\titleformat{\subsection}{\it}{\thesubsection.\enspace}{1pt}{}
\newtheorem{theo}{Theorem}[section]
\newtheorem{lemm}[theo]{Lemma}
\newtheorem{defi}{Definition}[section]
\newtheorem{coro}[theo]{Corollary}
\newtheorem{prop}[theo]{Proposition}
\newtheorem{rema}[theo]{Remark}
\numberwithin{equation}{section}
\let\tri=\triangleq
\title{Non‑implosion mechanism of 3D incompessible Euler equations}
\author{
	\vspace{12pt}
	Wenjie Deng\textsuperscript{1}\thanks{E-mail:
		detective2028@qq.com},
	\quad
	Song Jiang\textsuperscript{2}\thanks{E-mail: jiang@iapcm.ac.cn},
	\quad 
	Minling Li\textsuperscript{3}\thanks{E-mail: limling@pku.edu.cn}, \quad
	Zhaonan Luo\textsuperscript{4}\thanks{E-mail: luozhn7@mail.sysu.edu.cn} 
	\\
	\normalsize\textsuperscript{1}{Institute of Applied Physics and Computational Mathematics,
		Beijing, 100088, China}\\
	\normalsize\textsuperscript{2}{LCP, Institute of Applied Physics and Computational Mathematics, Beijing, 100088, China}\\
	\normalsize\textsuperscript{3}{School of Mathematical Sciences, Peking University,
		Beijing, 100871, China}\\
	\normalsize\textsuperscript{4}{School of Science, Shenzhen Campus of Sun Yat-sen University,
		Shenzhen, 518107, China}
}
\date{}
\begin{document}

\maketitle
\begin{abstract}
	This paper studies the non-implosion mechanism for the 3D incompressible Euler equations. We prove that vorticity blows up in finite time, whereas the $L^p_T L^\infty_{loc}$ $(p\in[1,\infty))$ norm of the velocity field remains bounded. Moreover, under an appropriate assumption on the scaling index, the exponent $p$ can be taken to be infinite. The proof is based on the introduction of a refined framework, the new observations for the null structure of transport term, and stability analysis of the self-similar model.
\end{abstract}

\vspace*{10pt}

\tableofcontents

\section{Introduction}
In this paper, we intend to investigate the Euler equations governing ideal fluid flow as:
\begin{align}\label{1eq1}
	\left\{\begin{array}{l}
		\displaystyle \partial_t \bm{u}+ \bm{u}\cdot\nabla  \bm{u}+\nabla P =0,\quad t\in\mathbb{R}^+,~~\bm{x}\in \mathbb{R}^3,
		\\[1ex]
		\displaystyle {\rm div}~ \bm{u}=0,
		\\[1ex]
		\displaystyle  \bm{u}|_{t=0}= \bm{u}_0,
	\end{array}\right.
\end{align}
where $ \bm{u}$ denotes the velocity field of the fluid, and $P$ represents the force of internal pressure. A central quantity in fluid dynamics is the vorticity field $\bm{\omega}\tri \nabla \times  \bm{u}$, which evolves according to
\begin{align}\label{1eq1-1}
	\left\{\begin{array}{l}
		\displaystyle \partial_t \bm{\omega}+ \bm{u}\cdot\nabla \bm{\omega} =\bm{\omega}\cdot\nabla \bm{u},~~~~{\rm div}~\bm{\omega}=0,\quad t\in\mathbb{R}^+,~~\bm{x}\in \mathbb{R}^3,
		\\[1ex]
		\displaystyle \bm{\omega}|_{t=0}=\bm{\omega}_0=\nabla \times  \bm{u}_0,
	\end{array}\right.
\end{align}
The velocity field can be reconstructed from vorticity via the Biot-Savart law:
\begin{align}\label{1eq1-2}
	\bm{u}=(-\Delta)^{-1}(\nabla \times  \bm{\omega}).
\end{align}

Research on the Euler equations \eqref{1eq1} has advanced significantly in the past century, profoundly deepening our understanding of inviscid flow.
Relevant references are summarized below to further elucidate the behavior of inviscid fluids. The early foundational work by Lichtenstein~\cite{L1925} and Gunther~\cite{G1927} established local well-posedness in Hölder spaces with suitable decay. The two-dimensional case saw significant advances with Yudovich \cite{Y63}, who proved the existence and uniqueness of global weak solutions. For the three-dimensional case, a major breakthrough came with the work of Beale et al.~\cite{BKM84}, which provided the BKM criterion for determining the global existence of strong solutions to the Euler equations.
To be specific, a classical solution for equations \eqref{1eq1-1}-\eqref{1eq1-2} cannot be continued in class $\{\bm{u}| \bm{u}\in C([0,T);H^s(\mathbb{R}^3))\cap C^1([0,T);H^{s-1}(\mathbb{R}^3)), s\geq3 \}$ as $t$ goes to $T$ if and only if
\begin{align*}
	\lim\limits_{t\rightarrow T}\int_0^t \sup_{\bm{x}\in \mathbb{R}^3}|\bm{\omega}(s,\bm{x})|ds=+\infty.
\end{align*}
Further contributions include the double exponential growth result for 2D flows by Kiselev and \v{S}ver\'{a}k~\cite{KS14}, local well-posedness with critical regularity by Guo et al.~\cite{GLY19}, ill-posedness in borderline Sobolev spaces by Bourgain and Li~\cite{BL15}, and the resolution of Onsager’s conjecture by Isett~\cite{Is18} and Buckmaster et al.~\cite{BLJV19}. Among many other notable contributions not covered in detail here, these works have collectively advanced our mathematical understanding of ideal fluids.

However, a fundamental gap persists between current mathematical theory and the understanding required to fully characterize solutions of the Euler equations, prompting the following global regularity problem: \\

\noindent
\textbf{Question 1:} 
Let $\bm{u}\in C^\infty([0,T)\times\mathbb{R}^3)$ be a smooth solution to the Euler equations \eqref{1eq1}. Can the derivative of the velocity field blow up in finite time $T$? That is, it is possible that
\begin{align*}
	\lim\limits_{t\rightarrow T}\sup_{\bm{x}\in\mathbb{R}^3}|\nabla \bm{u}(t,\bm{x})|=+\infty ?
\end{align*}

Although this question for smooth solutions remains open, a significant breakthrough was recently achieved by Elgindi \cite{E21}, who established finite-time blow-up for a class of low-regularity solutions to the Euler equations. More precisely, Elgindi constructed self-similar solutions with a finite-time singularity, which satisfy certain symmetries at regularity $C^{1,\alpha}$ with $\alpha>0$ small. Subsequently, Elgindi et al. \cite{EGM21} used a truncation argument to show that these self-similar solutions are stable and persist in the energy space $H^1$ until the blow-up time.
 Nevertheless, the details up to the blow-up time are still poorly understood. For example, it is unclear what quantities remain bounded as opposed to those that diverge, what structure such as point vortices forms, and whether the solution can be continued in a weak sense. A rigorous understanding of these issues is essential both mathematically and physically. 

To further characterize the blow-up dynamics of the Euler equations, we introduce the definition of non-implosion solution as follows:

\begin{defi}[Non-implosion solution]
Let $\bm{u}$ be a solution to the Euler equations \eqref{1eq1} and the derivative of the velocity field blow up in finite time $T$. 
The velocity field $\bm{u}$ is called a non-implosion solution of the Euler equations \eqref{1eq1} if it satisfies the bound
\begin{align*}
	\|\bm{u}(t,\bm{x})\|_{L^\infty_TL^\infty_{loc}}<+\infty.
\end{align*}
\end{defi}

Based on the definition of the non-implosion solution, a more general problem arises: the time integrability of the velocity field up to the blow-up time. That is, we consider the following problem:\\

\noindent
\textbf{Question 2:} 
Let $\bm{u}$ be a solution to the Euler equations \eqref{1eq1} and the derivative of the velocity field blow up in finite time $T$. Dose the velocity remain bounded?
That is, it is possible that for $1\leq p\leq +\infty$,
\begin{align*}
	\|\bm{u}(t,\bm{x})\|_{L^p_TL^\infty_{loc}}<+\infty ?
\end{align*}


In an effort to address this question, or more precisely, to establish a mechanism that captures both the non-implosion phenomenon and the dynamical behavior of the corresponding solutions, this paper develops a refined framework.
This framework serves to construct finite-time singularities with a prescribed structure and to characterize the extent of integrability of the resulting solutions up to the blow-up time.
To be more precise, a scaling-related parameter $\beta$ is introduced, and its effect on the regularity and integrability of solutions is systematically analyzed. The value of $\beta$ is shown to play a critical role in governing the asymptotic behavior of the solution. In particular, a suitable choice of $\beta$ produces solutions with the specific desired time integrability properties. These provide a foundation for systematically constructing and analyzing solutions with finite-time singularities within more specific function spaces.
Specifically, we construct a new velocity field whose derivative blows up in finite time $T$. 
Nevertheless, its $L^p_T L^\infty_{\text{loc}}$-norm remains bounded for any $1 \leq p < +\infty$.
For the endpoint case $p = +\infty$, boundedness holds provided that $\beta$ satisfies a certain condition, thereby excluding the possibility of velocity implosion at any finite point $|x| < \infty$ and time $T$.
In this way, we partially answer \textbf{Question 2}.

\subsection{Statement of main theorem}
Firstly, we give the definition of the odd function as follows:
\begin{defi}
	A velocity vector field $\bm{u}=(u_1,u_2,u_3):\mathbb{R}^3\rightarrow\mathbb{R}^3$ is called \textbf{odd} if $u_i$ is odd in $x_i$ and even in the other two variables for any $i=1,2,3$.
\end{defi}
From now on, we only consider the odd solution $\bm{u}$ of the Euler equations \eqref{1eq1}. 
We define the two-dimensional radial variable $r\tri\sqrt{x_1^2+x_2^2}$. Then the three-dimensional radial variable is given by $|\bm{x}|\tri \sqrt{r^2+x_3^2}$, which expands to $\sqrt{x_1^2+x_2^2+x_3^2}$.
The vorticity components in the $r$ and $x_3$ directions vanish identically, while the $\widetilde{\theta}$ component (i.e., the angle between $x_1$ and $x_2$ axes) is denoted by $\omega$.
To simplify the analysis, we impose an odd symmetry on $\omega$ with respect to the $x_3$-axis. 
Specifically, we seek solutions satisfying the following condition:
\begin{align*}
	\omega(t,r,x_3)=-\omega (t,r,-x_3).
\end{align*}
The axi-symmetric 3D Euler equations with vanishing swirl can be expressed as follows:
\begin{align}\label{1eq2}
	\left\{\begin{array}{l}
		\displaystyle \partial_t \omega+u_r\partial_{r}\omega+u_3\partial_{3}\omega=\frac{u_r}{r}\omega,\quad t\in\mathbb{R}^+,~~(r,x_3)\in D_0,\\[1ex]
		\displaystyle-\partial^2_r(\psi)-\partial^2_3(\psi)-\frac{\partial_r \psi}{r}+\frac{\psi}{r^2}=\omega, \\[1ex]
		\displaystyle u_r=\partial_3 \psi,~~u_3=-\frac{\psi}{r}-\partial_r \psi, \\[1ex]
		\displaystyle \omega |_{t=0}=\omega_0, \\[1ex]
		\displaystyle \omega|_{\partial D_0}=0,~~\psi|_{\partial D_0}=0,
	\end{array}\right.
\end{align}
where the domain $D_0\tri[0,\infty)\times[0,\infty)$.
We introduce some parameters, which will be commonly used in this paper, as follows:
\begin{align}\label{def-lambda}
	\alpha>0,~~~~\beta\in(0,1],~~~~\eta\in(0,1),~~~~\lambda=1+\frac{\alpha}{10\beta}>1.
\end{align}
Then, we define the backward self-similar solution with the scaling index $\beta_0$ as follows:
\begin{defi}
	The function $\omega(t,x)$ is the so-called backward self-similar solution, if it takes the form
	\begin{align*}
		\omega(t,x) =\frac{1}{T_0-t} \widetilde{\Omega} \left(\frac{x}{(T_0-t)^{\beta_0}}\right),
	\end{align*}
	for some positive constants $T_0$ and $\beta_0$. We then assert that $\omega(t,x)$ develops a finite-time singularity at $t=T_0$ with the scaling index $\beta_0$.
\end{defi}
Through a combination of self-similar analysis, nonlinear stability and energy estimates, we prove that classical solutions to the 3D Euler equations \eqref{1eq2} exhibit the finite-time vorticity singularity in this paper.
Now, we can state our main result.

\begin{theo}\label{Theo2}
	For any $\beta\in(0,1]$, there exists a positive constant $\alpha(\beta)$, the initial vorticity $\omega_0 \in C^{0,\frac{\alpha}{20\beta}}(D_0)$ with $|\omega_0|\leq c|\bm{x}|^{\frac{\alpha}{2\beta}}(1+|\bm{x}|)^{-\left({\frac{\alpha}{2\beta}+\frac{\alpha}{2}}\right)}$ for some constant $c>0$, and the positive time $T^\ast$, such that the axi-symmetric 3D Euler equations \eqref{1eq2} admit a solution 
	$\omega\in C^{0,\frac{\alpha}{2\beta}}_{t}([0,T^\ast); C^{0,\frac{\alpha}{20\beta}}_{x}( D_0))$.
	The solution develops a finite-time singularity at $t=T^\ast>0$ with scaling index $\frac{\beta}{\alpha}$.
	Moreover, there exists a parameter $\gamma\in\mathbb{R}^+$ that satisfies $|\frac{\beta}{\gamma}-1|\ll 1$ and $T^\ast= \frac{1}{2}+\frac{\beta}{2\gamma}$, such that the following blow-up results hold:
	\begin{align}\label{est-theo-1}
		\lim\limits_{t\rightarrow T^\ast} \int_0^t \|\omega(s)\|_{L^{\infty}(D_0)} ds = +\infty, \quad 
		\lim\limits_{t\rightarrow T^\ast} \int_0^t \Big\|\frac{u_r}{r} (s)\Big\|_{L^{\infty}(D_0)} ds = +\infty.
	\end{align}
    As for the velocity components $(u_r,u_3)$, we have, for any $1\leq p<+\infty$,
	\begin{align}\label{est-theo-2}
		\int_0^{T^\ast} \|(u_r,u_3) (s)\|^p_{L^{\infty}_{loc}(D_0)}ds  < +\infty.
	\end{align}
    If, moreover, $\gamma<\beta$, then $(u_r,u_3)$ remain uniformly bounded in the $L^\infty$-norm up to the singularity time:
    \begin{align}\label{est-theo-2'}
        \sup_{0\leq s\leq T^\ast} \|(u_r,u_3) (s)\|_{L^{\infty}_{loc}(D_0)}
        <+\infty.
    \end{align}
\end{theo}

\begin{rema}\label{Rema1}
	In Theorem \ref{Theo2}, we investigate the time integrability of the solutions at the blow-up time.
    The estimate \eqref{est-theo-2} shows that for any $1\leq p<+\infty$, the $L^p_{T^\ast}L^\infty_{loc}$-norm of the velocity components $(u_r,u_3)$ remains bounded.
    Whether this boundedness extends to the case $p=+\infty$ is determined by the relationship between the parameters $\beta$ and $\gamma$.
    Specifically, the $L^\infty_{T^\ast}L^\infty_{loc}$-norm remains bounded precisely when $\gamma<\beta$.
\end{rema}

It is straightforward to verify that the Euler equations \eqref{1eq1-1}-\eqref{1eq1-2} admit the solutions that develop the finite-time singularity, as an immediate corollary of Theorem \ref{Theo2}. 
\begin{coro}\label{Coro1}
	For any $\beta\in(0,1]$, there exists a positive constant $\alpha_0(\beta)$, a divergence-free and odd initial velocity $\bm{u}_0$, the corresponding vorticity $\bm{\omega}_0 \in C^{0,\alpha_0}(\mathbb{R}^3)$ with a positive constant $\alpha_0$, and the positive time $T^\ast$, such that the 3D Euler equations \eqref{1eq1-1}-\eqref{1eq1-2} admit a solution $\bm{\omega}\in 
	C^{0,\alpha_0}([0,T^\ast)\times \mathbb{R}^3)$.
    Moreover, there exists a parameter $\gamma\in\mathbb{R}^+$ that satisfies $|\frac{\beta}{\gamma}-1|\ll 1$ and $T^\ast= \frac{1}{2}+\frac{\beta}{2\gamma}$, such that the following blow-up result holds:
	$$
	\lim\limits_{t\rightarrow T^\ast} \int_0^t \|\bm{\omega}(s)\|_{L^{\infty}(\mathbb{R}^3)} ds = +\infty.
	$$
    For any $1\leq p<\infty$, the velocity $\bm{u}$ exhibits the following integrability property:
	\begin{align*}
		\int_0^{T^\ast} \|\bm{u} (s)\|^p_{L^{\infty}_{loc}(\mathbb{R}^3)}ds  < +\infty.
	\end{align*}
    If, moreover, $\gamma<\beta$, then $\bm{u}$ remains uniformly bounded in the $L^\infty$-norm up to the singularity time:
    \begin{align}\label{est-theo-3}
		\sup_{0\leq s\leq T^\ast}\|\bm{u} (s)\|_{L^{\infty}_{loc}(\mathbb{R}^3)}  < +\infty.
	\end{align}
\end{coro}

\begin{rema}
	It should be noted that $\alpha(\beta)$ in Theorem \ref{Theo2} and $\alpha_0(\beta)$ in Corollary \ref{Coro1}, while not identical, are both small parameters.
\end{rema}


\begin{rema}
	In Corollary \ref{Coro1}, we establish the space-time pointwise estimate \eqref{est-theo-3} for the velocity up to the blow-up time, thereby ruling out the velocity implosion at any finite point $|x| < \infty$ and time $T^\ast$ when $\gamma<\beta$.
\end{rema}

\subsection{Reformulation}
We now proceed to reformulate equations \eqref{1eq2} in polar coordinates.
For this purpose, we define:
\begin{itemize}
	\item The three-dimensional radial variable with index $\alpha$ as $R\tri(r^2+x_3^2)^\frac{\alpha}{2}$ with $\alpha>0$;
	\item The angle variable as $\theta\tri\arctan(\frac{x_3}{r})$,  representing the angle between $r$ and $x_3$.
\end{itemize}
We also define the differential operator $D_R\tri R\partial_R$. 
We introduce the new unknown functions as:
$$\Omega(t,R,\theta)\tri\omega(t,r,x_3), \quad R^{\frac{2}{\alpha}}\Psi(t,R,\theta)\tri\psi(t,r,x_3),$$ 
then the evolution equation for $\Omega$ can be written as:
\begin{align}\label{1eq3}
	\left\{\begin{array}{l}
		\partial_t \Omega+U(\Psi)\partial_{\theta}\Omega+V(\Psi)\alpha D_{R}\Omega=\mathcal{R}(\Psi)\Omega,\quad t\in\mathbb{R}^+,~~(R,\theta)\in D,\\[1ex]
		L^{\alpha}_{R}(\Psi)+L_{\theta}(\Psi)=\Omega, \\[1ex]
		\Omega|_{\partial D}=0,~~\Psi|_{\partial D}=0, 
	\end{array}\right.
\end{align}
where the domain $D\tri[0,\infty)\times[0,\frac{\pi}{2}]$, $U(\Psi)$, $V(\Psi)$ and $\mathcal{R}(\Psi)$ are defined by
$$U(\Psi)\tri-3\Psi-\alpha D_R\Psi,
~~
V(\Psi)\tri\partial_\theta \Psi-(\tan\theta)\Psi,
~~
\mathcal{R}(\Psi)\tri V(\Psi)-(\tan\theta) U(\Psi).$$
Moreover, the radial operator $L^{\alpha}_{R}$ and the angular operator $L_{\theta}$ are defined by
$$L^{\alpha}_{R}(\Psi)\tri-\alpha^2D^2_R\Psi-5\alpha D_R\Psi,
~~
L_{\theta}(\Psi)\tri-\partial_\theta V(\Psi)-6\Psi.$$
Next, we introduce two functions that depend solely on angular $\theta$ as:
\begin{align}\label{def_K-theta}
	K(\theta)\tri(\sin\theta)(\cos\theta)^2,
\end{align}
and for any $a>0$, 
$$\Gamma_{\theta,a}(\theta)\tri(K(\theta))^{\frac{\alpha}{3a}}.$$
The function spaces $\mathcal{H}^k$ and $\mathcal{E}^2$ will be defined in Section \ref{sec:notation}.
Considering the system \eqref{1eq3}, we have the following theorem.
\begin{theo}\label{Theo1}
	For any $\beta\in(0,1]$, there exists a positive constant $\alpha(\beta)$, such that the system \eqref{1eq3} admits a solution $\Omega$ that develops a finite-time singularity. 
	Moreover, there exists a function $g\in \mathcal{H}^2\cap\mathcal{E}^2$ and a parameter $\gamma\in \mathbb{R}^{+}$ satisfying $|\frac{\beta}{\gamma}-1|\ll 1$, such that
	$$
	\Omega(t,R,\theta) =\frac{1}{t_{\gamma}} F\left(\frac{R}{t_{\gamma}^{\beta}},\theta\right),~~~F(z,\theta)=F^{\ast}_{\gamma}(z,\theta)+g(z,\theta),~~~t_\gamma=1-\frac{2\gamma}{\gamma+\beta}t, ~~~z=\frac{R}{t_{\gamma}^{\beta}},
	$$
	where $F^{\ast}_{\gamma}$ is defined as
	$$F^{\ast}_{\gamma}(z,\theta)\tri \frac {4\alpha}{3\gamma}\frac{\Gamma_{\theta,\gamma}(\theta)}{\int_0^{\frac{\pi}{2}}K(\theta)\Gamma_{\theta,\gamma}(\theta)d\theta}\frac{z^{\frac{1}{\gamma}}}{(1+z^{\frac{1}{\gamma}})^2}.$$
\end{theo}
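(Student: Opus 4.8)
\enspace
The plan is to pass to self-similar variables and reduce Theorem~\ref{Theo1} to the construction of a single stationary profile. Substituting the ansatz $\Omega(t,R,\theta)=\frac{1}{t_\gamma}F(z,\theta)$ with $z=R\,t_\gamma^{-\beta}$ and $t_\gamma=1-\frac{2\gamma}{\gamma+\beta}t$ into \eqref{1eq3}, and writing $\Psi$ in the correspondingly rescaled form $\Psi_F(z,\theta)$ so that the weighted elliptic relation remains scale invariant, one is led to the stationary profile system
$\frac{2\gamma}{\gamma+\beta}\bigl(F+\beta D_zF\bigr)+U(\Psi_F)\partial_\theta F+V(\Psi_F)\,\alpha D_zF=\mathcal{R}(\Psi_F)F$, together with $L^\alpha_z(\Psi_F)+L_\theta(\Psi_F)=F$, where $D_z\tri z\partial_z$ and $U,V,\mathcal{R},L^\alpha,L_\theta$ are as in \eqref{1eq3}. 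Since $t_\gamma\to0$ as $t\uparrow\frac{\gamma+\beta}{2\gamma}=\frac12+\frac{\beta}{2\gamma}=T^{\ast}$ while $\Omega$ carries the prefactor $1/t_\gamma$, it suffices to exhibit, for each $\beta\in(0,1]$ and all sufficiently small $\alpha=\alpha(\beta)$, a parameter $\gamma$ with $|\beta/\gamma-1|\ll1$ and a solution $F=F^{\ast}_\gamma+g$ of this system with $g\in\mathcal{H}^2\cap\mathcal{E}^2$ small.

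The next step is to recognize $F^{\ast}_\gamma$ as the exact solution of the leading-order (``fundamental'') model obtained by retaining only the $\alpha=0$ part of $L^\alpha_z$ and freezing the angular coupling: in that regime the $\theta$-dependence separates and is forced to be proportional to $\Gamma_{\theta,\gamma}=K^{\alpha/(3\gamma)}$, while the radial factor solves a first-order ODE whose bounded, decaying solution is $z^{1/\gamma}(1+z^{1/\gamma})^{-2}$, with the normalization constant displayed in the statement and the overall $O(\alpha)$ amplitude $\tfrac{4\alpha}{3\gamma}$. Writing $F=F^{\ast}_\gamma+g$ and linearizing, the correction must satisfy $\mathcal{L}_\gamma g=-\mathcal{N}(g)-\mathcal{E}_{\alpha,\gamma}$, where $\mathcal{E}_{\alpha,\gamma}$ is the small error coming from $F^{\ast}_\gamma$ solving only the model equation, $\mathcal{N}$ collects the quadratic terms, and $\mathcal{L}_\gamma$ is the linearized operator at $F^{\ast}_\gamma$. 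The structural observation that makes the scheme work — the ``null structure'' of the transport term — is that the transport and stretching part of $\mathcal{L}_\gamma$, namely $U(\Psi^{\ast})\partial_\theta+V(\Psi^{\ast})\,\alpha D_z$ (with $\Psi^{\ast}$ the stream function of $F^{\ast}_\gamma$) together with the vortex-stretching contribution of $\mathcal{R}$, combines with the damping $-\frac{2\gamma}{\gamma+\beta}(1+\beta D_z)$ to form a strictly dissipative operator on the critical space, modulo a one-dimensional obstruction generated by the scaling/time-translation symmetry of the profile equation.

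The heart of the matter is then the coercivity estimate: there exist $c_{\ast,\beta}>0$ and $\alpha(\beta)>0$ such that for $\alpha\le\alpha(\beta)$ and $\gamma$ close to $\beta$ one has $\langle\mathcal{L}_\gamma g,g\rangle_{\mathcal{H}^2}\le-c_{\ast,\beta}\|g\|_{\mathcal{H}^2}^2$ for every $g$ orthogonal to the neutral direction, and similarly in $\mathcal{E}^2$. I would derive it by a direct energy computation testing the linearized equation against $g$ (and its derivatives) in a weight built from $K(\theta)$ and powers tied to $\lambda$ and $\eta$ from \eqref{def-lambda}, exploiting the null structure so that the commutators of $D_z$, the transport vector field, and the weight carry a favorable sign, and treating $L^\alpha_z$ as a perturbation of its $\alpha=0$ limit while using the explicit angular kernel of $L_\theta$ to bound the non-local map $(L^\alpha_z+L_\theta)^{-1}$ uniformly in small $\alpha$. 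The requirement that all terms of the accompanying $\mathcal{H}^{-1}$-type norm remain nonnegative is exactly what confines $\beta$ to $(0,1]$, as flagged in Remark~\ref{Rema1}. Coercivity makes $\mathcal{L}_\gamma$ invertible on the codimension-one subspace, so $g$ can be produced as the fixed point of $g=-\mathcal{L}_\gamma^{-1}\bigl(\mathcal{N}(g)+\mathcal{E}_{\alpha,\gamma}\bigr)$ by a contraction argument in $\mathcal{H}^2\cap\mathcal{E}^2$, since $\mathcal{E}_{\alpha,\gamma}$ is $O(\alpha)$ and $\mathcal{N}$ is quadratic; the single lost codimension is recovered by choosing $\gamma$ via a Lyapunov--Schmidt/implicit-function argument so that the projection of $\mathcal{N}(g)+\mathcal{E}_{\alpha,\gamma}$ onto the neutral direction vanishes, which is solvable because $\partial_\gamma F^{\ast}_\gamma$ has nonzero component along that direction. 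This yields the self-similar profile $F=F^{\ast}_\gamma+g$, and hence Theorem~\ref{Theo1}.

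The main obstacle is the coercivity step: establishing dissipativity of the linearized transport operator in the \emph{critical} Sobolev space $\mathcal{H}^2$ rather than in some more forgiving topology. This rests on the precise cancellation between transport and vortex stretching, on a delicate design of the weights defining $\mathcal{H}^2$ and the auxiliary $\mathcal{H}^{-1}$ norm, and on uniform-in-$\alpha$ control of the non-local elliptic inverse; the tension among these ingredients is precisely what forces $\beta\le1$ and what Remark~\ref{Rema1} addresses. Once this estimate is in place, deriving the profile system, identifying $F^{\ast}_\gamma$, the Lyapunov--Schmidt reduction for $\gamma$, and the contraction for $g$ are comparatively routine.
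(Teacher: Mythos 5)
Your proposal captures most of the paper's scaffolding correctly: the passage to self-similar variables, the role of $F^{\ast}_\gamma$ as the leading-order profile, the decomposition $F=F^{\ast}_\gamma+g$, the need for a coercivity estimate for the linearized operator with a carefully designed weight and a normalization constraint, the $\beta\leq1$ restriction, and the criticality of the $\mathcal{H}^2\cap\mathcal{E}^2$ framework. The deviation from the paper — and the place where a real gap opens — is in the final construction step.

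The paper does not invert a linearized operator and run a Banach fixed-point iteration. It introduces a \emph{parabolic regularization} $\partial_\tau g + \mathcal{L}_\Gamma(g) = -T + R_0 + R_1 + R_2$ with $g|_{\tau=0}=0$ (equation \eqref{82in1}), proves uniform-in-$\tau$ a priori bounds for $\|g\|_{\mathcal{H}^{-1}}+\|g\|_{\mathcal{H}^2_\eta}+\|g\|_{\mathcal{E}^2_\eta}$ by combining Propositions \ref{5co2}, \ref{7Tg}, \ref{7Tf}, \ref{8R1}, \ref{8R0R1-1}, then differentiates the equation in $\tau$ and shows $\partial_\tau g$ decays exponentially, and passes to the limit $\tau\to\infty$ to obtain the stationary profile. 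Moreover, the one constraint $L^{-1}_{z,K}(g)(0)=0$ (which makes Proposition \ref{5co1} valid and is the analogue of your ``codimension-one obstruction'') is not recovered a posteriori by a Lyapunov--Schmidt projection onto $\gamma$; it is enforced from the outset by the explicit choice $\mu=\frac{3}{4\alpha}L^{-1}_{z,K}(R_2-T)(0)$, $\gamma=\frac{1+\mu}{1-\mu}\beta$, and then shown to be propagated by the flow. It is also worth noting that in the paper the transport term $T$ is kept \emph{outside} the operator $\mathcal{L}_\Gamma$ and estimated separately, rather than folded into a single linearization.

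The fixed-point route you sketch — $g=-\mathcal{L}_\gamma^{-1}(\mathcal{N}(g)+\mathcal{E}_{\alpha,\gamma})$ in $\mathcal{H}^2\cap\mathcal{E}^2$ — has a regularity-loss problem precisely because this space is critical. The quadratic part of the nonlinearity contains terms of the form $U(\Phi_g)\,\sin^{-1}(2\theta)D_\theta g$ and $V(\Phi_g)\,\alpha D_z g$, which involve one full derivative of $g$; with $g\in\mathcal{H}^2$ these generically land only in $\mathcal{H}^1$. A coercivity inequality of the form $\langle\mathcal{L}_\gamma g,g\rangle\geq c\|g\|^2$ gives dissipation of the $\mathcal{H}^2$ energy, not an inverse that gains a derivative, so your map is not obviously bounded (let alone a contraction) $\mathcal{H}^2\to\mathcal{H}^2$. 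The energy method the paper uses is designed exactly to sidestep this: the null structure of $T$ manifests itself as a cancellation in the \emph{quadratic form} $\langle T_g,g\rangle$ after integration by parts with the weight $\overline{w}^\xi=w^\xi z^{\frac12-\frac{3}{2\alpha}}(\cos\theta)^{-\frac12}$ (Lemma \ref{7div1}), and the extra space $\mathcal{E}^2$ supplies the additional radial weighting needed to close the commutator $D_zD_\theta T_g$ without raising the order. Neither of these effects is visible at the level of $\mathcal{L}_\gamma^{-1}\mathcal{N}(\cdot)$. So to make your contraction argument work you would have to explain how the divergence-free cancellation is accessed at the resolvent level rather than at the level of the energy identity — and that is precisely the difficulty the paper (following Elgindi) flags as the reason the $\mathcal{H}^2$ case ``becomes critical.'' Absent such an explanation, I would regard the parabolic/energy route in the paper as the more robust one and the contraction step in your proposal as the gap.
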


\begin{rema}
	Let $\gamma=1$. Elgindi proved that there exists a positive constant $\alpha$, such that the system \eqref{1eq3} admits solutions $\Omega$ that develop singularity in finite time, see \cite{E21}. Specifically, there exists a function $g\in \mathcal{H}^4$ with the different parameter $\lambda=1+\frac{\alpha}{10}$, and a parameter $\beta\in \mathbb{R}^{+}$ with $|\beta-1|\ll 1$, such that
	$$
	\Omega(t,R,\theta) =\frac{1}{t_1} F\left(\frac{R}{t_1^{\beta}},\theta\right),~~~F(z,\theta)=F_1^{\ast}(z,\theta)+g(z,\theta),~~~t_1=1-\frac{2}{\beta+1}t, ~~~ z=\frac{R}{t_1^{\beta}}.$$
	He established that $F^{\ast}_{1}$,  which is smooth in $z$, lies in $\mathcal{W}^{l,\infty}$ for any $l\in\mathbb{N}^+$.
\end{rema}

\begin{rema}
	The fundamental self-similar solutions $F^{\ast}_{\gamma}$,  constructed in Theorem \ref{Theo1}, is well-defined for all $\gamma>0$.
	It should be noted when $\frac{1}{\gamma}\not\in\mathbb{N}^+$, $F^{\ast}_{\gamma}$ fails to be smooth as $z$ goes to $0$.
	We further prove that $F^{\ast}_{\gamma} \in \mathcal{H}^{k,\ast}$, where the space $\mathcal{H}^{k,\ast}$  is introduced in Section \ref{sec:notation}.
	Additionally, the parameter stability of $F^{\ast}_{\gamma}$ is established in Section \ref{sec:toy model}.
\end{rema}

\begin{rema}
	Here, we highlight the critical issue mentioned in Section $8$ of \cite{E21}: 
	``In $\mathcal{H}^2$ the problem becomes critical in a certain sense, and it is not clear whether the non-linear estimates can be closed in an easy way."
	Through a rigorous analysis in Section \ref{sec:Est-T}, we introduce a new second-order space $\mathcal{E}^2$ equipped with a stronger singular weight and establish \textit{a priori} energy estimates in $\mathcal{H}^2\cap\mathcal{E}^2$ by exploiting the null structure of the transport term.
	Finally, we establish the existence of the self-similar solution $g(z,\theta)$, see Theorem \ref{Theo1} for more details.
	Moreover, we point out that $\mathcal{H}^k \hookrightarrow \mathcal{H}^2\cap\mathcal{E}^2$ for any $k\geq 3$.
\end{rema}

\subsection{Motivation and main ideas}

The absence of intrinsic scales in the Euler equations permits the rapid growth of solutions at arbitrarily small spatial scales in a short time, potentially leading to a finite-time singularity. This process is inherently self-similar, thereby rendering a self-similar framework the natural setting for its investigation. 
For self-similar solutions, the non-implosion is tied to a specific scaling exponent, and understanding this mechanism is a fundamental problem in fluid mechanics.
This paper aims to establish a new framework to elucidate the non-implosion mechanism in the self-similar setting. We shall now outline our main ideas in four steps.

\medskip 
\noindent\textbf{Step 1: The backward self-similar system of the Euler equations.}
\medskip 

For any given $\beta\in(0,1]$ and $\mu\in\mathbb{R}$ that would be determined later, we consider the following self-similar variables:
\begin{align}\label{4eq1}
	t_{\mu}\tri 1-(1+\mu)t,~~~z\tri\frac{R}{t_{\mu}^{\beta}}.
\end{align} 
Assuming that solution $(\Omega,\Psi)$ satisfies the self-similar structure as:
\begin{align}\label{4eq2}
	\Omega(t,R,\theta)=\frac{1}{t_{\mu}}F\left(z,\theta\right),~~~
	\Psi(t,R,\theta)=\frac{1}{t_{\mu}}\Phi\left(z,\theta\right),
\end{align}
then system \eqref{1eq3} can be rewritten as the following self-similar form:
\begin{align}\label{1.4eq3}
	\left\{\begin{array}{l} (1+\mu)F+(1+\mu)\beta D_{z}F+T=\mathcal{R}(\Phi)F\quad \text{(The main equation)},\\[1ex]
		L^{\alpha}_{z}(\Phi)+L_{\theta}(\Phi)=F\quad \text{(The potential equation)}, \\[1ex]
		F|_{\partial D}=0,~~\Phi|_{\partial D}=0, 
	\end{array}\right.
\end{align}
where $(z,\theta)\in D$ and $D\tri[0,\infty)\times[0,\frac{\pi}{2}]$,
\begin{align}
	T&\tri U(\Phi)\partial_{\theta}F+V(\Phi)\alpha D_{z}F
	\quad\text{(The transport term)},
	\label{def-T}
	\\
	\mathcal{R}(\Phi)&\tri V(\Phi)-\tan\theta U(\Phi)
	\quad\text{(The stretching term)},
	\label{def-R}
\end{align}
and the differential operators $L^{\alpha}_{z}$ and  $L_{\theta}$ are defined by
\begin{align}\label{def-L}
	L^{\alpha}_{z}(\Phi)=-\alpha^2 D^2_z\Phi-5\alpha D_z\Phi,\quad L_{\theta}(\Phi)=-\partial_\theta V(\Phi)-6\Phi.
\end{align}
To be more specific, $L^{\alpha}_{z}$ represents the radial operator, while $L_{\theta}$ corresponds to the angular operator.
In this new frame of reference, $U(\Phi)$ and $V(\Phi)$ are reduced to
\begin{align}\label{def_U-V}
	U(\Phi)=-3\Phi-\alpha D_z\Phi,~~V(\Phi)=\partial_\theta \Phi-(\tan\theta) \Phi.
\end{align}

\medskip  

\noindent\textbf{Step 2: The fundamental self-similar solution and its parameter stability.}

\medskip

 This step is to construct a family of solutions to the fundamental self-similar model parameterized by $\gamma$, and establish its parameter stability.
Specifically, we begin by analyzing the following fundamental self-similar model, which retains only the leading parts from system \eqref{1.4eq3}:
\begin{align}\label{1.4eq4}
	\left\{\begin{array}{l}
		F^{\ast}_\gamma+\gamma D_zF^{\ast}_\gamma=\frac{3}{2\alpha}L^{-1}_{z,K}(F^{\ast}_\gamma)F^{\ast}_\gamma,
		\quad t\in\mathbb{R}^+,~~(z,\theta)\in D,
		\\[1ex]
		F^{\ast}_\gamma|_{\partial D}=0.
	\end{array}\right.
\end{align}
For the smooth (in $z$) solution $F^{\ast}_1$, Elgindi in \cite{E21} established the energy estimates of $F^{\ast}_1$ in $\mathcal{W}^{l,\infty}$ for any $l\in\mathbb{N}^{+}$.
Moreover, his result can be extended to any $\frac{1}{\gamma}\in \mathbb{N}^{+}$.
It is straightforward to verify that $F^{\ast}_\gamma$ fails to be smooth as $z$ goes to $0$ for $\frac{1}{\gamma}\notin \mathbb{N}^{+}$.
In this paper, we construct the Sobolev space $\mathcal{H}^{k,\ast}$ (defined in Section \ref{sec:notation}), and show that $F^{\ast}_\gamma\in \mathcal{H}^{k,\ast}$ for any $\gamma>0$.
Moreover, we establish the parameter stability of $F^{\ast}_\gamma$, a crucial step in determining a suitable $\gamma(\beta)$ for any given $\beta>0$.
A more detailed analysis is presented in Section \ref{sec:toy model}.
\medskip  

\noindent\textbf{Step 3: The energy estimate for the perturbation at critical regularity.}

\medskip

Based on the fundamental self-similar solution, we now decompose $F$ as $F=F^{\ast}_{\gamma}+g$, then $g$ satisfies the following self-similar system:
\begin{align}\label{1.4eq5}
	\left\{\begin{array}{l} \mathcal{L}_{\Gamma}(g)=-T+R_0+R_1+R_2,\\[1ex]
		L^{\alpha}_{z}(\Phi)+L_{\theta}(\Phi)=F, \\[1ex]
		\mu= \frac{3}{4\alpha}L^{-1}_{z,K}(R_2-T)(0),~~\gamma = \frac{1+\mu}{1-\mu}\beta, \\[1ex]
		L^{-1}_{z,K}(g)(0)=0,~~g|_{\partial D}=0,~~\Phi|_{\partial D}=0, 
	\end{array}\right.
\end{align}
where the linear operator $\mathcal{L}_{\Gamma}(g)$ is defined by
\begin{align}\label{def-LGammag}
	\mathcal{L}_{\Gamma}(g)\tri g+\beta D_{z}g-\frac{3}{2\alpha}L^{-1}_{z,K}(F^{\ast}_{\gamma})g-\frac{3}{2\alpha}L^{-1}_{z,K}(g)F^{\ast}_{\gamma},
\end{align}
and the remaining terms are defined by
\begin{align}
	&R_0\tri 
	-\mu F^{\ast}_{\gamma}+(\gamma-(1+\mu)\beta) D_zF^{\ast}_{\gamma}, 
	\quad
	R_1\tri -\mu g-\mu\beta D_zg,
	\label{def-R1}\\
	&R_2\tri \frac{3}{2\alpha}L^{-1}_{z,K}(g)g-\frac{3
	}{2}(\sin\theta)^2\langle F,K\rangle_{\theta}F+ \mathcal{R}(\Phi-\Phi_{\text{main}})F.
	\label{def-R2-1}
\end{align}
Here, $R_0$ and $R_1$ contain the linear terms of $F^{\ast}_{\gamma}$ and $g$, respectively, whereas $R_2$ contains the nonlinear terms of $F^{\ast}_{\gamma}$ and $g$.
Since $L^{-1}_{z,K}$ is a non-local operator, it is not feasible to directly impose the condition $L^{-1}_{z,K}(g)(0)=0$, see $\eqref{1.4eq5}_4$ (the definition of $L^{-1}_{z,K}$ can be found in Section \ref{sec:notation} below). Thus, we will ensure this boundary condition through selecting $\mu$, as in $\eqref{1.4eq5}_3$.
Another central step of this work is to establish the energy estimates at critical regularity, namely, in $\mathcal{H}^2\cap\mathcal{E}^2$ space.


In order to obtain the coercivity estimate for the linear operator $\mathcal{L}_{\Gamma}$, we select the suitable weights and an appropriate parameter $\eta$, then construct the Sobolev spaces $\mathcal{H}^{-1}$ and $\mathcal{H}^2_{\eta}$ with $0<\beta\leq 1$, to establish coercivity estimate.
More precisely, we begin by decomposing the operator $\mathcal{L}_{\Gamma}$ as:
\begin{align*}
	\mathcal{L}_{\Gamma}=\mathcal{L}^{\beta}_{\Gamma}(f)+\tilde{P}(f),
\end{align*}
where 
\begin{align*}
	\mathcal{L}^{\beta}_{\Gamma}(f)\tri& \mathcal{L}^{\beta}(f)-\frac{3}{2\alpha}L^{-1}_{z,K}(f)F^{\ast}_{\beta}\quad\text{(the main operator)},
	\\
	\mathcal{L}^{\beta}(f) \tri& f + \beta D_z f - L_z^{-1}(\Gamma^{\ast}_{\beta})f \quad \text{( the core operator)},
	\\
	\tilde{P}(f) \tri& L^{-1}_z(\Gamma^{\ast}_{\beta}-\Gamma^{\ast}_{\gamma})f+\frac{3}{2\alpha}L^{-1}_{z,K}(f)(F^{\ast}_{\beta}-F^{\ast}_{\gamma}) \quad\text{(the perturbation term) }.
\end{align*}
We then select an appropriate radial weight $w_z$, such that the core operator $\mathcal{L}^{\beta}$ satisfies the following $L^2$-energy estimate:
\begin{align*}
	\langle\mathcal{L}^{\beta}(g),g(w_z)^2\rangle_z=\left(1-\frac{\beta}{2}\right)\|gw_z\|^2_{L^2_z}.
\end{align*}
Based on the equality above, we choose a suitable angle weight $w^{K}_{\theta}$, such that the main operator $\mathcal{L}^{\beta}_{\Gamma}$ satisfies
\begin{align*}
	\langle\mathcal{L}^{\beta}_{\Gamma}(g), g(w^K)^2\rangle=\|g\|^2_{\mathcal{H}^{-1}},
\end{align*}
where $w^K\tri w_z w^{K}_{\theta}$ and the definition of $\mathcal{H}^{-1}$ is defined in Section \ref{sec:notation}. 
The construction of this new $\mathcal{H}^{-1}$ space is one of the innovations in our paper.
Moreover, we apply the parameter stability of $F^{\ast}_\gamma$ with respect to $\gamma$, to derive the parameter stability of the main operator. 
This ensures that $\tilde{P}(g)$ is indeed the perturbation term. 
More details are shown in Section \ref{sec:toy model} and Section \ref{sec:Coer}.

By standard elliptic estimates, we can obtain that the $\mathcal{H}^{n}$-norm of $(\alpha^2 D_z^2\Phi_g,\partial_\theta^2\Phi_g)$ can be controlled by that of $g$, respectively.
Correspondingly, we also establish that the $\mathcal{H}^{n,\ast}$-norm of $(\alpha^2 D_z^2\Phi_{F^\ast_\gamma},\partial_\theta^2\Phi_{F^\ast_\gamma})$ can be bounded by that of $F^\ast_\gamma$.
Moreover, when establishing the elliptic estimates for $(\alpha^2 D_z^2\Phi_g,\partial_\theta^2\Phi_g)$ in $\mathcal{E}^n$, we need to introduce a new angle weight with the parameter $\lambda>1$. Thus, we derive the Hardy inequality with parameter $\xi>1$. 
For more proof details, the reader can refer to Section \ref{sec:Elli}.

For the transport term $T$, when establishing the energy estimates in $\mathcal{H}^{-1}$, we define the quantity $(1-\eta)^{-1}\|D_{\theta}g w^{\lambda}\|_{L^2}$ in $\mathcal{H}^2_{\eta}$-norm to deal with its linear part.
By selecting $1-\eta$ sufficiently small, we ensure that the $\mathcal{H}^{-1}$ energy estimates remain controlled by the $\mathcal{H}^2_{\eta}$-norm.
When developing energy estimate in $\mathcal{H}^2_{\eta}$, we exploit the null structure present in $T$.
More precisely, we perform the decomposition of $T_g$ as:
\begin{align*}
	T_g =\frac{1}{\mathcal{W}}T_{g\mathcal{W}}-\frac{1}{\mathcal{W}}T_{\mathcal{W}}\cdot g,
\end{align*}
where the notations $\frac{1}{\mathcal{W}}T_{g\mathcal{W}}$ and $\frac{1}{\mathcal{W}}T_{\mathcal{W}}\cdot g$ are defined by
\begin{align*}
	\frac{1}{\mathcal{W}}T_{g\mathcal{W}} 
	\tri \frac{1}{\mathcal{W}}\left(\frac{U(\Phi)}{\sin(2\theta)} D_{\theta}(g\mathcal{W})+V(\Phi)\alpha D_{z}(g\mathcal{W})\right),
	\quad
	\frac{1}{\mathcal{W}}T_{\mathcal{W}}\cdot g 
	\tri \left(\frac{U(\Phi)}{\sin(2\theta)} \frac{D_{\theta}\mathcal{W}}{\mathcal{W}}+\alpha V(\Phi) \frac{D_{z}\mathcal{W}}{\mathcal{W}}\right)g.
\end{align*}
After some direct calculation, we get that
\begin{align}\label{null-1}
	\big\langle T_{g\mathcal{W}},\big(g\mathcal{W}\big)z^{\frac{3}{\alpha}-1}\cos\theta \big\rangle =0.
\end{align}
By selecting a suitable $\mathcal{W}$, the main part of $\frac{1}{\mathcal{W}}T_{\mathcal{W}}$ can be rewritten as: 
\begin{align}\label{null-2}
	\frac{D_{\theta}\mathcal{W}}{\mathcal{W}}+\cos(2\theta)-(\sin\theta)^2
	= (1-2\xi)\cos(2\theta).
\end{align}
This indicates that the homogeneous part or even entire transport term $T$ certainly is the perturbation of $\mathcal{L}^{\beta}_\Gamma$ whenever $\xi$ is approximately equal to $\frac{1}{2}$. \textbf{Accordingly, we refer to the structure \eqref{null-1} and the identity \eqref{null-2} with $\xi=\frac{1}{2}$ as the null structure of transport term $T$.}
This observation is key to establishing the energy estimates at critical regularity.
Thus, our choice of angle weights with the parameters $\eta$ or $\lambda$ allows us to treat additional principal terms as small perturbations relative to the linear part.
Consequently, in $\mathcal{H}^2_\eta$, we can obtain that
\begin{align*}
	T_g=\text{Null~perturbation} + \text{Nonlinear~perturbation}.
\end{align*}
Additionally, when handling $D_z D_\theta T_g$, there is a core difficulty to control the $L^\infty$-norm of $D_z \Big(\frac{U(\tilde{\Phi}_{g})}{\sin(2\theta)}\Big)$, we have to introduce the new norm of $D^2_z g$ with an additional weight, which avoids increasing regularity. That is,
\begin{align*}
	\bigg\|D_z \bigg(\frac{U(\tilde{\Phi}_{g})}{\sin(2\theta)}\bigg)\bigg\|_{L^{\infty}}
	\lesssim_\beta \alpha^{-1}\|g\|_{\mathcal{E}^{2}}.
\end{align*}
Thus, we properly close the nonlinear energy estimates in $\mathcal{H}^{-1}\cap\mathcal{H}^2\cap\mathcal{E}^2$.
We point out that the discovery of the null structure and the introduction of the new space $\mathcal{E}^2$ are key innovations in establishing the energy estimates at critical regularity in our paper.
Further discussion on the transport term $T$ can be found in Section \ref{sec:Est-T}.

We observe that $R_0$ and $R_1$ contain the linear terms, while $R_2$ contains the nonlinear terms.
Through appropriate selection of small $\mu$ and setting $\gamma=\frac{1+\mu}{1-\mu}\beta$, we demonstrate that both $R_0$ and $R_1$ can indeed be regarded as small perturbations.
A complete proof is provided in Section \ref{sec:blow-up}.

\medskip
\noindent\textbf{Step 4: The non-implosion mechanism of the Euler equations.}
\medskip  

From the steps above, we demonstrate that the $L^1([0,t];L^\infty(D_0))$-norm of both $\omega$ and $\frac{u_r}{r}$ exhibits finite-time blow-up as $t$ goes to a finite time $T^\ast$. 
In addition, the time integrability of the velocity is rigorously examined. 
Regarding the choice of weights $w^K$, $w^\eta$, $w^\lambda$ employed above, we only prove that \eqref{est-theo-1} and \eqref{est-theo-2} hold for all $p\in[1,\frac{2}{2-\beta})$.
In the course of the preceding analysis, it is observed that the time integrability of velocity is closely related to the decay at infinity of both the fundamental solution $F^\ast_\gamma$ and the perturbation $g$. 
This observation necessitates the use of new energy estimates with strengthened weights. 
More precisely, the admissible range of the weight parameter, raised to a certain power, falls within an interval related to $\beta$ and $\gamma$.
Accordingly, we can prove \eqref{est-theo-2} with the range $\frac{2}{2-\beta}\leq p<\infty$ and \eqref{est-theo-2'} when $\gamma<\beta$.
The proof, with full details, is presented in Section \ref{sec:non-implosion}.


\medskip
\noindent\textbf{Organization}. 
In Section \ref{sec:notation}, we set up some basic notations, which will be used extensively throughout this paper.
Section \ref{sec:toy model} presents our analysis of the fundamental model, establishing both parameter stability and energy estimates for the fundamental self-similar solutions. 
Section \ref{sec:analysis} is devoted to asymptotic analysis of the self-similar system. 
In Section \ref{sec:Coer}, we establish the coercivity estimate and the parameter stability of the main operator. 
We derive the elliptic estimates of the solutions in Section \ref{sec:Elli}, while we treat the transport term in Section \ref{sec:Est-T}.
Building upon these foundations, we complete the proof of Theorem \ref{Theo1} and show the blow-up phenomenon of the Euler equations in Section \ref{sec:blow-up}. 
In section \ref{sec:non-implosion}, we further study the time integrability of the solutions and complete the proof of Theorem \ref{Theo2}.

\section{Notations}\label{sec:notation}
In this section, we present our notations that will be frequently used in the sequel.
\begin{itemize}
	\item[(1)] We denote the usual Lebesgue $L^p$-norm of $u$ by
	\begin{align*}
		\|u\|_{L^p(\Omega)}\tri\begin{cases}
			& \left(\int_{\Omega} |u|^p\ dx\right)^{\frac{1}{p}},\quad  1\le p<\infty;\\
			& \underset{x\in\Omega}{\operatorname{esssup}}|u(x)|,\quad p=\infty.
		\end{cases}
	\end{align*}
	In addition, we use the $L^p_\xi$ norm to denote the $L^p$ norm with the independent variable $\xi=z~\text{or}~\theta$.
	\item[(2)] We use $\langle \cdot, \cdot \rangle$ to denote the $L^2$ inner product in $\Omega$, $\langle f, g\rangle \tri \int_{\Omega} f(x)g(x)dx$. Moreover, we use $\langle \cdot, \cdot \rangle_\xi$ to denote the $L^2$ inner product with the independent variable $\xi=z~\text{or}~\theta$.
	\item[(3)] We define the radial weights as:
	$$w_z(z)\tri\frac{(1+z^{\frac{1}{\beta}})^2}{z^{\frac{2}{\beta}}},~~~~w^{\ast}_z(z)\tri\frac{1+z^{\frac{1}{\beta}}}{z^{\frac{1}{\beta}+\frac{1}{4}}},$$
	and the angular weights as:
	$$w^K_{\theta}(\theta)\tri (K(\theta))^{\frac{1}{2}(1-\frac{\alpha}{3\beta})},~~~~w^{\eta}_{\theta}(\theta)\tri(\sin(2\theta))^{-\frac{\eta}{2}},~~~~w^{\lambda}_{\theta}(\theta)\tri(\sin(2\theta))^{-\frac{\lambda}{2}}.$$
	Then we take 
	$$w^K(z,\theta)\tri w^K_{\theta}\cdot w_z,~~~~w^{\eta}(z,\theta)\tri w^{\eta}_{\theta}\cdot w_z,~~~~w^{\lambda}(z,\theta)\tri w^{\lambda}_{\theta}\cdot w_z,$$
	and 
	$$w^{\ast,\eta}(z,\theta)\tri w^{\eta}_{\theta}\cdot w^{\ast}_z,~~~~w^{\ast,\lambda}(z,\theta)\tri w^{\lambda}_{\theta}\cdot w^{\ast}_z.$$
	\item[(4)]We introduce the differential operators  $$D_z\tri z\partial_z,~~~~D_\theta\tri\sin(2\theta)\partial_\theta.$$ 
	Define the $\mathcal{H}^{-1}([0,\infty)\times[0,\frac{\pi}{2}])$ norm as:
	\begin{align*}
		\|f\|^2_{\mathcal{H}^{-1}}
        \tri\Big(1-\frac{\beta}{2}\Big)\|fw^K\|^2_{L^2}+\frac{1-\beta}{2c_{\ast,\beta}\beta}\|L^{-1}_{z,K}(f)w_z(\Gamma^{\ast}_{\beta})^\frac{1}{2}\|^2_{L^2_z}
		+\frac{1}{2c_{\ast,\beta}\beta}\|L^{-1}_{z,K}(f)w_z(\Gamma^{\ast}_{\beta}L^{-1}_{z}(\Gamma^{\ast}_{\beta}))^\frac{1}{2}\|^2_{L^2_z}. 
	\end{align*}
	We also define the $\mathcal{H}^k([0,\infty)\times[0,\frac{\pi}{2}])$ norm as:
	\begin{align*}
		\|f\|^2_{\mathcal{H}^k}\tri \sum_{i=0}^{k}\|D^i_z fw^{\eta}\|^2_{L^2}+\sum_{0\leq i+j\leq k,j\geq 1}\|D^i_zD^j_{\theta} fw^{\lambda}\|^2_{L^2},
	\end{align*}
	and the $\mathcal{E}^k([0,\infty)\times[0,\frac{\pi}{2}])$ norm as:
	\begin{align*}
		\|f\|^2_{\mathcal{E}^k}\tri\alpha\left(\|fw^{\lambda}\|^2_{L^2}+\|D^k_z fw^{\lambda}\|^2_{L^2}\right).
	\end{align*}
	Moreover, we take the $\mathcal{H}^{k,\ast}([0,\infty)\times[0,\frac{\pi}{2}])$ norm as:
	\begin{align*}
		\|f\|^2_{\mathcal{H}^{k,\ast}}\tri\sum_{i=0}^{k}\|D^i_z fw^{\ast,\eta}\|^2_{L^2}+\sum_{0\leq i+j\leq k,j\geq 1}\|D^i_zD^j_{\theta} fw^{\ast,\lambda}\|^2_{L^2}.
	\end{align*}
	\item[(5)]Define the integral operators 
	$$L^{-1}_{z}(f)(z)\tri\int_{z}^{\infty}\frac{f(\rho)}{\rho} d\rho,$$
	and
	$$L^{-1}_{z,K}(f)(z)\tri\int_{z}^{\infty}\int_{0}^{\frac{\pi}{2}}f(\rho,\theta)\frac{K(\theta)}{\rho}d\theta d\rho.$$
	Note that $D_z(L^{-1}_{z}(f))=-f(z)$ and $D_z(L^{-1}_{z,K}(f))=-\langle f,K\rangle_{\theta}$.
	\item[(6)] 
	We use the notation $a\lesssim b$ (resp. $a\gtrsim b$) to indicate that there exists a uniform constant $C$, which may be different in each occurrence, such that $a\leq Cb$ (resp. $a\geq Cb$). Moreover, the symbol $a\approx b$ represents $a\lesssim b\lesssim a$.
	We also use the notation $a\leq C_{d}b$ to emphasize that the constant $C_{d}$ depends on some parameter $d$.
	Sometimes we will write $a\lesssim_{d}b$ if we want to emphasize that the implicit constant depends on some parameter $d$.
\end{itemize}

\section{The fundamental model}\label{sec:toy model}
Before investigating the blow-up phenomenon of the Euler equations \eqref{1eq2}, we first consider the fundamental model in this section.
More precisely, we will study the blow-up phenomenon of the fundamental model and the parameter stability of the fundamental self-similar model.
The stability of the parameters plays the crucial role in fixing the scaling index of the solutions of the Euler equations \eqref{1eq2}.
\subsection{Blow-up phenomenon of the fundamental model}
Firstly, we study the following fundamental model as:
\begin{align}\label{3eq1}
	\left\{\begin{array}{l}
		\partial_t \Omega^{\ast}=\frac{3}{2\alpha}L^{-1}_{R,K}(\Omega^{\ast})\Omega^{\ast},\quad t\in\mathbb{R}^+,~~(R,\theta)\in D,\\[1ex]
		\Omega^{\ast}|_{\partial D}=0. 
	\end{array}\right.
\end{align}
where $D\tri[0,\infty)\times[0,\frac{\pi}{2}]$ and $L^{-1}_{R,K}(\Omega^{\ast})(R)\tri\int_{R}^{\infty}\int_{0}^{\frac{\pi}{2}}\Omega^{\ast}(\rho,\theta)\frac{K(\theta)}{\rho}d\theta d\rho$.
\begin{prop}\label{3FM1}
	Assume that $\Gamma(\theta)$ satisfies $\Gamma(0)=\Gamma(\frac{\pi}{2})=0$, $K(\theta)\Gamma(\theta)\in L^1([0,\frac{\pi}{2}])$ and
	\begin{align*}
		c_{\ast}\tri\int_{0}^{\frac{\pi}{2}}K(\theta)\Gamma(\theta)d\theta\neq 0.
	\end{align*}
	Let $\alpha>0$, $\gamma>0$, then the fundamental model \eqref{3eq1} admits a family of self-similar solutions of the form 
	\begin{align}\label{form-omega}
		\Omega^{\ast}(R,\theta,t)=\frac {2\alpha}{3(1-t)}\frac{\Gamma(\theta)}{c_\ast}\Gamma^{\ast}_{\gamma}\left(\frac{R}{(1-t)^{\gamma}}\right),
	\end{align}
	where $\Gamma^{\ast}_{\gamma}(z)\tri\frac{2z^{\frac{1}{\gamma}}}{\gamma(1+z^{\frac{1}{\gamma}})^2}$. 
\end{prop}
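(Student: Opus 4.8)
The plan is to verify directly that the ansatz \eqref{form-omega} solves \eqref{3eq1}, reducing the PDE to an algebraic identity for the radial profile $\Gamma^{\ast}_{\gamma}$. First I would compute $\partial_t\Omega^{\ast}$ for the proposed form. Writing $\Omega^{\ast}(R,\theta,t)=\frac{2\alpha}{3(1-t)}\frac{\Gamma(\theta)}{c_{\ast}}\Gamma^{\ast}_{\gamma}(z)$ with self-similar variable $z=R/(1-t)^{\gamma}$, the chain rule gives two contributions: one from the prefactor $\frac{1}{1-t}$ and one from the time-dependence of $z$, namely $\partial_t z = \gamma z/(1-t)$. Hence $\partial_t\Omega^{\ast}=\frac{2\alpha}{3(1-t)^2}\frac{\Gamma(\theta)}{c_{\ast}}\big(\Gamma^{\ast}_{\gamma}(z)+\gamma D_z\Gamma^{\ast}_{\gamma}(z)\big)$, using the operator $D_z=z\partial_z$ introduced in Section \ref{sec:notation}.

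Next I would compute the right-hand side. The key observation is that $L^{-1}_{R,K}$ acts only in the $R$ (radial) variable and integrates against $K(\theta)$ in $\theta$, so applying it to the separated form factors cleanly: $L^{-1}_{R,K}(\Omega^{\ast})(R)=\frac{2\alpha}{3(1-t)}\frac{1}{c_{\ast}}\int_0^{\pi/2}K(\theta)\Gamma(\theta)\,d\theta\cdot\int_R^{\infty}\Gamma^{\ast}_{\gamma}\!\big(\rho/(1-t)^{\gamma}\big)\frac{d\rho}{\rho}$. The $\theta$-integral is exactly $c_{\ast}$ by hypothesis, cancelling the $1/c_{\ast}$, and the $\rho$-integral, after the substitution $\rho\mapsto\rho/(1-t)^{\gamma}$ (which leaves $d\rho/\rho$ invariant), becomes $L^{-1}_z(\Gamma^{\ast}_{\gamma})(z)=\int_z^{\infty}\Gamma^{\ast}_{\gamma}(\rho)\frac{d\rho}{\rho}$. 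So $L^{-1}_{R,K}(\Omega^{\ast})=\frac{2\alpha}{3(1-t)}L^{-1}_z(\Gamma^{\ast}_{\gamma})(z)$, and multiplying by $\frac{3}{2\alpha}\Omega^{\ast}$ gives RHS $=\frac{2\alpha}{3(1-t)^2}\frac{\Gamma(\theta)}{c_{\ast}}L^{-1}_z(\Gamma^{\ast}_{\gamma})(z)\,\Gamma^{\ast}_{\gamma}(z)$. Matching with $\partial_t\Omega^{\ast}$ and cancelling the common factor $\frac{2\alpha}{3(1-t)^2}\frac{\Gamma(\theta)}{c_{\ast}}$, the PDE reduces to the one-dimensional radial identity
\begin{align*}
\Gamma^{\ast}_{\gamma}(z)+\gamma D_z\Gamma^{\ast}_{\gamma}(z)=L^{-1}_z(\Gamma^{\ast}_{\gamma})(z)\,\Gamma^{\ast}_{\gamma}(z).
\end{align*}

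It then remains to verify this identity for $\Gamma^{\ast}_{\gamma}(z)=\frac{2z^{1/\gamma}}{\gamma(1+z^{1/\gamma})^2}$. I would introduce $s\tri z^{1/\gamma}$, so that $D_z=z\partial_z=\frac{s}{\gamma}\partial_s\cdot\gamma\cdot\frac{1}{\gamma}$... more cleanly, $D_z f = \frac{1}{\gamma}D_s f$ where $D_s=s\partial_s$, since $z\partial_z = \frac{1}{\gamma}s\partial_s$ under $s=z^{1/\gamma}$. With $\Gamma^{\ast}_{\gamma}=\frac{2}{\gamma}\frac{s}{(1+s)^2}$, a direct computation gives $D_s\!\left(\frac{s}{(1+s)^2}\right)=\frac{s(1-s)}{(1+s)^3}$, hence $\Gamma^{\ast}_{\gamma}+\gamma D_z\Gamma^{\ast}_{\gamma}=\frac{2}{\gamma}\big(\frac{s}{(1+s)^2}+\frac{s(1-s)}{(1+s)^3}\big)=\frac{2}{\gamma}\frac{2s}{(1+s)^3}=\frac{4}{\gamma}\frac{s}{(1+s)^3}$. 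For the right side, $L^{-1}_z(\Gamma^{\ast}_{\gamma})(z)=\int_z^{\infty}\Gamma^{\ast}_{\gamma}(\rho)\frac{d\rho}{\rho}$; changing to $\sigma=\rho^{1/\gamma}$ turns $\frac{d\rho}{\rho}$ into $\gamma\frac{d\sigma}{\sigma}$, so this equals $\int_s^{\infty}\frac{2}{\gamma}\frac{\sigma}{(1+\sigma)^2}\cdot\gamma\frac{d\sigma}{\sigma}=2\int_s^{\infty}\frac{d\sigma}{(1+\sigma)^2}=\frac{2}{1+s}$. Therefore $L^{-1}_z(\Gamma^{\ast}_{\gamma})\Gamma^{\ast}_{\gamma}=\frac{2}{1+s}\cdot\frac{2}{\gamma}\frac{s}{(1+s)^2}=\frac{4}{\gamma}\frac{s}{(1+s)^3}$, which matches. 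The boundary condition $\Omega^{\ast}|_{\partial D}=0$ holds because $\Gamma(0)=\Gamma(\pi/2)=0$ and $\Gamma^{\ast}_{\gamma}(0)=0$, while the integrability hypothesis $K\Gamma\in L^1$ guarantees $c_{\ast}$ is well-defined and finite.

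The computation is entirely routine; there is no genuine obstacle. The only points requiring a modicum of care are the change of variables in the nonlocal operator $L^{-1}_{R,K}$ (checking that the self-similar rescaling is compatible with the scale-invariant kernel $d\rho/\rho$, which is what forces the specific exponent $\gamma$ to appear consistently on both sides) and the bookkeeping of the $D_z$ versus $D_s$ operator under $s=z^{1/\gamma}$. I would also remark that this proposition is essentially the statement that separation of variables works for \eqref{3eq1} because the nonlinearity is ``angularly linear'' (the $\theta$-dependence enters only through the scalar $c_{\ast}$), so the angular profile $\Gamma$ is arbitrary subject to the stated conditions and only the radial profile is pinned down.
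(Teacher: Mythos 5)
Your proof is correct. You verify by direct substitution that the separated ansatz solves \eqref{3eq1}, reducing everything to the radial identity $\Gamma^{\ast}_{\gamma}+\gamma D_z\Gamma^{\ast}_{\gamma}=L^{-1}_z(\Gamma^{\ast}_{\gamma})\,\Gamma^{\ast}_{\gamma}$ and then checking it via the substitution $s=z^{1/\gamma}$; all the calculus is right, including the scale-invariance of $d\rho/\rho$ under the self-similar rescaling and the change of operator $D_z=\frac{1}{\gamma}D_s$.

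Your route differs from the paper's in emphasis. The paper does not simply verify the given profile: after arriving at the same radial identity \eqref{3f2}, it converts it (via $D_z L^{-1}_z(\Gamma^{\ast}_{\gamma})=-\Gamma^{\ast}_{\gamma}$ and an integration by parts) into a first-order autonomous ODE for the auxiliary function $L^{-1}_z(\Gamma^{\ast}_{\gamma})$, namely $\gamma z\partial_z L^{-1}_z(\Gamma^{\ast}_{\gamma})=\big(\tfrac12 L^{-1}_z(\Gamma^{\ast}_{\gamma})-1\big)L^{-1}_z(\Gamma^{\ast}_{\gamma})$, and then carries out a case analysis on the range of $L^{-1}_z(\Gamma^{\ast}_{\gamma})$ to show that the only nontrivial admissible family is the one-parameter scaling family $L^{-1}_z(\Gamma^{\ast}_{\gamma})=2/(1+cz^{1/\gamma})$, from which it sets $c=1$. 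So the paper actually \emph{derives} the profile and establishes that it is essentially the unique nontrivial solution among separated-variable ansätze decaying at infinity, whereas your verification-only argument is shorter and more elementary but establishes only existence, which is all the proposition literally asserts. Your closing remark that the nonlinearity is ``angularly linear'' and pins down only the radial profile captures exactly the structural reason the paper's derivation decouples into a one-dimensional ODE.
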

\begin{proof}
	Taking $\overline{\Omega}^\ast\tri\frac{3}{2\alpha}\Omega^\ast$, we deduce from \eqref{3eq1} that
	\begin{align}\label{3f1}
		\partial_t \overline{\Omega}^{\ast}=L^{-1}_{R,K}(\overline{\Omega}^{\ast})\overline{\Omega}^{\ast}.
	\end{align}
	By virtue of the separated variable method, we consider $\overline{\Omega}^{\ast}$ as the following form:
	\begin{align}\label{form-bar-omega}
		\overline{\Omega}^{\ast}(t,R,\theta)=\frac {1}{1-t}\frac{\Gamma(\theta)}{c_\ast}\Gamma^{\ast}_{\gamma}(z),
	\end{align}
	where $z\tri\frac{R}{(1-t)^{\gamma}}$.
	For any $\Gamma(\theta)$, which satisfies $\Gamma(0)=\Gamma(\frac{\pi}{2})=0$ and $K(\theta)\Gamma(\theta)\in L^1([0,\frac{\pi}{2}])$, we derive from \eqref{3f1} that $\Gamma^{\ast}_{\gamma}(z)$ satisfies the following equation:
	\begin{align}\label{3f2}
		\Gamma^{\ast}_{\gamma}+\gamma D_z\Gamma^{\ast}_{\gamma}=L^{-1}_{z}(\Gamma^{\ast}_{\gamma})\Gamma^{\ast}_{\gamma},
	\end{align}
	where $L^{-1}_z(\Gamma^{\ast}_{\gamma})(z)$ is defined by
	\begin{align}\label{3f2'}
		L^{-1}_z(\Gamma^{\ast}_{\gamma})(z)\tri\int_z^{\infty}\frac{\Gamma^{\ast}_{\gamma}(\rho)}{\rho}d\rho.
	\end{align}
	We will proceed to solve the ordinary differential equation \eqref{3f2} with \eqref{3f2'}.
	
	By virtue of \eqref{3f2'}, it is easy to check that
	\begin{align}\label{est-DL1}
		D_zL^{-1}_{z}(\Gamma^{\ast}_{\gamma})(z)=-\Gamma^{\ast}_{\gamma}(z).
	\end{align}
	The combination of \eqref{3f2} and \eqref{est-DL1} leads us to get that
	\begin{align}\label{3f3}
		D_zL^{-1}_{z}(\Gamma^{\ast}_{\gamma})-\gamma D_z\Gamma^{\ast}_{\gamma}=-L^{-1}_{z}(\Gamma^{\ast}_{\gamma})\Gamma^{\ast}_{\gamma}.
	\end{align}
	Without loss of generality, we consider $\lim\limits_{z\rightarrow +\infty}\Gamma^{\ast}_{\gamma}(z)=0$.
	It then follows from \eqref{3f3} that
	\begin{align}\label{3f4}
		L^{-1}_{z}(\Gamma^{\ast}_{\gamma})-\gamma \Gamma^{\ast}_{\gamma}
		&=\int_z^{\infty}L^{-1}_{\rho}(\Gamma^{\ast}_{\gamma})\frac{\Gamma^{\ast}_{\gamma}}{\rho}d\rho =\int_z^{\infty}L^{-1}_{\rho}(\Gamma^{\ast}_{\gamma})(-\partial_{\rho}L^{-1}_{\rho}(\Gamma^{\ast}_{\gamma}))d\rho =\frac{1}{2}(L^{-1}_{z}(\Gamma^{\ast}_{\gamma}))^2. 
	\end{align}
	Again thanks to \eqref{est-DL1}, one can deduce from \eqref{3f4} that
	\begin{align}\label{3f4'}
		\gamma z\partial_z L^{-1}_{z}(\Gamma^{\ast}_{\gamma})=\left(\frac12L^{-1}_{z}(\Gamma^{\ast}_{\gamma})-1\right)L^{-1}_{z}(\Gamma^{\ast}_{\gamma}).
	\end{align}
	
	One can see that \eqref{3f4'} is an ordinary differential equation of $L^{-1}_{z}(\Gamma^{\ast}_{\gamma})$.
	It is easy to check that $L^{-1}_{z}(\Gamma^{\ast}_{\gamma})\equiv0$ and $L^{-1}_{z}(\Gamma^{\ast}_{\gamma})\equiv2$ are two trivial solutions of \eqref{3f4'}.
	Then $\Gamma^{\ast}_{\gamma}(z)=-D_zL^{-1}_{z}(\Gamma^{\ast}_{\gamma})\equiv0$ for all $z\in(0,+\infty)$.
	We find that $L^{-1}_{z}(\Gamma^{\ast}_{\gamma})\equiv2$ and $\Gamma^{\ast}_{\gamma}(z)\equiv0$ do not satisfy \eqref{3f2'}.
	Thus, only $L^{-1}_{z}(\Gamma^{\ast}_{\gamma})=\Gamma^{\ast}_{\gamma}(z)\equiv0$ is the trivial solution for the ordinary equation \eqref{3f2} with \eqref{3f2'}.
	But we do not consider the trivial solution here.
	
	Now, we aim to find the non-trivial solution for the ordinary equation \eqref{3f2} with \eqref{3f2'}.
	After some direct calculations, \eqref{3f4'} can be rewritten as follows:
	\begin{align}\label{ode-f}
		\left(\frac{1}{L^{-1}_{z}(\Gamma^{\ast}_{\gamma})-2}-\frac{1}{L^{-1}_{z}(\Gamma^{\ast}_{\gamma})}\right)\partial_zL^{-1}_{z}(\Gamma^{\ast}_{\gamma})=\frac{1}{\gamma z}.
	\end{align}	
	After detailed calculations, we arrive at
	\begin{align}
		&L^{-1}_{z}(\Gamma^{\ast}_{\gamma})(z)=\frac{2}{1+z^{\frac{1}{\gamma}}},
		\quad
		\Gamma^{\ast}_{\gamma}(z)=-D_z L^{-1}_{z}(\Gamma^{\ast}_{\gamma})=\frac{2 z^{\frac{1}{\gamma}}}{\gamma(1+z^{\frac{1}{\gamma}})^2}.
		\label{3f7}
	\end{align}
	Substituting \eqref{3f7} into \eqref{form-bar-omega}, we can obtain \eqref{form-omega} directly.
	We thus complete the proof of this proposition.
\end{proof}

\subsection{Parameter stability and energy estimate of the fundamental self-similar solutions}
In this subsection, we consider the fundamental self-similar model as follows:
\begin{align}\label{3eq2}
	\left\{\begin{array}{l}
		F^{\ast}_\gamma+\gamma D_zF^{\ast}_\gamma=\frac{3}{2\alpha}L^{-1}_{z,K}(F^{\ast}_\gamma)F^{\ast}_\gamma,
		\quad t\in\mathbb{R}^+,~~(z,\theta)\in D,
		\\[1ex]
		F^{\ast}_\gamma|_{\partial D}=0,
	\end{array}\right.
\end{align}
where $D\tri[0,\infty)\times[0,\frac{\pi}{2}]$ and $L^{-1}_{z,K}(F^{\ast}_\gamma)(z)\tri \int_{z}^{\infty}\int_{0}^{\frac{\pi}{2}}F^{\ast}_\gamma(\rho,\theta)\frac{K(\theta)}{\rho}d\theta d\rho$. 

Firstly, we study the parameter stability of the fundamental self-similar solutions for equation \eqref{3eq2}.
\begin{prop}\label{3FM2}
	Let $0<\alpha \leq \gamma$, then the fundamental self-similar model \eqref{3eq2} admits the solutions of the form 
	\begin{align}
		F^{\ast}_{\gamma}(z,\theta)=\frac {2\alpha}{3}\frac{\Gamma_{\theta,\gamma}(\theta)}{c_{\ast,\gamma}}\Gamma^{\ast}_{\gamma}(z),\label{form_F1}
	\end{align}
	where $c_{\ast,\gamma}$, $\Gamma_{\theta,\gamma}(\theta)$ and $\Gamma^{\ast}_{\gamma}(z)$ are defined by
	\begin{align}
		c_{\ast,\gamma}\tri \int_{0}^{\frac{\pi}{2}}K(\theta)\Gamma_{\theta,\gamma}(\theta)d\theta,\quad
		\Gamma_{\theta,\gamma}(\theta)\tri (K(\theta))^{\frac{\alpha}{3\gamma}},\quad 
		\Gamma^{\ast}_{\gamma}(z)\tri\frac{2z^{\frac{1}{\gamma}}}{\gamma(1+z^{\frac{1}{\gamma}})^2}.\label{def_Gama+c*}
	\end{align}
	Moreover, for 
	$\gamma_1,~\gamma_2\in(0,2)$ and $\delta_0\in(0,1)$, if $|\frac{1}{\gamma_1}-\frac{1}{\gamma_2}|\leq \min\{\frac{\ln(1-\delta_0)}{2\ln(\delta_0)},\delta_0\}$, then there holds
	\begin{align}
		\label{continu-F*}
		&\frac{3}{2\alpha}\|F^{\ast}_{\gamma_1}-F^{\ast}_{\gamma_2}\|_{L^\infty}\leq C\frac{\delta_0}{\gamma_1}.
	\end{align}
\end{prop}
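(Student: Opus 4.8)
\noindent\emph{Part 1: the explicit profile.} The plan is to feed the separated ansatz $F^{\ast}_\gamma(z,\theta)=c\,\Gamma_{\theta,\gamma}(\theta)\,\Gamma^{\ast}_\gamma(z)$ into \eqref{3eq2} and reduce it to the scalar ODE \eqref{3f2} already solved inside the proof of Proposition \ref{3FM1}. Because $L^{-1}_{z,K}$ integrates separately in $\rho$ and $\theta$, one has $L^{-1}_{z,K}(F^{\ast}_\gamma)=c\,c_{\ast,\gamma}\,L^{-1}_{z}(\Gamma^{\ast}_\gamma)$ with $c_{\ast,\gamma}=\int_0^{\pi/2}K\,\Gamma_{\theta,\gamma}\,d\theta$, so the equation collapses to $\Gamma^{\ast}_\gamma+\gamma D_z\Gamma^{\ast}_\gamma=\tfrac{3c\,c_{\ast,\gamma}}{2\alpha}L^{-1}_z(\Gamma^{\ast}_\gamma)\Gamma^{\ast}_\gamma$, which is exactly \eqref{3f2} once $c=\tfrac{2\alpha}{3c_{\ast,\gamma}}$; hence $\Gamma^{\ast}_\gamma(z)=\tfrac{2z^{1/\gamma}}{\gamma(1+z^{1/\gamma})^2}$ by Proposition \ref{3FM1}. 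It then remains to check that $\Gamma_{\theta,\gamma}=K^{\alpha/(3\gamma)}$ meets the hypotheses of Proposition \ref{3FM1}: it vanishes at $\theta=0,\tfrac\pi2$ since $K$ does, it is continuous on $[0,\tfrac\pi2]$ so $K\Gamma_{\theta,\gamma}\in L^1$, and $c_{\ast,\gamma}=\int_0^{\pi/2}K^{1+\alpha/(3\gamma)}\,d\theta>0$. Here the hypothesis $0<\alpha\le\gamma$ enters, via $\alpha/(3\gamma)\le\tfrac13$: it gives $K^{1+\alpha/(3\gamma)}\ge K^{4/3}$ and hence the uniform lower bound $c_{\ast,\gamma}\ge c_0:=\int_0^{\pi/2}K^{4/3}\,d\theta>0$, which Part 2 will use. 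Substituting back recovers \eqref{form_F1}--\eqref{def_Gama+c*}.

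\noindent\emph{Part 2: setup and the radial difference.} Write $\tfrac{3}{2\alpha}F^{\ast}_\gamma=A_\gamma(\theta)B_\gamma(z)$ with $A_\gamma=\Gamma_{\theta,\gamma}/c_{\ast,\gamma}$ and $B_\gamma=\Gamma^{\ast}_\gamma$, and estimate the two pieces of $\tfrac{3}{2\alpha}(F^{\ast}_{\gamma_1}-F^{\ast}_{\gamma_2})=A_{\gamma_1}(B_{\gamma_1}-B_{\gamma_2})+(A_{\gamma_1}-A_{\gamma_2})B_{\gamma_2}$ in $L^\infty$. Set $a_i=1/\gamma_i\in(\tfrac12,\infty)$ and $\epsilon=|a_1-a_2|\le\delta_0$; note $a_2\le a_1+\delta_0<3a_1$, that $\|B_\gamma\|_{L^\infty}=\tfrac1{2\gamma}$, and that $0\le K^{\alpha/(3\gamma)}\le1$ with $c_{\ast,\gamma}\ge c_0$ give $\|A_\gamma\|_{L^\infty}\le c_0^{-1}$. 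For the radial factor I peel off the prefactor: with $h(u)=\tfrac u{(1+u)^2}$ (so $0\le h\le\tfrac14$ and $|h'(u)|\le(1+u)^{-2}$) and $B_\gamma(z)=\tfrac2\gamma h(z^{1/\gamma})$, we have $B_{\gamma_1}-B_{\gamma_2}=\tfrac2{\gamma_1}\bigl(h(z^{a_1})-h(z^{a_2})\bigr)+2(a_1-a_2)h(z^{a_2})$, the last term being $\le\tfrac{\delta_0}2$. For the first term, if $\delta_0\ge\tfrac12$ the crude bound $|h(z^{a_1})-h(z^{a_2})|\le\tfrac14$ already yields $\tfrac2{\gamma_1}\cdot\tfrac14\le\tfrac{\delta_0}{\gamma_1}$; if $\delta_0<\tfrac12$ I split $z$ at the band $[\delta_0^2,\delta_0^{-2}]$. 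On the band $\epsilon|\ln z|\le 2\epsilon\ln\tfrac1{\delta_0}\le\ln\tfrac1{1-\delta_0}$ by the hypothesis $\epsilon\le\tfrac{\ln(1-\delta_0)}{2\ln\delta_0}$, so $z^{\epsilon}\in[1-\delta_0,(1-\delta_0)^{-1}]$; using $|h(u)-h(v)|\le\tfrac{|u-v|}{(1+u)(1+v)}$ and $|z^{a_1}-z^{a_2}|=z^{\min(a_1,a_2)}|z^{\epsilon}-1|$ one gets $|h(z^{a_1})-h(z^{a_2})|\le|z^{\epsilon}-1|\le\tfrac{\delta_0}{1-\delta_0}\le2\delta_0$. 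Off the band $z^{a_i}\notin[\delta_0,\delta_0^{-1}]$ because $a_i>\tfrac12$, hence $h(z^{a_i})\le\min(z^{a_i},z^{-a_i})<\delta_0$ and the difference is $<2\delta_0$. Thus $\|B_{\gamma_1}-B_{\gamma_2}\|_{L^\infty}\lesssim\tfrac{\delta_0}{\gamma_1}$, so $\|A_{\gamma_1}(B_{\gamma_1}-B_{\gamma_2})\|_{L^\infty}\lesssim\tfrac{\delta_0}{\gamma_1}$.

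\noindent\emph{Part 2: the angular difference.} Put $p_i=\tfrac{\alpha}{3\gamma_i}\in(0,\tfrac13]$, so $|p_1-p_2|=\tfrac\alpha3\epsilon$, and split $A_{\gamma_1}-A_{\gamma_2}=\tfrac{K^{p_1}-K^{p_2}}{c_{\ast,\gamma_2}}+K^{p_1}\,\tfrac{c_{\ast,\gamma_2}-c_{\ast,\gamma_1}}{c_{\ast,\gamma_1}c_{\ast,\gamma_2}}$. The mean value theorem in the exponent gives $|K^{p_1}-K^{p_2}|\le|p_1-p_2|\,|\ln K|\,K^{\min(p_1,p_2)}$ for $K\in(0,1]$, and since $\sup_{0<K\le1}|\ln K|\,K^{c}=(ec)^{-1}$ this yields $\|K^{p_1}-K^{p_2}\|_{L^\infty}\le\tfrac{|p_1-p_2|}{e\min(p_1,p_2)}=\tfrac{\epsilon}{e\min(a_1,a_2)}\le\tfrac{2\delta_0}{e}$, whence $|c_{\ast,\gamma_1}-c_{\ast,\gamma_2}|\le\int_0^{\pi/2}K\,|K^{p_1}-K^{p_2}|\,d\theta\lesssim\delta_0$. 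Combined with $c_{\ast,\gamma_i}\ge c_0$ and $0\le K^{p_1}\le1$ these give $\|A_{\gamma_1}-A_{\gamma_2}\|_{L^\infty}\lesssim\delta_0$, and since $\|B_{\gamma_2}\|_{L^\infty}=\tfrac1{2\gamma_2}<\tfrac3{2\gamma_1}$, also $\|(A_{\gamma_1}-A_{\gamma_2})B_{\gamma_2}\|_{L^\infty}\lesssim\tfrac{\delta_0}{\gamma_1}$. Adding the two contributions proves \eqref{continu-F*}.

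\noindent\emph{Main obstacle.} Apart from the parameter-differencing of $z^{1/\gamma}$ and $K^{\alpha/(3\gamma)}$ everything is mechanical; the difficulty is that differentiating these in $\gamma$ produces the unbounded factors $\ln z$ and $\ln K$, so a plain Lipschitz-in-$\gamma$ estimate is false. The dyadic band $[\delta_0^2,\delta_0^{-2}]$ is calibrated precisely to the logarithmic smallness $|\tfrac1{\gamma_1}-\tfrac1{\gamma_2}|\le\tfrac{\ln(1-\delta_0)}{2\ln\delta_0}$ assumed in the statement --- it keeps $z^{\epsilon}$ pinned near $1$ on the band while $h$ itself is $O(\delta_0)$ off it --- and the angular analogue is the elementary extremal identity $\sup_K K^p|\ln K|=(ep)^{-1}$, whose $p$ cancels the $p$ hidden in $|p_1-p_2|=\tfrac{\alpha}{3}|a_1-a_2|$. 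I expect this logarithmic balancing to be the only genuinely non-routine ingredient.
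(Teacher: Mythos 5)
Your proposal is correct. Both Part~1 (the separable ansatz reducing \eqref{3eq2} to the scalar ODE from Proposition~\ref{3FM1}) and Part~2 (the continuity estimate) reach the same conclusion as the paper, and the overarching strategy is shared: the crux is the logarithmic band $[\delta_0^2,\delta_0^{-2}]$ calibrated to the hypothesis $|\tfrac1{\gamma_1}-\tfrac1{\gamma_2}|\le\tfrac{\ln(1-\delta_0)}{2\ln\delta_0}$, which pins $z^\epsilon$ near $1$ on the band while the profile itself is $O(\delta_0)$ off it. The paper's own argument establishes exactly the pointwise claim $|z^{1/\gamma_1}-z^{1/\gamma_2}|\le\delta_0$ on $[0,1]$ by splitting at $\delta_0^2$ and extending to $z\ge1$ by the reciprocity $z\mapsto 1/z$, which is your symmetric band in disguise.

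The two proofs diverge, though, in how they propagate that claim to $\Gamma^*_\gamma$ and to $\Gamma_{\theta,\gamma}$. The paper routes the radial difference through the ODE-derived quadratic identity $\Gamma^*_\gamma=\tfrac1\gamma\bigl(L^{-1}_z(\Gamma^*_\gamma)-\tfrac12(L^{-1}_z(\Gamma^*_\gamma))^2\bigr)$ (equations \eqref{3f4}, \eqref{3f7}, \eqref{3f10}), so it only needs the Lipschitz character of $u\mapsto u$ and $u\mapsto u^2$ applied to $L^{-1}_z(\Gamma^*_\gamma)=\tfrac2{1+z^{1/\gamma}}$, whereas you work directly with $h(u)=u/(1+u)^2$ and the algebraic identity $h(u)-h(v)=\tfrac{(u-v)(1-uv)}{(1+u)^2(1+v)^2}$, which gives the sharper bound $|h(u)-h(v)|\le\tfrac{|u-v|}{(1+u)(1+v)}$ without invoking the ODE structure. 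For the angular factor the paper simply reuses the same $[0,1]$-claim applied to $z=K(\theta)^{\alpha/3}\in[0,1]$ (see \eqref{3f11}), while you use the mean value theorem in the exponent together with the extremal identity $\sup_{0<K\le1}K^c|\ln K|=(ec)^{-1}$, which is where the factor $\min(a_1,a_2)^{-1}\le2$ and hence the hypothesis $\gamma_i<2$ reappears. Your route is a bit more self-contained and elementary (no appeal to the fixed-point identity), and you are also more explicit about the uniform lower bound $c_{\ast,\gamma}\ge c_0=\int_0^{\pi/2}K^{4/3}\,d\theta>0$, which the paper uses implicitly; the paper's route has the merit of reducing all three differences (radial profile, angular profile, $c_{\ast,\gamma}$) to a single scalar estimate. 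Both are valid and of comparable length.
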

\begin{proof}
	By virtue of Proposition \ref{3FM1}, we conclude that \eqref{form_F1} holds immediately.
	Next, we intend to prove \eqref{continu-F*}.
	If $|\frac{1}{\gamma_1}-\frac{1}{\gamma_2}|\leq \frac{\ln(1-\delta_0)}{2\ln(\delta_0)}$, for any $z\in[0,1]$, we claim that
	\begin{align}\label{3f8}
		|z^{\frac{1}{\gamma_1}}-z^{\frac{1}{\gamma_2}}|\leq \delta_0.
	\end{align}
	Without loss of generality,
	we assume that $\gamma_1\leq \gamma_2$, and the proof of the other case with $\gamma_1>\gamma_2$ is similar.
	For any $z\in[0,1]$, we have
	\begin{align*}
		|z^{\frac{1}{\gamma_1}}-z^{\frac{1}{\gamma_2}}|\leq z^{\frac{1}{\gamma_2}}(1-z^{\frac{1}{\gamma_1}-\frac{1}{\gamma_2}}).
	\end{align*}
	If $z\leq \delta^2_0$, then we get
	\begin{align}\label{est-1}
		|z^{\frac{1}{\gamma_1}}-z^{\frac{1}{\gamma_2}}|\leq z^{\frac{1}{\gamma_2}}\leq \delta_0.
	\end{align}
	Note that $1-z^{\frac{1}{\gamma_1}-\frac{1}{\gamma_2}}$ is a monotonically decreasing function. If $z\geq \delta^2_0$, then we obtain that
	\begin{align}\label{est-2}
		|z^{\frac{1}{\gamma_1}}-z^{\frac{1}{\gamma_2}}|\leq 1-z^{\frac{1}{\gamma_1}-\frac{1}{\gamma_2}}\leq 1-(\delta_0)^{2(\frac{1}{\gamma_1}-\frac{1}{\gamma_2})}\leq 1-(\delta_0)^{\frac{\ln(1-\delta_0)}{\ln(\delta_0)}}= \delta_0.
	\end{align}
	The combination of \eqref{est-1} and \eqref{est-2} shows that claim \eqref{3f8} is valid.

	With the help of \eqref{3f7} and \eqref{3f8}, one deduce that for any $z\in[0,1]$,
	\begin{align*}
		|L^{-1}_z(\Gamma^\ast_{\gamma_1})-L^{-1}_z(\Gamma^\ast_{\gamma_2})|=\frac{2}{(1+z^{\frac{1}{\gamma_1}})(1+z^{\frac{1}{\gamma_2}})}|z^{\frac{1}{\gamma_1}}-z^{\frac{1}{\gamma_2}}|\leq 2\delta_0.
	\end{align*}
	For another range that $z\geq 1$, we get that
	\begin{align*}
		|L^{-1}_z(\Gamma^\ast_{\gamma_1})-L^{-1}_z(\Gamma^\ast_{\gamma_2})|\leq\frac{2}{z^{\frac{1}{\gamma_1}}z^{\frac{1}{\gamma_2}}}|z^{\frac{1}{\gamma_1}}-z^{\frac{1}{\gamma_2}}|\leq 2|(z^{-1})^{\frac{1}{\gamma_1}}-(z^{-1})^{\frac{1}{\gamma_2}}|\leq 2\delta_0.
	\end{align*}
	Then we conclude that for any $z\geq 0$, there holds
	\begin{align}\label{3f9}
		|L^{-1}_z(\Gamma^\ast_{\gamma_1})-L^{-1}_z(\Gamma^\ast_{\gamma_2})|\leq 2\delta_0.
	\end{align}
	According to \eqref{3f4}, \eqref{3f7} and \eqref{3f9}, we find that for any $|\frac{1}{\gamma_1}-\frac{1}{\gamma_2}|\leq \delta_0$,
	\begin{align}\label{3f10}
		|\Gamma^\ast_{\gamma_1}-\Gamma^\ast_{\gamma_2}|
		\lesssim& \frac{1}{\gamma_1}\delta_0,
	\end{align}
	owing to $0<\gamma_1,\gamma_2<2$.
	Moreover, we infer from \eqref{3f8} and $(K(\theta))^{\frac{\alpha}{3}}\in[0,1]$ that 
	\begin{align}\label{3f11}
		|c_{\ast,\gamma_1}-c_{\ast,\gamma_2}|\lesssim \sup_{\theta\in[0,\frac{\pi}{2}]}|\Gamma_{\theta,\gamma_1}-\Gamma_{\theta,\gamma_2}|\lesssim \sup_{\theta\in[0,\frac{\pi}{2}]}|(K(\theta))^{\frac{\alpha}{3\gamma_1}}-(K(\theta))^{\frac{\alpha}{3\gamma_2}}|\lesssim \delta_0.
	\end{align}
	In view of \eqref{3f10} and \eqref{3f11}, we conclude that
	\begin{align*}
		\frac{3}{2\alpha}\|F^{\ast}_{\gamma_1}-F^{\ast}_{\gamma_2}\|_{L^\infty}
		&\lesssim\frac{\delta_0}{\gamma_1}.
	\end{align*}
	Therefore, we finish the proof of this proposition.
\end{proof}

In the last of this subsection, we intend to study the $\mathcal{H}^{k,\ast}$ norm of the fundamental self-similar solutions $F^\ast_{\gamma}$ and give the following lemma.
\begin{lemm}\label{7fm1}
	Let $\beta \in (0,1]$ and $\gamma = \frac{1+\mu}{1-\mu}\beta$.
	There exist constants $\alpha>0$ sufficiently small and $\eta(\beta)$, such that if $\alpha\ll 1-\eta\ll \beta$ and $|\mu|\leq \alpha^\frac{1}{2}$, then there holds 
	\begin{align*}
		\| F^\ast_{\gamma} \|_{\mathcal{H}^{k,\ast}}\leq C_{\beta}\alpha (1-\eta)^{-\frac{1}{2}},
	\end{align*}
	where integer $k$ satisfies $0\leq k\leq 4$.
\end{lemm}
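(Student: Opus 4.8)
\textbf{Proof strategy for Lemma \ref{7fm1}.}
The plan is to exploit the explicit product structure $F^\ast_\gamma(z,\theta)=\frac{2\alpha}{3c_{\ast,\gamma}}\Gamma_{\theta,\gamma}(\theta)\Gamma^\ast_\gamma(z)$ from Proposition \ref{3FM2}, so that every derivative appearing in the $\mathcal{H}^{k,\ast}$-norm splits into a purely radial factor times a purely angular factor. Since $D_z$ acts only on $\Gamma^\ast_\gamma(z)=\frac{2z^{1/\gamma}}{\gamma(1+z^{1/\gamma})^2}$ and $D_\theta$ acts only on $\Gamma_{\theta,\gamma}(\theta)=(K(\theta))^{\alpha/(3\gamma)}$, each term of the form $\|D_z^iD_\theta^j F^\ast_\gamma\, w^{\ast,\lambda}\|_{L^2}$ (and the analogous $w^{\ast,\eta}$ terms with $j=0$) factors as $\tfrac{2\alpha}{3c_{\ast,\gamma}}\|D_z^i\Gamma^\ast_\gamma\, w^\ast_z\|_{L^2_z}\cdot\|D_\theta^j\Gamma_{\theta,\gamma}\, w^\cdot_\theta\|_{L^2_\theta}$. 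So it suffices to bound the two one-dimensional factors separately and to control $c_{\ast,\gamma}$ from below. The overall prefactor $\alpha$ is what produces the claimed $C_\beta\alpha$, and the $(1-\eta)^{-1/2}$ will come entirely from one borderline angular integral.

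First I would record that $D_z^i\Gamma^\ast_\gamma$ is, up to $\gamma$-dependent constants, a rational combination of $z^{1/\gamma}$ with denominator $(1+z^{1/\gamma})^{2}$ and numerator a polynomial in $z^{1/\gamma}$ of matching degree; thus $|D_z^i\Gamma^\ast_\gamma(z)|\lesssim_\gamma \frac{z^{1/\gamma}}{(1+z^{1/\gamma})^2}$ for $0\le i\le 4$. With $w^\ast_z=\frac{1+z^{1/\beta}}{z^{1/\beta+1/4}}$ and the hypothesis $|\beta/\gamma-1|\ll1$ (which holds since $\gamma=\frac{1+\mu}{1-\mu}\beta$ and $|\mu|\le\alpha^{1/2}$), the product $D_z^i\Gamma^\ast_\gamma\cdot w^\ast_z$ behaves like $z^{1/\gamma-1/\beta-1/4}(1+z^{1/\beta})/(1+z^{1/\gamma})^2$; since the two exponents $1/\gamma$ and $1/\beta$ are within $O(\alpha^{1/2})$ of each other, this is square-integrable in $\tfrac{dz}{z}$-measure near $0$ (exponent close to $-1/4>-1/2$) and near $\infty$ (exponent close to $-1/\beta-1/4<0$), with an $L^2_z$-bound that is bounded uniformly for $\beta\in(0,1]$, $\alpha$ small. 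Importantly, $\Gamma^\ast_\gamma$ has no hidden large constant: the $L^2_z$ factor is $\le C_\beta$ with no $\alpha^{-1}$, so the prefactor $\alpha$ survives intact.

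Next I would treat the angular factor. For $j=0$ we need $\|\Gamma_{\theta,\gamma}\,w^\eta_\theta\|_{L^2_\theta}$ with $w^\eta_\theta=(\sin 2\theta)^{-\eta/2}$ and $\Gamma_{\theta,\gamma}=(K(\theta))^{\alpha/(3\gamma)}=((\sin\theta)(\cos\theta)^2)^{\alpha/(3\gamma)}$, which is bounded near both endpoints and integrable against $(\sin 2\theta)^{-\eta}$ for $\eta<1$; this integral is finite for every fixed $\eta<1$ but \emph{diverges} as $\eta\to1^-$ like $(1-\eta)^{-1}$, which is exactly the source of the $(1-\eta)^{-1/2}$ in the statement. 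For $j\ge1$, each $D_\theta=\sin2\theta\,\partial_\theta$ hitting $(K(\theta))^{\alpha/(3\gamma)}$ pulls down a factor $\frac{\alpha}{3\gamma}\cdot\frac{D_\theta K}{K}=\frac{\alpha}{3\gamma}(\cos2\theta-\sin^2\theta)\cdot(\text{bounded})$, so $|D_\theta^j\Gamma_{\theta,\gamma}|\lesssim_{\beta}\alpha^{\,j}$ for $j\ge1$ after using $\alpha\ll\beta\lesssim\gamma$; integrating against the weaker weight $w^\lambda_\theta=(\sin2\theta)^{-\lambda/2}$ with $\lambda=1+\frac{\alpha}{10\beta}$ gives $\|D_z^iD_\theta^j\Gamma_{\theta,\gamma}w^\lambda_\theta\|_{L^2_\theta}\lesssim_\beta \alpha^{j}(1-\lambda)^{-1/2}$, and since $\lambda-1=\frac{\alpha}{10\beta}$ this is $\lesssim_\beta\alpha^{j-1/2}\beta^{1/2}$, which is even smaller than the $j=0$ contribution; in particular the $j\ge1$ terms contribute at most $C_\beta\alpha^{1/2}\cdot\alpha$, absorbed into $C_\beta\alpha(1-\eta)^{-1/2}$ since $1-\eta\ll1$. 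Finally, $c_{\ast,\gamma}=\int_0^{\pi/2}K(\theta)(K(\theta))^{\alpha/(3\gamma)}d\theta\to\int_0^{\pi/2}K(\theta)d\theta=\frac{1}{4}>0$ as $\alpha\to0$ uniformly in $\gamma\gtrsim\beta$, so $c_{\ast,\gamma}^{-1}\le C_\beta$. Collecting the pieces gives $\|F^\ast_\gamma\|_{\mathcal{H}^{k,\ast}}\le \frac{2\alpha}{3c_{\ast,\gamma}}\big(\sum_i\|D_z^i\Gamma^\ast_\gamma w^\ast_z\|_{L^2_z}\|\Gamma_{\theta,\gamma}w^\eta_\theta\|_{L^2_\theta}+\sum_{i,j\ge1}\|D_z^i\Gamma^\ast_\gamma w^\ast_z\|_{L^2_z}\|D_\theta^j\Gamma_{\theta,\gamma}w^\lambda_\theta\|_{L^2_\theta}\big)\le C_\beta\alpha(1-\eta)^{-1/2}$.

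\textbf{Main obstacle.} The routine parts (differentiating the explicit profiles, endpoint integrability) are mechanical; the delicate point is tracking the \emph{two competing small/large parameters} simultaneously — the overall $\alpha$ from the amplitude of $F^\ast_\gamma$ must not be degraded, while the weight exponent $\eta$ is allowed to approach $1$, producing the $(1-\eta)^{-1/2}$ blow-up which must be isolated to the single $j=0$ angular integral and \emph{not} contaminate (via, say, the normalization $c_{\ast,\gamma}^{-1}$ or the radial integrals) into a worse power. One must also check carefully that the hypothesis chain $\alpha\ll1-\eta\ll\beta$ together with $|\mu|\le\alpha^{1/2}$ genuinely forces $|1/\gamma-1/\beta|$ small enough that all the radial integrals converge with $\beta$-uniform (not $\eta$-dependent) constants; this is where the precise relationship $\gamma=\frac{1+\mu}{1-\mu}\beta$ is used, and it is the only place the hypothesis $|\mu|\le\alpha^{1/2}$ enters.
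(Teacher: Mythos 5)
Your overall strategy is identical to the paper's: exploit the separable form $F^\ast_\gamma = \tfrac{2\alpha}{3c_{\ast,\gamma}}\Gamma_{\theta,\gamma}(\theta)\,\Gamma^\ast_\gamma(z)$, bound the radial and angular $L^2$ factors independently, observe that the $(1-\eta)^{-1/2}$ arises solely from the $j=0$ angular integral $\|\Gamma_{\theta,\gamma}w^\eta_\theta\|_{L^2_\theta}$, and note that the $j\geq1$ angular integrals (against $w^\lambda_\theta$) are controlled via the fact that $\lambda - \tfrac{2\alpha}{3\gamma}<1$. The bound on $c_{\ast,\gamma}^{-1}$ and the remark that $|\mu|\le\alpha^{1/2}$ keeps $\gamma$ and $\beta$ close enough for $\beta$-uniform radial constants are also exactly what the paper does.

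However, there is a genuine error in your treatment of the higher angular derivatives. You claim that ``each $D_\theta$ hitting $(K(\theta))^{\alpha/(3\gamma)}$ pulls down a factor $\frac{\alpha}{3\gamma}$, so $|D_\theta^j\Gamma_{\theta,\gamma}|\lesssim_\beta\alpha^j$ for $j\ge1$.'' This is false for $j\ge 2$. After one derivative you have $D_\theta\Gamma_{\theta,\gamma}=\tfrac{2\alpha}{3\gamma}\Gamma_{\theta,\gamma}(\cos 2\theta-\sin^2\theta)$, and the product rule in the next differentiation can land on the trigonometric polynomial $(\cos 2\theta-\sin^2\theta)$ rather than on the power of $K$; that branch does \emph{not} bring another $\alpha$. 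Explicitly,
\begin{align*}
D^2_\theta\Gamma_{\theta,\gamma}
=\Big(\tfrac{2\alpha}{3\gamma}\Big)^2\Gamma_{\theta,\gamma}(\cos2\theta-\sin^2\theta)^2
-\tfrac{2\alpha}{\gamma}\,\Gamma_{\theta,\gamma}\sin^2(2\theta),
\end{align*}
and the second term is $O(\alpha/\gamma)$, not $O(\alpha^2/\gamma^2)$. The same phenomenon recurs at $j=3,4$ (see \eqref{est-Dtheta-3}, \eqref{est-Dtheta-4} in the paper). The rescue is that these lower-$\alpha$-order terms come with compensating factors of $\sin(2\theta)$, which raise the exponent in the angular integral against $w^\lambda_\theta$ well above the borderline, so that no $(\beta/\alpha)^{1/2}$ factor is incurred for those pieces. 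The final conclusion you want still holds, but via this ``one $\alpha$ plus improved integrability'' mechanism rather than the clean $\alpha^j$ you assert. Separately, note that $(1-\lambda)^{-1/2}$ as written is ill-defined since $\lambda>1$; the relevant exponent controlling the borderline angular integral is $1-\big(\lambda-\tfrac{2\alpha}{3\gamma}\big)$, which is positive of size $\approx\tfrac{17\alpha}{30\beta}$, giving the $(\beta/\alpha)^{1/2}$ you intended.
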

\begin{proof}
	Firstly, by some direct calculations, we find that for any $0<a<1$,
	\begin{align}\label{est-sin}
		\int_0^{\frac{\pi}{2}}(\sin(2\theta))^{-a}d\theta \approx (1-a)^{-1},
	\end{align}
	which will be frequently used.
	We can deduce that
	\begin{align}\label{est-gamma-beta}
		|\gamma-\beta|=\frac{2|\mu|}{1-\mu}\beta\lesssim \alpha^{\frac12}\ll 1.
	\end{align}
	Recalling \eqref{def_K-theta}, \eqref{form_F1} and \eqref{def_Gama+c*}, then applying \eqref{est-sin}, \eqref{est-gamma-beta} and $K(\theta)\leq \sin(2\theta)$, one gets that
	\begin{align}\label{est-L2}
		\|F^\ast_{\gamma}w^{\ast,\eta}\|^2_{L^{2}}&\lesssim \left(\frac{\alpha}{\gamma}\right)^2\int_0^{\frac{\pi}{2}}(\sin(2\theta))^{\frac{2\alpha}{3\gamma}-\eta}d\theta\int_0^{\infty} \frac{z^{\frac{2}{\gamma}}}{(1+z^{\frac{1}{\gamma}})^4}\frac{(1+z^{\frac{1}{\beta}})^2}{z^{\frac{2}{\beta}+\frac{1}{2}}}dz \\ 
		&\lesssim \left(\frac{\alpha}{\beta}\right)^2(1-\eta)^{-1}\int_0^{\infty} \frac{1}{z^{\frac{3}{4}}(1+z)}dz 
		\lesssim \left(\frac{\alpha}{\beta}\right)^2(1-\eta)^{-1}.\notag
	\end{align}
	We note that, since $\eta\in(0,1)$, we use $\frac{2\alpha}{3\gamma}>0$, then apply \eqref{est-sin} to directly obtain the second inequality in \eqref{est-L2}.
	By virtue of \eqref{est-DL1}, \eqref{3f3} and \eqref{3f7}, we see that
	\begin{align}
		|D_z \Gamma^\ast_{\gamma}|&=\left|\frac{1}{\gamma}\Gamma^\ast_{\gamma}(L^{-1}_{z}(\Gamma^\ast_{\gamma})-1)\right|
		\lesssim \frac{1}{\gamma}\Gamma^\ast_{\gamma},
		\label{est-Dz-1}
		\\
		|D^2_z \Gamma^\ast_{\gamma}|
		&=\left|\frac{1}{\gamma^2}\Gamma^\ast_{\gamma}(L^{-1}_{z}(\Gamma^\ast_{\gamma})-1)^2-\frac{1}{\gamma}(\Gamma^\ast_{\gamma})^2\right| \lesssim \frac{1}{\gamma^2}\Gamma^\ast_{\gamma}.
		\label{est-Dz-2}
	\end{align}
	Thanks to \eqref{est-Dz-1} and \eqref{est-Dz-2}, it then follows from a similar way as \eqref{est-L2} that
	\begin{align*}
		\|D_zF^\ast_{\gamma}w^{\ast,\eta}\|_{L^{2}}^2
		\lesssim \frac{\alpha^2}{\gamma^4}(1-\eta)^{-1}\lesssim \frac{\alpha^2}{\beta^4}(1-\eta)^{-1},~~~\|D^2_zF^\ast_{\gamma}w^{\ast,\eta}\|_{L^{2}}^2
		\lesssim \frac{\alpha^2}{\beta^6}(1-\eta)^{-1}.
	\end{align*}
	Remembering the definition of $K(\theta)$ in \eqref{def_K-theta}, we infer that 
	\begin{align}
		|D_\theta \Gamma_{\theta,\gamma}|
		&=\left|\frac{\alpha}{3\gamma} (K(\theta))^{\frac{\alpha}{3\gamma}-1}D_\theta K(\theta)\right|
		=\left|\frac{2\alpha}{3\gamma} (K(\theta))^{\frac{\alpha}{3\gamma}}(\cos(2\theta)-(\sin\theta)^2)\right|
		\lesssim \frac{\alpha}{\gamma}\Gamma_{\theta,\gamma},
		\label{est-Dtheta-1}
		\\
		|D^2_\theta \Gamma_{\theta,\gamma}|
		&=\bigg|(K(\theta))^{\frac{\alpha}{3\gamma}}\Big(\Big(\frac{2\alpha}{3\gamma}\Big)^2 (\cos(2\theta)-(\sin\theta)^2)^2-\frac{2\alpha}{\gamma} \sin^2(2\theta)\Big)\bigg| 
		\lesssim  
		\frac{\alpha}{\gamma}\Gamma_{\theta,\gamma}\left(\frac{\alpha}{\gamma}+\sin(2\theta)\right).
		\label{est-Dtheta-2}
	\end{align}
	We recall the definition of $\lambda$ in \eqref{def-lambda}, then use the fact that $K(\theta) \leq \sin(2\theta)$ and \eqref{est-Dtheta-1}, to arrive at
	\begin{align}\label{est-H1-theta}
		\|D_{\theta}F^\ast_{\gamma}w^{\ast,\lambda}\|^2_{L^{2}}
		&\lesssim \Big(\frac{\alpha}{\gamma}\Big)^4\int_0^{\frac{\pi}{2}}(\sin(2\theta))^{\frac{2\alpha}{3\gamma}-\lambda}d\theta\int_0^{\infty} \frac{z^{\frac{2}{\gamma}}}{(1+z^{\frac{1}{\gamma}})^4}\frac{(1+z^{\frac{1}{\beta}})^2}{z^{\frac{2}{\beta}+\frac{1}{2}}}dz \\ 
		&\lesssim \Big(\frac{\alpha}{\beta}\Big)^3\int_0^{\infty} \frac{1}{z^{\frac{3}{4}}(1+z)}dz \lesssim \left(\frac{\alpha}{\beta}\right)^2.\notag
	\end{align}
	We point out that since $\lambda>1$, we can not drop out $\frac{2\alpha}{3\gamma}$ directly. But we have
	\begin{align}
		\lambda-\frac{2\alpha}{3\gamma}=1-\frac{\alpha}{\beta}\frac{17-23\mu}{30(1+\mu)}\in(0,1).
	\end{align}
	Thus, we can apply \eqref{est-sin} to obtain the second inequality in \eqref{est-H1-theta}.
	
	The same manner yields immediately
	\begin{align*}
		\|D^2_{\theta}F^\ast_{\gamma}w^{\ast,\lambda}\|^2_{L^{2}}
		\lesssim \left(\frac{\alpha}{\beta}\right)^2.
	\end{align*}
	According to separate variables of $F^\ast_{\gamma}$, the estimates \eqref{est-Dz-1} and \eqref{est-Dtheta-1}, we can deduce from a similar way as \eqref{est-L2} and \eqref{est-H1-theta} that
	\begin{align*}
		\|D_zD_{\theta}F^\ast_{\gamma}w^{\ast,\lambda}\|^2_{L^{2}}
		\lesssim \left(\frac{\alpha}{\beta}\right)^2.
	\end{align*}
	After some straightforward calculations, one can get that
	\begin{align}\label{est-Dz-3}
		|D^3_z \Gamma^\ast_{\gamma}|
		 \lesssim \frac{1}{\gamma^3}\Gamma^\ast_{\gamma},~~~~
	|D^4_z \Gamma^\ast_{\gamma}|
		 \lesssim \frac{1}{\gamma^4}\Gamma^\ast_{\gamma},~~~~
         |D^3_\theta \Gamma_{\theta,\gamma}|+|D^4_\theta \Gamma_{\theta,\gamma}|
		\lesssim \frac{\alpha^2}{\gamma^2}\Gamma_{\theta,\gamma}+\frac{\alpha}{\gamma}\Gamma_{\theta,\gamma}\sin(2\theta).
	\end{align}
	Owing to the separate variables of $F^\ast_{\gamma}$, we infer from the estimates \eqref{est-Dz-1}-\eqref{est-Dtheta-2} and \eqref{est-Dz-3} that
	\begin{align*}	
    &\|D^3_zF^\ast_{\gamma}w^{\ast,\eta}\|_{L^{2}}^2+\sum_{i+j= 3,j\geq 1}\|D^i_zD^j_{\theta} fw^{\ast,\lambda}\|_{L^2}^2\lesssim \frac{\alpha^2}{\beta^8}(1-\eta)^{-1},
	\\
		&\|D^4_z fw^{\ast,\eta}\|_{L^2}^2+\sum_{i+j= 4,j\geq 1}\|D^i_zD^j_{\theta} fw^{\ast,\lambda}\|_{L^2}^2\lesssim \frac{\alpha^2}{\beta^{10}}(1-\eta)^{-1}.
	\end{align*}
	The proof of this lemma is thus completed.
\end{proof}

\section{Self-similar variables and asymptotic analysis} 
\label{sec:analysis}

In this section, we aim to investigate the backward self-similar solution of system \eqref{1eq3}. For any given $\beta\in(0,1]$ and $\mu\in\mathbb{R}$ that would be determined later, we consider the following system:
\begin{align}\label{4eq3}
	\left\{\begin{array}{l} (1+\mu)F+(1+\mu)\beta D_{z}F+T=\mathcal{R}(\Phi)F,\quad (z,\theta)\in D,\\[1ex]
		L^{\alpha}_{z}(\Phi)+L_{\theta}(\Phi)=F, \\[1ex]
		F|_{\partial D}=0,~~\Phi|_{\partial D}=0, 
	\end{array}\right.
\end{align}
where the transport term $T$ and the stretching term $\mathcal{R}(\Phi)$ are defined by \eqref{def-T} and \eqref{def-R}, respectively.
The differential operators $L^{\alpha}_{z}$ and $L_{\theta}$ are defined by \eqref{def-L}. 
Here and throughout this section, we perform a refined analysis of system \eqref{4eq3}.

\subsection{Decomposition of potential equation}
In this subsection, we focus on the potential equation as:
\begin{align*}
	L^{\alpha}_{z}(\Phi)+L_{\theta}(\Phi)=F,
\end{align*}
and decompose $F$ based on the structure and properties of $L^{\alpha}_{z}$ and $L_{\theta}$.
It is worth highlighting that the operators $L^{\alpha}_z$ and $L_{\theta}$ are to some extent independent. 
Therefore, it is natural to investigate the properties of these two operators separately.

\medskip

\noindent\textbf{(1) Analysis of $L^{\alpha}_z$}
\medskip

Let $f\in C[0,\infty)$, we want to find $G(f)\in C^2(0,\infty)$, which vanishes as $z$ goes to infinity, such that
$$L^{\alpha}_z(G(f))=f.$$
Indeed, by some direct computations, $L^{\alpha}_z(G(f))$ can be rewritten as:
$$
L^{\alpha}_z(G(f)) = -\alpha^2 D_z\Big(z^{-\frac{5}{\alpha}}D_z\big(z^{\frac{5}{\alpha}}G(f)\big)\Big).
$$
It is easy to compute that
\begin{align}\label{est-f-1}
	f=-D_z L^{-1}_{z}(f).
\end{align}
Hence, we just need to solve
\begin{align}\label{est-G-1}
	\alpha^2z^{-\frac{5}{\alpha}}D_z(z^{\frac{5}{\alpha}}G(f))=L^{-1}_{z}(f),
\end{align}
where $L^{-1}_{z}(f)\in C^1(0,\infty)$ vanishes as $z\rightarrow \infty$.
Return to \eqref{est-G-1}, we assume that $L^{-1}_{\rho}(f)\rho^{\frac{5}{\alpha}}=o(1)$ as $\rho\rightarrow 0$, then employ \eqref{est-f-1} and integrate by parts, to get that
\begin{align}\label{4eq4}
	G(f) &= \frac{1}{\alpha^2}z^{-\frac{5}{\alpha}}\int_0^z \rho^{\frac{5}{\alpha}-1}L^{-1}_{\rho}(f)d\rho
	= \frac{1}{5\alpha}z^{-\frac{5}{\alpha}}\left(L^{-1}_{\rho}(f)\rho^{\frac{5}{\alpha}}\right)\bigg|_{\rho=0}^{\rho=z}+\frac{1}{5\alpha}z^{-\frac{5}{\alpha}}\int_0^z \rho^{\frac{5}{\alpha}-1} f d\rho\\ \notag
	&=\frac{1}{5\alpha}L^{-1}_{z}(f)+\frac{1}{5\alpha}z^{-\frac{5}{\alpha}}\int_0^z \rho^{\frac{5}{\alpha}-1} f d\rho
	\tri G^{*}(f) + \widetilde{G}(f).
\end{align}
Taking $f(0)=0$, we infer that
$$
\lim_{z\rightarrow 0^{+}} \widetilde{G}(f) = \frac{1}{25}\lim_{z\rightarrow 0^{+}} f(z) = 0,
\quad
\lim_{z\rightarrow 0^{+}} G^{*}(f)=\frac{1}{5\alpha}\int_0^{\infty}\frac{f(\rho)}{\rho} d\rho.
$$
We note that the following limit, in general, does not vanish.
But we can get that
\begin{align}\label{limit-z-0}
	\lim_{z\rightarrow0^+} \left|\frac{\widetilde{G}(f)}{G^{*}(f)}\right|\lesssim_{f} \alpha.
\end{align}
Moreover, one can verify that
\begin{align} \label{limit-z-infty}
	\lim_{z\rightarrow\infty}\frac{\widetilde{G}(f)}{G^{*}(f)}=\lim_{z\rightarrow\infty}\frac{\widetilde{G}(f)}{f}\lim_{z\rightarrow\infty}\frac{f}{G^{*}(f)}
	=-\frac{\lim\limits_{z\rightarrow\infty}\frac{D_zf}{f}}{\frac{5}{\alpha}+\lim\limits_{z\rightarrow\infty}\frac{D_zf}{f}}.
\end{align}
The estimates \eqref{limit-z-0} and \eqref{limit-z-infty} imply the crucial inequality below
$$
\max\limits_{z\in[0,\infty)}\left|\frac{\widetilde{G}(f)}{G^{*}(f)}\right| \lesssim_{f} \alpha.
$$
In conclusion, we decompose $G(f)$ into $G^*(f)$ and $\widetilde{G}(f)$.
Here, $G^{*}(f)$ constitutes the main component, while $\widetilde{G}(f)$ can be regarded as a perturbation of $G^{*}(f)$.

\medskip

\noindent\textbf{(2) Analysis of $L_{\theta}$}

\medskip

Denote $L^{*}_{\theta}$ as the conjugate operator of $L_{\theta}$. Then we obtain that 
$$\langle L_{\theta}(h),f \rangle_{\theta}=\langle h,L^{*}_{\theta}(f)\rangle_{\theta},~~~\forall~f,~h\in C^2\left(0,\frac{\pi}{2}\right).$$
In fact, the operator $L^{*}_{\theta}$ can be specifically written as
\begin{align*}
	L^{*}_{\theta}=-\partial_{\theta}^2-(\tan \theta )\partial_{\theta}-6.
\end{align*}
And it is easy to verify that
\begin{align}\label{est-Ltheta-1}
	L_{\theta}(\sin(2\theta))=L^{*}_{\theta}(K(\theta))=0.
\end{align}
For any given $f\in C[0,\infty)$, we consider the following model:
$$L_{\theta}(h)=f,$$
where $h\in C^2(0,\infty)$. 
We can derive that the following necessary condition holds:
$$\langle f,K \rangle_{\theta}=\langle L_{\theta}(h),K \rangle_{\theta}=\langle h,L^{*}_{\theta}(K) \rangle_{\theta}=0,$$
which indicates that $f$ should be orthogonal to $K(\theta)$. However, it is unrealistic to require $\langle F,K \rangle_{\theta}=0$. 
In order to solve the difficulty above, we regard $L^{\alpha}_{z}+L_{\theta}$ as a perturbation of $L^{\alpha}_{z}$, which would inherit the pleasing estimates from $L_{\theta}$. 

\medskip

\noindent\textbf{(3) Analysis of $L^{\alpha}_z+L_{\theta}$}

\medskip
We aim to find $\tilde{F}$, such that $\langle \tilde{F},K \rangle_{\theta}=0$. 
Firstly, we compute that
\begin{align*}
	\langle \sin(2\theta),K \rangle_{\theta}=\frac{4}{15}.
\end{align*}
So, we consider the decomposition of $F$ as follows:
$$
F=\bar{F}+\tilde{F},~~~\bar{F}\tri\frac{15}{4}\sin(2\theta)\langle F,K\rangle_{\theta},~~~\tilde{F}\tri F-\frac{15}{4}\sin(2\theta)\langle F,K\rangle_{\theta}.
$$
Hence, $\bar{F}$ and $\tilde{F}$ satisfy
$$\langle \bar{F},K\rangle_{\theta} = \langle F,K\rangle_{\theta},~~~\langle \tilde{F},K\rangle_{\theta} = 0.$$
Correspondingly, recalling \eqref{est-Ltheta-1}, we can split $\Phi$ as follows:
$$\Phi = G(z)\sin(2\theta)+\widetilde{\Phi},$$
where $G$ and $\widetilde{\Phi}$ satisfy
$$
L^{\alpha}_z(G)=\frac{15}{4}\langle F,K\rangle_{\theta},~~~
L^{\alpha}_z(\widetilde{\Phi})+L_{\theta}(\widetilde{\Phi})=\widetilde{F}.
$$
In the above decomposition, we observe that $\widetilde{\Phi}$ deserves better estimates since $\langle \tilde{F},K\rangle_{\theta} = 0$. 
According to \eqref{4eq4}, by taking $f=\frac{15}{4}\langle F,K\rangle_{\theta}$, we can further decompose $\Phi$ into the following three parts:
\begin{align}\label{decom-Phi}
	\Phi = \Phi_{\text{main}}+\widetilde{G}\sin(2\theta)+\widetilde{\Phi},
\end{align}
where $\Phi_{\text{main}}$ and $\widetilde{G}$ are defined by
$$\Phi_{\text{main}}\tri G^\ast\sin(2\theta),~~~G^\ast\tri \frac{3}{4\alpha}L^{-1}_{z,K}(F),~~~\widetilde{G}\tri \frac{3}{4\alpha}z^{-\frac{5}{\alpha}}\int_0^z \rho^{\frac{5}{\alpha}-1}\langle F,K\rangle_{\theta} d\rho.$$
We note that $\widetilde{G}$ and $\widetilde{\Phi}$ defined above both deserve better estimates than $\Phi_{\text{main}}$.
Thus, in the remaining analysis of this section, we will focus on the main term $\Phi_{\text{main}}$.

\subsection{Asymptotic analysis of stretching term}
The prevailing belief is that solutions to the 3D Euler equations blow up in finite time, as few global existence results are known.
And the stretching term $\mathcal{R}(\Phi)F$ in system \eqref{4eq3} definitely causes blow-up.
Our goal in this subsection is to derive the profile that generates the blow-up from the stretching term. 
As shown in \eqref{decom-Phi}, we compute that
\begin{align*}
	\mathcal{R}(\Phi) = \mathcal{R} (\Phi_{\text{main}}) +\mathcal{R}(\Phi-\Phi_{\text{main}})
	=\frac{3}{2\alpha}L^{-1}_{z,K}(F)-\frac{3
	}{2}\sin^2\theta\langle F,K\rangle_{\theta} + \mathcal{R}(\Phi-\Phi_{\text{main}}).
\end{align*}
Hence, $\eqref{4eq3}_1$ can be written as follows:
\begin{align}\label{4eq5}
	(1+\mu)F+(1+\mu)\beta D_{z}F+T=\frac{3}{2\alpha}L^{-1}_{z,K}(F)F -\frac{3
	}{2}\sin^2\theta\langle F,K\rangle_{\theta}F + \mathcal{R}(\Phi-\Phi_{\text{main}})F.
\end{align}
A natural idea is to regard \eqref{4eq5} as a perturbation of the following fundamental self-similar model:
\begin{align}\label{4eq6}
	F^{\ast}_{\gamma}+\gamma D_{z}F^{\ast}_{\gamma}=\frac{3}{2\alpha}L^{-1}_{z,K}(F^{\ast}_{\gamma})F^{\ast}_{\gamma},~~~\forall~\gamma\in(0,\infty),
\end{align}
which has been extensively studied in Section \ref{sec:toy model}. According to Proposition \ref{3FM2}, the fundamental self-similar model \eqref{4eq6} admits a solution $F^{\ast}_{\gamma}$. 
Then, we rewrite $F$ as 
\begin{align}\label{com-F}
	F=F^{\ast}_{\gamma}+g,
\end{align}
where $F^{\ast}_{\gamma}$ is regarded as the main part of $F$ and $g$ is regarded as the perturbation of $F^{\ast}_{\gamma}$.
This, along with \eqref{4eq5} and \eqref{4eq6}, leads us to derive that $g$ satisfies the following equation:
\begin{align}\label{4eq7}
	\mathcal{L}_{\Gamma}(g)=-T+R_0+R_1+R_2,
\end{align}
where the linear operator $\mathcal{L}_{\Gamma}(g)$ is defined by \eqref{def-LGammag},
and the remaining terms are defined by \eqref{def-R1}-\eqref{def-R2-1}.
Here, $R_0$ and $R_1$ contain the linear terms of $F^{\ast}_{\gamma}$ and $g$, respectively, while $R_2$ contains the nonlinear terms of $F^{\ast}_{\gamma}$ and $g$.
We point out that we will use different way to deal with these three terms.

According to the theory of stability analysis, we know that linear terms determine the nature of the partial differential equations.
Therefore, our core task is to derive the coercivity estimate from the linear operator.
For simplicity, we define the core part of the linear operator $\mathcal{L}_{\Gamma}(g)$ as follows:
\begin{align*}
	\mathcal{L}^{\beta}(g) & \tri g+\beta D_{z}g-\frac{3}{2\alpha}L^{-1}_{z,K}(F^{\ast}_{\beta})g
	\quad \text{( the core operator)},
	\\
	\mathcal{L}^{\beta}_{\Gamma}(g) & \tri 
	\mathcal{L}^{\beta}(g)
	-\frac{3}{2\alpha}L^{-1}_{z,K}(g)F^{\ast}_{\beta}
	\quad \text{(the main operator)}.
\end{align*}
Actually, the linear operator $\mathcal{L}_{\Gamma}(g)$ can be rewritten in the following perturbation form:
\begin{align}\label{4eq7-1}
	\mathcal{L}_{\Gamma}(g)  = \mathcal{L}^{\beta}_{\Gamma}(g)+ \text{Linear perturbation}.
\end{align}

\medskip

\noindent\textbf{(1) Analysis of the core operator $\mathcal{L}^{\beta}$}

\medskip

\noindent By choosing the suitable weight, we can obtain the following coercivity estimate:
$$
\langle\mathcal{L}^{\beta}(g),g(w_z)^2\rangle_z=\left(1-\frac{\beta}{2}\right)\|gw_z\|^2_{L^2_z}.
$$
Reader may refer to Proposition \ref{5co0} for more details.
In other words, it is necessary to establish the linear/nonlinear stability of $g$ in $L^2_z$ with weight $w_z$ satisfying
$$
w_z=O(z^{-\frac{2}{\beta}}),~~~\text{as}~z\rightarrow 0^{+};~~~w_z=O(1),~~~\text{as}~z\rightarrow \infty.
$$
This observation motivates us to construct $g$ with the following radial contour:
\begin{align}\label{4eq8}
	g=o(z^{\frac{2}{\beta}-\frac{1}{2}}),~~~\text{as}~z\rightarrow 0^{+};~~~g=o(z^{-\frac{1}{2}}),~~~\text{as}~z\rightarrow \infty.
\end{align}
If $f$ satisfies \eqref{4eq8}, then for convenience, we denote that
\begin{align}\label{def-g-Gamma}
	f=\widetilde{o}(\Gamma^{\ast}_{\beta}).
\end{align}
We note that if \eqref{4eq8} holds, then for $0<\beta\leq 1$, $g$ satisfies $
\lim\limits_{z\rightarrow0^+} \frac{g(z)}{\Gamma^{\ast}_{\beta}(z)}
=0.
$
But we can not directly deduce from \eqref{4eq8} that for $0<\beta\leq 1$,
$
\lim\limits_{z\rightarrow\infty} \frac{g(z)}{\Gamma^{\ast}_{\beta}(z)}=0.
$
Thus, we refer to the notation $g=\widetilde{o}(\Gamma^{\ast}_{\beta})$ as indicating that $g$ is a stable disturbance of $\Gamma^{\ast}_{\beta}$ at the origin. 

\medskip

\noindent\textbf{(2) Analysis of the main operator $\mathcal{L}^{\beta}_{\Gamma} $}

\medskip

\noindent 
We observe that $\mathcal{L}^{\beta}_{\Gamma}(g) $ contains an additional term $\frac{3}{2\alpha}L^{-1}_{z,K}(g)F^{\ast}_{\beta}$ compared to $\mathcal{L}^{\beta}(g)$. 
Thus, in what follows, we only need to estimate $\frac{3}{2\alpha}L^{-1}_{z,K}(g)F^{\ast}_{\beta}$.
As a component of linear operator $\mathcal{L}_{\Gamma}$, we still need to ensure
$$\frac{3}{2\alpha}L^{-1}_{z,K}(g)F^{\ast}_{\beta}=\widetilde{o}(\Gamma^{\ast}_{\beta}),$$
which requires that
\begin{align}\label{4eq9}
	L^{-1}_{z,K}(g) = o(z^{\frac{1}{\beta}-\frac{1}{2}}),~~~\text{as}~z\rightarrow 0^{+}.
\end{align}
Therefore, $g$ needs to meet the additional structural condition:
\begin{align}\label{4eq10}
	L^{-1}_{z,K}(g)(0) = \int_{0}^{\infty}\int_0^{\frac{\pi}{2}} g(\rho,\theta)\frac{K(\theta)}{\rho}d\theta d\rho = 0.
\end{align}
However, since the operator $L^{-1}_{z,K}$ is non-local, the structural condition \eqref{4eq10} cannot be derived from the equation \eqref{4eq7} naturally and directly. 
Therefore, we introduce the parameter $\mu$ to ensure that \eqref{4eq10} is satisfied, which will be detailed in Subsection \ref{subsec4:mu}.
With \eqref{4eq10} in hand, a new weight $w^{K}$ is introduced and the coercivity estimate of low frequency is obtained in Proposition \ref{5co1}. Specifically,
$$\langle\mathcal{L}^{\beta}_{\Gamma}(g), g(w^K)^2\rangle=\|g\|^2_{\mathcal{H}^{-1}},$$
which implies the better property for the low frequency of $L^{-1}_{z,K}(g)$:
\begin{align}\label{est-LzK-1g}
	L^{-1}_{z,K}(g) = o(z^{\frac{3}{2\beta}-\frac{1}{2}}),~~~\text{as}~z\rightarrow 0^{+}.
\end{align}

\medskip

\noindent\textbf{(3) Analysis of $R_0$}

\medskip

\noindent According to theory of nonlinear stability analysis, it is necessary to show that all terms on the right-hand side of \eqref{4eq7} are of the order $O(g)$. Given the assumptions regarding the perturbation terms, it is readily apparent that $R_1$ and $R_2$  both meet this requirement, except for $R_0$. 
Consequently, a more detailed analysis of $R_0$ is warranted.
Indeed, we deduce from \eqref{4eq6} that
\begin{align}\label{4eq11}
	\frac{R_0}{F^{\ast}_{\gamma}}&=-\mu+(\gamma-(1+\mu)\beta) \frac{D_zF^{\ast}_{\gamma}}{F^{\ast}_{\gamma}}
	= (1+\mu)\left(-1+\frac{\beta}{\gamma}\right)+\left(1-(1+\mu)\frac{\beta}{\gamma}\right)\frac{3}{2\alpha}L^{-1}_{z,K}(F^{\ast}_{\gamma}).
\end{align}
According to Proposition \ref{3FM2} and \eqref{3f7}, we have
\begin{align}\label{4eq12}
	\frac{3}{2\alpha}L^{-1}_{z,K}(F^{\ast}_{\gamma})(0) = L^{-1}_{z}(\Gamma^{\ast}_{\gamma})(0) 
	= 2.
\end{align}
Collecting \eqref{4eq11} and \eqref{4eq12} together, we find that
\begin{align*}
	\frac{R_0}{F^{\ast}_{\gamma}}\bigg|_{z=0}
	= 1-\mu - (1+\mu)\frac{\beta}{\gamma}.
\end{align*}
We choose
\begin{align}\label{4eq13}
	\gamma = \frac{1+\mu}{1-\mu}\beta,
\end{align}
which implies that 
$ \frac{R_0}{F^{\ast}_{\gamma}}\Big|_{z=0}=0.$
With \eqref{4eq13} at hand, if we choose $|\mu|\ll \beta\leq 1$, then $R_0$ can be rewritten as follows:
\begin{align}\label{4eq14}
	R_0
	=\mu \big(-2+\frac{3}{2\alpha}L^{-1}_{z,K}(F^{\ast}_{\gamma})\big)F^{\ast}_{\gamma}=-\frac{2\mu z^{\frac{1}{\gamma}}}{1+z^{\frac{1}{\gamma}}}F^{\ast}_{\gamma} =\widetilde{o}(\Gamma^{\ast}_{\beta}).
\end{align}

\noindent\textbf{(4) Conclusion}

\medskip

\noindent 
In view of \eqref{def-g-Gamma}, \eqref{est-LzK-1g} and \eqref{4eq14}, we finally conclude that
\begin{align}\label{anlysis-Lg}
	\mathcal{L}_{\Gamma}(g)=\widetilde{o}(\Gamma^{\ast}_{\beta}),~~~R_0+R_1+R_2=\widetilde{o}(\Gamma^{\ast}_{\beta}).
\end{align}

\subsection{The null structure of transport term}
Our goal in this subsection is to guarantee the ingredients appearing in the transport term $T$ are of $\widetilde{o}(\Gamma^{\ast}_{\beta})$.
The divergence free condition is well-known as a fundamental structure for the incompressible Euler equations \eqref{1eq1-1}-\eqref{1eq1-2} and plays a crucial role in its regularity-related issues.
Therefore, our next goal is to explore the relevant null structure under new variables and construct compatible Banach spaces to establish the global existence of the self-similar system \eqref{4eq3} with critical regularity.
Recalling the decomposition \eqref{com-F}, then transport term $T$ can be rewritten as follows:
\begin{align}\label{decom-T}
	T=T_{F^{\ast}_{\gamma}} + T_g,
\end{align}
where for any given $f$, which denotes either $F^{\ast}_{\gamma}$ or $g$, the notation $T_f$ is defined by
$$
T_f\tri\frac{U(\Phi)}{\sin(2\theta)} D_{\theta}f+V(\Phi)\alpha D_{z}f.
$$

\medskip

\noindent\textbf{(1) Analysis of $T_{F^{\ast}_{\gamma}}$}

\medskip

\noindent 
Based on the analysis of the linear operator $\mathcal{L}_{\Gamma}(g)$ in \eqref{anlysis-Lg}, we need to ensure that $T_{F^{\ast}_{\gamma}}=\widetilde{o}(\Gamma^{\ast}_{\beta})$. 
For any given $f$, which denotes either $F^{\ast}_{\gamma}$ or $g$, we denote that $G^\ast_f\tri\frac{3}{4\alpha}L^{-1}_{z,K}(f)$. 
According to \eqref{decom-Phi} and \eqref{com-F}, we can treat $\Phi$ as the perturbation of $G^\ast_{F^\ast_{\gamma}}\sin(2\theta)$. 
By some direct calculations, one obtains that
\begin{align}
	U(\Phi)&=U(G^\ast_{F^\ast_{\gamma}}\sin(2\theta))+U(\Phi-G^\ast_{F^\ast_{\gamma}}\sin(2\theta)) \label{est-U-1}\\
	&=-\frac{9}{4\alpha}\sin(2\theta)L^{-1}_{z,K}(F^{\ast}_{\gamma})+\frac{\alpha}{2}\sin(2\theta)\Gamma^{\ast}_{\gamma}+U(\Phi-G^\ast_{F^\ast_{\gamma}}\sin(2\theta)),\notag
\end{align}
and
\begin{align}
	V(\Phi)&= V(G^\ast_{F^\ast_{\gamma}}\sin(2\theta))+V(\Phi-G^\ast_{F^\ast_{\gamma}}\sin(2\theta))\label{est-V-1}\\
	&=\frac{3}{2\alpha}(\cos(2\theta)-\sin^2\theta)L^{-1}_{z,K}(F^{\ast}_{\gamma})+V(\Phi-G^\ast_{F^\ast_{\gamma}}\sin(2\theta)).\notag
\end{align}
Then, we can decompose $T_{F^{\ast}_{\gamma}} $ as
\begin{align*}
	T_{F^{\ast}_{\gamma}} 
	=T^1_{F^{\ast}_{\gamma}} + T^2_{F^{\ast}_{\gamma}},
\end{align*}
where $T^1_{F^{\ast}_{\gamma}}$ and $T^2_{F^{\ast}_{\gamma}}$ are defined by
\begin{align*}
	T^1_{F^{\ast}_{\gamma}}
	\tri& -\frac{9}{4\alpha}L^{-1}_{z,K}(F^{\ast}_{\gamma}) D_{\theta}F^{\ast}_{\gamma} + \frac{3}{2}(\cos(2\theta)-(\sin\theta)^2)L^{-1}_{z,K}(F^{\ast}_{\gamma})D_{z}F^{\ast}_{\gamma},
	\\
	T^2_{F^{\ast}_{\gamma}}
	\tri&\left(\frac{\alpha}{2}\Gamma^{\ast}_{\gamma}+\frac{U(\Phi-G^\ast_{F^\ast_{\gamma}}\sin(2\theta))}{\sin(2\theta)}\right) D_{\theta}F^{\ast}_{\gamma} 
	+  V(\Phi-G^\ast_{F^\ast_{\gamma}}\sin(2\theta))\alpha D_{z}F^{\ast}_{\gamma}.
\end{align*}
Under the assumptions \eqref{4eq8} and \eqref{4eq9}, we can verify that both $T^2_{F^{\ast}_{\gamma}}$ and $T_g$ satisfy the expected estimate, namely,
\begin{align*}
	T^2_{F^{\ast}_{\gamma}}=\widetilde{o}(\Gamma^{\ast}_{\beta}),\quad T_g=\widetilde{o}(\Gamma^{\ast}_{\beta}).
\end{align*}
But for $T^1_{F^{\ast}_{\gamma}}$, we need to conduct a more detailed analysis.
Firstly, we should first ensure that $T^1_{F^{\ast}_{\gamma}}\big|_{t=0}=0$.
Indeed, we deduce from \eqref{3f2} and \eqref{form_F1} that
\begin{align}\label{4eq15}
	\frac{T^1_{F^{\ast}_{\gamma}}}{L^{-1}_{z,K}(F^{\ast}_{\gamma})F^{\ast}_{\gamma}}
	&=-\frac{9}{4\alpha}\frac{D_{\theta}\Gamma_{\theta,\gamma}}{\Gamma_{\theta,\gamma}} + \frac{3}{2}\left(\cos(2\theta)-(\sin\theta)^2\right)\frac{D_{z}\Gamma^{\ast}_{\gamma}}{\Gamma^{\ast}_{\gamma}} \\ \notag
	&=-\frac{9}{4\alpha}\frac{D_{\theta}\Gamma_{\theta,\gamma}}{\Gamma_{\theta,\gamma}} + \frac{3}{2\gamma}\left(\cos(2\theta)-(\sin\theta)^2\right)\left(-1+L^{-1}_{z}(\Gamma^{\ast}_{\gamma})\right).
\end{align}
Choosing 
$\Gamma_{\theta,\gamma} = (K(\theta))^{\frac{\alpha}{3\gamma}}$, one gets, by \eqref{4eq12} and \eqref{4eq15}, that
\begin{align}\label{4eq16}
	\frac{T^1_{F^{\ast}_{\gamma}}}{L^{-1}_{z,K}(F^{\ast}_{\gamma})F^{\ast}_{\gamma}}\bigg|_{z=0}
	= -\frac{9}{4\alpha}\frac{D_{\theta}\Gamma_{\theta,\gamma}}{\Gamma_{\theta,\gamma}} + \frac{3}{2\gamma}\left(\cos(2\theta)-(\sin\theta)^2\right)=0.
\end{align}
Furthermore, combining \eqref{4eq15} and \eqref{4eq16}, then recalling that $\Gamma_{\theta,\gamma} = (K(\theta))^{\frac{\alpha}{3\gamma}}$, we can rewrite $T^1_{F^{\ast}_{\gamma}}$ as
\begin{align*}
	T^1_{F^{\ast}_{\gamma}}&=L^{-1}_{z,K}(F^{\ast}_{\gamma})F^{\ast}_{\gamma}\left(-\frac{9}{4\alpha}\frac{D_{\theta}\Gamma_{\theta,\gamma}}{\Gamma_{\theta,\gamma}} + \frac{3}{2\gamma}\left(\cos(2\theta)-(\sin\theta)^2\right)\left(-1+L^{-1}_{z}(\Gamma^{\ast}_{\gamma})\right)\right)
	\\ \notag
	&= -\frac{3}{\gamma}\left(\cos(2\theta)-\sin^2\theta\right)\frac{z^{\frac{1}{\gamma}}}{1+z^{\frac{1}{\gamma}}}L^{-1}_{z,K}(F^{\ast}_{\gamma})F^{\ast}_{\gamma}= \tilde{o}(\Gamma^{\ast}_{\beta}).
\end{align*}
Finally, we conclude that 
\begin{align}\label{4eq16-1}
	T_{F^{\ast}_{\gamma}}=\widetilde{o}(\Gamma^{\ast}_{\beta}).
\end{align}

\noindent\textbf{(2) Analysis of $T_g$}

\medskip

\noindent 
For $\mathcal{W}\in C((0,\infty)\times(0,\frac{\pi}{2}))$ that would be determined later, we perform the decomposition of $T_g$ below
\begin{align}\label{decom-Tg}
	T_g =\frac{1}{\mathcal{W}}T_{g\mathcal{W}}-\frac{1}{\mathcal{W}}T_{\mathcal{W}}\cdot g,
\end{align}
where the notations $\frac{1}{\mathcal{W}}T_{g\mathcal{W}}$ and $\frac{1}{\mathcal{W}}T_{\mathcal{W}}\cdot g$ are defined by
\begin{align*}
	\frac{1}{\mathcal{W}}T_{g\mathcal{W}} 
	\tri \frac{1}{\mathcal{W}}\left(\frac{U(\Phi)}{\sin(2\theta)} D_{\theta}(g\mathcal{W})+V(\Phi)\alpha D_{z}(g\mathcal{W})\right),
	\quad
	\frac{1}{\mathcal{W}}T_{\mathcal{W}}\cdot g 
	\tri \left(\frac{U(\Phi)}{\sin(2\theta)} \frac{D_{\theta}\mathcal{W}}{\mathcal{W}}+\alpha V(\Phi) \frac{D_{z}\mathcal{W}}{\mathcal{W}}\right)g.
\end{align*}
We point out that $\frac{1}{\mathcal{W}}T_{g\mathcal{W}}$ is the null part, while $\frac{1}{\mathcal{W}}T_{\mathcal{W}}\cdot g$ is the homogeneous part.
We aim to find the weight $\mathcal{W}$, such that the null part can inherit the structure from the divergence-free condition.
This enables us to treat $T_g$ as the null perturbation of the homogeneous part.

\medskip

\noindent\textbf{(2.1) Analysis of the null part}

\medskip

\noindent 
We note that the null structure is crucial for establishing global well-posedness with critical regularity.
After some direct computations, we derive that
\begin{align}\label{null-Tf-1}
	\big\langle T_f,fz^{\frac{3}{\alpha}-1}\cos\theta \big\rangle = 0.
\end{align}
Assume that $\widetilde{\mathcal{W}}(z,\theta)$ is the weight for establishing the $L^2$ stability of $g$, then the weight $\mathcal{W}$ is defined by 
\begin{align}\label{def-W}
	\mathcal{W}\tri\widetilde{\mathcal{W}}~z^{\frac{1}{2}-\frac{3}{2\alpha}}(\cos\theta)^{-\frac{1}{2}},
\end{align}
which, along with \eqref{null-Tf-1}, yields that
\begin{align}\label{4eqnull1}
	\big\langle \frac{1}{\mathcal{W}}T_{g\mathcal{W}}, g(\widetilde{\mathcal{W}})^2 \big\rangle = \big\langle T_{g\mathcal{W}},\big(g\mathcal{W}\big)z^{\frac{3}{\alpha}-1}\cos\theta \big\rangle =0.
\end{align}
For more details, one can refer to Lemma \ref{7div1} in Section \ref{sec:Est-T}.

\medskip

\noindent\textbf{(2.2) Analysis of the homogeneous part}

\medskip

\noindent 
Recalling \eqref{decom-Phi}, we only consider the main term $G^\ast_{F^\ast_{\gamma}}\sin(2\theta)$. With the help of \eqref{form_F1}, \eqref{est-U-1} and \eqref{est-V-1}, we infer that
\begin{align}\label{est-Twg-1}
	\frac{1}{\mathcal{W}}T_{\mathcal{W}}\cdot g &=
	\left(-\frac{3}{2}\frac{D_{\theta}\mathcal{W}}{\mathcal{W}}+ \alpha(\cos(2\theta)-(\sin\theta)^2) \frac{D_{z}\mathcal{W}}{\mathcal{W}}\right)L^{-1}_{z}(\Gamma^{\ast}_{\gamma})~g+\text{Nonlinear ~perturbation}.
\end{align}
We assume that weight $\widetilde{\mathcal{W}}$ satisfies the form of variable separation
\begin{align}\label{def-wW}
\widetilde{\mathcal{W}} = w_z~(\sin(2\theta))^{-\xi}.
\end{align}
According to the detailed analysis of $\mathcal{L}^{\beta}$, we apply \eqref{def-W}, to obtain that
\begin{align*}
	\frac{D_{z}\mathcal{W}}{\mathcal{W}}=\frac{D_{z}\widetilde{\mathcal{W}}_z}{\widetilde{\mathcal{W}}_z}+\frac{1}{2}-\frac{3}{2\alpha} = \frac{1}{2}-\frac{3}{2\alpha}-\frac{1}{\beta}L^{-1}_z(\Gamma^{\ast}_{\beta}).
\end{align*}
When $\alpha$ is small enough, $\frac{3}{2\alpha}$ occupies the majority in $\frac{D_z\mathcal{W}}{\mathcal{W}}$.
Thus, one can deduce that
\begin{align*}
	\frac{1}{\mathcal{W}}T_{\mathcal{W}}\cdot g &=
	-\frac{3}{2}\left(\frac{D_{\theta}\mathcal{W}}{\mathcal{W}}+(\cos(2\theta)(\sin\theta)^2) \right) L^{-1}_{z}(\Gamma^{\ast}_{\gamma})~g+\text{Linear/nonlinear ~perturbation}, 
\end{align*}
where
\begin{align}\label{4eqnull2}
	\frac{D_{\theta}\mathcal{W}}{\mathcal{W}}+\cos(2\theta)-(\sin\theta)^2
	=\frac{D_{\theta}\widetilde{\mathcal{W}}}{\widetilde{\mathcal{W}}}+\cos(2\theta) 
	= (1-2\xi)\cos(2\theta).
\end{align}
Note that the homogeneous part or even entire transport term $T$ acts as the perturbation of $\mathcal{L}^{\beta}$ if $\xi$ is approximately equal to $\frac{1}{2}$. 
Thus, we identify the structure \eqref{4eqnull1} and the identity \eqref{4eqnull2} with $\xi=\frac{1}{2}$ as the null structure of transport term $T$.
In view of \eqref{decom-Tg}, \eqref{4eqnull1}, \eqref{est-Twg-1} and \eqref{4eqnull2}, we can obtain that
\begin{align}\label{est-Tg}
	T_g=\text{Null~perturbation} + \text{Nonlinear~perturbation}.
\end{align}

\noindent\textbf{(3) Conclusion}

\medskip

\noindent 
In the end, we conclude from \eqref{4eq7}, \eqref{4eq7-1}, \eqref{anlysis-Lg}, \eqref{decom-T}, \eqref{4eq16-1} and \eqref{est-Tg} that
$$
\mathcal{L}^{\beta}(g)-\frac{3}{2\alpha}L^{-1}_{z,K}(g)F^{\ast}_{\beta}=\text{Linear/nonlinear~perturbation}+\text{Null~perturbation} = \tilde{o}(\Gamma^{\ast}_{\beta}).
$$

\subsection{Derivation of the self-similar system}\label{subsec4:mu}
In this subsection, we derive the equation of $\mu$, which ensures $L^{-1}_{z,K}(g)(0)=0$. To start with, we apply $L^{-1}_{z,K}$ to self-similar system \eqref{4eq7} and consider the value at $z=0$, to get that
\begin{align*}
	L^{-1}_{z,K}\big(\mathcal{L}_{\Gamma}(g)\big)(0)=-L^{-1}_{z,K}(T)(0)+L^{-1}_{z,K}(R_0)(0)+L^{-1}_{z,K}(R_1)(0)+L^{-1}_{z,K}(R_2)(0).
\end{align*}
Integrating by parts, we have
\begin{align*}
	L^{-1}_{z,K}(D_{z}g)(0) =0,~~~
	L^{-1}_{z,K}\big(L^{-1}_{z,K}(F^{\ast}_{\gamma})g\big)(0)= -L^{-1}_{z,K}\big(L^{-1}_{z,K}(g)F^{\ast}_{\gamma}\big)(0),
\end{align*}
which, along with \eqref{def-LGammag} and \eqref{def-R1}, implies that
\begin{align*}
	L^{-1}_{z,K}\big(\mathcal{L}_{\Gamma}(g)\big)(0) = L^{-1}_{z,K}(g)(0),~~~L^{-1}_{z,K}(R_1)(0)=-\mu L^{-1}_{z,K}(g)(0).
\end{align*}
Thanks to \eqref{form_F1} and \eqref{4eq14}, we see that
\begin{align*}
	L^{-1}_{z,K}(R_0)(0) = -2\mu \int_0^{\infty}\frac{ z^{\frac{1}{\gamma}-1}}{1+z^{\frac{1}{\gamma}}}\langle F^{\ast}_{\gamma},K\rangle_{\theta}dz= -\frac{8}{3}\frac{\alpha\mu}{\gamma}\int_0^{\infty}\frac{ z^{\frac{2}{\gamma}-1}}{(1+z^{\frac{1}{\gamma}})^3}dz = -\frac{4\alpha\mu}{3}.
\end{align*}
Then, we conclude from the three estimates above that
\begin{align}\label{4eq17}
	(1+\mu)L^{-1}_{z,K}(g)(0) &=L^{-1}_{z,K}(R_2-T)(0)-\frac{4\alpha\mu}{3}.
\end{align}
If we take $\mu$ satisfy 
\begin{align*}
	\mu = \frac{3}{4\alpha}L^{-1}_{z,K}(R_2-T)(0),
\end{align*}
then \eqref{4eq17} can be simplified as
\begin{align*}
	(1+\mu)L^{-1}_{z,K}(g)(0) = 0.
\end{align*}
Since $|\mu|<1$, which will be proven in Section \ref{sec:blow-up}, it follows that  
\begin{align*}
	L^{-1}_{z,K}(g)(0) = 0.
\end{align*}
This is consistent with \eqref{4eq10} and completes the analysis of $\mathcal{L}_{\Gamma}$ eventually. In summary, we rewrite \eqref{4eq3} as the following self-similar system 
\begin{align}\label{4eq18}
	\left\{\begin{array}{l} \mathcal{L}_{\Gamma}(g)=-T+R_0+R_1+R_2,\\[1ex]
		L^{\alpha}_{z}(\Phi)+L_{\theta}(\Phi)=F, \\[1ex]
		\mu= \frac{3}{4\alpha}L^{-1}_{z,K}(R_2-T)(0),~~\gamma = \frac{1+\mu}{1-\mu}\beta, \\[1ex]
		L^{-1}_{z,K}(g)(0)=0,~~g|_{\partial D}=0,~~\Phi|_{\partial D}=0, 
	\end{array}\right.
\end{align}
where $F=F^{\ast}_{\gamma}+g$, the operator $\mathcal{L}_{\Gamma}$ and the remaining terms $R_i$ $(i=0,1,2)$ is defined in \eqref{def-LGammag}-\eqref{def-R2-1}, respectively.
Guided by the detailed analysis in this section, we will establish the global estimate of the self-similar system \eqref{4eq18} with critical regularity.

\section{Coercivity}\label{sec:Coer}
Our goal in this section is to explore the coercivity of operator $\mathcal{L}_{\Gamma}$.
Firstly, for any function $f$, we define the main operator $\mathcal{L}^{\beta}_{\Gamma}$ as follows:
\begin{align}\label{def-main-oper}
	\mathcal{L}^{\beta}_{\Gamma}(f)\tri \mathcal{L}^{\beta}(f)-\frac{3}{2\alpha}L^{-1}_{z,K}(f)F^{\ast}_{\beta} ,
\end{align}
where the core operator $\mathcal{L}^{\beta}$ is defined by
\begin{align}\label{def-core-oper}
	\mathcal{L}^{\beta}(f) \tri f + \beta D_z f - L_z^{-1}(\Gamma^{\ast}_{\beta})f.
\end{align}
Recalling the definition of the linear operator $\mathcal{L}_{\Gamma}$ in \eqref{def-LGammag},
then $\mathcal{L}_{\Gamma}$ can be divided into the following two parts, namely, the main operator and the perturbation term:
\begin{align}\label{def-oper}
	\mathcal{L}_{\Gamma}(f)=\mathcal{L}^{\beta}_{\Gamma}(f)+\tilde{P}(f),
\end{align}
where perturbation term $\tilde{P}(f)$ is defined by
\begin{align}\label{def-wP}
	\tilde{P}(f)\tri L^{-1}_z(\Gamma^{\ast}_{\beta}-\Gamma^{\ast}_{\gamma})f+\frac{3}{2\alpha}L^{-1}_{z,K}(f)(F^{\ast}_{\beta}-F^{\ast}_{\gamma}).
\end{align}

Next, we recall the definition of $\mathcal{H}^{-1}([0,\infty)\times[0,\frac{\pi}{2}])$ norm.
For any $\beta\in (0,1]$, we remember that
\begin{align}\label{def_norm_H-1}
	\|f\|^2_{\mathcal{H}^{-1}}=&\frac{2-\beta}{2}\|fw^K\|^2_{L^2}+\frac{1-\beta}{2c_{\ast,\beta}\beta}\|L^{-1}_{z,K}(f)w_z(\Gamma^{\ast}_{\beta})^\frac{1}{2}\|^2_{L^2_z} 
	+\frac{1}{2c_{\ast,\beta}\beta}\|L^{-1}_{z,K}(f)w_z(\Gamma^{\ast}_{\beta}L^{-1}_{z}(\Gamma^{\ast}_{\beta}))^\frac{1}{2}\|^2_{L^2_z}.
\end{align} 
We then introduce the $\mathcal{H}_{\eta}^2([0,\infty)\times[0,\frac{\pi}{2}])$ norm related to $\eta$ as:
\begin{align}\label{def_norm_H2eta}
	\|f\|^2_{\mathcal{H}_{\eta}^2}\tri &(1-\eta)^2(\| fw^{\eta}\|^2_{L^2}+\|D_z fw^{\eta}\|^2_{L^2}+\|D_zD_{\theta} fw^{\lambda}\|^2_{L^2})+(1-\eta)^{-2}\|D_{\theta} fw^{\lambda}\|^2_{L^2} \\
	&+\|D^2_{\theta} fw^{\lambda}\|^2_{L^2}+(1-\eta)^4\|D^2_z fw^{\eta}\|^2_{L^2}.
	\notag
\end{align}
Moreover, we introduce the $\mathcal{E}^2_{\eta}([0,\infty)\times[0,\frac{\pi}{2}])$ norm related to $\eta$ as:
\begin{align}\label{def_norm_E2eta}
	\|f\|^2_{\mathcal{E}^2_{\eta}}\tri\alpha(1-\eta)^{2}\| fw^{\lambda}\|^2_{L^2}+\alpha(1-\eta)^{4}\|D^2_z fw^{\lambda}\|^2_{L^2}.
\end{align}
Here, the definitions of the weights $w^K$, $w_z$, $w^{\eta}$ and $w^{\lambda}$ can be found in Section \ref{sec:notation}.
Specifically, the key distinction between $w^{\lambda}$ and $w^{\eta}$ is their difference in angular weight, with $w^{\lambda}$ exhibiting higher angular singularity.
Actually, $\mathcal{E}^2_{\eta}$ can be viewed as a complement to the $\mathcal{H}_{\eta}^2$ norm, introducing extra angular weighting for radial derivatives.
While $\|\cdot\|_{\mathcal{E}^2_{\eta}}$ itself does not constitute a “good” norm, the intersection space $\mathcal{H}_{\eta}^2\cap\mathcal{E}^2_{\eta}$ is equipped with a “good” norm.
Furthermore, the notations $\langle \cdot, \cdot \rangle_{\mathcal{H}_{\eta}^2}$ and $\langle \cdot, \cdot \rangle_{\mathcal{E}_{\eta}^2}$ denote the corresponding inner products.

\subsection{Coercivity of main operator}
Firstly, we give the estimate of the core operator $\mathcal{L}^{\beta}$, which is defined in \eqref{def-core-oper}.
\begin{lemm}\label{5co0}
	For any $\beta\in(0,1]$, there holds
	\begin{align}\label{est-lem5.1}
		\langle\mathcal{L}^{\beta}(g),g(w_z)^2\rangle_z=\left(1-\frac{\beta}{2}\right)\|gw_z\|^2_{L^2_z}.
	\end{align}
\end{lemm}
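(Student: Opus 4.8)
The plan is to compute the inner product $\langle \mathcal{L}^{\beta}(g), g(w_z)^2\rangle_z$ term by term, using the explicit definition $\mathcal{L}^{\beta}(g) = g + \beta D_z g - L_z^{-1}(\Gamma^{\ast}_{\beta})g$ together with the identity $D_z(L^{-1}_z(\Gamma^{\ast}_{\beta})) = -\Gamma^{\ast}_{\beta}$ from Section \ref{sec:notation}. The first term contributes $\|gw_z\|^2_{L^2_z}$ directly. The decisive step is the second term: $\langle \beta D_z g, g(w_z)^2\rangle_z = \frac{\beta}{2}\langle D_z(g^2), w_z^2\rangle_z$, which after integrating by parts against the logarithmic-derivative operator $D_z$ (recall $D_z = z\partial_z$, so $\int D_z(h)\,\frac{dz}{z}\cdot(\text{stuff})$ shifts the derivative onto the weight via $\int_0^\infty \partial_z(h)\,k\,dz = -\int_0^\infty h\,\partial_z k\,dz$ with vanishing boundary terms) becomes $-\frac{\beta}{2}\langle g^2, D_z(w_z^2)\rangle_z = -\beta\langle g^2, w_z D_z w_z\rangle_z$. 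So I would first record the elementary integration-by-parts rule $\langle D_z f, h\rangle_z = -\langle f, D_z h\rangle_z$ (valid here because the extra factor $1/z$ from $D_z=z\partial_z$ is absorbed into the $L^2_z$ measure $dz$, and boundary terms vanish by the decay/growth assumptions on $g$ recorded in \eqref{4eq8}).

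Next I would compute $D_z w_z / w_z$ explicitly. From $w_z(z) = z^{-2/\beta}(1+z^{1/\beta})^2$, a direct logarithmic differentiation gives $\frac{D_z w_z}{w_z} = -\frac{2}{\beta} + \frac{2}{\beta}\frac{z^{1/\beta}}{1+z^{1/\beta}} = -\frac{2}{\beta}\frac{1}{1+z^{1/\beta}}$. On the other hand, from \eqref{3f7} one has $L^{-1}_z(\Gamma^{\ast}_{\beta})(z) = \frac{2}{1+z^{1/\beta}}$, so $\frac{D_z w_z}{w_z} = -\frac{1}{\beta}L^{-1}_z(\Gamma^{\ast}_{\beta})$. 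This is the key algebraic coincidence: the weight $w_z$ was chosen precisely so that its logarithmic derivative matches (up to the factor $-1/\beta$) the potential $L^{-1}_z(\Gamma^{\ast}_{\beta})$ appearing in $\mathcal{L}^{\beta}$. Consequently $-\beta\langle g^2, w_z D_z w_z\rangle_z = \langle g^2, (L^{-1}_z(\Gamma^{\ast}_{\beta}))\, w_z^2\rangle_z$, which exactly cancels the third term $-\langle L^{-1}_z(\Gamma^{\ast}_{\beta})g, g w_z^2\rangle_z$ of $\mathcal{L}^{\beta}$.

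After the cancellation, the only leftover is a piece of the $\beta D_z g$ term. To keep the bookkeeping clean I would instead split $\langle \beta D_z g, g w_z^2\rangle_z = -\frac{\beta}{2}\langle g^2, D_z(w_z^2)\rangle_z = -\beta\langle g^2, \frac{D_z w_z}{w_z} w_z^2\rangle_z = \langle g^2, L^{-1}_z(\Gamma^{\ast}_{\beta}) w_z^2\rangle_z$, so that
\begin{align*}
\langle\mathcal{L}^{\beta}(g),g w_z^2\rangle_z = \|gw_z\|^2_{L^2_z} + \langle g^2, L^{-1}_z(\Gamma^{\ast}_{\beta}) w_z^2\rangle_z - \langle g^2, L^{-1}_z(\Gamma^{\ast}_{\beta}) w_z^2\rangle_z = \|gw_z\|^2_{L^2_z}.
\end{align*}
That would give coefficient $1$, not $1-\frac{\beta}{2}$, so I must have mishandled the integration by parts: the correct identity is $\langle D_z f, f w_z^2\rangle_z = \frac12\int_0^\infty \partial_z(f^2)\, z\, w_z^2\, \frac{dz}{1} $ — wait, here $D_z f = z\partial_z f$ and the measure is $dz$, so $\langle D_z f, h\rangle_z = \int z(\partial_z f) h\,dz$, and integrating by parts in the ordinary sense gives $-\int f\,\partial_z(zh)\,dz = -\int f(h + z\partial_z h)\,dz = -\langle f,h\rangle_z - \langle f, D_z h\rangle_z$. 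So $\langle D_z f, h\rangle_z = -\langle f, h\rangle_z - \langle f, D_z h\rangle_z$; with $f=g$, $h = g w_z^2$ and writing it symmetrically, $\langle D_z g, g w_z^2\rangle_z = -\frac12\langle g^2, w_z^2\rangle_z - \frac12\langle g^2, D_z(w_z^2)\rangle_z$. The first piece contributes $-\frac{\beta}{2}\|gw_z\|^2_{L^2_z}$, the second cancels the potential term as above, and we are left with $\left(1-\frac{\beta}{2}\right)\|gw_z\|^2_{L^2_z}$, as claimed. The main obstacle is thus not any deep estimate but getting the integration-by-parts identity for the operator $D_z = z\partial_z$ exactly right (the extra $-\langle f,h\rangle_z$ term is what produces the $-\frac{\beta}{2}$) and verifying carefully that all boundary contributions at $z=0$ and $z=\infty$ vanish under the hypotheses \eqref{4eq8} on $g$; I would also note $w_z D_z w_z = \frac12 D_z(w_z^2)$ and $\frac{D_z w_z}{w_z} = -\frac1\beta L^{-1}_z(\Gamma^{\ast}_{\beta})$ as the two computational lemmas underpinning the argument.
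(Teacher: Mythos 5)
Your proof is correct and rests on exactly the same mechanism as the paper's: the identity $\frac{D_z w_z}{w_z}=-\frac{1}{\beta}L^{-1}_z(\Gamma^{\ast}_{\beta})$, followed by integration by parts under $D_z=z\partial_z$ producing the extra $-\frac{\beta}{2}\|gw_z\|^2_{L^2_z}$ term. The only (cosmetic) difference is that you compute $D_zw_z/w_z$ by differentiating the closed form of $w_z$ and integrate by parts on $\langle D_z g, g w_z^2\rangle_z$ directly, whereas the paper writes $w_z=2\beta^{-1}z^{-1/\beta}(\Gamma^\ast_\beta)^{-1}$, invokes the ODE \eqref{3f2} to get the same logarithmic derivative, and then factors the weight to recast $\mathcal{L}^{\beta}(g)w_z=\beta D_z(gw_z)+gw_z$ before a single clean IBP step.
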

\begin{proof}
	Recalling the natural radial weight $w_z(z)=2\beta^{-1}z^{-\frac{1}{\beta}}(\Gamma^{\ast}_{\beta})^{-1}$, we derive that
	\begin{align}\label{est-Dzwz-Gamma}
		\frac{D_z w_z}{w_z}=-\left(\frac{D_z \Gamma^{\ast}_{\beta}}{\Gamma^{\ast}_{\beta}}+\frac{1}{\beta}\right).
	\end{align}
	This equality, when combined with \eqref{3f2}, yields that
	\begin{align*}
		\mathcal{L}^{\beta}(g)w_z 
        &= gw_z + \beta D_z gw_z - L_z^{-1}(\Gamma^{\ast}_{\beta})gw_z  
		=\beta D_z (gw_z)+\left(1- \beta \frac{D_z w_z}{w_z}-
		L_z^{-1}(\Gamma^{\ast}_{\beta})\right)gw_z
        \\
		&=\beta D_z (gw_z)+gw_z.   
	\end{align*}
	This implies that 
	\begin{align*}
		\langle\mathcal{L}^{\beta}(g),g(w_z)^2\rangle_z&=\langle\beta D_z (gw_z)+gw_z,gw_z\rangle_z =\left(1-\frac{\beta}{2}\right)\|gw_z\|^2_{L^2_z}.
	\end{align*}
	Thus, we complete the proof of Lemma \ref{5co0}.
\end{proof}

Based on the estimate of the core operator in Lemma \ref{5co0}, we then investigate the coercivity of the main operator $\mathcal{L}^{\beta}_{\Gamma}$, which is defined in \eqref{def-main-oper}.
\begin{prop}\label{5co1}
	If $L^{-1}_{z,K}(g)(0)=0$, then there holds
	\begin{align}\label{est-H-1}
		\langle\mathcal{L}^{\beta}_{\Gamma}(g), g(w^K)^2\rangle=\|g\|^2_{\mathcal{H}^{-1}}.
	\end{align}
	Moreover, there exist constants $\alpha>0$ sufficiently small and $\eta(\beta)$, such that if $\alpha\ll 1-\eta\ll \beta$ and $|\mu|\leq \alpha^\frac{1}{2}$, then
	\begin{align}
		\langle\mathcal{L}^{\beta}_{\Gamma}(g), g\rangle_{\mathcal{H}_{\eta}^2}
		\geq& \frac{1}{3}\|g\|^2_{\mathcal{H}_{\eta}^2}-\frac{1}{100}\|g\|^2_{\mathcal{H}^{-1}},
		\label{est-H2eta}
		\\
		\langle\mathcal{L}^{\beta}_{\Gamma}(g), g\rangle_{\mathcal{E}_{\eta}^2}
		\geq& \frac{1}{3}\|g\|^2_{\mathcal{E}_{\eta}^2}-\frac{1}{100}\|g\|^2_{\mathcal{H}^{-1}}-\frac{1}{100}\|g\|^2_{\mathcal{H}_{\eta}^2}.
		\label{est-E2eta}
	\end{align}
\end{prop}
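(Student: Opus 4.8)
\emph{Proof plan.} The three assertions are of two different natures: \eqref{est-H-1} is an exact algebraic identity, while \eqref{est-H2eta} and \eqref{est-E2eta} are coercivity estimates obtained by commuting derivatives through $\mathcal{L}^{\beta}_{\Gamma}$ and absorbing the resulting commutators. Throughout we use the decomposition $\mathcal{L}^{\beta}_{\Gamma}=\mathcal{L}^{\beta}-\frac{3}{2\alpha}L^{-1}_{z,K}(\cdot)F^{\ast}_{\beta}$ from \eqref{def-main-oper}--\eqref{def-core-oper}, the fact that $\mathcal{L}^{\beta}$ acts only in the radial variable (so that it can be paired with the radial weight $w_z$ for a.e.\ fixed $\theta$ and then integrated in $\theta$), and the pointwise profile bounds for $F^{\ast}_{\beta}$ recorded in the proof of Lemma \ref{7fm1} (namely \eqref{est-Dz-1}--\eqref{est-Dtheta-2} with $\gamma=\beta$), which show that every derivative of $F^{\ast}_{\beta}$ is bounded by $\alpha\beta^{-O(1)}\Gamma_{\theta,\beta}\Gamma^{\ast}_{\beta}$; the $\alpha$ here will cancel the prefactor $\frac{3}{2\alpha}$.

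For \eqref{est-H-1}, I would pair $\mathcal{L}^{\beta}_{\Gamma}(g)$ with $g(w^{K})^{2}$ and treat the two pieces separately. Since the radial part of $w^{K}$ is exactly $w_z$, Lemma \ref{5co0} applied for a.e.\ $\theta$ and integrated against $(w^{K}_{\theta})^{2}d\theta$ gives $\langle\mathcal{L}^{\beta}(g),g(w^{K})^{2}\rangle=(1-\frac{\beta}{2})\|gw^{K}\|^{2}_{L^{2}}=\frac{2-\beta}{2}\|gw^{K}\|^{2}_{L^{2}}$, the first term of \eqref{def_norm_H-1}. For the remaining piece I would use the design identity $(w^{K}_{\theta})^{2}\Gamma_{\theta,\beta}=K$, the formula $F^{\ast}_{\beta}=\frac{2\alpha}{3c_{\ast,\beta}}\Gamma_{\theta,\beta}\Gamma^{\ast}_{\beta}$ from \eqref{form_F1}, and $\langle g,K\rangle_{\theta}=-D_zL^{-1}_{z,K}(g)$, which reduce the $\theta$-integration to a one-dimensional integral in $h:=L^{-1}_{z,K}(g)$, equal to $-\frac{1}{2c_{\ast,\beta}}\int_0^{\infty}D_z(h^{2})\,\Gamma^{\ast}_{\beta}w_z^{2}\,dz$. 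Integrating by parts in $z$ — the boundary terms vanish because $h(0)=0$ by hypothesis and $h(\infty)=0$ by the definition of $L^{-1}_{z,K}$ — and computing, via \eqref{est-Dzwz-Gamma} and the profile ODE \eqref{3f2}, that $\frac{1}{z}D_z\big(z\,\Gamma^{\ast}_{\beta}w_z^{2}\big)=\Gamma^{\ast}_{\beta}w_z^{2}\big(1-\tfrac1\beta-\tfrac1\beta L^{-1}_z(\Gamma^{\ast}_{\beta})\big)$, this contribution becomes precisely $\frac{1-\beta}{2c_{\ast,\beta}\beta}\|hw_z(\Gamma^{\ast}_{\beta})^{1/2}\|^{2}_{L^{2}_z}+\frac{1}{2c_{\ast,\beta}\beta}\|hw_z(\Gamma^{\ast}_{\beta}L^{-1}_z(\Gamma^{\ast}_{\beta}))^{1/2}\|^{2}_{L^{2}_z}$, i.e.\ the last two terms of \eqref{def_norm_H-1}. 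Summing gives \eqref{est-H-1}.

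For \eqref{est-H2eta} and \eqref{est-E2eta}, for each multi-index $(i,j)$ entering \eqref{def_norm_H2eta}--\eqref{def_norm_E2eta} I would write $D_z^{i}D_{\theta}^{j}\mathcal{L}^{\beta}_{\Gamma}(g)=\mathcal{L}^{\beta}_{\Gamma}(D_z^{i}D_{\theta}^{j}g)+\mathcal{C}_{ij}(g)$ using the commutator identities $[D_z,\mathcal{L}^{\beta}]=\Gamma^{\ast}_{\beta}$ (as a multiplier), $[D_{\theta},\mathcal{L}^{\beta}]=0$, the intertwining $L^{-1}_{z,K}(D_z f)=D_zL^{-1}_{z,K}(f)$, and the fact that $L^{-1}_{z,K}(D_{\theta}f)$ is, after an integration by parts in $\theta$, a bounded multiple of an $L^{-1}_{z,K}$-type quantity. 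The commutator $\mathcal{C}_{ij}(g)$ is then a sum of terms of two types: (a) $\Gamma^{\ast}_{\beta}$ (or $D_z\Gamma^{\ast}_{\beta}$, which by \eqref{est-Dz-1} is $\lesssim\beta^{-1}\Gamma^{\ast}_{\beta}$) times lower-$D_z$-order derivatives of $g$; and (b) $\frac{3}{2\alpha}L^{-1}_{z,K}(\text{a derivative of }g)$ times a derivative of $F^{\ast}_{\beta}$, the latter $\lesssim\alpha\beta^{-O(1)}\Gamma_{\theta,\beta}\Gamma^{\ast}_{\beta}$. For the diagonal term I would again invoke Lemma \ref{5co0} in the radial variable (the radial part of both $w^{\eta}$ and $w^{\lambda}$ is $w_z$), obtaining $(1-\frac{\beta}{2})\|D_z^{i}D_{\theta}^{j}g\,w\|^{2}$ plus, for the $\theta$-derivative components, the lower-order contributions produced by the weight algebra — these already give the factor $\tfrac13$ on the right of \eqref{est-H2eta}--\eqref{est-E2eta} after losing a bit from $\tfrac12$. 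The type-(b) commutators, whose $\frac{3}{2\alpha}$ prefactor is cancelled by the $\alpha$ in $D^{\bullet}F^{\ast}_{\beta}$, are absorbed into $\frac{1}{100}\|g\|^{2}_{\mathcal{H}^{-1}}$ because, after Cauchy--Schwarz, their weighted norms reduce exactly to the $L^{-1}_{z,K}(g)w_z(\Gamma^{\ast}_{\beta})^{1/2}$-type quantities that $\|g\|^{2}_{\mathcal{H}^{-1}}$ controls. The type-(a) commutators are cross terms between distinct components of the norm; by Young's inequality they are absorbed using the graded powers of $(1-\eta)$ in \eqref{def_norm_H2eta}--\eqref{def_norm_E2eta} and the hierarchy $\alpha\ll1-\eta\ll\beta$, with the mixed $\mathcal{E}^2_{\eta}$/$\mathcal{H}^2_{\eta}$ couplings in \eqref{est-E2eta} controlled by $\frac{1}{100}\|g\|^{2}_{\mathcal{H}^2_{\eta}}$ thanks to the $\alpha$-prefactor built into $\|\cdot\|_{\mathcal{E}^2_{\eta}}$.

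The main obstacle is the second-order level — the mixed component $\|D_zD_{\theta}g\,w^{\lambda}\|$ in \eqref{est-H2eta} and the component $\|D_z^{2}g\,w^{\lambda}\|$ in \eqref{est-E2eta}. There the type-(a) commutators couple components carrying different angular weights ($w^{\eta}$ versus $w^{\lambda}$) and different $(1-\eta)$-gradings, so one must verify that the specific exponents of $(1-\eta)$ in \eqref{def_norm_H2eta}--\eqref{def_norm_E2eta} and the $\alpha$-weight in \eqref{def_norm_E2eta} are chosen so that every such cross term acquires either a positive power of $\alpha$ or a sufficiently high power of $(1-\eta)$ to be genuinely \emph{absorbed} (not merely bounded) after Young's inequality; this is where the low-frequency gain encoded in \eqref{est-H-1} and the separation $\alpha\ll1-\eta\ll\beta$ are both indispensable.
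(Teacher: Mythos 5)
Your Step 1 (the identity \eqref{est-H-1}) matches the paper's argument exactly: Lemma \ref{5co0} applied radially against $(w^{K}_{\theta})^{2}d\theta$ gives the $\frac{2-\beta}{2}\|gw^K\|^2$ piece, the identity $(w^K_\theta)^2\Gamma_{\theta,\beta}=K$ collapses the $\theta$-integration, and the integration by parts in $z$ with $\partial_z(z\Gamma^{\ast}_{\beta}w_z^2)$ (computed from \eqref{est-Dzwz-Gamma} and \eqref{3f2}, exactly as in \eqref{est-pz-1}) produces the last two terms of \eqref{def_norm_H-1}. Your Steps for \eqref{est-H2eta}--\eqref{est-E2eta} also follow the paper's strategy at the structural level: commute $D_z^iD_\theta^j$ through $\mathcal{L}^\beta_\Gamma$, apply Lemma \ref{5co0} for the diagonal, control the $L^{-1}_{z,K}(g)(\partial^\bullet F^\ast_\beta)$-type commutators using the $\frac{3}{2\alpha}\cdot\alpha$ cancellation and the $\mathcal{H}^{-1}$-norm, and control the remaining cross terms via the $(1-\eta)$-grading and $\alpha\ll1-\eta\ll\beta$.

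However, there is a genuine gap in your plan for the ``type-(a)'' commutators. You propose to absorb the cross terms of the form $\langle\Gamma^{\ast}_{\beta}\,\partial^\bullet g,\;\partial^{\bullet+1}g\,(w^{\eta})^2\rangle$ (the paper's $J_{24},J_{32},J_{33},K_{22}$) ``by Young's inequality … using the graded powers of $(1-\eta)$ and the hierarchy $\alpha\ll1-\eta\ll\beta$.'' This cannot work for $\beta$ small: in e.g.\ $J_{24}=(1-\eta)^2\langle\Gamma^{\ast}_{\beta}g,D_zg\,(w^\eta)^2\rangle$, both paired components carry the \emph{same} $(1-\eta)^2$ weight and the multiplier $\|\Gamma^{\ast}_{\beta}\|_{L^\infty}\sim\beta^{-1}$ carries no $\alpha$- or $(1-\eta)$-smallness, so Cauchy--Schwarz yields a prefactor $\sim\beta^{-1}$ on $\|gw^\eta\|\|D_zgw^\eta\|$ that cannot be dominated by the diagonal coefficient $(1-\tfrac{\beta}{2})$ once $\beta$ is small. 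The paper's resolution is qualitatively different: it integrates these cross terms by parts in $z$ and reuses the identity \eqref{est-pz-1} to rewrite them as $\frac{1-\beta}{2\beta}\|\cdot(\Gamma^{\ast}_{\beta})^{1/2}w^\eta\|^2 + \frac{1}{2\beta}\|\cdot(\Gamma^{\ast}_{\beta}L^{-1}_z(\Gamma^{\ast}_{\beta}))^{1/2}w^\eta\|^2$, which is \emph{manifestly non-negative precisely when $\beta\le1$}. This sign-definiteness observation (not absorption) is the reason for the hypothesis $\beta\in(0,1]$ and is exactly where the paper's Remark \ref{Rema1} points for the $\beta>1$ extension. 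Without it, your argument does not close for small $\beta$. A second, minor, point: the commutator of $D_\theta$ with $\mathcal{L}^\beta_\Gamma$ does not produce any $L^{-1}_{z,K}(D_\theta g)$ term at all (since $L^{-1}_{z,K}(g)$ is independent of $\theta$, $D_\theta$ only lands on $F^\ast_\beta$, cf.\ \eqref{eq-D-theta}), so the ``integration by parts in $\theta$'' you invoke there is not actually needed.
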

\begin{proof}
	According to the definitions of different norms, we will divide into the following three steps to prove \eqref{est-H-1}, \eqref{est-H2eta} and \eqref{est-E2eta}, respectively. 
	
	\medskip

	\noindent\textbf{Step 1: Estimate for $\mathcal{H}^{-1}$-norm.}
	
	\medskip
	
	\noindent 
	It follows from \eqref{3f2} and \eqref{est-Dzwz-Gamma} that
	\begin{align}\label{est-pz-1}
		\partial_z(z\Gamma^{\ast}_{\beta}w_z^2)
		=\left(1-\frac{1}{\beta}\right)\Gamma^{\ast}_{\beta}w_z^2-\frac{1}{\beta}L^{-1}_z(\Gamma^{\ast}_{\beta})\Gamma^{\ast}_{\beta}w_z^2.
	\end{align}
	Recall that the natural angular weight $w^K_{\theta}(\theta)=[K(\theta)]^{\frac{1}{2}(1-\frac{\alpha}{3\beta})}$ and $w^K(z,\theta)=w^K_{\theta} w_z$.
	By virtue of $L^{-1}_{z,K}(g)(0)=0$, we can deduce from \eqref{def-main-oper} that
	\begin{align}\label{est-Lbetag-1}
		\langle\mathcal{L}^{\beta}_{\Gamma}(g), g(w^K)^2\rangle
		&=\langle\mathcal{L}^{\beta}(g), g(w^K)^2\rangle-\langle\frac{3}{2\alpha}L^{-1}_{z,K}(g)F^{\ast}_{\beta}, g(w^K)^2\rangle.
	\end{align}
	It follows from \eqref{form_F1}, \eqref{def-main-oper}, \eqref{est-Dzwz-Gamma} and \eqref{est-pz-1} that
	\begin{align*}
		\langle\frac{3}{2\alpha}L^{-1}_{z,K}(g)F^{\ast}_{\beta}, g(w^K)^2\rangle
		&=\langle L^{-1}_{z,K}(g)\frac{\Gamma_{\theta,\beta}}{c_{\ast,\beta}}\Gamma^{\ast}_{\beta}, g(w^K)^2\rangle
		=\frac{1}{c_{\ast,\beta}}\langle L^{-1}_{z,K}(g)\Gamma^{\ast}_{\beta}, gKw_z^2\rangle
		\\
		&
		=-\frac{1}{c_{\ast,\beta}}\langle L^{-1}_{z,K}(g)\Gamma^{\ast}_{\beta}, D_zL^{-1}_{z,K}(g)w_z^2\rangle_z
		=\frac{1}{2c_{\ast,\beta}}\langle [L^{-1}_{z,K}(g)]^2, \partial_z(z\Gamma^{\ast}_{\beta}w_z^2)\rangle_z
		\\
		&
		=\frac{1}{2c_{\ast,\beta}}\langle [L^{-1}_{z,K}(g)]^2, \left(1-\frac{1}{\beta}\right)\Gamma^{\ast}_{\beta}w_z^2+\frac{1}{\beta}L^{-1}_z(\Gamma^{\ast}_{\beta})\Gamma^{\ast}_{\beta}w_z^2\rangle_z.
	\end{align*}
	Substituting the estimate above and \eqref{est-lem5.1} into \eqref{est-Lbetag-1}, we obtain \eqref{est-H-1} directly.
	
	\medskip

	\noindent\textbf{Step 2: Estimate for $\mathcal{H}^{2}_{\eta}$-norm.}
	
	\medskip
	
	\noindent 
	Recalling the definition of $\mathcal{H}_{\eta}^2$, we have
	\begin{align}
		\langle\mathcal{L}^{\beta}_{\Gamma}(g), g\rangle_{\mathcal{H}_{\eta}^2}
		=&
		(1-\eta)^2\langle\mathcal{L}^{\beta}_{\Gamma}(g), g(w^\eta)^2\rangle
		+\left[(1-\eta)^{-2}\langle D_{\theta}\mathcal{L}^{\beta}_{\Gamma}(g), D_{\theta}g(w^\eta)^2\rangle
		\right.
		\label{est-J}\\
		&
		\left.
		+(1-\eta)^2\langle D_{z}\mathcal{L}^{\beta}_{\Gamma}(g), D_zg(w^\eta)^2\rangle\right]
		+\left[\langle D_{\theta}^2\mathcal{L}^{\beta}_{\Gamma}(g), D_{\theta}^2g(w^\eta)^2\rangle
		\right.
		\notag\\
		&
		\left.
		+(1-\eta)^2\langle D_{\theta}D_{z}\mathcal{L}^{\beta}_{\Gamma}(g), D_{\theta}D_zg(w^\eta)^2\rangle
		+(1-\eta)^4\langle D_{z}^2\mathcal{L}^{\beta}_{\Gamma}(g), D_{z}^2g(w^\eta)^2\rangle\right]
		\notag\\
		\tri&J_1+J_2+J_3.
		\notag
	\end{align}
	We note that $J_1$, $J_2$ and $J_3$ are related to the $\mathcal{L}^{2}_{\eta}$,  $\dot{\mathcal{H}}^{1}_{\eta}$ and $\dot{\mathcal{H}}^{2}_{\eta}$ norms, respectively.
	
	\medskip   
	
	\noindent\textbf{Step 2.1: Estimate for $\mathcal{L}^{2}_{\eta}$-norm.} 
	
	\medskip
	
	\noindent 
	Recalling the definition of $\mathcal{L}^{\beta}_{\Gamma}$ in \eqref{def-main-oper}, we have
	\begin{align}\label{5in3-1}
		J_1
		&=(1-\eta)^2\langle\mathcal{L}^{\beta}(g), g(w^\eta)^2\rangle-(1-\eta)^2\langle\frac{3}{2\alpha}L^{-1}_{z,K}(g)F^{\ast}_{\beta}, g(w^\eta)^2\rangle
		\tri J_{11}+J_{12}.
	\end{align}
	By virtue of Lemma \ref{5co0}, we infer that
	\begin{align}\label{5in1}
		J_{11}
		&=\left(1-\frac{\beta}{2}\right)(1-\eta)^2\|gw^\eta\|^2_{L^2}.
	\end{align}
	In view of \eqref{form_F1}, one can get
	that
	\begin{align}\label{5in2}
		|J_{12}|
		\lesssim&(1-\eta)^2\|gw^\eta\|_{L^2}\|\frac{3}{2\alpha}L^{-1}_{z,K}(g)F^{\ast}_{\beta}w^\eta\|_{L^2} \\ \notag 
		\lesssim&(1-\eta)^2\|gw^\eta\|_{L^2}\|L^{-1}_{z,K}(g)\Gamma^{\ast}_{\beta}w_z\|_{L^2_z}\|\frac{\Gamma_{\theta,\beta}}{c_{\ast,\beta}}w_\theta^\eta\|_{L^2_\theta} \\ \notag
		\lesssim&(1-\eta)^2\|gw^\eta\|_{L^2}
		\beta^{-\frac12}\|L^{-1}_{z,K}(g)w_z(\Gamma^{\ast}_{\beta}L^{-1}_{z}(\Gamma^{\ast}_{\beta}))^\frac{1}{2}\|_{L^2_z}
		\beta^{\frac12}\|\frac{\Gamma^{\ast}_{\beta}}{L_{z}^{-1}(\Gamma^{\ast}_{\beta})}\|^{\frac{1}{2}}_{L^{\infty}_{z}}
		\|\Gamma_{\theta,\beta}w_\theta^\eta\|_{L^2_\theta} \\ \notag
		\lesssim&(1-\eta)^{\frac{3}{2}}\|gw^\eta\|_{L^2}\|g\|_{\mathcal{H}^{-1}} \\ \notag
		\lesssim& (1-\eta)^{\frac{5}{2}}\|gw^\eta\|^2_{L^2}+(1-\eta)^{\frac{1}{2}}\|g\|^2_{\mathcal{H}^{-1}},
	\end{align}
	where we have used the fact that
	\begin{align}
		&\|\frac{\Gamma^{\ast}_{\beta}}{L_{z}^{-1}(\Gamma^{\ast}_{\beta})}\|_{L^{\infty}_{z}}\lesssim \beta^{-1},
		\label{est-Gamma*-1}
		\\
		&\|\Gamma_{\theta,\beta}w_\theta^\eta\|_{L^2_\theta}^2
		\lesssim  \int_0^{\frac{\pi}{2}} \sin^{-\eta}(2\theta)d\theta\lesssim (1-\eta)^{-1}, \quad \forall~\eta\in(0,1).
		\label{est-sin-1}
	\end{align}
	The combination of \eqref{5in3-1}-\eqref{5in2} gives that
	\begin{align}\label{5in3}
		J_1
		&\geq \left(1-\frac{\beta}{2}-C(1-\eta)^{\frac{1}{2}}\right)(1-\eta)^2\|gw^\eta\|^2_{L^2}-C(1-\eta)^{\frac{1}{2}}\|g\|^2_{\mathcal{H}^{-1}}.
	\end{align}

	\noindent\textbf{Step 2.2: Estimate for $\dot{\mathcal{H}}^{1}_{\eta}$-norm.} 
	
	\medskip
	
	\noindent 
	After some calculations, one obtains that
	\begin{align}
		D_\theta \mathcal{L}^{\beta}_{\Gamma}(g)
		&=\mathcal{L}^{\beta}(D_\theta g)-\frac{3}{2\alpha}L^{-1}_{z,K}(g)D_\theta F^{\ast}_{\beta},
		\label{eq-D-theta}
		\\
		D_z \mathcal{L}^{\beta}_{\Gamma}(g)
		&=\mathcal{L}^{\beta}(D_z g)+\Gamma^{\ast}_{\beta}g-\frac{3}{2\alpha}L^{-1}_{z,K}(g)D_z F^{\ast}_{\beta}+\frac{3}{2\alpha}\langle g,K\rangle_\theta F^{\ast}_{\beta}.
		\label{eq-D-z}
	\end{align}
	Hence, we have
	\begin{align}
		J_2=&(1-\eta)^{-2}\langle\mathcal{L}^{\beta}(D_\theta g), D_\theta g(w^\lambda)^2\rangle
		+(1-\eta)^{2}\langle\mathcal{L}^{\beta}(D_z g), D_z g(w^\eta )^2\rangle
		\label{est-J2-1}\\
		&
		-(1-\eta)^{-2}\langle\frac{3}{2\alpha}L^{-1}_{z,K}(g)D_\theta F^{\ast}_{\beta},D_\theta g(w^\lambda)^2\rangle
		+(1-\eta)^{2}\langle \Gamma^{\ast}_{\beta}g, D_z g(w^\eta )^2\rangle
		\notag\\
		&
		-(1-\eta)^{2}\langle\frac{3}{2\alpha}L^{-1}_{z,K}(g)D_z F^{\ast}_{\beta},D_z g(w^\eta)^2\rangle
		+(1-\eta)^{2}\langle\frac{3}{2\alpha}\langle g,K\rangle_\theta F^{\ast}_{\beta},D_z g(w^\eta)^2\rangle
		\notag\\
		\tri& \sum_{i=1}^{6}J_{2i}.
		\notag
	\end{align}
	Next, we estimate $J_{2i}$ $(i=1,\cdots,6)$ term by term.
	By virtue of Lemma \ref{5co0}, we infer that
	\begin{align}
		J_{21}=\left(1-\frac{\beta}{2}\right)(1-\eta)^{-2}\|D_\theta gw^\lambda\|^2_{L^2},~~~~
		J_{22}=\left(1-\frac{\beta}{2}\right)(1-\eta)^{2}\|D_z gw^\eta\|^2_{L^2}.
		\label{5in4}
	\end{align}
	
	According to \eqref{est-Gamma*-1} and Proposition \ref{3FM2}, we have
	\begin{align}\label{5in5}
		|J_{23}|
		&\lesssim(1-\eta)^{-2}\|D_\theta gw^\lambda\|_{L^2}\|\frac{3}{2\alpha}L^{-1}_{z,K}(g)D_\theta F^{\ast}_{\beta}w^\lambda\|_{L^2} \\ \notag 
		&\lesssim\sqrt{\frac{\alpha}{\beta}}(1-\eta)^{-2}\|D_\theta gw^\lambda\|_{L^2}\|g\|_{\mathcal{H}^{-1}} \\ \notag
		&\lesssim\sqrt{\frac{\alpha}{\beta}}(1-\eta)^{-1}\left((1-\eta)^{-2}\|D_\theta gw^\lambda\|^2_{L^2}+\|g\|^2_{\mathcal{H}^{-1}}\right),
	\end{align}
	where we have used the fact that
	\begin{align}\label{est-sin-2}
		\|\Gamma_{\theta,\beta}w_\theta^\lambda\|_{L^2_\theta}^2\lesssim \int_0^{\frac{\pi}{2}} \sin^{-1+\frac{17\alpha}{30\beta}}(2\theta)d\theta\lesssim \frac{\beta}{\alpha}.
	\end{align}
	With the help of \eqref{est-pz-1}, we get that for any $0<\beta\leq 1$,
	\begin{align}\label{5in8}
		J_{24}
		&=\frac{1}{2}(1-\eta)^{2}\langle \Gamma^{\ast}_{\beta}, D_z g^2(w^\eta )^2\rangle
		=-\frac{1}{2}(1-\eta)^{2}\langle \partial_z(z\Gamma^{\ast}_{\beta}w_z^2),g^2(w_\theta^\eta)^2\rangle\\ \notag
		&=\frac{1-\beta}{2\beta}(1-\eta)^{2}\|g(\Gamma^\ast_\beta)^{\frac{1}{2}}w^\eta\|^2_{L^2}+\frac{1}{2\beta}(1-\eta)^{2}\|g(\Gamma^\ast_\beta L^{-1}_z(\Gamma^\ast_\beta))^{\frac{1}{2}}w^\eta\|^2_{L^2}\geq 0.
	\end{align}
	Then \eqref{est-Gamma*-1}, \eqref{est-sin-1} and Proposition \ref{3FM2} ensure that
	\begin{align}\label{5in9}
		|J_{25}|
		&\lesssim(1-\eta)^{2}\|D_z gw^\eta\|_{L^2}\|\frac{3}{2\alpha}L^{-1}_{z,K}(g)D_z F^{\ast}_{\beta}w^\eta\|_{L^2} 
        \\
		&\lesssim\frac{1}{\beta^{\frac32}}\big((1-\eta)^{\frac{5}{2}}\|D_z g w^\eta\|^2_{L^2}+(1-\eta)^{\frac{1}{2}}\|g\|^2_{\mathcal{H}^{-1}}\big).
        \notag
	\end{align}
	Thanks to \eqref{3f7} and Proposition \ref{3FM2}, we can deduce that
	\begin{align}\label{5in10}
		|J_{26}|
		&\lesssim(1-\eta)^{2}\|D_z gw^\eta\|_{L^2}\|\frac{3}{2\alpha}\langle g,K\rangle_\theta F^{\ast}_{\beta}w^\eta\|_{L^2} \\ \notag 
		&\lesssim(1-\eta)^{2}\|D_zgw^\eta\|_{L^2}\|\langle g,K\rangle_\theta\Gamma^{\ast}_{\beta}w_z\|_{L^2_z}\|\frac{\Gamma_{\theta,\beta}}{c_{\ast,\beta}}w_\theta^\eta\|_{L^2_\theta} \\ \notag
		&\lesssim\frac{1}{\beta}\left((1-\eta)^{\frac{5}{2}}\|D_z g w^\eta\|^2_{L^2}+(1-\eta)^{\frac{1}{2}}\|g\|^2_{\mathcal{H}^{-1}}\right).
	\end{align}
	Substituting \eqref{5in4}-\eqref{5in5}, \eqref{5in8}-\eqref{5in10} into \eqref{est-J2-1}, we conclude that
	\begin{align}\label{5in6}
		J_{2}
		\geq& \left(1-\frac{\beta}{2}-C\sqrt{\frac{\alpha}{\beta}}(1-\eta)^{-1}\right)(1-\eta)^{-2}\|D_\theta gw^\lambda\|^2_{L^2} -C\sqrt{\frac{\alpha}{\beta}}(1-\eta)^{-1}\|g\|^2_{\mathcal{H}^{-1}}
		\\
		&+\left(1-\frac{\beta}{2}-C(1-\eta)^{\frac{1}{2}}\frac{1}{\beta^{\frac32}}\right)(1-\eta)^2\|D_z gw^\eta\|^2_{L^2}
		-C(1-\eta)^{\frac{1}{2}}\frac{1}{\beta^{\frac32}}\|g\|^2_{\mathcal{H}^{-1}}.
		\notag
	\end{align}

	\noindent\textbf{Step 2.3: Estimate for $\dot{\mathcal{H}}^{2}_{\eta}$-norm.}
	
	\medskip
	
	\noindent
	Straightforward computations give that
	\begin{align*}
		D^2_\theta \mathcal{L}^{\beta}_{\Gamma}(g)
		=&\mathcal{L}^{\beta}(D^2_\theta g)-\frac{3}{2\alpha}L^{-1}_{z,K}(g)D^2_\theta F^{\ast}_{\beta},
		\\
		D_\theta D_z \mathcal{L}^{\beta}_{\Gamma}(g)
		=&\mathcal{L}^{\beta}(D_\theta D_z g)+\Gamma^{\ast}_{\beta}D_\theta g-\frac{3}{2\alpha}L^{-1}_{z,K}(g)D_\theta D_z F^{\ast}_{\beta}+\frac{3}{2\alpha}\langle g,K\rangle_\theta D_\theta F^{\ast}_{\beta},
	\end{align*}
	and
	\begin{align}\label{5in18}
		D^2_z \mathcal{L}^{\beta}_{\Gamma}(g)
		&=\mathcal{L}^{\beta}(D^2_z g)+2\Gamma^{\ast}_{\beta}D_z g+gD_z\Gamma^{\ast}_{\beta}-\frac{3}{2\alpha}L^{-1}_{z,K}(g)D^2_z F^{\ast}_{\beta}+\frac{3}{\alpha}\langle g,K\rangle_\theta D_z F^{\ast}_{\beta}
        +\frac{3}{2\alpha}\langle D_z g,K\rangle_\theta F^{\ast}_{\beta}.
	\end{align}
	Applying three equalities above, we find that
	\begin{align}\label{est-J3-1}
		J_3=&\left[\langle \mathcal{L}^{\beta}(D^2_\theta g), D^2_\theta g(w^\lambda)^2\rangle
		+(1-\eta)^{2}\langle\mathcal{L}^{\beta}(D_\theta D_z g), D_\theta D_z g(w^\lambda)^2\rangle
		\right.
		\\
		&
		\left.
		+(1-\eta)^{4}\langle\mathcal{L}^{\beta}(D^2_z g), D^2_z g(w^\eta )^2\rangle
		\right]
		+(1-\eta)^{2}\langle \Gamma^{\ast}_{\beta}D_\theta g, D_\theta D_z g(w^\lambda)^2\rangle
		\notag\\
		&
		+(1-\eta)^{4}\langle 2\Gamma^{\ast}_{\beta}D_z g, D^2_z g(w^\eta )^2\rangle
		-
		\langle \frac{3}{2\alpha}L^{-1}_{z,K}(g)D^2_\theta F^{\ast}_{\beta}, D^2_\theta g(w^\lambda)^2\rangle
		\notag\\
		&
		-(1-\eta)^{2}\langle\frac{3}{2\alpha}L^{-1}_{z,K}(g)D_\theta D_z F^{\ast}_{\beta},D_\theta D_z g(w^\lambda)^2\rangle
		-(1-\eta)^{4}\langle\frac{3}{2\alpha}L^{-1}_{z,K}(g)D^2_z F^{\ast}_{\beta},D^2_z g(w^\eta)^2\rangle
		\notag\\
		&
		+(1-\eta)^{2}\langle\frac{3}{2\alpha}\langle g,K\rangle_\theta D_\theta F^{\ast}_{\beta},D_\theta D_z g(w^\lambda)^2\rangle
		+(1-\eta)^{4}\langle gD_z \Gamma^{\ast}_{\beta},D^2_z g(w^\eta)^2\rangle
		\notag\\
		&
		+\Big[(1-\eta)^{4}\langle\frac{3}{\alpha}\langle g,K\rangle_\theta D_z F^{\ast}_{\beta},D^2_z g(w^\eta)^2\rangle 
		+(1-\eta)^{4}\langle\frac{3}{2\alpha}\langle D_z g,K\rangle_\theta F^{\ast}_{\beta},D^2_z g(w^\eta)^2\rangle\Big]
		\notag
		\\
		\tri&\sum_{i=1}^{9}J_{3i}.
		\notag
	\end{align}
	Now, we begin to estimate $J_{3i}$ $(i=1,\cdots,9)$ term by term.
	Again thanks to Lemma \ref{5co0}, one gets that
	\begin{align*}
		J_{31}=
		\left(1-\frac{\beta}{2}\right)
		\left(\|D_\theta^2 gw^\lambda\|^2_{L^2}
		+
		(1-\eta)^{2}\|D_\theta D_z gw^\lambda\|^2_{L^2}
		+
		(1-\eta)^{4}\|D_z^2 gw^\eta\|^2_{L^2}
		\right).
	\end{align*}
	It then follows from a similar way as \eqref{5in8} that for any $0<\beta\leq1$,
	\begin{align*}
		J_{32}+J_{33}
		\geq0.
	\end{align*}
	We then deduce from a similar way as \eqref{5in5} that
	\begin{align}\label{5in12}
		|J_{34}|
		&\lesssim \sqrt{\frac{\alpha}{\beta}}\left(\|D^2_\theta gw^\lambda\|^2_{L^2}+\|g\|^2_{\mathcal{H}^{-1}}\right).
	\end{align}
	In the same manner as \eqref{5in5} and \eqref{5in9}, one gets, by \eqref{est-sin-2} that
	\begin{align*}
		|J_{35}|
		&\lesssim\frac{\alpha^{\frac{1}{2}}}{\beta^{\frac{3}{2}}}(1-\eta)^{2}\|D_\theta D_z g w^\lambda\|_{L^2}\|g\|_{\mathcal{H}^{-1}} \lesssim\alpha^{\frac{1}{2}}\left((1-\eta)^{2}\|D_\theta D_z g w^\lambda\|^2_{L^2}+\|g\|^2_{\mathcal{H}^{-1}}\right).
	\end{align*}
	Combining this with \eqref{est-sin-1} and applying a derivation analogous to \eqref{5in9}, we obtain that
	\begin{align}\label{5in15-1}
		|J_{36}|
		&\lesssim\frac{1}{\beta^{2}}(1-\eta)^{\frac72}\|D_z^2 g w^\eta\|_{L^2}\|g\|_{\mathcal{H}^{-1}} \lesssim\frac{1}{\beta^{2}}(1-\eta)^{\frac12}\left((1-\eta)^{4}\|D_z^2 g w^\eta\|^2_{L^2}+\|g\|^2_{\mathcal{H}^{-1}}\right).
	\end{align}
	It then follows from a similar way as \eqref{5in10} that
	\begin{align}\label{5in16}
		|J_{37}|
		&\lesssim\frac{\alpha^{\frac12}}{\beta^{\frac32}}(1-\eta)^{2}\|D_\theta D_z g w^\lambda\|_{L^2}\|g\|_{\mathcal{H}^{-1}} 
		\lesssim\alpha^{\frac{1}{2}}((1-\eta)^{2}\|D_\theta D_z g w^\lambda\|^2_{L^2}+\|g\|^2_{\mathcal{H}^{-1}}).
	\end{align}
	Using Proposition \ref{3FM2}, we infer that
	\begin{align}\label{5in21}
		|J_{38}|
		\lesssim\frac{1}{\beta^2}(1-\eta)\left((1-\eta)^{4}\|D^2_z g w^\eta\|^2_{L^2}+(1-\eta)^{2}\|g w^\eta\|^2_{L^2}\right).
	\end{align}
	We then deduce through a method analogous to \eqref{5in10} that
	\begin{align}\label{5in22}
		|J_{39}|
		\lesssim\frac{1}{\beta^2}(1-\eta)^{\frac{1}{2}}((1-\eta)^{4}\|D^2_z g w^\eta\|^2_{L^2}+(1-\eta)^{2}\|D_z g w^\eta\|^2_{L^2}+\|g\|^2_{\mathcal{H}^{-1}}).
	\end{align}
	Inserting 
	the estimates of $J_{3i}$ with $i=1,\cdots,9$ into \eqref{est-J3-1}, we have
	\begin{align}\label{5in17}
		J_3
		&\geq 
		\left(1-\frac{\beta}{2}-C\sqrt{\frac{\alpha}{\beta}}\right)\|D_\theta^2 gw^\lambda\|^2_{L^2} 
		-C\sqrt{\frac{\alpha}{\beta}}\|g\|^2_{\mathcal{H}^{-1}}
		\\
		&+
		\left(1-\frac{\beta}{2}-C\alpha^{\frac{1}{2}}\right)(1-\eta)^2\|D_\theta D_z gw^\lambda\|^2_{L^2} 
		-C\alpha^{\frac{1}{2}}\|g\|^2_{\mathcal{H}^{-1}}
		\notag\\
		&+\left(1-\frac{\beta}{2}-C(1-\eta)^{\frac{1}{2}}\frac{1}{\beta^2}\right)(1-\eta)^4\|D^2_z gw^\eta\|^2_{L^2}  
		-C(1-\eta)^{\frac{1}{2}}\frac{1}{\beta^2}(\|g\|^2_{\mathcal{H}^{-1}}+\|g\|^2_{\mathcal{H}^{2}_\eta})
		.
		\notag
	\end{align}
	
	We substitute \eqref{5in3} in \textbf{Step 2.1}, \eqref{5in6} in \textbf{Step 2.2} and \eqref{5in17} in \textbf{Step 2.3} into \eqref{est-J}, and choose the suitable constants $\alpha$ and $1-\eta$ sufficiently small relative to $\beta$, to conclude that \eqref{est-H2eta} holds.
	
	\medskip

	\noindent\textbf{Step 3: Estimate for $\mathcal{E}^{2}_{\eta}$-norm.}
	
	\medskip
	
	\noindent
	We recall the definition of $\mathcal{E}^{2}_{\eta}$, to discover that
	\begin{align}\label{est-K-1}
		\langle\mathcal{L}^{\beta}_{\Gamma}(g), g\rangle_{\mathcal{E}_{\eta}^2}
		=&
		\alpha(1-\eta)^2\langle\mathcal{L}^{\beta}_{\Gamma}(g), g(w^\lambda)^2\rangle
		+\alpha(1-\eta)^4\langle D^2_z\mathcal{L}^{\beta}_{\Gamma}(g), D^2_z g(w^\lambda)^2\rangle
		\tri K_1+K_2,
	\end{align}
	where 
	$K_1$ is related to the $\mathcal{L}^{2}$ norm with the weight $w^\lambda$, while $K_2$ is related to the radial $\dot{\mathcal{H}}^{2}$ norm with the weight $w^\lambda$.
	
	\medskip   
	
	\noindent\textbf{Step 3.1: Estimate for $\mathcal{L}^{2}_{\eta}$-norm with the weight $w^\lambda$.} 
	
	\medskip
	
	\noindent 
	According to the definition of $\mathcal{L}^{\beta}_{\Gamma}$ in \eqref{def-main-oper}, $K_1$ can be divided into the following two terms:
	\begin{align}\label{est-K1-1}
		K_1=\alpha(1-\eta)^2\langle\mathcal{L}^{\beta}(g), g(w^\lambda)^2\rangle
		-\alpha(1-\eta)^2\langle\frac{3}{2\alpha}L^{-1}_{z,K}(g)F^{\ast}_{\beta}, g(w^\lambda)^2\rangle\tri K_{11}+K_{12}.
	\end{align}
	By virtue of Lemma \ref{5co0}, we deduce that
	\begin{align}\label{5in24}
		K_{11}
		=\left(1-\frac{\beta}{2}\right)\alpha(1-\eta)^2\|gw^\lambda\|^2_{L^2}.
	\end{align}
	With the help of \eqref{est-sin-2}, one can infer that
	\begin{align}\label{5in25}
		|K_{12}|
		&\lesssim\alpha(1-\eta)^2\|gw^\lambda\|_{L^2}\|\frac{3}{2\alpha}L^{-1}_{z,K}(g)F^{\ast}_{\beta}w^\lambda\|_{L^2} \\ \notag 
		&\lesssim\alpha(1-\eta)^2\|gw^\lambda\|_{L^2}\|L^{-1}_{z,K}(g)\Gamma^{\ast}_{\beta}w_z\|_{L^2_z}\|\frac{\Gamma_{\theta,\beta}}{c_{\ast,\beta}}w_\theta^\lambda\|_{L^2_\theta} \\ \notag
		&\lesssim (1-\eta)(\alpha(1-\eta)^{2}\|gw^\lambda\|^2_{L^2}+\|g\|^2_{\mathcal{H}^{-1}}).
	\end{align}
	Plugging \eqref{5in24} and \eqref{5in25} in \eqref{est-K1-1}, we conclude that
	\begin{align}\label{5in26}
		K_1
		&\geq \left(1-\frac{\beta}{2}-C(1-\eta)\right)\alpha(1-\eta)^2\|gw^\lambda\|^2_{L^2}-C(1-\eta)\|g\|^2_{\mathcal{H}^{-1}}.
	\end{align}

	\noindent\textbf{Step 3.2: Estimate for the radial $\dot{\mathcal{H}}^{2}_{\eta}$-norm with the weight $w^\lambda$.} 
	
	\medskip
	
	\noindent 
	Recalling \eqref{5in18} and the definition of $\mathcal{L}^{\beta}_{\Gamma}$ in \eqref{def-main-oper}, we divide $K_2$ into the following five terms:
	\begin{align}\label{est-K2-1}
		K_2&=\alpha(1-\eta)^{4}\langle\mathcal{L}^{\beta}(D^2_z g), D^2_z g(w^\lambda )^2\rangle
		+\alpha(1-\eta)^{4}\langle 2\Gamma^{\ast}_{\beta}D_z g, D^2_z g(w^\lambda)^2\rangle 
		\\
		&~~~
		+\alpha(1-\eta)^{4}\langle gD_z \Gamma^{\ast}_{\beta},D^2_z g(w^\lambda)^2\rangle
		-\alpha(1-\eta)^{4}\langle\frac{3}{2\alpha}L^{-1}_{z,K}(g)D^2_z F^{\ast}_{\beta},D^2_z g(w^\lambda)^2\rangle
		\notag\\
		&~~~
		+\alpha(1-\eta)^{4}\left(\langle\frac{3}{\alpha}\langle g,K\rangle_\theta D_z F^{\ast}_{\beta},D^2_z g(w^\lambda)^2\rangle +\langle\frac{3}{2\alpha}\langle D_z g,K\rangle_\theta F^{\ast}_{\beta},D^2_z g(w^\lambda)^2\rangle\right)
		\notag\\
		&\tri \sum_{i=1}^{5}K_{2i}.
		\notag
	\end{align}
	We now estimate $K_{2i}$ $(i=1,\cdots,5)$ term by term.
	Again thanks to Lemma \ref{5co0}, we obtain that
	\begin{align}\label{5in27}
		K_{21}
		=\left(1-\frac{\beta}{2}\right)\alpha(1-\eta)^{4}\|D^2_z gw^\lambda\|^2_{L^2}.
	\end{align}
	We use a similar derivation of \eqref{5in8} and \eqref{5in21}, to deduce that
	\begin{align}\label{5in28}
		K_{22}\geq0,~~~~
		|K_{23}|\lesssim \frac{1-\eta}{\beta^2}
		\left(\alpha(1-\eta)^{4}\|D^2_z g w^\lambda\|^2_{L^2}+\alpha(1-\eta)^{2}\|g w^\lambda\|^2_{L^2}\right).
	\end{align}
	In view of \eqref{est-sin-2}, we then deduce from a similar way as \eqref{5in15-1} that
	\begin{align}\label{5in29}
		|K_{24}|
		\lesssim \frac{1-\eta}{\beta^2}\left(\alpha(1-\eta)^{4}\|D^2_z g w^\lambda\|^2_{L^2}+\|g\|^2_{\mathcal{H}^{-1}}\right).
	\end{align}
	We employ \eqref{est-sin-2} again, then use a similar derivation as \eqref{5in22}, to find that
	\begin{align}\label{5in30}
		|K_{25}|
		\lesssim\frac{1-\eta}{\beta^2}(\alpha(1-\eta)^{4}\|D^2_z g w^\eta\|^2_{L^2}+(1-\eta)^{2}\|D_z g w^\eta\|^2_{L^2}+\|g\|^2_{\mathcal{H}^{-1}}).
	\end{align}
	We substitute \eqref{5in27}-\eqref{5in30} into \eqref{est-K2-1}, to conclude that
	\begin{align}\label{5in31}
		K_2
		&\geq \Big(1-\frac{\beta}{2}-C\frac{1-\eta}{\beta^2}\Big) \alpha(1-\eta)^4 \|D^2_z gw^\lambda\|^2_{L^2} -C\frac{1-\eta}{\beta^2}(\|g\|^2_{\mathcal{H}^{-1}}+\alpha(1-\eta)^{2}\|g w^\lambda\|^2_{L^2}+\|g\|^2_{\mathcal{H}^{2}_\eta}).
	\end{align}

	Inserting \eqref{5in26} in \textbf{Step 3.1} and \eqref{5in31} in \textbf{Step 3.2} into \eqref{est-K-1}, choosing $1-\eta$ sufficiently small relative to $\frac{1}{\beta}$, one can obtain \eqref{est-E2eta} directly.
\end{proof}

\subsection{Parameter stability of main operator}
With the coercivity of the main operator $\mathcal{L}^{\beta}_{\Gamma}$ shown in Proposition \ref{5co1} at hand, our goal in this subsection is to establish the coercivity of the operator $\mathcal{L}_{\Gamma}$. 
We note that the parameter stability plays a crucial role in this analysis.
To begin with, we introduce the following Hardy-type inequality.
\begin{lemm}\label{5ha1}
	If $L^{-1}_{z,K}(g)(0)=0$, then there holds 
	$$\|L^{-1}_{z,K}(g)w_z\|_{L^2_z}\leq C\|gw^K\|_{L^2}.$$ 
\end{lemm}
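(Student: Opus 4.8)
The plan is to prove this weighted Hardy inequality by reducing it, via Cauchy--Schwarz in the angular variable, to a pair of one--dimensional weighted Hardy inequalities for the radial profile of $L^{-1}_{z,K}(g)$; the hypothesis $L^{-1}_{z,K}(g)(0)=0$ will be used exactly once, and crucially. Throughout, set $h(z)\tri\langle g(z,\cdot),K\rangle_\theta$ and $H(z)\tri L^{-1}_{z,K}(g)(z)=\int_z^\infty h(\rho)\rho^{-1}\,d\rho$, so that $D_zH=-h$ by the identities in Section~\ref{sec:notation}.

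\emph{Step 1: angular reduction.} Writing $K=(w^K_\theta)^2\cdot K(w^K_\theta)^{-2}$, applying Cauchy--Schwarz in $\theta$, and using $(w^K_\theta)^2=K^{1-\frac{\alpha}{3\beta}}$, one gets
\[
|h(z)|^2\le\Big(\int_0^{\frac{\pi}{2}}|g(z,\theta)|^2(w^K_\theta(\theta))^2\,d\theta\Big)\Big(\int_0^{\frac{\pi}{2}}K(\theta)^{1+\frac{\alpha}{3\beta}}\,d\theta\Big)\lesssim\int_0^{\frac{\pi}{2}}|g(z,\theta)|^2(w^K_\theta(\theta))^2\,d\theta,
\]
since $\int_0^{\pi/2}K^{1+\frac{\alpha}{3\beta}}\,d\theta$ is a finite Beta-type integral, bounded uniformly for $\alpha/\beta$ small. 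Multiplying by $w_z(z)^2$, integrating in $z$, and recalling $w^K=w^K_\theta w_z$ yields $\|hw_z\|_{L^2_z}\lesssim\|gw^K\|_{L^2}$. It therefore suffices to establish the purely radial estimate $\|Hw_z\|_{L^2_z}\lesssim_\beta\|hw_z\|_{L^2_z}$.

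\emph{Step 2: radial Hardy inequality.} I would split at $z=1$, using that $w_z(z)=(1+z^{-1/\beta})^2$ is comparable to $z^{-2/\beta}$ on $(0,1]$ and to $1$ on $[1,\infty)$. On $[1,\infty)$ I keep the defining representation $H(z)=\int_z^\infty h(\rho)\rho^{-1}\,d\rho$ and apply the dual Hardy inequality on the half-line, $\int_1^\infty\big(\int_z^\infty|h(\rho)|\rho^{-1}\,d\rho\big)^2\,dz\lesssim\int_1^\infty|h(z)|^2\,dz$, which holds because the weights are bounded there (equivalently, the Muckenhoupt condition $\sup_{r\ge1}(r-1)\int_r^\infty\rho^{-2}\,d\rho<\infty$). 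On $(0,1]$ I instead use $H(z)=H(0)-\int_0^z h(\rho)\rho^{-1}\,d\rho=-\int_0^z h(\rho)\rho^{-1}\,d\rho$, which is available \emph{precisely because} $L^{-1}_{z,K}(g)(0)=0$, and invoke the classical weighted Hardy inequality with exponent $a=4/\beta>1$:
\[
\int_0^1\Big|\int_0^z\frac{h(\rho)}{\rho}\,d\rho\Big|^2z^{-\frac{4}{\beta}}\,dz\le\Big(\frac{2}{4/\beta-1}\Big)^2\int_0^1|h(z)|^2z^{-\frac{4}{\beta}}\,dz.
\]
Since $w_z^2\approx z^{-4/\beta}$ on $(0,1]$ and $w_z^2\approx1$ on $[1,\infty)$, adding the two pieces gives $\|Hw_z\|_{L^2_z}^2\lesssim_\beta\|hw_z\|_{L^2_z}^2$; combined with Step~1, this proves the lemma.

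\emph{Main obstacle.} The delicate point is the exponent bookkeeping together with the role of the hypothesis. The weight $w_z$ does \emph{not} decay at infinity, so a single Hardy inequality on all of $(0,\infty)$ would diverge; the splitting at a fixed scale is forced, and the near-origin half relies on rewriting $H$ as $-\int_0^z$, which is legitimate only under $L^{-1}_{z,K}(g)(0)=0$. One must also check that the Hardy exponent $a=4/\beta$ generated by the blow-up rate $z^{-2/\beta}$ of $w_z$ at the origin exceeds $1$ with a constant controlled uniformly in $\beta\in(0,1]$ (indeed $(2/(4/\beta-1))^2=(2\beta/(4-\beta))^2\le 4/9$), which is comfortably satisfied in this range. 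Everything else is a routine application of the one--dimensional Hardy and dual-Hardy inequalities together with Fubini.
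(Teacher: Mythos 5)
Your proof is correct but takes a genuinely different route from the paper. The paper proves the lemma by a single integration by parts: for each $k\neq 1$ it differentiates the antiderivative $z^{1-k}\big(L^{-1}_{z,K}(g)\big)^2$, whose derivative has a $z^{-k}\big(L^{-1}_{z,K}(g)\big)^2$ piece plus a cross term $z^{-k}L^{-1}_{z,K}(g)\langle g,K\rangle_\theta$ coming from $D_zL^{-1}_{z,K}(g)=-\langle g,K\rangle_\theta$; the boundary term $\big[z^{1-k}\big(L^{-1}_{z,K}(g)\big)^2\big]_0^\infty$ is discarded (the hypothesis $L^{-1}_{z,K}(g)(0)=0$ is exactly what makes it vanish at $z=0$ when $k>1$), the cross term is closed by a single Cauchy--Schwarz in $(z,\theta)$ jointly, and finally the choices $k=0$ and $k=4/\beta$ recover the two pieces of $w_z^2\approx 1+z^{-4/\beta}$. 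You instead decouple the angular Cauchy--Schwarz reduction from a purely radial weighted Hardy inequality for $H\tri L^{-1}_{z,K}(g)$, and then split the radial domain at $z=1$, applying the classical Hardy inequality near the origin (rewriting $H(z)=-\int_0^z h(\rho)\rho^{-1}\,d\rho$ via the hypothesis) and the dual Hardy inequality at infinity. Both routes are sound; yours is more modular and more explicit about where the hypothesis enters, but it requires the explicit split at $z=1$. The paper's version is more compact, yields the clean $1/|k-1|^2$ constant uniformly in $k$ (which is reused elsewhere), and avoids the domain split by handling the two regimes through the two choices of $k$ over the entire half-line. Note that the vanishing of the paper's boundary term $z^{1-4/\beta}H^2$ at $z=0$ requires the same quantitative decay of $H$ near the origin that your classical Hardy step establishes, so the hypothesis is doing the same work in both proofs, just packaged differently.
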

\begin{proof}
	Notice that
	$$w_z^2=\frac{(1+z^{\frac{1}{\beta}})^4}{z^{\frac{4}{\beta}}}\approx 1+z^{-\frac{4}{\beta}},$$
	hence, for any $k\neq 1$, we have
	\begin{align*}
		\int_0^{\infty} z^{-k}\left(L^{-1}_{z,K}(g)\right)^2dz&\lesssim\frac{1}{|k-1|}\left|\int_0^{\infty} \partial_z z^{1-k}\left(L^{-1}_{z,K}(g)\right)^2dz\right| \\
		&\lesssim\frac{1}{|k-1|}\left|\int_0^{\infty} z^{-k}L^{-1}_{z,K}(g)\int_0^{\frac{\pi}{2}}g(z,\theta)K(\theta)d\theta dz\right|\\
		&\lesssim\frac{1}{|k-1|}\left(\int_0^{\infty} z^{-k}\left(L^{-1}_{z,K}(g)\right)^2dz\right)^{\frac12}\left(\int_0^{\infty}\int_0^{\frac{\pi}{2}} z^{-k}g^2(w_\theta^K)^2 d\theta dz\right)^{\frac{1}{2}},
	\end{align*}
	which implies that
	\begin{align*}
		\int_0^{\infty} z^{-k}\left(L^{-1}_{z,K}(g)\right)^2dz\lesssim\frac{1}{|k-1|^2}\int_0^{\infty}\int_0^{\frac{\pi}{2}} z^{-k}g^2(w_\theta^K)^2 d\theta dz.
	\end{align*}
	By choosing $k=0$ and $k=\frac{4}{\beta}$, we thus complete the proof of Lemma \ref{5ha1}.
\end{proof}

Building upon Lemma \ref{5ha1}, we now proceed to establish the coercivity of the operator $\mathcal{L}_{\Gamma}$.
\begin{prop}\label{5co2}
	Let $L^{-1}_{z,K}(g)(0)=0$. There exist constants $\alpha>0$ sufficiently small and $\eta(\beta)$, such that if $\alpha\ll 1-\eta\ll \beta$ and $|\mu|\leq \alpha^\frac{1}{2}$, then
	\begin{align}
		&\langle\mathcal{L}_{\Gamma}(g), g(w^K)^2\rangle+\langle\mathcal{L}_{\Gamma}(g), g\rangle_{\mathcal{H}_{\eta}^2}\geq \frac{1}{4}(\|g\|^2_{\mathcal{H}^{-1}}+\|g\|^2_{\mathcal{H}_{\eta}^2}),
		\label{oper-L2-H2} \\
		&\langle\mathcal{L}_{\Gamma}(g), g\rangle_{\mathcal{E}_{\eta}^2}\geq \frac{1}{4}\|g\|^2_{\mathcal{E}_{\eta}^2}-\frac{1}{50}(\|g\|^2_{\mathcal{H}^{-1}}+\|g\|^2_{\mathcal{H}_{\eta}^2}).
		\label{oper-E2}
	\end{align}
\end{prop}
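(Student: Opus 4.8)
The plan is to use the splitting $\mathcal{L}_{\Gamma}(g)=\mathcal{L}^{\beta}_{\Gamma}(g)+\tilde{P}(g)$ from \eqref{def-oper}, apply the coercivity estimates \eqref{est-H-1}--\eqref{est-E2eta} of Proposition \ref{5co1} to the main operator $\mathcal{L}^{\beta}_{\Gamma}$ (this is legitimate because the hypothesis $L^{-1}_{z,K}(g)(0)=0$ is exactly the one required there and in Lemma \ref{5ha1}), and then absorb the remainder $\tilde{P}(g)$ of \eqref{def-wP} as a genuine perturbation via the parameter stability of the fundamental profile. For \eqref{oper-L2-H2} I would add the first two displays of Proposition \ref{5co1} to get the clean lower bound $\langle\mathcal{L}^{\beta}_{\Gamma}(g),g(w^{K})^{2}\rangle+\langle\mathcal{L}^{\beta}_{\Gamma}(g),g\rangle_{\mathcal{H}_{\eta}^{2}}\geq\frac{99}{100}\|g\|^{2}_{\mathcal{H}^{-1}}+\frac{1}{3}\|g\|^{2}_{\mathcal{H}_{\eta}^{2}}$, and for \eqref{oper-E2} I would invoke \eqref{est-E2eta} directly; it then suffices to bound the three pairings of $\tilde{P}(g)$ by a multiple of the ambient norms that is small in $\alpha$.

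The parameter-stability input is the following. Since $\gamma=\frac{1+\mu}{1-\mu}\beta$ and $|\mu|\leq\alpha^{1/2}$, we have $|\frac{1}{\gamma}-\frac{1}{\beta}|\lesssim_{\beta}\alpha^{1/2}$, so choosing $\delta_{0}\lesssim_{\beta}\alpha^{1/4}$ makes the hypothesis $|\frac{1}{\gamma}-\frac{1}{\beta}|\leq\min\{\frac{\ln(1-\delta_{0})}{2\ln\delta_{0}},\delta_{0}\}$ of Proposition \ref{3FM2} hold and gives $\frac{3}{2\alpha}\|F^{\ast}_{\beta}-F^{\ast}_{\gamma}\|_{L^{\infty}}\lesssim_{\beta}\alpha^{1/4}$ through \eqref{continu-F*}; the intermediate inequalities \eqref{3f9}--\eqref{3f11} in that proof give the same smallness for $\|L^{-1}_{z}(\Gamma^{\ast}_{\beta}-\Gamma^{\ast}_{\gamma})\|_{L^{\infty}}$ and $\|\Gamma^{\ast}_{\beta}-\Gamma^{\ast}_{\gamma}\|_{L^{\infty}}$. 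Differencing the ODE identities \eqref{3f7}, \eqref{3f3} for the two parameters and running the same manipulations as in the proof of Lemma \ref{7fm1} then yields the analogous weighted bounds for $D_{z}(\Gamma^{\ast}_{\beta}-\Gamma^{\ast}_{\gamma})$, $D_{z}^{2}(\Gamma^{\ast}_{\beta}-\Gamma^{\ast}_{\gamma})$, $D_{\theta}(\Gamma_{\theta,\beta}-\Gamma_{\theta,\gamma})$, $D_{\theta}^{2}(\Gamma_{\theta,\beta}-\Gamma_{\theta,\gamma})$ and the mixed derivatives, each smaller than the corresponding quantity for $F^{\ast}_{\beta}$ alone by the factor $\alpha^{1/4}$.

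The key observation is that $\tilde{P}(g)$ has exactly the algebraic shape of the error terms already estimated inside the proof of Proposition \ref{5co1}, with $L^{-1}_{z}(\Gamma^{\ast}_{\beta})g$ replaced by $L^{-1}_{z}(\Gamma^{\ast}_{\beta}-\Gamma^{\ast}_{\gamma})g$ and $\frac{3}{2\alpha}L^{-1}_{z,K}(g)F^{\ast}_{\beta}$ by $\frac{3}{2\alpha}L^{-1}_{z,K}(g)(F^{\ast}_{\beta}-F^{\ast}_{\gamma})$, the $D_{z}$- and $D_{\theta}$-commutators mirroring \eqref{eq-D-theta}, \eqref{eq-D-z} and \eqref{5in18} under the same substitution. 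Re-running verbatim the Cauchy--Schwarz steps, the Hardy inequality of Lemma \ref{5ha1} (which converts $\|L^{-1}_{z,K}(g)w_{z}\|_{L^{2}_{z}}$ into $\|gw^{K}\|_{L^{2}}\lesssim\|g\|_{\mathcal{H}^{-1}}$), the structure of the $\mathcal{H}^{-1}$-norm \eqref{def_norm_H-1}, and the angular integrals \eqref{est-sin}, \eqref{est-sin-1}, \eqref{est-sin-2}, but inserting the smaller coefficients above, one obtains for some $C_{\beta}$ and $\delta_{0}\lesssim_{\beta}\alpha^{1/4}$ that
\begin{align*}
\bigl|\langle\tilde{P}(g),g(w^{K})^{2}\rangle\bigr|&\leq C_{\beta}\delta_{0}\,\|g\|^{2}_{\mathcal{H}^{-1}},\\
\bigl|\langle\tilde{P}(g),g\rangle_{\mathcal{H}_{\eta}^{2}}\bigr|&\leq C_{\beta}\delta_{0}\bigl(\|g\|^{2}_{\mathcal{H}^{-1}}+\|g\|^{2}_{\mathcal{H}_{\eta}^{2}}\bigr),\\
\bigl|\langle\tilde{P}(g),g\rangle_{\mathcal{E}_{\eta}^{2}}\bigr|&\leq C_{\beta}\delta_{0}\bigl(\|g\|^{2}_{\mathcal{H}^{-1}}+\|g\|^{2}_{\mathcal{H}_{\eta}^{2}}+\|g\|^{2}_{\mathcal{E}_{\eta}^{2}}\bigr);
\end{align*}
the fact that the first two pairings do not see the $\mathcal{E}_{\eta}^{2}$-norm reflects that the radial second-derivative weight in the $\mathcal{H}_{\eta}^{2}$-estimate of Proposition \ref{5co1} is $w^{\eta}$, whereas the $w^{\lambda}$-weighted $D_{z}^{2}$-term only occurs in the $\mathcal{E}_{\eta}^{2}$-estimate. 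Since $1-\eta$ is fixed once $\beta$ is fixed, the finitely many negative powers of $1-\eta$ produced on the way are harmless, and choosing $\alpha$ small enough that $C_{\beta}\delta_{0}<\frac{1}{100}$ subtracts off these perturbations from the bounds of the first paragraph, degrading $\frac{1}{3}$ to $\frac{1}{4}$ and $\frac{1}{100}$ to $\frac{1}{50}$; this gives \eqref{oper-L2-H2} and \eqref{oper-E2}.

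The main obstacle is the $\mathcal{E}_{\eta}^{2}$-pairing. There $D_{z}^{2}\tilde{P}(g)$ contributes $\frac{3}{2\alpha}L^{-1}_{z,K}(g)D_{z}^{2}(F^{\ast}_{\beta}-F^{\ast}_{\gamma})$ together with the cross terms $\langle g,K\rangle_{\theta}D_{z}(F^{\ast}_{\beta}-F^{\ast}_{\gamma})$ and $\langle D_{z}g,K\rangle_{\theta}(F^{\ast}_{\beta}-F^{\ast}_{\gamma})$ tested against $D_{z}^{2}g\,(w^{\lambda})^{2}$, and since the $\mathcal{E}_{\eta}^{2}$-norm controls $\|D_{z}^{2}gw^{\lambda}\|_{L^{2}}$ only with a prefactor of order $\alpha^{-1/2}$, one must verify that the $\alpha$-weights in the parameter-stability bound for $D_{z}^{2}(F^{\ast}_{\beta}-F^{\ast}_{\gamma})$ — extracted from the difference of \eqref{3f7}, \eqref{3f3} in the spirit of \eqref{est-Dz-2} — exactly absorb this loss. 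This is the same balance that already makes the terms $K_{23}$--$K_{25}$ in the proof of Proposition \ref{5co1} close, so passing to the difference only adds smallness; still, carrying out the full list of weighted parameter-stability estimates for $F^{\ast}_{\beta}-F^{\ast}_{\gamma}$ — which the excerpt records only at the $L^{\infty}$ level in Proposition \ref{3FM2} — is where the substantive work of the proof lies.
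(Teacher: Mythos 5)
Your proposal follows the paper's argument faithfully: same splitting $\mathcal{L}_{\Gamma}=\mathcal{L}^{\beta}_{\Gamma}+\tilde{P}$, coercivity from Proposition \ref{5co1}, parameter stability from Proposition \ref{3FM2} (with $\delta_0$ made small by shrinking $\alpha$, exactly as the paper does via \eqref{est-beta-gamma}), the Hardy inequality of Lemma \ref{5ha1} to convert $\|L^{-1}_{z,K}(g)w_z\|_{L^2_z}$ into $\|g\|_{\mathcal{H}^{-1}}$, and the ordering ``fix $\eta(\beta)$, then take $\alpha$ small'' to absorb the finitely many $(1-\eta)^{-k}$ factors. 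The only minor deviation is that the paper handles the $D_\theta(F^\ast_\beta-F^\ast_\gamma)$-type terms by the triangle inequality (using that $D_\theta$ already brings a factor $\alpha/\gamma$) rather than by the parameter-stability difference estimate you propose, and the intermediate weighted bounds for $D_z(F^\ast_\beta-F^\ast_\gamma)$, $D^2_z(F^\ast_\beta-F^\ast_\gamma)$ that you correctly identify as the substantive work are indeed established in the course of the paper's proof (e.g.\ \eqref{est-Dz-F-bg} and the display preceding \eqref{5p11}); both routes close and give the same conclusion.
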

\begin{proof}
	Recalling \eqref{4eq13} and employing $|\mu|\leq \alpha^\frac{1}{2}$, for any $\delta_0>0$, we can choose $\alpha$ small enough, such that 
    \begin{align}\label{est-beta-gamma}
		\left|\frac{1}{\beta}-\frac{1}{\gamma}\right|\leq\frac{2\alpha^\frac{1}{2}}{\beta}\leq \min\left\{\frac{\ln(1-\delta_0)}{2\ln(\delta_0)},\delta_0\right\}.
	\end{align}
	By Proposition \ref{3FM2}, we can obtain that \eqref{continu-F*} and \eqref{3f9} hold for parameters $\beta$ and $\gamma$.
	Specifically, we have
	\begin{align}
		\label{continu-F*-1}
		\frac{3}{2\alpha}\|F^{\ast}_{\beta}-F^{\ast}_{\gamma}\|_{L^\infty}\lesssim \frac{\delta_0}{\beta},~~~~
		\|L^{-1}_z(\Gamma^\ast_{\beta})-L^{-1}_z(\Gamma^\ast_{\gamma})\|_{L^\infty_z}\lesssim \delta_0.
	\end{align}
	Then this proof will be divided into the following two steps.

	\medskip

	\noindent\textbf{Step 1: Estimate for $\mathcal{H}^{-1}\cap\mathcal{H}_{\eta}^2$-norm.}
	
	\medskip
	
	\noindent
	Recalling \eqref{def-oper} and the definition of $\mathcal{H}_{\eta}^2$, we find that
	\begin{align}
		&\langle\mathcal{L}_{\Gamma}(g), g(w^K)^2\rangle+\langle\mathcal{L}_{\Gamma}(g), g\rangle_{\mathcal{H}_{\eta}^2}
		\label{est-L-1}\\
		=&
		\left(\langle\mathcal{L}_{\Gamma}^{\beta}(g), g(w^K)^2\rangle
		+\langle\mathcal{L}_{\Gamma}^{\beta}(g), g\rangle_{\mathcal{H}_{\eta}^2}\right)
		+\langle\tilde{P}(g), g(w^K)^2\rangle
		+(1-\eta)^2\langle\tilde{P}(g), g(w^\eta)^2\rangle
		\notag\\
		&
		+\left[(1-\eta)^{-2}\langle D_{\theta}\tilde{P}(g), D_{\theta}g(w^\eta)^2\rangle
		+(1-\eta)^2\langle D_{z}\tilde{P}(g), D_zg(w^\eta)^2\rangle\right]
		\notag\\
		&
		+\left[\langle D_{\theta}^2\tilde{P}(g), D_{\theta}^2g(w^\eta)^2\rangle
		+(1-\eta)^2\langle D_{\theta}D_{z}\tilde{P}(g), D_{\theta}D_zg(w^\eta)^2\rangle
		+(1-\eta)^4\langle D_{z}^2\tilde{P}(g), D_{z}^2g(w^\eta)^2\rangle\right]
        \notag\\
		\tri& \sum_{i=1}^{5}L_i.
		\notag
	\end{align}
	Here $L_1$ is the $\mathcal{H}^{-1}\cap\mathcal{H}_{\eta}^2$ norm of the main operator, while $L_2$, $L_3$, $L_4$ and $L_5$ are the $\mathcal{H}^{-1}$, $\mathcal{L}_{\eta}^2$, $\dot{\mathcal{H}}_{\eta}^1$ and $\dot{\mathcal{H}}_{\eta}^2$ norms of the perturbation term, respectively. 
	
	\medskip

	\noindent\textbf{Step 1.1: Estimate for $\mathcal{H}^{-1}\cap\mathcal{H}_{\eta}^2$-norm of the main operator.}
	
	\medskip
	
	\noindent 
	According to \eqref{est-H-1} and \eqref{est-H2eta} in Proposition \ref{5co1}, we compute that
	\begin{align}\label{5p1}
		L_1\geq \frac{1}{3}(\|g\|^2_{\mathcal{H}^{-1}}+\|g\|^2_{\mathcal{H}_{\eta}^2}).
	\end{align}

	\noindent\textbf{Step 1.2: Estimate for $\mathcal{H}^{-1}$-norm of the perturbation term.}
	
	\medskip
	
	\noindent
	In view of \eqref{def-wP}, \eqref{continu-F*-1} and Lemma \ref{5ha1}, we find that
	\begin{align}\label{5p2}
		|L_2|
		&\lesssim \|gw^K\|_{L^2}
		\left(\|L^{-1}_z(\Gamma^\ast_{\beta})-L^{-1}_z(\Gamma^\ast_{\gamma})\|_{L^\infty_z}\|gw^K\|_{L^2}+\frac{3}{2\alpha}\|F^{\ast}_{\beta}-F^{\ast}_{\gamma}\|_{L^\infty}\|L^{-1}_{z,K}(g)w^K\|_{L^2}\right)
		\\
		&\lesssim \delta_0\left(1+
		\frac{1}{\beta}\right)\|gw^K\|_{L^2}^2
		\notag
	\end{align}

	\noindent\textbf{Step 1.3: Estimate for $\mathcal{L}^{2}_{\eta}$-norm of the perturbation term.}
	
	\medskip
	
	\noindent 
	We infer from \eqref{def-wP}, \eqref{est-sin-1}, \eqref{continu-F*-1} and Lemma \ref{5ha1} that
	\begin{align}\label{5p3}
		|L_3|
		&\lesssim\frac{\delta_0}{\beta}(1-\eta)^2\|gw^\eta\|_{L^2}\left(\|gw^\eta\|_{L^2}+\|L^{-1}_{z,K}(g)w^\eta\|_{L^2}\right)
		\\ \notag
		&\lesssim \frac{\delta_0}{\beta}(1-\eta)^2\|gw^\eta\|^2_{L^2}
		+\frac{\delta_0}{\beta}(1-\eta)\|g\|^2_{\mathcal{H}^{-1}}.
	\end{align}

	\noindent\textbf{Step 1.4: Estimate for $\dot{\mathcal{H}}^{1}_{\eta}$-norm of the perturbation term.}
	
	\medskip
	\noindent
	After some direct calculations, one obtains that
	$$
	D_{\theta}\tilde{P}(g)=
	L^{-1}_z(\Gamma^{\ast}_{\beta}-\Gamma^{\ast}_{\gamma})D_\theta g+\frac{3}{2\alpha}L^{-1}_{z,K}(g)D_\theta(F^{\ast}_{\beta}-F^{\ast}_{\gamma}),
	$$
	and
	$$D_z\tilde{P}(g)=
	L^{-1}_z(\Gamma^{\ast}_{\beta}-\Gamma^{\ast}_{\gamma})D_z g-(\Gamma^{\ast}_{\beta}-\Gamma^{\ast}_{\gamma})g+\frac{3}{2\alpha}L^{-1}_{z,K}(g)D_z(F^{\ast}_{\beta}-F^{\ast}_{\gamma})-\frac{3}{2\alpha}\langle g,K\rangle_\theta(F^{\ast}_{\beta}-F^{\ast}_{\gamma}).$$
	These give that
	\begin{align}
		L_4&=
		(1-\eta)^{-2}\left[\langle L^{-1}_z(\Gamma^{\ast}_{\beta}-\Gamma^{\ast}_{\gamma})D_\theta g+\frac{3}{2\alpha}L^{-1}_{z,K}(g)D_\theta(F^{\ast}_{\beta}-F^{\ast}_{\gamma}), D_\theta g(w^\lambda)^2\rangle\right]
		\label{est-L4-1}\\
		&~~~+(1-\eta)^{2}\left[\langle L^{-1}_z(\Gamma^{\ast}_{\beta}-\Gamma^{\ast}_{\gamma})D_z g-(\Gamma^{\ast}_{\beta}-\Gamma^{\ast}_{\gamma})g, D_z g(w^\eta)^2\rangle
		\right]
		\notag\\
		&~~~+(1-\eta)^{2}\left[\langle \frac{3}{2\alpha}L^{-1}_{z,K}(g)D_z(F^{\ast}_{\beta}-F^{\ast}_{\gamma})-\frac{3}{2\alpha}\langle g,K\rangle_\theta(F^{\ast}_{\beta}-F^{\ast}_{\gamma}), D_z g(w^\eta)^2\rangle
		\right]
		\notag\\
		&\tri\sum_{i=1}^{3}L_{4i}.
		\notag
	\end{align}
	Next, $L_{4i}$ $(i=1,2,3)$ will be controlled term by term.
	In order to control the first term $L_{41}$, we apply \eqref{continu-F*-1}, Proposition \ref{3FM2} and Lemma \ref{5ha1}, to discover that
	\begin{align}\label{5p4}
		|L_{41}|
		&\lesssim(1-\eta)^{-2}\|D_\theta gw^\lambda\|_{L^2}\left(\|L^{-1}_z
		(\Gamma^\ast_{\beta})-L^{-1}_z(\Gamma^\ast_{\gamma})\|_{L^\infty_z}\|D_\theta gw^\lambda\|_{L^2}
		\right.\\
		\notag
		&~~~\left.
		+\left(\|\frac{3}{2\alpha}D_\theta F^{\ast}_{\beta}w^\lambda_{\theta}\|_{L^\infty}+\|\frac{3}{2\alpha}D_\theta F^{\ast}_{\gamma}w^\lambda_{\theta}\|_{L^\infty}\right)
		\|L^{-1}_{z,K}(g)w_z\|_{L^2_z}\right) \\ \notag 
		&\lesssim\delta_0(1-\eta)^{-2}\|D_\theta gw^\lambda\|^2_{L^2}+\sqrt{\frac{\alpha}{\beta^3}}(1-\eta)^{-2}\|D_\theta gw^\lambda\|_{L^2}\|g w^{K}\|_{L^2} \\ \notag
		&\lesssim\left(\delta_0+\sqrt{\frac{\alpha}{\beta^3}}\right)(1-\eta)^{-2}\|D_\theta gw^\lambda\|^2_{L^2}+\sqrt{\frac{\alpha}{\beta^3}}(1-\eta)^{-2}\|g\|^2_{\mathcal{H}^{-1}}.
	\end{align}
	Taking advantage of \eqref{continu-F*-1}, Proposition \ref{3FM2} and Lemma \ref{5ha1} again, we achieve that
	\begin{align}\label{5p5}
		|L_{42}| 
		\lesssim\frac{\delta_0}{\beta}(1-\eta)^{2}\|D_z gw^\eta\|^2_{L^2}+\frac{\delta_0}{\beta}(1-\eta)^{2}\| gw^\eta\|^2_{L^2}.
	\end{align}
	With the help of \eqref{3eq2}, \eqref{est-beta-gamma}-\eqref{continu-F*-1}, one gets that
	\begin{align*}
		\frac{3}{2\alpha}\|D_z F^{\ast}_{\beta}-D_z F^{\ast}_{\gamma}\|_{L^\infty}
		&\lesssim\frac{\delta_0}{\beta^2},
	\end{align*} 
	which, together with \eqref{est-sin-1}, \eqref{continu-F*-1} and Lemma \ref{5ha1}, implies that
	\begin{align}\label{5p6}
		|L_{43}| 
		&\lesssim(1-\eta)^{\frac32}\|D_z gw^\eta\|_{L^2}
		\left(\frac{\delta_0}{\beta^2}\|L^{-1}_{z,K}(g)w_z\|_{L^2_z}+\frac{\delta_0}{\beta}\|gw^K\|_{L^2}\right) 
		\\ \notag
		&\lesssim\frac{\delta_0}{\beta^2}(1-\eta)^{2}\|D_z g w^\eta\|^2_{L^2}+\frac{\delta_0}{\beta^2}(1-\eta)\|g\|^2_{\mathcal{H}^{-1}}.
	\end{align}
	Substituting \eqref{5p4}-\eqref{5p6} into \eqref{est-L4-1}, we end up with
	\begin{align}
		|L_4|&\lesssim 
		\left(\delta_0+\sqrt{\frac{\alpha}{\beta^3}}\right)(1-\eta)^{-2}\|D_\theta gw^\lambda\|^2_{L^2}
		+\frac{\delta_0}{\beta^2}(1-\eta)^{2}\|D_z g w^\eta\|^2_{L^2}
		\label{est-L4-2}\\
		&~~~
		+\frac{\delta_0}{\beta}(1-\eta)^{2}\| gw^\eta\|^2_{L^2}
		+\left(\sqrt{\frac{\alpha}{\beta^3}}(1-\eta)^{-2}
		+\frac{\delta_0}{\beta^2}(1-\eta)\right)\|g\|^2_{\mathcal{H}^{-1}}.
		\notag
	\end{align}

	\noindent\textbf{Step 1.5: Estimate for $\dot{\mathcal{H}}^{2}_{\eta}$-norm of the perturbation term.}
	
	\medskip
	
	\noindent
	Straightforward computations immediately show that
	\begin{align}
		D^2_\theta \tilde{P}(g)&=
		L^{-1}_z(\Gamma^{\ast}_{\beta}-\Gamma^{\ast}_{\gamma})D^2_\theta g+\frac{3}{2\alpha}L^{-1}_{z,K}(g)D^2_\theta(F^{\ast}_{\beta}-F^{\ast}_{\gamma}),
		\notag\\
		D_\theta D_z\tilde{P}(g)&=
		L^{-1}_z(\Gamma^{\ast}_{\beta}-\Gamma^{\ast}_{\gamma})D_\theta D_z g
		-(\Gamma^{\ast}_{\beta}-\Gamma^{\ast}_{\gamma})D_\theta g 
		+\frac{3}{2\alpha}L^{-1}_{z,K}(g)D_\theta D_z(F^{\ast}_{\beta}-F^{\ast}_{\gamma})
		\notag\\ 
		&~~~-\frac{3}{2\alpha}\langle g,K\rangle_\theta D_\theta(F^{\ast}_{\beta}-F^{\ast}_{\gamma}),
		\notag\\
		D^2_z\tilde{P}(g) 
		&=L^{-1}_z(\Gamma^{\ast}_{\beta}-\Gamma^{\ast}_{\gamma})D^2_z g-2(\Gamma^{\ast}_{\beta}-\Gamma^{\ast}_{\gamma})D_z g-D_z(\Gamma^{\ast}_{\beta}-\Gamma^{\ast}_{\gamma})g+\frac{3}{2\alpha}L^{-1}_{z,K}(g)D^2_z(F^{\ast}_{\beta}-F^{\ast}_{\gamma}) 
		\label{eq-Dz-wP}\\
		&~~~-\frac{3}{\alpha}\langle g,K\rangle_\theta D_z(F^{\ast}_{\beta}-F^{\ast}_{\gamma}))-\frac{3}{2\alpha}\langle D_zg,K\rangle_\theta(F^{\ast}_{\beta}-F^{\ast}_{\gamma}),
		\notag
	\end{align}
	which implies that
	\begin{align}
		L_5=&
		\left[\langle L^{-1}_z(\Gamma^{\ast}_{\beta}-\Gamma^{\ast}_{\gamma})D^2_\theta g+\frac{3}{2\alpha}L^{-1}_{z,K}(g)D^2_\theta(F^{\ast}_{\beta}-F^{\ast}_{\gamma}), D^2_\theta g(w^\lambda)^2\rangle\right]
		\label{est-L5-1}\\
		&+(1-\eta)^{2}\left[
		\langle L^{-1}_z(\Gamma^{\ast}_{\beta}-\Gamma^{\ast}_{\gamma})D_\theta D_z g-(\Gamma^{\ast}_{\beta}-\Gamma^{\ast}_{\gamma})D_\theta g, D_\theta D_z g(w^\lambda)^2\rangle\right]
		\notag\\
		&+(1-\eta)^{2}\left[\langle \frac{3}{2\alpha}L^{-1}_{z,K}(g)D_\theta D_z(F^{\ast}_{\beta}-F^{\ast}_{\gamma})-\frac{3}{2\alpha}\langle g,K\rangle_\theta D_\theta(F^{\ast}_{\beta}-F^{\ast}_{\gamma}), D_\theta D_z g(w^\lambda)^2\rangle\right]
		\notag\\
		&+(1-\eta)^{4}\left[\langle L^{-1}_z(\Gamma^{\ast}_{\beta}-\Gamma^{\ast}_{\gamma})D^2_z g-2(\Gamma^{\ast}_{\beta}-\Gamma^{\ast}_{\gamma})D_z g-D_z(\Gamma^{\ast}_{\beta}-\Gamma^{\ast}_{\gamma})g, D_z^2 g(w^\eta)^2\rangle\right]
		\notag\\
		&+(1-\eta)^{4}\left[\langle \frac{3}{2\alpha}L^{-1}_{z,K}(g)D^2_z(F^{\ast}_{\beta}-F^{\ast}_{\gamma})-\frac{3}{\alpha}\langle g,K\rangle_\theta D_z(F^{\ast}_{\beta}-F^{\ast}_{\gamma})), D^2_z g(w^\eta)^2\rangle\right] 
		\notag\\
		&-(1-\eta)^{4}\langle \frac{3}{2\alpha}\langle D_zg,K\rangle_\theta(F^{\ast}_{\beta}-F^{\ast}_{\gamma}), D^2_z g(w^\eta)^2\rangle
		\notag\\
		\tri& \sum_{i=1}^{6}L_{5i}.
		\notag
	\end{align}
	Now, we deal with $L_{5i}$ $(i=1,\cdots,6)$ in turn.
	Bounding the first term $L_{51}$ by a similar way as \eqref{5p4}, we thus get that
	\begin{align}\label{5p7}
		|L_{51}| 
		\lesssim \left(\delta_0+\sqrt{\frac{\alpha}{\beta^3}}\right)\|D^2_\theta gw^\lambda\|^2_{L^2}+\sqrt{\frac{\alpha}{\beta^3}}\|g\|^2_{\mathcal{H}^{-1}}.
	\end{align}
	For the term $L_{52}$, we apply an analogous approach to  \eqref{5p5} and deduce that
	\begin{align}\label{5p8}
		|L_{52}|
		&\lesssim\frac{\delta_0}{\beta}(1-\eta)^{2}\|D_\theta D_z gw^\lambda\|^2_{L^2}+\frac{\delta_0}{\beta}(1-\eta)^{-2}\| D_\theta gw^\lambda\|^2_{L^2}.
	\end{align}
	We employ \eqref{est-sin-2}, Lemma \ref{5ha1} and Proposition \ref{3FM2}, to find that
	\begin{align}\label{5p9}
		|L_{53}| 
		&\lesssim(1-\eta)^{2}\|D_{\theta}D_z gw^\lambda\|_{L^2}
		\left(\sqrt{\frac{\alpha}{\beta^5}}\|L^{-1}_{z,K}(g)w_z\|_{L^2_z}
		+\sqrt{\frac{\alpha}{\beta^3}}\|g w^{K}\|_{L^2}
		\right) 
		\\ \notag
		&\lesssim\sqrt{\frac{\alpha}{\beta^5}}\left((1-\eta)^{2}\|D_\theta D_z g w^\lambda\|^2_{L^2}+\|g\|^2_{\mathcal{H}^{-1}}\right).
	\end{align}
	Along the similar line as \eqref{5p5}, it is enough to prove that
	\begin{align}\label{5p10}
		|L_{54}|
		&\lesssim\frac{\delta_0}{\beta^2}\left((1-\eta)^{4}\|D^2_z gw^\eta\|^2_{L^2}+(1-\eta)^{2}\|D_z gw^\eta\|^2_{L^2}+(1-\eta)^{2}\| gw^\eta\|^2_{L^2}\right).
	\end{align}
	The combination of \eqref{3eq2}, \eqref{est-beta-gamma}-\eqref{continu-F*-1} gives that
	\begin{align*}
		\frac{3}{2\alpha}\|D^2_z F^{\ast}_{\beta}-D^2_z F^{\ast}_{\gamma}\|_{L^\infty}
		\lesssim\frac{\delta_0}{\beta^3}.
		\notag
	\end{align*}
	This, following a derivation analogous to \eqref{5p6}, yields
	\begin{align}\label{5p11}
		|L_{55}|
		&\lesssim\frac{\delta_0}{\beta^3}\left((1-\eta)^{4}\|D^2_z g w^\eta\|^2_{L^2}+(1-\eta)^{3}\|g\|^2_{\mathcal{H}^{-1}}\right).
	\end{align}
	It then follows from a similar way as \eqref{5p6} that
	\begin{align}\label{5p12}
		|L_{56}|
		\lesssim\frac{\delta_0}{\beta}(1-\eta)^{\frac72}\|D^2_z gw^\eta\|_{L^2}\|D_z gw^\eta\|_{L^2} 
		\lesssim \frac{\delta_0}{\beta}\left((1-\eta)^{4}\|D^2_z g w^\eta\|^2_{L^2}+(1-\eta)^{3}\|D_z g w^\eta\|^2_{L^2}\right).
	\end{align}
	We then substitute \eqref{5p7}-\eqref{5p10}, \eqref{5p11} and \eqref{5p12} into \eqref{est-L5-1}, to find that
	\begin{align}
		|L_5|
		\lesssim 
		\left(\frac{\delta_0}{\beta}+\sqrt{\frac{\alpha}{\beta^5}}\right)\|g\|^2_{\mathcal{H}^2_{\eta}}
		+\left(\frac{\delta_0}{\beta^3}+\sqrt{\frac{\alpha}{\beta^5}}\right)\|g\|^2_{\mathcal{H}^{-1}}.
		\label{est-L5-2}
	\end{align}
	
	Inserting \eqref{5p1} in \textbf{Step 1.1}, \eqref{5p2} in \textbf{Step 1.2}, \eqref{5p3} in \textbf{Step 1.3}, \eqref{est-L4-2} in \textbf{Step 1.4} and \eqref{est-L5-2} in \textbf{Step 1.5} into \eqref{est-L-1}, choosing the suitable transitional constant $\eta$, $\delta_0$, and taking $\alpha$ small enough, we can conclude that \eqref{oper-L2-H2} holds.

	\medskip

	\noindent\textbf{Step 2: Estimate for $\mathcal{E}_{\eta}^2$-norm.}
	
	\medskip
	
	\noindent
	Recalling the definition of $\mathcal{E}^{2}_{\eta}$, we find that
	\begin{align}\label{est-M-1}
		\langle\mathcal{L}_{\Gamma}(g), g\rangle_{\mathcal{E}_{\eta}^2}
		&=\langle\mathcal{L}^{\beta}_{\Gamma}(g), g\rangle_{\mathcal{E}_{\eta}^2}
		+
		\alpha(1-\eta)^2\langle\tilde{P}(g), g(w^\lambda)^2\rangle
		+\alpha(1-\eta)^4\langle D^2_z\tilde{P}(g), D^2_z g(w^\lambda)^2\rangle
		\\
		&\tri M_1+M_2+M_3,
		\notag
	\end{align}
	where $M_1$ is the $\mathcal{E}^{2}_{\eta}$ norm of the main operator, while $M_2$ and $M_3$ are the $\mathcal{L}^{2}_{\eta}$ and the radial $\dot{\mathcal{H}}^{2}_{\eta}$ norms with the weight $w^\lambda$ of the perturbation term, respectively. 
	\medskip

	\noindent\textbf{Step 2.1: Estimate for $\mathcal{E}_{\eta}^2$-norm of the main operator.}
	
	\medskip
	
	\noindent
	By \eqref{est-E2eta}, it is easy to check that
	\begin{align}\label{est-M1}
		M_1
		\geq \frac{1}{3}\|g\|^2_{\mathcal{E}_{\eta}^2}-\frac{1}{100}\|g\|^2_{\mathcal{H}^{-1}}-\frac{1}{100}\|g\|^2_{\mathcal{H}_{\eta}^2}.
	\end{align}

	\noindent\textbf{Step 2.2: Estimate for $\mathcal{L}^{2}_{\eta}$-norm with the weight $w^\lambda$ of the perturbation term.} 
	
	\medskip
	
	\noindent 
	We infer from \eqref{continu-F*-1} and Lemma \ref{5ha1} that
	\begin{align}\label{5p13}
		|M_2|
		&\lesssim\frac{\delta_0}{\beta}\alpha(1-\eta)^2\|gw^\lambda\|_{L^2}\left(\|gw^\lambda\|_{L^2}+\|L^{-1}_{z,K}(g)[K(\theta)]^{\frac{\alpha}{10\beta}}w^\lambda\|_{L^2}\right)\\ \notag
		&\lesssim\frac{\delta_0}{\beta}\alpha(1-\eta)^2\|gw^\lambda\|_{L^2}\left(\|gw^\lambda\|_{L^2}+\sqrt{\frac{\beta}{\alpha}}\|L^{-1}_{z,K}(g)w_z\|_{L^2_z}\right)\\ \notag
		&\lesssim \frac{\delta_0}{\beta}\alpha(1-\eta)^2\|gw^\lambda\|^2_{L^2}+\delta_0(1-\eta)^2\|g\|^2_{\mathcal{H}^{-1}}.
	\end{align}

	\noindent\textbf{Step 2.3: Estimate for the radial $\dot{\mathcal{H}}^{2}_{\eta}$-norm with the weight $w^\lambda$ of the perturbation term.} 
	
	\medskip
	
	\noindent 
	Recalling \eqref{eq-Dz-wP}, $M_3$ can be writen as follows:
	\begin{align}
		M_3&=
		\alpha(1-\eta)^{4}\left[\langle L^{-1}_z(\Gamma^{\ast}_{\beta}-\Gamma^{\ast}_{\gamma})D^2_z g-2(\Gamma^{\ast}_{\beta}-\Gamma^{\ast}_{\gamma})D_z g-D_z(\Gamma^{\ast}_{\beta}-\Gamma^{\ast}_{\gamma})g, D^2_z g(w^\lambda)^2\rangle\right]
		\label{est-M3-1}\\
		&~~~+\alpha(1-\eta)^{4}\left[\langle \frac{3}{2\alpha}L^{-1}_{z,K}(g)D^2_z(F^{\ast}_{\beta}-F^{\ast}_{\gamma})-\frac{3}{\alpha}\langle g,K\rangle_\theta D_z(F^{\ast}_{\beta}-F^{\ast}_{\gamma})), D^2_z g(w^\lambda)^2\rangle\right]
		\notag\\
		&~~~-\alpha(1-\eta)^{4}\langle \frac{3}{2\alpha}\langle D_zg,K\rangle_\theta(F^{\ast}_{\beta}-F^{\ast}_{\gamma}), D^2_z g(w^\lambda)^2\rangle
		\notag\\
		&\tri M_{31}+M_{32}+M_{33}.
		\notag
	\end{align}
	Together with \eqref{est-sin-2}, it follows from a similar way as \eqref{5p10} that
	\begin{align}\label{5p14}
		|M_{31}|
		&\lesssim \frac{\delta_0}{\beta^2}\alpha(1-\eta)^{4}\|D^2_z gw^\lambda\|_{L^2}\left(\|D^2_z gw^\lambda\|_{L^2}
		+\|w_\theta^\lambda K^{\frac{\alpha}{10\beta}}\|_{L^2_\theta}\|D_z gw^\eta\|_{L^2}
		+\|gw^\lambda\|_{L^2}\right) \\ \notag
		&\lesssim \frac{\delta_0}{\beta^2}\left(\alpha(1-\eta)^{4}\|D^2_z gw^\lambda\|^2_{L^2}+\alpha(1-\eta)^{2}\| gw^\lambda\|^2_{L^2}\right)
		+\frac{\delta_0}{\beta}(1-\eta)^{2}\|D_z gw^\eta\|_{L^2}^2.
	\end{align}
	Along a similar way as \eqref{5p11}, we deduce from \eqref{est-sin-2} that 
	\begin{align}\label{5p15}
		|M_{32}|
		&\lesssim \frac{\delta_0}{\beta^3}\left(\alpha(1-\eta)^{4}\|D^2_z g w^\lambda\|^2_{L^2}+\|g\|^2_{\mathcal{H}^{-1}}\right).
	\end{align}
	Repeating the derivation scheme of \eqref{5p12} with \eqref{est-sin-2} gives
	\begin{align}\label{5p16}
		|M_{33}|
		&\lesssim \frac{\delta_0}{\beta}\left(\alpha(1-\eta)^{4}\|D^2_z g w^\lambda\|^2_{L^2}+(1-\eta)^{2}\|D_z g w^\eta\|^2_{L^2}\right).
	\end{align}
	Collecting the estimates from \eqref{est-M3-1} through \eqref{5p16}, we obtain that
	\begin{align}\label{est-M3-2}
		|M_3|
		\lesssim 
		\frac{\delta_0}{\beta^3}\|g\|_{\mathcal{E}^2_{\eta}}^2
		+\frac{\delta_0}{\beta}\|g\|^2_{\mathcal{H}_{\eta}^2}
		+\frac{\delta_0}{\beta^3}\|g\|^2_{\mathcal{H}^{-1}}.
	\end{align}

	By substituting \eqref{est-M1} in \textbf{Step 2.1}, \eqref{5p13} in \textbf{Step 2.2} and \eqref{est-M3-2} in \textbf{Step 2.3} into \eqref{est-M-1}, then choosing the suitable transitional constant $\eta$, $\delta_0$, finally taking $\alpha$ small enough, we directly derive \eqref{oper-E2}.
	Therefore, we finish the proof of this proposition.
\end{proof}

\section{Elliptic estimates}\label{sec:Elli}
In this section, we aim to establish elliptic estimates for the following boundary value problem across different Banach spaces:
\begin{align}\label{6model}
	\left\{\begin{array}{l} 
		L^{\alpha}_{z}(\phi)+L_{\theta}(\phi)=f, \\[1ex]
		\phi|_{\partial D}=0,~~~\langle f,K\rangle_{\theta}=0,
	\end{array}\right.
\end{align}
where the domain $D\tri[0,\infty)\times[0,\frac{\pi}{2}]$, the radial operator $L^{\alpha}_{z}(\phi)$ is defined by
\begin{align}\label{def-L-z}
	L^{\alpha}_{z}(\phi)=-\alpha^2 D^2_z\phi-5\alpha D_z\phi,
\end{align}
and the angular operator $L_{\theta}(\phi)$ is given by 
\begin{align}\label{def-L-theta}
	L_{\theta}(\phi)=-\partial_\theta^2 \phi+\partial_{\theta}\left(\tan\theta \phi\right)-6\phi.
\end{align}
Let $(L^{\alpha}_{z})^\ast$ and $L^{*}_{\theta}$ denote the adjoint operators of $L^{\alpha}_{z}$ and $L_{\theta}$, respectively.
Moreover, for $\xi\in[0,\infty)$, we define the weight function:
\begin{align*}
	\mathcal{W} \tri \mathcal{W}_z(z)\sin^{-\frac{\xi}{2}}(2\theta),
\end{align*}
where $\mathcal{W}_z$ denotes either $w_z$ or $w_z^\ast$. 
For any $n\in\mathbb{N}$, we denote that
$$\tilde{D}^n_\theta f \tri \sin^n(2\theta)\partial^n_{\theta} f.$$
It is easy to check that $\tilde{D}^n_\theta=D^n_\theta$ for $n=0,1$.
We also define the weighted $\mathcal{L}^2_{\mathcal{W}}(D)$-norm as follows:
\begin{align*}
	\|f\|_{\mathcal{L}^2_{\mathcal{W}}}\tri \|f\mathcal{W}\|_{L^2}.
\end{align*}

\subsection{Basic inequality}
Our goal in this subsection is to present some fundamental inequalities and lemmas, which is useful for establishing the elliptic estimates.
We begin with some equalities of the trigonometric functions.
\begin{lemm}\cite{E21}\label{6lem2}
	For any $n\in\mathbb{Z}$, there holds 
	\begin{align}\label{est-lem6.1}
		\int_0^{\frac{\pi}{2}}\sin(2n\theta)K(\theta)d\theta=(-1)^{n}~\frac{4n}{(4n^2-1)(4n^2-9)}.
	\end{align}
\end{lemm}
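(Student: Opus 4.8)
The plan is to reduce the integral to elementary trigonometric integrals by first rewriting $K(\theta)$ as a short Fourier sine sum. Since $\sin\theta+\sin3\theta = 2\sin(2\theta)\cos\theta = 4(\sin\theta)(\cos\theta)^2$, one has the identity
\[
K(\theta) = (\sin\theta)(\cos\theta)^2 = \tfrac14\bigl(\sin\theta + \sin 3\theta\bigr).
\]
Substituting this into the left-hand side of \eqref{est-lem6.1}, it suffices to compute $I_m(n) \tri \int_0^{\frac{\pi}{2}}\sin(2n\theta)\sin(m\theta)\,d\theta$ for the two odd values $m=1$ and $m=3$, and then to form $\tfrac14\bigl(I_1(n)+I_3(n)\bigr)$.

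Next I would evaluate $I_m(n)$ via the product-to-sum formula $\sin(2n\theta)\sin(m\theta) = \tfrac12\bigl[\cos((2n-m)\theta)-\cos((2n+m)\theta)\bigr]$. Because $2n$ is even and $m$ is odd, both frequencies $2n\pm m$ are odd integers, hence nonzero, so $\int_0^{\frac{\pi}{2}}\cos(k\theta)\,d\theta = k^{-1}\sin(k\pi/2)$ applies with $k=2n\pm m$. Writing $2n\cdot\tfrac{\pi}{2}=n\pi$ and using $\sin(n\pi\pm m\pi/2) = \pm(-1)^n\sin(m\pi/2)$ (since $\sin(n\pi)=0$ and $\cos(n\pi)=(-1)^n$), one obtains
\[
I_m(n) = -\frac{(-1)^n\sin(m\pi/2)}{2}\left(\frac{1}{2n-m}+\frac{1}{2n+m}\right) = -\frac{2n\,(-1)^n\sin(m\pi/2)}{4n^2-m^2}.
\]
The case $n=0$ is trivially consistent, since then the integrand vanishes and the right-hand side of \eqref{est-lem6.1} is $0$; for $n\neq 0$ no denominator can vanish, as $m\in\{1,3\}$ while $2n$ is even, so $2n\pm m\neq 0$ and $4n^2-m^2\neq 0$.

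Finally I would specialize: $\sin(\pi/2)=1$ gives $I_1(n) = -2n(-1)^n/(4n^2-1)$, and $\sin(3\pi/2)=-1$ gives $I_3(n) = 2n(-1)^n/(4n^2-9)$. Hence
\[
\int_0^{\frac{\pi}{2}}\sin(2n\theta)K(\theta)\,d\theta = \frac{I_1(n)+I_3(n)}{4} = \frac{2n(-1)^n}{4}\left(\frac{1}{4n^2-9}-\frac{1}{4n^2-1}\right) = \frac{4n\,(-1)^n}{(4n^2-1)(4n^2-9)},
\]
using $\frac{1}{4n^2-9}-\frac{1}{4n^2-1}=\frac{8}{(4n^2-1)(4n^2-9)}$, which is exactly \eqref{est-lem6.1}. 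There is no genuine obstacle here; the computation is routine, and the only care needed is in tracking the signs of $\sin((2n\pm m)\pi/2)$ and in confirming that all frequencies occurring are nonzero — both points being controlled by the parity of $2n$.
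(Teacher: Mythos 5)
Your proof is correct and takes essentially the same approach as the paper: both reduce the claim to the two integrals $\int_0^{\pi/2}\sin(2n\theta)\sin\theta\,d\theta$ and $\int_0^{\pi/2}\sin(2n\theta)\sin(3\theta)\,d\theta$ via the decomposition $K(\theta)=\tfrac14(\sin\theta+\sin 3\theta)$. The only cosmetic difference is that you obtain this decomposition and evaluate the remaining integrals directly with product-to-sum identities, whereas the paper reaches the same decomposition by expanding $\cos^2\theta$ and then moving the $K$-term to the left, and evaluates the sine integrals via a general integration-by-parts formula (its equation \eqref{61eq1}).
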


Then, we introduce some Hardy inequalities.
\begin{lemm}\label{6lem4}
	For any $n\in\mathbb{N}^{+}$ and $\xi\in[0,\infty)$, assume that $f\in H^n([0,\frac{\pi}{2}])$ and $f(0)=f(\frac{\pi}{2})=0$,
	then there holds 
	\begin{align}\label{est-lem6.2}
		\int_0^{\frac{\pi}{2}} \frac{f^2}{\sin^{\xi+2n}(2\theta)} d\theta \leq C\prod_{j=0}^{n-1}\left(\xi+2(n-j)-1\right)^{-2}\int_0^{\frac{\pi}{2}} \frac{(\partial^n_{\theta}f)^2}{\sin^{\xi}(2\theta)} d\theta.
	\end{align}    
\end{lemm}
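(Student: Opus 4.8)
I would prove Lemma \ref{6lem4} by induction on $n$, reducing everything to the one-dimensional weighted Hardy inequality, i.e.\ the case $n=1$:
\[
\int_0^{\pi/2}\frac{f^2}{\sin^{\xi+2}(2\theta)}\,d\theta\le \frac{C}{(\xi+1)^2}\int_0^{\pi/2}\frac{(\partial_\theta f)^2}{\sin^{\xi}(2\theta)}\,d\theta,\qquad f(0)=f(\tfrac\pi2)=0 .
\]
Since $\sin(2\theta)$, the domain $[0,\frac\pi2]$, and the hypotheses are all invariant under the reflection $\theta\mapsto\frac\pi2-\theta$, it suffices to bound the integral over $[0,\frac\pi4]$; the contribution of $[\frac\pi4,\frac\pi2]$ is handled identically after this change of variables. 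On $[0,\frac\pi4]$ the weight $\sin(2\theta)$ is increasing and vanishes only at the left endpoint, where $f$ vanishes too, which places us in the classical Hardy framework and sidesteps the non-monotonicity of $\sin(2\theta)$ on the full interval.

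For the base case I would introduce the antiderivative
\[
G(\theta)\tri -\tfrac12\int_{2\theta}^{\pi/2}\csc^{\xi+2}(u)\,du,\qquad\text{so that}\qquad G'(\theta)=\sin^{-(\xi+2)}(2\theta),\quad G(\tfrac\pi4)=0,
\]
and integrate by parts: $\int_0^{\pi/4}f^2G'\,d\theta=\big[f^2G\big]_0^{\pi/4}-2\int_0^{\pi/4}ff'G\,d\theta$. The boundary term at $\frac\pi4$ vanishes because $G(\frac\pi4)=0$; the one at $0$ vanishes because either the right-hand side of the asserted inequality is infinite (nothing to prove), or it is finite, in which case Cauchy--Schwarz forces $f(\theta)=o\big(\theta^{(\xi+1)/2}\big)$, so that $f^2G\to0$ as $\theta\to0^+$ (recall $|G(\theta)|=O\big((\xi+1)^{-1}\theta^{-(\xi+1)}\big)$ near $0$). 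A short induction using the reduction formula for $\int\csc^m$ gives the pointwise bound $|G(\theta)|\le C(\xi+1)^{-1}\sin^{-(\xi+1)}(2\theta)$ on $(0,\frac\pi4]$; writing $\sin^{-(\xi+1)}(2\theta)=\sin^{-(\xi+2)/2}(2\theta)\,\sin^{-\xi/2}(2\theta)$ and applying Cauchy--Schwarz to $-2\int_0^{\pi/4}ff'G\,d\theta$ yields
\[
\int_0^{\pi/4}\frac{f^2}{\sin^{\xi+2}(2\theta)}\,d\theta\le \frac{C}{\xi+1}\Big(\int_0^{\pi/4}\frac{f^2}{\sin^{\xi+2}(2\theta)}\,d\theta\Big)^{1/2}\Big(\int_0^{\pi/4}\frac{(\partial_\theta f)^2}{\sin^{\xi}(2\theta)}\,d\theta\Big)^{1/2},
\]
and the $n=1$ estimate follows by absorbing the first factor on the right.

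For general $n$ I would iterate the $n=1$ inequality $n$ times, applying it successively to $f,\partial_\theta f,\dots,\partial_\theta^{n-1}f$ with exponents $\xi+2(n-1),\xi+2(n-2),\dots,\xi$. This produces
\[
\int_0^{\pi/2}\frac{f^2}{\sin^{\xi+2n}(2\theta)}\,d\theta\le C^n\Big(\prod_{m=0}^{n-1}(\xi+2n-2m-1)^{-2}\Big)\int_0^{\pi/2}\frac{(\partial_\theta^n f)^2}{\sin^{\xi}(2\theta)}\,d\theta ,
\]
and since $\prod_{m=0}^{n-1}(\xi+2n-2m-1)^{-2}=\prod_{j=0}^{n-1}(\xi+2(n-j)-1)^{-2}$, this is exactly the claimed constant.

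The main obstacle is the base case, and within it the two ingredients carrying the $\xi$-dependence of the constant: first, checking that the integration-by-parts boundary term at $\theta=0$ genuinely vanishes — which requires extracting the extra decay $f(\theta)=o\big(\theta^{(\xi+1)/2}\big)$ from finiteness of the right-hand side, not merely from $f(0)=0$; second, establishing $|G(\theta)|\le C(\xi+1)^{-1}\sin^{-(\xi+1)}(2\theta)$ with the correct $(\xi+1)^{-1}$ factor, which means tracking the coefficients generated by the reduction formula for $\int\csc^m$ rather than settling for the crude comparison $\sin(2\theta)\ge\frac2\pi\cdot2\theta$. Once the problem has been reflected onto $[0,\frac\pi4]$ the remaining bookkeeping is routine, the interior of the interval being harmless because there the weights are comparable to constants.
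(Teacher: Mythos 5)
Your scheme (induction to reduce to $n=1$, integration by parts against an antiderivative of the weight, Cauchy--Schwarz, absorption) is a genuinely different route from the paper's: the paper never touches an antiderivative of $\sin^{-(\xi+2)}(2\theta)$, but instead uses the crude two-sided comparisons $\sin(2\theta)\ge\frac{4}{\pi}\theta$ and $\sin(2\theta)\le 2\theta$ on $[0,\frac\pi4]$ to transfer everything to the pure power weight $\theta^{-s}$ and run the textbook one-dimensional Hardy computation there (and symmetrically on $[\frac\pi4,\frac\pi2]$). The difficulty with your route is precisely the step you flag as the key ingredient: the pointwise bound $|G(\theta)|\le C(\xi+1)^{-1}\sin^{-(\xi+1)}(2\theta)$ on all of $(0,\frac\pi4]$ with $C$ independent of $\xi$ does not hold, and the reduction formula does not produce it. With $m=\xi+2$ and $I_m=\int_a^{\pi/2}\csc^m u\,du$, the reduction reads $(m-1)I_m=\cos a\,\csc^{m-1}a+(m-2)I_{m-2}$; since $I_{m-2}\le I_m$ on $(0,\frac\pi2]$ it yields only $I_m\le\cos a\,\csc^{m-1}a$, i.e.\ $|G(\theta)|\le\frac12\sin^{-(\xi+1)}(2\theta)$, with no $(\xi+1)^{-1}$. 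The extra decay does appear as $2\theta\to 0$, where the integral localizes near $u=2\theta$ and $\cos u\approx 1$, but it degrades near $2\theta=\pi/2$: if $\sin(2\theta)=1-\delta$ with $\delta\sim(\xi+1)^{-1}$, then $|G(\theta)|\sim\sqrt{\delta}\sim(\xi+1)^{-1/2}$ while $(\xi+1)^{-1}\sin^{-(\xi+1)}(2\theta)\sim(\xi+1)^{-1}$, so your claimed bound is off by a factor of order $\sqrt{\xi+1}$.

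This cannot be repaired by a cleverer estimate of $G$, because for a $\xi$-uniform constant the $n=1$ inequality is simply false in the large-$\xi$ regime. Take a bump $f$ of unit height and width $\epsilon$ centered at $\theta=\frac\pi4$ with $(\xi+1)^{-1}\ll\epsilon\ll(\xi+1)^{-1/2}$; both weights are then $\approx 1$ on its support, the left side of the inequality is $\sim\epsilon$, the right side is $\sim C(\xi+1)^{-2}\epsilon^{-1}$, and the inequality would force $\epsilon\lesssim(\xi+1)^{-1}$. In other words the sharp Hardy constant at the interior critical point $\theta=\frac\pi4$ is $O(1)$, not $O((\xi+1)^{-2})$. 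The paper's own proof is affected in the same way, since the step $\theta^{-\xi}\le 2^\xi\sin^{-\xi}(2\theta)$ silently absorbs an exponential $\xi$-dependence into the implicit constant; this is harmless for the paper because the lemma is only ever invoked with $\xi\in[0,2]$. To make your argument correct you should either restrict $\xi$ to a bounded interval (letting $C$ depend on the bound, which is all the paper actually needs), or keep only the honest bound $|G(\theta)|\le\frac12\cos(2\theta)\sin^{-(\xi+1)}(2\theta)$ and recover the $(\xi+1)^{-1}$ gain only in a neighborhood of $\theta=0$.
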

\begin{rema}
	While Lemma $7.3$ in \cite{E21} established the Hardy inequality \eqref{est-lem6.2} for the specific case when $n=1$ and $\xi=0$, we now present its extension to all positive integers $n\in\mathbb{N}^{+}$ and parameters $\xi\in[0,\infty)$ in Lemma \ref{6lem4}. 
\end{rema}
Lemma \ref{6lem4} can be derived from the following lemma and mathematical induction, and the proof is omitted here.
\begin{lemm}\label{6lem3}
	Assume $f\in H^1([0,\frac{\pi}{2}])$ and $f(0)=f(\frac{\pi}{2})=0$, for any $\xi\in [0,\infty)$, there holds 
	\begin{align}\label{est-lem6.3}
		\int_0^{\frac{\pi}{2}} \frac{f^2}{\sin^{\xi+2}(2\theta)} d\theta \leq \frac{1}{(\xi+1)^2}\int_{0}^{\frac{\pi}{2}} \frac{(\partial_{\theta}f)^2}{\sin^{\xi}(2\theta)} d\theta + C\|f\|^2_{H^1}.
	\end{align}    
\end{lemm}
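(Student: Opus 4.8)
The plan is to prove this sharp weighted Hardy inequality through an integration-by-parts identity adapted precisely to the weight $\sin^{-\xi-2}(2\theta)$, which extracts the optimal constant $(\xi+1)^{-2}$ and leaves behind only a lower-order remainder controlled by $\int_0^{\pi/2}\frac{f^2}{\sin^{\xi}(2\theta)}\,d\theta$. First I may assume without loss of generality that $\int_0^{\pi/2}\frac{(\partial_\theta f)^2}{\sin^{\xi}(2\theta)}\,d\theta<\infty$, since otherwise the right-hand side is already infinite and there is nothing to prove. The computational starting point is the elementary identity
\[
\frac{1}{\sin^{\xi+2}(2\theta)}=\frac{\xi}{\xi+1}\,\frac{1}{\sin^{\xi}(2\theta)}-\frac{1}{2(\xi+1)}\,\frac{d}{d\theta}\!\left(\frac{\cos(2\theta)}{\sin^{\xi+1}(2\theta)}\right),
\]
which follows from a direct differentiation using $\cos^2(2\theta)=1-\sin^2(2\theta)$.

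Multiplying this identity by $f^2$, integrating over $[0,\tfrac{\pi}{2}]$ and integrating by parts in the last term, I obtain an identity in which the boundary contributions $\bigl[f^2\cos(2\theta)\sin^{-\xi-1}(2\theta)\bigr]_0^{\pi/2}$ vanish: since $f(0)=f(\tfrac{\pi}{2})=0$, writing $f(\theta)=\int_0^\theta \partial_\theta f$ and applying Cauchy--Schwarz together with the elementary bound $\int_0^\theta\sin^{\xi}(2t)\,dt\lesssim_{\xi}\sin^{\xi+1}(2\theta)$ near $\theta=0$ (and symmetrically near $\theta=\tfrac{\pi}{2}$) gives $f^2(\theta)\sin^{-\xi-1}(2\theta)\to 0$ at both endpoints — this is exactly where the finiteness reduction is used. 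The result is the identity
\[
\int_0^{\pi/2}\frac{f^2}{\sin^{\xi+2}(2\theta)}\,d\theta=\frac{1}{\xi+1}\int_0^{\pi/2}\frac{f\,\partial_\theta f\,\cos(2\theta)}{\sin^{\xi+1}(2\theta)}\,d\theta+\frac{\xi}{\xi+1}\int_0^{\pi/2}\frac{f^2}{\sin^{\xi}(2\theta)}\,d\theta.
\]

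The decisive step is to absorb the first term on the right-hand side. Using $|\cos(2\theta)|\le 1$ and the splitting $\sin^{-\xi-1}(2\theta)=\sin^{-(\xi+2)/2}(2\theta)\cdot\sin^{-\xi/2}(2\theta)$, Young's inequality with parameter $\varepsilon>0$ bounds that term by $\frac{\varepsilon}{2(\xi+1)}\int_0^{\pi/2}\frac{f^2}{\sin^{\xi+2}(2\theta)}\,d\theta+\frac{1}{2\varepsilon(\xi+1)}\int_0^{\pi/2}\frac{(\partial_\theta f)^2}{\sin^{\xi}(2\theta)}\,d\theta$. Choosing the \emph{optimal} value $\varepsilon=\xi+1$ makes the coefficient of $\int\frac{f^2}{\sin^{\xi+2}(2\theta)}$ exactly $\tfrac12$, so it can be moved to the left, yielding
\[
\int_0^{\pi/2}\frac{f^2}{\sin^{\xi+2}(2\theta)}\,d\theta\le\frac{1}{(\xi+1)^2}\int_0^{\pi/2}\frac{(\partial_\theta f)^2}{\sin^{\xi}(2\theta)}\,d\theta+\frac{2\xi}{\xi+1}\int_0^{\pi/2}\frac{f^2}{\sin^{\xi}(2\theta)}\,d\theta.
\]
To finish, the remainder term is absorbed into $C\|f\|_{H^1}^2$: when $\xi\le 2$ one has $\sin^{-\xi}(2\theta)\le\sin^{-2}(2\theta)$ on $[0,\tfrac{\pi}{2}]$, and the classical one-dimensional Hardy inequality near each endpoint gives $\int_0^{\pi/2}\frac{f^2}{\sin^{\xi}(2\theta)}\,d\theta\le\int_0^{\pi/2}\frac{f^2}{\sin^{2}(2\theta)}\,d\theta\lesssim\|f\|_{H^1}^2$; for larger $\xi$ the same integral is controlled analogously via Lemma \ref{6lem4}, so that the final constant $C$ depends on $\xi$.

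I expect the main obstacle to be securing the \emph{exact} constant $(\xi+1)^{-2}$ rather than $(1+\delta)(\xi+1)^{-2}$: a generic Young parameter, or a naive Cauchy--Schwarz applied directly to $\bigl(\int\frac{f^2}{\sin^{\xi+2}(2\theta)}\bigr)^{1/2}\bigl(\int\frac{(\partial_\theta f)^2}{\sin^{\xi}(2\theta)}\bigr)^{1/2}$, loses a multiplicative factor; it is the particular algebraic identity above combined with the choice $\varepsilon=\xi+1$ that produces sharpness, and the substitution $\cos^2(2\theta)=1-\sin^2(2\theta)$ — which both generates the favorable $\tfrac{\xi}{\xi+1}$ coefficient and keeps $|\cos(2\theta)|\le1$ throughout — is what lets the whole argument run on $[0,\tfrac{\pi}{2}]$ without splitting at $\theta=\tfrac{\pi}{4}$. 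A secondary technical point is the rigorous justification that the boundary terms vanish for an arbitrary $f\in H^1$ with zero endpoint values, handled by the reduction to the case $\int_0^{\pi/2}\frac{(\partial_\theta f)^2}{\sin^{\xi}(2\theta)}\,d\theta<\infty$.
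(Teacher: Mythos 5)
Your proof takes a genuinely different route from the paper's. The paper first proves the inequality with the power weight $(2\theta)^{-(\xi+2)}$ on $[0,\frac{\pi}{4}]$ (so the integration by parts produces a boundary term at $\theta=\frac{\pi}{4}$ that is directly bounded by $\|f\|_{H^1}^2$), applies Young with the same optimal parameter, and then switches back to the $\sin(2\theta)$ weight using the pointwise bound $\bigl|\frac{1}{(2\theta)^{\xi}}-\frac{1}{\sin^{\xi}(2\theta)}\bigr|\lesssim 1$ on $[0,\frac{\pi}{4}]$; symmetry handles $[\frac{\pi}{4},\frac{\pi}{2}]$. You instead work on the whole interval $[0,\frac{\pi}{2}]$ with the exact $\sin(2\theta)$ weight from the start, using the algebraic identity
\[
\frac{1}{\sin^{\xi+2}(2\theta)}=\frac{\xi}{\xi+1}\,\frac{1}{\sin^{\xi}(2\theta)}-\frac{1}{2(\xi+1)}\,\frac{d}{d\theta}\!\Bigl(\frac{\cos(2\theta)}{\sin^{\xi+1}(2\theta)}\Bigr),
\]
which I verified; this avoids the split at $\theta=\frac{\pi}{4}$ and the $(2\theta)\leftrightarrow\sin(2\theta)$ transfer altogether, and the subsequent integration by parts, boundary-term estimate, and Young step with $\varepsilon=\xi+1$ are all correct. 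What the paper's route buys is that its only remainder is the boundary term, which is $\|f\|_{H^1}^2$ automatically; what yours buys is a cleaner global identity and a sharp constant without any endpoint comparisons between $2\theta$ and $\sin(2\theta)$.

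There is, however, a real gap in the final absorption step for $\xi>2$. After Young you are left with the remainder $\frac{2\xi}{\xi+1}\int_0^{\pi/2}\frac{f^2}{\sin^{\xi}(2\theta)}\,d\theta$, which you correctly bound by $C\|f\|_{H^1}^2$ when $\xi\le 2$ via the classical Hardy inequality. For $\xi>2$, you assert "the same integral is controlled analogously via Lemma~\ref{6lem4}"; but Lemma~\ref{6lem4} bounds $\int\frac{f^2}{\sin^{\xi}(2\theta)}\,d\theta$ by a constant times $\int\frac{(\partial_\theta f)^2}{\sin^{\xi-2}(2\theta)}\,d\theta$, which is a weighted derivative norm, not $\|f\|_{H^1}^2$ — and in fact $\int\frac{f^2}{\sin^{\xi}(2\theta)}\,d\theta$ can be infinite for general $f\in H^1$ when $\xi>2$. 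Plugging Lemma~\ref{6lem4} back in also pollutes the main constant and destroys sharpness. To be fair, the paper's own transition step has the same problem: $\bigl|\frac{1}{(2\theta)^{\xi}}-\frac{1}{\sin^{\xi}(2\theta)}\bigr|\approx\frac{\xi}{6}(2\theta)^{2-\xi}$ near $\theta=0$, which is only bounded when $\xi\le 2$. So both arguments establish the lemma rigorously for the range $\xi\in[0,2]$ that is actually used in the rest of the paper (the weight exponents $\eta\in(0,1)$ and $\lambda=1+\frac{\alpha}{10\beta}\in(1,2)$); the claim of validity for all $\xi\ge 0$ does not follow from either proof as written. You should either restrict to $\xi\in[0,2]$ or explain how the remainder is to be handled in the supercritical regime.
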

\begin{rema}
	By comparing Lemma \ref{6lem4} and Lemma \ref{6lem3}, we see that the size of the first constant on the right-hand side of \eqref{est-lem6.3} is sharp. 
	In the following elliptic estimates in this section, we will make full use of the smallness of the first term, so that it can be absorbed by the main term.
	We note that Corollary $7.7$ in \cite{E21} established the result for the parameter range $\xi\in[0,1]$.
	However, our analysis requires an extension to $\xi\in[0, 2]$ due to the angular weight $w_{\theta}^{\lambda}$ in the norms our introduced norm $\mathcal{H}^k$, $\mathcal{H}^{k,\ast}$ and $\mathcal{E}^2$, where $\lambda>1$ by construction.
	Remarkably, we can further generalize \eqref{est-lem6.3} to hold for all $\xi \in [0, +\infty)$.
\end{rema}
\begin{proof}[The proof of Lemma \ref{6lem3}]
	 We only prove the case that $\theta\in \left[0,\frac{\pi}{4}\right]$.
	Let $\xi\geq 0$. Integration by parts leads to
	\begin{align*}
		\int_0^{\frac{\pi}{4}} \frac{f^2}{(2\theta)^{\xi+2}} d\theta  &= -\frac{1}{2(\xi+1)}\int_0^{\frac{\pi}{4}} f^2 d(2\theta)^{-( \xi+1)}\\ \notag
		&=-\frac{1}{2(\xi+1)} \frac{f^2}{(2\theta)^{\xi+1}}\bigg|^{\theta=\frac{\pi}{4}}_{\theta=0}+\frac{1}{ \xi+1}\int_0^{\frac{\pi}{4}} \frac{f\partial_{\theta}f}{(2\theta)^{\xi+1}} d\theta\\ \notag
		&=-\frac{\left(\frac{2}{\pi}\right)^{\xi+1}}{2(\xi+1)} \int_0^{\frac{\pi}{4}}2f\partial_{\theta}fd\theta+\frac{1}{ \xi+1}\int_0^{\frac{\pi}{4}} \frac{f\partial_{\theta}f}{(2\theta)^{\xi+1}} d\theta \\ \notag
		&\leq C\|f\|_{H^1}^2+\frac{1}{2}\int_0^{\frac{\pi}{4}} \frac{f^2}{(2\theta)^{\xi+2}} d\theta+\frac{1}{2}\frac{1}{ (\xi+1)^2}\int_0^{\frac{\pi}{4}} \frac{(\partial_{\theta}f)^2}{(2\theta)^{\xi}} d\theta,
	\end{align*}
	which implies that 
	\begin{align}\label{est-f-1-1}
		\int_0^{\frac{\pi}{4}} \frac{f^2}{(2\theta)^{\xi+2}} d\theta\leq \frac{1}{ (\xi+1)^2}\int_0^{\frac{\pi}{4}} \frac{(\partial_{\theta}f)^2}{(2\theta)^{\xi}} d\theta+C\|f\|_{H^1}^2.
	\end{align}
	For any $\xi\geq 0$, we have
	\begin{align*}
		\left|\frac{1}{(2\theta)^{\xi}}-\frac{1}{\sin^{\xi}(2\theta)} \right|\lesssim 1,\quad \text{for any}~~\theta\in \left[0,\frac{\pi}{4}\right].
	\end{align*}
	This, together with \eqref{est-f-1-1}, yields directly that 
	\begin{align*}
		\int_0^{\frac{\pi}{4}} \frac{f^2}{\sin^{\xi+2}(2\theta)} d\theta \leq \frac{1}{(\xi+1)^2}\int_{0}^{\frac{\pi}{4}} \frac{(\partial_{\theta}f)^2}{\sin^{\xi}(2\theta)} d\theta + C\|f\|^2_{H^1}.
	\end{align*} 
	Thus, we finish the proof of this lemma.
\end{proof}

Moreover, based on the Hardy inequalities established in Lemma $8.26$ of \cite{E21} and Lemma \ref{6lem4}, we can derive the following proposition.
For the sake of simplicity, the proof is omitted here.
\begin{prop}\label{6prop0}
	Let $n\geq 0$. There holds
	\begin{align*}
		\|\frac{f}{\sin(2\theta)}\|_{\mathcal{H}^n}\leq C\|\partial_{\theta}f\|_{\mathcal{H}^n},
		\quad
		\|\frac{f}{\sin(2\theta)}\|_{\mathcal{H}^{n,\ast}}\leq C\|\partial_{\theta}f\|_{\mathcal{H}^{n,\ast}}.
	\end{align*} 
\end{prop}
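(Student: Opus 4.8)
The plan is to reduce the estimate for $\|\frac{f}{\sin(2\theta)}\|_{\mathcal{H}^n}$ to a sum of weighted $L^2$ estimates for $\tilde D^i_z \tilde D^j_\theta$ derivatives of $\frac{f}{\sin(2\theta)}$, and then to compare each such term against the corresponding term in $\|\partial_\theta f\|_{\mathcal{H}^n}$ using the Hardy-type inequality of Lemma \ref{6lem4} together with the Hardy inequality from Lemma $8.26$ of \cite{E21}. Recall the definition
\[
\|h\|^2_{\mathcal{H}^n}=\sum_{i=0}^{n}\|D^i_z h\,w^{\eta}\|^2_{L^2}+\sum_{0\le i+j\le n,\ j\ge 1}\|D^i_z D^j_\theta h\,w^{\lambda}\|^2_{L^2},
\]
so that, writing $h=\frac{f}{\sin(2\theta)}$, it suffices to bound each of the finitely many pieces above. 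First I would note that $D_z$ commutes with division by $\sin(2\theta)$ (since $D_z$ acts only in the radial variable), so $D^i_z h=\frac{D^i_z f}{\sin(2\theta)}$, which reduces everything to estimates in the angular variable $\theta$, uniformly in $z$; the radial weights $w^\eta_z,w^\lambda_z$ (which depend only on $z$) then simply pass through the argument and can be reinstated at the end by integrating in $z$.

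The heart of the matter is therefore: for fixed $z$, and for each admissible $(i,j)$, estimate $\|\partial^j_\theta\big(\tfrac{g}{\sin(2\theta)}\big)\,\sin^j(2\theta)\,w^{\#}_\theta\|_{L^2_\theta}$ in terms of $\|\partial_\theta g\|$ with the same $\theta$-weight and $\theta$-derivative count, where $g$ stands for $D^i_z f$ and $w^\#_\theta$ is either $w^\eta_\theta$ or $w^\lambda_\theta$. I would expand $\partial^j_\theta\big(\tfrac{g}{\sin(2\theta)}\big)$ by the Leibniz rule, producing terms of the form $\partial^{j-\ell}_\theta g \cdot \partial^\ell_\theta\big(\tfrac{1}{\sin(2\theta)}\big)$; since $\partial^\ell_\theta\big(\tfrac{1}{\sin(2\theta)}\big)$ behaves like $\sin^{-1-\ell}(2\theta)$ near the endpoints, each such term is comparable to $\frac{\partial^{j-\ell}_\theta g}{\sin^{1+\ell}(2\theta)}$. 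Multiplying by $\sin^j(2\theta)$ and the weight $w^\#_\theta=(\sin(2\theta))^{-a}$ with $a\in\{\eta/2,\lambda/2\}$ (more precisely $w^K_\theta$ for the $L^2$ slot, but the structure is the same), the worst case is $\ell=j$, which gives a term $\frac{g}{\sin(2\theta)}\,w^\#_\theta$ that we must control by $\partial_\theta g$ at the level of no derivatives on the right — exactly the $n=1$ Hardy inequality (Lemma \ref{6lem4} with $n=1$), applied with the appropriate $\xi$ determined by the weight exponent. For the intermediate terms $\ell<j$ one uses Lemma \ref{6lem4} with $n=\ell+1$ to trade the $\sin^{-1-\ell}(2\theta)$ singularity for $\ell+1$ angular derivatives landing on $g$, landing safely inside $\|\partial_\theta g\|_{\mathcal{H}^{n-1}}\le\|\partial_\theta g\|_{\mathcal{H}^n}$. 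The boundary conditions $f|_{\partial D}=0$, hence $g|_{\partial D}=0$ and (after differentiating the equation, if needed) vanishing of the relevant derivatives at $\theta=0,\frac\pi2$, are what license every application of the Hardy inequalities; one must check that enough derivatives of $f$ vanish at the poles, which follows from the regularity of $f$ together with $f(0)=f(\frac\pi2)=0$ and the structure of the weighted spaces.

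The main obstacle I anticipate is bookkeeping: verifying that the exponent $\xi$ arising in each Leibniz term is always $\ge 0$ (so that Lemma \ref{6lem4} applies — note $\lambda>1$ but $\lambda<2$, so $2-\lambda>0$ and $\xi=2-\lambda\ge0$ in the borderline slots), and confirming that the number of angular derivatives produced never exceeds $n$ so that the right-hand side stays within $\|\partial_\theta f\|_{\mathcal{H}^n}$. The mismatch between the weights $w^\eta$ (used in the pure-radial slot $j=0$) and $w^\lambda$ (used whenever $j\ge1$) requires care: when $j=0$ on the left we are estimating $\|\tfrac{D^i_z f}{\sin(2\theta)}\,w^\eta\|_{L^2}$, and since $D^j_\theta$ with $j\ge1$ already appears on the right-hand side in the $\mathcal{H}^n$ norm with the (more singular) weight $w^\lambda$, we gain room rather than lose it — one Hardy step trades the $\sin^{-1}(2\theta)$ against $\partial_\theta$, and $w^\eta$ is less singular than $w^\lambda$, so the estimate closes comfortably. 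The $\mathcal{H}^{n,\ast}$ case is entirely parallel: the only change is the radial weight $w^{\ast,\#}$ built from $w^\ast_z$ instead of $w^\#$ built from $w_z$, and since the radial weights are inert under the (purely angular) Hardy manipulations, the identical argument applies verbatim. I would therefore omit the routine details and simply invoke Lemma \ref{6lem4}, Lemma $8.26$ of \cite{E21}, and the Leibniz expansion, as the statement already indicates.
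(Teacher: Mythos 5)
Your term-by-term Leibniz strategy has a genuine gap at the $j\ge 1$ slots of the $\mathcal{H}^n$ norm. After expanding $\partial^j_\theta\big(\tfrac{g}{\sin(2\theta)}\big)$ and multiplying by $\sin^j(2\theta)\,w^\lambda_\theta$, the two top Leibniz contributions (in your notation $\ell=j$ and $\ell=j-1$) both reduce, after Hardy, to controlling $\|D^i_z\partial_\theta f\,w^\lambda\|_{L^2}$: that is, $\partial_\theta f$ paired with the singular weight $w^\lambda_\theta=(\sin 2\theta)^{-\lambda/2}$ but with \emph{no} $D_\theta$ factor. This is not a piece of $\|\partial_\theta f\|_{\mathcal{H}^n}$: in that norm every $w^\lambda$ slot carries at least one $D_\theta$, and the derivative-free slots carry the weaker weight $w^\eta$. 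It is not merely ``not obviously controlled''; it can be infinite even for smooth $f$ vanishing at $\partial D$. Per fixed $z$, take $f(\theta)=\sin(2\theta)$, so $\tfrac{f}{\sin(2\theta)}\equiv 1$ and both sides of the proposition are finite, yet $\|\partial_\theta f\,w^\lambda_\theta\|^2_{L^2_\theta}=4\int_0^{\pi/2}\cos^2(2\theta)(\sin 2\theta)^{-\lambda}d\theta=\infty$ since $\lambda>1$. The two Leibniz contributions cancel exactly in this example (indeed $D_\theta\big(\tfrac{f}{\sin(2\theta)}\big)=0$), and it is precisely this cancellation that a separate positive estimate of each term destroys.

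A working proof must encode the cancellation structurally rather than estimating term by term. Observe that $D_\theta\big(\tfrac{f}{\sin(2\theta)}\big)=\tfrac{g_1}{\sin(2\theta)}$ with $g_1\tri D_\theta f-2\cos(2\theta)f$. The numerator $g_1$ vanishes at $\theta=0,\tfrac{\pi}{2}$ (both summands do), so Lemma~\ref{6lem4} applies to it, and crucially $\partial_\theta g_1=D_\theta(\partial_\theta f)+4\sin(2\theta)f$: the dangerous $\cos(2\theta)\partial_\theta f$ contributions from the two pieces of $g_1$ cancel, leaving $D_\theta(\partial_\theta f)$ (which carries a $D_\theta$ and therefore pairs correctly with $w^\lambda$) plus a harmless $\sin(2\theta)f$ term that is absorbed via a weighted Poincar\'e into $\|\partial_\theta f\,w^\eta\|$. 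Iterating this identity in $j$, letting $D^i_z$ pass through, and invoking Lemma~\ref{6lem5} to pass between $D^j_\theta$ and $\tilde D^j_\theta$ closes the estimate; the $\mathcal{H}^{n,\ast}$ case is verbatim since only the radial weight changes. You invoke the right tools (Lemma~\ref{6lem4}, Lemma $8.26$ of \cite{E21}), and the paper itself omits its proof so there is no detailed argument for you to have missed, but a blind Leibniz expansion without this algebraic cancellation does not close.
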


Our next goal is to investigate the equivalent norms of the operators $D_{\theta}$ and $\tilde{D}_{\theta}$. 
\begin{lemm}\label{6lem5}
	Let $n\geq 1$. There holds
	\begin{align}\label{est-lem6.5}
		\|D^n_\theta f\|^2_{\mathcal{L}^2_{\mathcal{W}}} \leq C \sum_{k=1}^{n}\|\tilde{D}^k_\theta f\|^2_{\mathcal{L}^2_{\mathcal{W}}},\quad \|\tilde{D}^n_\theta f\|^2_{\mathcal{L}^2_{\mathcal{W}}} \leq C \sum_{k=1}^{n}\|D^k_\theta f\|^2_{\mathcal{L}^2_{\mathcal{W}}}.
	\end{align}
\end{lemm}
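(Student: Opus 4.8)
The plan is to relate the two families of operators $D_\theta = \sin(2\theta)\partial_\theta$ and $\tilde D^n_\theta = \sin^n(2\theta)\partial^n_\theta$ by expanding each in terms of the other, using the Leibniz rule and the fact that $\partial_\theta\sin(2\theta) = 2\cos(2\theta)$. The key combinatorial observation is that both $D^n_\theta f$ and $\tilde D^n_\theta f$ can be written as finite linear combinations
\[
D^n_\theta f = \sum_{k=1}^n a_{n,k}(\theta)\,\tilde D^k_\theta f, \qquad \tilde D^n_\theta f = \sum_{k=1}^n b_{n,k}(\theta)\,D^k_\theta f,
\]
where the coefficients $a_{n,k}, b_{n,k}$ are bounded trigonometric polynomials (products of $\cos(2\theta)$ with nonnegative powers of $\sin(2\theta)$), hence uniformly bounded on $[0,\tfrac\pi2]$ independently of the weight $\mathcal{W}$. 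Once these expansions are in hand, the inequalities \eqref{est-lem6.5} follow immediately by applying the triangle inequality in $\mathcal{L}^2_{\mathcal{W}}$ and bounding each coefficient by a constant.

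First I would establish the expansion of $D^n_\theta$ in terms of $\{\tilde D^k_\theta\}_{k\le n}$ by induction on $n$. The base case $n=1$ is the identity $D_\theta = \tilde D_\theta$. For the inductive step, assuming $D^n_\theta f = \sum_{k=1}^n a_{n,k}\tilde D^k_\theta f$, I apply $D_\theta = \sin(2\theta)\partial_\theta$ to both sides. Using $\partial_\theta(\tilde D^k_\theta f) = \partial_\theta(\sin^k(2\theta)\partial^k_\theta f) = 2k\sin^{k-1}(2\theta)\cos(2\theta)\partial^k_\theta f + \sin^k(2\theta)\partial^{k+1}_\theta f$, one gets
\[
D_\theta(\tilde D^k_\theta f) = 2k\cos(2\theta)\,\tilde D^k_\theta f + \tilde D^{k+1}_\theta f,
\]
and also $D_\theta(a_{n,k}) = \sin(2\theta)\partial_\theta a_{n,k}$, which is again a bounded trigonometric polynomial since $a_{n,k}$ is. Collecting terms yields $D^{n+1}_\theta f = \sum_{k=1}^{n+1} a_{n+1,k}\tilde D^k_\theta f$ with $a_{n+1,k}$ a bounded trigonometric polynomial. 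The reverse expansion, $\tilde D^n_\theta$ in terms of $\{D^k_\theta\}_{k\le n}$, is proved analogously by induction: from $\tilde D^n_\theta f = \sin(2\theta)\cdot\sin^{n-1}(2\theta)\partial^n_\theta f$ one writes $\sin^{n-1}(2\theta)\partial^n_\theta f = \partial_\theta(\tilde D^{n-1}_\theta f)/\sin(2\theta) \cdot \sin(2\theta) - (\text{lower order})$, or more cleanly, solve the first system: since the matrix $(a_{n,k})$ is triangular with nonzero diagonal entries ($a_{n,n}=1$), it is invertible over the ring of bounded trigonometric polynomials, giving $b_{n,k}$ bounded as well.

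The main (mild) obstacle is bookkeeping: ensuring that the coefficients $a_{n,k}$ and $b_{n,k}$ stay within the class of \emph{bounded} functions on $[0,\tfrac\pi2]$ under repeated differentiation — the point being that differentiating $\sin^j(2\theta)$ never produces negative powers of $\sin(2\theta)$, only nonnegative powers times $\cos(2\theta)$, so no singularity is introduced. Once this closure property of the coefficient class is verified, the triangle inequality in $\mathcal{L}^2_{\mathcal{W}}$ gives $\|D^n_\theta f\|_{\mathcal{L}^2_{\mathcal{W}}} \le \sum_{k=1}^n \|a_{n,k}\|_{L^\infty}\|\tilde D^k_\theta f\|_{\mathcal{L}^2_{\mathcal{W}}} \le C\sum_{k=1}^n \|\tilde D^k_\theta f\|_{\mathcal{L}^2_{\mathcal{W}}}$, and symmetrically for the other direction, completing the proof. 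No properties of $\mathcal{W}$ beyond it being a fixed weight are used, so the argument is uniform in the choice $\mathcal{W}_z \in \{w_z, w_z^\ast\}$ and in $\xi$.
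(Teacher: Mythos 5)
Your proposal is correct and follows essentially the same approach as the paper: both establish the triangular identity $D^n_\theta f = \tilde D^n_\theta f + \sum_{k=1}^{n-1} S_k(\theta)\,\tilde D^k_\theta f$ with smooth, bounded coefficients via the Leibniz rule (the paper states this directly, you derive it by induction from $D_\theta(\tilde D^k_\theta f) = 2k\cos(2\theta)\tilde D^k_\theta f + \tilde D^{k+1}_\theta f$), and both use the unit-diagonal triangular structure to pass from one direction to the other — the paper by rearranging and inducting, you by either the same induction or by observing invertibility of the coefficient matrix over bounded functions. The only minor difference is framing: the paper performs the back-substitution explicitly in the induction step, whereas you also offer the (equivalent) matrix-inversion language; both are sound and neither requires any property of $\mathcal{W}$.
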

\begin{proof}
	Recalling the definitions of $D^n_\theta $ and $\tilde{D}^n_\theta$, it is easy to check that \eqref{est-lem6.5} holds for $n=1$.
	For $n\geq2$, there exists $\big\{S_k(\theta)\big\}_{k=1}^{n-1}\subset C^{\infty}([0,\frac{\pi}{2}])$ such that
	\begin{align}\label{inden-D-tD-f}
		D^{n}_\theta f =\tilde{D}^{n}_\theta f + \sum_{k=1}^{n-1} S_k(\theta) \tilde{D}^k_\theta f,
	\end{align}
	which implies that 
	\begin{align*}
		\|D^n_\theta f\|^2_{\mathcal{L}^2_{\mathcal{W}}} \lesssim \|\tilde{D}^n_\theta f\|^2_{\mathcal{L}^2_{\mathcal{W}}}+\sum_{k=1}^{n-1}\|S_k(\theta)\tilde{D}^k_\theta f\|^2_{\mathcal{L}^2_{\mathcal{W}}}\lesssim\sum_{k=1}^{n}\|\tilde{D}^k_\theta f\|^2_{\mathcal{L}^2_{\mathcal{W}}}.
	\end{align*}
	This gives the first inequality in \eqref{est-lem6.5}.

	While for proving the second inequality in \eqref{est-lem6.5}, we use the standard induction argument.
	Suppose that the second inequality in \eqref{est-lem6.5} holds for any $0<n<l$, then for $n=l+1$, we deduce from \eqref{inden-D-tD-f} that
	\begin{align*}
		\tilde{D}^{l+1}_\theta f=D^{l+1}_\theta f - \sum_{k=1}^{l} S_k(\theta) \tilde{D}^k_\theta f.
	\end{align*}
	Combining this with the induction assumption, we obtain that
	\begin{align*}
		\|\tilde{D}^{l+1}_\theta f\|^2_{\mathcal{L}^2_{\mathcal{W}}} \lesssim \|D^{l+1}_\theta f\|^2_{\mathcal{L}^2_{\mathcal{W}}}+\sum_{k=1}^{l}\|S_k(\theta)\tilde{D}^k_\theta f\|^2_{\mathcal{L}^2_{\mathcal{W}}}\lesssim\sum_{k=1}^{l+1}\|D^k_\theta f\|^2_{\mathcal{L}^2_{\mathcal{W}}}.
	\end{align*}
	Hence, the second inequality in \eqref{est-lem6.5} can be derive by the standard induction argument.
	We thus complete the proof of this lemma.
\end{proof}

From now on, for any function $f$, we define the adjoint operators of $D_\theta$ and $D_z$ as follows:
$$D^\ast_\theta (f)\tri \partial_{\theta}(\sin(2\theta) f),\quad 
D^\ast_z (f)\tri \partial_{z}(z f).$$
The following lemma provides an estimate bounding the $\mathcal{L}^2_{\mathcal{W}}$-norm of the mixed derivatives in terms of purely radial and angular derivatives.
\begin{lemm}\label{6lem6}
	Assume that for any $k=1,2$,
	$
	\sum\limits_{l=0}^{k}\big|\frac{D^{l}_{z}D^{k-l}_{\theta}\mathcal{W}}{\mathcal{W}}\big|\leq C_{\beta}$, where $C_{\beta}>0$ is a constant.
	Then for any $i,j,n\in\mathbb{N}$ such that $i+j=n$, there holds
	\begin{align}\label{61in1}
		\|D^i_zD^j_{\theta}f\|^2_{\mathcal{L}^2_{\mathcal{W}}} \leq C_{\beta} \sum_{k=0}^{n}\big(\|D^k_zf\|^2_{\mathcal{L}^2_{\mathcal{W}}} + \|D^k_{\theta}f\|^2_{\mathcal{L}^2_{\mathcal{W}}}\big).
	\end{align}
\end{lemm}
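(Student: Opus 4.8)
The plan is to induct on the total order $n=i+j$, reducing the claimed estimate \eqref{61in1} to a single interpolation inequality at fixed order together with a discrete log-convexity estimate. Write $M_{p,q}:=\|D_z^pD_\theta^qf\|_{\mathcal{L}^2_{\mathcal W}}$ and $A_m:=\sum_{k=0}^m\big(\|D_z^kf\|^2_{\mathcal{L}^2_{\mathcal W}}+\|D_\theta^kf\|^2_{\mathcal{L}^2_{\mathcal W}}\big)$, so that \eqref{61in1} asks for $M_{i,j}^2\lesssim_\beta A_n$. For $n\le1$ the left side is one of $\|f\|_{\mathcal{L}^2_{\mathcal W}}$, $\|D_zf\|_{\mathcal{L}^2_{\mathcal W}}$, $\|D_\theta f\|_{\mathcal{L}^2_{\mathcal W}}$, all of which appear in $A_n$. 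So fix $n\ge2$, assume the estimate for every pair with sum $\le n-1$ (hence $M_{p,q}^2\lesssim_\beta A_{n-1}$ whenever $p+q\le n-1$, since $A_m$ is increasing in $m$), and assume $i,j\ge1$ — the cases $i=0$ or $j=0$ being trivial. By a routine density argument one may take $f$ smooth with all the boundary terms below vanishing (the $\theta$-endpoint terms vanish automatically because $\sin(2\theta)$ vanishes at $\theta=0,\frac{\pi}{2}$).

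For the interpolation step I would begin with $M_{i,j}^2=\langle D_z^iD_\theta^jf,(D_z^iD_\theta^jf)\mathcal W^2\rangle$, integrate by parts once in $z$ — writing $D_z^iD_\theta^jf=z\partial_z(D_z^{i-1}D_\theta^jf)$ and using the adjoint $D_z^*$ — and then integrate by parts once in $\theta$ on the resulting factor $D_z^{i+1}D_\theta^jf=\sin(2\theta)\partial_\theta(D_z^{i+1}D_\theta^{j-1}f)$ using $D_\theta^*$, the net effect being to produce the ``balanced'' term $-\langle\mathcal W^2 D_z^{i-1}D_\theta^{j+1}f,\,D_z^{i+1}D_\theta^{j-1}f\rangle$, in which both factors have total order $n$. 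Every other term comes from differentiating $\mathcal W^2$ or $\sin(2\theta)\mathcal W^2$; its coefficient is a polynomial in $\cos(2\theta)$ and in the logarithmic weight derivatives $\frac{D_z\mathcal W}{\mathcal W},\frac{D_\theta\mathcal W}{\mathcal W},\frac{D_z^2\mathcal W}{\mathcal W},\frac{D_zD_\theta\mathcal W}{\mathcal W},\frac{D_\theta^2\mathcal W}{\mathcal W}$, all bounded by $C_\beta$ by the hypothesis ($k=1,2$), so each such term is $C_\beta$ times a product of two factors $M_{p,q}$ with $p+q\le n$ of which at least one has order $\le n-1$. For the single such term whose other factor still has order $n$, one further integration by parts in $z$ replaces it by factors of order $\le n-1$ at the price of an absorbable multiple of $M_{i,j}^2$. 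Applying Young's inequality throughout one arrives at
\[
M_{i,j}^2\le C_\beta\big(M_{i-1,j+1}\,M_{i+1,j-1}+A_{n-1}\big).
\]

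Now set $x_m:=M_{m,n-m}^2$ for $0\le m\le n$. The endpoints $x_0=\|D_\theta^nf\|^2_{\mathcal{L}^2_{\mathcal W}}$ and $x_n=\|D_z^nf\|^2_{\mathcal{L}^2_{\mathcal W}}$ already appear in $A_n$, and the inequality above reads $x_m\le C_\beta\big(\sqrt{x_{m-1}x_{m+1}}+A_{n-1}\big)$ for $1\le m\le n-1$. Replacing $x_m$ by $x_m+\Lambda A_{n-1}$ with $\Lambda=\Lambda(n,C_\beta)$ large enough absorbs the additive constant into the geometric mean, leaving the homogeneous inequality $y_m\le K\sqrt{y_{m-1}y_{m+1}}$ with $K=K(n,C_\beta)\ge1$. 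Taking logarithms shows the sequence $(\log y_m)_{m=0}^n$ has discrete second difference $\ge-2\log K$; comparing it with the explicit quadratic $m\mapsto m(n-m)\log K$ (whose second difference is $-2\log K$) and invoking the maximum principle for discrete convex sequences with zero boundary data, $\log y_m$ lies below the linear interpolant of $\log y_0,\log y_n$ plus $\frac{n^2}{4}\log K$. Hence $x_m\le y_m\le K^{n^2/4}\max(x_0+\Lambda A_{n-1},\,x_n+\Lambda A_{n-1})\lesssim_\beta A_n$, which is exactly $M_{i,j}^2\lesssim_\beta A_n$ and closes the induction, proving \eqref{61in1}.

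The main obstacle is the bookkeeping of the integrations by parts in the second paragraph: one must check that, after shifting one $D_z$ and one $D_\theta$ in opposite directions, the only order-$n$ contribution besides the balanced product $M_{i-1,j+1}M_{i+1,j-1}$ is an absorbable multiple of $M_{i,j}^2$, and that every weight derivative occurring is among those controlled by the $k=1,2$ hypothesis — this is precisely why second-order logarithmic control of $\mathcal W$ is assumed. A secondary point is that the constant $C_\beta$ in the interpolation inequality need not be small, so a naive maximum-principle argument fails and the logarithmic (convexity) version is genuinely needed; the resulting constant $K^{n^2/4}$ depends on $n$, which is harmless since $n$ is a fixed small integer in every application in this paper.
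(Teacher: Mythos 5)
Your proof is correct, and up to the interpolation step it follows exactly the same route as the paper: you integrate by parts once in each of $z$ and $\theta$ to produce the balanced cross term $\langle D_z^{i+1}D_\theta^{j-1}f,\,D_z^{i-1}D_\theta^{j+1}f\,\mathcal W^2\rangle\le M_{i+1,j-1}M_{i-1,j+1}$, with all remaining terms carrying bounded logarithmic weight derivatives. Where you genuinely depart from the paper is in how you close the resulting recurrence over the diagonal $\{M_{p,q}:p+q=n\}$. The paper applies $ab\le\tfrac12(a^2+b^2)$ to pass to the additive form $x_m\lesssim_\beta x_{m-1}+x_{m+1}+A_n$ and then simply says ``performing the finite iterations, we can conclude.'' That step deserves scrutiny: once the Cauchy--Schwarz/Young coefficient on $x_{m\pm1}$ is allowed to exceed $\tfrac12$ (and the implicit $C_\beta$ hides this), the characteristic roots of the linear recursion become complex, the discrete maximum principle no longer applies directly, and the iteration does not obviously terminate. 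Your approach — keeping the inequality in geometric-mean form $x_m\le C_\beta\big(\sqrt{x_{m-1}x_{m+1}}+A_{n-1}\big)$, shifting by $\Lambda A_{n-1}$, and invoking discrete log-convexity to compare with the quadratic $m(n-m)\log K$ — is insensitive to the size of $C_\beta$ (the price is only the $n$-dependent factor $K^{n^2/4}$, which is harmless here) and therefore closes cleanly. You also do one extra integration by parts on the single order-$(n{-}1,n)$ cross term so that every non-balanced term lands in $A_{n-1}$; this is a tidy refinement though not strictly needed, since $A_n$ would serve just as well. The one detail worth recording explicitly is that the extra integration by parts produces $D_\theta^* D_z^*(\mathcal W^2)$, which is exactly why the $k=2$ clause of the hypothesis is invoked. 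In short: same decomposition, but your closing argument is a genuine improvement on the paper's handwaved iteration, and it is the version that actually works when the interpolation constant is not small.
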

\begin{proof}
	We will establish \eqref{61in1} by induction on $n$, the cases $n=0$ and $n=1$ follow directly.
	For $n=2$, it is easy to check that \eqref{61in1} holds for $i=0$ or $j=0$.
	For $i=j=1$, one can deduce that
	\begin{align*}
		\|D_zD_{\theta}f\|^2_{\mathcal{L}^2_{\mathcal{W}}} 
		&= -\langle D_zf,D_zD^2_{\theta}f\mathcal{W}^2\rangle-\langle D_zf,D_zD_{\theta}fD^{\ast}_{\theta}(\mathcal{W}^2)\rangle\\ \notag
		&=\langle D^2_zf,D^2_{\theta}f\mathcal{W}^2\rangle+\langle D_zf,D^2_{\theta}fD^{\ast}_z(\mathcal{W}^2)\rangle+\frac{1}{2}\langle (D_zf)^2,(D^{\ast}_{\theta})^2(\mathcal{W}^2)\rangle\\ \notag
		&\lesssim_\beta \|D^2_zf\|^2_{\mathcal{L}^2_{\mathcal{W}}} + \|D^2_{\theta}f\|^2_{\mathcal{L}^2_{\mathcal{W}}} + \|D_zf\|^2_{\mathcal{L}^2_{\mathcal{W}}},
	\end{align*}
	which implies that \eqref{61in1} holds for $n=2$. 
	Assume that the following inequality is valid for $i+j \leq n$ as:
	\begin{align}\label{61in2}
	\|D^i_zD^j_{\theta}f\|^2_{\mathcal{L}^2_{\mathcal{W}}} \lesssim_\beta \sum_{k=0}^{i+j}\big(\|D^{k}_zf\|^2_{\mathcal{L}^2_{\mathcal{W}}} + \|D^{k}_{\theta}f\|^2_{\mathcal{L}^2_{\mathcal{W}}}\big).
	\end{align}
	For $i+j=n+1$, \eqref{61in1} holds for $i=0$ or $j=0$.
	With the induction assumption \eqref{61in2}, we deduce that for any $i+j=n+1$ and $i,j\neq0$,
	\begin{align}\label{61in3}
\|D^i_zD^j_{\theta}f\|^2_{\mathcal{L}^2_{\mathcal{W}}} 
		&=\langle D^{i+1}_zD^{j-1}_{\theta}f,D^{i-1}_zD^{j+1}_{\theta}f\mathcal{W}^2\rangle+\langle D^{i}_zD^{j-1}_{\theta}f,D^{i-1}_zD^{j+1}_{\theta}fD^{\ast}_z(\mathcal{W}^2)\rangle\\ \notag
		&~~~+\frac{1}{2}\langle (D^i_zD^{j-1}_{\theta}f)^2,(D^{\ast}_{\theta})^2(\mathcal{W}^2)\rangle\\ \notag
		&\lesssim_\beta\|D^{i+1}_zD^{j-1}_{\theta}f\|^2_{\mathcal{L}^2_{\mathcal{W}}} + \|D^{i-1}_zD^{j+1}_{\theta}f\|^2_{\mathcal{L}^2_{\mathcal{W}}} + \|D^i_zD^{j-1}_{\theta}f\|^2_{\mathcal{L}^2_{\mathcal{W}}}\\ \notag
		&\lesssim_\beta\|D^{i+1}_zD^{j-1}_{\theta}f\|^2_{\mathcal{L}^2_{\mathcal{W}}} + \|D^{i-1}_zD^{j+1}_{\theta}f\|^2_{\mathcal{L}^2_{\mathcal{W}}} + \sum_{k=0}^{n}\left(\|D^k_zf\|^2_{\mathcal{L}^2_{\mathcal{W}}} + \|D^k_{\theta}f\|^2_{\mathcal{L}^2_{\mathcal{W}}}\right).
	\end{align}
	This confirms that \eqref{61in1} is valid for $i=1$ and $j=1$.
	Performing the finite iterations, we can conclude that \eqref{61in1} for any $i+j=n+1$ by similar calculations as \eqref{61in3}. 
	This completes the proof of Lemma \ref{6lem6}.
\end{proof}

Furthermore, we can give the following lemma by similar argument as the proof of Lemma \ref{6lem6}.
\begin{lemm}\label{6lem7}
	Let $\alpha\in(0,1]$. Assume that for any $k=1,2$, 
	$
	\sum\limits_{l=0}^{k}\big|\frac{D^{l}_{z}D^{k-l}_{\theta}\mathcal{W}}{\mathcal{W}}\big|\leq C_{\beta}$, 
	where $C_{\beta}>0$ is a constant. 
	Then there holds
	\begin{align*}
		\alpha^2\|\partial_{\theta}D_{z}f\|^2_{\mathcal{L}^2_{\mathcal{W}}} \leq C_\beta( \|\partial^2_{\theta}f\|^2_{\mathcal{L}^2_{\mathcal{W}}} + \alpha^4\|D^2_{z}f\|^2_{\mathcal{L}^2_{\mathcal{W}}}).
	\end{align*}
\end{lemm}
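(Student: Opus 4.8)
\textbf{Proof plan for Lemma \ref{6lem7}.}
The goal is to control the mixed derivative $\alpha^2\|\partial_\theta D_z f\|_{\mathcal{L}^2_{\mathcal{W}}}^2$ by a pure angular second derivative $\|\partial_\theta^2 f\|_{\mathcal{L}^2_{\mathcal{W}}}^2$ and a pure radial second derivative $\alpha^4\|D_z^2 f\|_{\mathcal{L}^2_{\mathcal{W}}}^2$, in the spirit of Lemma \ref{6lem6}, but now keeping track of the powers of $\alpha$ that accompany the radial operator $D_z$. The natural route is to integrate by parts in both the $z$ and $\theta$ variables, moving one $\partial_\theta$ and one $D_z$ off of the factor $\partial_\theta D_z f$ and onto the other copy, so as to pair a $\partial_\theta^2 f$ against a $D_z^2 f$ and absorb the resulting cross term by Cauchy--Schwarz. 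Concretely, I would write
\begin{align*}
\alpha^2\|\partial_\theta D_z f\|_{\mathcal{L}^2_{\mathcal{W}}}^2
=\alpha^2\langle \partial_\theta D_z f,\ \partial_\theta D_z f\,\mathcal{W}^2\rangle
=-\alpha^2\langle \partial_\theta D_z f,\ \partial_\theta\big(\partial_\theta(z\partial_z f)\big)\mathcal{W}^2\rangle,
\end{align*}
then integrate by parts once in $\theta$ to reach $\langle \partial_\theta^2 D_z f,\ \partial_\theta D_z f\,\mathcal{W}^2\rangle$-type terms plus lower-order terms with $D^\ast_\theta(\mathcal{W}^2)$, and then integrate by parts once in $z$, transferring the $D_z$ so that one factor becomes $\partial_\theta^2 f$ and the other becomes $\alpha^2 D_z^2 f$ (or $\alpha^2$ times a first $D_z$ derivative paired against a weight derivative). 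The boundary terms vanish because of the $\phi|_{\partial D}=0$ condition and the decay encoded in $\mathcal{W}$, exactly as in the analogous manipulations in the proof of Lemma \ref{6lem6}.

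The key estimate feeding into this is the hypothesis $\sum_{l=0}^{k}\big|\frac{D_z^l D_\theta^{k-l}\mathcal{W}}{\mathcal{W}}\big|\le C_\beta$ for $k=1,2$, which guarantees that every term produced by integration by parts in which a derivative lands on the weight is controlled by $C_\beta$ times an $\mathcal{L}^2_{\mathcal{W}}$-norm of at most one derivative of $f$; the relevant adjoint operators $D^\ast_\theta(f)=\partial_\theta(\sin(2\theta)f)$ and $D^\ast_z(f)=\partial_z(zf)$ satisfy $D^\ast_\theta(\mathcal{W}^2)=\mathcal{W}^2\cdot O_\beta(1)$ and similarly for $D^\ast_z$, since $D^\ast_\theta(\mathcal{W}^2)/\mathcal{W}^2 = 2\cos(2\theta)+2\,D_\theta\mathcal{W}/\mathcal{W}$. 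After the two integrations by parts the leading term is of the form $\alpha^2\langle \partial_\theta^2 f,\ D_z^2 f\,\mathcal{W}^2\rangle$ up to harmless commutator factors (the operators $D_z$ and $\partial_\theta$ commute, and converting $\partial_\theta$ to $D_\theta=\sin(2\theta)\partial_\theta$ costs only smooth coefficients), which by Cauchy--Schwarz is bounded by $\varepsilon\alpha^4\|D_z^2 f\|_{\mathcal{L}^2_{\mathcal{W}}}^2 + \varepsilon^{-1}\|\partial_\theta^2 f\|_{\mathcal{L}^2_{\mathcal{W}}}^2$; the $\alpha^2$ in front splits evenly as $\alpha^4\cdot 1$ in the right proportions. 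One should take $\varepsilon$ a fixed small constant, not absorb anything back into the left side, since the left side has no companion term — instead every cross term is simply bounded.

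The main obstacle I anticipate is bookkeeping the powers of $\alpha$ so that the final bound reads $C_\beta(\|\partial_\theta^2 f\|_{\mathcal{L}^2_{\mathcal{W}}}^2 + \alpha^4\|D_z^2 f\|_{\mathcal{L}^2_{\mathcal{W}}}^2)$ with no stray $\alpha^2$ or $\alpha^3$ term sitting alone (such a term would not be controllable by the stated right-hand side when $\alpha$ is small). The point is that $\alpha^2\partial_\theta D_z f$ carries exactly one factor of $\alpha$ with the radial operator, and after integrating by parts in $z$ the second factor of $\alpha$ must attach to the other radial derivative; the terms where the $z$-integration by parts instead lands on the weight produce $\alpha^2\langle \partial_\theta D_z f,\ \partial_\theta f\cdot (D_z^\ast \mathcal{W}^2)/\cdots\rangle$-type expressions, which should be handled by splitting $\alpha^2 = \alpha\cdot\alpha$ and Cauchy--Schwarz against $\alpha\|\partial_\theta D_z f\|$ and $\alpha\|\partial_\theta f\|$ — the latter is lower order and controlled by interpolation/Hardy (Lemma \ref{6lem4}) or simply bounded by $\|\partial_\theta^2 f\|_{\mathcal{L}^2_{\mathcal{W}}}$ using the weight hypothesis. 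Since the statement ends with ``by similar argument as the proof of Lemma \ref{6lem6},'' a reasonable write-up would carry out the two integrations by parts explicitly, invoke the weight hypothesis to discard weight-derivative terms, and close with Young's inequality, referring to Lemma \ref{6lem6} for the routine parts. Using $\alpha\le 1$ lets one absorb any genuinely lower-order positive power of $\alpha$ into $C_\beta$ at the end.
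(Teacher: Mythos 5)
Your overall strategy — integrate by parts twice to pair $\partial_\theta^2 f$ against $D_z^2 f$, use the hypothesis $|D_z^l D_\theta^{k-l}\mathcal{W}/\mathcal{W}|\le C_\beta$ to discard derivative-on-weight factors, finish with Young's inequality, and invoke the Hardy inequality (Lemma \ref{6lem4}) to control $\|\partial_\theta f\|$-type remainders by $\|\partial_\theta^2 f\|_{\mathcal{L}^2_{\mathcal{W}}}$ — is exactly the paper's, and the $\alpha$-bookkeeping you describe ($\alpha^2 = \alpha\cdot\alpha$ splitting evenly onto the two radial derivatives, $\alpha\le 1$ to swallow stray lower-order powers) is also what the paper does.

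One spot in your description does not quite close as stated. After the $z$-integration by parts you will meet a cross term of the form $-\alpha^2\langle\partial_\theta f,\ \partial_\theta D_z f\cdot D_z^\ast(\mathcal{W}^2)\rangle$. You propose to bound it by $\alpha\|\partial_\theta D_z f\|\cdot\alpha\|\partial_\theta f\|$, but the first factor is exactly $\sqrt{\text{LHS}}$, so after Young's inequality you would pick up $\varepsilon\alpha^2\|\partial_\theta D_z f\|^2_{\mathcal{L}^2_{\mathcal{W}}}$ on the right, which must be absorbed back into the left — contradicting your stated plan ``not absorb anything back into the left side.'' The paper avoids this entirely: it rewrites $\partial_\theta f\cdot D_z(\partial_\theta f)=\tfrac12 D_z\big((\partial_\theta f)^2\big)$ and integrates by parts once more in $z$, turning the cross term into $\tfrac{\alpha^2}{2}\langle(\partial_\theta f)^2,(D_z^\ast)^2(\mathcal{W}^2)\rangle$, which (using $\alpha\le 1$ and the weight hypothesis) is directly dominated by $C_\beta\|\partial_\theta f/\sin(2\theta)\|^2_{\mathcal{L}^2_{\mathcal{W}}}$ and then by $C_\beta\|\partial_\theta^2 f\|^2_{\mathcal{L}^2_{\mathcal{W}}}$ via Lemma \ref{6lem4}. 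This is precisely the move carried out in the proof of Lemma \ref{6lem6} that you cite for the routine parts, so your proposal is morally complete; either perform that extra integration by parts explicitly, or acknowledge that a small absorption with $\varepsilon\ll 1$ is needed.
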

\begin{proof}
	With the help of integrating by parts and Lemma \ref{6lem4}, we get that
	\begin{align*}
		\alpha^2\|\partial_{\theta}D_{z}f\|^2_{\mathcal{L}^2_{\mathcal{W}}} 	&=\alpha^2\langle\partial^2_{\theta}f,D^2_{z}f\mathcal{W}^2\rangle+\alpha^2\langle\partial_{\theta}f,D^2_{z}f\partial_{\theta}(\mathcal{W}^2)\rangle+\frac{\alpha^2}{2}\langle(\partial_{\theta}f)^2,(D^{\ast}_{z})^2(\mathcal{W}^2)\rangle \\ \notag
		&\lesssim_\beta \big\|\partial^2_{\theta}f\big\|^2_{\mathcal{L}^2_{\mathcal{W}}} + \alpha^4\big\|D^2_{z}f\big\|^2_{\mathcal{L}^2_{\mathcal{W}}} + \big\|\frac{\partial_{\theta}f}{\sin(2\theta)}\big\|^2_{\mathcal{L}^2_{\mathcal{W}}}\\ \notag
		&\lesssim_\beta
		\big\|\partial^2_{\theta}f\big\|^2_{\mathcal{L}^2_{\mathcal{W}}} + \alpha^4\big\|D^2_{z}f\big\|^2_{\mathcal{L}^2_{\mathcal{W}}}.
	\end{align*}
	We thus complete this proof.
\end{proof}

\subsection{Estimates for radial operator}
Our goal in this subsection is to study the radial operator and establish some estimates for it.
From now on, we assume that $\xi\in[0,2]$.
We can check that the radial derivative $D_z$ commutes with the operators $L_z^{\alpha}$ and $L_{\theta}$.
We thus briefly derive the following two propositions.
\begin{prop}\label{6prop3}
	Assume that $\big|\frac{D_z\mathcal{W}}{\mathcal{W}}\big| \leq C_\beta,$ where $C_{\beta}>0$ is a constant. 
	Then there holds
	\begin{align*}
		-\langle L^{\alpha}_z (\phi),(\alpha D_{z})^2\phi\mathcal{W}^2\rangle
		\geq
		\|(\alpha D_{z})^2\phi\|^2_{\mathcal{L}^2_{\mathcal{W}}}
		-C_\beta\alpha^3\|D_{z}\phi\|^2_{\mathcal{L}^2_{\mathcal{W}}}.
	\end{align*}
\end{prop}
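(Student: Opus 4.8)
The plan is to test the elementary identity $-L^{\alpha}_z(\phi) = (\alpha D_z)^2\phi + 5\alpha D_z\phi$ (valid since $\alpha$ is constant, so $(\alpha D_z)^2 = \alpha^2 D_z^2$) against $(\alpha D_z)^2\phi\,\mathcal{W}^2$. The diagonal term then reproduces $\|(\alpha D_z)^2\phi\|^2_{\mathcal{L}^2_{\mathcal{W}}}$ exactly, so the entire problem reduces to controlling the single cross term $5\alpha^3\langle D_z\phi, D_z^2\phi\,\mathcal{W}^2\rangle$, which we must show is bounded in absolute value by $C_\beta\alpha^3\|D_z\phi\|^2_{\mathcal{L}^2_{\mathcal{W}}}$.

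To handle that cross term I would integrate by parts in $z$. The weight factorizes as $\mathcal{W}^2 = \mathcal{W}_z^2(z)\,\sin^{-\xi}(2\theta)$ and $D_z = z\partial_z$ acts only on the radial variable, so the $\theta$-integral (carrying the fixed factor $\sin^{-\xi}(2\theta)$) merely rides along and one may integrate by parts in $z$ pointwise in $\theta$. Writing $D_z\phi\,D_z^2\phi = \tfrac12 D_z\big((D_z\phi)^2\big)$ and using $\partial_z\big(z\mathcal{W}_z^2\big) = \mathcal{W}_z^2\big(1 + 2\tfrac{D_z\mathcal{W}_z}{\mathcal{W}_z}\big)$, the integration by parts gives
$$\langle D_z\phi, D_z^2\phi\,\mathcal{W}^2\rangle = -\tfrac12\|D_z\phi\|^2_{\mathcal{L}^2_{\mathcal{W}}} - \Big\langle (D_z\phi)^2\,\tfrac{D_z\mathcal{W}}{\mathcal{W}},\,\mathcal{W}^2\Big\rangle .$$
Invoking the hypothesis $\big|\tfrac{D_z\mathcal{W}}{\mathcal{W}}\big| \le C_\beta$ bounds the right-hand side by $C_\beta\|D_z\phi\|^2_{\mathcal{L}^2_{\mathcal{W}}}$, whence $|5\alpha^3\langle D_z\phi, D_z^2\phi\,\mathcal{W}^2\rangle| \le C_\beta\alpha^3\|D_z\phi\|^2_{\mathcal{L}^2_{\mathcal{W}}}$; combining with the diagonal term yields the claimed lower bound.

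The only point requiring genuine care — and the place where I expect the (mild) main obstacle — is the justification of the integration by parts, i.e. the vanishing of the boundary contributions $z\,(D_z\phi)^2\,\mathcal{W}_z^2$ as $z\to 0^+$ and $z\to\infty$. I would dispose of this by first establishing the identity for $\phi$ smooth and compactly supported in $z\in(0,\infty)$, where every manipulation above is legitimate, and then extending to the general case by density in the weighted norm $\|\cdot\|_{\mathcal{L}^2_{\mathcal{W}}}$ together with the finiteness of $\|(\alpha D_z)^2\phi\|_{\mathcal{L}^2_{\mathcal{W}}}$ implicit in the statement; the explicit forms $\mathcal{W}_z = w_z$ or $w_z^\ast$ from Section~\ref{sec:notation} make the required decay bookkeeping (and the verification that they indeed satisfy $|D_z\mathcal{W}/\mathcal{W}|\le C_\beta$) entirely routine. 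Everything else is a one-line computation once the algebraic splitting of $L^{\alpha}_z$ is in place.
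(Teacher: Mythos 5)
Your proposal is correct and takes essentially the same approach as the paper: expand $-L^{\alpha}_z(\phi)=(\alpha D_z)^2\phi+5\alpha D_z\phi$, test against $(\alpha D_z)^2\phi\,\mathcal{W}^2$, integrate the cross term by parts via $D_z\phi\,D_z^2\phi=\tfrac12 D_z\big((D_z\phi)^2\big)$, and bound the resulting $D_z^*(\mathcal{W}^2)$ factor using $|D_z\mathcal{W}/\mathcal{W}|\le C_\beta$. The paper simply packages your last step in the adjoint-operator notation $-\tfrac{5}{2}\alpha^3\langle(D_z\phi)^2,D_z^*(\mathcal{W}^2)\rangle$, which is identical to your expansion $-\tfrac12\|D_z\phi\|^2_{\mathcal{L}^2_{\mathcal{W}}}-\langle(D_z\phi)^2\,\tfrac{D_z\mathcal{W}}{\mathcal{W}},\mathcal{W}^2\rangle$ after multiplying by $5\alpha^3$; your density remark about justifying the integration by parts is a reasonable addendum that the paper leaves implicit.
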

\begin{proof}
	Recalling \eqref{def-L-z}, we can get easily that
	\begin{align*}
		-\langle L^{\alpha}_z (\phi),(\alpha D_{z})^2\phi\mathcal{W}^2\rangle
		=
		\|(\alpha D_{z})^2\phi\|^2_{\mathcal{L}^2_{\mathcal{W}}}
		-\frac{5}{2}\alpha^3
		\langle (D_z\phi)^2,D_z^\ast(\mathcal{W}^2)\rangle
		\geq 
		\|(\alpha D_{z})^2\phi\|^2_{\mathcal{L}^2_{\mathcal{W}}}
		-C_\beta\alpha^3\|D_{z}\phi\|^2_{\mathcal{L}^2_{\mathcal{W}}}.
	\end{align*}
	We thus finish the proof of this proposition.
\end{proof}

\begin{prop}\label{6prop4}
	Assume that for any $k=1,2$,
	$
	\sum\limits_{l=0}^{k}\big|\frac{D^{l}_{z}D^{k-l}_{\theta}\mathcal{W}}{\mathcal{W}}\big|\leq C_{\beta}$, where $C_{\beta}>0$ is a constant. 
	Then there exist sufficiently small constants $\alpha>0$ and $\varepsilon>0$, such that
	\begin{align}\label{est-prop6.13-1}
		-\langle L^{\alpha}_z (\phi),\partial^2_{\theta}\phi\mathcal{W}^2\rangle
		\geq& \frac{1-\xi}{1+\xi}\alpha^2\|D_{z}\partial_{\theta}\phi\|^2_{\mathcal{L}^2_{\mathcal{W}}} -\varepsilon\|\partial^2_{\theta}\phi\|^2_{\mathcal{L}^2_{\mathcal{W}}}-C_\beta\alpha^2\|D_{z}\partial_{\theta}\phi\|^2_{\mathcal{L}^2_{\mathcal{W}_z}}
		-C_{\beta,\varepsilon}\alpha^2\|D_{z}\phi\|^2_{\mathcal{L}^2_{\mathcal{W}}}.
	\end{align}
	Moreover, for any $n\in\mathbb{N}^{+}$, there holds
	\begin{align}\label{est-prop6.13-2}
		-\langle \tilde{D}^n_{\theta}L^{\alpha}_z (\phi),\tilde{D}^n_{\theta}\partial^2_{\theta}\phi\mathcal{W}^2\rangle
		\geq \alpha^2\|D_{z}\tilde{D}^n_{\theta}\partial_{\theta}\phi\|^2_{\mathcal{L}^2_{\mathcal{W}}} 
		-\varepsilon\|\tilde{D}^{n}_{\theta}\partial^2_{\theta}\phi\|^2_{\mathcal{L}^2_{\mathcal{W}}}- C_{\beta,\varepsilon}\alpha^2\|D_{z}\tilde{D}^{n-1}_{\theta}\partial_{\theta}\phi\|^2_{\mathcal{L}^2_{\mathcal{W}}}.
	\end{align}
\end{prop}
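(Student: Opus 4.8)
The plan is to reduce both inequalities to an integration-by-parts identity for $L^\alpha_z=-\alpha^2 D_z^2-5\alpha D_z$, with the angular derivatives $\tilde D^n_\theta\partial_\theta$ (or $\partial_\theta$) acting as bystanders since $D_z$ commutes with every $\partial_\theta$. First I would treat \eqref{est-prop6.13-1}. Write $L^\alpha_z(\phi)=-\alpha^2 D_z^2\phi-5\alpha D_z\phi$ and pair against $\partial_\theta^2\phi\,\mathcal W^2$. The leading term $\alpha^2\langle D_z^2\phi,\partial_\theta^2\phi\,\mathcal W^2\rangle$ is integrated by parts once in $z$: this produces $\alpha^2\|D_z\partial_\theta\phi\|^2_{\mathcal L^2_{\mathcal W}}$ plus a commutator term of the form $-\alpha^2\langle D_z\partial_\theta\phi,\,\partial_\theta\phi\, D_z^\ast(\mathcal W^2)/\mathcal W^2\rangle$ (up to the sign bookkeeping coming from $D_z\partial_\theta$ falling on $\mathcal W^2$). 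The logarithmic-derivative hypothesis $|D_z\mathcal W/\mathcal W|\le C_\beta$ controls $D_z^\ast(\mathcal W^2)=\partial_z(z\mathcal W^2)$; but because $\mathcal W=\mathcal W_z(z)\sin^{-\xi/2}(2\theta)$ the $z$-part of this commutator is exactly where the constant $\frac{1-\xi}{1+\xi}$ enters. The sharp route is: after the first integration by parts, one still has an expression involving $\|D_z\partial_\theta\phi\|_{\mathcal L^2_{\mathcal W}}$ and $\|\partial_\theta^2\phi\|_{\mathcal L^2_{\mathcal W}}$, and one applies the sharp angular Hardy inequality of Lemma \ref{6lem3} with parameter $\xi$ to the quantity $\partial_\theta\phi$ (noting $\phi$ vanishes on $\partial D$), whose first constant is $(\xi+1)^{-2}$; combining the $\sin^{-\xi}$-weighted term against $\sin^{-(\xi+2)}$ and bookkeeping the multiplicative factors $\alpha^2$ yields precisely the factor $\tfrac{1-\xi}{1+\xi}=1-\tfrac{2\xi}{1+\xi}$ in front of $\alpha^2\|D_z\partial_\theta\phi\|^2_{\mathcal L^2_{\mathcal W}}$. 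The remaining terms — the $5\alpha D_z\phi$ contribution, and the lower-order commutators — are absorbed: the $5\alpha$ term gives, after one $z$-integration by parts, a term bounded by $C_\beta\alpha^2\|D_z\partial_\theta\phi\|^2_{\mathcal L^2_{\mathcal W_z}}$ (the purely radial weight, which is why that term appears on the right of \eqref{est-prop6.13-1}) plus $C_{\beta,\varepsilon}\alpha^2\|D_z\phi\|^2_{\mathcal L^2_{\mathcal W}}$, after a Cauchy–Schwarz with a small parameter $\varepsilon$ that produces the $-\varepsilon\|\partial^2_\theta\phi\|^2_{\mathcal L^2_{\mathcal W}}$ loss. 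For $\alpha$ small, the error terms of size $C_\beta\alpha^2\|D_z\partial_\theta\phi\|^2_{\mathcal L^2_{\mathcal W}}$ are swallowed by the main $\tfrac{1-\xi}{1+\xi}\alpha^2\|D_z\partial_\theta\phi\|^2$ term, except that the proposition deliberately keeps one such error on the right with the weaker weight $\mathcal W_z$.

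For \eqref{est-prop6.13-2} I would follow the same scheme but with $\tilde D^n_\theta$ applied first. Since $\tilde D_\theta=\sin(2\theta)\partial_\theta$ commutes with $D_z$ but not with $\partial_\theta^2$, one pairs $\tilde D^n_\theta L^\alpha_z(\phi)=L^\alpha_z(\tilde D^n_\theta\phi)$ against $\tilde D^n_\theta\partial_\theta^2\phi\,\mathcal W^2$. Integrating the $-\alpha^2 D_z^2$ term by parts in $z$ gives $\alpha^2\langle D_z\tilde D^n_\theta\phi,\,D_z(\tilde D^n_\theta\partial_\theta^2\phi)\,\mathcal W^2\rangle$ plus a $D_z^\ast(\mathcal W^2)$ commutator. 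Here one does not need the sharp Hardy constant: commuting $D_z$ through and using that $\tilde D^n_\theta\partial_\theta^2\phi$ and $\partial_\theta\tilde D^n_\theta\partial_\theta\phi$ differ by lower-order $\tilde D^k_\theta$-terms (the Leibniz expansion with smooth coefficients $S_k(\theta)$ from the proof of Lemma \ref{6lem5}), together with a further $z$-integration by parts against the angular derivative, produces $\alpha^2\|D_z\tilde D^n_\theta\partial_\theta\phi\|^2_{\mathcal L^2_{\mathcal W}}$ as the main positive term, all lower-order pieces being of the form $\alpha^2\|D_z\tilde D^{n-1}_\theta\partial_\theta\phi\|^2_{\mathcal L^2_{\mathcal W}}$ or smaller, hence absorbed into $C_{\beta,\varepsilon}\alpha^2\|D_z\tilde D^{n-1}_\theta\partial_\theta\phi\|^2_{\mathcal L^2_{\mathcal W}}$. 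The $5\alpha D_z$ contribution and the $\varepsilon$-losses on $\|\tilde D^n_\theta\partial_\theta^2\phi\|^2_{\mathcal L^2_{\mathcal W}}$ are handled exactly as in the $n=0$ case. For the commutators between $\partial_\theta^2$ (hidden in $L_\theta$-style bookkeeping) and $\tilde D^n_\theta$, the assumption $\sum_{l=0}^{k}|D_z^l D_\theta^{k-l}\mathcal W/\mathcal W|\le C_\beta$ for $k=1,2$ is exactly what is needed to bound all the weight-derivative factors uniformly, and Lemma \ref{6lem4} with the appropriate $\xi+2(n-j)$ exponents converts any stray $\tilde D^k_\theta\phi/\sin(2\theta)$ terms back into controlled quantities.

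The main obstacle I expect is tracking the sharp constant $\tfrac{1-\xi}{1+\xi}$ in \eqref{est-prop6.13-1}: one must be careful that the angular Hardy inequality (Lemma \ref{6lem3}) is invoked with the correct exponent — against $\sin^{-(\xi+2)}(2\theta)$ weight matching $\mathcal W^2\cdot\sin^{-2}(2\theta)$ — and that the error term $C\|f\|^2_{H^1}$ in that lemma, after multiplication by $\alpha^2$, is genuinely subordinate (it is, since the proposition already carries an $\varepsilon$-loss and an $\alpha^2\|D_z\phi\|^2$ term). A secondary subtlety is the appearance of the purely radial weight $\mathcal W_z$ (not $\mathcal W$) in the third term on the right of \eqref{est-prop6.13-1}: this is because the $5\alpha D_z$ commutator, after integrating the angular derivative by parts, picks up $\partial_\theta(\mathcal W_\theta^2)/\mathcal W_\theta^2$ which is not bounded, so one must integrate that piece by parts in $z$ only and accept the weaker angular weight — this is a genuine structural point rather than a computational one, and it dictates the precise form of the statement. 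Everything else is routine integration by parts plus Cauchy–Schwarz with the small parameters $\alpha$ and $\varepsilon$.
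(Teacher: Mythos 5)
Your overall strategy for \eqref{est-prop6.13-1}---integrate the $-\alpha^2 D_z^2$ piece by parts once in $z$ and once in $\theta$ to produce $\alpha^2\|D_z\partial_\theta\phi\|^2_{\mathcal{L}^2_{\mathcal{W}}}$, then invoke the sharp Hardy inequality of Lemma~\ref{6lem3} to pin the constant at $\frac{1-\xi}{1+\xi}=1-\frac{2\xi}{1+\xi}$---is the same as the paper's, and your bookkeeping of the sharp constant is correct. However, two mechanisms are misidentified, and both matter for someone trying to turn this outline into a proof.

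First, you say the Hardy inequality is applied ``to the quantity $\partial_\theta\phi$.'' That is not what happens. After the two integrations by parts, the singular commutator term is $-\tfrac{\alpha^2}{2}\langle (D_z\phi)^2,\,\partial_\theta^2(\mathcal{W}^2)\rangle$, and the algebraic identity $\partial_\theta^2(\mathcal{W}^2)/\mathcal{W}^2=4\xi+4\xi(\xi+1)\cot^2(2\theta)$ shows that the singular part pairs $(D_z\phi)^2$ against $\cot^2(2\theta)\mathcal{W}^2\lesssim\sin^{-(\xi+2)}(2\theta)\mathcal{W}_z^2$. Lemma~\ref{6lem3} must therefore be applied in $\theta$ to $f=D_z\phi$ (at each fixed $z$), producing $\frac{1}{(\xi+1)^2}\|D_z\partial_\theta\phi\|^2_{\mathcal{L}^2_{\mathcal{W}}}$ on the right, which when multiplied by $4\xi(\xi+1)$ and halved yields the $\frac{2\xi}{\xi+1}$ that is subtracted from $1$. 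Applying Hardy to $\partial_\theta\phi$ would instead produce $\|\partial_\theta^2\phi\|^2$ and would not reconstruct the stated main term.

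Second, you attribute the $-C_\beta\alpha^2\|D_z\partial_\theta\phi\|^2_{\mathcal{L}^2_{\mathcal{W}_z}}$ error to the $5\alpha D_z$ piece, saying it forces an integration by parts in $z$ only because $\partial_\theta(\mathcal{W}_\theta^2)/\mathcal{W}_\theta^2$ is unbounded. In fact the $5\alpha D_z$ contribution is estimated directly by Cauchy--Schwarz with the small parameter $\varepsilon$ and produces only $-\tfrac{\varepsilon}{2}\|\partial_\theta^2\phi\|^2_{\mathcal{L}^2_{\mathcal{W}}}-C_{\beta,\varepsilon}\alpha^2\|D_z\phi\|^2_{\mathcal{L}^2_{\mathcal{W}}}$; no $\theta$-integration by parts is performed there. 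The $\mathcal{W}_z$-weighted error term is instead the footprint of the $C\|f\|^2_{H^1}$ error in Lemma~\ref{6lem3}: that error carries no angular weight, so after multiplying by $\mathcal{W}_z^2$ and integrating in $z$ it lands precisely in $\mathcal{L}^2_{\mathcal{W}_z}$, giving both $\|D_z\phi\|^2_{\mathcal{L}^2_{\mathcal{W}_z}}$ (absorbed into the $\mathcal{L}^2_{\mathcal{W}}$ term since $\mathcal{W}\geq\mathcal{W}_z$) and $\|D_z\partial_\theta\phi\|^2_{\mathcal{L}^2_{\mathcal{W}_z}}$. Your outline for \eqref{est-prop6.13-2} is broadly consistent with the paper's, which follows the same scheme with the weight $\sin^{2n}(2\theta)\mathcal{W}^2$ and needs no sharp constant since $n\geq 1$ makes the weight non-singular.
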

\begin{proof}
	We recall \eqref{def-L-z}, to discover that
	\begin{align}\label{est-6-M-1}
		-\langle L^{\alpha}_z (\phi),\partial^2_{\theta}\phi\mathcal{W}^2\rangle
		=\langle (\alpha D_{z})^2\phi,\partial^2_{\theta}\phi\mathcal{W}^2\rangle
		+\langle 5\alpha D_{z}\phi,\partial^2_{\theta}\phi\mathcal{W}^2\rangle
		\tri \mathcal{J}_1+\mathcal{J}_2.
	\end{align}
	Integration by parts leads to
	\begin{align}\label{63in1}
		\mathcal{J}_1
		&=-\alpha^2\langle D_{z}\phi,\partial^2_{\theta}D_{z}\phi\mathcal{W}^2\rangle-\alpha^2\langle D_{z}\phi,\partial^2_{\theta}\phi D^{\ast}_{z}(\mathcal{W}^2)\rangle\\ \notag  
		&=\alpha^2\|D_{z}\partial_{\theta}\phi\|^2_{\mathcal{L}^2_{\mathcal{W}}}-\frac{\alpha^2}{2}\langle (D_{z}\phi)^2,\partial^2_{\theta}(\mathcal{W}^2)\rangle-\alpha^2\langle D_{z}\phi,\partial^2_{\theta}\phi D^{\ast}_{z}(\mathcal{W}^2)\rangle\\ \notag
		&\geq \alpha^2\|D_{z}\partial_{\theta}\phi\|^2_{\mathcal{L}^2_{\mathcal{W}}} -
		\frac{\alpha^2}{2}\langle (D_{z}\phi)^2,\partial^2_{\theta}(\mathcal{W}^2)\rangle-\frac{\varepsilon}{2}\|\partial^2_{\theta}\phi\|^2_{\mathcal{L}^2_{\mathcal{W}}}-C_{\beta,\varepsilon}\alpha^2\|D_{z}\phi\|^2_{\mathcal{L}^2_{\mathcal{W}}},
	\end{align}
	where $\varepsilon>0$ is a small constant. 
	Notice that
	\begin{align}\label{63in3}
		\frac{\partial^2_{\theta}(\mathcal{W}^2)}{\mathcal{W}^2}-4\xi(\xi+1)\cot^2(2\theta)=4\xi,\quad\text{for any}~~\xi\in[0,2].
	\end{align}
	It then follows from \eqref{63in3}, Lemma \ref{6lem4} and Lemma \ref{6lem3} that
	\begin{align*}
		\frac{\alpha^2}{2}\langle (D_{z}\phi)^2,\partial^2_{\theta}(\mathcal{W}^2)\rangle
		&\leq2\alpha^2\xi(\xi+1)\langle (D_{z}\phi)^2,\cot^2(2\theta)\mathcal{W}^2\rangle+C_\beta\alpha^2\|D_{z}\phi\|^2_{\mathcal{L}^2_{\mathcal{W}}}\\ \notag
		&\leq \frac{2\alpha^2\xi}{\xi+1}\|D_{z}\partial_{\theta}\phi\|^2_{\mathcal{L}^2_{\mathcal{W}}}
		+C_\beta\alpha^2\left(\|D_{z}\phi\|^2_{\mathcal{L}^2_{\mathcal{W}_z}}
		+\|D_{z}\partial_{\theta}\phi\|^2_{\mathcal{L}^2_{\mathcal{W}_z}}
		\right)
		+C_\beta\alpha^2\|D_{z}\phi\|^2_{\mathcal{L}^2_{\mathcal{W}}}
		\\
		\notag
		&\leq \frac{2\alpha^2\xi}{\xi+1}\|D_{z}\partial_{\theta}\phi\|^2_{\mathcal{L}^2_{\mathcal{W}}}
		+C_\beta\alpha^2\|D_{z}\partial_{\theta}\phi\|^2_{\mathcal{L}^2_{\mathcal{W}_z}}
		+C_\beta\alpha^2\|D_{z}\phi\|^2_{\mathcal{L}^2_{\mathcal{W}}}.
	\end{align*}
	Plugging the estimate above into \eqref{63in1}, we can get that
	\begin{align*}
		\mathcal{J}_1
		\geq 
		\frac{1-\xi}{1+\xi}\alpha^2\|D_{z}\partial_{\theta}\phi\|^2_{\mathcal{L}^2_{\mathcal{W}}} -\frac{\varepsilon}{2}\|\partial^2_{\theta}\phi\|^2_{\mathcal{L}^2_{\mathcal{W}}}-C_\beta\alpha^2\|D_{z}\partial_{\theta}\phi\|^2_{\mathcal{L}^2_{\mathcal{W}_z}}-C_{\beta,\varepsilon}\alpha^2\|D_{z}\phi\|^2_{\mathcal{L}^2_{\mathcal{W}}}.
	\end{align*}
	A direct computation yields directly that
	\begin{align}\label{63in2}
		\mathcal{J}_2
		\geq - \frac{\varepsilon}{2}\|\partial^2_{\theta}\phi\|^2_{\mathcal{L}^2_{\mathcal{W}}}-C_{\beta,\varepsilon}\alpha^2\|D_{z}\phi\|^2_{\mathcal{L}^2_{\mathcal{W}}}.
	\end{align}
	We substitute the estimates of $\mathcal{J}_1$ and $\mathcal{J}_2$ into \eqref{est-6-M-1}, to obtain \eqref{est-prop6.13-1} directly.

	Again thanks to \eqref{def-L-z}, we can find that
	\begin{align}\label{est-6-M-2}
		-\langle \tilde{D}^n_{\theta}L^{\alpha}_z (\phi),\tilde{D}^n_{\theta}\partial^2_{\theta}\phi\mathcal{W}^2\rangle
		=\langle \tilde{D}^n_{\theta} (\alpha D_{z})^2\phi,\tilde{D}^n_{\theta}\partial^2_{\theta}\phi\mathcal{W}^2\rangle
		+5\alpha\langle \tilde{D}^n_{\theta} D_{z}\phi,\tilde{D}^n_{\theta}\partial^2_{\theta}\phi\mathcal{W}^2\rangle
		\tri \mathcal{J}_3+\mathcal{J}_4.
	\end{align}
	We infer from $\big|\frac{D^j_{z,\theta}\mathcal{W}}{\mathcal{W}}\big|\lesssim_\beta 1$ with $j=1,2$ that
	\begin{align*}
		\mathcal{J}_3
		=&-\alpha^2\langle D_{z}\partial^n_{\theta}\phi D_{z}\partial^{n+2}_{\theta}\phi,\sin^{2n}(2\theta)\mathcal{W}^2\rangle
		-\alpha^2\langle D_{z}\partial^n_{\theta}\phi\partial^{n+2}_{\theta}\phi,\sin^{2n}(2\theta)D^{\ast}_{z}(\mathcal{W}^2)\rangle
		\\ \notag
		=&\alpha^2\|D_{z}\tilde{D}^n_{\theta}\partial_{\theta}\phi\|^2_{\mathcal{L}^2_{\mathcal{W}}} -\frac{\alpha^2}{2} \langle (D_{z}\partial^n_{\theta}\phi)^2,\partial^2_{\theta}\big(\sin^{2n}(2\theta)\mathcal{W}^2\big)\rangle 
		-\alpha^2\langle D_{z}\partial^n_{\theta}\phi\partial^{n+2}_{\theta}\phi,\sin^{2n}(2\theta)D^{\ast}_{z}(\mathcal{W}^2)\rangle
		\\ \notag
		\geq&\alpha^2\|D_{z}\tilde{D}^n_{\theta}\partial_{\theta}\phi\|^2_{\mathcal{L}^2_{\mathcal{W}}}-\frac{\varepsilon}{2}\|\tilde{D}^{n}_{\theta}\partial^2_{\theta}\phi\|^2_{\mathcal{L}^2_{\mathcal{W}}}- C_{\beta,\varepsilon}\alpha^2\|D_{z}\tilde{D}^{n-1}_{\theta}\partial_{\theta}\phi\|^2_{\mathcal{L}^2_{\mathcal{W}}}.
	\end{align*}
	It is not difficult to deduce that
	\begin{align*}
		\mathcal{J}_4
		&\geq -\frac{\varepsilon}{2}\|\tilde{D}^{n}_{\theta}\partial^2_{\theta}\phi\|^2_{\mathcal{L}^2_{\mathcal{W}}} - C_{\beta,\varepsilon}\alpha^2\|D_{z}\tilde{D}^{n-1}_{\theta}\partial_{\theta}\phi\|^2_{\mathcal{L}^2_{\mathcal{W}}}.
	\end{align*}
	Inserting the estimates $\mathcal{J}_3$ and $\mathcal{J}_4$ into \eqref{est-6-M-2}, we obtain \eqref{est-prop6.13-2} immediately, and thus complete the proof of this proposition.
\end{proof}

\subsection{Estimates for angular operator}
In this subsection, we aim to investigate the angular operator and derive some estimates for it.
One can easily verify that the radial operator $L_z^{\alpha}$ commutes with the angular derivative $D_{\theta}$, whereas the angular operator $L_{\theta}$ does not.
This occurs because the term $\partial_{\theta}\left(\tan\theta \phi\right)$ in $L_{\theta}$ does not commute with $D_{\theta}$.
Thus, we establish some estimates for $\partial_{\theta}\left(\tan\theta \phi\right)$ in this section.

We define
$\widetilde{\phi}\tri \frac{\phi}{\cos(\theta)}$
and take the notation $D^{-1}_\theta f= 0$, and thereby derive the following proposition.
\begin{prop}\label{6prop1}
	Assume that for $k=1,2$, $\left|\frac{D^k_{\theta}\mathcal{W}}{\mathcal{W}}\right|\leq C$, where $C>0$ is a constant.
	Then for $n\in\mathbb{N}$, there holds
	\begin{align}\label{est-6.2-1}
		-\langle\tilde{D}^n_{\theta}\partial_{\theta}(\tan\theta\phi),\tilde{D}^n_\theta\partial^{2}_{\theta}\phi\mathcal{W}^2\rangle
		\geq
		\left(\frac{4}{3}-\frac{\xi}{2}\right)\|\tilde{D}^n_\theta\partial_{\theta}\widetilde{\phi}\|^2_{\mathcal{L}^2_{\mathcal{W}}}
		- C\|\tilde{D}^{n-1}_{\theta}\partial_{\theta}\widetilde{\phi}\|^2_{\mathcal{L}^2_{\mathcal{W}}}
		-C\sum_{k=0}^{n}\|\tilde{D}^k_\theta\widetilde{\phi}\|^2_{\mathcal{L}^2_{\mathcal{W}}}.
	\end{align}
\end{prop}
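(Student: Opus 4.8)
The plan is to push everything through the substitution $\widetilde{\phi}\tri\phi/\cos\theta$, under which $\tan\theta\,\phi=\sin\theta\,\widetilde{\phi}$, so that the operator $\partial_\theta(\tan\theta\,\cdot)$ becomes a first-order perturbation with smooth coefficients. First I would record the pointwise identities $\partial_\theta(\tan\theta\,\phi)=\cos\theta\,\widetilde{\phi}+\sin\theta\,\partial_\theta\widetilde{\phi}$ and $\partial_\theta^2\phi=\cos\theta\,\partial_\theta^2\widetilde{\phi}-2\sin\theta\,\partial_\theta\widetilde{\phi}-\cos\theta\,\widetilde{\phi}$. Applying $\tilde{D}^n_\theta=\sin^n(2\theta)\partial_\theta^n$ and expanding by the Leibniz rule, each of $\tilde{D}^n_\theta\partial_\theta(\tan\theta\,\phi)$ and $\tilde{D}^n_\theta\partial_\theta^2\phi$ splits into a principal part carrying the top number of $\theta$-derivatives of $\widetilde{\phi}$ --- respectively $\sin\theta\,\tilde{D}^n_\theta\partial_\theta\widetilde{\phi}+(n+1)\cos\theta\,\tilde{D}^n_\theta\widetilde{\phi}$ and $\cos\theta\,\sin^n(2\theta)\,\partial_\theta^{n+2}\widetilde{\phi}-(n+2)\sin\theta\,\tilde{D}^n_\theta\partial_\theta\widetilde{\phi}$ --- plus a remainder of the form $\sum_{k=0}^{n}S_k(\theta)\,\tilde{D}^k_\theta\widetilde{\phi}$ with $S_k$ smooth on $[0,\tfrac\pi2]$. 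The conversions between $\partial_\theta^k$ and $\tilde{D}^k_\theta$, and the absorption of the factors $\cot(2\theta)$ generated by $\partial_\theta\sin^n(2\theta)$, are handled by Lemma~\ref{6lem5} and the Hardy inequality of Lemma~\ref{6lem4}; as a consequence every remainder contribution is bounded by $C\sum_{k=0}^{n}\|\tilde{D}^k_\theta\widetilde{\phi}\|^2_{\mathcal{L}^2_{\mathcal{W}}}$, together with $C\|\tilde{D}^{n-1}_\theta\partial_\theta\widetilde{\phi}\|^2_{\mathcal{L}^2_{\mathcal{W}}}$ whenever a surviving order-$n$ factor of $\widetilde{\phi}$ has to be paired with $\partial_\theta^{n+1}\widetilde{\phi}$.

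Next I would compute the principal inner product. Pairing the two principal parts against $\mathcal{W}^2=\mathcal{W}_z^2\sin^{-\xi}(2\theta)$ produces the products $\partial_\theta^{n+1}\widetilde{\phi}\,\partial_\theta^{n+2}\widetilde{\phi}$ and $\partial_\theta^{n}\widetilde{\phi}\,\partial_\theta^{n+2}\widetilde{\phi}$; integrating by parts in $\theta$ replaces the first by $\tfrac12\partial_\theta\big((\partial_\theta^{n+1}\widetilde{\phi})^2\big)$ and the second by $(\partial_\theta^{n+1}\widetilde{\phi})^2$ plus a cross term of the type already controlled above. After also differentiating the accompanying weights $\sin^{m}(2\theta)\mathcal{W}_z^2$, one is left with a single leading quadratic form $\langle c_n(\theta)\,(\partial_\theta^{n+1}\widetilde{\phi})^2,\,\mathcal{W}_z^2\sin^{2n-\xi}(2\theta)\rangle$, and a direct computation --- using $\cos(2\theta)=1-2\sin^2\theta$ to combine the trigonometric factors --- shows that $c_n(\theta)\ge\tfrac43-\tfrac\xi2$ uniformly in $\theta$ for every $n\ge0$ and every $\xi\in[0,2]$, with strict inequality at interior points. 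Since $\|\tilde{D}^n_\theta\partial_\theta\widetilde{\phi}\|^2_{\mathcal{L}^2_{\mathcal{W}}}=\langle(\partial_\theta^{n+1}\widetilde{\phi})^2,\,\mathcal{W}_z^2\sin^{2n-\xi}(2\theta)\rangle$, this yields the advertised main term, and the gap between $c_n$ and the value $\tfrac43-\tfrac\xi2$ is exactly what furnishes the slack needed below.

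Finally I would collect the leftover cross terms --- each of which pairs $\partial_\theta^{n+1}\widetilde{\phi}$ with a strictly lower-order factor $\partial_\theta^{k}\widetilde{\phi}$, $k\le n$ --- and dispose of them by Cauchy--Schwarz and Young's inequality, placing one factor into a small multiple of $\|\tilde{D}^n_\theta\partial_\theta\widetilde{\phi}\|^2_{\mathcal{L}^2_{\mathcal{W}}}$ and the other into $\|\tilde{D}^{n-1}_\theta\partial_\theta\widetilde{\phi}\|^2_{\mathcal{L}^2_{\mathcal{W}}}$ or $\sum_{k\le n}\|\tilde{D}^k_\theta\widetilde{\phi}\|^2_{\mathcal{L}^2_{\mathcal{W}}}$, invoking Lemma~\ref{6lem5} to pass between $\tilde{D}_\theta$ and $\partial_\theta$ and Lemma~\ref{6lem4} (or the sharp version Lemma~\ref{6lem3}) whenever a $\cot(2\theta)$ weight appears. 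One must also check that all boundary terms from the integrations by parts vanish: at $\theta=0$ because $\widetilde{\phi}$ is regular and vanishes there (the condition $\phi|_{\partial D}=0$), and at $\theta=\tfrac\pi2$ because the powers $\sin^m(2\theta)$ accompanying the weight dominate the growth of $\mathcal{W}$. I expect this last bookkeeping step to be the main obstacle: keeping the accumulated Young losses on $\|\tilde{D}^n_\theta\partial_\theta\widetilde{\phi}\|^2_{\mathcal{L}^2_{\mathcal{W}}}$ strictly below the slack $c_n(\theta)-(\tfrac43-\tfrac\xi2)$, organizing the Leibniz commutators so that every singular $\cot(2\theta)$ is dominated through Hardy, and handling the endpoint $\theta=\tfrac\pi2$, where $\cos\theta\to0$ while $\sin^{-\xi}(2\theta)\to\infty$.
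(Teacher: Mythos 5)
Your proposal is correct and follows essentially the same route as the paper: substitute $\widetilde{\phi}=\phi/\cos\theta$, Leibniz-expand $\partial_\theta^{n+1}(\tan\theta\,\phi)$ and $\partial_\theta^{n+2}\phi$ in $\widetilde{\phi}$, pair the principal parts, integrate by parts to reduce $\partial_\theta^{n+1}\widetilde{\phi}\,\partial_\theta^{n+2}\widetilde{\phi}$ and $\partial_\theta^{n}\widetilde{\phi}\,\partial_\theta^{n+2}\widetilde{\phi}$ to a quadratic form in $(\partial_\theta^{n+1}\widetilde{\phi})^2$, and check that the combined coefficient dominates $\tfrac43-\tfrac\xi2$ using $|D_\theta\mathcal W/\mathcal W|\le\xi$. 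Two small remarks on bookkeeping: the paper first rewrites the inner product as $\langle\partial_\theta^{n+1}(\tan\theta\phi)\,\partial_\theta^{n+2}\phi,\sin^{2n}(2\theta)\mathcal W^2\rangle$, so all subsequent integrations by parts only differentiate the combined weight $\sin^{2n}(2\theta)\mathcal W^2$ and produce bounded $\cos(2\theta)$ factors, never the singular $\cot(2\theta)$ you were bracing for — the Hardy inequality of Lemma~\ref{6lem4} is not actually needed in this proof, only the weight hypothesis and Cauchy--Schwarz. Also, the actual coefficient the paper obtains is $\mathcal A_1+\mathcal A_2+\mathcal A_3=2n\cos^2\theta+\tfrac32+\tfrac12\tfrac{D_\theta\mathcal W}{\mathcal W}\ge\tfrac{3-\xi}{2}$, strictly larger than the stated $\tfrac43-\tfrac\xi2$ even at the endpoints, and the margin $\tfrac16$ is precisely the slack used to absorb the $\varepsilon$-losses from the cross terms, as you anticipated.
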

\begin{proof}
	We apply the Leibniz rule, to obtain that
	\begin{align}\label{eq-tantheta-phi}
		&\partial^{n+1}_{\theta}(\tan\theta \phi)
		=\partial^{n+1}_{\theta}(\sin\theta \widetilde{\phi})=(\sin\theta)\partial^{n+1}_{\theta} \widetilde{\phi}+(n+1)(\cos\theta)\partial^{n}_{\theta} \widetilde{\phi} + \sum_{k=0}^{n-1}P_{k}(\theta)\partial^{k}_{\theta} \widetilde{\phi},
		\\
		&\partial^{n+2}_{\theta}\phi=\partial^{n+2}_{\theta}((\cos\theta )\widetilde{\phi})=\cos\theta\partial^{n+2}_{\theta} \widetilde{\phi}-(n+2)(\sin\theta)\partial^{n+1}_{\theta} \widetilde{\phi} + \sum_{m=0}^{n}Q_{m}(\theta)\partial^{m}_{\theta} \widetilde{\phi},
		\notag
	\end{align}
	for some functions $P_k(\theta)$, $Q_m(\theta)\in C^{\infty}([0,\frac{\pi}{2}])$. 
	These two identities yield that
	\begin{align}\label{est-6-J-1}
		&~~~-\langle\tilde{D}^n_{\theta}\partial_{\theta}(\tan\theta\phi),\tilde{D}^n_\theta\partial^{2}_{\theta}\phi\mathcal{W}^2\rangle
		\\
		&=
		-\langle\partial_{\theta}^{n+1}(\tan\theta\phi)\partial^{n+2}_{\theta}\phi,\sin^{2n}(2\theta)\mathcal{W}^2\rangle
		\notag\\
		&=
		-\frac{1}{2}\langle\partial^{n+1}_{\theta} \widetilde{\phi}D_{\theta}\partial^{n+1}_{\theta} \widetilde{\phi},\sin^{2n}(2\theta)\mathcal{W}^2\rangle
		+(n+2)\langle (\sin\theta)^2(\partial^{n+1}_{\theta} \widetilde{\phi})^2,\sin^{2n}(2\theta)\mathcal{W}^2\rangle
		\notag\\
		&~~~-(n+1)\langle \cos^2\theta\partial^n_{\theta}\widetilde{\phi}\partial^{n+2}_{\theta}\widetilde{\phi},\sin^{2n}(2\theta)\mathcal{W}^2\rangle
		+\frac{(n+1)(n+2)}{2}
		\langle \sin(2\theta)\partial^n_{\theta}\tilde{\phi}\partial^{n+1}_{\theta}\tilde{\phi},\sin^{2n}(2\theta)\mathcal{W}^2\rangle
		\notag\\
		&~~~-\sum_{k=0}^{n-1}
		\langle S_k(\theta)\partial^k_{\theta}\tilde{\phi}\partial^{n+2}_{\theta}\tilde{\phi}
		,\sin^{2n}(2\theta)\mathcal{W}^2\rangle
		-\sum_{i=0}^{n+1}\sum_{j=0}^{n}\langle 
		S_{ij}(\theta)\partial^i_{\theta}\tilde{\phi}\partial^j_{\theta}\tilde{\phi}
		,\sin^{2n}(2\theta)\mathcal{W}^2\rangle
		\notag\\
		&\tri\sum_{i=1}^6\mathcal{K}_i,
		\notag
	\end{align}
	for some functions $S_k(\theta)$, $S_{i,j}(\theta)\in C^{\infty}([0,\frac{\pi}{2}])$.
	We deal with the terms $\mathcal{K}_i$ $(i=1,\dots,6)$ in turn, beginning with $\mathcal{K}_1$. 
	In view of integration by parts, we infer that
	\begin{align*}
		\mathcal{K}_1&=-\frac{1}{4}\langle D_{\theta} (\partial^{n+1}_{\theta} \widetilde{\phi})^2,\sin^{2n}(2\theta)\mathcal{W}^2\rangle
		=\frac{1}{4}\langle (\partial^{n+1}_{\theta} \widetilde{\phi})^2,D^{\ast}_{\theta}\big(\sin^{2n}(2\theta)\mathcal{W}^2\big)\rangle
		=\langle (\tilde{D}^n_{\theta}\partial_{\theta} \widetilde{\phi})^2,\mathcal{A}_1\mathcal{W}^2\rangle,
	\end{align*}
	where $D^{\ast}_{\theta}$ is the adjoint operators of $D_{\theta}$, and $\mathcal{A}_1 \tri \left(n+\frac{1}{2}\right)
	\cos(2\theta) + \frac{1}{2}\frac{D_{\theta}\mathcal{W}}{\mathcal{W}}$.
	For the term $\mathcal{K}_2$, it is easy to check that
	\begin{align*}
		\mathcal{K}_2=\langle (\tilde{D}^n_{\theta}\partial_{\theta}\widetilde{\phi})^2,\mathcal{A}_2\mathcal{W}^2\rangle,
	\end{align*}
	where $\mathcal{A}_2 \tri (n+2)\sin^2\theta.$
	As for the term $\mathcal{K}_3$, we deduce from $\left|\frac{D^j_{\theta}\mathcal{W}}{\mathcal{W}}\right|\lesssim1$ with $j=1,2$ and integration by parts that
	\begin{align*}
		\mathcal{K}_3 
		&=\langle (\tilde{D}^n_{\theta}\partial_{\theta}\widetilde{\phi})^2,\mathcal{A}_3\mathcal{W}^2\rangle-\frac{n+1}{2}\langle(\partial^n_{\theta}\widetilde{\phi})^2,\partial^2_{\theta}\left[\cos^2\theta\sin^{2n}(2\theta)\mathcal{W}^2\right]\rangle \\ \notag
		&\geq \langle (\tilde{D}^n_{\theta}\partial_{\theta}\widetilde{\phi})^2,\mathcal{A}_3\mathcal{W}^2\rangle - C\langle(\partial^n_{\theta}\widetilde{\phi})^2,\sin^{2(n-1)}(2\theta)\mathcal{W}^2\rangle \\ \notag
		&\geq \langle (\tilde{D}^n_{\theta}\partial_{\theta}\widetilde{\phi})^2,\mathcal{A}_3\mathcal{W}^2\rangle - C\|\tilde{D}^{n-1}_{\theta}\partial_{\theta}\widetilde{\phi}\|^2_{\mathcal{L}^2_{\mathcal{W}}},
	\end{align*}
	where $\mathcal{A}_3 \tri (n+1)\cos^2\theta.$ 
	For the term $\mathcal{K}_4$, since $\left|\frac{D_{\theta}\mathcal{W}}{\mathcal{W}}\right|\lesssim1$, we see that
	\begin{align*}
		\mathcal{K}_4=-\frac{1}{4}(n+1)(n+2)\big\langle\big(\partial^n_{\theta}\widetilde{\phi}\big)^2,D^{\ast}_{\theta}\left[\sin^{2n}(2\theta)\mathcal{W}^2\right]\big\rangle\gtrsim-\|\tilde{D}^{n}_{\theta}\widetilde{\phi}\|^2_{\mathcal{L}^2_{\mathcal{W}}}.
	\end{align*}
	In order to address the term $\mathcal{K}_5$, we apply once more $\left|\frac{D_{\theta}\mathcal{W}}{\mathcal{W}}\right|\lesssim1$ and Cauchy-Schwartz inequality, to find that
	\begin{align*}
		\mathcal{K}_5=& \sum_{k=0}^{n-1}\langle S_k(\theta)\partial^{k+1}_{\theta}\tilde{\phi}\partial^{n+1}_{\theta}\tilde{\phi},\sin^{2n}(2\theta)\mathcal{W}^2\rangle
		+\sum_{k=0}^{n-1}\big\langle \partial^{k}_{\theta}\tilde{\phi}\partial^{n+1}_{\theta}\tilde{\phi},D^{\ast}_{\theta}\left[S_k(\theta)\sin^{2n-1}(2\theta)\mathcal{W}^2\right]\big\rangle\\ \notag
		\gtrsim& -\varepsilon\|\tilde{D}^{n}_{\theta}\partial_{\theta}\widetilde{\phi}\|^2_{\mathcal{L}^2_{\mathcal{W}}}-C_\varepsilon\sum_{k=0}^{n}\|\tilde{D}^{k}_{\theta}\widetilde{\phi}\|^2_{\mathcal{L}^2_{\mathcal{W}}}.
	\end{align*}
	For the last term $\mathcal{K}_6$, again thanks to Cauchy-Schwartz inequality, we get that
	\begin{align*}
		\mathcal{K}_6
		\gtrsim -\varepsilon\|\tilde{D}^{n}_{\theta}\partial_{\theta}\widetilde{\phi}\|^2_{\mathcal{L}^2_{\mathcal{W}}}-C_\varepsilon\sum_{k=0}^{n}\|\tilde{D}^{k}_{\theta}\widetilde{\phi}\|^2_{\mathcal{L}^2_{\mathcal{W}}}.
	\end{align*}
	Moreover, it is easy to verify that
	\begin{align}\label{est-A123}
		\mathcal{A}_1+\mathcal{A}_2+\mathcal{A}_3
		=2n\cos^2(\theta)+\frac{3}{2}+\frac{1}{2}\frac{D_{\theta}\mathcal{W}}{\mathcal{W}}\geq \frac{3-\xi}{2},
	\end{align}
	where we have used the fact that
	\begin{align}\label{est-Dtheta-W}
		-\xi\leq \frac{D_{\theta}\mathcal{W}}{\mathcal{W}}\leq \xi.
	\end{align}
	Substituting the estimates for $\mathcal{K}_i$ $(i=1,\cdots,6)$ into \eqref{est-6-J-1}, and applying \eqref{est-A123}, we can obtain \eqref{est-6.2-1} immediately. 
\end{proof}

\begin{prop}\label{6prop2}
	Assume that for any $k=1,2$,
	$
	\sum\limits_{l=0}^{k}\left|\frac{D^{l}_{z}D^{k-l}_{\theta}\mathcal{W}}{\mathcal{W}}\right|\leq C_{\beta}$ 
	where $C_{\beta}>0$ is a constant.
	Then there exist sufficiently small constants $0<\alpha \leq 1$ and $0<\varepsilon \leq 1$ such that
	\begin{align}\label{est-prop6.11-1}
		-\langle\partial_{\theta}((\tan\theta) \phi),(\alpha D_{z})^2\phi\mathcal{W}^2\rangle
		\geq \frac{1-\xi}{2}\alpha^2\| D_{z}\widetilde{\phi}\|^2_{\mathcal{L}^2_{\mathcal{W}}}-\varepsilon\alpha^2\| D_{z}\widetilde{\phi}\|^2_{\mathcal{L}^2_{\mathcal{W}}}-C_{\beta,\varepsilon}\|\partial_{\theta}\phi\|^2_{\mathcal{L}^2_{\mathcal{W}}},
	\end{align}
	and for any $n\in\mathbb{N}^{+}$, there holds 
	\begin{align}\label{est-prop6.11-2}
		-\langle\tilde{D}^n_\theta\partial_{\theta}((\tan\theta) \phi),(\alpha D_{z})^2\tilde{D}^n_\theta \phi\mathcal{W}^2\rangle
		\geq& \frac{1-\xi}{2}\alpha^2\|\tilde{D}^n_\theta D_{z}\widetilde{\phi}\|^2_{\mathcal{L}^2_{\mathcal{W}}}
		-C_{\beta,\varepsilon}\alpha^2\sum_{k=0}^{n-1}\sum_{j=0}^{1}\|D^j_{z}\tilde{D}^k_\theta \widetilde{\phi}\|^2_{\mathcal{L}^2_{\mathcal{W}}}
		\\
		&
		-\left(2\varepsilon+C_{\beta}\alpha\right) 
		\sum_{i+j=0}^{1}\|(\alpha D_{z})^i\tilde{D}^n_\theta\partial^{j}_{\theta}\widetilde{\phi}\|^2_{\mathcal{L}^2_{\mathcal{W}}}.
		\notag
	\end{align}
\end{prop}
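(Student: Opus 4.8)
The strategy mirrors that of Propositions \ref{6prop4} and \ref{6prop1}: move one radial derivative across by integrating by parts in $z$, extract the coercive angular quadratic form produced by the $\tan\theta$ term, and treat everything else as a perturbation. Since $D_z$ commutes with $\partial_\theta$, with multiplication by $\tan\theta$, and with $\tilde D^n_\theta$, writing $(\alpha D_z)^2=\alpha D_z(\alpha D_z\cdot)$ and integrating by parts in $z$ (the boundary terms vanish) gives, for \eqref{est-prop6.11-1},
\begin{align*}
-\langle\partial_{\theta}((\tan\theta)\phi),(\alpha D_{z})^2\phi\,\mathcal{W}^2\rangle
&=\alpha^2\langle\partial_{\theta}((\tan\theta)D_z\phi),D_z\phi\,\mathcal{W}^2\rangle
+\alpha^2\langle\partial_{\theta}((\tan\theta)\phi),D_z\phi\,\mathcal{W}^2\rangle\\
&\quad+\alpha^2\Big\langle\partial_{\theta}((\tan\theta)\phi),D_z\phi\,\tfrac{D_z(\mathcal{W}^2)}{\mathcal{W}^2}\,\mathcal{W}^2\Big\rangle,
\end{align*}
and the analogous identity for \eqref{est-prop6.11-2} with $\phi$ replaced by $\tilde D^n_\theta\phi$ and $\partial_\theta((\tan\theta)\,\cdot)$ by $\tilde D^n_\theta\partial_\theta((\tan\theta)\,\cdot)$. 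I will call these the main, cross and perturbation terms.

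For the main term, set $h\tri D_z\phi$, so that $h/\cos\theta=D_z\widetilde\phi$. Using $\tan\theta\,h=\sin\theta\,(h/\cos\theta)$ and integrating by parts once in $\theta$,
\begin{align*}
\langle\partial_\theta((\tan\theta)h),h\,\mathcal{W}^2\rangle
&=\Big\langle (D_z\widetilde\phi)^2,\Big(\cos^2\theta-\tfrac12\cos(2\theta)-\tfrac12\tfrac{D_\theta\mathcal{W}}{\mathcal{W}}\Big)\mathcal{W}^2\Big\rangle\\
&=\Big\langle (D_z\widetilde\phi)^2,\tfrac12\Big(1-\tfrac{D_\theta\mathcal{W}}{\mathcal{W}}\Big)\mathcal{W}^2\Big\rangle
\ge\tfrac{1-\xi}{2}\,\|D_z\widetilde\phi\|^2_{\mathcal{L}^2_{\mathcal{W}}},
\end{align*}
by the algebraic identity $\cos^2\theta-\tfrac12\cos(2\theta)=\tfrac12$ and the bound $\tfrac{D_\theta\mathcal{W}}{\mathcal{W}}\le\xi$ from \eqref{est-Dtheta-W}. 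For \eqref{est-prop6.11-2} I would expand $\partial_\theta^{n+1}(\sin\theta\,D_z\widetilde\phi)$ and $\partial_\theta^{n}(\cos\theta\,D_z\widetilde\phi)$ by the Leibniz rule, isolate the quadratic form in $\partial_\theta^{n}(D_z\widetilde\phi)$ coming from the order $n{+}1$, $n$ and $n{-}1$ pieces (the last after one more integration by parts in $\theta$), and, exactly as in the proof of Proposition \ref{6prop1}, check that its coefficient equals $n+\tfrac12-n\cos(2\theta)-\tfrac12\tfrac{D_\theta\mathcal{W}}{\mathcal{W}}\ge\tfrac{1-\xi}{2}$, the key cancellation being $(n{+}1)\cos^2\theta+n\sin^2\theta-\tfrac{2n+1}{2}\cos(2\theta)=n+\tfrac12-n\cos(2\theta)$; the leftover is a sum of products of $\le n$ angular derivatives of $D_z\widetilde\phi$ with coefficients bounded via the hypothesis on $\mathcal{W}$ and Lemma \ref{6lem5}, hence controlled after Cauchy--Schwarz by $\varepsilon\alpha^2\|\tilde D^n_\theta D_z\widetilde\phi\|^2_{\mathcal{L}^2_{\mathcal{W}}}+C_{\beta,\varepsilon}\alpha^2\sum_{k=0}^{n-1}\|D_z\tilde D^k_\theta\widetilde\phi\|^2_{\mathcal{L}^2_{\mathcal{W}}}$.

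The cross and perturbation terms both carry the product $\partial_\theta((\tan\theta)\phi)\cdot D_z\phi=\cos^2\theta\,\widetilde\phi\,D_z\widetilde\phi+\tfrac12\,D_\theta\widetilde\phi\,D_z\widetilde\phi$ (and its $\tilde D^n_\theta$ analogue), so the dangerous weights $\sec^2\theta,\tan\theta$ cancel exactly. For the first piece I would integrate by parts once more in $z$, using $\widetilde\phi\,D_z\widetilde\phi=\tfrac12 D_z(\widetilde\phi^{\,2})$ and the hypothesis-driven bounds $|D_z^*(\mathcal{W}^2)/\mathcal{W}^2|,\ |D_z^*(D_z(\mathcal{W}^2))/\mathcal{W}^2|\lesssim_\beta 1$, so that this piece is dominated by $C_\beta\alpha^2\|\cos\theta\,\widetilde\phi\|^2_{\mathcal{L}^2_{\mathcal{W}}}=C_\beta\alpha^2\|\phi\|^2_{\mathcal{L}^2_{\mathcal{W}}}\le C_{\beta,\varepsilon}\|\partial_\theta\phi\|^2_{\mathcal{L}^2_{\mathcal{W}}}$ by the Hardy inequality of Lemma \ref{6lem4}; the second piece is handled by Cauchy--Schwarz together with $\|D_\theta\widetilde\phi\|_{\mathcal{L}^2_{\mathcal{W}}}\lesssim_\beta\|\partial_\theta\phi\|_{\mathcal{L}^2_{\mathcal{W}}}$ (again Lemma \ref{6lem4}). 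This proves \eqref{est-prop6.11-1}. For \eqref{est-prop6.11-2} the same splitting, after writing the two leading pieces of $\tilde D^n_\theta\partial_\theta((\tan\theta)\phi)$ as $\sin\theta\,\tilde D^n_\theta\partial_\theta\widetilde\phi$ and $(n{+}1)\cos\theta\,\tilde D^n_\theta\widetilde\phi$ and the leading piece of $\tilde D^n_\theta D_z\phi$ as $\cos\theta\,\tilde D^n_\theta D_z\widetilde\phi$, yields interactions of the form $\alpha^2\langle (\tilde D^n_\theta\partial_\theta\widetilde\phi)(\tilde D^n_\theta D_z\widetilde\phi)\,\cdot,\mathcal{W}^2\rangle$ and $\alpha^2\langle (\tilde D^n_\theta\widetilde\phi)(\tilde D^n_\theta D_z\widetilde\phi)\,\cdot,\mathcal{W}^2\rangle$; applying Cauchy--Schwarz in the unbalanced form $\alpha^2 ab\le\varepsilon\alpha^2 a^2+C_\beta\alpha\,b^2$ (valid once $\alpha\le\varepsilon$) produces precisely the factor $2\varepsilon+C_\beta\alpha$ in front of $\sum_{i+j=0}^{1}\|(\alpha D_z)^i\tilde D^n_\theta\partial_\theta^{j}\widetilde\phi\|^2_{\mathcal{L}^2_{\mathcal{W}}}$, while the remaining lower-order products feed into the $C_{\beta,\varepsilon}\alpha^2\sum_{k=0}^{n-1}\sum_{j=0}^{1}$ sum.

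I expect the main obstacle to be the bookkeeping in the $n\ge1$ case: one must verify that the top-order coefficient is \emph{exactly} $n+\tfrac12-n\cos(2\theta)-\tfrac12\tfrac{D_\theta\mathcal{W}}{\mathcal{W}}$ rather than merely a positive multiple of $(1-\xi)$, since only the sharp constant $\tfrac{1-\xi}{2}$ will later combine correctly with the constant $\tfrac{1-\xi}{1+\xi}$ produced by Proposition \ref{6prop4} to close the elliptic estimates; and one must route each of the many lower-order remainders into one of the two admissible buckets $C_{\beta,\varepsilon}\alpha^2\sum_{k=0}^{n-1}\sum_{j=0}^{1}\|D_z^j\tilde D^k_\theta\widetilde\phi\|^2_{\mathcal{L}^2_{\mathcal{W}}}$ and $(2\varepsilon+C_\beta\alpha)\sum_{i+j=0}^{1}\|(\alpha D_z)^i\tilde D^n_\theta\partial_\theta^{j}\widetilde\phi\|^2_{\mathcal{L}^2_{\mathcal{W}}}$, keeping careful track of which estimates cost a power of $\alpha$ and which cost a power of $\varepsilon$.
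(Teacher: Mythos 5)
Your argument is correct and mirrors the paper's own proof in all essentials: both extract coercivity from the identity $\cos^2\theta\,\mathcal{W}^2 - \tfrac14 D_\theta^*(\mathcal{W}^2) = \tfrac12\big(1 - \tfrac{D_\theta\mathcal{W}}{\mathcal{W}}\big)\mathcal{W}^2 \geq \tfrac{1-\xi}{2}\mathcal{W}^2$ (resp.\ its $n$-derivative analogue, where you correctly compute the coefficient $n+\tfrac12 - n\cos(2\theta) - \tfrac12\tfrac{D_\theta\mathcal{W}}{\mathcal{W}}$), and both route the remaining terms through the unbalanced Cauchy--Schwarz and the angular Hardy inequality. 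The only cosmetic difference is the order of operations — you integrate by parts in $z$ first and then do the angular decomposition $\tan\theta\,\phi = \sin\theta\,\widetilde\phi$, while the paper first decomposes $\partial_\theta((\tan\theta)\phi)(\alpha D_z)^2\phi = \tfrac12 D_\theta\widetilde\phi(\alpha D_z)^2\widetilde\phi + \phi(\alpha D_z)^2\phi$ (resp.\ a Leibniz expansion for $n\geq1$) and integrates by parts term by term; the resulting estimates and the bookkeeping for the error buckets agree.
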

\begin{proof}
	Firstly, we infer that 
	\begin{align*}
		\partial_{\theta}((\tan\theta) \phi)(\alpha D_{z})^2\phi=\frac{1}{2}D_{\theta} \widetilde{\phi}(\alpha D_{z})^2\widetilde{\phi}
		+ \phi (\alpha D_{z})^2\phi.
	\end{align*}
	This gives that
	\begin{align}\label{est-6-K-1}
		-\langle\partial_{\theta}((\tan\theta) \phi),(\alpha D_{z})^2\phi\mathcal{W}^2\rangle
		=&-\frac12\langle D_{\theta} \widetilde{\phi}(\alpha D_{z})^2\widetilde{\phi},\mathcal{W}^2\rangle
		-\langle \phi (\alpha D_{z})^2\phi,\mathcal{W}^2\rangle
		\tri \mathcal{L}_1+ \mathcal{L}_2.
	\end{align}
	For the first term $\mathcal{L}_1$, in view of $\left|\frac{D_z\mathcal{W}}{\mathcal{W}}\right| \lesssim_{\beta} 1$ and integration by parts, we find that
	\begin{align*}
		\mathcal{L}_1
		&=\frac{\alpha^2}{2}\langle D_{\theta}D_{z}\widetilde{\phi}D_{z}\widetilde{\phi},\mathcal{W}^2\rangle+\frac{\alpha^2}{2}\langle D_{\theta}\widetilde{\phi}D_{z}\widetilde{\phi},D^{\ast}_{z}(\mathcal{W}^2)\rangle \\ \notag
		&\geq - \frac{\alpha^2}{4} \langle \big(D_{z}\widetilde{\phi}\big)^2,D^{\ast}_{\theta}(\mathcal{W}^2)\rangle 
		- \varepsilon\alpha^2\|D_{z}\widetilde{\phi}\|^2_{\mathcal{L}^2_{\mathcal{W}}}-C_{\beta,\varepsilon}\|D_{\theta}\widetilde{\phi}\|^2_{\mathcal{L}^2_{\mathcal{W}}},
	\end{align*}
	where $\varepsilon>0$ is a small constant.
	While for the term $\mathcal{L}_2$, we infer from $\left|\frac{D^j_z\mathcal{W}}{\mathcal{W}}\right| \lesssim_{\beta} 1$ with $j=1,2$ and Lemma \ref{6lem4} that
	\begin{align*}
		\mathcal{L}_2&=\alpha^2\|D_{z}\phi\|^2_{\mathcal{L}^2_{\mathcal{W}}}-\frac{\alpha^2}{2}\langle \phi^2,(D^{\ast}_{z})^2(\mathcal{W}^2)\rangle
		\geq \alpha^2\|D_{z}\phi\|^2_{\mathcal{L}^2_{\mathcal{W}}}-C_\beta\|\partial_{\theta}\phi\|^2_{\mathcal{L}^2_{\mathcal{W}}}
		\\
		&\geq \alpha^2\|(\cos \theta )D_{z}\tilde{\phi}\|^2_{\mathcal{L}^2_{\mathcal{W}}}-C_\beta\|\partial_{\theta}\phi\|^2_{\mathcal{L}^2_{\mathcal{W}}}.
	\end{align*}
	Notice that
	\begin{align}\label{62in1}
		(\cos\theta)^2 \mathcal{W}^2-\frac{1}{4}D^{\ast}_{\theta}(\mathcal{W}^2)=\left[(\cos\theta)^2+(\sin\theta)^2 - \frac{1}{2}\left(1+\frac{D_{\theta}\mathcal{W}}{\mathcal{W}}\right)\right]\mathcal{W}^2\geq\frac{1-\xi}{2}\mathcal{W}^2,
	\end{align}
	where we have used \eqref{est-Dtheta-W}.
	We continue by substituting the estimates of $\mathcal{L}_1$ and $\mathcal{L}_2$ into \eqref{est-6-K-1}, and employing \eqref{62in1}, eventually to find that \eqref{est-prop6.11-1} holds. 
	
	With the help of the Leibniz rule, one gets easily that 
	\begin{align*}
		\partial^{n}_{\theta}(\alpha D_{z})^2\phi
		=(\cos\theta)\partial^{n}_{\theta} (\alpha D_{z})^2\widetilde{\phi}-(n+1)(\sin\theta)\partial^{n-1}_{\theta} (\alpha D_{z})^2\widetilde{\phi}+\sum_{m=0}^{n-2}Q_{m}(\theta)\partial^{m}_{\theta} (\alpha D_{z})^2\widetilde{\phi},
	\end{align*}
	for some $Q_m(\theta)\in C^{\infty}([0,\frac{\pi}{2}])$. 
	This, together with \eqref{eq-tantheta-phi}, yields directly that
	\begin{align}
		&~~~-\langle\tilde{D}^n_\theta\partial_{\theta}((\tan\theta )\phi),(\alpha D_{z})^2\tilde{D}^n_\theta \phi\mathcal{W}^2\rangle
		\label{est-6-L-1}\\
		&=-\frac{1}{2}\langle D_{\theta}\partial^{n}_{\theta} \widetilde{\phi}\partial^{n}_{\theta} (\alpha D_{z})^2\widetilde{\phi}, \sin^{2n}(2\theta)\mathcal{W}^2\rangle
		-(n+1)\langle \cos^2\theta\partial^{n}_{\theta} \widetilde{\phi}\partial^{n}_{\theta} (\alpha D_{z})^2\widetilde{\phi},\sin^{2n}(2\theta)\mathcal{W}^2\rangle
		\notag\\
		&~~~+(n+1)\langle \sin^2\theta\partial^{n+1}_{\theta}\widetilde{\phi}\partial^{n-1}_{\theta}(\alpha D_{z})^2\widetilde{\phi}, \sin^{2n}(2\theta)\mathcal{W}^2\rangle
		-\sum_{k=0}^{n-1}\langle S_{k}(\theta)\partial^{k}_{\theta} \widetilde{\phi}\partial^{n}_{\theta} (\alpha D_{z})^2\widetilde{\phi} , \sin^{2n}(2\theta)\mathcal{W}^2\rangle
		\notag\\
		&~~~-\sum_{r=0}^{1}\sum_{i=0}^{n+r}\sum_{j=0}^{n-r-1}\langle S_{rij}(\theta)\partial^{i}_{\theta} \widetilde{\phi}\partial^{j}_{\theta} (\alpha D_{z})^2\widetilde{\phi}, \sin^{2n}(2\theta)\mathcal{W}^2\rangle
		\notag\\
		&\tri\sum_{i=1}^{5} \mathcal{M}_i,
		\notag
	\end{align}
	for some $S_k(\theta)$, $S_{rij}(\theta)\in C^{\infty}([0,\frac{\pi}{2}])$. 
	Recalling that $\left|\frac{D_z\mathcal{W}}{\mathcal{W}}\right|\lesssim_{\beta}1$, the first term $\mathcal{M}_1$ is controlled by
	\begin{align*}
		\mathcal{M}_1
		&=-\frac{\alpha^2}{4}\langle (\partial^{n}_{\theta} D_{z}\widetilde{\phi})^2,D^{\ast}_{\theta}\big(\sin^{2n}(2\theta)\mathcal{W}^2\big)\rangle+\frac{\alpha^2}{2}\langle \tilde{D}^{n+1}_{\theta} \widetilde{\phi}\tilde{D}^{n}_{\theta} D_{z}\widetilde{\phi},D^{\ast}_{z}(\mathcal{W}^2)\rangle
		\\
		&\geq
		\alpha^2\langle(\tilde{D}^n_{\theta}D_{z}\widetilde{\phi})^2,\tilde{\mathcal{A}}_1\mathcal{W}^2\rangle
		-C_\beta\alpha^3\|\tilde{D}^n_{\theta}D_{z}\widetilde{\phi}\|^2_{\mathcal{L}^2_{\mathcal{W}}}
		-C_\beta\alpha\|\tilde{D}^{n+1}_{\theta}\widetilde{\phi}\|^2_{\mathcal{L}^2_{\mathcal{W}}}.
	\end{align*}
	where $\tilde{\mathcal{A}}_1$ is defined by
	\begin{align*}
		\tilde{\mathcal{A}}_1
		=-\left(n+\frac12\right)\cos(2\theta)-\frac12\frac{D_{\theta}\mathcal{W}}{\mathcal{W}}
		=(n+1)(\sin\theta)^2-n(\cos\theta)^2- \frac{1}{2}\left(1+\frac{D_{\theta}\mathcal{W}}{\mathcal{W}}\right).
	\end{align*}
	For the term $\mathcal{M}_2$, we deduce from $\left|\frac{D^j_z\mathcal{W}}{\mathcal{W}}\right|\lesssim_{\beta}1$ with $j=1,2$ that
	\begin{align*}
		\mathcal{M}_2
		&=\alpha^2\langle (\tilde{D}^{n}_{\theta} D_{z}\widetilde{\phi})^2,\tilde{\mathcal{A}}_2\mathcal{W}^2\rangle
		-\frac{\alpha^2}{2}(n+1)\langle (\tilde{D}^{n}_{\theta} \widetilde{\phi})^2,(\cos\theta)^2 (D^{\ast}_{z})^2(\mathcal{W}^2)\rangle\\ \notag
		&\geq\alpha^2\langle (\tilde{D}^{n}_{\theta} D_{z}\widetilde{\phi})^2,\tilde{\mathcal{A}}_2\mathcal{W}^2\rangle - C_\beta\alpha^2\|\tilde{D}^{n}_{\theta} \widetilde{\phi}\|^2_{\mathcal{L}^2_{\mathcal{W}}},
	\end{align*}
	where $\tilde{\mathcal{A}}_2=(n+1)(\cos\theta)^2.$ 
	We now estimate the third term $\mathcal{M}_3$, we employ $\left|\frac{D_{z}\mathcal{W}}{\mathcal{W}}\right|+\left|\frac{D_{\theta}\mathcal{W}}{\mathcal{W}}\right|\lesssim_{\beta}1$ and $\alpha\leq 1$ to deduce that
	\begin{align*}
		\mathcal{M}_3&=\alpha^2(n+1)\langle\partial^{n}_{\theta}D_{z}\widetilde{\phi}\partial^{n-1}_{\theta}D_{z}\widetilde{\phi},D^{\ast}_{\theta}((\sin\theta)^2 \sin^{2n-1}(2\theta)\mathcal{W}^2)\rangle\\ \notag
		&~~~-\alpha^2(n+1)\langle\tilde{D}^{n+1}_{\theta}\widetilde{\phi}\tilde{D}^{n-1}_{\theta}D_{z}\widetilde{\phi},(\sin\theta)^2 D^{\ast}_{z}(\mathcal{W}^2)\rangle+\alpha^2\langle(\tilde{D}^{n}_{\theta}D_{z}\widetilde{\phi})^2,\tilde{\mathcal{A}}_3\mathcal{W}^2\rangle\\ \notag
		&\geq \alpha^2\langle(\tilde{D}^{n}_{\theta}D_{z}\widetilde{\phi})^2,\tilde{\mathcal{A}}_3\mathcal{W}^2\rangle - 
		\varepsilon\sum_{i+j=1}\|(\alpha D_{z})^{i}\tilde{D}^{n}_{\theta}\partial^j_{\theta}\widetilde{\phi}\|^2_{\mathcal{L}^2_{\mathcal{W}}}-C_\beta\alpha^2\|D_{z}\tilde{D}^{n-1}_{\theta}\widetilde{\phi}\|^2_{\mathcal{L}^2_{\mathcal{W}}},
	\end{align*}
	where $\tilde{\mathcal{A}}_3=(n+1)(\sin\theta)^2.$
	We then address the term $\mathcal{M}_4$ by using $\left|\frac{D_z\mathcal{W}}{\mathcal{W}}\right|\lesssim_{\beta}1$ that
	\begin{align*}
		\mathcal{M}_4
		&=\alpha^2\sum_{k=0}^{n-1}\langle S_{k}(\theta)\tilde{D}^{k}_{\theta} D_{z}\widetilde{\phi}\tilde{D}^{n}_{\theta} D_{z}\widetilde{\phi},\sin^{n-k}(2\theta)\mathcal{W}^2\rangle
		+\alpha^2\sum_{k=0}^{n-1}\langle S_{k}(\theta)\tilde{D}^{k}_{\theta} \widetilde{\phi}\tilde{D}^{n}_{\theta} D_{z}\widetilde{\phi},\sin^{n-k}(2\theta)D^{\ast}_{z}(\mathcal{W}^2)\rangle\\ \notag
		&\geq-\varepsilon\alpha^2 \|D_{z}\tilde{D}^n_\theta\widetilde{\phi}\|^2_{\mathcal{L}^2_{\mathcal{W}}} -C_\beta\alpha^2 \sum_{k=0}^{n-1}\sum_{j=0}^{1}\|D^j_{z}\tilde{D}^k_\theta \widetilde{\phi}\|^2_{\mathcal{L}^2_{\mathcal{W}}}.
	\end{align*}
	To handle the term $\mathcal{M}_5$, we derive from $\left|\frac{D_z\mathcal{W}}{\mathcal{W}}\right|\lesssim_{\beta}1$ and $\alpha\leq 1$ that
	\begin{align*}
		\mathcal{M}_5
		&=\alpha^2\sum_{r=0}^{1}\sum_{i=0}^{n+r}\sum_{j=0}^{n-r-1}\langle S_{rij}(\theta)\partial^{i}_{\theta} D_{z}\widetilde{\phi}\partial^{j}_{\theta} D_{z}\widetilde{\phi},\sin^{2n}(2\theta)\mathcal{W}^2\rangle\\ \notag
		&~~~+\alpha^2\sum_{r=0}^{1}\sum_{i=0}^{n+r}\sum_{j=0}^{n-r-1}\langle S_{rij}(\theta)\tilde{D}^{i}_{\theta} \widetilde{\phi}\tilde{D}^{j}_{\theta} D_{z}\widetilde{\phi},\sin^{2n-i-j}(2\theta)D^{\ast}_{z}(\mathcal{W}^2)\rangle\\ \notag
		&=-\alpha^2\sum_{r=0}^{1}\sum_{i=0}^{n+r-1}\sum_{j=0}^{n-r}\langle S_{rij}(\theta)\partial^{i}_{\theta} D_{z}\widetilde{\phi}\partial^{j}_{\theta} D_{z}\widetilde{\phi},\sin^{2n}(2\theta)\mathcal{W}^2)\rangle\\ \notag
		&~~~-\alpha^2\sum_{r=0}^{1}\sum_{i=0}^{n+r-1}\sum_{j=0}^{n-r-1}\langle S_{rij}(\theta)\partial^{i}_{\theta} D_{z}\widetilde{\phi}\partial^{j}_{\theta} D_{z}\widetilde{\phi},D^{\ast}_{\theta}(\sin^{2n-1}(2\theta)\mathcal{W}^2)\rangle\\ \notag
		&~~~+\alpha^2\sum_{r=0}^{1}\sum_{i=0}^{n+r}\sum_{j=0}^{n-r-1}\langle S_{rij}(\theta)\tilde{D}^{i}_{\theta} \widetilde{\phi}\tilde{D}^{j}_{\theta} D_{z}\widetilde{\phi},\sin^{n-i-j}(2\theta)D^{\ast}_{z}(\mathcal{W}^2)\rangle
		\\ \notag
		&\geq -\varepsilon\sum_{i+j=1}\|(\alpha D_{z})^{i}\tilde{D}^{n}_{\theta}\partial^j_{\theta}\widetilde{\phi}\|^2_{\mathcal{L}^2_{\mathcal{W}}}-C_\beta\alpha^2\sum_{k=0}^{n-1}\sum_{j=0}^{1}\|D^j_{z}\tilde{D}^k_\theta \widetilde{\phi}\|^2_{\mathcal{L}^2_{\mathcal{W}}}.
	\end{align*}
	Moreover, it is easy to verify that
	\begin{align}\label{62in2}
		\tilde{\mathcal{A}}_1+\tilde{\mathcal{A}}_2+\tilde{\mathcal{A}}_3
		=1+(2n+1)\sin^2\theta-\frac{1}{2}\bigg(1+\frac{D_{\theta}\mathcal{W}}{\mathcal{W}}\bigg)\geq\frac{1-\xi}{2}\mathcal{W}^2.
	\end{align}
	Substituting the estimates of $\mathcal{M}_i$ $(i=1,\cdots,5)$ into \eqref{est-6-L-1}, then using the fact that $0<\alpha\leq 1$ and $0<\varepsilon\leq 1$, finally using \eqref{62in2}, we get \eqref{est-prop6.11-2} directly. 
\end{proof}

\subsection{Elliptic estimates with purely radial weight}
In this subsection, we intend to establish the weighted elliptic estimates with purely radial weight.
\begin{lemm}\label{6lem8}
	Assume that $\langle f,K\rangle_{\theta}=0$ and that for $j=1,2$, $\big|\frac{D^j_z\mathcal{W}_z}{\mathcal{W}_z}\big|\leq C_\beta $, 
	where $C_{\beta}>0$ is a constant. Then for any $n\in \mathbb{N}$, there exists a sufficiently small constant $\alpha>0$ such that
	\begin{align*}
		\alpha^2\|D^{n+1}_z\phi\|^2_{\mathcal{L}^2_{\mathcal{W}_z}}+\|\partial_\theta D^n_z\phi\|^2_{\mathcal{L}^2_{\mathcal{W}_z}} \leq C_\beta\|D^n_zf\|^2_{\mathcal{L}^2_{\mathcal{W}_z}}.
	\end{align*}
\end{lemm}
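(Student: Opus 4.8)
The plan is to reduce to $n=0$ and then run a weighted energy estimate that couples the dissipativity of $L^{\alpha}_z$ in $z$ (Proposition \ref{6prop3}) with the coercivity of $L_{\theta}$ \emph{restricted to the codimension-one subspace} cut out by the constraint $\langle\cdot,K\rangle_{\theta}=0$. The constraint is essential here, since the symmetric part of $L_{\theta}$ by itself is indefinite: its quadratic form vanishes on $\sin(2\theta)$ while $\sin(2\theta)$ is not its ground state, so testing against a flat-in-$\theta$ weight is doomed.

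Since $D_z$ commutes with both $L^{\alpha}_z$ and $L_{\theta}$, applying $D^n_z$ to \eqref{6model} shows that $D^n_z\phi$ solves the same boundary value problem with datum $D^n_z f$, and $D^n_z$ preserves $\phi|_{\partial D}=0$ as well as the constraint, because $\langle D^n_z f,K\rangle_{\theta}=D^n_z\langle f,K\rangle_{\theta}=0$. Hence it suffices to treat $n=0$, i.e. to prove $\alpha^{2}\|D_z\phi\|^{2}_{\mathcal{L}^2_{\mathcal{W}_z}}+\|\partial_\theta\phi\|^{2}_{\mathcal{L}^2_{\mathcal{W}_z}}\le C_\beta\|f\|^{2}_{\mathcal{L}^2_{\mathcal{W}_z}}$. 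Beforehand I would record a hidden orthogonality: pairing \eqref{6model} with $K$ in $\theta$ and using $L^{*}_{\theta}(K)=0$ from \eqref{est-Ltheta-1} gives $L^{\alpha}_z\big(\langle\phi,K\rangle_{\theta}\big)=\langle f,K\rangle_{\theta}=0$; since $\langle\phi,K\rangle_{\theta}$ is bounded near $z=0$ and decays as $z\to\infty$, the structure of $L^{\alpha}_z$ forces $\langle\phi(z,\cdot),K\rangle_{\theta}\equiv0$, and the same holds for $D^n_z\phi$.

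Next I would pass to the variable $\widetilde{\phi}=\phi/\cos\theta$ (as in Proposition \ref{6prop1}), under which the system becomes $L^{\alpha}_z(\widetilde{\phi})+\widetilde{L}_{\theta}(\widetilde{\phi})=f\sec\theta$ with $\widetilde{L}_{\theta}\tri-\tfrac{1}{\cos^{3}\theta}\partial_{\theta}\!\big(\cos^{3}\theta\,\partial_{\theta}\cdot\big)-4$; this conjugation absorbs the singular zeroth-order term $\sec^{2}\theta$ of $L_{\theta}$, so the angular energy identities generate no boundary divergence at $\theta=\tfrac{\pi}{2}$. The operator $\widetilde{L}_{\theta}$ is self-adjoint with respect to $\cos^{3}\theta\,d\theta$, with kernel spanned by $\sin\theta$ (this is $\sin(2\theta)/\cos\theta$, cf. \eqref{est-Ltheta-1}), and the constraint $\langle\phi,K\rangle_{\theta}\equiv0$ says precisely that $\widetilde{\phi}(z,\cdot)$ is orthogonal to $\sin\theta$ in $L^{2}(\cos^{3}\theta\,d\theta)$. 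I would then test the system against $\widetilde{\phi}\cos^{3}\theta\,\mathcal{W}_z^{2}$ and against $(\alpha D_z)^{2}\widetilde{\phi}\cos^{3}\theta\,\mathcal{W}_z^{2}$, integrate by parts, and combine. The radial part, controlled by Proposition \ref{6prop3} and its $D_z$-commuted version, produces the favourable terms $\alpha^{2}\|D_z\widetilde{\phi}\|^{2}$ and $\alpha^{4}\|D_z^{2}\widetilde{\phi}\|^{2}$, with all weight commutators absorbed thanks to $|D^{j}_z\mathcal{W}_z/\mathcal{W}_z|\le C_\beta$ for $j=1,2$ and the smallness of $\alpha$; the angular part obeys $\langle\widetilde{L}_{\theta}\widetilde{\phi},\widetilde{\phi}\rangle_{\cos^{3}\theta}\ge c_0\big(\|\partial_\theta\widetilde{\phi}\|^{2}_{L^{2}(\cos^{3}\theta)}+\|\widetilde{\phi}\|^{2}_{L^{2}(\cos^{3}\theta)}\big)$ by the spectral gap on the constrained subspace (the $-4\langle\cdot,\cdot\rangle$ defect is absorbed exactly by the gap, which is why the constraint cannot be dropped), and the right-hand side is closed by Cauchy--Schwarz with $\|f\mathcal{W}_z\|$. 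This yields control of every $\cos^{3}\theta$-weighted quantity by $C_\beta\|f\|^{2}_{\mathcal{L}^2_{\mathcal{W}_z}}$.

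Finally I would upgrade these $\cos\theta$-weighted bounds to the flat-in-$\theta$ bounds claimed, using $\partial_\theta\phi=-\sin\theta\,\widetilde{\phi}+\cos\theta\,\partial_\theta\widetilde{\phi}$ and $D_z\phi=\cos\theta\,D_z\widetilde{\phi}$. Near $\theta=0$ the weight $\cos^{3}\theta$ is comparable to $1$ and the Hardy inequalities of Lemmas \ref{6lem3}--\ref{6lem4} (with $\widetilde{\phi}(0)=0$) close the gap. Near $\theta=\tfrac{\pi}{2}$ the homogeneous angular equation for $\widetilde{L}_{\theta}$ has indicial exponents $0$ and $-2$, so the admissible solution is regular there with $\partial_\theta\widetilde{\phi}(\tfrac{\pi}{2})=0$, and integrating the identity for $\cos^{3}\theta\,\partial_\theta\widetilde{\phi}$ outward from $\theta=\tfrac{\pi}{2}$ converts the already-controlled $\cos^{3}\theta$-weighted radial and angular quantities (together with the datum $f\sec\theta$ in the conjugated norm) into pointwise bounds on $\widetilde{\phi}$ and $\partial_\theta\widetilde{\phi}$, hence into flat $\mathcal{L}^2_{\mathcal{W}_z}$-control of $\partial_\theta\phi$ and $D_z\phi$. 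I expect this last step — extracting the flat angular weight at the singular endpoint $\theta=\tfrac{\pi}{2}$ — to be the main obstacle, since the energy method only ever delivers the degenerate $\cos\theta$-weighted norms; the hidden orthogonality and the conjugation to $\widetilde{L}_{\theta}$ are exactly what make it tractable.
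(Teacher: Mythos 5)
Your reduction to $n=0$ and your extraction of the hidden orthogonality $\langle\phi,K\rangle_\theta\equiv0$ match the paper's argument exactly, and the spectral-gap idea for the conjugated operator $\widetilde{L}_\theta$ on $\{\sin\theta\}^\perp$ in $L^2(\cos^3\theta\,d\theta)$ is mathematically correct (it is just the Fourier argument the paper runs, expressed in a different basis). Where the proposal has a genuine gap is exactly where you flagged it: the upgrade from $\cos^3\theta$-weighted bounds to the flat-in-$\theta$ bounds the lemma asserts. Your energy estimate delivers $\alpha^2\|D_z\widetilde\phi\|^2_{L^2(\cos^3\theta)}$ and $\|\partial_\theta\widetilde\phi\|^2_{L^2(\cos^3\theta)}$, but the lemma needs $\alpha^2\|D_z\phi\|^2_{L^2}=\alpha^2\|D_z\widetilde\phi\|^2_{L^2(\cos^2\theta)}$ and $\|\partial_\theta\phi\|^2_{L^2}$, which near $\theta=\pi/2$ requires $\|\widetilde\phi\|^2_{L^2(d\theta)}$ itself and the strictly stronger weight $\cos^2\theta$ on $\partial_\theta\widetilde\phi$. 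The ODE-regularity argument you sketch does not supply this: after conjugation the source is $f\sec\theta$, which under the hypotheses is only in a weighted $L^2$, not pointwise bounded near $\pi/2$; integrating $\partial_\theta(\cos^3\theta\,\partial_\theta\widetilde\phi)$ inward from $\pi/2$ then only yields $\partial_\theta\widetilde\phi=O((\tfrac{\pi}{2}-\theta)^{-1/2})$ by Cauchy--Schwarz, which is not enough to conclude $\partial_\theta\widetilde\phi(\tfrac{\pi}{2})=0$ or pointwise control of $\widetilde\phi$, and the Frobenius/indicial reasoning requires regular forcing that you do not have.

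The paper sidesteps this entirely by never conjugating. It tests $L_\theta(\phi)$ against the flat test function $D^n_z\phi\,\mathcal{W}_z^2$; the cross term $\partial_\theta(\tan\theta\,\phi)$ then produces the positive contribution $\tfrac12\|\sec\theta\,D^n_z\phi\|^2_{\mathcal{L}^2_{\mathcal{W}_z}}$ for free — this is precisely the flat $L^2$ control of $\widetilde\phi$ that your $\cos^3\theta$-weighted test function throws away. The remaining indefinite piece $\|\partial_\theta D^n_z\phi\|^2-6\|D^n_z\phi\|^2$ is made coercive not via a weighted spectral gap but by a direct Fourier sine-series argument in flat $L^2_\theta$: Lemma \ref{6lem2} converts the orthogonality $\langle D^n_z\phi,K\rangle_\theta=0$ into the coefficient identity \eqref{64in2}, which (with \eqref{64in3}) bounds $\|\phi^n_1\|^2$ by $\sum_{k\ge2}\|\phi^n_k\|^2$ and hence shows $\|\partial_\theta D^n_z\phi\|^2-7\|D^n_z\phi\|^2\ge\tfrac{3}{16}\|\partial_\theta D^n_z\phi\|^2$. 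If you want to rescue your approach, you would need to redo the energy pairing with a flat (or at most $\cos\theta$-weighted) test function so that the $\sec^2\theta$ term in $L_\theta$ contributes positively rather than being cancelled by the conjugation; otherwise the endpoint degeneracy at $\theta=\tfrac{\pi}{2}$ cannot be recovered from the weighted estimate alone.
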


\begin{proof}
	Recalling the definition of $L_{\theta}$ in \eqref{def-L-theta}, it is easy to verify that its adjoint operator $L^{*}_{\theta}$ satisfies
	$$L^{*}_{\theta}(K(\theta))=0.$$ 
	For any $\alpha>0$, there holds 
	\begin{align*}
		L^{\alpha}_z(\langle D^n_z\phi,K\rangle_{\theta})=\langle D^n_z(L^{\alpha}_z(\phi)+L_{\theta}(\phi)),K\rangle_{\theta} = D^n_z\langle f,K\rangle_{\theta}=0.
	\end{align*}
	This, combining a similar derivation of \eqref{4eq4}, leads us to get that
	\begin{align}\label{64in1}
		\langle D^n_z\phi,K\rangle_{\theta}=0.
	\end{align}
	According to the completeness of the Fourier series, we obtain the odd extension of $D_{z}^n\phi$ $(n\geq0)$ on $[0,\pi]$ as:
	\begin{align}\label{Four-Ser-phi}
		D^n_z\phi(z,\theta) = \sum_{k=0}^{\infty} \phi^n_k(z)\sin(2k\theta),~~~
		\phi_k^n(z)\tri\int_0^{\pi} D^n_z\phi(z,\theta)\sin(2k\theta)d\theta.
	\end{align} 
	With this odd extension at hand, the unique identity is derived from \eqref{64in1} and Lemma \ref{6lem2} that
	\begin{align}\label{64in2}
		\phi^n_1 = \sum_{k=2}^{\infty} (-1)^{k}\frac{15k}{(4k^2-1)(4k^2-9)} \phi^n_k.
	\end{align}
	We employ \eqref{64in2} and Cauchy-Schwartz inequality, to get that
	\begin{align}\label{est-phi-1}
		\|\phi^n_1\|^2_{L^2_{\mathcal{W}_z}} \leq \sum_{k=2}^{\infty} \frac{225k^2}{(4k^2-1)^2(4k^2-9)^2} \|\phi^n_k\|^2_{L^2_{\mathcal{W}_z}}\leq \sum_{k=2}^{\infty}  \|\phi^n_k\|^2_{L^2_{\mathcal{W}_z}}.
	\end{align}
	On the other hand, the equation \eqref{6model} gives that
	\begin{align}\label{64in4-1}
		\langle D^n_zL^{\alpha}_z(\phi),D^n_z\phi\mathcal{W}^2_z\rangle
		+\langle D^n_zL_{\theta}(\phi),D^n_z\phi\mathcal{W}^2_z\rangle
		=\langle f,D^n_z\phi\mathcal{W}^2_z\rangle.
	\end{align}
	Recalling \eqref{def-L-z} and \eqref{def-L-theta}, then applying integration by parts, we achieve that
	\begin{align*}
		\langle D^n_zL^{\alpha}_z(\phi),D^n_z\phi\mathcal{W}^2_z\rangle&= \alpha^2\|D^{n+1}_z\phi\|^2_{\mathcal{L}^2_{\mathcal{W}_z}}-\frac{\alpha^2}{2}\langle (D^n_z\phi)^2,(D^{\ast}_{z})^2(\mathcal{W}^2_z)\rangle + \frac{5\alpha}{2}\langle (D^n_z\phi)^2,D^{\ast}_{z}(\mathcal{W}^2_z)\rangle\\ \notag
		&\geq\alpha^2\|D^{n+1}_z\phi\|^2_{\mathcal{L}^2_{\mathcal{W}_z}}-C_{\beta}\alpha\|D^n_z\phi\|^2_{\mathcal{L}^2_{\mathcal{W}_z}},
        \\
		\langle D^n_zL_{\theta}(\phi),D^n_z\phi\mathcal{W}^2_z\rangle
		&=
		\|\partial_\theta D^n_z\phi\|^2_{\mathcal{L}^2_{\mathcal{W}_z}} 
		-6\|D^n_z\phi\|^2_{\mathcal{L}^2_{\mathcal{W}_z}} 
		+ \frac{1}{2}\|(\sec\theta) D^n_z\phi\|^2_{\mathcal{L}^2_{\mathcal{W}_z}}.
	\end{align*}
	We combine these two estimates above and \eqref{64in4-1} together, then take $\alpha$ and $\varepsilon$ small enough, to discover that
	\begin{align}\label{64in4}
		\alpha^2\|D^{n+1}_z\phi\|^2_{\mathcal{L}^2_{\mathcal{W}_z}}
		+\|\partial_\theta D^n_z\phi\|^2_{\mathcal{L}^2_{\mathcal{W}_z}} 
		-7\|D^n_z\phi\|^2_{\mathcal{L}^2_{\mathcal{W}_z}} 
		+
		\frac{1}{2}\|(\sec\theta )D^n_z\phi\|^2_{\mathcal{L}^2_{\mathcal{W}_z}}
		\lesssim_{\beta} \|D_z^nf\|^2_{\mathcal{L}^2_{\mathcal{W}_z}}.
	\end{align}
	We then deduce from \eqref{Four-Ser-phi}, the orthogonality of $\left\{\sin(2k\theta)\right\}_{k\in\mathbb{Z}}$ and $\left\{\cos(2k\theta)\right\}_{k\in\mathbb{Z}}$ that
	\begin{align}\label{express-phi}
		\|D^n_z\phi\|^2_{\mathcal{L}^2_{\mathcal{W}_z}} = \mathop{\sum}_{k=1}^{\infty} \|\phi^n_k\|^2_{L^2_{\mathcal{W}_z}},
		\quad
		\|\partial_{\theta} D^n_z\phi\|^2_{\mathcal{L}^2_{\mathcal{W}_z}} = \mathop{\sum}_{k=1}^{\infty} 4k^2\|\phi^n_k\|^2_{\mathcal{L}^2_{\mathcal{W}_z}}.
	\end{align}
	In view of \eqref{est-phi-1} and \eqref{express-phi}, we have
	\begin{align*}
		\|\partial_{\theta} D^n_z\phi\|^2_{\mathcal{L}^2_{\mathcal{W}_z}} \leq 2\mathop{\sum}_{k=2}^{\infty} 4k^2\|\phi^n_k\|^2_{\mathcal{L}^2_{\mathcal{W}_z}}.
	\end{align*}
	Combining this with \eqref{est-phi-1}, \eqref{64in4} and \eqref{express-phi}, we get that
	\begin{align}\label{64in4-2}
		\|\partial_\theta D^n_z\phi\|^2_{\mathcal{L}^2_{\mathcal{W}_z}} - 7\|D^n_z\phi\|^2_{\mathcal{L}^2_{\mathcal{W}_z}}&=-3\|\phi^n_1\|^2_{\mathcal{L}^2_{\mathcal{W}_z}} + \mathop{\sum}_{k=2}^{\infty} (4k^2-7)\|\phi^n_k\|^2_{\mathcal{L}^2_{\mathcal{W}_z}} 
		\geq \mathop{\sum}_{k=2}^{\infty} (4k^2-10)\|\phi^n_k\|^2_{\mathcal{L}^2_{\mathcal{W}_z}} \\ \notag
		&\geq \frac{3}{8}\mathop{\sum}_{k=2}^{\infty} 4k^2\|\phi^n_k\|^2_{\mathcal{L}^2_{\mathcal{W}_z}}
		\geq \frac{3}{16}\|\partial_\theta D^n_z\phi\|^2_{\mathcal{L}^2_{\mathcal{W}_z}}.
	\end{align}
	It follows that
	\begin{align}\label{64in5-1}
		7\|D^n_z\phi\|^2_{\mathcal{L}^2_{\mathcal{W}_z}}\leq\frac{13}{16}\|\partial_\theta D^n_z\phi\|^2_{\mathcal{L}^2_{\mathcal{W}_z}}.
	\end{align}
	Substituting \eqref{64in4-2} and \eqref{64in5-1} into \eqref{64in4}, we find that
	\begin{align*}
		\|D^n_z\phi\|^2_{\mathcal{L}^2_{\mathcal{W}_z}}+\alpha^2\|D^{n+1}_z\phi\|^2_{\mathcal{L}^2_{\mathcal{W}_z}}
		+\|\partial_\theta D^n_z\phi\|^2_{\mathcal{L}^2_{\mathcal{W}_z}} 
		+\|(\sec\theta) D^n_z\phi\|^2_{\mathcal{L}^2_{\mathcal{W}_z}}
		\lesssim_{\beta}\|D_z^nf\|^2_{\mathcal{L}^2_{\mathcal{W}_z}}.
	\end{align*}
	This completes the proof of Lemma \ref{6lem8}.
\end{proof}
For any $n\in\mathbb{N}$, we introduce $\mathcal{H}^n_{\mathcal{W}_z}([0,\infty)\times[0,\frac{\pi}{2}])$ equipped with the Sobolev norm containing exclusively radial weight as:
\begin{align*}
	\|f\|_{\mathcal{H}^n_{\mathcal{W}_z}}
	\tri \sum_{i+j=0}^{n}\|D^i_zD^j_{\theta}f\|^2_{\mathcal{L}^2_{\mathcal{W}_z}}.
\end{align*}

\begin{prop}\label{6prop5}
	Assume that $\langle f,K\rangle_{\theta}=0$ and that for $j=1,2$, $\left|\frac{D^j_z\mathcal{W}_z}{\mathcal{W}_z}\right|\leq C_\beta $ where $C_\beta>0$ is a constant. Then for any $n\in\mathbb{N}$, there exists a sufficiently small constant $\alpha>0$ such that
	\begin{align}\label{64eq0}
		\alpha^4\|D^2_{z}\phi\|^2_{\mathcal{H}^n_{\mathcal{W}_z}} + 
		\alpha^2\|D_{z}\partial_{\theta}\phi\|^2_{\mathcal{H}^n_{\mathcal{W}_z}} 
		+\|\partial^2_{\theta}\phi\|^2_{\mathcal{H}^n_{\mathcal{W}_z}} \leq C_{\beta} \|f\|^2_{\mathcal{H}^n_{\mathcal{W}_z}}.
	\end{align}
\end{prop}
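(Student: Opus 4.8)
\textbf{Proof proposal for Proposition \ref{6prop5}.}

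The plan is to prove \eqref{64eq0} by induction on $n$, reducing at each step to the basic $L^2$-type bounds already collected in the preceding propositions. For the base case $n=0$, I would test the equation $L^{\alpha}_z(\phi)+L_{\theta}(\phi)=f$ against a suitable combination of $(\alpha D_z)^2\phi\,\mathcal{W}_z^2$, $\partial_\theta^2\phi\,\mathcal{W}_z^2$ and $\phi\,\mathcal{W}_z^2$, exactly mirroring the structure used to prove Propositions \ref{6prop3}, \ref{6prop4}, \ref{6prop1} and \ref{6prop2}, but now with the purely radial weight $\mathcal{W}=\mathcal{W}_z$ (i.e. $\xi=0$). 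The key point is that with $\xi=0$ the coefficients $\tfrac{1-\xi}{1+\xi}$, $\tfrac{1-\xi}{2}$ and $\tfrac{4}{3}-\tfrac{\xi}{2}$ appearing in those propositions are all strictly positive, so the leading terms $\alpha^2\|D_z\partial_\theta\phi\|^2_{\mathcal{L}^2_{\mathcal{W}_z}}$, $\alpha^2\|D_z\widetilde\phi\|^2_{\mathcal{L}^2_{\mathcal{W}_z}}$, $\|(\alpha D_z)^2\phi\|^2_{\mathcal{L}^2_{\mathcal{W}_z}}$ survive with good sign. The lower-order error terms on the right-hand side — quantities like $\|\partial_\theta\phi\|^2_{\mathcal{L}^2_{\mathcal{W}_z}}$, $\|D_z\phi\|^2_{\mathcal{L}^2_{\mathcal{W}_z}}$, $\|\widetilde\phi\|^2_{\mathcal{L}^2_{\mathcal{W}_z}}$ — are then absorbed using Lemma \ref{6lem8} (which controls $\alpha^2\|D_z\phi\|^2_{\mathcal{L}^2_{\mathcal{W}_z}}+\|\partial_\theta\phi\|^2_{\mathcal{L}^2_{\mathcal{W}_z}}\lesssim_\beta\|f\|^2_{\mathcal{L}^2_{\mathcal{W}_z}}$) together with Proposition \ref{6prop0} or the Hardy inequality of Lemma \ref{6lem4} to handle $\widetilde\phi=\phi/\cos\theta$. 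Finally, $\alpha^4\|D_z^2\phi\|^2$ and $\alpha^2\|D_z\partial_\theta\phi\|^2$ are recovered from $\|(\alpha D_z)^2\phi\|^2$ and the $\alpha^2\|D_z\partial_\theta\phi\|^2$ term directly, while Lemma \ref{6lem7} converts $\alpha^2\|\partial_\theta D_z\phi\|^2$ into a combination of $\|\partial_\theta^2\phi\|^2$ and $\alpha^4\|D_z^2\phi\|^2$ if needed for cross terms.

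For the inductive step, I would apply $D_z^n$ to the equation. Since $D_z$ commutes with both $L^{\alpha}_z$ and $L_\theta$ (as noted just before Proposition \ref{6prop3}), $D_z^n\phi$ solves $L^{\alpha}_z(D_z^n\phi)+L_\theta(D_z^n\phi)=D_z^nf$, and $\langle D_z^nf,K\rangle_\theta=D_z^n\langle f,K\rangle_\theta=0$, so the hypotheses of the $n=0$ case are met with $\phi$ replaced by $D_z^n\phi$ and $f$ by $D_z^nf$. This gives $\alpha^4\|D_z^{n+2}\phi\|^2_{\mathcal{L}^2_{\mathcal{W}_z}}+\alpha^2\|D_z^{n+1}\partial_\theta\phi\|^2_{\mathcal{L}^2_{\mathcal{W}_z}}+\|D_z^n\partial_\theta^2\phi\|^2_{\mathcal{L}^2_{\mathcal{W}_z}}\lesssim_\beta\|D_z^nf\|^2_{\mathcal{L}^2_{\mathcal{W}_z}}$. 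To reassemble the full $\mathcal{H}^n_{\mathcal{W}_z}$-norms (which involve mixed derivatives $D_z^iD_\theta^j$ with $i+j\le n$), I would commute $\tilde D_\theta^j$ through the equation using the commutator identities of the form \eqref{eq-tantheta-phi}, invoke Propositions \ref{6prop4}, \ref{6prop1}, \ref{6prop2} with $\mathcal{W}=\mathcal{W}_z$ (and $\tilde D_\theta^{k}$ insertions), and use the lower-order error terms' absorbability by the already-established $(n-1)$-level bounds together with Lemma \ref{6lem6} (which trades mixed-derivative norms for pure radial and pure angular ones) and Lemma \ref{6lem5} (equivalence of $D_\theta$ and $\tilde D_\theta$ weighted norms). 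The chain of absorptions closes because the constants depend only on $\beta$ and $\alpha$ can be taken small after $\beta,n$ are fixed.

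I expect the main obstacle to be bookkeeping the angular commutators: when $\tilde D_\theta^j$ hits the term $\partial_\theta(\tan\theta\,\phi)$ in $L_\theta$, one generates lower-order angular derivatives of $\widetilde\phi$ with singular trigonometric coefficients, and these must be controlled uniformly — this is precisely what Propositions \ref{6prop1} and \ref{6prop2} are designed for, but assembling them into a single positive-definite quadratic form for $D_z^iD_\theta^j\phi$ at level $i+j=n$, with all cross terms absorbed, requires choosing the relative weights of the various test functions carefully (as in the multi-step decompositions $\sum J_i$, $\sum K_i$, $\sum \mathcal{L}_i$, $\sum\mathcal{M}_i$ in those proofs). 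A secondary technical point is ensuring the boundary terms vanish, which follows from $\phi|_{\partial D}=0$ together with the Fourier-series representation \eqref{Four-Ser-phi}; since everything is phrased in the purely radial weight and $\xi=0$, no new Hardy inequality beyond Lemmas \ref{6lem4}–\ref{6lem7} is needed, so this case is genuinely simpler than the weighted versions with $\xi>0$ that the section later addresses.
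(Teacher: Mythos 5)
Your proposal is correct and follows essentially the same route as the paper's proof: base case $n=0$ by testing against $\partial_\theta^2\phi\,\mathcal{W}_z^2$ and $(\alpha D_z)^2\phi\,\mathcal{W}_z^2$ using Propositions \ref{6prop3}, \ref{6prop4}, \ref{6prop1}, \ref{6prop2} with $\xi=0$ and absorbing via Lemma \ref{6lem8}; then induction on $n$ by commuting $D_z^n$ (pure radial) and $\tilde D_\theta^n$ (pure angular) through the equation and reassembling the mixed-derivative norms via Lemmas \ref{6lem5} and \ref{6lem6}. You also correctly flag the commutator bookkeeping for $[\tilde D_\theta^j,\partial_\theta(\tan\theta\,\cdot)]$ as the principal technical burden, which is indeed where the bulk of the paper's Step 2.2 lies.
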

\begin{proof}
	We will prove this proposition by induction on $n$.
	
	\medskip
	
	\noindent
	\textbf{Step 1: For the base case $n=0$.} 
	Along a similar way as \eqref{63in1} with $\partial^2_{\theta}(\mathcal{W}_z^2)=0$ and \eqref{63in2}, we can deduce that
	\begin{align}\label{64eq-zalpha}
		-\langle L^{\alpha}_z (\phi),\partial^2_{\theta}\phi\mathcal{W}^2_z\rangle
		\geq \alpha^2\|\partial_{\theta}D_{z}\phi\|^2_{\mathcal{L}^2_{\mathcal{W}_z}} -\varepsilon\|\partial^2_{\theta}\phi\|^2_{\mathcal{L}^2_{\mathcal{W}_z}}-C_{\beta,\varepsilon}\alpha^2\|D_{z}\phi\|^2_{\mathcal{L}^2_{\mathcal{W}_z}}.
	\end{align}
	Recalling the definition of $L_\theta$ in \eqref{def-L-theta}, and taking $n=0$ and $\xi=0$ in Proposition \ref{6prop1}, we then have
	\begin{align}\label{64eq-theta}
		-\langle L_\theta (\phi),\partial^{2}_{\theta}\phi\mathcal{W}^2_z\rangle
		&=\|\partial^2_{\theta}\phi\|^2_{\mathcal{L}^2_{\mathcal{W}_z}}-\langle\partial_{\theta}(\tan\theta\phi),\partial^{2}_{\theta}\phi\mathcal{W}^2_z\rangle+6\langle \phi,\partial^{2}_{\theta}\phi\mathcal{W}^2_z\rangle\\ \notag
		&\geq\frac{1}{2}\|\partial^2_{\theta}\phi\|^2_{\mathcal{L}^2_{\mathcal{W}_z}}+\frac{4}{3}\|\partial_{\theta}\widetilde{\phi}\|^2_{\mathcal{L}^2_{\mathcal{W}_z}}-C_{\beta}\|\widetilde{\phi}\|^2_{\mathcal{L}^2_{\mathcal{W}_z}}.
	\end{align}
	Then, we recall the equation \eqref{6model}, to discover that
	\begin{align}\label{64eq1-1}
		\langle L^{\alpha}_z(\phi),\partial^2_{\theta}\phi\mathcal{W}^2_z\rangle
		+\langle L_{\theta}\phi,\partial^2_{\theta}\phi\mathcal{W}^2_z\rangle
		=\langle f,\partial^2_{\theta}\phi\mathcal{W}^2_z\rangle.
	\end{align}
	Plugging \eqref{64eq-zalpha} and \eqref{64eq-theta} into \eqref{64eq1-1}, choosing $\varepsilon$ suitably small, and applying Lemma \ref{6lem4}, one gets that
	\begin{align}\label{64eq1}
		\alpha^2\|\partial_{\theta}D_{z}\phi\|^2_{\mathcal{L}^2_{\mathcal{W}_z}} +\|\partial^2_{\theta}\phi\|^2_{\mathcal{L}^2_{\mathcal{W}_z}}
		+\|\partial_{\theta}\widetilde{\phi}\|^2_{\mathcal{L}^2_{\mathcal{W}_z}}
		\lesssim_\beta&\|f\|^2_{\mathcal{L}^2_{\mathcal{W}_z}}+\alpha^2\|D_{z}\phi\|^2_{\mathcal{L}^2_{\mathcal{W}_z}}+\|\widetilde{\phi}\|^2_{\mathcal{L}^2_{\mathcal{W}_z}}
		\\
		\lesssim_{\beta}&\|f\|^2_{\mathcal{L}^2_{\mathcal{W}_z}}+\alpha^2\|D_{z}\phi\|^2_{\mathcal{L}^2_{\mathcal{W}_z}}+\|\partial_{\theta}\phi\|^2_{\mathcal{L}^2_{\mathcal{W}_z}}.
		\notag
	\end{align}

	The same argument as Proposition \ref{6prop3}, we can deduce that
	\begin{align}\label{est-Lzalpha-2}
		-\langle L^{\alpha}_z (\phi),(\alpha D_{z})^2\phi\mathcal{W}^2_z\rangle
		\geq\|(\alpha D_{z})^2\phi\|^2_{\mathcal{L}^2_{\mathcal{W}_z}}-C_\beta\alpha^3\|D_{z}\phi\|^2_{\mathcal{L}^2_{\mathcal{W}_z}}.
	\end{align}
	We then deduce from Proposition \ref{6prop2} with $\xi=0$ and $\varepsilon=\frac14$ that
	\begin{align*}
		-\langle\partial_{\theta}((\tan\theta)\phi),(\alpha D_{z})^2\phi\mathcal{W}^2_z\rangle
		\geq&\frac{\alpha^2}{4}\|D_{z}\widetilde{\phi}\|^2_{\mathcal{L}^2_{\mathcal{W}_z}}-C_\beta\|\partial_\theta\phi\|^2_{\mathcal{L}^2_{\mathcal{W}_z}},
	\end{align*}
	which, together with Lemma \ref{6lem4}, implies that
	\begin{align}\label{est-Ltheta-2}
		-\langle L_{\theta}(\phi),(\alpha D_{z})^2\phi\mathcal{W}^2_z\rangle
		&=\langle \partial^2_{\theta}\phi, (\alpha D_{z})^2\phi\mathcal{W}^2_z\rangle-\langle\partial_{\theta}((\tan\theta)\phi),\partial^{2}_{\theta}\phi\mathcal{W}^2_z\rangle+6\langle \phi,(\alpha D_{z})^2\phi\mathcal{W}^2_z\rangle
		\\ 
		&\geq \frac{\alpha^2}{4}\|D_{z}\widetilde{\phi}\|^2_{\mathcal{L}^2_{\mathcal{W}_z}}-\frac{\alpha^4}{2}\|D^2_{z}\phi\|^2_{\mathcal{L}^2_{\mathcal{W}_z}}-C_\beta\|\partial^2_{\theta}\phi\|^2_{\mathcal{L}^2_{\mathcal{W}_z}}-C_\beta\|\partial_{\theta}\phi\|^2_{\mathcal{L}^2_{\mathcal{W}_z}}.
		\notag
	\end{align}
	Combining the equation \eqref{6model}, \eqref{est-Lzalpha-2} and \eqref{est-Ltheta-2} together, we find that
	\begin{align}
		\label{64eq2}
		\alpha^4\|D^2_{z}\phi\|^2_{\mathcal{L}^2_{\mathcal{W}_z}}+\alpha^2\|D_{z}\widetilde{\phi}\|^2_{\mathcal{L}^2_{\mathcal{W}_z}}\lesssim_\beta \|f\|^2_{\mathcal{L}^2_{\mathcal{W}_z}}+\|\partial^2_{\theta}\phi\|^2_{\mathcal{L}^2_{\mathcal{W}_z}}+\alpha^2\|D_{z}\phi\|^2_{\mathcal{L}^2_{\mathcal{W}_z}}+\|\partial_{\theta}\phi\|^2_{\mathcal{L}^2_{\mathcal{W}_z}}.
	\end{align}
	By virtue of \eqref{64eq1}, \eqref{64eq2} and Lemma \ref{6lem8}, we can deduce that
	\begin{align}\label{64eq3}
		\alpha^4\|D^2_{z}\phi\|^2_{\mathcal{L}^2_{\mathcal{W}_z}}+\alpha^2\|D_{z}\widetilde{\phi}\|^2_{\mathcal{L}^2_{\mathcal{W}_z}}+\alpha^2\|\partial_{\theta}D_{z}\phi\|^2_{\mathcal{L}^2_{\mathcal{W}_z}} +\|\partial^2_{\theta}\phi\|^2_{\mathcal{L}^2_{\mathcal{W}_z}}+\|\partial_{\theta}\widetilde{\phi}\|^2_{\mathcal{L}^2_{\mathcal{W}_z}}\lesssim_\beta \|f\|^2_{\mathcal{L}^2_{\mathcal{W}_z}},
	\end{align}
	which implies that \eqref{64eq0} holds true for $n=0$.
	\medskip
	
	\noindent
	\textbf{Step 2: For the case $n\geq1$.} We continue our proof by induction argument on $n\in\mathbb{N}^{+}$. First of all, assume that \eqref{64eq0} holds for $n-1$, namely
	\begin{align}\label{indu-assum-1}
		\alpha^4\|D^2_{z}\phi\|^2_{\mathcal{H}^{n-1}_{\mathcal{W}_z}} 
		+\alpha^2\|D_{z}\partial_{\theta}\phi\|^2_{\mathcal{H}^{n-1}_{\mathcal{W}_z}}
		+ \|\partial^2_{\theta}\phi\|^2_{\mathcal{H}^{n-1}_{\mathcal{W}_z}}  \lesssim_\beta \|f\|^2_{\mathcal{H}^{n-1}_{\mathcal{W}_z}}.
	\end{align}
	With the help of Lemmas \ref{6lem5}-\ref{6lem7} and the induction assumption \eqref{indu-assum-1}, we find that
	\begin{align*}
		&~~~~~~\alpha^4\|D^2_{z}\phi\|^2_{\mathcal{H}^{n}_{\mathcal{W}_z}} 
		+\alpha^2\|D_{z}\partial_{\theta}\phi\|^2_{\mathcal{H}^{n}_{\mathcal{W}_z}}
		+\|\partial^2_{\theta}\phi\|^2_{\mathcal{H}^{n}_{\mathcal{W}_z}} 
		\\
		&\lesssim_\beta \|f\|^2_{\mathcal{H}^{n-1}_{\mathcal{W}_z}}
		+\sum_{i+j=n}\left(\alpha^4\|D^{i+2}_{z}D_{\theta}^{j}\phi\|^2_{\mathcal{L}^{2}_{\mathcal{W}_z}} 
		+\alpha^2\|D_{z}^{i+1}D_{\theta}^{j}\partial_{\theta}\phi\|^2_{\mathcal{L}^{2}_{\mathcal{W}_z}}
		+ \|D_{z}^{i}D_{\theta}^{j}\partial^2_{\theta}\phi\|^2_{\mathcal{L}^{2}_{\mathcal{W}_z}}
		\right) 
		\\
		&\lesssim_\beta 
		\|f\|^2_{\mathcal{H}^{n-1}_{\mathcal{W}_z}}
		+\sum_{k=0}^{n}\left(
		\alpha^4
		\left(\|D^{k+2}_{z}\phi\|^2_{\mathcal{L}^{2}_{\mathcal{W}_z}} 
		+\|D^{k}_{\theta}D_{z}^2\phi\|^2_{\mathcal{L}^{2}_{\mathcal{W}_z}} 
		\right)
		+\alpha^2\left(\|D^{k+1}_{z}\partial_{\theta}\phi\|^2_{\mathcal{L}^{2}_{\mathcal{W}_z}} 
		+\|D^{k}_{\theta}D_{z}\partial_{\theta}\phi\|^2_{\mathcal{L}^{2}_{\mathcal{W}_z}} 
		\right)
		\right.
		\\
		&~~~~~\left.
		+ \left(\|D_{z}^{k}\partial^{2}_{\theta}\phi\|^2_{\mathcal{L}^{2}_{\mathcal{W}_z}}
		+\|D_{\theta}^{k}\partial^{2}_{\theta}\phi\|^2_{\mathcal{L}^{2}_{\mathcal{W}_z}}
		\right)
		\right) 
		\\
		&\lesssim_\beta 
		\|f\|^2_{\mathcal{H}^{n-1}_{\mathcal{W}_z}}
		+
		\alpha^4
		\left(\|D^{n+2}_{z}\phi\|^2_{\mathcal{L}^{2}_{\mathcal{W}_z}} 
		+\|D^{n}_{\theta}D_{z}^2\phi\|^2_{\mathcal{L}^{2}_{\mathcal{W}_z}} 
		\right)
		+\alpha^2\left(\|D^{n+1}_{z}\partial_{\theta}\phi\|^2_{\mathcal{L}^{2}_{\mathcal{W}_z}} 
		+\|D^{n}_{\theta}D_{z}\partial_{\theta}\phi\|^2_{\mathcal{L}^{2}_{\mathcal{W}_z}} 
		\right)
		\\
		&~~~~~
		+ \left(\|D_{z}^{n}\partial^{2}_{\theta}\phi\|^2_{\mathcal{L}^{2}_{\mathcal{W}_z}}
		+\|D_{\theta}^{n}\partial^{2}_{\theta}\phi\|^2_{\mathcal{L}^{2}_{\mathcal{W}_z}}
		\right)
		\\
		&\lesssim_{\beta}
		\|f\|^2_{\mathcal{H}^{n-1}_{\mathcal{W}_z}}
		+\sum_{i+j=2}\|(\alpha D_{z})^i\tilde{D}^n_{\theta}\partial^j_{\theta}\phi\|^2_{\mathcal{L}^{2}_{\mathcal{W}_z}}+\alpha^4\|D^{n+2}_z\phi\|^2_{\mathcal{L}^{2}_{\mathcal{W}_z}} + \|\partial^2_{\theta}D^n_z\phi\|^2_{\mathcal{L}^{2}_{\mathcal{W}_z}}.
	\end{align*}
	Hence, we only need to prove that
	\begin{align}\label{est-goal-1}
		\sum_{i+j=2}\|(\alpha D_{z})^i\tilde{D}^n_{\theta}\partial^j_{\theta}\phi\|^2_{\mathcal{L}^{2}_{\mathcal{W}_z}}+\alpha^4\|D^{n+2}_z\phi\|^2_{\mathcal{L}^{2}_{\mathcal{W}_z}} + \|\partial^2_{\theta}D^n_z\phi\|^2_{\mathcal{L}^{2}_{\mathcal{W}_z}}
		\lesssim_\beta \|f\|^2_{\mathcal{H}^{n}_{\mathcal{W}_z}}.
	\end{align}
	We divide into the following two steps to prove \eqref{est-goal-1}.
	\medskip
	
	\noindent
	\textbf{Step 2.1.} 
	Our targe in to prove that
	\begin{align}\label{64eq6-1}
		\|(\alpha D_{z})^2D^n_z\phi\|^2_{\mathcal{L}^{2}_{\mathcal{W}_z}} + \|\partial^2_{\theta}D^n_z\phi\|^2_{\mathcal{L}^{2}_{\mathcal{W}_z}}\lesssim_\beta\|D^n_z f\|_{\mathcal{L}^{2}_{\mathcal{W}_z}}\lesssim_\beta \|f\|^2_{\mathcal{H}^{n}_{\mathcal{W}_z}}.
	\end{align}
	Notice that the operators $D_z$ and $L^\alpha_z+L_\theta$ are commutative, \eqref{64eq6-1} can be deduced by a similar way as \eqref{64eq3}.
	\medskip
	
	\noindent
	\textbf{Step 2.2.} Our goal is to show that 
	\begin{align}\label{64eq6}
		\sum_{i+j=2}\|(\alpha D_{z})^i\tilde{D}^n_{\theta}\partial^j_{\theta}\phi\|^2_{\mathcal{L}^2_{\mathcal{W}_z}}\lesssim_\beta\|f\|^2_{\mathcal{H}^{n}_{\mathcal{W}_z}}.
	\end{align}
	We can deduce from Lemma \ref{6lem5}, Proposition \ref{6prop4} and the induction assumption \eqref{indu-assum-1} that for $n\geq 1$,
	\begin{align}\label{64in7}
		-\langle \tilde{D}^n_{\theta}L^{\alpha}_z (\phi),\tilde{D}^n_{\theta}\partial^2_{\theta}\phi\mathcal{W}^2_z\rangle
		\geq \alpha^2\|D_{z}\tilde{D}^n_{\theta}\partial_{\theta}\phi\|^2_{\mathcal{L}^2_{\mathcal{W}_z}} -\varepsilon\|\tilde{D}^{n}_{\theta}\partial^2_{\theta}\phi\|^2_{\mathcal{L}^2_{\mathcal{W}_z}}- C_{\beta,\varepsilon}\|f\|^2_{\mathcal{H}^{n-1}_{\mathcal{W}_z}}.
	\end{align}
	On the other hand, some direct computations give that
	\begin{align}\label{est-Ltheta-n-1}
		-\langle \tilde{D}^n_{\theta}L_{\theta} (\phi),\tilde{D}^n_{\theta}\partial^2_{\theta}\phi\mathcal{W}^2_z\rangle
		=\|\tilde{D}^n_{\theta}\partial_{\theta}^2\phi\|_{L^2_{\mathcal{W}_z}}
		-\langle\tilde{D}^n_{\theta}\partial_{\theta}(\tan(\theta)\phi),\tilde{D}^n_\theta\partial^{2}_{\theta}\phi\mathcal{W}^2_z\rangle
		+6\langle\tilde{D}^n_{\theta}\phi,\tilde{D}^n_\theta\partial^{2}_{\theta}\phi\mathcal{W}^2_z\rangle.
	\end{align}
	We infer from Proposition \ref{6prop1} with $\xi=0$ that
	\begin{align}\label{est-Dznphi-0}
		-\langle\tilde{D}^n_{\theta}\partial_{\theta}((\tan\theta)\phi),\tilde{D}^n_\theta\partial^{2}_{\theta}\phi\mathcal{W}^2_z\rangle
		&\geq\|\tilde{D}^n_\theta\partial_{\theta}\widetilde{\phi}\|^2_{\mathcal{L}^2_{\mathcal{W}_z}}
		-C\|\tilde{D}^{n-1}_{\theta}\partial_{\theta}\widetilde{\phi}\|^2_{\mathcal{L}^2_{\mathcal{W}}}
		-C\sum_{k=0}^{n}\|\tilde{D}^k_\theta\widetilde{\phi}\|^2_{\mathcal{L}^2_{\mathcal{W}_z}}.
	\end{align}
	For any $1\leq m\leq n$, it is easy to check that
	\begin{align}\label{est-Dm-theta-wphi-1}
		\|\tilde{D}^m_\theta\widetilde{\phi}\|^2_{\mathcal{L}^2_{\mathcal{W}_z}} &\lesssim \sum_{j=0}^{m}\|\sin^{m}(2\theta)(\cos\theta)^{j-m-1}
		\partial^j_{\theta}\phi\|^2_{\mathcal{L}^2_{\mathcal{W}_z}}\lesssim\sum_{j=0}^{m}\|\sin^{j-1}(2\theta)\partial^j_{\theta}\phi\|^2_{\mathcal{L}^2_{\mathcal{W}_z}}.
	\end{align}
	We then employ the induction assumption \eqref{indu-assum-1}, Lemma \ref{6lem4} and Lemma \ref{6lem5}, to find that
	\begin{align*}
		&\|\frac{\partial_{\theta}^j\phi}{\sin^{2-j}(2\theta)}\|^2_{\mathcal{L}^2_{\mathcal{W}_z}}
		\lesssim \|\partial_{\theta}^2\phi\|^2_{\mathcal{L}^2_{\mathcal{W}_z}}
		\lesssim_\beta \|f\|^2_{\mathcal{L}^{2}_{\mathcal{W}_z}},\quad \text{for}~~j=0,1,\\
		&\|\sin^{j-2}(2\theta)\partial^j_{\theta}\phi\|^2_{\mathcal{L}^2_{\mathcal{W}_z}}
		\lesssim
		\|\tilde{D}^{j-2}_{\theta}\partial^2_{\theta}\phi\|^2_{\mathcal{L}^2_{\mathcal{W}_z}}
		\lesssim\|D^{j-2}_{\theta}\partial^2_{\theta}\phi\|^2_{\mathcal{L}^2_{\mathcal{W}_z}} 
		\lesssim_\beta \|f\|^2_{\mathcal{H}^{j-2}_{\mathcal{W}_z}},\quad \text{for any}~~2\leq j\leq m.
	\end{align*}
	Substituting these three estimates above into \eqref{est-Dm-theta-wphi-1}, then employing the induction assumption \eqref{indu-assum-1} and Lemma \ref{6lem5}, one obtains that for any $1\leq m\leq n$,
	\begin{align}\label{est-Dm-1}
		\|\tilde{D}^m_\theta\widetilde{\phi}\|^2_{\mathcal{L}^2_{\mathcal{W}_z}}
		\lesssim_\beta \|f\|^2_{\mathcal{H}^{m-1}_{\mathcal{W}_z}}.
	\end{align}
	For any $1\leq m\leq n$, by some direct calculations, we have
	\begin{align}\label{est-Dm-theta-wphi-2}
		\|\tilde{D}_{\theta}^{m-1}\partial_{\theta}\widetilde{\phi}\|^2_{\mathcal{L}^2_{\mathcal{W}_z}}
		\lesssim& \sum_{j=0}^{m}\|\sin^{m-1}(2\theta)(\cos\theta)^{j-m-1}
		\partial^j_{\theta}\phi\|^2_{\mathcal{L}^2_{\mathcal{W}_z}}\lesssim \sum_{j=0}^{m}\|\sin^{j-2}(2\theta)\partial^j_{\theta}\phi\|^2_{\mathcal{L}^2_{\mathcal{W}_z}}.
	\end{align}
	Using \eqref{est-Dm-theta-wphi-2}, we discover that for any $1\leq m\leq n$,
	\begin{align}\label{est-Dm-2}
		\|\tilde{D}_{\theta}^{m-1}\partial_{\theta}\widetilde{\phi}\|^2_{\mathcal{L}^2_{\mathcal{W}_z}}
		\lesssim_\beta \|f\|^2_{\mathcal{H}^{m-1}_{\mathcal{W}_z}}.
	\end{align}
	Substituting \eqref{est-Dm-1} and \eqref{est-Dm-2} into \eqref{est-Dznphi-0}, we find that
	\begin{align}\label{est-Dznphi-1}
		-\langle\tilde{D}^n_{\theta}\partial_{\theta}((\tan\theta)\phi),\tilde{D}^n_\theta\partial^{2}_{\theta}\phi\mathcal{W}^2_z\rangle
		&\geq\|\tilde{D}^n_\theta\partial_{\theta}\widetilde{\phi}\|^2_{\mathcal{L}^2_{\mathcal{W}_z}}-C_\beta\|f\|^2_{\mathcal{H}^{n-1}_{\mathcal{W}_z}}.
	\end{align}
	With the help of integration by parts, we can deduce that
	\begin{align}\label{est-Dznphi-2}
		\langle\tilde{D}^n_{\theta}\phi,\tilde{D}^n_\theta\partial^{2}_{\theta}\phi\mathcal{W}^2_z\rangle
		=&-\langle\sin^{2n}(2\theta)\left(\partial^{n+1}_{\theta}\phi\right)^2,\mathcal{W}^2_z\rangle
		+\frac12 \langle\partial_{\theta}^2\left(\sin^{2n}(2\theta)\right)\left(\partial^{n}_{\theta}\phi\right)^2,\mathcal{W}^2_z\rangle
		\\
		\lesssim& \|\tilde{D}^n_{\theta}\partial_{\theta}\phi\|^2_{\mathcal{L}^2_{\mathcal{W}_z}}+\|\widetilde{D}^{n-1}_{\theta}\partial_{\theta}\phi\|^2_{\mathcal{L}^2_{\mathcal{W}_z}}.
		\notag
	\end{align}
	Again applying Lemma \ref{6lem4}, Lemma \ref{6lem5} and the induction assumption \eqref{indu-assum-1}, and following a derivation analogous to \eqref{est-Dm-1} and \eqref{est-Dm-2}, we obtain that
	\begin{align}\label{est-Dznphi-3}
		\langle\tilde{D}^n_{\theta}\phi,\tilde{D}^n_\theta\partial^{2}_{\theta}\phi\mathcal{W}^2_z\rangle
		\lesssim_\beta  \|f\|^2_{\mathcal{H}^{n-1}_{\mathcal{W}_z}}.
	\end{align}
	The combination of \eqref{est-Ltheta-n-1}, \eqref{est-Dznphi-1} and \eqref{est-Dznphi-3}, yields directly that
	\begin{align}\label{64in6}
		-\langle\tilde{D}^n_{\theta}L_{\theta}(\phi),\tilde{D}^n_\theta\partial^{2}_{\theta}\phi\mathcal{W}^2_z\rangle
		\geq\|\tilde{D}^n_\theta\partial^2_{\theta}\phi\|^2_{\mathcal{L}^2_{\mathcal{W}_z}}+\|\tilde{D}^n_\theta\partial_{\theta}\widetilde{\phi}\|^2_{\mathcal{L}^2_{\mathcal{W}_z}}-C_\beta\|f\|^2_{\mathcal{H}^{n-1}_{\mathcal{W}_z}}.
	\end{align}
	Recalling the equation \eqref{6model}, applying \eqref{64in7} and \eqref{64in6}, and taking $\varepsilon$ suitably small, we infer that
	\begin{align}\label{64eq4}
		\|\tilde{D}^n_\theta\partial^2_{\theta}\phi\|^2_{\mathcal{L}^2_{\mathcal{W}_z}}
		+\|\tilde{D}^n_\theta\partial_{\theta}\widetilde{\phi}\|^2_{\mathcal{L}^2_{\mathcal{W}_z}}
		+\alpha^2\|D_{z}\tilde{D}^n_{\theta}\partial_{\theta}\phi\|^2_{\mathcal{L}^2_{\mathcal{W}_z}}\lesssim_\beta\|f\|^2_{\mathcal{H}^{n}_{\mathcal{W}_z}}.
	\end{align}

	Along a similar way as Proposition \ref{6prop3}, it then follows from Lemma \ref{6lem5} and the induction assumption \eqref{indu-assum-1} that for $n\geq 1$,
	\begin{align}\label{64in8}
		-\langle \tilde{D}^n_\theta L^{\alpha}_z (\phi),(\alpha D_z)^2\tilde{D}^n_\theta \phi\mathcal{W}^2_z\rangle 
		&\geq\|(\alpha D_z)^2\tilde{D}^n_\theta \phi\|^2_{\mathcal{L}^2_{\mathcal{W}_z}}-C_\beta\alpha^3\|D_{z}\tilde{D}^n_\theta \phi\|^2_{\mathcal{L}^2_{\mathcal{W}_z}} \\ \notag
		&\geq\|(\alpha D_z)^2\tilde{D}^n_\theta \phi\|^2_{\mathcal{L}^2_{\mathcal{W}_z}}-C_\beta\alpha^3\|D_{z}\tilde{D}^{n-1}_\theta \partial_{\theta}\phi\|^2_{\mathcal{L}^2_{\mathcal{W}_z}}\\ \notag
		&\geq\|(\alpha D_z)^2\tilde{D}^n_\theta \phi\|^2_{\mathcal{L}^2_{\mathcal{W}_z}}-C_\beta\|f\|^2_{\mathcal{H}^{n-1}_{\mathcal{W}_z}}.
	\end{align}
	While for $L_{\theta}$, we have
	\begin{align}\label{est-6-15-theta-1}
		-\langle\tilde{D}^n_\theta L_{\theta}(\phi),(\alpha D_{z})^2\tilde{D}^n_\theta \phi\mathcal{W}^2_z\rangle
		&=\langle \tilde{D}^n_{\theta}\partial_{\theta}^2\phi,(\alpha D_{z})^2\tilde{D}^n_\theta \phi\mathcal{W}^2_z\rangle
		-\langle\tilde{D}^n_{\theta}\partial_{\theta}((\tan\theta)\phi),(\alpha D_{z})^2\tilde{D}^n_\theta \phi\mathcal{W}^2_z\rangle
		\\
		&~~~+6\langle\tilde{D}^n_{\theta}\phi,(\alpha D_{z})^2\tilde{D}^n_\theta \phi\mathcal{W}^2_z\rangle
		\notag
		\\
		&\tri\mathcal{N}_1+\mathcal{N}_2+\mathcal{N}_3.
		\notag
	\end{align}
	We now control three terms in the right-hand side of \eqref{est-6-15-theta-1} term by term.
	The application of \eqref{64eq4} yields directly that
	\begin{align}\label{est-6-15-theta-2}
		\mathcal{N}_1
		\geq -\frac{\varepsilon}{2}\|(\alpha D_{z})^2\tilde{D}^n_\theta \phi\|^2_{\mathcal{L}^2_{\mathcal{W}_z}} -C_{\varepsilon} \|\tilde{D}^n_{\theta}\partial_{\theta}^2\phi\|^2_{\mathcal{L}^2_{\mathcal{W}_z}}
		\geq-\frac{\varepsilon}{2}\|(\alpha D_{z})^2\tilde{D}^n_\theta \phi\|^2_{\mathcal{L}^2_{\mathcal{W}_z}} -C_{\varepsilon} \|f\|^2_{\mathcal{H}^n_{\mathcal{W}_z}}.
	\end{align}
	For the second term, employing Proposition \ref{6prop2} with $\xi=0$, we get that for $n\geq 1$,
	\begin{align}\label{est-6-15-0}
		\mathcal{N}_2
		\geq -(2\varepsilon+C_{\beta}\alpha) \sum^1_{i+j=0}\|(\alpha D_{z})^i\tilde{D}^n_\theta\partial^{j}_{\theta}\widetilde{\phi}\|^2_{\mathcal{L}^2_{\mathcal{W}_z}} 
		-C_{\beta,\varepsilon}\alpha^2 \sum_{j=0}^{1}\sum_{k=0}^{n-1}\|D_{z}^j\tilde{D}^k_\theta \widetilde{\phi}\|^2_{\mathcal{L}^2_{\mathcal{W}_z}}.
	\end{align}
	We then estimate the terms in the right-hand side of \eqref{est-6-15-0} in turn.
	Firstly, it follows from Lemma \ref{6lem4} that
	\begin{align}\label{est-Dn-patheta-1}
		\sum_{i+j=1}\|(\alpha D_{z})^i\tilde{D}^n_\theta\partial^{j}_{\theta}\widetilde{\phi}\|^2_{\mathcal{L}^2_{\mathcal{W}_z}} 
		&\lesssim \sum_{i+j=1}\sum_{k=0}^{n+j}\|(\alpha D_{z})^i\sin^n(2\theta)(\cos\theta)^{k-n-j-1}\partial^k_{\theta}\phi\|^2_{\mathcal{L}^2_{\mathcal{W}_z}}\\ \notag
		&\lesssim \sum_{i+j=1}\sum_{k=0}^{n} \|(\alpha D_{z})^i(\cos\theta)^{-j-1}\tilde{D}^k_{\theta}\phi\|^2_{\mathcal{L}^2_{\mathcal{W}_z}}+\|(\cos\theta)^{-1}\tilde{D}^{n}_{\theta}\partial_{\theta}\phi\|^2_{\mathcal{L}^2_{\mathcal{W}_z}}
		\\ \notag
		&\lesssim \sum_{i+j=1}\sum_{k=0}^{n} \|(\alpha D_{z})^i\partial_{\theta}^{j+1}\tilde{D}^k_{\theta}\phi\|^2_{\mathcal{L}^2_{\mathcal{W}_z}}+\|\partial_{\theta}\tilde{D}^{n}_{\theta}\partial_{\theta}\phi\|^2_{\mathcal{L}^2_{\mathcal{W}_z}}
		\\\notag
		&\lesssim \sum_{i+j=1}\sum_{k=0}^{n}\sum_{l=0}^{j+1} \|(\alpha D_{z})^i(\sin(2\theta))^{k+l-j-1}\partial_{\theta}^{k+l}\phi\|^2_{\mathcal{L}^2_{\mathcal{W}_z}}.
	\end{align}
	For any $0\leq l\leq j+1-k$, it follows from Lemma \ref{6lem4} that
	\begin{align*}
		\|(\alpha D_{z})^i(\sin(2\theta))^{k+l-j-1}\partial_{\theta}^{k+l}\phi\|^2_{\mathcal{L}^2_{\mathcal{W}_z}}
		\lesssim \|(\alpha D_{z})^i\partial_{\theta}^{j+1}\phi\|^2_{\mathcal{L}^2_{\mathcal{W}_z}}.
	\end{align*}
	For any $j+2-k\leq l\leq j+1$, one can deduce directly that
	\begin{align*}
		\|(\alpha D_{z})^i(\sin(2\theta))^{k+l-j-1}\partial_{\theta}^{k+l}\phi\|^2_{\mathcal{L}^2_{\mathcal{W}_z}}
		\lesssim\|(\alpha D_{z})^i\tilde{D}_{\theta}^{k+l-j-1}\partial_{\theta}^{j+1}\phi\|^2_{\mathcal{L}^2_{\mathcal{W}_z}}
		\lesssim \sum_{m=0}^{k}\|(\alpha D_{z})^i\tilde{D}_{\theta}^{m}\partial_{\theta}^{j+1}\phi\|^2_{\mathcal{L}^2_{\mathcal{W}_z}}.
	\end{align*}
	We plug these two estimates into \eqref{est-Dn-patheta-1}, then employ Lemma \ref{6lem5}, the induction assumption \eqref{indu-assum-1} and \eqref{64eq4}, to discover that
	\begin{align}\label{est-6-15-1}
		\sum_{i+j=1}\|(\alpha D_{z})^i\tilde{D}^n_\theta\partial^{j}_{\theta}\widetilde{\phi}\|^2_{\mathcal{L}^2_{\mathcal{W}_z}} 
		&\lesssim \sum_{i+j=1}\sum_{k=0}^{n} \|(\alpha D_{z})^i\tilde{D}^{k}_{\theta}\partial_{\theta}^{j+1}\phi\|^2_{\mathcal{L}^2_{\mathcal{W}_z}}
		\lesssim_\beta  \|f\|^2_{\mathcal{H}^{n}_{\mathcal{W}_z}}.
	\end{align}
	After some direct calculations, one gets, by Lemma \ref{6lem4}, that
	\begin{align}\label{est-DzjDtheta-1}
		\sum_{j=0}^{1}\sum_{k=0}^{n-1}\|(\alpha D_{z})^j\tilde{D}^k_\theta \widetilde{\phi}\|^2_{\mathcal{L}^2_{\mathcal{W}_z}}&\lesssim \sum_{j=0}^{1}\sum_{k=0}^{n-1}\sum_{i=0}^{k}\|(\alpha D_{z})^j\sin^{k}(2\theta)(\cos\theta)^{i-k-1}\partial^{i}_{\theta}\phi\|^2_{\mathcal{L}^2_{\mathcal{W}_z}}\\ \notag
		& \lesssim\sum_{j=0}^{1}\sum_{i=0}^{n-1}\|(\cos\theta)^{-1}(\alpha D_{z})^j\widetilde{D}^{i}_{\theta}\phi\|^2_{\mathcal{L}^2_{\mathcal{W}_z}}
		\\ \notag
		& \lesssim\sum_{j=0}^{1}\sum_{i=0}^{n-1}\|\partial_{\theta}(\alpha D_{z})^j\tilde{D}^{i}_{\theta}\phi\|^2_{\mathcal{L}^2_{\mathcal{W}_z}}.
	\end{align}
	For $j=0$, taking advantage of Lemma \ref{6lem4}, Lemma \ref{6lem5} and the induction assumption \eqref{indu-assum-1}, we can deduce from a similar way as \eqref{est-Dm-1} and \eqref{est-Dm-2} that
	\begin{align}\label{est-DzjDtheta-2}
		\sum_{i=0}^{n-1}\|\partial_{\theta}\tilde{D}^{i}_{\theta}\phi\|^2_{\mathcal{L}^2_{\mathcal{W}_z}}
		\lesssim_\beta \|f\|^2_{\mathcal{H}^{n-1}_{\mathcal{W}_z}}.
	\end{align}
	For $j=1$, again thanks to Lemma \ref{6lem4}, Lemma \ref{6lem5} and the induction assumption \eqref{indu-assum-1}, it is enough to prove that
	\begin{align}\label{est-DzjDtheta-3}
		\sum_{i=0}^{n-1}\|\alpha D_{z}\partial_{\theta}\tilde{D}^{i}_{\theta}\phi\|^2_{\mathcal{L}^2_{\mathcal{W}_z}}
		&\lesssim
		\|\alpha D_{z}\partial_{\theta}\phi\|^2_{\mathcal{L}^2_{\mathcal{W}_z}}
		+\sum_{i=1}^{n-1}\left(\|\alpha D_{z}\tilde{D}^{i}_{\theta}\partial_{\theta}\phi\|^2_{\mathcal{L}^2_{\mathcal{W}_z}}
		+\|\alpha D_{z}\tilde{D}^{i-1}_{\theta}\partial_{\theta}\phi\|^2_{\mathcal{L}^2_{\mathcal{W}_z}}
		\right)
		\\
		&\lesssim \sum_{i=0}^{n-1}\|\alpha D_{z}\tilde{D}^{i}_{\theta}\partial_{\theta}\phi\|^2_{\mathcal{L}^2_{\mathcal{W}_z}}
		\lesssim_\beta \|f\|^2_{\mathcal{H}^{n-1}_{\mathcal{W}_z}}.
		\notag
	\end{align}
	Substituting \eqref{est-DzjDtheta-2} and \eqref{est-DzjDtheta-3} into \eqref{est-DzjDtheta-1}, we have
	\begin{align}\label{est-6-15-2}
		\sum_{j=0}^{1}\sum_{k=0}^{n-1}\|(\alpha D_{z})^j\tilde{D}^k_\theta \widetilde{\phi}\|^2_{\mathcal{L}^2_{\mathcal{W}_z}}
		&
		\lesssim_\beta \|f\|^2_{\mathcal{H}^{n-1}_{\mathcal{W}_z}}.
	\end{align}
	Inserting \eqref{est-Dm-1}, \eqref{est-6-15-1} and \eqref{est-6-15-2} into \eqref{est-6-15-0}, one obtains that
	\begin{align}\label{est-6-15-theta-3}
		\mathcal{N}_2
		\gtrsim_\beta -\|f\|^2_{\mathcal{H}^{n}_{\mathcal{W}_z}}.
	\end{align}
	As for the last term, we can deduce from the induction assumption \eqref{indu-assum-1} and Lemma \ref{6lem4} that
	\begin{align}\label{est-6-15-theta-4}
		\mathcal{N}_3
		&\geq-\|(\alpha D_{z})^2\tilde{D}^n_\theta \phi\|_{\mathcal{L}^{2}_{\mathcal{W}_z}}
		\|\tilde{D}^n_{\theta}\phi\|_{\mathcal{L}^{2}_{\mathcal{W}_z}}
		\geq -\|(\alpha D_{z})^2\tilde{D}^n_\theta \phi\|_{\mathcal{L}^{2}_{\mathcal{W}_z}}
		\left(\sum_{k=0}^{n-1}\|\tilde{D}^k_{\theta}\partial_{\theta}^2\phi\|_{\mathcal{L}^{2}_{\mathcal{W}_z}}\right)
		\\
		\notag
		&\geq -\frac{\varepsilon}{2}\|(\alpha D_{z})^2\tilde{D}^n_\theta \phi\|^2_{\mathcal{L}^{2}_{\mathcal{W}_z}}
		-C_{\varepsilon}\|f\|^2_{\mathcal{H}^{n-1}_{\mathcal{W}_z}}.
	\end{align}
	We substitute \eqref{est-6-15-theta-2}, \eqref{est-6-15-theta-3} and \eqref{est-6-15-theta-4} into \eqref{est-6-15-theta-1}, to discover that
	\begin{align}\label{64in9}
		-\langle\tilde{D}^n_\theta L_{\theta}(\phi),(\alpha D_{z})^2\tilde{D}^n_\theta \phi\mathcal{W}^2_z\rangle
		&\geq -\varepsilon \|(\alpha D_{z})^2\tilde{D}^{n}_{\theta}\phi\|^2_{\mathcal{L}^2_{\mathcal{W}_z}}-C_{\beta,\varepsilon}\|f\|^2_{\mathcal{H}^{n}_{\mathcal{W}_z}}.
	\end{align}
	Combining the equation \eqref{6model}, \eqref{64in8} and \eqref{64in9}, and taking $\varepsilon$ suitably small, we infer that
	\begin{align}\label{64eq5}
		\|(\alpha D_{z})^2\tilde{D}^n_\theta \phi\|^2_{\mathcal{L}^2_{\mathcal{W}_z}}\lesssim_\beta\|f\|^2_{\mathcal{H}^{n}_{\mathcal{W}_z}}.
	\end{align}
	In view of \eqref{64eq4} and \eqref{64eq5}, we conclude that \eqref{64eq6} holds true.

	The combination of \eqref{64eq6-1} in \textbf{Step 2.1} and \eqref{64eq6} in \textbf{Step 2.2} yields \eqref{est-goal-1} immediately.
	We thus complete the proof of Proposition \ref{6prop5} by standard induction argument.
\end{proof}
\subsection{Elliptic estimates with mixed weight}
In the previous section, we obtained the elliptic estimates with purely radial weights. 
Building upon this foundation, we now focus on the mixed weight and attempt to establish the elliptic estimate with mixed weight.
Firstly, for any parameters $\alpha$, $\beta$, $\eta$ and $\lambda$, which are defined in \eqref{def-lambda}, we denote that 
\begin{align*}
	\|f\|^2_{\tilde{\mathcal{H}}^n}\tri
	\sum_{i=0}^{n}\|D^i_z f\mathcal{W}^{\eta}\|^2_{L^2}+\sum_{0\leq i+j\leq n,j\geq 1}\|D^i_z\tilde{D}^j_{\theta} f\mathcal{W}^{\lambda}\|^2_{L^2},
\end{align*}
where $\mathcal{W}^{\eta}\tri\mathcal{W}_z\sin^{-\frac{\eta}{2}}(2\theta)$ and $\mathcal{W}^{\lambda}\tri\mathcal{W}_z\sin^{-\frac{\lambda}{2}}(2\theta)$.
Then, we derive the following elliptic estimates.

\begin{prop}\label{6prop6}
	Assume that $\langle f,K\rangle_{\theta}=0$ and  that $j=1,2$, $\left|\frac{D^j_z\mathcal{W}_z}{\mathcal{W}_z}\right|\leq C_\beta $, where $C_\beta>0$ is a constant.
	Then for any $n\in\mathbb{N}$, there exists a sufficiently small constant $\alpha>0$ such that
	\begin{align}\label{65eq0}
		\|(\alpha D_{z})^2\phi\|^2_{\tilde{\mathcal{H}}^n} 
		+\|(\alpha D_{z})\partial_{\theta}\phi\|^2_{\tilde{\mathcal{H}}^n} 
		+\|\partial^2_{\theta}\phi\|^2_{\tilde{\mathcal{H}}^n} \leq C_\beta  \|f\|^2_{\tilde{\mathcal{H}}^n}.
	\end{align}
\end{prop}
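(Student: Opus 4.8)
The plan is to run, for the mixed weights, the same induction on $n$ that proves Proposition \ref{6prop5}: the purely radial weight $\mathcal{W}_z$ is replaced by $\mathcal{W}^{\eta}=\mathcal{W}_z\sin^{-\frac{\eta}{2}}(2\theta)$ on the purely radial derivatives appearing in the $\tilde{\mathcal{H}}^n$-norm and by $\mathcal{W}^{\lambda}=\mathcal{W}_z\sin^{-\frac{\lambda}{2}}(2\theta)$ on the mixed ones. First I would record that both weights meet the structural hypotheses of Propositions \ref{6prop1}--\ref{6prop4}: $\frac{D^j_z\mathcal{W}^{\eta}}{\mathcal{W}^{\eta}}=\frac{D^j_z\mathcal{W}_z}{\mathcal{W}_z}$ is bounded by assumption, $\frac{D_{\theta}\mathcal{W}^{\lambda}}{\mathcal{W}^{\lambda}}=-\lambda\cos(2\theta)$, and the remaining first- and second-order mixed logarithmic derivatives are bounded combinations of these, so $\sum_{l}\big|\frac{D^l_zD^{k-l}_{\theta}\mathcal{W}}{\mathcal{W}}\big|\leq C_{\beta}$ for $k=1,2$; the same holds with $\lambda$ replaced by $\eta$. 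Since $\eta\in(0,1)$ and $\lambda=1+\frac{\alpha}{10\beta}\in(1,2]$ for $\alpha$ small, the angular Hardy inequalities of Lemmas \ref{6lem4} and \ref{6lem3} are available with $\xi\in\{\eta,\lambda\}\subset[0,2]$, which is exactly why those lemmas were established for all $\xi\in[0,\infty)$.

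For the base case $n=0$ one observes that the second sum in the definition of $\tilde{\mathcal{H}}^0$ is empty, so $\|g\|_{\tilde{\mathcal{H}}^0}=\|g\mathcal{W}^{\eta}\|_{L^2}$, and \eqref{65eq0} at $n=0$ is literally the base case of Proposition \ref{6prop5} with $\mathcal{W}_z$ replaced by $\mathcal{W}^{\eta}$ and $\xi=\eta\in(0,1)$. I would test \eqref{6model} against $\partial^2_{\theta}\phi(\mathcal{W}^{\eta})^2$ and against $(\alpha D_z)^2\phi(\mathcal{W}^{\eta})^2$ and apply Propositions \ref{6prop3}, \ref{6prop4} (through \eqref{est-prop6.13-1}), \ref{6prop1} and \ref{6prop2} (through \eqref{est-prop6.11-1}); because $\eta<1$, the leading coefficients $\frac{1-\eta}{1+\eta}$, $\frac{1-\eta}{2}$ and $\frac{4}{3}-\frac{\eta}{2}$ are all strictly positive, and the lower-order radial remainders are absorbed via Lemmas \ref{6lem4} and \ref{6lem8} and Sobolev embedding in $\theta$, exactly as in \eqref{64eq1}--\eqref{64eq3}.

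For the inductive step, assuming \eqref{65eq0} at level $n-1$, the new top-order contributions split into a purely radial group $\|D^n_z(\cdot)\mathcal{W}^{\eta}\|_{L^2}$, obtained by differentiating \eqref{6model} $n$ times in $D_z$ (which commutes with $L^{\alpha}_z+L_{\theta}$) and repeating the $n=0$ argument with weight $\mathcal{W}^{\eta}$, the lower-order errors being controlled by the induction hypothesis and Lemmas \ref{6lem5}--\ref{6lem7} as in the reduction to \eqref{est-goal-1}; and a group carrying at least one angular derivative, $\|D^i_z\tilde{D}^j_{\theta}(\cdot)\mathcal{W}^{\lambda}\|_{L^2}$ with $j\geq1$, $i+j=n$, for which one applies $D^i_z$ and then $\tilde{D}^j_{\theta}$ to \eqref{6model}, tests against $\tilde{D}^j_{\theta}D^i_z\partial^2_{\theta}\phi(\mathcal{W}^{\lambda})^2$ and $(\alpha D_z)^2\tilde{D}^j_{\theta}D^i_z\phi(\mathcal{W}^{\lambda})^2$, and invokes Propositions \ref{6prop4} (through \eqref{est-prop6.13-2}, whose leading coefficient is $1$, independent of $\xi$), \ref{6prop1} (leading coefficient $\frac{4}{3}-\frac{\lambda}{2}\geq\frac{1}{3}$), and \ref{6prop2} (through \eqref{est-prop6.11-2}) with $\xi=\lambda$. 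The commutator and $\widetilde{\phi}$-terms they generate are of order $\leq n-1$ in $(D_z,D_{\theta})$ or involve $\leq n$ angular derivatives of $\widetilde{\phi}=\frac{\phi}{\cos\theta}$, the latter being reduced to $\partial^2_{\theta}\phi$-quantities via Proposition \ref{6prop0}, Lemma \ref{6lem4} (with $\xi=\lambda\leq2$) and Lemma \ref{6lem5} as in \eqref{est-Dm-1}--\eqref{est-Dznphi-3}, then absorbed by the induction hypothesis.

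The one place the argument genuinely departs from that of Proposition \ref{6prop5} (which only uses $\xi=0$), and the step I expect to be the main obstacle, is that in \eqref{est-prop6.11-2} with $\xi=\lambda>1$ the leading coefficient $\frac{1-\lambda}{2}$ is negative, so the $L_{\theta}$-contribution in the $(\alpha D_z)^2$-test supplies the unfavorable term $\frac{1-\lambda}{2}\alpha^2\|\tilde{D}^j_{\theta}D_z\widetilde{\phi}\,\mathcal{W}^{\lambda}\|^2_{L^2}=-\frac{\alpha}{20\beta}\|\alpha D_z\tilde{D}^j_{\theta}\widetilde{\phi}\,\mathcal{W}^{\lambda}\|^2_{L^2}$ rather than a coercive one. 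The decisive observation is that $\lambda-1=\frac{\alpha}{10\beta}$ is itself a small parameter, so this term has the same size and the same form as the $C_{\beta}\alpha$-error terms already present in \eqref{est-prop6.11-2}; once $\|\alpha D_z\tilde{D}^j_{\theta}\widetilde{\phi}\,\mathcal{W}^{\lambda}\|_{L^2}$ has been bounded by $\|f\|_{\tilde{\mathcal{H}}^n}$ (using the $\widetilde{\phi}$-reductions just mentioned, the $\tilde{D}^j_{\theta}\partial^2_{\theta}\phi$-estimate obtained in the first half of the same step, and the induction hypothesis, exactly as for \eqref{est-6-15-1}), it is absorbed into $\lesssim_{\beta}\|f\|^2_{\tilde{\mathcal{H}}^n}$. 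Summing over $1\leq j\leq n$, choosing $\varepsilon$ and then $\alpha$ sufficiently small relative to $\beta$, and reorganizing the $\mathcal{W}^{\eta}$- and $\mathcal{W}^{\lambda}$-estimates via Lemmas \ref{6lem5}--\ref{6lem7}, one obtains \eqref{65eq0} at level $n$ and closes the induction.
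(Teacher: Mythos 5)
Your proposal is correct and follows the same route as the paper: induct on $n$, test \eqref{6model} against $\partial^2_\theta\phi$ and $(\alpha D_z)^2\phi$ with the mixed weights $\mathcal{W}^\eta$ and $\mathcal{W}^\lambda$, invoke Propositions~\ref{6prop1}--\ref{6prop4} together with the Hardy inequalities for $\xi\in\{\eta,\lambda\}$, and reduce lower-order $\widetilde{\phi}$- and commutator-terms to the already-established Proposition~\ref{6prop5} bounds and the induction hypothesis. The one remark worth making is that the paper's handling of the negative coefficient $\frac{1-\lambda}{2}$ in \eqref{est-prop6.11-2} does not actually use that $\lambda-1$ is small: in the $(\alpha D_z)^2$-test the $L_\theta$ contribution is treated entirely as an error term bounded in absolute value (see \eqref{65in9-2}--\eqref{65in9}), and its absorption relies only on first establishing $\sum_{i+j=1}\|(\alpha D_z)^i\tilde{D}^n_\theta\partial^j_\theta\widetilde{\phi}\|_{\mathcal{L}^2_{\mathcal{W}^\lambda}}\lesssim_\beta\|f\|_{\tilde{\mathcal{H}}^n}$ from the $\partial^2_\theta$-test and the induction hypothesis, so any $O(1)$ coefficient would do; your emphasis on $\lambda-1=\frac{\alpha}{10\beta}\ll 1$ is a sufficient but not a necessary ingredient for this step.
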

\begin{proof}
	We will prove \eqref{65eq0} by induction on $n$. 
	\medskip
	
	\noindent
	\textbf{Step 1: For the base case $n=0$.}
	Since $0<\eta<1$, we deduce from Lemma \ref{6lem4} and Proposition \ref{6prop4} and Proposition \ref{6prop5} that
	\begin{align}\label{65eq3}
		-\langle L^{\alpha}_z \phi,\partial^2_{\theta}\phi(\mathcal{W}^\eta)^2\rangle
		&\geq \frac{1-\eta}{1+\eta}\alpha^2\|D_{z}\partial_{\theta}\phi\|^2_{\mathcal{L}^2_{\mathcal{W}^\eta}} -\varepsilon\|\partial^2_{\theta}\phi\|^2_{\mathcal{L}^2_{\mathcal{W}^\eta}}-C_{\beta,\varepsilon} \alpha^2\|D_{z}\partial_{\theta}\phi\|^2_{\mathcal{L}^2_{\mathcal{W}_z}}\\ \notag
		&\geq \frac{1-\eta}{1+\eta}\alpha^2\|D_{z}\partial_{\theta}\phi\|^2_{\mathcal{L}^2_{\mathcal{W}^\eta}} -\varepsilon\|\partial^2_{\theta}\phi\|^2_{\mathcal{L}^2_{\mathcal{W}^\eta}}-C_{\beta,\varepsilon} \|f\|^2_{\mathcal{L}^2_{\mathcal{W}_z}}.
	\end{align}
	While for $L_{\theta}$, it is easy to check that
	\begin{align}\label{65in1-1}
		-\langle L_{\theta}(\phi),\partial^{2}_{\theta}\phi(\mathcal{W}^\eta)^2\rangle
		=&\langle \partial_{\theta}^2\phi,\partial^{2}_{\theta}\phi(\mathcal{W}^\eta)^2\rangle
		-\langle\partial_{\theta}((\tan\theta)\phi),\partial^{2}_{\theta}\phi(\mathcal{W}^\eta)^2\rangle
		+6\langle\phi,\partial^{2}_{\theta}\phi(\mathcal{W}^\eta)^2\rangle.
	\end{align}
	Again utilizing the condition $0<\eta<1$, we apply Proposition \ref{6prop1} and Proposition \ref{6prop5} with $n=0$ and $\xi=\eta$, combined with Lemma \ref{6lem4}, to obtain that
	\begin{align}\label{65eq1}
		-\langle\partial_{\theta}(\tan(\theta)\phi),\partial^{2}_{\theta}\phi(\mathcal{W}^\eta)^2\rangle
		&\geq\frac{1}{2}\|\partial_{\theta}\widetilde{\phi}\|^2_{\mathcal{L}^2_{\mathcal{W}^\eta}}-C\|\widetilde{\phi}\|^2_{\mathcal{L}^2_{\mathcal{W}^\eta}} \geq \frac{1}{2}\|\partial_{\theta}\widetilde{\phi}\|^2_{\mathcal{L}^2_{\mathcal{W}^\eta}}-C\|\partial^2_{\theta}\phi\|^2_{\mathcal{L}^2_{\mathcal{W}_z}}\\ \notag
		&\geq \frac{1}{2}\|\partial_{\theta}\widetilde{\phi}\|^2_{\mathcal{L}^2_{\mathcal{W}^\eta}}-C_\beta \|f\|^2_{\mathcal{L}^2_{\mathcal{W}_z}}.
	\end{align}
	It then follows from $0<\eta<1$, Lemma \ref{6lem4} and Proposition \ref{6prop5} that
	\begin{align}\label{65eq1-1}
		6\langle\phi,\partial^{2}_{\theta}\phi(\mathcal{W}^\eta)^2\rangle
		&\geq -\varepsilon \|\partial^{2}_{\theta}\phi\|^2_{\mathcal{L}^2_{\mathcal{W}^\eta}}-C_{\varepsilon}\|\phi\|^2_{\mathcal{L}^2_{\mathcal{W}^\eta}}
		\geq -\varepsilon \|\partial^{2}_{\theta}\phi\|^2_{\mathcal{L}^2_{\mathcal{W}^\eta}}-C_{\varepsilon}\|\partial_{\theta}^2\phi\|^2_{\mathcal{L}^2_{\mathcal{W}_z}}
		\\
		&\leq -\varepsilon \|\partial^{2}_{\theta}\phi\|^2_{\mathcal{L}^2_{\mathcal{W}^\eta}}-C_{\beta,\varepsilon}\|f\|^2_{\mathcal{L}^2_{\mathcal{W}_z}}.
		\notag
	\end{align}
	Substituting \eqref{65eq1} and \eqref{65eq1-1} into \eqref{65in1-1}, and choosing $\varepsilon$ suitably small, one obtains that
	\begin{align}\label{65eq2}
		-\langle L_{\theta}(\phi),\partial^{2}_{\theta}\phi(\mathcal{W}^\eta)^2\rangle
		&\geq \frac{1}{2} \|\partial^2_{\theta}\phi\|^2_{\mathcal{L}^2_{\mathcal{W}^\eta}}+\frac{1}{2}\|\partial_{\theta}\widetilde{\phi}\|^2_{\mathcal{L}^2_{\mathcal{W}^\eta}}-C_\beta \|f\|^2_{\mathcal{L}^2_{\mathcal{W}_z}}.
	\end{align}
	Recalling \eqref{6model}, then employing \eqref{65eq3} and \eqref{65eq2}, finally choosing $\varepsilon$ suitably small, we infer that
	\begin{align}\label{65eq4}
		\|\partial^2_{\theta}\phi\|^2_{\mathcal{L}^2_{\mathcal{W}^\eta}}+\|\partial_{\theta}\widetilde{\phi}\|^2_{\mathcal{L}^2_{\mathcal{W}^\eta}}\lesssim_\beta  \|f\|^2_{\mathcal{L}^2_{\mathcal{W}^\eta}}.
	\end{align}
	
	Owing to $0<\eta<1$, by virtue of Lemma \ref{6lem4} and Proposition \ref{6prop3}, Proposition \ref{6prop5} with $n=0$, we get that
	\begin{align}\label{65in2}
		-\langle L^{\alpha}_z (\phi),(\alpha D_{z})^2\phi(\mathcal{W}^\eta)^2\rangle
		&\geq\|(\alpha D_{z})^2\phi\|^2_{\mathcal{L}^2_{\mathcal{W}^\eta}}-C_\beta \alpha^3 \|D_{z}\phi\|^2_{\mathcal{L}^2_{\mathcal{W}^\eta}}\\ \notag
		&\geq\|(\alpha D_{z})^2\phi\|^2_{\mathcal{L}^2_{\mathcal{W}^\eta}}-C_\beta \alpha^3\|D_{z}\partial_{\theta}\phi\|^2_{\mathcal{L}^2_{\mathcal{W}_z}}\\ \notag
		&\geq\|(\alpha D_{z})^2\phi\|^2_{\mathcal{L}^2_{\mathcal{W}^\eta}}-C_{\beta} \|f\|^2_{\mathcal{L}^2_{\mathcal{W}_z}}.
	\end{align}
	On the other hand, it is not difficult to find that
	\begin{align}\label{65in1-2}
		-\langle L_{\theta}(\phi),(\alpha D_{z})^2\phi(\mathcal{W}^\eta)^2\rangle
		&=\langle \partial_{\theta}^2\phi,(\alpha D_{z})^2\phi(\mathcal{W}^\eta)^2\rangle
		-\langle\partial_{\theta}((\tan\theta)\phi),(\alpha D_{z})^2\phi(\mathcal{W}^\eta)^2\rangle
		\\
		&~~~+6\langle\phi,(\alpha D_{z})^2\phi(\mathcal{W}^\eta)^2\rangle.
		\notag
	\end{align}
	We can derive that
	\begin{align}\label{65eq1-4}
		\langle \partial_{\theta}^2\phi,(\alpha D_{z})^2\phi(\mathcal{W}^\eta)^2\rangle
		\geq -\varepsilon \|(\alpha D_{z})^2\phi\|^2_{\mathcal{L}^2_{\mathcal{W}^\eta}}-C_\varepsilon \|\partial^2_{\theta}\phi\|^2_{\mathcal{L}^2_{\mathcal{W}^\eta}}.
	\end{align}
	Then, we deduce from Lemma \ref{6lem4}, Lemma \ref{6lem7}, Proposition \ref{6prop2} and Proposition \ref{6prop5} with $n=0$ that
	\begin{align}\label{65eq1-3}
		-\langle\partial_{\theta}(\tan(\theta)\phi),(\alpha D_{z})^2\phi(\mathcal{W}^\eta)^2\rangle
		&\geq-\varepsilon\alpha^2\|D_{z}\widetilde{\phi}\|^2_{\mathcal{L}^2_{\mathcal{W}^\eta}}-C_{\beta,\varepsilon} \|\partial_\theta\phi\|^2_{\mathcal{L}^2_{\mathcal{W}^\eta}}\\ \notag
		&\geq-\varepsilon\alpha^2\|\partial_{\theta}D_{z}\phi\|^2_{\mathcal{L}^2_{\mathcal{W}^\eta}}-C_{\beta,\varepsilon} \|\partial^2_{\theta}\phi\|^2_{\mathcal{L}^2_{\mathcal{W}_z}}\\ \notag
		&\geq -\varepsilon\big(\|(\alpha D_{z})^2\phi\|^2_{\mathcal{L}^2_{\mathcal{W}^\eta}}+\|\partial^2_{\theta}\phi\|^2_{\mathcal{L}^2_{\mathcal{W}^\eta}}\big)-C_{\beta,\varepsilon} \|f\|^2_{\mathcal{L}^2_{\mathcal{W}_z}}.
	\end{align}
	Using a similar derivation of \eqref{65eq1-1}, we obtain that
	\begin{align}\label{65eq1-2}
		6\langle\phi,(\alpha D_{z})^2\phi(\mathcal{W}^\eta)^2\rangle
		\leq& -\varepsilon \|(\alpha D_{z})^2\phi\|^2_{\mathcal{L}^2_{\mathcal{W}^\eta}}-C_{\beta,\varepsilon}\|f\|^2_{\mathcal{L}^2_{\mathcal{W}_z}}.
	\end{align}
	Inserting \eqref{65eq1-4}-\eqref{65eq1-2} into \eqref{65in1-2}, one gets that
	\begin{align}\label{65in1}
		-\langle L_{\theta}(\phi),(\alpha D_{z})^2\phi(\mathcal{W}^\eta)^2\rangle\geq -\varepsilon\|(\alpha D_{z})^2\phi\|^2_{\mathcal{L}^2_{\mathcal{W}^\eta}}-C_{\beta,\varepsilon} \|\partial^2_{\theta}\phi\|^2_{\mathcal{L}^2_{\mathcal{W}^\eta}}
		-C_{\beta,\varepsilon} \|f\|^2_{\mathcal{L}^2_{\mathcal{W}_z}}.
	\end{align}
	Applying \eqref{6model}, \eqref{65in2} and \eqref{65in1}, and choosing $\varepsilon$ suitably small, we deduce that
	\begin{align}\label{65eq5}
		\|(\alpha D_{z})^2\phi\|^2_{\mathcal{L}^2_{\mathcal{W}^\eta}}\lesssim_\beta \|\partial^2_{\theta}\phi\|^2_{\mathcal{L}^2_{\mathcal{W}^\eta}}+\|f\|^2_{\mathcal{L}^2_{\mathcal{W}^\eta}}.
	\end{align}
	Therefore, we conclude from \eqref{65eq4} and \eqref{65eq5} that
	\begin{align}\label{est-case-n0}
		\|(\alpha D_{z})^2\phi\|^2_{\mathcal{L}^2_{\mathcal{W}^\eta}}+\|\partial^2_{\theta}\phi\|^2_{\mathcal{L}^2_{\mathcal{W}^\eta}}+\|\partial_{\theta}\widetilde{\phi}\|^2_{\mathcal{L}^2_{\mathcal{W}^\eta}}\lesssim_\beta \|f\|^2_{\mathcal{L}^2_{\mathcal{W}^\eta}}.
	\end{align}
	This completes the base case $(n=0)$ of the induction, with \eqref{65eq0} following directly from Lemma \ref{6lem7}.
	\medskip

	\noindent
	\textbf{Step 2: For the case $n\geq1$.} 
	We now proceed with the induction argument on the parameter $n\in\mathbb{N}$. 
	As our induction hypothesis, we assume the validity of estimate \eqref{65eq0} for the case $n-1$, which explicitly takes the form:
	\begin{align}\label{indu-assum-2}
		\|(\alpha D_{z})^2\phi\|^2_{\tilde{\mathcal{H}}^{n-1}} 
		+\alpha^2\|D_{z}\partial_{\theta}\phi\|^2_{\tilde{\mathcal{H}}^{n-1}}
		+ \|\partial^2_{\theta}\phi\|^2_{\tilde{\mathcal{H}}^{n-1}}  \lesssim _\beta \|f\|^2_{\tilde{\mathcal{H}}^{n-1}}.
	\end{align}
	We intend to show that \eqref{65eq0} holds for $n$.
	Actually, we can deduce from Lemma \ref{6lem5}, Lemma \ref{6lem7} and \eqref{indu-assum-2} that
	\begin{align*}
		\|(\alpha D_{z})^2\phi\|^2_{\tilde{\mathcal{H}}^{n}}
		+\alpha^2\|D_{z}\partial_{\theta}\phi\|^2_{\tilde{\mathcal{H}}^{n}}+ \|\partial^2_{\theta}\phi\|^2_{\tilde{\mathcal{H}}^{n}}
		&\lesssim _\beta \|f\|^2_{\tilde{\mathcal{H}}^{n-1}}
		+\|(\alpha D_{z})^2D^n_z \phi\|^2_{L^2_{\mathcal{W}^{\eta}}}
		+\|D^n_z \partial_{\theta}^2\phi\|^2_{L^2_{\mathcal{W}^{\eta}}}
		\notag
		\\
		&~~~~
		+\sum_{i+j= n,j\geq 1}
		\left(\|D^i_z\tilde{D}^j_{\theta} (\alpha D_{z})^2\phi\|^2_{L^2_{\mathcal{W}^{\lambda}}}
		+\|D^i_z\tilde{D}^j_{\theta} \partial_{\theta}^2\phi\|^2_{L^2_{\mathcal{W}^{\lambda}}}
		\right).
	\end{align*}
	Hence, we only need to show that for any $n\geq1$,
	\begin{align}\label{est-case-n1}
		&\sum_{i+j= n,j\geq 1}
		\left(\|D^i_z\tilde{D}^j_{\theta} (\alpha D_{z})^2\phi\|^2_{L^2_{\mathcal{W}^{\lambda}}}
		+\|D^i_z\tilde{D}^j_{\theta} \partial_{\theta}^2\phi\|^2_{L^2_{\mathcal{W}^{\lambda}}}
		\right)
		\\
		&\quad+\|(\alpha D_{z})^2D^n_z \phi\|^2_{L^2_{\mathcal{W}^{\eta}}}
		+\|D^n_z \partial_{\theta}^2\phi\|^2_{L^2_{\mathcal{W}^{\eta}}}
		\lesssim _\beta \|f\|^2_{\tilde{\mathcal{H}}^{n}}.~
		\notag
	\end{align}
We divide the proof into the following three steps.
\medskip

\noindent
\textbf{Step 2.1.} 
In this step, we aim to prove that for any $n\geq1$,
\begin{align}\label{65eq8-1}
	\|(\alpha D_{z})^2D^n_z\phi\|^2_{\mathcal{L}^2_{\mathcal{W}^\eta}}+\|\partial^2_{\theta}D^n_z\phi\|^2_{\mathcal{L}^2_{\mathcal{W}^\eta}}+\|\partial_{\theta}D^n_z\widetilde{\phi}\|^2_{\mathcal{L}^2_{\mathcal{W}^\eta}}\lesssim_\beta  \|D^n_zf\|^2_{\mathcal{L}^2_{\mathcal{W}^\eta}}\lesssim _\beta \|f\|^2_{\tilde{\mathcal{H}}^n}.
\end{align}
Since the operators $D_z$ and $L^\alpha_z+L_\theta$ are commutative, we can use a similar derivation of \eqref{est-case-n0}, to deduce that \eqref{65eq8-1} holds.

\medskip

\noindent
\textbf{Step 2.2.} 
Our goal in this step is to prove that for any $n\geq1$,
\begin{align}\label{65eq8}
	\sum_{i+j=2}\|(\alpha D_{z})^i\tilde{D}^n_{\theta}\partial^j_{\theta}\phi\|^2_{\mathcal{L}^2_{\mathcal{W}^\lambda}}\lesssim_\beta \|f\|^2_{\tilde{\mathcal{H}}^{n}}.
\end{align}
For the case $n=1$, we reapplying Proposition \ref{6prop4} in combination with Lemma \ref{6lem4} and \eqref{65eq8-1}, we establish that
\begin{align}\label{65in5-2}
	-\langle \tilde{D}_{\theta}L^{\alpha}_z (\phi),\tilde{D}_{\theta}\partial^2_{\theta}\phi(\mathcal{W}^\lambda)^2\rangle
	&\geq \alpha^2\|D_{z}\tilde{D}_{\theta}\partial_{\theta}\phi\|^2_{\mathcal{L}^2_{\mathcal{W}^\lambda}} -\varepsilon\|\tilde{D}_{\theta}\partial^2_{\theta}\phi\|^2_{\mathcal{L}^2_{\mathcal{W}^\lambda}}
	- C_{\beta,\varepsilon} \alpha^2\|D_{z}\partial_{\theta}\phi\|^2_{\mathcal{L}^2_{\mathcal{W}^\lambda}}\\ \notag
	&\geq \alpha^2\|D_{z}\tilde{D}_{\theta}\partial_{\theta}\phi\|^2_{\mathcal{L}^2_{\mathcal{W}^\lambda}} -\varepsilon\|\tilde{D}_{\theta}\partial^2_{\theta}\phi\|^2_{\mathcal{L}^2_{\mathcal{W}^\lambda}}- C_{\beta,\varepsilon} \alpha^2\|D_{z}\partial_{\theta}^2\phi\|^2_{\mathcal{L}^2_{\mathcal{W}^\eta}}
	\\ \notag
	&\geq 
	\alpha^2\|D_{z}\tilde{D}_{\theta}\partial_{\theta}\phi\|^2_{\mathcal{L}^2_{\mathcal{W}^\lambda}} -\varepsilon\|\tilde{D}_{\theta}\partial^2_{\theta}\phi\|^2_{\mathcal{L}^2_{\mathcal{W}^\lambda}}
	-
	C_{\beta,\varepsilon} \|f\|^2_{\tilde{\mathcal{H}}^{1}}.
\end{align}
One deduces from Proposition \ref{6prop4} and the induction assumption \eqref{indu-assum-2} that for any $n\geq2$,
\begin{align}\label{65in5-1}
	-\langle \tilde{D}^n_{\theta}L^{\alpha}_z (\phi),\tilde{D}^n_{\theta}\partial^2_{\theta}\phi(\mathcal{W}^\lambda)^2\rangle
	&\geq \alpha^2\|D_{z}\tilde{D}^n_{\theta}\partial_{\theta}\phi\|^2_{\mathcal{L}^2_{\mathcal{W}^\lambda}} -\varepsilon\|\tilde{D}^{n}_{\theta}\partial^2_{\theta}\phi\|^2_{\mathcal{L}^2_{\mathcal{W}^\lambda}}
	- C_{\beta,\varepsilon} \alpha^2\|D_{z}\tilde{D}^{n-1}_{\theta}\partial_{\theta}\phi\|^2_{\mathcal{L}^2_{\mathcal{W}^\lambda}}\\ \notag
	&\geq  \alpha^2\|D_{z}\tilde{D}^n_{\theta}\partial_{\theta}\phi\|^2_{\mathcal{L}^2_{\mathcal{W}^\lambda}} -\varepsilon\|\tilde{D}^{n}_{\theta}\partial^2_{\theta}\phi\|^2_{\mathcal{L}^2_{\mathcal{W}^\lambda}}-C_{\beta,\varepsilon} \|f\|^2_{\tilde{\mathcal{H}}^{n-1}}.
\end{align}
Combining \eqref{65in5-2} and \eqref{65in5-1} together, we find that for any $n\geq1$,
\begin{align}\label{65in5}
	-\langle \tilde{D}^n_{\theta}L^{\alpha}_z (\phi),\tilde{D}^n_{\theta}\partial^2_{\theta}\phi(\mathcal{W}^\lambda)^2\rangle
	\geq \alpha^2\|D_{z}\tilde{D}^n_{\theta}\partial_{\theta}\phi\|^2_{\mathcal{L}^2_{\mathcal{W}^\lambda}} -\varepsilon\|\tilde{D}^{n}_{\theta}\partial^2_{\theta}\phi\|^2_{\mathcal{L}^2_{\mathcal{W}^\lambda}}-C_{\beta,\varepsilon} \|f\|^2_{\tilde{\mathcal{H}}^{n}}.
\end{align}
For the angular operator $L_{\theta}$, we recall \eqref{def-L-theta}, to get that
\begin{align}\label{65in4-0}
	-\langle\tilde{D}^n_{\theta}L_{\theta}(\phi),\tilde{D}^n_\theta\partial^{2}_{\theta}\phi(\mathcal{W}^\lambda)^2\rangle
	&=
	\langle\tilde{D}^n_{\theta}\partial_{\theta}^2\phi,\tilde{D}^n_\theta\partial^{2}_{\theta}\phi(\mathcal{W}^\lambda)^2\rangle
	-\langle\tilde{D}^n_{\theta}\partial_{\theta}(\tan\theta\phi),\tilde{D}^n_\theta\partial^{2}_{\theta}\phi(\mathcal{W}^\lambda)^2\rangle
	\\
	&~~~
	+6\langle\tilde{D}^n_{\theta}\phi,\tilde{D}^n_\theta\partial^{2}_{\theta}\phi(\mathcal{W}^\lambda)^2\rangle.
	\notag
\end{align}
Applying Proposition \ref{6prop1} with $n\geq 1$ and $\xi=\lambda$, we infer that
\begin{align}\label{65in4-1}
	-\langle\tilde{D}^n_{\theta}\partial_{\theta}(\tan\theta\phi),\tilde{D}^n_\theta\partial^{2}_{\theta}\phi(\mathcal{W}^\lambda)^2\rangle
	\geq\frac{1}{2}\|\tilde{D}^n_\theta\partial_{\theta}\widetilde{\phi}\|^2_{\mathcal{L}^2_{\mathcal{W}^\lambda}}-C \|\tilde{D}^{n-1}_\theta\partial_{\theta}\widetilde{\phi}\|^2_{\mathcal{L}^2_{\mathcal{W}^\lambda}}-C \sum_{k=0}^{n}\|\tilde{D}^k_\theta\widetilde{\phi}\|^2_{\mathcal{L}^2_{\mathcal{W}^\lambda}}.
\end{align}
In view of Lemma \ref{6lem4} and the induction assumption \eqref{indu-assum-2}, we obtain that for any $1\leq m\leq n$, 
\begin{align}
	\label{65in3}
	\|\tilde{D}^m_\theta\widetilde{\phi}\|^2_{\mathcal{L}^2_{\mathcal{W}^\lambda}} 
	&\lesssim \sum_{j=0}^{m}\|\sin^{m}(2\theta)(\cos\theta)^{j-m-1}\partial^j_{\theta}\phi\|^2_{\mathcal{L}^2_{\mathcal{W}^\lambda}}
    \lesssim \sum_{j=0}^{m}\|\sin^{j-1}(2\theta)\partial^j_{\theta}\phi\|^2_{\mathcal{L}^2_{\mathcal{W}^\lambda}}
    \\ \notag
	&\lesssim \big\|\frac{\phi}{\sin(2\theta)}\big\|^2_{\mathcal{L}^2_{\mathcal{W}^\lambda}}+\big\|\partial_{\theta}\phi\big\|^2_{\mathcal{L}^2_{\mathcal{W}^\lambda}}+\sum_{j=1}^{m}\|\sin^{j}(2\theta)\partial^{j+1}_{\theta}\phi\|^2_{\mathcal{L}^2_{\mathcal{W}^\lambda}} 
	\\ \notag
	&\lesssim \|\partial^2_{\theta}\phi\|^2_{\mathcal{L}^2_{\mathcal{W}^\eta}} 
	+\sum_{j=1}^{m-1}\|\tilde{D}^j_{\theta}\partial^{2}_{\theta}\phi\|^2_{\mathcal{L}^2_{\mathcal{W}^\lambda}}
	\\ \notag
	&\lesssim_\beta  \|f\|^2_{\tilde{\mathcal{H}}^{m-1}}.
\end{align}
Similar argument leads us to get that
\begin{align*}
	\|\tilde{D}^{m-1}_\theta\partial_{\theta}\widetilde{\phi}\|^2_{\mathcal{L}^2_{\mathcal{W}^\lambda}} 
	\lesssim \sum_{j=0}^{m-1}\|\sin^{m-1}(2\theta)(\cos\theta)^{j-m} \partial^j_{\theta}\phi\|^2_{\mathcal{L}^2_{\mathcal{W}^\lambda}} 
	\lesssim \sum_{j=0}^{m-1}\|\sin^{j-1}(2\theta)\partial^j_{\theta}\phi\|^2_{\mathcal{L}^2_{\mathcal{W}^\lambda}} 
	\lesssim_\beta  \|f\|^2_{\tilde{\mathcal{H}}^{m-1}}.
\end{align*}
We then substitute two estimates above into \eqref{65in4-1}, to find that
\begin{align}\label{65in4-2}
	-\langle\tilde{D}^n_{\theta}\partial_{\theta}(\tan\theta\phi),\tilde{D}^n_\theta\partial^{2}_{\theta}\phi(\mathcal{W}^\lambda)^2\rangle
	&\geq\frac{1}{2}\|\tilde{D}^n_\theta\partial_{\theta}\widetilde{\phi}\|^2_{\mathcal{L}^2_{\mathcal{W}^\lambda}}-C_\beta \|f\|^2_{\tilde{\mathcal{H}}^{n-1}}.
\end{align}
It is easy to verify that
\begin{align}\label{65in4-3}
	6\langle\tilde{D}^n_{\theta}\phi,\tilde{D}^n_\theta\partial^{2}_{\theta}\phi(\mathcal{W}^\lambda)^2\rangle
	\geq&-\varepsilon\|\tilde{D}^n_\theta\partial^2_{\theta}\phi\|^2_{\mathcal{L}^2_{\mathcal{W}^\lambda}}
	-C_{\varepsilon}\|\tilde{D}^n_\theta\phi\|^2_{\mathcal{L}^2_{\mathcal{W}^\lambda}}
	\\
	\geq&-\varepsilon\|\tilde{D}^n_\theta\partial^2_{\theta}\phi\|^2_{\mathcal{L}^2_{\mathcal{W}^\lambda}}
	-C_{\varepsilon}\|\partial_{\theta}^2\phi\|^2_{\mathcal{L}^2_{\mathcal{W}^\eta}}
	-C_{\varepsilon}\sum_{k=1}^{n-1}\|\tilde{D}^k_\theta\partial_{\theta}^2\phi\|^2_{\mathcal{L}^2_{\mathcal{W}^\lambda}}
	\notag\\
	\geq&-\varepsilon\|\tilde{D}^n_\theta\partial^2_{\theta}\phi\|^2_{\mathcal{L}^2_{\mathcal{W}^\lambda}}
	-C_{\beta,\varepsilon} \|f\|^2_{\tilde{\mathcal{H}}^{n-1}}.
	\notag
\end{align}
Plugging \eqref{65in4-2} and \eqref{65in4-3} into \eqref{65in4-0}, and choosing $\varepsilon$ suitably small, we find that
\begin{align}\label{65in4}
	-\langle\tilde{D}^n_{\theta}L_{\theta}(\phi),\tilde{D}^n_\theta\partial^{2}_{\theta}\phi(\mathcal{W}^\lambda)^2\rangle
	\geq\frac{1}{2}\|\tilde{D}^n_\theta\partial^2_{\theta}\phi\|^2_{\mathcal{L}^2_{\mathcal{W}^\lambda}}+\frac{1}{2}\|\tilde{D}^n_\theta\partial_{\theta}\widetilde{\phi}\|^2_{\mathcal{L}^2_{\mathcal{W}^\lambda}}-C_\beta \|f\|^2_{\tilde{\mathcal{H}}^{n-1}}.
\end{align}
Recalling the equation \eqref{6model}, then employing \eqref{65in5} and \eqref{65in4}, finally taking $\varepsilon$ suitably small, we infer that
\begin{align}\label{65eq6}
	\alpha^2\|D_{z}\tilde{D}^n_{\theta}\partial_{\theta}\phi\|^2_{\mathcal{L}^2_{\mathcal{W}^\lambda}} +\|\tilde{D}^{n}_{\theta}\partial^2_{\theta}\phi\|^2_{\mathcal{L}^2_{\mathcal{W}^\lambda}}+\|\tilde{D}^n_\theta\partial_{\theta}\widetilde{\phi}\|^2_{\mathcal{L}^2_{\mathcal{W}^\lambda}}
	\lesssim_\beta \|f\|^2_{\tilde{\mathcal{H}}^{n}}.
\end{align}

It follows from a similar way as Proposition \ref{6prop3} that
\begin{align}\label{65in6-0}
	-\langle \tilde{D}^{n}_{\theta}L^{\alpha}_z (\phi),(\alpha D_{z})^2\tilde{D}^{n}_{\theta}\phi(\mathcal{W}^\lambda)^2\rangle
	&\geq\|(\alpha D_{z})^2\tilde{D}^{n}_{\theta}\phi\|^2_{\mathcal{L}^2_{\mathcal{W}^\lambda}}
	-C_\beta \alpha^3\|D_{z
	}\tilde{D}^{n}_{\theta}\phi\|^2_{\mathcal{L}^2_{\mathcal{W}^\lambda}}
	\\
	&\geq\|(\alpha D_{z})^2\tilde{D}^{n}_{\theta}\phi\|^2_{\mathcal{L}^2_{\mathcal{W}^\lambda}}-C_\beta \alpha^3\|D_{z}\tilde{D}^{n-1}_{\theta}\partial_{\theta}\phi\|^2_{\mathcal{L}^2_{\mathcal{W}^\lambda}}.
	\notag
\end{align}
As for $n=1$, applying \eqref{65eq8-1} and \eqref{65in6-0}, we can get that
\begin{align*}
	-\langle \tilde{D}_{\theta}L^{\alpha}_z (\phi),(\alpha D_{z})^2\tilde{D}_{\theta}\phi(\mathcal{W}^\lambda)^2\rangle
	&\geq 
	\|(\alpha D_{z})^2\tilde{D}_{\theta}\phi\|^2_{\mathcal{L}^2_{\mathcal{W}^\lambda}}
	-C_{\beta}\alpha^3\|D_z\partial_{\theta}^2\phi\|^2_{\mathcal{L}^2_{\mathcal{W}^\eta}}
	\\
	&\geq\|(\alpha D_{z})^2\tilde{D}_{\theta}\phi\|^2_{\mathcal{L}^2_{\mathcal{W}^\lambda}}-C_\beta \|f\|^2_{\tilde{\mathcal{H}}^{1}}.
\end{align*}
Using the induction assumption \eqref{indu-assum-2}, we can deduce from \eqref{65in6-0} that for any $n\geq2$
\begin{align*}
	-\langle \tilde{D}^{n}_{\theta}L^{\alpha}_z (\phi),(\alpha D_{z})^2\tilde{D}^{n}_{\theta}\phi(\mathcal{W}^\lambda)^2\rangle
	&\geq\|(\alpha D_{z})^2\tilde{D}^{n}_{\theta}\phi\|^2_{\mathcal{L}^2_{\mathcal{W}^\lambda}}-C_\beta \|f\|^2_{\tilde{\mathcal{H}}^{n-1}}.
\end{align*}
A combination of two estimates above yields that
\begin{align}\label{65in6}
	-\langle \tilde{D}^{n}_{\theta}L^{\alpha}_z (\phi),(\alpha D_{z})^2\tilde{D}^{n}_{\theta}\phi(\mathcal{W}^\lambda)^2\rangle
	&\geq\|(\alpha D_{z})^2\tilde{D}^{n}_{\theta}\phi\|^2_{\mathcal{L}^2_{\mathcal{W}^\lambda}}-C_\beta \|f\|^2_{\tilde{\mathcal{H}}^{n}}.
\end{align}
While for $L_{\theta}$ defined in \eqref{def-L-theta}, one deduces that
\begin{align}\label{65in9-0}
	\langle\tilde{D}^n_\theta L_{\theta}(\phi),(\alpha D_{z})^2\tilde{D}^n_\theta \phi(\mathcal{W}^\lambda)^2\rangle
	&=-\langle\tilde{D}^n_\theta \partial^2_{\theta}\phi,(\alpha D_{z})^2\tilde{D}^n_\theta \phi(\mathcal{W}^\lambda)^2\rangle
	\\
	&~~~+\langle\tilde{D}^n_\theta \partial_{\theta}((\tan\theta)\phi),(\alpha D_{z})^2\tilde{D}^n_\theta \phi(\mathcal{W}^\lambda)^2\rangle
	\notag
	\\
	&~~~-6\langle\tilde{D}^n_\theta \phi,(\alpha D_{z})^2\tilde{D}^n_\theta \phi(\mathcal{W}^\lambda)^2\rangle.
	\notag
\end{align}
With the help of \eqref{65eq6}, we obtain that
\begin{align}\label{65in9-1}
	\left|\langle\tilde{D}^n_\theta \partial^2_{\theta}\phi,(\alpha D_{z})^2\tilde{D}^n_\theta \phi(\mathcal{W}^\lambda)^2\rangle\right|
	&\leq \frac{\varepsilon}{2}\|(\alpha D_{z})^2\tilde{D}^{n}_{\theta}\phi\|^2_{\mathcal{L}^2_{\mathcal{W}^\lambda}}
	+C_{\varepsilon}\|\tilde{D}^n_\theta \partial^{2}_{\theta}\phi\|^2_{\mathcal{L}^2_{\mathcal{W}^\lambda}}
	\\
	&\leq \frac{\varepsilon}{2}\|(\alpha D_{z})^2\tilde{D}^{n}_{\theta}\phi\|^2_{\mathcal{L}^2_{\mathcal{W}^\lambda}}
	+C_{\beta,\varepsilon}\|f\|^2_{\tilde{\mathcal{H}}^{n}}.
	\notag
\end{align}
Using Proposition \ref{6prop2} with $\xi=\lambda$, we find that
\begin{align}\label{65in9-2}
	\langle\tilde{D}^n_\theta\partial_{\theta}(\tan(\theta)\phi),(\alpha D_{z})^2\tilde{D}^n_\theta \phi(\mathcal{W}^\lambda)^2\rangle
	&\lesssim_\beta  \sum^1_{i+j=0}\|(\alpha D_{z})^i\tilde{D}^n_\theta\partial^{j}_{\theta}\widetilde{\phi}\|^2_{\mathcal{L}^2_{\mathcal{W}^\lambda}} 
	+\sum_{k=0}^{n-1}\sum_{j=0}^{1}\|(\alpha D_{z})^j\tilde{D}^k_\theta \widetilde{\phi}\|^2_{\mathcal{L}^2_{\mathcal{W}^\lambda}}.
\end{align}
Thanks to \eqref{65eq6}, the induction assumption \eqref{indu-assum-2} and Lemma \ref{6lem4}, we see that
\begin{align}\label{65in7}
	\sum_{i+j=1}\|(\alpha D_{z})^i\tilde{D}^n_\theta\partial^{j}_{\theta}\widetilde{\phi}\|^2_{\mathcal{L}^2_{\mathcal{W}^\lambda}}
	&\lesssim \sum_{i+j=1}\sum_{k=0}^{n-1} \|(\alpha D_{z})^i\tilde{D}^k_{\theta}\partial_{\theta}^{j+1}\phi\|^2_{\mathcal{L}^2_{\mathcal{W}^\lambda}}
	\\
	&\lesssim 
	\sum_{i+j=1}\|(\alpha D_{z})^i\tilde{D}_{\theta}\partial_{\theta}^{j+1}\phi\|^2_{\mathcal{L}^2_{\mathcal{W}^\lambda}}
	+\sum_{i+j=1}\sum_{k=1}^{n-1} \|(\alpha D_{z})^i\tilde{D}^k_{\theta}\partial_{\theta}^{j+1}\phi\|^2_{\mathcal{L}^2_{\mathcal{W}^\lambda}}
	\notag
	\\
	&\lesssim_\beta  \|f\|^2_{\tilde{\mathcal{H}}^{n}}.
	\notag
\end{align}
It then follows from Lemma \ref{6lem4}, the induction assumption \eqref{indu-assum-2} and \eqref{65eq8-1} that
\begin{align}\label{65in8}
	\sum_{j=0}^{1}\sum_{k=0}^{n-1}\|(\alpha D_{z})^j\tilde{D}^k_\theta \widetilde{\phi}\|^2_{\mathcal{L}^2_{\mathcal{W}^\lambda}}
	&\lesssim \sum_{j=0}^{1}\sum_{i=0}^{n-1}\|(\alpha D_{z})^j\sin^{i-1}(2\theta)\partial^i_\theta \phi\|^2_{\mathcal{L}^2_{\mathcal{W}^\lambda}}
	\\
	\notag
	&\lesssim \sum_{j=0}^{1}\sum_{i=0}^{n-1}\|\partial_{\theta}(\alpha D_{z})^j\tilde{D}^{i}_{\theta}\phi\|^2_{\mathcal{L}^2_{\mathcal{W}^\lambda}}\\ \notag
	&\lesssim \sum_{j=0}^{1}\sum_{i=0}^{n-1}\|(\alpha D_{z})^j\tilde{D}^{i}_{\theta}\partial_{\theta}\phi\|^2_{\mathcal{L}^2_{\mathcal{W}^\lambda}}\\ \notag
	&\lesssim 
	\sum_{j=0}^{1}\|(\alpha D_{z})^j\partial_{\theta}^2\phi\|^2_{\mathcal{L}^2_{\mathcal{W}^\eta}}
	+
	\sum_{i=1}^{n-1}\sum_{k+j=2}\|(\alpha D_{z})^j\tilde{D}^{i}_{\theta}\partial^k_{\theta}\phi\|^2_{\mathcal{L}^2_{\mathcal{W}^\lambda}}\\ \notag
	&\lesssim_\beta  \|f\|^2_{\tilde{\mathcal{H}}^{n}}.
\end{align}
Substituting \eqref{65in3} with $m=n$, \eqref{65in7} and \eqref{65in8} into \eqref{65in9-2}, we find that
\begin{align}\label{65in9-3}
	\langle\tilde{D}^n_\theta\partial_{\theta}(\tan(\theta)\phi),(\alpha D_{z})^2\tilde{D}^n_\theta \phi(\mathcal{W}^\lambda)^2\rangle
	&\lesssim_\beta \|f\|^2_{\tilde{\mathcal{H}}^{n}}.
\end{align}
It is easy to verify that
\begin{align}\label{65in9-4}
	\left|
	6\langle\tilde{D}^n_\theta \phi,(\alpha D_{z})^2\tilde{D}^n_\theta \phi(\mathcal{W}^\lambda)^2\rangle
	\right|    
	&\leq \frac{\varepsilon}{2}\|(\alpha D_{z})^2\tilde{D}^{n}_{\theta}\phi\|^2_{\mathcal{L}^2_{\mathcal{W}^\lambda}}
	+C_{\varepsilon}\|\tilde{D}^n_\theta\phi\|^2_{\mathcal{L}^2_{\mathcal{W}^\lambda}}
	\\
	&\lesssim \frac{\varepsilon}{2}\|(\alpha D_{z})^2\tilde{D}^{n}_{\theta}\phi\|^2_{\mathcal{L}^2_{\mathcal{W}^\lambda}}
	+C_{\varepsilon}\sum_{k=0}^{n-1}\|\tilde{D}^k_\theta\partial_{\theta}^2\phi\|^2_{\mathcal{L}^2_{\mathcal{W}^\eta}}
	\notag\\
	&\lesssim \frac{\varepsilon}{2}\|(\alpha D_{z})^2\tilde{D}^{n}_{\theta}\phi\|^2_{\mathcal{L}^2_{\mathcal{W}^\lambda}}
	+C_{\beta,\varepsilon}\|f\|^2_{\tilde{\mathcal{H}}^{n-1}}.
	\notag
\end{align}
We insert \eqref{65in9-1}, \eqref{65in9-3} and \eqref{65in9-4} into \eqref{65in9-0}, to discover that
\begin{align}\label{65in9}
	\left|\langle\tilde{D}^n_\theta L_{\theta}(\phi),(\alpha D_{z})^2\tilde{D}^n_\theta \phi(\mathcal{W}^\lambda)^2\rangle\right|
	\leq\varepsilon\|(\alpha D_{z})^2\tilde{D}^{n}_{\theta}\phi\|^2_{\mathcal{L}^2_{\mathcal{W}^\lambda}}+C_\beta \|f\|^2_{\tilde{\mathcal{H}}^{n}}.
\end{align}
Combining \eqref{65in6} and \eqref{65in9} together, and taking $\varepsilon$ suitably small, we infer that
\begin{align}\label{65eq7}
	\|(\alpha D_{z})^2\tilde{D}^{n}_{\theta}\phi\|^2_{\mathcal{L}^2_{\mathcal{W}^\lambda}}\lesssim_\beta \|f\|^2_{\tilde{\mathcal{H}}^{n}}.
\end{align}
Therefore, we conclude from \eqref{65eq6} and \eqref{65eq7} that \eqref{65eq8} holds.

\medskip

\noindent
\textbf{Step 2.3.} In this step, we intend to show that
\begin{align}\label{65eq7-1}
	\sum_{i+j=n,i,1\leq j\leq n-1}\|(\alpha D_{z})^2\tilde{D}^j_{\theta}(D^i_z\phi)\|^2_{\mathcal{L}^2_{\mathcal{W}^\lambda}}+\|\partial^2_{\theta}\tilde{D}^j_{\theta}(D^i_z\phi)\|^2_{\mathcal{L}^2_{\mathcal{W}^\lambda}}
	\lesssim_\beta  \|f\|^2_{\tilde{\mathcal{H}}^n}.
\end{align}
We have shown in \eqref{65eq8} that for any $1\leq k\leq n$,
\begin{align*}
	\sum_{i+j=2}\|(\alpha D_z)^i\tilde{D}^k_{\theta}\partial^j_{\theta}\phi\|^2_{\mathcal{L}^2_{\mathcal{W}^\lambda}}\lesssim_\beta \|f\|^2_{\tilde{\mathcal{H}}^{k}}.
\end{align*}
Along a similar way as \eqref{65eq8-1}, we can deduce from the estimate above that
\begin{align*}
	\sum_{i+j=2}\|(\alpha D_z)^i\tilde{D}^k_{\theta}\partial^j_{\theta}D_z^{n-k}\phi\|^2_{\mathcal{L}^2_{\mathcal{W}^\lambda}}
	\lesssim_\beta \|D^{n-k}_z f\|^2_{\tilde{\mathcal{H}}^{k}}
	\lesssim_\beta \|f\|^2_{\tilde{\mathcal{H}}^{n}}.
\end{align*}
This implies that \eqref{65eq7-1} holds.

Combining \eqref{65eq8-1} in \textbf{Step 2.1}, \eqref{65eq8} in \textbf{Step 2.2} and \eqref{65eq7-1} in \textbf{Step 2.3}, we can obtain \eqref{est-case-n1} immediately.
As a consequence, we complete the proof of this proposition by the standard induction argument.
\end{proof}

Recalling Lemma \ref{6lem5} and Proposition \ref{6prop6}, and taking $\mathcal{W}_z=w_z$ or $w_z^\ast$, we can derive the following two corollaries.
\begin{coro}\label{6cor1}
Assume that $\langle f,K\rangle_{\theta}=0$ and $f\in\mathcal{H}^n$, there exists a sufficiently small constant $\alpha>0$ such that
\begin{align*}
	\|(\alpha D_{z})^2\phi\|^2_{\mathcal{H}^n}
	+\|(\alpha D_{z})\partial_{\theta}\phi\|^2_{\mathcal{H}^n}
	+\|\partial^2_{\theta}\phi\|^2_{\mathcal{H}^n}
	\lesssim_\beta \|f\|^2_{\mathcal{H}^n}.
\end{align*}
\end{coro}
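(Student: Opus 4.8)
\textbf{Proof plan for Corollary \ref{6cor1}.}

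The plan is to deduce Corollary \ref{6cor1} directly from Proposition \ref{6prop6} by specializing the radial weight $\mathcal{W}_z$ to be $w_z$, and then converting the $\tilde{\mathcal{H}}^n$-norms that appear there into the $\mathcal{H}^n$-norms of the statement. First I would check that the hypothesis on the radial weight is satisfied: for $\mathcal{W}_z=w_z=\frac{(1+z^{1/\beta})^2}{z^{2/\beta}}$ we have, as computed in \eqref{est-Dzwz-Gamma}, $\frac{D_zw_z}{w_z}=-\big(\frac{D_z\Gamma^\ast_\beta}{\Gamma^\ast_\beta}+\frac1\beta\big)$, and using \eqref{D-z} one finds $\frac{D_zw_z}{w_z}=\frac{2}{1+z^{1/\beta}}-\frac{2}{\beta}\cdot\frac{1}{1+z^{1/\beta}}\cdot(\text{bounded})$, which is bounded by a constant $C_\beta$; a second application of $D_z$ gives the same for $\frac{D_z^2w_z}{w_z}$. (Concretely, $\big|\frac{D_z^jw_z}{w_z}\big|\le C_\beta$ for $j=1,2$ follows from the explicit form of $w_z$.) Hence Proposition \ref{6prop6} applies with $\mathcal{W}^\eta=w_z\sin^{-\eta/2}(2\theta)=w^\eta$ and $\mathcal{W}^\lambda=w_z\sin^{-\lambda/2}(2\theta)=w^\lambda$, giving
\[
\|(\alpha D_z)^2\phi\|^2_{\tilde{\mathcal{H}}^n}+\|(\alpha D_z)\partial_\theta\phi\|^2_{\tilde{\mathcal{H}}^n}+\|\partial^2_\theta\phi\|^2_{\tilde{\mathcal{H}}^n}\lesssim_\beta\|f\|^2_{\tilde{\mathcal{H}}^n}.
\]

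The remaining step is to pass from $\tilde{\mathcal{H}}^n$ to $\mathcal{H}^n$. These two norms differ only in that $\tilde{\mathcal{H}}^n$ uses the operators $\tilde D^j_\theta=\sin^j(2\theta)\partial^j_\theta$ for the angular derivatives, whereas $\mathcal{H}^n$ uses $D^j_\theta=(\sin(2\theta)\partial_\theta)^j$. By Lemma \ref{6lem5} (with $\mathcal{W}=w^\eta$ or $w^\lambda$, each of which satisfies the separated form required there) one has for every $m\ge1$ the two-sided bounds $\|D^m_\theta h\|^2_{\mathcal{L}^2_{\mathcal{W}}}\lesssim\sum_{k=1}^m\|\tilde D^k_\theta h\|^2_{\mathcal{L}^2_{\mathcal{W}}}$ and $\|\tilde D^m_\theta h\|^2_{\mathcal{L}^2_{\mathcal{W}}}\lesssim\sum_{k=1}^m\|D^k_\theta h\|^2_{\mathcal{L}^2_{\mathcal{W}}}$, and since $\tilde D^m_\theta=D^m_\theta$ for $m=0,1$ there is no discrepancy in the purely radial part of the norms. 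Applying these equivalences term by term — to $h=\phi$, $h=D^i_z(\alpha D_z)^2\phi$, $h=D^i_z(\alpha D_z)\partial_\theta\phi$, $h=D^i_z\partial^2_\theta\phi$, summing over the relevant multi-indices $i+j\le n$ — and also to $f$ on the right-hand side, one converts every $\tilde{\mathcal{H}}^n$ into the corresponding $\mathcal{H}^n$ up to a $\beta$-dependent constant. This gives exactly
\[
\|(\alpha D_z)^2\phi\|^2_{\mathcal{H}^n}+\|(\alpha D_z)\partial_\theta\phi\|^2_{\mathcal{H}^n}+\|\partial^2_\theta\phi\|^2_{\mathcal{H}^n}\lesssim_\beta\|f\|^2_{\mathcal{H}^n},
\]
which is the claim.

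I do not anticipate a genuine obstacle here, since all the analytic content has already been absorbed into Proposition \ref{6prop6} and Lemma \ref{6lem5}; the only point requiring a little care is bookkeeping when commuting $D_z$ past $\tilde D^j_\theta$ versus $D^j_\theta$ and making sure the mixed terms $D^i_z\tilde D^j_\theta$ with $j\ge1$ in the definition of $\tilde{\mathcal{H}}^n$ are matched to the $D^i_zD^j_\theta$ terms in $\mathcal{H}^n$ — but this is exactly what Lemma \ref{6lem5} (and, if one prefers to reorganize mixed derivatives, Lemma \ref{6lem6}) is designed to handle, and it introduces only constants depending on $\beta$. For this reason the proof can legitimately be stated in one or two lines: apply Proposition \ref{6prop6} with $\mathcal{W}_z=w_z$, then invoke Lemma \ref{6lem5} to replace $\tilde D_\theta$ by $D_\theta$ on both sides, which is presumably why the excerpt records it as an immediate corollary.
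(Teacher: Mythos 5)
Your proposal is correct and follows exactly the route the paper takes: the paper records these two corollaries as "recalling Lemma~\ref{6lem5} and Proposition~\ref{6prop6}, and taking $\mathcal{W}_z=w_z$ or $w_z^\ast$." Your verification of the weight hypothesis $\big|\tfrac{D_z^j w_z}{w_z}\big|\le C_\beta$ for $j=1,2$ and your use of Lemma~\ref{6lem5} to convert $\tilde D_\theta$ into $D_\theta$ are precisely the bookkeeping the paper leaves implicit.
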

\begin{coro}\label{6cor2}
Assume that $\langle f,K\rangle_{\theta}=0$ and $f\in\mathcal{H}^{n,\ast}$, there exists a sufficiently small constant $\alpha>0$ such that
\begin{align*}
	\|(\alpha D_{z})^2\phi\|^2_{\mathcal{H}^{n,\ast}}
	+\|(\alpha D_{z})\partial_{\theta}\phi\|^2_{\mathcal{H}^{n,\ast}}
	+\|\partial^2_{\theta}\phi\|^2_{\mathcal{H}^{n,\ast}}
	\lesssim_\beta \|f\|^2_{\mathcal{H}^{n,\ast}}.
\end{align*}
\end{coro}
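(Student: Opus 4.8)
\textbf{Proof proposal for Corollary \ref{6cor2}.}

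The plan is to deduce Corollary \ref{6cor2} from Proposition \ref{6prop6} by specializing the radial weight $\mathcal{W}_z$ to $w_z^\ast$, together with Lemma \ref{6lem5} to pass between the two families of angular derivative operators $D_\theta^j$ and $\tilde D_\theta^j$. First I would verify that $w_z^\ast$ satisfies the structural hypothesis of Proposition \ref{6prop6}, namely that $\bigl|\tfrac{D_z^j w_z^\ast}{w_z^\ast}\bigr|\leq C_\beta$ for $j=1,2$: since $w_z^\ast(z)=\dfrac{1+z^{1/\beta}}{z^{1/\beta+1/4}}$, a direct logarithmic differentiation gives $\dfrac{D_z w_z^\ast}{w_z^\ast}=-\bigl(\tfrac14+\tfrac1\beta\cdot\tfrac{z^{1/\beta}}{1+z^{1/\beta}}\bigr)$, which is bounded uniformly in $z$ by a constant depending only on $\beta$, and the second logarithmic derivative is handled identically. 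With $\mathcal{W}_z=w_z^\ast$, the weights $\mathcal{W}^\eta=w_z^\ast\sin^{-\eta/2}(2\theta)=w^{\ast,\eta}_\theta\,$-type weight and $\mathcal{W}^\lambda=w_z^\ast\sin^{-\lambda/2}(2\theta)$ coincide exactly with $w^{\ast,\eta}$ and $w^{\ast,\lambda}$ from Section \ref{sec:notation}, so the norm $\|\cdot\|_{\tilde{\mathcal{H}}^n}$ defined just before Proposition \ref{6prop6} becomes the $\mathcal{H}^{n,\ast}$ norm except that it uses $\tilde D_\theta^j$ in place of $D_\theta^j$.

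The remaining point is therefore purely the equivalence of these two norms. By Lemma \ref{6lem5}, applied with $\mathcal{W}=w^{\ast,\lambda}$ (noting that Lemma \ref{6lem5} as stated is for a single weight $\mathcal{W}$, and the displayed inequalities there read $\|D^n_\theta f\|^2_{\mathcal{L}^2_{\mathcal{W}}}\le C\sum_{k=1}^n\|\tilde D^k_\theta f\|^2_{\mathcal{L}^2_{\mathcal{W}}}$ and conversely), one gets, after also invoking Lemma \ref{6lem6} to interchange orders of $D_z$ and $D_\theta$ / $\tilde D_\theta$ factors, that
\begin{align*}
	\sum_{0\le i+j\le n,\,j\ge 1}\|D^i_z D^j_\theta f\, w^{\ast,\lambda}\|^2_{L^2}
	\approx \sum_{0\le i+j\le n,\,j\ge 1}\|D^i_z \tilde D^j_\theta f\, w^{\ast,\lambda}\|^2_{L^2},
\end{align*}
with implicit constants depending only on $\beta$; the purely radial part $\sum_{i=0}^n\|D_z^i f\,w^{\ast,\eta}\|^2_{L^2}$ is literally the same in both norms since $\tilde D_\theta^0=D_\theta^0=\mathrm{Id}$. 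Consequently $\|\phi\|_{\mathcal{H}^{n,\ast}}\approx_\beta\|\phi\|_{\tilde{\mathcal{H}}^n}$ for the solution $\phi$, and likewise $\|f\|_{\mathcal{H}^{n,\ast}}\approx_\beta\|f\|_{\tilde{\mathcal{H}}^n}$; moreover $\|(\alpha D_z)^2\phi\|_{\mathcal{H}^{n,\ast}}\approx_\beta\|(\alpha D_z)^2\phi\|_{\tilde{\mathcal{H}}^n}$, $\|(\alpha D_z)\partial_\theta\phi\|_{\mathcal{H}^{n,\ast}}\approx_\beta\|(\alpha D_z)\partial_\theta\phi\|_{\tilde{\mathcal{H}}^n}$, and $\|\partial_\theta^2\phi\|_{\mathcal{H}^{n,\ast}}\approx_\beta\|\partial_\theta^2\phi\|_{\tilde{\mathcal{H}}^n}$. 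Chaining these equivalences with the estimate \eqref{65eq0} of Proposition \ref{6prop6} (valid once $\alpha$ is chosen small depending on $\beta$) yields the claimed bound
\begin{align*}
	\|(\alpha D_{z})^2\phi\|^2_{\mathcal{H}^{n,\ast}}
	+\|(\alpha D_{z})\partial_{\theta}\phi\|^2_{\mathcal{H}^{n,\ast}}
	+\|\partial^2_{\theta}\phi\|^2_{\mathcal{H}^{n,\ast}}
	\lesssim_\beta \|f\|^2_{\mathcal{H}^{n,\ast}}.
\end{align*}

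I do not expect a genuine obstacle here: the work is entirely in bookkeeping — matching the abstract weights $\mathcal{W}^\eta,\mathcal{W}^\lambda$ with the concrete $w^{\ast,\eta},w^{\ast,\lambda}$, checking the logarithmic-derivative bound for $w_z^\ast$, and carefully applying Lemma \ref{6lem5} (and Lemma \ref{6lem6}) term by term so that the sum over the multi-index range $\{0\le i+j\le n,\ j\ge 1\}$ is reproduced on both sides. The only mild subtlety worth stating explicitly is that Lemma \ref{6lem5} converts a top-order $\tilde D_\theta^n$ (resp.\ $D_\theta^n$) into a \emph{sum} of lower-and-equal-order terms of the other type, so one should organize the argument so that the highest-order angular term on one side is controlled by the highest-order term plus strictly lower-order terms on the other side, and then close by induction on $n$ or simply by noting that the full finite sum is reproduced. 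Corollary \ref{6cor1} is obtained in exactly the same way with $\mathcal{W}_z=w_z$, for which $\dfrac{D_z w_z}{w_z}=-2\bigl(\tfrac1\beta\cdot\tfrac{1}{1+z^{1/\beta}}\bigr)$-type expression is again uniformly bounded by a constant depending only on $\beta$.
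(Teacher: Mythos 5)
Your proof is correct and follows exactly the route the paper intends — specializing $\mathcal{W}_z = w_z^\ast$ in Proposition \ref{6prop6}, verifying the logarithmic-derivative hypothesis, and invoking Lemma \ref{6lem5} (with Lemma \ref{6lem6} as needed) to pass between the $\tilde{\mathcal{H}}^n$ norm and $\mathcal{H}^{n,\ast}$; the paper leaves precisely this bookkeeping to the reader. One small arithmetic slip: the logarithmic derivative should read $\frac{D_z w_z^\ast}{w_z^\ast} = -\frac{1/\beta}{1+z^{1/\beta}} - \frac14$ rather than $-\bigl(\frac14 + \frac1\beta\cdot\frac{z^{1/\beta}}{1+z^{1/\beta}}\bigr)$, though both expressions are uniformly bounded by $\frac1\beta+\frac14$, so the hypothesis of Proposition \ref{6prop6} holds either way and your conclusion is unaffected.
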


Now, it remains to estimate $\mathcal{E}^{n}$-norm.
More precisely, we obtain the following proposition.
\begin{prop}\label{6prop6-19}
Assume that $\langle f,K\rangle_{\theta}=0$.
Then for any $n\in\mathbb{N}$, there exists a sufficiently small constant $\alpha>0$ such that
\begin{align*}
	\|(\alpha D_{z})^2D_z^n\phi\|^2_{\mathcal{L}^2_{w^{\lambda}}}
	+\|(\alpha D_{z})\partial_{\theta}D_z^n\phi\|^2_{\mathcal{L}^2_{w^{\lambda}}}
	+\|\partial^2_{\theta}D_z^n\phi\|^2_{\mathcal{L}^2_{w^{\lambda}}}\lesssim_\beta  \|D_z^nf\|^2_{\mathcal{L}^2_{w^{\lambda}}}.
\end{align*}
\end{prop}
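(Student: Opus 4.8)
The plan is to mimic the structure of the proof of Proposition \ref{6prop6}, but now with the purely angular weight $w^{\lambda}_{\theta}$ replaced throughout by a weight in which the radial part is $w_z$ (i.e. $\mathcal{W}_z=w_z$) and $\xi=\lambda$. Since the operators $D_z$ and $L^{\alpha}_z+L_{\theta}$ commute, it suffices to prove the statement for $n=0$ and then apply $D_z^n$ to the equation \eqref{6model}; indeed $\langle D_z^n f,K\rangle_\theta = D_z^n\langle f,K\rangle_\theta = 0$, so $D_z^n\phi$ solves the same boundary value problem with right-hand side $D_z^n f$. Thus I would first reduce to establishing
\begin{align*}
	\|(\alpha D_{z})^2\phi\|^2_{\mathcal{L}^2_{w^{\lambda}}}
	+\|(\alpha D_{z})\partial_{\theta}\phi\|^2_{\mathcal{L}^2_{w^{\lambda}}}
	+\|\partial^2_{\theta}\phi\|^2_{\mathcal{L}^2_{w^{\lambda}}}\lesssim_\beta  \|f\|^2_{\mathcal{L}^2_{w^{\lambda}}}.
\end{align*}

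For the $n=0$ estimate I would run the two standard test-function computations. Testing \eqref{6model} against $\partial^2_\theta\phi (w^\lambda)^2$: the radial part $L^\alpha_z$ is handled by Proposition \ref{6prop4} with $\xi=\lambda\in(1,2)$ (note $\lambda>1$ makes the coefficient $\frac{1-\lambda}{1+\lambda}$ negative, but the lost term is of the form $\alpha^2\|D_z\partial_\theta\phi\|^2_{\mathcal{L}^2_{w_z}}$ which is controlled by $\|f\|^2_{\mathcal{L}^2_{w_z}}\lesssim_\beta\|f\|^2_{\mathcal{L}^2_{w^\lambda}}$ via Proposition \ref{6prop5}, since $w_z\le w^\lambda$ pointwise), and the angular part $L_\theta$ is handled by Proposition \ref{6prop1} with $\xi=\lambda$, which gives the positive coefficient $\frac{4}{3}-\frac{\lambda}{2}>0$ as long as $\lambda<\frac{8}{3}$ — true by construction since $\lambda=1+\frac{\alpha}{10\beta}$ and $\alpha$ is small. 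The lower-order remainders ($\widetilde\phi$-terms, $\tilde D^{-1}_\theta$-terms) are again absorbed via Lemma \ref{6lem4}, Proposition \ref{6prop6-19}'s own induction being replaced here simply by Proposition \ref{6prop5}. Testing \eqref{6model} against $(\alpha D_z)^2\phi(w^\lambda)^2$: use Proposition \ref{6prop3} for $L^\alpha_z$ and Proposition \ref{6prop2} with $\xi=\lambda$ (which needs $\frac{1-\lambda}{2}>0$ — false, but again the deficit is $-\varepsilon\alpha^2\|D_z\widetilde\phi\|^2+C\|\partial_\theta\phi\|^2_{\mathcal{L}^2_{w^\lambda}}$, and $\|D_z\widetilde\phi\|$ is tied back to $\|(\alpha D_z)^2\phi\|$ via Lemma \ref{6lem7} while $\|\partial_\theta\phi\|^2_{\mathcal{L}^2_{w^\lambda}}$ is controlled by $\|\partial^2_\theta\phi\|^2_{\mathcal{L}^2_{w^\lambda}}$ through Lemma \ref{6lem4}). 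Combining the two tested identities, choosing $\varepsilon,\alpha$ small relative to $\beta$, and invoking the radial-weight estimate Proposition \ref{6prop5} (with $\mathcal{W}_z=w_z$) to close the lower-order terms in $\|f\|_{\mathcal{L}^2_{w_z}}$, one obtains the $n=0$ bound. Finally, apply $D_z^n$ and repeat verbatim.

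The main obstacle is bookkeeping the sign issue coming from $\lambda>1$: every place where the proof of Proposition \ref{6prop6} exploited $0<\eta<1$ (or $\xi=0$) to get a strictly positive leading constant now has a potentially negative or merely small constant, and one must check that the "bad" term produced is always strictly weaker — either a purely radial-weighted norm controllable by $\|f\|_{\mathcal{L}^2_{w_z}}\lesssim_\beta\|f\|_{\mathcal{L}^2_{w^\lambda}}$, or a Hardy-absorbable lower-order term. The crucial quantitative inputs are $\lambda<\tfrac{8}{3}$ (for positivity of $\tfrac43-\tfrac\lambda2$ in Proposition \ref{6prop1}) and $\lambda<\infty$ combined with the smallness of $\alpha$ so that the negative contributions come multiplied by $\alpha^2$ and can be buried. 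I would also double check that $w_z\le C w^\lambda$ uniformly, i.e. that $\sin^{-\lambda/2}(2\theta)\ge c>0$, which is immediate since $\lambda>0$; this legitimizes using Proposition \ref{6prop5}'s $w_z$-estimate to absorb all the secondary terms. No new structural ideas beyond Propositions \ref{6prop1}--\ref{6prop5} are needed; the proof is a routine, if slightly delicate, re-run of Proposition \ref{6prop6} with the weight $\mathcal{W}=w_z\sin^{-\lambda/2}(2\theta)$, so I would state it briefly and refer the reader to the proof of Proposition \ref{6prop6} for the parallel details.
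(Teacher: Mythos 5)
The overall strategy — reduce to $n=0$, test \eqref{6model} against $\partial^2_\theta\phi (w^\lambda)^2$ and $(\alpha D_z)^2\phi(w^\lambda)^2$, invoke Propositions \ref{6prop1}--\ref{6prop5}, and absorb — is the same as the paper's. But your explanation of \emph{why} the negative coefficient coming from $\xi=\lambda>1$ is harmless contains a genuine error, and the quantitative input you identify as crucial is the wrong one.

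When you apply Proposition \ref{6prop4} with $\xi=\lambda$ and $\mathcal{W}=w^\lambda$, the leading term is $\tfrac{1-\lambda}{1+\lambda}\alpha^2\|D_z\partial_\theta\phi\|^2_{\mathcal{L}^2_{w^\lambda}}$, with the \emph{singular} weight $w^\lambda$. You assert this ``lost term is of the form $\alpha^2\|D_z\partial_\theta\phi\|^2_{\mathcal{L}^2_{w_z}}$'' and can therefore be bounded by $\|f\|^2_{\mathcal{L}^2_{w_z}}$ via Proposition \ref{6prop5}. That is false: $w^\lambda$ has the extra angular singularity $\sin(2\theta)^{-\lambda/2}$, so $\|\cdot\|_{\mathcal{L}^2_{w^\lambda}}$ dominates $\|\cdot\|_{\mathcal{L}^2_{w_z}}$, not the other way around, and the purely radial estimate cannot control this term. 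The same confusion appears in your summary, where you name ``$\lambda<\infty$ combined with the smallness of $\alpha$ so that the negative contributions come multiplied by $\alpha^2$'' as the quantitative input. That is not the operative mechanism: once you convert $\alpha^2\|D_z\partial_\theta\phi\|^2_{\mathcal{L}^2_{w^\lambda}}$ into second-order norms via Lemma \ref{6lem7}, the $\alpha^2$ prefactor is consumed, and what survives is a negative contribution $\tfrac{1-\lambda}{1+\lambda}\,C_\beta\big(\|\partial^2_\theta\phi\|^2_{\mathcal{L}^2_{w^\lambda}}+\|(\alpha D_z)^2\phi\|^2_{\mathcal{L}^2_{w^\lambda}}\big)$. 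If $\lambda$ were, say, $2$ (finite, with $\alpha$ arbitrarily small), this coefficient would be $-\tfrac13 C_\beta$ and could swamp the good terms. What actually saves the argument — and what the paper explicitly invokes — is that $\lambda-1=\tfrac{\alpha}{10\beta}$, so $\big|\tfrac{1-\lambda}{1+\lambda}\big|\lesssim\tfrac{\alpha}{\beta}$, making the entire negative contribution $O(\alpha/\beta)$ and absorbable for $\alpha$ small relative to $\beta$. The same issue occurs in your treatment of Proposition \ref{6prop2} with $\xi=\lambda$, where the deficit $\tfrac{1-\lambda}{2}\alpha^2\|D_z\widetilde\phi\|^2_{\mathcal{L}^2_{w^\lambda}}$ must likewise be absorbed into the good $w^\lambda$-weighted terms using the smallness of $|\lambda-1|$, not bounded by the data. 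Your proposal would be rescued by replacing those two explanations with the observation $|\lambda-1|\le\tfrac{\alpha}{\beta}$, but as written the stated mechanism is incorrect and, taken at face value, the argument would fail.

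A minor further remark: the paper's proof also relies on Proposition \ref{6prop6} (the $w^\eta$ mixed-weight estimate), not just on Proposition \ref{6prop5}, to absorb the secondary $w^\eta$-weighted remainders. Since $w^{\lambda-2}_\theta\le 1\le w^\eta_\theta$, the purely radial estimate would in fact suffice here, so this difference is cosmetic; it is the $\lambda>1$ sign issue above that is the substantive gap.
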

\begin{proof}
It follows from Proposition \ref{6prop5} that
\begin{align}\label{65in10}
	\|(\alpha D_{z})^2\phi\|^2_{\mathcal{L}^{2}_{w_z}} + \|\partial^2_{\theta}\phi\|^2_{\mathcal{L}^{2}_{w_z}} \lesssim_\beta  \|f\|^2_{\mathcal{L}^2_{w_z}}.
\end{align}
Owing to $|\lambda-1|\leq\frac{\alpha}{\beta}$, in view of \eqref{65in10}, Lemma \ref{6lem4}, Lemma \ref{6lem7}, Proposition \ref{6prop4} with $\xi=\lambda$ and Proposition \ref{6prop6} with $n=0$, one obtains that
\begin{align*}
	-\langle L^{\alpha}_z (\phi),\partial^2_{\theta}\phi (w^{\lambda})^2\rangle
	&\geq -\frac{\alpha}{\beta}\|(\alpha D_{z})\partial_{\theta}\phi\|^2_{\mathcal{L}^2_{w^{\lambda}}} -C\varepsilon\|\partial^2_{\theta}\phi\|^2_{\mathcal{L}^2_{w^{\lambda}}}-C_\beta \alpha^2\|\partial_{\theta}D_z\phi\|^2_{\mathcal{L}^{2}_{w_z}}
	-C_{\beta,\varepsilon} \alpha^2\|\partial_{\theta}D_z\phi\|^2_{\mathcal{L}^{2}_{w^{\eta}}}
	\\
	&\geq -\frac{\alpha}{\beta}\|(\alpha D_{z})^2\phi\|^2_{\mathcal{L}^2_{w^{\lambda}}} -C\left(\frac{\alpha}{\beta}+\varepsilon\right)\|\partial^2_{\theta}\phi\|^2_{\mathcal{L}^2_{w^{\lambda}}}-C_{\beta,\varepsilon}\|f\|^2_{\mathcal{L}^{2}_{w^{\eta}}}.
	\notag
\end{align*}
By virtue of Lemma \ref{6lem4}, Proposition \ref{6prop1}, Proposition \ref{6prop6}, we infer that
\begin{align*}
	-\langle L_{\theta}(\phi),\partial^{2}_{\theta}\phi (w^{\lambda})^2\rangle
	\geq
	\|\partial^2_{\theta}\phi\|^2_{\mathcal{L}^2_{w^{\lambda}}}-C\|\partial_\theta^2\phi\|^2_{\mathcal{L}^2_{w^{\eta}}}-\varepsilon\|\partial^2_{\theta}\phi\|^2_{\mathcal{L}^2_{w^{\lambda}}}
	-C_{\varepsilon} \|\partial^2_{\theta}\phi\|^2_{\mathcal{L}^{2}_{w^{\eta}}}
    \geq \frac{1}{2}\|\partial^2_{\theta}\phi\|^2_{\mathcal{L}^2_{w^{\lambda}}}-C_\beta \|f\|^2_{\mathcal{L}^{2}_{w^{\eta}}}.
\end{align*}
These two estimates above, together with the equation \eqref{6model}, yields directly by taking $\varepsilon$ and $\alpha$ suitably small that
\begin{align}\label{65in11}
	\|\partial^2_{\theta}\phi\|^2_{\mathcal{L}^2_{w^{\lambda}}}\lesssim_\beta  \alpha\|(\alpha D_{z})^2\phi\|^2_{\mathcal{L}^2_{w^{\lambda}}}+\|f\|^2_{\mathcal{L}^{2}_{w^{\lambda}}}.
\end{align}

Applying Lemma \ref{6lem4}, Proposition \ref{6prop3} and Proposition \ref{6prop6}, we find that
\begin{align*}
	-\langle L^{\alpha}_z (\phi),(\alpha D_{z})^2\phi (w^{\lambda})^2\rangle
	\geq\|(\alpha D_{z})^2\phi\|^2_{\mathcal{L}^2_{w^{\lambda}}}-C_\beta \alpha^2\|(\alpha D_z)\partial_{\theta}\phi\|^2_{\mathcal{L}^2_{w^\eta}}
	\geq\|(\alpha D_{z})^2\phi\|^2_{\mathcal{L}^2_{w^{\lambda}}}-C_\beta \|f\|^2_{\mathcal{L}^2_{w^\eta}}.
\end{align*}
Since $|\lambda-1|\leq\frac{\alpha}{\beta}$, we then deduce from Proposition \ref{6prop2} with $\xi=\lambda$ and Lemma \ref{6lem4} that
\begin{align*}
	-\langle L_{\theta}(\phi),(\alpha D_{z})^2\phi (w^{\lambda})^2\rangle
	&\geq
	-\varepsilon\|(\alpha D_{z})^2\phi\|^2_{\mathcal{L}^2_{w^{\lambda} }}
	-C_{\beta,\varepsilon}\|\partial^2_{\theta}\phi\|^2_{w^{\lambda}}-\left(\frac{\alpha}{\beta}+\varepsilon\right) \|(\alpha D_{z})\partial_{\theta}\phi\|^2_{\mathcal{L}^2_{w^{\lambda} }}
	\\
	&\geq -\left(\frac{\alpha}{\beta}+\varepsilon\right)\|(\alpha D_{z})^2\phi\|^2_{\mathcal{L}^2_{w^{\lambda} }}-C_{\beta,\varepsilon} \|\partial^2_{\theta}\phi\|^2_{w^{\lambda}}.
\end{align*}
Collecting the two estimates above and the equation \eqref{6model}, and taking $\varepsilon$ suitably small, we find that
\begin{align}\label{65in12}
	\|(\alpha D_{z})^2\phi\|^2_{\mathcal{L}^2_{w^{\lambda}}}\lesssim_\beta  \|\partial^2_{\theta}\phi\|^2_{\mathcal{L}^2_{w^{\lambda}}}+\|f\|^2_{\mathcal{L}^2_{w^{\lambda}}}.
\end{align}
Combining Lemma \ref{6lem7} with estimates \eqref{65in11} and \eqref{65in12}, and selecting $\alpha$ sufficiently small, we obtain that
\begin{align*}
	\|(\alpha D_{z})^2\phi\|^2_{\mathcal{L}^2_{w^{\lambda}}}
	+\|(\alpha D_{z})\partial_{\theta}\phi\|^2_{\mathcal{L}^2_{w^{\lambda}}}
	+\|\partial^2_{\theta}\phi\|^2_{\mathcal{L}^2_{w^{\lambda}}}\lesssim_\beta  \|f\|^2_{\mathcal{L}^2_{w^{\lambda}}}.
\end{align*}
Since $D_z$ commutes with operators $L_{z}^{\alpha}$ and $L_{\theta}$, we can easily derive from the estimate above that for any $n\geq1$,
\begin{align*}
	\|(\alpha D_{z})^2D_z^n\phi\|^2_{\mathcal{L}^2_{w^{\lambda}}}
	+\|(\alpha D_{z})\partial_{\theta}D_z^n\phi\|^2_{\mathcal{L}^2_{w^{\lambda}}}
	+\|\partial^2_{\theta}D_z^n\phi\|^2_{\mathcal{L}^2_{w^{\lambda}}}\lesssim_\beta  \|D_z^nf\|^2_{\mathcal{L}^2_{w^{\lambda}}}.
\end{align*}
We thus complete the proof of Proposition \ref{6prop6-19}.
\end{proof}

Based on Proposition \ref{6prop6-19}, we obtain the following corollary directly by recalling the definition of $\mathcal{E}^n$.
\begin{coro}\label{6cor3}
Assume that $\langle f,K\rangle_{\theta}=0$ and $f\in\mathcal{E}^{n}$.
Then there exists a sufficiently small constant $\alpha>0$ such that
\begin{align*}
	\|(\alpha D_{z})^2\phi\|^2_{\mathcal{E}^{n}}
	+\|(\alpha D_{z})\partial_{\theta}\phi\|^2_{\mathcal{E}^{n}}
	+\|\partial^2_{\theta}\phi\|^2_{\mathcal{E}^{n}}\lesssim_\beta \|f\|^2_{\mathcal{E}^{n}}.
\end{align*}
\end{coro}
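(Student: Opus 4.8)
\textbf{Proof proposal for Corollary \ref{6cor3}.}

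The plan is to deduce the corollary directly from Proposition \ref{6prop6-19} by unwinding the definition of the $\mathcal{E}^n$-norm. Recall from Section \ref{sec:notation} that
\[
\|f\|^2_{\mathcal{E}^k}=\alpha\left(\|fw^{\lambda}\|^2_{L^2}+\|D^k_z fw^{\lambda}\|^2_{L^2}\right),
\]
so the $\mathcal{E}^n$-norm only involves the purely radial derivatives $D_z^0$ and $D_z^n$ together with the mixed weight $w^\lambda = w_z\, w^\lambda_\theta = w_z (\sin(2\theta))^{-\lambda/2}$. This matches precisely the form of the conclusion of Proposition \ref{6prop6-19}, which controls $\|(\alpha D_z)^2 D_z^n\phi\|^2_{\mathcal{L}^2_{w^\lambda}}+\|(\alpha D_z)\partial_\theta D_z^n\phi\|^2_{\mathcal{L}^2_{w^\lambda}}+\|\partial^2_\theta D_z^n\phi\|^2_{\mathcal{L}^2_{w^\lambda}}$ by $\|D_z^n f\|^2_{\mathcal{L}^2_{w^\lambda}}$, where by definition $\|\cdot\|_{\mathcal{L}^2_{w^\lambda}}=\|\cdot\, w^\lambda\|_{L^2}$.

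Concretely, I would proceed as follows. First, apply Proposition \ref{6prop6-19} with $n=0$ to get
\[
\|(\alpha D_z)^2\phi\|^2_{\mathcal{L}^2_{w^\lambda}}+\|(\alpha D_z)\partial_\theta\phi\|^2_{\mathcal{L}^2_{w^\lambda}}+\|\partial^2_\theta\phi\|^2_{\mathcal{L}^2_{w^\lambda}}\lesssim_\beta \|f\|^2_{\mathcal{L}^2_{w^\lambda}}.
\]
Next, apply Proposition \ref{6prop6-19} with the given $n$ to obtain the analogous bound with $D_z^n\phi$ in place of $\phi$ on the left and $D_z^n f$ in place of $f$ on the right. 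In particular this yields $\|(\alpha D_z)^2 D_z^n\phi\|^2_{\mathcal{L}^2_{w^\lambda}}\lesssim_\beta\|D_z^n f\|^2_{\mathcal{L}^2_{w^\lambda}}$, and similarly $\|(\alpha D_z)\partial_\theta D_z^n\phi\|^2_{\mathcal{L}^2_{w^\lambda}}\lesssim_\beta\|D_z^n f\|^2_{\mathcal{L}^2_{w^\lambda}}$ and $\|\partial^2_\theta D_z^n\phi\|^2_{\mathcal{L}^2_{w^\lambda}}\lesssim_\beta\|D_z^n f\|^2_{\mathcal{L}^2_{w^\lambda}}$. Then I multiply the $n=0$ estimate by $\alpha$ and add $\alpha$ times the $n$-th estimate; the right-hand side becomes $\alpha\big(\|f w^\lambda\|^2_{L^2}+\|D_z^n f w^\lambda\|^2_{L^2}\big)=\|f\|^2_{\mathcal{E}^n}$ up to a $\beta$-dependent constant, while the left-hand side contains exactly $\alpha\|(\alpha D_z)^2\phi\, w^\lambda\|^2_{L^2}+\alpha\|(\alpha D_z)^2 D_z^n\phi\, w^\lambda\|^2_{L^2}=\|(\alpha D_z)^2\phi\|^2_{\mathcal{E}^n}$, together with the two other terms reassembled as $\|(\alpha D_z)\partial_\theta\phi\|^2_{\mathcal{E}^n}$ and $\|\partial^2_\theta\phi\|^2_{\mathcal{E}^n}$. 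This gives the claimed estimate.

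Since this corollary is a direct bookkeeping consequence of an already-established proposition, there is no genuine obstacle; the only thing requiring care is matching indices and weights—specifically, noting that the operator $D_z$ commutes with $L_z^\alpha+L_\theta$ (already used in the proof of Proposition \ref{6prop6-19}) so that $D_z^n\phi$ solves \eqref{6model} with right-hand side $D_z^n f$, and that $\langle D_z^n f,K\rangle_\theta = D_z^n\langle f,K\rangle_\theta = 0$ so the hypothesis of Proposition \ref{6prop6-19} is preserved under applying $D_z^n$. With those observations in hand the proof is a one-line combination, and indeed the excerpt already signals ``we obtain the following corollary directly by recalling the definition of $\mathcal{E}^n$.''
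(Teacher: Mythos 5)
Your proposal is correct and follows exactly the route the paper intends: applying Proposition \ref{6prop6-19} at orders $0$ and $n$, multiplying by $\alpha$, adding, and using $D_z^n(\alpha D_z)^2=(\alpha D_z)^2 D_z^n$, $D_z^n(\alpha D_z)\partial_\theta=(\alpha D_z)\partial_\theta D_z^n$, $D_z^n\partial_\theta^2=\partial_\theta^2 D_z^n$ to reassemble the left-hand side into the $\mathcal{E}^n$-norms. The observations you flag (commutation of $D_z$ with $L_z^\alpha+L_\theta$, and $\langle D_z^n f,K\rangle_\theta=D_z^n\langle f,K\rangle_\theta=0$) are indeed already absorbed into the statement of Proposition \ref{6prop6-19}, so invoking it directly at index $n$ already handles them; no further work is needed.
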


\section{Estimates of transport term}\label{sec:Est-T}
In the previous section, we have established the elliptic estimates for equation \eqref{6model}, namely Corollaries \ref{6cor1}, \ref{6cor2} and \ref{6cor3}. 
Building upon these corollaries, we now begin to study the transport term in this section:
$$T=\frac{U(\Phi)}{\sin(2\theta)}D_{\theta}F+V(\Phi)\alpha D_{z}F.$$
We stress that estimating this transport term is crucial for the analysis of the critical regularity problem. 
We need to employ some special structures, such as the null structure. 
Recall from \eqref{com-F} that the transport term $T$ admits the following decomposition as:
$$T=T_g+T_{F^{\ast}_{\gamma}},$$
where for any function $f$ (representing either $g$ or $F^{\ast}_{\gamma}$), the operator $T_f$ is defined as
\begin{align*}
T_f
\tri
\frac{U(\Phi)}{\sin(2\theta)}D_{\theta}f+V(\Phi)\alpha D_{z}f.
\end{align*}
We recall that the Sobolev spaces $\mathcal{H}^{-1}$, $\mathcal{H}^{2}_{\eta}$ and $\mathcal{E}^2_{\eta}$ are respectively defined by \eqref{def_norm_H-1}-\eqref{def_norm_E2eta}, whereas the definitions of $\mathcal{H}^{2}$, $\mathcal{H}^{*,2}$ and $\mathcal{E}^2$ are provided in Section \ref{sec:notation}.

\subsection{Basic lemmas and the null structure}
Let $D=[0,\infty)\times[0,\frac{\pi}{2}]$ and $\beta\in(0,1]$. 
Before proceeding, we first present some useful lemmas in this section, which are particularly crucial when dealing with the transport term later.
\begin{lemm}\label{7em1}
Let $f(z,\theta)|_{\partial D}=0$. Then we have
$$\|f\|_{L^{\infty}_{z}}\leq C\|D_{z}f w_{z}\|_{L^{2}_{z}},~~~~\|f\|_{L^{\infty}_{z}}\leq C\|D_{z}f w^{\ast}_{z}\|_{L^{2}_{z}},$$
and 
$$\|f\|_{L^{\infty}_{\theta}}\leq C\sqrt{\frac{\beta}{\alpha}}\|D_{\theta}f w_{\theta}^{\lambda}\|_{L^{2}_{\theta}}.$$   
\end{lemm}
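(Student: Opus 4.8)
\textbf{Proof plan for Lemma \ref{7em1}.}

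The plan is to prove all three inequalities by the same elementary one-dimensional device: write the square of the function value at a point as an integral of a derivative starting from the boundary (where $f$ vanishes), then apply Cauchy--Schwarz with the appropriate weight. For the first inequality, fix $\theta$ and note that since $f(0,\theta)=0$ and $D_z f=z\partial_z f$, we have $f(z,\theta)=\int_0^z \frac{D_zf(\rho,\theta)}{\rho}\,d\rho$. Splitting at $\rho=1$ (or any fixed point) and using Cauchy--Schwarz, the contribution from $\rho\in(0,1)$ is bounded by $\big(\int_0^1 \rho^{-2}w_z^{-2}\,d\rho\big)^{1/2}\|D_zf\,w_z\|_{L^2_z}$, and since $w_z(z)=z^{-2/\beta}(1+z^{1/\beta})^2\gtrsim z^{-2/\beta}$ near $0$, the integral $\int_0^1 \rho^{-2}\rho^{4/\beta}\,d\rho$ converges for $\beta\le 1$ (indeed $4/\beta-2\ge 2>-1$); for $\rho\ge 1$, $w_z\gtrsim 1$ and $\int_1^\infty \rho^{-2}\,d\rho<\infty$, so we also must use $f(z,\theta) = -\int_z^\infty \frac{D_zf(\rho,\theta)}{\rho}d\rho$ when $f(\infty,\theta)=0$, or simply bound directly. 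Either way the weight integral is finite uniformly, giving $\|f\|_{L^\infty_z}\lesssim \|D_zf\,w_z\|_{L^2_z}$. The second inequality with $w_z^\ast(z)=z^{-1/\beta-1/4}(1+z^{1/\beta})$ is identical: near $0$, $w_z^\ast\gtrsim z^{-1/\beta-1/4}$ so $\int_0^1 \rho^{-2}(w_z^\ast)^{-2}\,d\rho=\int_0^1 \rho^{2/\beta-3/2}\,d\rho$ converges (exponent $\ge 1/2>-1$), and near infinity $w_z^\ast\gtrsim z^{-1/4}$ so $\int_1^\infty \rho^{-2}\rho^{1/2}\,d\rho=\int_1^\infty \rho^{-3/2}\,d\rho<\infty$.

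For the angular inequality, fix $z$ and use $f(z,0)=0$ together with $D_\theta f=\sin(2\theta)\partial_\theta f$, so that $f(z,\theta)=\int_0^\theta \frac{D_\theta f(z,\phi)}{\sin(2\phi)}\,d\phi$ for $\theta\in[0,\pi/4]$, and symmetrically from $\theta=\pi/2$ on $[\pi/4,\pi/2]$. Cauchy--Schwarz gives $|f(z,\theta)|^2\le \Big(\int_0^{\pi/2}\frac{d\phi}{\sin^2(2\phi)(w_\theta^\lambda)^2}\Big)\|D_\theta f\,w_\theta^\lambda\|_{L^2_\theta}^2$. Recalling $w_\theta^\lambda(\theta)=(\sin(2\theta))^{-\lambda/2}$, the weight integral is $\int_0^{\pi/2}(\sin(2\phi))^{\lambda-2}\,d\phi$, which by \eqref{est-sin} is $\approx (1-(\lambda-1))^{-1}=(2-\lambda)^{-1}$; since $\lambda=1+\frac{\alpha}{10\beta}$, we have $2-\lambda = 1-\frac{\alpha}{10\beta}\approx 1$ for $\alpha\ll\beta$, but more pertinently $\lambda-2 = -1+\frac{\alpha}{10\beta}$, and the integral $\int_0^{\pi/2}(\sin 2\phi)^{-1+\alpha/(10\beta)}d\phi\approx \frac{10\beta}{\alpha}$ by the same \eqref{est-sin}-type estimate. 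This produces the factor $\frac{\beta}{\alpha}$ under the square root, i.e. $\|f\|_{L^\infty_\theta}\lesssim \sqrt{\beta/\alpha}\,\|D_\theta f\,w_\theta^\lambda\|_{L^2_\theta}$.

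I do not anticipate a serious obstacle here; the lemma is a routine weighted Hardy/Sobolev embedding in one variable. The one point requiring mild care is ensuring the weight integrals converge at \emph{both} endpoints ($\rho\to 0$ and $\rho\to\infty$ for the radial case; $\theta\to 0$ and $\theta\to\pi/2$ for the angular case), which is where the hypothesis $\beta\in(0,1]$ enters — it guarantees $4/\beta-2\ge 2$ and $2/\beta-3/2\ge 1/2$ so the near-origin integrals converge — and where the precise value $\lambda-2=-1+\frac{\alpha}{10\beta}\in(-1,0)$ enters to both make the angular integral converge and to extract the sharp $\sqrt{\beta/\alpha}$ constant. For the radial estimates I would write $f$ as an integral from whichever endpoint makes the tail integrable (from $0$ for the behaviour near $0$, noting the tail from $\infty$ is also controlled since the full integral $\int_0^\infty \rho^{-1}|D_zf|\,d\rho$ is finite by Cauchy--Schwarz against $w_z$ over all of $(0,\infty)$), so no boundary condition at infinity beyond $f|_{\partial D}=0$ is needed. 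I would then simply combine the two regions and take the supremum over the other variable to conclude.
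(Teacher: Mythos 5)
Your plan matches the paper's proof of Lemma~\ref{7em1} in all essentials: express $|f|$ as a boundary-to-$z$ integral of $\partial_z f = D_z f/z$ (resp.\ $\partial_\theta f = D_\theta f/\sin(2\theta)$), apply Cauchy--Schwarz against the weight, and check the companion weight integral is finite. The paper does this more compactly — it just bounds $\|f\|_{L^\infty_z}^2\lesssim\big(\int_0^\infty|\partial_zf|\,dz\big)^2$ and applies Cauchy--Schwarz over the whole half-line, observing $(zw_z)^{-2}\lesssim(1+z)^{-2}$, $(zw_z^\ast)^{-2}\lesssim(1+z)^{-3/2}$, and $\int_0^{\pi/2}(\sin(2\theta)w_\theta^\lambda)^{-2}d\theta=\int_0^{\pi/2}(\sin(2\theta))^{\lambda-2}d\theta\approx\beta/\alpha$ — so your split at $\rho=1$ and the aside about integrating from infinity are unnecessary (the single Cauchy--Schwarz over $(0,\infty)$ already handles both endpoints, and the hypothesis $f|_{\partial D}=0$ at $z=0$ suffices), though harmless. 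One arithmetic slip: writing the angular integral as ``$\approx(1-(\lambda-1))^{-1}=(2-\lambda)^{-1}$'' misreads \eqref{est-sin}; with exponent $-a=\lambda-2$ one has $a=2-\lambda$ and hence $(1-a)^{-1}=(\lambda-1)^{-1}=10\beta/\alpha$, not $(2-\lambda)^{-1}$. You immediately recompute and land on the correct $\beta/\alpha$, so the slip does not propagate, but the intermediate identity as written is wrong.
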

\begin{proof}
Since $\beta\in(0,1]$, one gets, by recalling the definitions of $D_z$ and $w_z$ in Section \ref{sec:notation}, that
\begin{align}\label{7in1}
	\|f\|^2_{L^{\infty}_{z}}&\lesssim \left(\int_0^{\infty}|\partial_z f| dz\right)^2 \lesssim \int_0^{\infty}|D_z f|^2(w_z)^2 dz\int_0^{\infty}(zw_z)^{-2} dz \\ \notag
	&\lesssim \|D_{z}fw_{z}\|^2_{L^{2}_{z}}\int_0^{\infty}(1+z)^{-2} dz \lesssim \|D_{z}fw_{z}\|^2_{L^{2}_{z}}.
\end{align}
From the definition of $w^{\ast}_{z}$ in Section \ref{sec:notation}, the same manner yields immediately that
\begin{align}\label{7in2}
	\|f\|^2_{L^{\infty}_{z}}
	&\lesssim \|D_{z}fw^{\ast}_{z}\|^2_{L^{2}_{z}}\int_0^{\infty}(1+z)^{-\frac{3}{2}} dz 
	\lesssim \|D_{z}fw^{\ast}_{z}\|^2_{L^{2}_{z}}.
\end{align}
Following the definition \eqref{def-lambda}, the parameter $\lambda$ satisfies $\lambda=1+\frac{\alpha}{10\beta}>1$.
It then follows from a similar way that
\begin{align}\label{7in3}
	\|f\|^2_{L^{\infty}_{\theta}}
	&\lesssim \int_0^{\frac{\pi}{2}}|D_\theta f|^2(w^\lambda_\theta)^2 d\theta\int_0^{\frac{\pi}{2}}(\sin(2\theta)w^\lambda_\theta)^{-2} d\theta 
	\lesssim \frac{\beta}{\alpha}\|D_{\theta}fw^\lambda_\theta\|^2_{L^{2}_{\theta}}.
\end{align}
We thus complete the proof of this lemma.
\end{proof}

Remembering the definitions of some weight functions $w_{z}$, $w_{\theta}^{\eta}$, $w^{\eta}$, $w^{\ast,\eta}$, $w^{\lambda}$ and $w^{\ast,\lambda}$ in Section \ref{sec:notation}, we have the following corollary. 
Its proof is similar to that of Lemma \ref{7em1}, so it is omitted here.
\begin{coro}\label{7em2}
Let $f(z,\theta)|_{\partial D}=0$. Then we have
$$\|f w_{\theta}^{\eta}\|
_{L^{\infty}_{z}L^{2}_{\theta}}
\leq C\|D_{z}f w^\eta\|_{L^{2}},
~~~~
\|fw_{\theta}^{\eta}\|
_{L^{\infty}_{z}L^{2}_{\theta}}
\leq C\|D_{z}fw^{\ast,\eta}\|_{L^{2}},$$
and 
$$\|f w_z\|_{L^2_z L^{\infty}_{\theta}}\leq C\sqrt{\frac{\beta}{\alpha}}\|D_{\theta}f w^{\lambda}\|_{L^{2}}.$$
Moreover, there holds 
$$\|f\|_{L^{\infty}}\leq C\sqrt{\frac{\beta}{\alpha}}\|D_{\theta}D_z fw^{\lambda}\|_{L^{2}},
~~~
\|f\|_{L^{\infty}}\leq C\sqrt{\frac{\beta}{\alpha}}\|D_{\theta}D_z fw^{\ast,\lambda}\|_{L^{2}}.$$
\end{coro}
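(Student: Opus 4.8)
\textbf{Proof proposal for Corollary \ref{7em2}.}

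The plan is to mimic the one-dimensional weighted Sobolev embeddings established in Lemma \ref{7em1}, but now carrying an extra variable (either $\theta$ or $z$) as a parameter, and then to combine the two one-dimensional estimates to obtain the $L^\infty$ bounds on $D$. For the first two assertions, I would fix $\theta$ and run the argument of \eqref{7in1} (respectively \eqref{7in2}) in the $z$-variable with $f$ replaced by $f(\cdot,\theta)$: using $f|_{\partial D}=0$ so that $f(z,\theta)=\int_0^z \partial_\rho f(\rho,\theta)\,d\rho$, Cauchy--Schwarz gives $|f(z,\theta)|^2 \lesssim \|D_z f(\cdot,\theta) w_z\|_{L^2_z}^2$ (resp. with $w_z^\ast$), because $\int_0^\infty (z w_z)^{-2}\,dz \approx \int_0^\infty (1+z)^{-2}\,dz < \infty$ and $\int_0^\infty (z w_z^\ast)^{-2}\,dz \approx \int_0^\infty (1+z)^{-3/2}\,dz < \infty$. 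Multiplying by $(w_\theta^\eta(\theta))^2$ and integrating in $\theta$ over $[0,\frac\pi2]$ then yields
\begin{align*}
	\|f w_\theta^\eta\|_{L^\infty_z L^2_\theta}^2
	= \sup_z \int_0^{\pi/2} |f(z,\theta)|^2 (w_\theta^\eta)^2\,d\theta
	\lesssim \int_0^{\pi/2}\int_0^\infty |D_z f|^2 (w_z)^2 (w_\theta^\eta)^2\,dz\,d\theta
	= \|D_z f\, w^\eta\|_{L^2}^2,
\end{align*}
and identically with $w_z^\ast$ and $w^{\ast,\eta}$ in place of $w_z$ and $w^\eta$, using $w^\eta = w_\theta^\eta w_z$, $w^{\ast,\eta} = w_\theta^\eta w_z^\ast$ from Section \ref{sec:notation}.

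For the third assertion I would instead fix $z$ and apply the $\theta$-embedding from the last line of Lemma \ref{7em1}, namely $\|f(z,\cdot)\|_{L^\infty_\theta} \lesssim \sqrt{\beta/\alpha}\,\|D_\theta f(z,\cdot) w_\theta^\lambda\|_{L^2_\theta}$, which relies on $\lambda>1$ so that $\int_0^{\pi/2}(\sin(2\theta) w_\theta^\lambda)^{-2}\,d\theta \approx \int_0^{\pi/2}(\sin(2\theta))^{-2+\lambda}\,d\theta \approx (\lambda-1)^{-1} \approx \beta/\alpha$ by \eqref{est-sin}. Squaring, multiplying by $(w_z(z))^2$, and integrating in $z$ gives
\begin{align*}
	\|f w_z\|_{L^2_z L^\infty_\theta}^2
	= \int_0^\infty \|f(z,\cdot)\|_{L^\infty_\theta}^2 (w_z)^2\,dz
	\lesssim \frac\beta\alpha \int_0^\infty\int_0^{\pi/2} |D_\theta f|^2 (w_\theta^\lambda)^2 (w_z)^2\,d\theta\,dz
	= \frac\beta\alpha\,\|D_\theta f\, w^\lambda\|_{L^2}^2,
\end{align*}
using $w^\lambda = w_\theta^\lambda w_z$.

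Finally, for the $L^\infty(D)$ bounds I would compose the two directions. Starting from $|f(z,\theta)|^2 \lesssim \int_0^\infty |\partial_\rho f(\rho,\theta)|^2 (w_z(\rho))^2\,d\rho$ obtained as above (pointwise in $\theta$), I note $\partial_\rho f(\rho,\theta) = \rho^{-1} D_z f(\rho,\theta)$, so the right side is $\lesssim \|(D_z f)(\cdot,\theta) w_z\|_{L^2_z}^2$; then I apply the $\theta$-estimate of Lemma \ref{7em1} to the function $\theta \mapsto \|(D_z f)(\cdot,\theta) w_z\|_{L^2_z}$ — which vanishes at $\theta=0,\frac\pi2$ because $f|_{\partial D}=0$ — to get $\|(D_z f) w_z\|_{L^\infty_\theta L^2_z}^2 \lesssim (\beta/\alpha)\,\|D_\theta(D_z f) w^\lambda\|_{L^2}^2$, i.e. $\|f\|_{L^\infty} \lesssim \sqrt{\beta/\alpha}\,\|D_\theta D_z f\, w^\lambda\|_{L^2}$; the $w^{\ast,\lambda}$ version follows verbatim with $w_z^\ast$ replacing $w_z$ and the bound $\int_0^\infty(z w_z^\ast)^{-2}\,dz<\infty$. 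The only mildly delicate point — and the main thing to be careful about — is the bookkeeping of the weight factorizations ($w^\eta = w_\theta^\eta w_z$, $w^{\ast,\eta}=w_\theta^\eta w_z^\ast$, $w^\lambda = w_\theta^\lambda w_z$, $w^{\ast,\lambda}=w_\theta^\lambda w_z^\ast$) and confirming that the auxiliary radial integrals $\int_0^\infty(1+z)^{-2}\,dz$ and $\int_0^\infty(1+z)^{-3/2}\,dz$ converge and that the angular integral produces exactly the factor $\beta/\alpha$ via $\lambda - 1 = \frac{\alpha}{10\beta}$; there is no genuine analytic obstacle, since every ingredient is already contained in Lemma \ref{7em1} and \eqref{est-sin}. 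Hence the proof is a routine Fubini-plus-Cauchy--Schwarz argument, which is why it is omitted in the text.
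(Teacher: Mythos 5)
Your strategy — run the one-dimensional weighted embeddings of Lemma \ref{7em1} in one variable with the other carried as a parameter, then Fubini (and for the $L^\infty$ bound, iterate) — is exactly the proof the paper has in mind when it says the corollary is ``similar to that of Lemma \ref{7em1}, so it is omitted.'' Your weight bookkeeping ($w^\eta = w_\theta^\eta w_z$, $w^{\ast,\eta}=w_\theta^\eta w_z^\ast$, etc.), the finiteness of $\int_0^\infty(\rho w_z)^{-2}\,d\rho$ and $\int_0^\infty(\rho w_z^\ast)^{-2}\,d\rho$, and the origin of the factor $\beta/\alpha$ from $\int_0^{\pi/2}(\sin(2\theta))^{\lambda-2}\,d\theta \approx (\lambda-1)^{-1} = 10\beta/\alpha$ via \eqref{est-sin} are all tracked correctly.

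One slip to fix in the $L^\infty(D)$ step: the written inequality $|f(z,\theta)|^2 \lesssim \int_0^\infty |\partial_\rho f(\rho,\theta)|^2 (w_z(\rho))^2\,d\rho$ is not what \eqref{7in1} produces, and the subsequent substitution $\partial_\rho f = \rho^{-1}D_z f$ does \emph{not} reduce it to $\|D_z f(\cdot,\theta)w_z\|_{L^2_z}^2$, since $\int_0^\infty \rho^{-2}|D_zf|^2 w_z^2\,d\rho$ is \emph{not} controlled by $\|D_z f\,w_z\|_{L^2_z}^2$ ($\rho^{-2}$ is unbounded near $\rho=0$). What \eqref{7in1} actually gives — and what your later steps use — is $|f(z,\theta)|^2 \lesssim \big(\int_0^\infty |\partial_\rho f|\,d\rho\big)^2 \le \|D_z f(\cdot,\theta)w_z\|_{L^2_z}^2\cdot \int_0^\infty(\rho w_z)^{-2}\,d\rho$, where Cauchy--Schwarz splits the singular factor into the convergent weight integral rather than absorbing it into the $D_z f$ norm. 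With the starting display corrected to $\int_0^\infty |D_\rho f|^2\,w_z^2\,d\rho$ (or phrased as in \eqref{7in1}), the chain is sound. It is also worth making explicit the small Cauchy--Schwarz observation you implicitly invoke when applying the $\theta$-embedding of Lemma \ref{7em1} to $h(\theta)\tri\|D_z f(\cdot,\theta)w_z\|_{L^2_z}$, namely $|D_\theta h(\theta)|\le \|D_\theta D_z f(\cdot,\theta)\,w_z\|_{L^2_z}$, which together with $h(0)=h(\pi/2)=0$ closes the argument.
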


\begin{lemm}\label{7ha1}
Let $f(z,\theta)|_{\partial D}=0$ and $\tilde{G}_f(z)=\frac{3}{4\alpha}z^{-\frac{5}{\alpha}}\int_{0}^{z}\rho^{\frac{5}{\alpha}-1}\langle f, K\rangle_{\theta}d\rho$. Then we have
$$\|D^k_z\tilde{G}_fw_z\|_{L^{2}_{z}}\leq C\|f\|_{\mathcal{H}^{k}},~~~~\|D^k_z\tilde{G}_fw^{\ast}_z\|_{L^{2}_{z}}\leq C\|f\|_{\mathcal{H}^{k,\ast}},$$
and 
$$\alpha\|D^{k+1}_z\tilde{G}_fw_z\|_{L^{2}_{z}}\leq C\|f\|_{\mathcal{H}^{k}},~~~~\alpha\|D^{k+1}_z\tilde{G}_fw^{\ast}_z\|_{L^{2}_{z}}\leq C\|f\|_{\mathcal{H}^{k,\ast}}.$$   
\end{lemm}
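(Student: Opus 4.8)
\textbf{Proof plan for Lemma \ref{7ha1}.}
The plan is to control $\tilde G_f$ directly from its integral formula $\tilde G_f(z)=\frac{3}{4\alpha}z^{-5/\alpha}\int_0^z\rho^{5/\alpha-1}\langle f,K\rangle_\theta\,d\rho$ by exploiting the averaging structure of the kernel $z^{-5/\alpha}\rho^{5/\alpha-1}\mathbf{1}_{\{\rho<z\}}$ and the explicit powers of $z$ in $w_z$ and $w_z^\ast$. First I would record the two algebraic identities $D_z\tilde G_f=\frac{3}{4\alpha}\langle f,K\rangle_\theta-\frac5\alpha\tilde G_f$ (which follows from differentiating the formula, exactly as in the computation leading to \eqref{4eq4}) and its iterates $D_z^{k}\tilde G_f=\frac{3}{4\alpha}\sum_{j=0}^{k-1}(-\tfrac5\alpha)^{j}D_z^{k-1-j}\langle f,K\rangle_\theta-(\tfrac5\alpha)^{k}\tilde G_f$. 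Thus, modulo the pure $\tilde G_f$ term, each $D_z^{k}\tilde G_f$ and even $\alpha D_z^{k+1}\tilde G_f$ is a finite combination of $\alpha^{-1}$ times radial derivatives $D_z^{i}\langle f,K\rangle_\theta$ with $i\le k$; since the factor $(5/\alpha)^{j}$ appears with the matching power of $\alpha^{-1}$, each term carries a clean $\alpha^{-1}$, which is the size we want after noting $\tilde G^*=G^*=\frac{3}{4\alpha}L^{-1}_{z,K}$ also carries $\alpha^{-1}$ and that in all the applications this is absorbed. So the whole lemma reduces to the single weighted bound $\|\tilde G_f w_z\|_{L^2_z}\lesssim\|f\|_{\mathcal H^0}$ together with $\|\langle f,K\rangle_\theta D_z^i(\cdot)\,w_z\|_{L^2_z}\lesssim\|f\|_{\mathcal H^i}$, and their $w_z^\ast$/$\mathcal H^{k,\ast}$ analogues.

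The core estimate is a Hardy-type inequality for the averaging operator $A g(z):=z^{-5/\alpha}\int_0^z\rho^{5/\alpha-1}g(\rho)\,d\rho$. Writing $w_z^2\approx 1+z^{-4/\beta}$ (as in the proof of Lemma \ref{5ha1}), I would split $\int_0^\infty |Ag|^2 w_z^2\,dz$ into the pieces weighted by $1$ and by $z^{-4/\beta}$, and on each piece apply the classical weighted Hardy inequality $\int_0^\infty z^{-s}\big(\int_0^z g\frac{d\rho}{\rho}\big)^2 dz\lesssim_{s}\int_0^\infty z^{-s}g^2\,dz$ valid for $s\ne 0$; here $s=0$ and $s=4/\beta$, both admissible since $\beta\in(0,1]$. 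This yields $\|\tilde G_f w_z\|_{L^2_z}\lesssim \alpha^{-1}\|\langle f,K\rangle_\theta w_z\|_{L^2_z}\lesssim \alpha^{-1}\|f w_z\|_{L^2}$, where the last step is Cauchy--Schwarz in $\theta$ against $K(\theta)$, using $\|K^{1/2}/w_\theta^\eta\|_{L^2_\theta}<\infty$ (or the corresponding weight for $w^\ast$). For the radial-derivative terms $\|D_z^i\langle f,K\rangle_\theta w_z\|_{L^2_z}=\|\langle D_z^i f,K\rangle_\theta w_z\|_{L^2_z}$ one commutes $D_z$ with the $\theta$-integration and again uses Cauchy--Schwarz, dominating by $\|D_z^i f\,w^\eta\|_{L^2}\le\|f\|_{\mathcal H^{i}}$. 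The $w_z^\ast$, $\mathcal H^{k,\ast}$ versions are identical after replacing $w_z\rightsquigarrow w_z^\ast$ and using $(w_z^\ast)^2\approx 1+z^{-(2/\beta+1/2)}$, so that the relevant Hardy exponents are $0$ and $2/\beta+1/2$, still nonzero.

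I expect the only genuinely delicate point to be bookkeeping the powers of $\alpha$: naively each application of $D_z$ to $\tilde G_f$ produces a factor $5/\alpha$, so $\alpha D_z^{k+1}\tilde G_f$ a priori only has a single saved power of $\alpha$, not more; the claim as stated (constant $C$ independent of $\alpha$ up to the implicit $\alpha^{-1}$ inherited from $\tilde G_f$'s definition, consistent with how $\tilde G$ was defined in \eqref{4eq4}) is exactly what the recursion gives, but one must check that no term like $\alpha^{-2}$ survives — it does not, because the $j$-th term of the expansion comes with $(5/\alpha)^{j}$ multiplying $D_z^{k-j}\langle f,K\rangle_\theta$ which needs no further $\alpha$ since $\|D_z^{i}\langle f,K\rangle_\theta w_z\|_{L^2_z}$ is $\alpha$-uniform, and the overall prefactor $\frac{3}{4\alpha}$ accounts for the single $\alpha^{-1}$. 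Everything else is a routine combination of the weighted Hardy inequality, Cauchy--Schwarz against $K(\theta)$, and the elementary asymptotics of $w_z$ and $w_z^\ast$ already used in Lemma \ref{5ha1} and in the estimates \eqref{7in1}--\eqref{7in3}.
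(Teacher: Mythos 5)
There is a genuine gap, and it is precisely at the point you flag as ``delicate'': your iterated formula does not, on its own, deliver $\alpha$-uniform constants, and the justification you offer for why no $\alpha^{-2}$ survives is incorrect.

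Your recursion $D_z\tilde G_f=\frac{3}{4\alpha}\langle f,K\rangle_\theta-\frac5\alpha\tilde G_f$ is algebraically right, but iterating it and estimating \emph{term by term} produces coefficients $\frac{3}{4\alpha}\bigl(\tfrac5\alpha\bigr)^j$, i.e.\ genuine $\alpha^{-j-1}$ losses. The phrase ``the $j$-th term $\ldots$ needs no further $\alpha$'' does not help: the product $(5/\alpha)^j\cdot\frac{3}{4\alpha}\cdot(\alpha\text{-uniform quantity})$ is of size $\alpha^{-j-1}$, not $\alpha^{-1}$. So the naive term-by-term bound of your expansion gives $\|D_z^k\tilde G_f w_z\|_{L^2_z}\lesssim\alpha^{-k}\|f\|_{\mathcal H^k}$, which is useless for the lemma and catastrophic in all downstream applications, where these constants must be $\alpha$-uniform for the perturbation argument to close. (There is also a sign slip in the last term of the iterate — it should be $+(-5/\alpha)^k\tilde G_f$ — but that is cosmetic.)

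The cure, and what the paper's proof actually does, is an integration by parts that turns the seemingly dangerous identity into an exact commutation: writing $\tilde G_f$ as the averaging operator $Ag(z)=z^{-5/\alpha}\int_0^z\rho^{5/\alpha-1}g\,d\rho$ applied to $\langle f,K\rangle_\theta$, one integrates by parts in $\rho$ (using $f|_{\partial D}=0$) to get
\begin{align*}
D_z\tilde G_f=\frac{3}{4\alpha}\langle f,K\rangle_\theta-\frac5\alpha\tilde G_f=\tilde G_{D_z f},
\end{align*}
i.e.\ $D_z$ commutes with $\tilde G$. The $\alpha^{-2}$ contribution of the second term exactly cancels against the boundary term produced by integration by parts; without this identity there is no reason for the cancellation you assert. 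Once $D_z^k\tilde G_f=\tilde G_{D_z^k f}$, the base estimate $\|\tilde G_g w_z\|_{L^2_z}\lesssim\|g w^\eta\|_{L^2}$ (your Hardy argument is fine for this, provided you keep the $\alpha$-gain of order $\alpha$ coming from the Hardy constant $\approx\frac{\alpha}{5}$; your draft loses it and writes $\alpha^{-1}$ on the right) immediately yields $\|D_z^k\tilde G_f w_z\|_{L^2_z}\lesssim\|D_z^kfw^\eta\|_{L^2}\leq\|f\|_{\mathcal H^k}$, and the $w^\ast_z/\mathcal H^{k,\ast}$ version is identical. For the $\alpha D_z^{k+1}$ estimate, one only needs the single identity $\alpha D_z\tilde G_f=-5\tilde G_f+\frac{3}{4}\langle f,K\rangle_\theta$ applied \emph{after} first using the commutation $D_z^k\tilde G_f=\tilde G_{D_z^kf}$; this gives $\alpha D_z^{k+1}\tilde G_f=-5D_z^k\tilde G_f+\frac34 D_z^k\langle f,K\rangle_\theta$, two terms each bounded by $\|f\|_{\mathcal H^k}$. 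Your plan needs this commutation inserted; as written it does not prove the $\alpha$-uniform bound.
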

\begin{proof}
After some direct calculations, it is not difficult to check that $w_z\approx 1+z^{-\frac{2}{\beta}}$. 
This, together with integration by parts, yields that
\begin{align*}
	\|\tilde{G}_fw_z\|^2_{L^{2}_{z}}&\lesssim \|\tilde{G}_f\|^2_{L^{2}_{z}}+ \|\tilde{G}_fz^{-\frac{2}{\beta}}\|^2_{L^{2}_{z}} \\ \notag
	&\lesssim \frac{1}{\alpha^2}\frac{\alpha}{|\alpha-10|}\left|\int_0^{\infty}\partial_z(z^{-\frac{10}{\alpha}+1})\left(\int_{0}^{z}\rho^{\frac{5}{\alpha}-1}\langle f, K\rangle_{\theta}d\rho\right)^2dz\right| \\ \notag
	&~~~+ \frac{1}{\alpha^2}\frac{\alpha\beta}{|\alpha\beta-10\beta-4\alpha|}\left|\int_0^{\infty}\partial_z(z^{-\frac{10}{\alpha}-\frac{4}{\beta}+1})\left(\int_{0}^{z}\rho^{\frac{5}{\alpha}-1}\langle f, K\rangle_{\theta}d\rho\right)^2dz\right| \\ \notag
	&\lesssim \left|\int_0^{\infty}\langle f, K\rangle_{\theta}\tilde{G}_fdz\right|+\left|\int_0^{\infty}z^{-\frac{4}{\beta}} \langle f, K\rangle_{\theta}\tilde{G}_fdz\right|  \\ \notag
	&\lesssim \|\tilde{G}_f\|_{L^{2}_{z}}\|fw_\theta^\eta\|_{L^{2}}+ \|\tilde{G}_fz^{-\frac{2}{\beta}}\|_{L^{2}_{z}}\|fz^{-\frac{2}{\beta}}w_\theta^\eta\|_{L^{2}}\\ \notag
	&\lesssim \|\tilde{G}_fw_z\|_{L^{2}_{z}}\|fw^\eta\|_{L^{2}},
\end{align*}
which directly implies that
\begin{align}\label{7in4}
	\|\tilde{G}_fw_z\|_{L^{2}_{z}}\lesssim \|fw^\eta\|_{L^{2}}.
\end{align}
Since $w^\ast_z\approx z^{-\frac{1}{4}}+z^{-\frac{1}{\beta}-\frac{1}{4}}$, it then follows from a similar way as \eqref{7in4} that
\begin{align}\label{7in5}
	\|\tilde{G}_fw^\ast_z\|_{L^{2}_{z}}\lesssim \|fw^{\ast,\eta}\|_{L^{2}}.
\end{align}
Since $D_z=z\partial_z$, one can get, by some straightforward computations, that
\begin{align}
	D_z \tilde{G}_f
	&=-\frac{15}{4\alpha^2}z^{-\frac{5}{\alpha}}\int_{0}^{z}\rho^{\frac{5}{\alpha}-1}\langle f, K\rangle_{\theta}d\rho+\frac{3}{4\alpha}\langle f, K\rangle_{\theta} \label{DzGf}\\
	&=-\frac{3}{4\alpha}z^{-\frac{5}{\alpha}}\int_{0}^{z}\partial_{\rho}(\rho^{\frac{5}{\alpha}})\langle f, K\rangle_{\theta}d\rho+\frac{3}{4\alpha}\langle f, K\rangle_{\theta} \notag\\
	&=\frac{3}{4\alpha}z^{-\frac{5}{\alpha}}\int_{0}^{z}\rho^{\frac{5}{\alpha}-1}\langle D_{\rho}f, K\rangle_{\theta}d\rho \notag\\
	&= \tilde{G}_{D_z f}.\notag
\end{align}
The above equality implies that the operators $D_z$ and $\tilde{G}$ commute.
Following the same derivation procedure as in \eqref{7in4} and \eqref{7in5}, we deduce from \eqref{DzGf} that
\begin{align}\label{7in6}
	\|D^k_z\tilde{G}_fw_z\|_{L^{2}_{z}}\leq C\|f\|_{\mathcal{H}^{k}},~~~~\|D^k_z\tilde{G}_fw^{\ast}_z\|_{L^{2}_{z}}\leq C\|f\|_{\mathcal{H}^{k,\ast}}.
\end{align}
It then follows from \eqref{DzGf} that
\begin{align*}
	\alpha D_z \tilde{G}_f
	&=-\frac{15}{4\alpha}z^{-\frac{5}{\alpha}}\int_{0}^{z}\rho^{\frac{5}{\alpha}-1}\langle f, K\rangle_{\theta}d\rho+\frac{3}{4}\langle f, K\rangle_{\theta} 
	=-5\tilde{G}_f+\frac{3}{4}\langle f,K\rangle_{\theta},
\end{align*}
which, along with \eqref{7in6}, gives that
\begin{align}\label{7in7}
	\alpha\|D^{k+1}_z\tilde{G}_fw_z\|_{L^{2}_{z}}\leq C\|f\|_{\mathcal{H}^{k}},~~~~\alpha\|D^{k+1}_z\tilde{G}_fw^{\ast}_z\|_{L^{2}_{z}}\leq C\|f\|_{\mathcal{H}^{k,\ast}}.
\end{align}
The proof of Lemma \ref{7ha1} is thereby completed.
\end{proof}

As is well known, when considering the solutions of the Euler equations \eqref{1eq1} in the $H^3$ framework, by exploiting the divergence-free condition when dealing with the transport term, one can obtain that $\langle u\cdot \nabla u,u \rangle=0$. This is the so-called null structure.
Our next goal is to find suitable weight functions so that a similar null structure can be achieved.
More precisely, we have the following lemma.
\begin{lemm}\label{7div1}
Let $f(z,\theta)|_{\partial D}=0$. 
For any $\xi\in[0,+\infty)$, we denote that 
\begin{align}\label{def-wxi}
	\overline{w}^\xi\tri w^\xi z^{\frac{1}{2}-\frac{3}{2\alpha}}(\cos\theta)^{-\frac{1}{2}},
\end{align}
then we have
\begin{align}
	\label{stru-Tf-1}
	\Big\langle \frac{1}{\overline{w}^\xi}T_{\overline{w}^\xi f},(w^\xi)^2f\Big\rangle=0.
\end{align}
\end{lemm}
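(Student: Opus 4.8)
The plan is to verify \eqref{stru-Tf-1} by exhibiting $T_{\overline{w}^\xi f}$ as an exact divergence (in the twisted coordinates) paired against a weight that converts the inner product into a total integral of a divergence over $D$, which then vanishes by the boundary condition. The key observation, already recorded in Section~\ref{sec:analysis}, is the identity
\begin{align}\label{eq-plan-divfree}
	(\cos\theta)^{-1}\partial_{\theta}\!\left(\cos\theta\, U(\Phi)\right)+\alpha z^{-\frac{3}{\alpha}}D_z\!\left(z^{\frac{3}{\alpha}}V(\Phi)\right)=0,
\end{align}
which is the incarnation of the divergence-free constraint in the $(z,\theta)$ frame. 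Granting \eqref{eq-plan-divfree}, for any function $h$ one has the product rule
\begin{align}\label{eq-plan-Th}
	T_h=(\cos\theta)^{-1}\partial_{\theta}\!\left(\cos\theta\, U(\Phi) h\right)+\alpha z^{-\frac{3}{\alpha}}D_z\!\left(z^{\frac{3}{\alpha}}V(\Phi) h\right),
\end{align}
since the extra terms produced by differentiating $h$ are exactly $\frac{U(\Phi)}{\sin(2\theta)}D_\theta h+V(\Phi)\alpha D_z h$ after using $\partial_\theta(\cos\theta\,h)=\cos\theta\,\partial_\theta h-\sin\theta\, h$ and $\frac{1}{\sin(2\theta)}=\frac{1}{2\sin\theta\cos\theta}$ — this bookkeeping is the only genuinely computational step and I would not spell it all out. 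Applying \eqref{eq-plan-Th} with $h=\overline{w}^\xi f$ and the established identity \eqref{null-Tf-1}, namely $\langle T_h, h\, z^{\frac{3}{\alpha}-1}\cos\theta\rangle=0$, one reduces \eqref{stru-Tf-1} to checking that the test function matches: $\langle \frac{1}{\overline{w}^\xi}T_{\overline{w}^\xi f},(w^\xi)^2 f\rangle=\langle T_{\overline{w}^\xi f},(\overline{w}^\xi f)\,\frac{(w^\xi)^2 f}{(\overline{w}^\xi)^2 f}\rangle$, and by \eqref{def-wxi} we have $\frac{(w^\xi)^2}{(\overline{w}^\xi)^2}=z^{\frac{3}{\alpha}-1}\cos\theta$ exactly, so the bracket is precisely $\langle T_{\overline{w}^\xi f},(\overline{w}^\xi f)\,z^{\frac{3}{\alpha}-1}\cos\theta\rangle=0$.

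\textbf{Order of steps.} First I would establish \eqref{eq-plan-divfree} directly from the definitions $U(\Phi)=-3\Phi-\alpha D_z\Phi$ and $V(\Phi)=\partial_\theta\Phi-(\tan\theta)\Phi$ in \eqref{def_U-V}: expand $(\cos\theta)^{-1}\partial_\theta(\cos\theta V(\Phi))=\partial_\theta^2\Phi-2(\tan\theta)\partial_\theta\Phi+((\tan\theta)^2-1)\Phi-\sec^2\theta\,\Phi$ up to a careful sign computation, and $\alpha z^{-3/\alpha}D_z(z^{3/\alpha}U(\Phi))=3U(\Phi)+\alpha D_z U(\Phi)=-9\Phi-3\alpha D_z\Phi-3\alpha D_z\Phi-\alpha^2 D_z^2\Phi$; their sum, once matched against the potential equation $L^{\alpha}_z(\Phi)+L_\theta(\Phi)=F$ rewritten with $L^{\alpha}_z(\Phi)=-\alpha^2 D_z^2\Phi-5\alpha D_z\Phi$ and $L_\theta(\Phi)=-\partial_\theta V(\Phi)-6\Phi$, should telescope to zero. (Here I am assuming that in Lemma~\ref{7div1} the potential $\Phi$ is the one solving this elliptic problem, so that \eqref{eq-plan-divfree} is available; this is the implicit standing assumption of Section~\ref{sec:Est-T}.) Second, I would prove \eqref{eq-plan-Th} as the product-rule identity above. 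Third, I would integrate \eqref{eq-plan-Th} with $h=\overline{w}^\xi f$ against $h\,z^{\frac{3}{\alpha}-1}\cos\theta$: writing $z^{\frac{3}{\alpha}-1}\cos\theta\, (\cos\theta)^{-1}\partial_\theta(\cos\theta U(\Phi)h)\cdot h=z^{\frac{3}{\alpha}-1}\partial_\theta(\cos\theta U(\Phi)h)h$, integrate by parts in $\theta$ to get $-\tfrac12\partial_\theta(\cos\theta U(\Phi))\,h^2 z^{3/\alpha-1}$ plus boundary terms that vanish since $h|_{\partial D}=0$; similarly the radial piece, after writing $\alpha z^{-3/\alpha}D_z(z^{3/\alpha}V(\Phi)h)\cdot h z^{3/\alpha-1}=\alpha z^{-1}D_z(z^{3/\alpha}V(\Phi)h)h$ and integrating by parts in $z$ using $\int D_z(A)B\,\frac{dz}{z}=-\int A D_z(B)\frac{dz}{z}$ plus boundary terms, yields $-\tfrac12 \alpha z^{-3/\alpha}D_z(z^{3/\alpha}V(\Phi))h^2 z^{3/\alpha-1}$. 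Summing and invoking \eqref{eq-plan-divfree} gives $0$; this is essentially the argument behind \eqref{null-Tf-1}/\eqref{4eqnull1}, which I may simply cite. Fourth, I would close by the weight-matching computation $(w^\xi/\overline{w}^\xi)^2=z^{3/\alpha-1}\cos\theta$ from \eqref{def-wxi}.

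\textbf{Main obstacle.} The conceptual content is entirely in \eqref{eq-plan-divfree}, and its proof is a sign-sensitive trigonometric identity; the likely pitfall is a miscomputation of $(\cos\theta)^{-1}\partial_\theta(\cos\theta\, U(\Phi))$ versus the $\partial_\theta V(\Phi)$ term in $L_\theta$, so I would carry that step slowly, cross-checking against the already-asserted \eqref{null-Tf-1} as a consistency test. A secondary technical point is justifying the vanishing of boundary contributions in the integration by parts: at $\theta=\pi/2$ the factor $\cos\theta\to 0$ helps, at $\theta=0$ one uses $U(\Phi)|_{\theta=0}=0$ (odd symmetry) together with $h|_{\partial D}=0$; in $z$, at $z=0$ the factor $z^{3/\alpha}$ controls the integrand given the decay/growth bounds on $\Phi$ and $f$ built into the function spaces, and at $z=\infty$ the weights $\overline{w}^\xi$ decay. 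I would state these vanishing claims and defer the quantitative decay bounds to the space definitions in Section~\ref{sec:notation}. Once \eqref{eq-plan-divfree} is in hand, the rest is routine and the identity \eqref{stru-Tf-1} follows exactly, with no loss of constants.
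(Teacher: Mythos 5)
Your overall strategy matches the paper exactly: pass to the pairing $\langle T_{\overline w^\xi f},(\overline w^\xi f)\,z^{3/\alpha-1}\cos\theta\rangle$ via the weight identity $(w^\xi/\overline w^\xi)^2 = z^{3/\alpha-1}\cos\theta$, integrate by parts to pick up the divergence, and kill the divergence by \eqref{eq-plan-divfree}. That weight-matching step and the reduction to \eqref{null-Tf-1} are correct and are precisely what the paper does in \eqref{est-Tf-1}. However, there is a genuine error in the way you propose to \emph{prove} the divergence identity. You state \eqref{eq-plan-divfree} correctly, with $U(\Phi)$ under the angular derivative and $V(\Phi)$ under the radial one, but your ``Order of steps'' then expands $(\cos\theta)^{-1}\partial_\theta(\cos\theta\,V(\Phi))$ and $\alpha z^{-3/\alpha}D_z(z^{3/\alpha}U(\Phi))$, transposing the roles of $U$ and $V$. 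The transposed sum is not zero, and it is also not made zero by ``matching against the potential equation'': carrying out the computation, the transposed expression differs from $-(L^\alpha_z+L_\theta)(\Phi)$ by lower-order terms such as $-\tan\theta\,\partial_\theta\Phi - 16\Phi - 11\alpha D_z\Phi - 2\alpha^2 D_z^2\Phi + \sec^2\theta\,\Phi$, which the potential equation cannot cancel.

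More conceptually, your remark that the lemma depends on $\Phi$ solving $L^\alpha_z(\Phi)+L_\theta(\Phi)=F$ is a misconception that should be removed: the identity $(\cos\theta)^{-1}\partial_\theta(\cos\theta\,U(\Phi))+\alpha z^{-3/\alpha}D_z(z^{3/\alpha}V(\Phi))=0$ holds for \emph{arbitrary} $\Phi$, purely from the algebraic definitions $U(\Phi)=-3\Phi-\alpha D_z\Phi$ and $V(\Phi)=\partial_\theta\Phi-(\tan\theta)\Phi$ and the commutation of $\partial_\theta$ with $D_z$. Concretely, $(\cos\theta)^{-1}\partial_\theta(\cos\theta\,U(\Phi))=-3\partial_\theta\Phi-\alpha\,\partial_\theta D_z\Phi+3\tan\theta\,\Phi+\alpha\tan\theta\,D_z\Phi$ and $\alpha z^{-3/\alpha}D_z(z^{3/\alpha}V(\Phi))=3\partial_\theta\Phi-3\tan\theta\,\Phi+\alpha\,D_z\partial_\theta\Phi-\alpha\tan\theta\,D_z\Phi$, and the sum cancels term by term. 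This is exactly the computation in \eqref{est-Tf-2}, and it invokes no elliptic equation. Once you correct the $U\!\leftrightarrow\!V$ swap and drop the appeal to the potential equation, your proof is the paper's proof; the boundary-term discussion at $\theta=0,\pi/2$ and $z=0,\infty$ is fine as you have it.
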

\begin{rema}
In what follows, we mainly employ the cases $\xi=\eta$ or $\lambda$ from Lemma \ref{7div1}.
\end{rema}
\begin{proof}[The proof of Lemma \ref{7div1}]
By virtue of integrating by parts, we deduce that
\begin{align}\label{est-Tf-1}
	\Big\langle \frac{1}{\overline{w}^\xi}T_{\overline{w}^\xi f},(w^\xi)^2f\Big\rangle
	&=\Big\langle \frac{1}{\overline{w}^\xi}T_{\overline{w}^\xi f},(\overline{w}^\xi)^2f z^{\frac{3}{\alpha}-1}\cos\theta \Big\rangle
    =\frac{1}{2} \big\langle T_{(\overline{w}^\xi f)^2},z^{\frac{3}{\alpha}-1}\cos\theta\big\rangle
    \\
	&=\frac{1}{2}\big\langle (\overline{w}^\xi f)^2,\partial_\theta(U(\Phi)z^{\frac{3}{\alpha}-1}\cos\theta)+\alpha\partial_z(V(\Phi)z^{\frac{3}{\alpha}}\cos\theta)\big\rangle.
	\notag
\end{align}
Thanks to \eqref{def_U-V}, we infer that
\begin{align}\label{est-Tf-2}
	&~~~~\partial_\theta(U(\Phi)z^{\frac{3}{\alpha}-1}\cos\theta)+\alpha\partial_z(V(\Phi)z^{\frac{3}{\alpha}}\cos\theta) \\
	&=\partial_\theta((-3\Phi-\alpha D_z \Phi)z^{\frac{3}{\alpha}-1}\cos\theta)+\alpha\partial_z((\partial_\theta \Phi-\tan \theta\Phi)z^{\frac{3}{\alpha}}\cos\theta) \notag\\
	&=(-3\partial_\theta\Phi-\alpha \partial_\theta D_z \Phi)z^{\frac{3}{\alpha}-1}\cos\theta+(3\Phi+\alpha D_z \Phi)z^{\frac{3}{\alpha}-1}\sin\theta \notag\\
	&~~~~+\alpha(\partial_\theta D_z \Phi-\tan\theta D_z\Phi)z^{\frac{3}{\alpha}-1}\cos\theta+3(\partial_\theta \Phi-\tan\theta\Phi)z^{\frac{3}{\alpha}-1}\cos\theta \notag\\
	&=0.\notag
\end{align}
We then substitute \eqref{est-Tf-2} into \eqref{est-Tf-1}, to find that \eqref{stru-Tf-1} holds.
Thus, we finish the proof of this lemma.
\end{proof}

\subsection{Transport estimates}
Based on the elliptic estimates for equation \eqref{6model} shown in Corollaries \ref{6cor1}, \ref{6cor2} and \ref{6cor3}, the transport null structure shown in Lemma \ref{7div1}, we will give the \textit{a priori} energy estimates for $\mathcal{H}^{-1}$-norm, $\mathcal{H}^{2}_{\eta}$-norm and $\mathcal{E}^2_{\eta}$-norm of $T_g$.

\begin{prop}\label{7Tg}
Let $L^{-1}_{z,K}(g)(0)=0$, there exist constants $\alpha>0$ sufficiently small and $\eta(\beta)$, such that if $\alpha\ll 1-\eta\ll \beta$ and $|\mu|\leq \alpha^\frac{1}{2}$, then 
\begin{align}
	\big|\langle T_g, g(w^K)^2\rangle\big|
	\leq&~ \frac{1}{10}\left(\|g\|^2_{\mathcal{H}^{-1}}+\|g\|^2_{\mathcal{H}_{\eta}^2}\right)+C_\beta\alpha^{\frac{1}{2}}(1-\eta)^{-\frac{1}{2}}\|g\|^2_{\mathcal{H}^2}+C_\beta \alpha^{-1}\|g\|^3_{\mathcal{H}^2},
	\label{est-norm-H-1}
	\\
	\big|\langle T_g, g\rangle_{\mathcal{H}_{\eta}^2}\big|
	\leq&~ \frac{1}{10}\|g\|^2_{\mathcal{H}_{\eta}^2}
	+C_\beta\alpha^{\frac{1}{2}}(1-\eta)^{-\frac{5}{2}}\|g\|^2_{\mathcal{H}^2}
    +C_\beta\alpha^{-1}(1-\eta)^{-2}\|g\|^3_{\mathcal{H}^2}
	+C_\beta\alpha^{-1}
	\|g\|^2_{\mathcal{H}^{2}}\|g\|_{\mathcal{E}^{2}},
	\label{est-norm-H2eta}
	\\ 
	\big|\langle T_g, g\rangle_{\mathcal{E}_{\eta}^2}\big|
	\leq&~ \frac{1}{10}(\|g\|^2_{\mathcal{E}_{\eta}^2}+\|g\|^2_{\mathcal{H}_{\eta}^2})+C_\beta\alpha^{\frac{1}{2}}(1-\eta)^{-\frac{1}{2}}(\|g\|^2_{\mathcal{H}^2}+\|g\|^2_{\mathcal{E}^2}) \label{est-norm-E2eta}\\
	&+C_\beta
	\alpha^{-1}\left(\|g\|_{\mathcal{E}^{2}}+\|g\|_{\mathcal{H}^{2}}\right)
	\|g\|_{\mathcal{E}^{2}}\|g\|_{\mathcal{H}^{2}}.
	\notag
\end{align}
\end{prop}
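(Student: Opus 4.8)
The plan is to estimate each of the three pairings by first inserting the decomposition $T_g = \frac{1}{\mathcal{W}}T_{g\mathcal{W}} - \frac{1}{\mathcal{W}}T_{\mathcal{W}}\cdot g$ from \eqref{decom-Tg}, choosing the weight $\mathcal{W}=\overline{w}^{\xi}$ as in \eqref{def-wxi} (with $\xi=\eta$ for the $\mathcal{H}^{-1}$ and $\mathcal{H}^2_\eta$ pairings against $w^K$ or $w^\eta$, and $\xi=\lambda$ for the factors carrying the $w^\lambda$ weight). For the null part $\frac{1}{\mathcal{W}}T_{g\mathcal{W}}$, the exact cancellation \eqref{stru-Tf-1} of Lemma \ref{7div1} kills the top-order contribution when it is paired against $(w^\xi)^2 g$; what remains after commuting $D_\theta,D_z$ through $\mathcal{W}$ in the higher-order pairings are genuinely lower-order commutator terms. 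For the homogeneous part $\frac{1}{\mathcal{W}}T_{\mathcal{W}}\cdot g$, I would use the identity \eqref{4eqnull2} (the null structure with $\xi\approx\tfrac12$) so that the coefficient $\frac{D_\theta\mathcal{W}}{\mathcal{W}}+\cos(2\theta)-(\sin\theta)^2=(1-2\xi)\cos(2\theta)$ is of size $O(1-\eta)$ or $O(\alpha/\beta)$, hence absorbable; the leftover is again a lower-order term times $L^{-1}_z(\Gamma^\ast_\gamma)$, plus the nonlinear coefficient $\mathcal{R}(\Phi-\Phi_{\text{main}})$-type pieces.

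The next step is to bound the coefficients $\frac{U(\Phi)}{\sin(2\theta)}$ and $V(\Phi)$ and their derivatives. Here I would split $\Phi = \Phi_{\text{main}} + \widetilde{G}\sin(2\theta) + \widetilde{\Phi}$ as in \eqref{decom-Phi}, with $\Phi_{\text{main}}=G^\ast\sin(2\theta)$, $G^\ast=\frac{3}{4\alpha}L^{-1}_{z,K}(F)$. For the main part, $\frac{U(\Phi_{\text{main}})}{\sin(2\theta)}$ and $V(\Phi_{\text{main}})$ are linear in $L^{-1}_{z,K}(F)$ and $D_z L^{-1}_{z,K}(F)$; writing $F=F^\ast_\gamma+g$, the $F^\ast_\gamma$-part is handled by Lemma \ref{7fm1}, and the $g$-part by the Hardy inequality of Lemma \ref{5ha1} together with Corollary \ref{6cor1} (and $L^\infty$ control via Lemma \ref{7em1} and Corollary \ref{7em2}). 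For $\widetilde{G}$ I would use Lemma \ref{7ha1}, and for $\widetilde{\Phi}$ the elliptic estimates Corollaries \ref{6cor1}–\ref{6cor3}, since $\langle\widetilde{F},K\rangle_\theta=0$. The $L^\infty$ bound on $D_z\!\big(\frac{U(\tilde{\Phi}_g)}{\sin(2\theta)}\big)$ is precisely where $\mathcal{E}^2$ enters: as flagged in the introduction, this quantity is $\lesssim_\beta \alpha^{-1}\|g\|_{\mathcal{E}^2}$, and this avoids raising the regularity index. The explicit powers of $\alpha$, $(1-\eta)$, $\beta$ in the three right-hand sides then come from keeping track of (i) the $(1-\eta)$-weights built into the $\mathcal{H}^2_\eta$ and $\mathcal{E}^2_\eta$ norms, (ii) the $\alpha^{1/2}$ smallness of $\gamma-\beta$ from \eqref{est-gamma-beta}, and (iii) the quadratic/cubic structure coming from the product $\Phi\cdot F$.

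For the $\mathcal{H}^{-1}$ estimate \eqref{est-norm-H-1}, after the null cancellation against $g(w^K)^2$ only lower-order pieces survive, and these are controlled by $\|g\|_{\mathcal{H}^{-1}}$, $\|g\|_{\mathcal{H}^2_\eta}$ and the nonlinear $\alpha^{-1}\|g\|^3_{\mathcal{H}^2}$ term; the factor $\alpha^{1/2}(1-\eta)^{-1/2}$ on the quadratic term comes from $\|\Gamma_{\theta,\beta}w^\eta_\theta\|_{L^2_\theta}\lesssim(1-\eta)^{-1/2}$ together with $|\gamma-\beta|\lesssim\alpha^{1/2}$. For \eqref{est-norm-H2eta} I would apply $D_\theta,D_\theta^2,D_z,D_\theta D_z,D_z^2$ to $T_g$ — commuting through the decomposition — and at each order use the null structure on the top term and Corollaries \ref{6cor1}–\ref{6cor3}, Lemmas \ref{6lem5}–\ref{6lem7} to reduce mixed derivatives; the $D_zD_\theta T_g$ term forces the $\alpha^{-1}\|g\|^2_{\mathcal{H}^2}\|g\|_{\mathcal{E}^2}$ contribution, and the powers $(1-\eta)^{-5/2}$, $(1-\eta)^{-2}$ track the worst angular weight appearing. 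For \eqref{est-norm-E2eta}, only the $\mathcal{L}^2$ and radial $\dot{\mathcal{H}}^2$ pieces with weight $w^\lambda$ appear; since $|\lambda-1|\le\alpha/\beta$, applying the Hardy inequalities with $\xi=\lambda\in[0,2]$ (Lemmas \ref{6lem4}, \ref{6lem3}) and Corollary \ref{6cor3} gives exactly the stated bound.

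The main obstacle is the control of $D_zD_\theta T_g$, and more precisely the $L^\infty$-bound on $D_z\!\big(\frac{U(\tilde{\Phi}_g)}{\sin(2\theta)}\big)$: a naive estimate would require $\|g\|_{\mathcal{H}^3}$, losing one derivative and breaking the critical-regularity scheme. The resolution — introducing the $\mathcal{E}^2$ norm, which carries an extra angular weight on $D_z^2 g$ rather than an extra derivative — is the key device, and verifying that the resulting cubic terms genuinely close in $\mathcal{H}^{-1}\cap\mathcal{H}^2_\eta\cap\mathcal{E}^2_\eta$ (so that small data/small perturbation absorbs them) is the delicate point. A secondary difficulty is bookkeeping: one must be careful that the $(1-\eta)$-weights attached to the various derivatives in the definitions \eqref{def_norm_H2eta}, \eqref{def_norm_E2eta} are matched correctly when commuting $\mathcal{W}$, so that the transport contribution is always dominated by $\tfrac{1}{10}$ of the coercive norm plus an $\alpha^{1/2}$- or cubic-small remainder.
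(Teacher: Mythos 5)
There is a genuine gap in your treatment of the $\mathcal{H}^{-1}$ estimate \eqref{est-norm-H-1}. You claim to apply the null cancellation of Lemma \ref{7div1} ``against $g(w^K)^2$'', but that lemma is stated (and only works nicely) for test weights of the form $w^\xi = w_z(\sin(2\theta))^{-\xi/2}$, whereas the $\mathcal{H}^{-1}$ pairing uses $w^K = w_z\, K(\theta)^{\frac12(1-\frac{\alpha}{3\beta})}$, whose angular factor is not of that form. Even if one adapts the null weight to $\mathcal{W} = w^K z^{\frac12-\frac{3}{2\alpha}}(\cos\theta)^{-1/2}$ (the null identity \eqref{null-Tf-1} does hold for arbitrary $\mathcal{W}$), the residual homogeneous piece $\tfrac{1}{\mathcal{W}}T_{\mathcal{W}}\cdot g$ then carries the coefficient
\begin{align*}
\tfrac{D_\theta\mathcal{W}}{\mathcal{W}}+\cos(2\theta)-\sin^2\theta
=\bigl(2-\tfrac{\alpha}{3\beta}\bigr)\cos(2\theta)-\bigl(1-\tfrac{\alpha}{3\beta}\bigr)\sin^2\theta,
\end{align*}
which is of size $O(1)$ — it lacks the crucial $(1-\eta)$ smallness that appears in \eqref{4eqnull2} when $\xi=\eta$, because $K^{1/2}$ behaves like $(\sin(2\theta))^{1/2}$ rather than $(\sin(2\theta))^{-\eta/2}$. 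The resulting bound $C\|gw^K\|_{L^2}^2$ has a constant far larger than $\tfrac{1}{10}$ and cannot be absorbed.

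The paper does something different for this step: it decomposes $T_g=T^l_g+T^n_g$ by splitting the \emph{coefficients} $U(\Phi),V(\Phi)$ into the main $F^\ast_\gamma$-part and the remainder, not by splitting $T_g$ into a null piece and a homogeneous piece. The linear piece $T^l_g$ then retains the derivatives $D_\theta g$ and $D_z g$ (rather than reducing to $g$ times a coefficient), so its pairing with $g(w^K)^2$ is bounded by $\|gw^K\|_{L^2}\cdot\|D_\theta g\,w^\lambda\|_{L^2}$ plus $\alpha\|gw^K\|_{L^2}\cdot\|D_z g\,w^\eta\|_{L^2}$, and the $(1-\eta)$-weights baked into the definition of $\|\cdot\|_{\mathcal{H}^2_\eta}$ supply the smallness needed to absorb into $\tfrac1{100}\bigl(\|g\|^2_{\mathcal{H}^{-1}}+\|g\|^2_{\mathcal{H}^2_\eta}\bigr)$. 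The null structure is reserved for the $w^\eta$- and $w^\lambda$-pairings in the $\mathcal{H}^2_\eta$ and $\mathcal{E}^2_\eta$ steps (and especially at the level $D_\theta^2, D_z D_\theta, D_z^2$), where the weight is of the correct $(\sin(2\theta))^{-\xi/2}$ form and \eqref{4eqnull2} yields the $(1-\eta)\cos(2\theta)$ or $(1-\lambda)\cos(2\theta)=O(\alpha/\beta)\cos(2\theta)$ smallness. Your remaining sketch of those higher-order steps, and in particular your identification of the $D_z\bigl(\tfrac{U(\widetilde{\Phi}_g)}{\sin(2\theta)}\bigr)$ obstruction and the $\mathcal{E}^2$ remedy, matches the paper.
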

\begin{proof}
We will divide this proof into three steps according to the definitions of different norms.

\medskip

\noindent\textbf{Step 1: Estimate for $\mathcal{H}^{-1}$-norm.}

\medskip

\noindent 
It is easy to check that the $\mathcal{H}^{-1}$-norm, which is defined by \eqref{def_norm_H-1}, can be decomposed as:
\begin{align}\label{est-bI-1}
	\langle T_g, g(w^K)^2\rangle
	=\langle T_g^l, g(w^K)^2\rangle
	+\langle T_g^n, g(w^K)^2\rangle
	\tri \bar{I}_1+\bar{I}_2,
\end{align}
where $T^l_g$ and $T^n_g$ are defined by
\begin{align*}
	T^l_g\tri&\frac{U(\frac{3}{4\alpha}L^{-1}_{z,K}(F^{\ast}_\gamma)\sin(2\theta))}{\sin(2\theta)}D_{\theta}g+V\bigg(\frac{3}{4\alpha}L^{-1}_{z,K}(F^{\ast}_\gamma)\sin(2\theta)\bigg)\alpha D_{z}g,
	\\
	T^n_g\tri&\frac{U(\Phi-\frac{3}{4\alpha}L^{-1}_{z,K}(F^{\ast}_\gamma)\sin(2\theta))}{\sin(2\theta)}D_{\theta}g+V\bigg(\Phi-\frac{3}{4\alpha}L^{-1}_{z,K}(F^{\ast}_\gamma)\sin(2\theta)\bigg)\alpha D_{z}g.
\end{align*}
Now, we estimate the first term $\bar{I}_1$.
In view of \eqref{form_F1}, we have
\begin{align}
	L^{-1}_{z,K}(F^{\ast}_\gamma)=\frac{2\alpha}{3}L^{-1}_{z}(\Gamma^{\ast}_{\gamma}).
\end{align}
The preceding equality, combined with \eqref{3f7} and \eqref{form_F1}, necessarily implies that
\begin{align}
	\frac{U(\frac{3}{4\alpha}L^{-1}_{z,K}(F^{\ast}_\gamma)\sin(2\theta))}{\sin(2\theta)}
	&= -\frac{3}{2}L^{-1}_{z}(\Gamma^{\ast}_{\gamma})-\frac{\alpha}{2}D_{z}L^{-1}_{z}(\Gamma^{\ast}_{\gamma})
	=-\frac{3}{2}L^{-1}_{z}(\Gamma^{\ast}_{\gamma})+\frac{\alpha}{2}\Gamma^{\ast}_{\gamma},
	\label{cal_U}
	\\
	V\bigg(\frac{3}{4\alpha}L^{-1}_{z,K}(F^{\ast}_\gamma)\sin(2\theta)\bigg)
	&= L^{-1}_{z}(\Gamma^{\ast}_{\gamma})(\cos(2\theta)-\sin^2(\theta)).
	\label{cal_V}
\end{align}
Applying \eqref{cal_U} and \eqref{cal_V}, we can simplify $T^l_g$ to:
\begin{align}\label{cal_Tgl}
	T^l_g
	&=\frac12\left(-3L^{-1}_{z}(\Gamma^{\ast}_{\gamma})+\alpha\Gamma^{\ast}_{\gamma}\right)D_{\theta}g+L^{-1}_{z}(\Gamma^{\ast}_{\gamma})
	\left(\cos(2\theta)-\sin^2(\theta)\right)\alpha D_{z}g.
\end{align}
Notice that 
\begin{align}\label{w-rela}
	|w^K|\leq|w^\eta|\leq|w^\lambda|,
\end{align} 
one gets, by employing \eqref{3f7} and \eqref{cal_Tgl}, that
\begin{align}\label{7g1}
	|\bar{I}_1|
	&\lesssim \left(1+\frac{\alpha}{\gamma}\right)\|g w^K\|_{L^2}\|D_\theta g w^\lambda\|_{L^2}+\alpha\|g w^K\|_{L^2}\|D_z g w^\eta\|_{L^2} \\ \notag
	&\lesssim \left(1+\frac{\alpha}{\beta}\right)\|g \|_{\mathcal{H}^{-1}}(1-\eta)\| g \|_{\mathcal{H}^2_{\eta}}+\alpha\|g\|_{\mathcal{H}^{-1}}(1-\eta)^{-1}\| g \|_{\mathcal{H}^2_{\eta}} \\ \notag
	&\leq \frac{1}{100}\|g \|^2_{\mathcal{H}^{-1}}+\frac{1}{100}\| g \|^2_{\mathcal{H}^2_{\eta}}.
\end{align}
Before estimating $\bar{I}_2$, we recall \eqref{decom-Phi} and \eqref{com-F}, to divide $\Phi$ into the following six terms:
\begin{align}\label{decom-Phi-1}   \Phi=\tilde{\Phi}_g+\tilde{G}_g\sin(2\theta)+G^\ast_g\sin(2\theta)+\tilde{\Phi}_{F^\ast_{\gamma}}+\tilde{G}_{F^\ast_{\gamma}}\sin(2\theta)+G^\ast_{F^\ast_{\gamma}}\sin(2\theta),\end{align}
where for any function $f$ (representing either $g$ or $F_\gamma^\ast$), $\tilde{G}_f$ and $G^\ast_f$ are defined as
\begin{align*}
	\tilde{G}_f(z)\tri\frac{3}{4\alpha}z^{-\frac{5}{\alpha}}\int_{0}^{z}\rho^{\frac{5}{\alpha}-1}\langle f, K\rangle_{\theta}d\rho,
	\quad
	G^\ast_f(z)=\frac{3}{4\alpha}L^{-1}_{z,K}(f).
\end{align*}
According to \eqref{decom-Phi-1}, we may split $\bar{I}_2$ into the following two terms:
\begin{align}\label{est-bI2-1}
	\bar{I}_2=
	\langle T_g^{n,1}, g(w^K)^2\rangle
	+\langle T_g^{n,2}, g(w^K)^2\rangle
	\tri \bar{I}_{21}+\bar{I}_{22},
\end{align}
where $T_g^{n,1}$ and $T_g^{n,2}$ are defined by
\begin{align*}
	T^{n,1}_g\tri&
	\left(\frac{U(\tilde{\Phi}_{F^\ast_{\gamma}})}{\sin(2\theta)}+U(\tilde{G}_{F^\ast_{\gamma}})\right)D_{\theta}g
	+V\left(\tilde{\Phi}_{F^\ast_{\gamma}}+\tilde{G}_{F^\ast_{\gamma}}\sin(2\theta)\right)\alpha D_{z}g,
	\\
	T^{n,2}_g\tri&\left(\frac{U(\tilde{\Phi}_g)}{\sin(2\theta)}+U(\tilde{G}_g)+U(G^\ast_g)\right)D_{\theta}g
	+V\left(\tilde{\Phi}_g+\tilde{G}_g\sin(2\theta)+G^\ast_g\sin(2\theta)\right)\alpha D_{z}g.
\end{align*}
With the help of Lemma \ref{7fm1}, Proposition \ref{6prop0}, Corollary \ref{6cor2} and Corollary \ref{7em2}, we infer that
\begin{align}\label{7g3}
	\bigg\|\frac{U(\tilde{\Phi}_{F^\ast_{\gamma}})}{\sin(2\theta)}\bigg\|_{L^{\infty}}
	\lesssim_{\beta} \alpha^{-\frac12}\|\partial_\theta U(\tilde{\Phi}_{F^\ast_{\gamma}})\|_{\mathcal{H}^{2,\ast}}\lesssim_\beta\alpha^{-\frac{1}{2}}\| F^\ast_{\gamma} \|_{\mathcal{H}^{2,\ast}}\lesssim_\beta \alpha^{\frac{1}{2}}(1-\eta)^{-\frac{1}{2}}.
\end{align}
In view of Lemma \ref{7fm1}, Lemma \ref{7em1} and Lemma \ref{7ha1}, one obtains that
\begin{align}\label{7g4}
	\|U(\tilde{G}_{F^\ast_{\gamma}})\|_{L^{\infty}}\lesssim \|D_z U(\tilde{G}_{F^\ast_{\gamma}})w^{\ast}_z\|_{L^{2}_z}  \lesssim\| F^\ast_{\gamma} \|_{\mathcal{H}^{1,\ast}}\lesssim_\beta \alpha(1-\eta)^{-\frac{1}{2}}.
\end{align}
The application of Lemma \ref{7fm1}, Proposition \ref{6prop0}, Corollary \ref{6cor2} and Corollary \ref{7em2} yields that
\begin{align}\label{7g5}
	\|V(\tilde{\Phi}_{F^\ast_{\gamma}})\|_{L^{\infty}}
	\lesssim_{\beta} \alpha^{-\frac12}\|\partial_\theta \tilde{\Phi}_{F^\ast_{\gamma}}\|_{\mathcal{H}^{2,\ast}} \lesssim_\beta\alpha^{-\frac{1}{2}}\| F^\ast_{\gamma} \|_{\mathcal{H}^{2,\ast}}\lesssim_\beta \alpha^{\frac{1}{2}}(1-\eta)^{-\frac{1}{2}}.
\end{align}
Again thanks to Lemma \ref{7fm1}, Lemma \ref{7em1} and Lemma \ref{7ha1}, we see that
\begin{align}\label{7g6}   \|V(\tilde{G}_{F^\ast_{\gamma}}\sin(2\theta)\|_{L^{\infty}}\lesssim \|D_z \tilde{G}_{F^\ast_{\gamma}}w^{\ast}_z\|_{L^{2}_z} \lesssim\| F^\ast_{\gamma} \|_{\mathcal{H}^{1,\ast}}\lesssim_\beta \alpha(1-\eta)^{-\frac{1}{2}}.
\end{align}
Combining \eqref{7g3}-\eqref{7g6} together, we get, by using \eqref{w-rela}, that
\begin{align}\label{7g7}
	|\bar{I}_{21}|
	&\lesssim_\beta \alpha^{\frac{1}{2}}(1-\eta)^{-\frac{1}{2}}\|g w^\eta\|_{L^2}(\|D_\theta g w^\lambda\|_{L^2}+\|D_z g w^\eta\|_{L^2}) 
	\lesssim_\beta \alpha^{\frac{1}{2}}(1-\eta)^{-\frac{1}{2}}\|g\|^2_{\mathcal{H}^2}.
\end{align}
According to Proposition \ref{6prop0}, Corollary \ref{6cor1} and Corollary \ref{7em2}, we infer that
\begin{align}\label{7g8}
	\bigg\|\frac{U(\tilde{\Phi}_{g})}{\sin(2\theta)}\bigg\|_{L^{\infty}}
	\lesssim_{\beta} \alpha^{-\frac12}\|\partial_\theta U(\tilde{\Phi}_{g})\|_{\mathcal{H}^{2}} 
	\lesssim_{\beta} \alpha^{-\frac12}(\|\partial_\theta \tilde{\Phi}_{g}\|_{\mathcal{H}^{2}}+\alpha\|\partial_\theta D_z\tilde{\Phi}_{g}\|_{\mathcal{H}^{2}}) \lesssim_\beta \alpha^{-\frac12} \|g\|_{\mathcal{H}^{2}}.
\end{align}
By virtue of Lemma \ref{7em1} and Lemma \ref{7ha1}, we get that
\begin{align}\label{7g9}
	\|U(\tilde{G}_{g})\|_{L^{\infty}}\lesssim \|D_z U(\tilde{G}_{g})w_z\|_{L^{2}_z} \lesssim \|D_z \tilde{G}_{g}w_z\|_{L^{2}_z}+\alpha\|D^2_z \tilde{G}_{g}w_z\|_{L^{2}_z}\lesssim\|g\|_{\mathcal{H}^{2}}.
\end{align}
We can deduce from Lemma \ref{7em1} that
\begin{align}\label{7g10}
	\|U(G^\ast_{g})\|_{L^{\infty}}
	\lesssim \|D_z U(G^\ast_{g})w_z\|_{L^{2}_z}  
	\lesssim \alpha^{-1}\|\langle g,K\rangle_{\theta}w_z\|_{L^{2}_z}+\|\langle D_z g,K\rangle_{\theta}w_z\|_{L^{2}_z}
	\lesssim\alpha^{-1}\|g\|_{\mathcal{H}^{2}}.
\end{align}
By using Proposition \ref{6prop0}, Corollary \ref{6cor1} and Corollary \ref{7em2}, it is easy to check that
\begin{align}\label{7g11}
	\|V(\tilde{\Phi}_{g})\|_{L^{\infty}}
	\lesssim_{\beta} \alpha^{-\frac12}\|\partial_\theta \tilde{\Phi}_{g}\|_{\mathcal{H}^{2}} \lesssim_\beta \alpha^{-\frac12}\|g\|_{\mathcal{H}^{2}}.
\end{align}
We employ Lemma \ref{7em1} and Lemma \ref{7ha1} once again, to find that
\begin{align}\label{7g12}
	\|V(\tilde{G}_{g}\sin(2\theta))\|_{L^{\infty}}&\lesssim \|D_z \tilde{G}_{g}w_z\|_{L^{2}_z}  \lesssim\|g\|_{\mathcal{H}^{2}}.
\end{align}
Taking advantage of Lemmas \ref{7em1}, we get that
\begin{align}\label{7g13}
	\|V(G^\ast_g\sin(2\theta))\|_{L^{\infty}}\lesssim \|D_z G^\ast_g w_z\|_{L^{2}_z}  
	\lesssim \alpha^{-1}\|\langle g,K\rangle_{\theta}w_z\|_{L^{2}_z} 
	\lesssim \alpha^{-1}\|g\|_{\mathcal{H}^{2}}.
\end{align}
Then the combination of \eqref{w-rela} and \eqref{7g8}-\eqref{7g13}, gives that
\begin{align}\label{7g14}
	|\bar{I}_{22}|
	&\lesssim_\beta \alpha^{-1}\|g\|_{\mathcal{H}^{2}}\|g w^\eta\|_{L^2}(\|D_\theta g w^\lambda\|_{L^2}+\|D_z g w^\eta\|_{L^2}) 
	\lesssim_\beta \alpha^{-1}\|g\|^3_{\mathcal{H}^2}.
\end{align}
Substituting \eqref{7g7} and \eqref{7g14} into \eqref{est-bI2-1}, we achieve that
\begin{align}\label{est-bI2-2}
	|\bar{I}_2| \lesssim_{\beta} \alpha^{\frac{1}{2}}(1-\eta)^{-\frac{1}{2}}\|g\|^2_{\mathcal{H}^2}
	+\alpha^{-1}\|g\|^3_{\mathcal{H}^2}.
\end{align}
Plugging \eqref{7g1} and \eqref{est-bI2-2} into \eqref{est-bI-1}, one obtains that \eqref{est-norm-H-1} holds.

\medskip

\noindent
\textbf{Step 2: Estimate for $\mathcal{H}^{2}_{\eta}$-norm.} 

\medskip

\noindent
We recall the definition of $\mathcal{H}^{2}_{\eta}$ in \eqref{def_norm_H2eta}, to find that
\begin{align}\label{est-bJ-1}
	\langle T_g, g\rangle_{\mathcal{H}^2_{\eta}}
	=&~ (1-\eta)^{2}\langle T_g, g(w^\eta)^2\rangle
	+\left[(1-\eta)^{-2}\langle D_{\theta}T_{g}, D_{\theta}g(w^\lambda)^2\rangle
    +(1-\eta)^{2}\langle D_{z}T_{g}, D_{z}g(w^\eta)^2\rangle\right]
	\\
	&~
	+\left[\langle D^2_{\theta}T_{g}, D^2_{\theta}g(w^\lambda)^2\rangle
	+(1-\eta)^{4}\langle D^2_{z}T_{g}, D^2_{z}g(w^\eta)^2\rangle
	+(1-\eta)^2\langle D_zD_{\theta}T_{g}, D_zD_{\theta}g(w^\lambda)^2\rangle\right]
	\notag\\
	\tri& \bar{J}_1+\bar{J}_2+\bar{J}_3.
	\notag
\end{align}
Here $\bar{J}_1$, $\bar{J}_2$ and $\bar{J}_3$ are related to the $\mathcal{L}^{2}_{\eta}$, $\dot{\mathcal{H}}^{1}_{\eta}$ and $\dot{\mathcal{H}}^{2}_{\eta}$ norms, respectively.

\medskip

\noindent
\textbf{Step 2.1: Estimate for $\mathcal{L}^{2}_{\eta}$-norm.}

\medskip

\noindent
We can split $\mathcal{L}^2_{\eta}$-norm into the following two parts:
\begin{align}\label{est-bJ1-1}
	\bar{J}_1
	=(1-\eta)^{2}\langle T_g^d, g(w^\eta)^2\rangle
	-(1-\eta)^{2}\langle T_g^e, g(w^\eta)^2\rangle
	\tri \bar{J}_{11}+\bar{J}_{12}.
\end{align}
Here $T^d_g$ and $T^e_g$ are defined by
\begin{align*}
	T^d_g\tri&\frac{1}{\overline{w}^\eta}\bigg[\frac{U(\Phi)}{\sin(2\theta)}D_{\theta}(g\overline{w}^\eta)+V(\Phi)\alpha D_{z}(g\overline{w}^\eta)\bigg]=\frac{1}{\overline{w}^\eta}T_{g\overline{w}^\eta},
	\\
	T^e_g\tri&\frac{1}{\overline{w}^\eta}g\bigg[\frac{U(\Phi)}{\sin(2\theta)}D_{\theta}(\overline{w}^\eta)+V(\Phi)\alpha D_{z}(\overline{w}^\eta)\bigg]=\frac{1}{\overline{w}^\eta}gT_{\overline{w}^\eta},
\end{align*}
where $\overline{w}^\eta$ is defined by \eqref{def-wxi} with $\xi=\eta$.
Whereas it follows from Lemma \ref{7div1} with $\xi=\eta$ that
\begin{align}\label{7g16}
	\bar{J}_{11}=0.
\end{align}
Then by exactly the same procedure as that in the decomposition of \eqref{est-bI-1}, we can deduce from \eqref{cal_U} and \eqref{cal_V} that
\begin{align}\label{est-bJ12-1}
	\bar{J}_{12}
	=-(1-\eta)^{2}\langle T_g^{e,l}, g(w^\eta)^2\rangle
	-(1-\eta)^{2}\langle T_g^{e,n}, g(w^\eta)^2\rangle
	\tri \bar{J}_{121}+\bar{J}_{122},
\end{align}
where $T^{e,l}_g$ and $T^{e,n}_g$ are defined by
\begin{align*}
	T^{e,l}_g
	\tri&\frac{1}{\overline{w}^\eta}g\left[\frac12\left(-3L^{-1}_{z}(\Gamma^{\ast}_{\gamma})+\alpha\Gamma^{\ast}_{\gamma}\right)D_{\theta}\overline{w}^\eta+L^{-1}_{z}(\Gamma^{\ast}_{\gamma})(\cos(2\theta)-\sin^2(\theta))\alpha D_{z}\overline{w}^\eta\right],
	\\
	T^{e,n}_g
	\tri&\frac{1}{\overline{w}^\eta}g\left[\frac{U(\Phi-\frac{3}{4\alpha}L^{-1}_{z,K}(F^{\ast}_\gamma)\sin(2\theta))}{\sin(2\theta)}D_{\theta}\overline{w}^\eta+V\bigg(\Phi-\frac{3}{4\alpha}L^{-1}_{z,K}(F^{\ast}_\gamma)\sin(2\theta)\bigg)\alpha D_{z}\overline{w}^\eta\right].
\end{align*}
Recalling the definition of $\overline{w}^\eta$ in \eqref{def-wxi}, one can infer that
\begin{align}
	\frac{D_{\theta}\overline{w}^\eta}{\overline{w}^\eta}
	=&\frac{D_{\theta}w^\eta}{w^\eta}+\frac{D_{\theta}(\cos^{-\frac{1}{2}}(\theta))}{\cos^{-\frac{1}{2}}(\theta)}
	=-\eta \cos(2\theta)+\sin^2(\theta),
	\label{7g17}
	\\
	\frac{D_{z}\overline{w}^\eta}{\overline{w}^\eta}
	=&\frac{D_{z}w^\eta}{w_\eta}+\frac{D_{z}(z^{\frac{1}{2}-\frac{3}{2\alpha}})}{z^{\frac{1}{2}-\frac{3}{2\alpha}}} 
	=-\frac{1}{\beta}L^{-1}_{z}(\Gamma^{\ast}_{\beta})+\frac{1}{2}-\frac{3}{2\alpha}.
	\label{7g18}
\end{align}
By applying \eqref{7g17} and \eqref{7g18}, we simplify $T^{e,l}_g$ as follows:
\begin{align}\label{7g19}
	T^{e,l}_g&=\frac12\left(-3L^{-1}_{z}(\Gamma^{\ast}_{\gamma})+\alpha\Gamma^{\ast}_{\gamma}\right)
	\left(-\eta \cos(2\theta)+\sin^2(\theta)\right)g \\ \notag
	&~~~+L^{-1}_{z}\left(\Gamma^{\ast}_{\gamma})(\cos(2\theta)-\sin^2(\theta)\right)\alpha \left(-\frac{1}{\beta}L^{-1}_{z}(\Gamma^{\ast}_{\beta})+\frac{1}{2}-\frac{3}{2\alpha}\right)g\\ \notag
	&=\frac{\alpha}{2}\Gamma^{\ast}_{\gamma}\left(-\eta \cos(2\theta)+\sin^2(\theta)\right)g-\frac{3(1-\eta)}{2}L^{-1}_{z}(\Gamma^{\ast}_{\gamma})\cos(2\theta)g \\ \notag
	&~~~+\alpha L^{-1}_{z}(\Gamma^{\ast}_{\gamma})\left(\cos(2\theta)-\sin^2(\theta)\right)
	\left(-\frac{1}{\beta}L^{-1}_{z}(\Gamma^{\ast}_{\beta})+\frac{1}{2}\right)g.
\end{align}
This fact gives that
\begin{align}\label{7g20}
	|\bar{J}_{121}|\lesssim \left(\frac{\alpha}{\beta}+(1-\eta)\right)(1-\eta)^2\|gw^\eta\|^2_{L^2}\leq \frac{1}{100}\| g \|^2_{\mathcal{H}^2_{\eta}}.
\end{align}
Recalling \eqref{decom-Phi-1} and employing \eqref{7g3}, \eqref{7g4} and \eqref{7g8}-\eqref{7g10}, we obtain that
\begin{align}\label{7g21}
	\bigg\|\frac{U(\Phi-\frac{3}{4\alpha}L^{-1}_{z,K}(F^{\ast}_\gamma)\sin(2\theta))}{\sin(2\theta)}\bigg\|_{L^{\infty}}\lesssim_\beta \alpha^{\frac{1}{2}}(1-\eta)^{-\frac{1}{2}}+\alpha^{-1}\|g\|_{\mathcal{H}^{2}}.
\end{align}
According \eqref{decom-Phi-1}, one can deduce from \eqref{7g5}, \eqref{7g6} and \eqref{7g11}-\eqref{7g13} that
\begin{align}\label{7g22}
	\bigg\|V\bigg(\Phi-\frac{3}{4\alpha}L^{-1}_{z,K}(F^{\ast}_\gamma)\sin(2\theta)\bigg)\bigg\|_{L^{\infty}}\lesssim_\beta \alpha^{\frac{1}{2}}(1-\eta)^{-\frac{1}{2}}+\alpha^{-1}\|g\|_{\mathcal{H}^{2}}.
\end{align}
By virtue of \eqref{7g17} and \eqref{7g18}, we infer that $$\bigg\|\frac{D_{\theta}\overline{w}^\eta}{\overline{w}^\eta}\bigg\|_{L^\infty}+\alpha\bigg\|\frac{D_{z}\overline{w}^\eta}{\overline{w}^\eta}\bigg\|_{L^\infty}\leq C.$$ 
Combining this with \eqref{7g21} and \eqref{7g22}, we obtain that
\begin{align}\label{7g23}
	|\bar{J}_{122}|&\lesssim_\beta \left(\alpha^{\frac{1}{2}}(1-\eta)^{-\frac{1}{2}}+\alpha^{-1}\|g\|_{\mathcal{H}^{2}}\right)(1-\eta)^2\|gw^\eta\|^2_{L^2} 
	\lesssim_\beta \alpha^{\frac{1}{2}}\|g\|^2_{\mathcal{H}^2}+\alpha^{-1}\|g\|^3_{\mathcal{H}^{2}}.
\end{align}
We then substitute \eqref{7g20} and \eqref{7g23} into \eqref{est-bJ12-1}, to discover that
\begin{align}\label{7g24-1}
	|\bar{J}_{12}|\leq \frac{1}{100}\|g\|^2_{\mathcal{H}_{\eta}^2}
	+C_\beta\alpha^{\frac{1}{2}}\|g\|^2_{\mathcal{H}^2}
	+C_\beta\alpha^{-1}\|g\|^3_{\mathcal{H}^2}.
\end{align}
Plugging \eqref{7g16} and \eqref{7g24-1} into \eqref{est-bJ1-1}, we find that
\begin{align}\label{7g24}
	|\bar{J}_{1}|\leq \frac{1}{100}\|g\|^2_{\mathcal{H}_{\eta}^2}+C_\beta\alpha^{\frac{1}{2}}\|g\|^2_{\mathcal{H}^2}+C_\beta\alpha^{-1}\|g\|^3_{\mathcal{H}^2}.
\end{align}

\noindent
\textbf{Step 2.2: Estimate for $\dot{\mathcal{H}}^{1}_{\eta}$-norm.} 

\medskip

\noindent
Recalling \eqref{est-bJ-1}, we split the $\dot{\mathcal{H}}^{1}_{\eta}$-norm into two terms as follows:
\begin{align}\label{est-bJ2-1}
	\bar{J}_2
	=(1-\eta)^{-2}\langle D_{\theta}T_{g}, D_{\theta}g(w^\lambda)^2\rangle
	+(1-\eta)^{2}\langle D_{z}T_{g}, D_{z}g(w^\eta)^2\rangle
	\tri \bar{J}_{21}+\bar{J}_{22}.
\end{align}
We note that $\bar{J}_{21}$ and $\bar{J}_{22}$ are related to $D_\theta T_g$ and $D_z T_g$, respectively.
\medskip

\noindent
\textbf{Step 2.2.1: Estimate for the term related to $D_\theta T_g$.} 
\medskip

\noindent
We recall \eqref{def-wxi} with $\xi=\lambda$ and using the similar way as \eqref{est-bJ1-1}, to divide $\bar{J}_{21}$ into the following three parts:
\begin{align}\label{est-bJ21-1}
	\bar{J}_{21}
	=&~(1-\eta)^{-2}\langle T_{D_\theta g}, D_{\theta}g(w^\lambda)^2\rangle
	+(1-\eta)^{-2}\langle \mathcal{R}_1, D_{\theta}g(w^\lambda)^2\rangle
	\\
	=&~(1-\eta)^{-2}\langle T^d_{D_{\theta}g}, D_{\theta}g(w^\lambda)^2\rangle
	-(1-\eta)^{-2}\langle T^e_{D_{\theta}g}, D_{\theta}g(w^\lambda)^2\rangle
	+(1-\eta)^{-2}\langle \mathcal{R}_1, D_{\theta}g(w^\lambda)^2\rangle
	\notag\\
	\tri&~ \bar{J}_{211}+\bar{J}_{212}+\bar{J}_{213}.
	\notag
\end{align}
Here $T^d_{D_{\theta}g}$, $T^e_{D_{\theta}g}$ and the commutator $\mathcal{R}_1$ are defined by
\begin{align*}
	&T^d_{D_{\theta}g}\tri \frac{1}     {\overline{w}^\lambda}T_{D_{\theta}g\overline{w}^\lambda},~~
	T^e_{D_{\theta}g}\tri \frac{1}{\overline{w}^\lambda}D_{\theta}gT_{\overline{w}^\lambda},~~
	\mathcal{R}_1\tri D_{\theta}
	\left(\frac{U(\Phi)}{\sin(2\theta)}\right)
	D_{\theta}g+D_{\theta}V(\Phi)\alpha D_{z} g.
\end{align*}
According to Lemma \ref{7div1}, we have
\begin{align}\label{7g25}
	\bar{J}_{211}=0.
\end{align}
As for $\bar{J}_{212}$, we deduce from a similar way as \eqref{est-bJ12-1} that
\begin{align}\label{est-bJ212-1}
	\bar{J}_{212}
	= -(1-\eta)^{-2}\langle T^{e,l}_{D_{\theta}g}, D_{\theta}g(w^\lambda)^2\rangle
	-(1-\eta)^{-2}\langle T^{e,n}_{D_{\theta}g}, D_{\theta}g(w^\lambda)^2\rangle
	\tri \bar{J}_{2121}+\bar{J}_{2122},
\end{align}
where $T^{e,l}_{D_{\theta}g}$ and $T^{e,n}_{D_{\theta}g}$ are defined by
\begin{align*}
	T^{e,l}_{D_{\theta}g}
	\tri& \frac{1}{\overline{w}^\lambda}D_{\theta}g\left[\frac12\left(-3L^{-1}_{z}(\Gamma^{\ast}_{\gamma})+\alpha\Gamma^{\ast}_{\gamma}\right)D_{\theta}\overline{w}^\lambda+L^{-1}_{z}(\Gamma^{\ast}_{\gamma})\left(\cos(2\theta)-\sin^2(\theta)\right)\alpha D_{z}\overline{w}^\lambda\right],
	\\
	T^{e,n}_{D_{\theta}g}
	\tri& \frac{1}{\overline{w}^\lambda}D_{\theta}g\left[\frac{U(\Phi-\frac{3}{4\alpha}L^{-1}_{z,K}(F^{\ast}_\gamma)\sin(2\theta))}{\sin(2\theta)}D_{\theta}\overline{w}^\lambda+V\bigg(\Phi-\frac{3}{4\alpha}L^{-1}_{z,K}(F^{\ast}_\gamma)\sin(2\theta)\bigg)\alpha D_{z}\overline{w}^\lambda\right].
\end{align*}
It then follows from the same derivation as \eqref{7g17} and \eqref{7g18} that
\begin{align}\label{7g26}
	\frac{D_{\theta}\overline{w}^\lambda}{\overline{w}^\lambda}
	=-\lambda \cos(2\theta)+\sin^2(\theta),
	~~
	\frac{D_{z}\overline{w}^\lambda}{\overline{w}^\lambda } 
	=-\frac{1}{\beta}L^{-1}_{z}(\Gamma^{\ast}_{\beta})+\frac{1}{2}-\frac{3}{2\alpha}.
\end{align}
According to \eqref{7g26}, $T^{e,l}_{D_{\theta}g}$ can be simplified using a method similar to \eqref{7g19} as
\begin{align*}
	T^{e,l}_{D_{\theta}g}
	=&\frac{\alpha}{2}\Gamma^{\ast}_{\gamma}\left(-\lambda \cos(2\theta)+\sin^2(\theta)\right)D_{\theta}g-\frac{3(1-\lambda)}{2}L^{-1}_{z}(\Gamma^{\ast}_{\gamma})\cos(2\theta)D_{\theta}g \\ \notag
	&+\alpha L^{-1}_{z}(\Gamma^{\ast}_{\gamma})(\cos(2\theta)-\sin^2(\theta))\left(-\frac{1}{\beta}L^{-1}_{z}(\Gamma^{\ast}_{\beta})+\frac{1}{2}\right)D_{\theta}g.
\end{align*}
This, along with the facts that $\lambda=1+\frac{\alpha}{10\beta}$ and $\alpha\ll1-\eta\ll\beta$, yields that
\begin{align}\label{7g29}
	|\bar{J}_{2121}|\lesssim \frac{\alpha}{\beta}(1-\eta)^{-2}\|D_{\theta}gw^\lambda\|^2_{L^2}\leq \frac{1}{200}\| g \|^2_{\mathcal{H}^2_{\eta}}.
\end{align}
By using \eqref{7g26}, it is easy to check that $$\bigg\|\frac{D_{\theta}\overline{w}^\lambda}{\overline{w}^\lambda}\bigg\|_{L^\infty}+\alpha\bigg\|\frac{D_{z}\overline{w}^\lambda}{\overline{w}^\lambda}\bigg\|_{L^\infty}\leq C,$$ 
which, together with \eqref{7g21} and \eqref{7g22}, ensures that
\begin{align}\label{7g30}
	|\bar{J}_{2122}|&\lesssim_\beta \left(\alpha^{\frac{1}{2}}(1-\eta)^{-\frac{1}{2}}+\alpha^{-1}\|g\|_{\mathcal{H}^{2}}\right)(1-\eta)^{-2}\|D_{\theta}gw^\lambda\|^2_{L^2} \\ \notag
	&\lesssim_\beta \alpha^{\frac{1}{2}}(1-\eta)^{-\frac{5}{2}}\|g\|^2_{\mathcal{H}^2}+\alpha^{-1}(1-\eta)^{-2}\|g\|^3_{\mathcal{H}^{2}}.
\end{align}
Substituting \eqref{7g29} and \eqref{7g30} into \eqref{est-bJ212-1}, we find that
\begin{align}\label{7g31}
	|\bar{J}_{212}|\leq \frac{1}{200}\|g\|^2_{\mathcal{H}_{\eta}^2}+C_\beta\alpha^{\frac{1}{2}}(1-\eta)^{-\frac{5}{2}}\|g\|^2_{\mathcal{H}^2}+C_\beta\alpha^{-1} (1-\eta)^{-2}\|g\|^3_{\mathcal{H}^2}.
\end{align}
Now, we deal with $\bar{J}_{213}$.
Along the same line to the decomposition of \eqref{est-bI-1}, we can deduce from \eqref{cal_U} and \eqref{cal_V} that
\begin{align}\label{est-bJ213-1}
	\bar{J}_{213}
	=(1-\eta)^{-2}\langle \mathcal{R}_1^l, D_{\theta}g(w^\lambda)^2\rangle
	+(1-\eta)^{-2}\langle \mathcal{R}_1^n, D_{\theta}g(w^\lambda)^2\rangle
	\tri \bar{J}_{2131}+\bar{J}_{2132},
\end{align}
where $\mathcal{R}_1^l$ and $\mathcal{R}_1^n$ are defined by
\begin{align}
	\mathcal{R}_1^l
	\tri&L^{-1}_{z}(\Gamma^{\ast}_\gamma)D_{\theta}(\cos(2\theta)-\sin^2(\theta))\alpha D_{z} g
	=-3L^{-1}_{z}(\Gamma^{\ast}_\gamma)\sin^2(2\theta)\alpha D_{z} g,
	\label{def-R1l}\\
	\mathcal{R}_1^n
	\tri&D_{\theta}\left[\frac{U(\Phi-\frac{3}{4\alpha}L^{-1}_{z,K}(F^{\ast}_\gamma)\sin(2\theta))}{\sin(2\theta)}\right]D_{\theta}g
	+D_{\theta}\left[V\bigg(\Phi-\frac{3}{4\alpha}L^{-1}_{z,K}(F^{\ast}_\gamma)\sin(2\theta)\bigg)\right]\alpha D_{z} g.
	\label{def-R1n}
\end{align}
One can deduce from \eqref{3f7} that
\begin{align}\label{7g32}
	|\bar{J}_{2131}|
	&\lesssim \alpha(1-\eta)^{-2}\|D_z g \sin^2(2\theta)w^\lambda\|_{L^2}\|D_{\theta}g w^\lambda\|_{L^2} \\ \notag
	&\lesssim \alpha(1-\eta)^{-2}\|D_z g w^\eta\|_{L^2}\|D_{\theta}g w^\lambda\|_{L^2} \\ \notag
	&\leq \frac{1}{200}\| g \|^2_{\mathcal{H}^2_{\eta}}.
\end{align}
According to \eqref{decom-Phi-1}, it follows from Lemma \ref{7fm1}, Proposition \ref{6prop0}, Corollary \ref{6cor1}, Corollary \ref{6cor2} and Corollary \ref{7em2}, we infer that
\begin{align}\label{7g33}
	&~~~~\bigg\|D_\theta\bigg[\frac{U(\Phi-\frac{3}{4\alpha}L^{-1}_{z,K}(F^{\ast}_\gamma)\sin(2\theta))}{\sin(2\theta)}\bigg]\bigg\|_{L^{\infty}}
	\\ \notag
	&\lesssim \bigg\|D_\theta\bigg(\frac{U(\tilde{\Phi}_{F^\ast_{\gamma}})}{\sin(2\theta)}\bigg)\bigg\|_{L^{\infty}}
	+\bigg\|D_\theta\bigg(\frac{U(\tilde{\Phi}_{g})}{\sin(2\theta)}\bigg)\bigg\|_{L^{\infty}} 
	\\ \notag
	&\lesssim \|\partial_\theta U(\tilde{\Phi}_{F^\ast_{\gamma}})\|_{L^{\infty}}+\|\partial_\theta U(\tilde{\Phi}_{g})\|_{L^{\infty}} +\bigg\|\frac{U(\tilde{\Phi}_{F^\ast_{\gamma}})}{\sin(2\theta)}\bigg\|_{L^{\infty}}+\bigg\|\frac{U(\tilde{\Phi}_{g})}{\sin(2\theta)}\bigg\|_{L^{\infty}}  \\ \notag
	&\lesssim_{\beta} \alpha^{-\frac12}(\|\partial_\theta U(\tilde{\Phi}_{F^\ast_{\gamma}})\|_{\mathcal{H}^{2,\ast}}+\|\partial_\theta U(\tilde{\Phi}_{g})\|_{\mathcal{H}^{2}}) \\  \notag
	&\lesssim_\beta \alpha^{\frac{1}{2}}(1-\eta)^{-\frac{1}{2}}+\alpha^{-\frac12}\|g\|_{\mathcal{H}^{2}}.
\end{align}
Taking advantage of Lemma \ref{7fm1}, Proposition \ref{6prop0}, Corollary \ref{6cor1}, Corollary \ref{6cor2} and Corollary \ref{7em2}, one obtains that
\begin{align}\label{7g34}
	\|\partial_\theta V(\tilde{\Phi}_{F^\ast_{\gamma}}+\tilde{\Phi}_{g})\|_{L^{\infty}}
	&\lesssim_{\beta} \alpha^{-\frac12}\left(\|\partial^2_\theta (\tilde{\Phi}_{F^\ast_{\gamma}}+\tilde{\Phi}_{g})\|_{\mathcal{H}^{2,\ast}}+\|\partial_\theta (\tan\theta(\tilde{\Phi}_{F^\ast_{\gamma}}+\tilde{\Phi}_{g}))\|_{\mathcal{H}^{2,\ast}}\right)\\
	&\lesssim_\beta\alpha^{-\frac12}(\| F^\ast_{\gamma} \|_{\mathcal{H}^{2,\ast}}+\|g\|_{\mathcal{H}^{2}})
	\lesssim_\beta \alpha^{\frac{1}{2}}(1-\eta)^{-\frac{1}{2}}+\alpha^{-\frac12}\|g\|_{\mathcal{H}^{2}}.
	\notag
\end{align}
As the functions $\tilde{G}_{F^\ast_{\gamma}}$, $\tilde{\Phi}_{g}$ and $G^\ast_g$ are independent of $\theta$, the same manners as \eqref{7g6}, \eqref{7g12} and \eqref{7g13}, yield immediately that
\begin{align}\label{7g35}   
	\|\partial_\theta V((\tilde{G}_{F^\ast_{\gamma}}+\tilde{G}_{g}+G^\ast_g)
	\sin(2\theta))\|_{L^{\infty}}
	\lesssim_\beta \alpha(1-\eta)^{-\frac{1}{2}}+\alpha^{-1}\|g\|_{\mathcal{H}^{2}}.
\end{align}
Combining \eqref{7g34} and \eqref{7g35} together, we have
\begin{align}\label{7g39}
	\bigg\|\partial_\theta \bigg[V\bigg(\Phi-\frac{3}{4\alpha}L^{-1}_{z,K}(F^{\ast}_{\gamma})\sin(2\theta)\bigg)\bigg]\bigg\|_{L^{\infty}}
	\lesssim_\beta \alpha^{\frac{1}{2}}(1-\eta)^{-\frac{1}{2}}+\alpha^{-1}\|g\|_{\mathcal{H}^{2}}.
\end{align}
The application of \eqref{7g33} and \eqref{7g39} gives that
\begin{align}\label{7g40}
	|\bar{J}_{2132}|
	&\lesssim_\beta \left(\alpha^{\frac{1}{2}}(1-\eta)^{-\frac{1}{2}}+\alpha^{-\frac12}\|g\|_{\mathcal{H}^{2}}\right)(1-\eta)^{-2}\|D_{\theta}g w^\lambda\|^2_{L^2} \\ \notag
	&~~~~+\left(\alpha^{\frac{1}{2}}(1-\eta)^{-\frac{1}{2}}+\alpha^{-1}\|g\|_{\mathcal{H}^{2}}\right)(1-\eta)^{-2}\alpha\|D_z g \sin(2\theta)w^\lambda\|_{L^2}\|D_{\theta}g w^\lambda\|_{L^2} \\ \notag
	&\lesssim_\beta \left(\alpha^{\frac{1}{2}}(1-\eta)^{-\frac{5}{2}}+\alpha^{-1}(1-\eta)^{-2}\|g\|_{\mathcal{H}^{2}}\right)\| g \|^2_{\mathcal{H}^2}.
\end{align}
We then substitute \eqref{7g32} and \eqref{7g40} into \eqref{est-bJ213-1}, to discover that
\begin{align}\label{7g41-1}
	|\bar{J}_{213}|\leq \frac{1}{200}\|g\|^2_{\mathcal{H}_{\eta}^2}+C_\beta\alpha^{\frac{1}{2}}(1-\eta)^{-\frac{5}{2}}\|g\|^2_{\mathcal{H}^2}+C_\beta\alpha^{-1} (1-\eta)^{-2}\|g\|^3_{\mathcal{H}^2}.
\end{align}
Plugging \eqref{7g25}, \eqref{7g31} and \eqref{7g41-1} into \eqref{est-bJ21-1}, we achieve that
\begin{align}\label{7g41}
	|\bar{J}_{21}|\leq \frac{1}{100}\|g\|^2_{\mathcal{H}_{\eta}^2}+C_\beta\alpha^{\frac{1}{2}}(1-\eta)^{-\frac{5}{2}}\|g\|^2_{\mathcal{H}^2}+C_\beta\alpha^{-1} (1-\eta)^{-2}\|g\|^3_{\mathcal{H}^2}.
\end{align}

\noindent
\textbf{Step 2.2.2: Estimate for the term related to $D_z T_g$.} 
\medskip

\noindent
Firstly, we split $\bar{J}_{22}$ into two terms as follows:
\begin{align}\label{est-J22-1}
	\bar{J}_{22}
	=(1-\eta)^{2}\langle T_{D_{z}g}, D_{z}g(w^\eta)^2\rangle
	+(1-\eta)^{2}\langle \mathcal{R}_2, D_{z}g(w^\eta)^2\rangle
	\tri \bar{J}_{221}+\bar{J}_{222},
\end{align}
where the commutator $\mathcal{R}_2$ is defined by
\begin{align}\label{def-R2}
	\mathcal{R}_2 
	\tri 
	D_{z}\left(\frac{U(\Phi)}{\sin(2\theta)}\right)D_{\theta}g+D_{z}V(\Phi)\alpha D_{z} g.
\end{align}
It then follows in a similar way to estimate \eqref{7g24} that
\begin{align}\label{7g46}
	|\bar{J}_{221}|
	&=(1-\eta)^{2}
	\left|\left\langle \frac{1}{\overline{w}^\eta}T_{D_{z}g\overline{w}^\eta}-\frac{1}{\overline{w}^\eta}D_{z}gT_{\overline{w}^\eta}, D_{z}g(w^\eta)^2\right\rangle\right| \\ \notag
	&\leq\left(\frac{\alpha}{\beta}+(1-\eta)\right)(1-\eta)^{2}\|D_{z}g w^\eta\|^2_{L^2} \\ \notag
	&~~~+C_\beta\left(\alpha^{\frac{1}{2}}(1-\eta)^{-\frac{1}{2}}+\alpha^{-1}\|g\|_{\mathcal{H}^{2}}\right)(1-\eta)^{2}\|D_{z}gw^\eta\|^2_{L^2}
	\\ \notag
	&\leq \frac{1}{200}\|g\|^2_{\mathcal{H}_{\eta}^2}+C_\beta\alpha^{\frac{1}{2}}\|g\|^2_{\mathcal{H}^2}+C_\beta\alpha^{-1}\|g\|^3_{\mathcal{H}^2}.
\end{align}
A direct calculation yields directly that
\begin{align}\label{est-Dz-Gamma}
	D_z L_{z}^{-1}(\Gamma^\ast_\gamma)=-\Gamma^\ast_\gamma.
\end{align}
By a similar decomposition of \eqref{est-bI-1}, one can deduce from \eqref{cal_U}, \eqref{cal_V} and \eqref{est-Dz-Gamma} that
\begin{align}\label{est-bJ222-1}
	\bar{J}_{222}
	=(1-\eta)^{2}\langle \mathcal{R}_2^l, D_{z}g(w^\eta)^2\rangle
	+(1-\eta)^{2}\langle \mathcal{R}_2^n, D_{z}g(w^\eta)^2\rangle
	\tri \bar{J}_{2221}+\bar{J}_{2222},
\end{align}
where $\mathcal{R}_2^l$ and $\mathcal{R}_2^n$ are defined by
\begin{align}
	\mathcal{R}_2^l
	&\tri \frac12\left(3\Gamma^{\ast}_{\gamma}+\alpha D_z\Gamma^{\ast}_{\gamma}\right)D_{\theta}g-\Gamma^{\ast}_{\gamma}\left(\cos(2\theta)-\sin^2(\theta)\right)\alpha D_{z}g,
	\label{def-R2l}
	\\
	\mathcal{R}_2^n
	&\tri D_{z}\left(\frac{U(\Phi-\frac{3}{4\alpha}L^{-1}_{z,K}(F^{\ast}_\gamma)\sin(2\theta))}{\sin(2\theta)}\right)D_{\theta}g+D_{z}\bigg(V\bigg(\Phi-\frac{3}{4\alpha}L^{-1}_{z,K}(F^{\ast}_\gamma)\sin(2\theta)\bigg)\bigg)\alpha D_{z} g.
	\label{def-R2n}
\end{align}
In view of \eqref{3f7} and $\alpha\ll1-\eta\ll\beta$, we find that
\begin{align}\label{7g47}
	|\bar{J}_{2221}|
	&\lesssim \frac{1}{\beta}(1-\eta)^{2}\|D_z g w^\eta\|_{L^2}\|D_{\theta}g w^\eta\|_{L^2}+\frac{\alpha}{\beta}(1-\eta)^{2}\|D_z g w^\eta\|^2_{L^2} \\ \notag
	&\lesssim \frac{1}{\beta}(1-\eta)^{2}\left((1-\eta)^{2}\|D_z g w^\eta\|^2_{L^2}+(1-\eta)^{-2}\|D_{\theta}g w^\lambda\|^2_{L^2}\right) +\frac{\alpha}{\beta}(1-\eta)^{2}\|D_z g w^\eta\|^2_{L^2} \\ \notag
	&\leq \frac{1}{200}\| g \|^2_{\mathcal{H}^2_{\eta}}.
\end{align}
Let us deal with $\bar{J}_{2222}$. 
It follows in a similar way to \eqref{7g3} that
\begin{align}\label{7g49}
	\bigg\|D_z \left(\frac{U(\tilde{\Phi}_{F^\ast_{\gamma}})}{\sin(2\theta)}\right)\bigg\|_{L^{\infty}}
	\lesssim \sqrt{\frac{\beta}{\alpha}}\|\partial_\theta U(\tilde{\Phi}_{F^\ast_{\gamma}})\|_{\mathcal{H}^{3,\ast}}
	\lesssim_\beta \alpha^{-\frac12}\| F^\ast_{\gamma} \|_{\mathcal{H}^{3,\ast}}\lesssim_\beta \alpha^{\frac{1}{2}}(1-\eta)^{-\frac{1}{2}}.
\end{align}
According to Lemma \ref{7fm1}, Lemma \ref{7em1} and Lemma \ref{7ha1}, we infer that
\begin{align}\label{7g50}   
	\|D_z U(\tilde{G}_{F^\ast_{\gamma}})\|_{L^{\infty}}\lesssim \|D^2_z \tilde{G}_{F^\ast_{\gamma}}w^{\ast}_z\|_{L^{2}_z}+\alpha\|D^3_z \tilde{G}_{F^\ast_{\gamma}}w^{\ast}_z\|_{L^{2}_z}  \lesssim\| F^\ast_{\gamma} \|_{\mathcal{H}^{2,\ast}}\lesssim_\beta\alpha(1-\eta)^{-\frac{1}{2}}.
\end{align}
While for the terms related to $g$, we can only controlled them by the $\mathcal{H}^{2}$-norm for closing the energy estimates.
Thus, by a different way as \eqref{7g49} and \eqref{7g50}, we only estimate the $L^2_zL^\infty_\theta$-norm.
In view of Proposition \ref{6prop0}, Corollary \ref{6cor1} and Corollary \ref{7em2}, we obtain that
\begin{align}\label{7g51}
	\bigg\|D_z \left(\frac{U(\tilde{\Phi}_{g})}{\sin(2\theta)}\right)w_z\bigg\|_{L_z^{2}L_\theta^{\infty}}
	\lesssim \sqrt{\frac{\beta}{\alpha}}\bigg\|\frac{U(\tilde{\Phi}_{g})}{\sin(2\theta)}\bigg\|_{\mathcal{H}^{2}}
	\lesssim_\beta \alpha^{-\frac12}\|g\|_{\mathcal{H}^{2}}.
\end{align}
With the help of Lemma \ref{7ha1}, we have
\begin{align}\label{7g52}
	\|D_z U(\tilde{G}_{g}+G^\ast_g)w_z\|_{L_z^{2}}
	\lesssim
	\|D_z (\tilde{G}_{g}+G^\ast_g) w_z\|_{L^{2}_z}+\alpha\|D^2_z (\tilde{G}_{g}+G^\ast_g) w_z\|_{L^{2}_z}
	\lesssim
	\alpha^{-1}\|g\|_{\mathcal{H}^{2}}.
\end{align}
Collecting \eqref{7g49}-\eqref{7g52}, we find that
\begin{align}\label{7g54}
	&~~~~\left|(1-\eta)^{2}\bigg\langle D_{z}\bigg(\frac{U(\Phi-\frac{3}{4\alpha}L^{-1}_{z,K}(F^{\ast}_\gamma)\sin(2\theta))}{\sin(2\theta)}\bigg) D_{\theta}g,D_{z}g(w^\eta)^2\bigg\rangle\right| \\ \notag
	&\lesssim_\beta \alpha^{\frac{1}{2}}(1-\eta)^{\frac{3}{2}}\|D_{z}gw^\eta\|_{L^2}\|D_{\theta}gw^\eta\|_{L^2}+\frac{1}{\alpha} (1-\eta)^{2}
	\|g\|_{\mathcal{H}^{2}}\|D_{z}gw^\eta\|_{L^2}
	\|D_{\theta}gw_\theta^\eta\|_{L_z^{\infty}L^2_\theta}\\ \notag
	&\lesssim_\beta \alpha^{\frac{1}{2}}\|g\|^2_{\mathcal{H}^2}+\alpha^{-1}\|g\|^3_{\mathcal{H}^{2}}.
\end{align}
Similar argument as \eqref{7g5} and \eqref{7g6} leads us to get that
\begin{align}
	\label{7g55}
	\|D_z V\left(\tilde{\Phi}_{F^\ast_{\gamma}}+\tilde{G}_{F^\ast_{\gamma}}\sin(2\theta)\right)\|_{L^{\infty}}
	&\lesssim \sqrt{\frac{\beta}{\alpha}}\|\partial_\theta \tilde{\Phi}_{F^\ast_{\gamma}}\|_{\mathcal{H}^{3,\ast}}
	+\|D^2_z \tilde{G}_{F^\ast_{\gamma}}w^{\ast}_z\|_{L^{2}_z} 
	\\
	&\lesssim_\beta\alpha^{-\frac12}\| F^\ast_{\gamma} \|_{\mathcal{H}^{3,\ast}}+\| F^\ast_{\gamma} \|_{\mathcal{H}^{2,\ast}}
	\lesssim_\beta \alpha^{\frac{1}{2}}(1-\eta)^{-\frac{1}{2}}.
	\nonumber
\end{align}
We employ Proposition \ref{6prop0}, Corollary \ref{6cor1} and Corollary \ref{7em2}, to discover that
\begin{align}\label{7g57}
	\|D_z V(\tilde{\Phi}_{g})w_z\|_{L_{z}^{2}L_\theta^{\infty}}
	\lesssim \sqrt{\frac{\beta}{\alpha}}\|\partial_\theta \tilde{\Phi}_{g}\|_{\mathcal{H}^{2}} \lesssim_\beta \alpha^{-\frac12}\|g\|_{\mathcal{H}^{2}}.
\end{align}
According to Lemma \ref{7ha1}, we have
\begin{align}\label{7g58}
	\|D_z V((\tilde{G}_{g}+G^\ast_g)\sin(2\theta))w_z\|_{L_{z}^{2}L_\theta^{\infty}}
	&\lesssim \|D_z \tilde{G}_{g}w_z\|_{L^{2}_z}  +\|D_z G^\ast_g w_z\|_{L^{2}_z}
	\lesssim \alpha^{-1} \|g\|_{\mathcal{H}^{2}}.
\end{align}
Combining \eqref{7g55}-\eqref{7g58} together, we conclude that
\begin{align}\label{7g60}
	&~~~~(1-\eta)^{2}\left|\left\langle D_{z}V\bigg(\Phi-\frac{3}{4\alpha}L^{-1}_{z,K}(F^{\ast}_\gamma)\sin(2\theta)\bigg)\alpha D_{z} g,D_{z}g(w^\eta)^2\right\rangle \right|
	\\ \notag
	&\lesssim_\beta \alpha^{\frac{3}{2}}(1-\eta)^{\frac{3}{2}}\|D_{z}gw^\eta\|^2_{L^2}+(1-\eta)^{2}\|g\|_{\mathcal{H}^{2}} \|D_{z}gw^\eta\|_{L^2}\|D_{z}gw_\theta^\eta\|_{L_z^{\infty}L^2_\theta}\\ \notag
	&\lesssim_\beta \alpha^{\frac{3}{2}}\|g\|^2_{\mathcal{H}^2}+\|g\|^3_{\mathcal{H}^{2}}.
\end{align}
Applying \eqref{7g54} and \eqref{7g60}, we deduce that
\begin{align}\label{7g61}
	|\bar{J}_{2222}|
	&\lesssim_\beta \alpha^{\frac{1}{2}}\|g\|^2_{\mathcal{H}^2}+\alpha^{-1}\|g\|^3_{\mathcal{H}^{2}}.
\end{align}
Inserting \eqref{7g47} and \eqref{7g61} into \eqref{est-bJ222-1}, we find that
\begin{align}\label{est-bJ222-2}
	|\bar{J}_{222}|
	\leq 
	\frac{1}{200}\|g\|^2_{\mathcal{H}_{\eta}^2}+
	C_\beta \alpha^{\frac{1}{2}}\|g\|^2_{\mathcal{H}^2}+C_\beta\alpha^{-1}\|g\|^3_{\mathcal{H}^2}.
\end{align}
We then substitute \eqref{7g46} and \eqref{est-bJ222-2} into \eqref{est-J22-1}, to discover that
\begin{align}\label{7g62-1}
	|\bar{J}_{22}|\leq \frac{1}{100}\|g\|^2_{\mathcal{H}_{\eta}^2}+C_\beta\alpha^{\frac{1}{2}}\|g\|^2_{\mathcal{H}^2}+C_\beta\alpha^{-1}\|g\|^3_{\mathcal{H}^2}.
\end{align}

Finally, we plug \eqref{7g41} in \textbf{Step 2.2.1} and \eqref{7g62-1} in \textbf{Step 2.2.2} into \eqref{est-bJ2-1}, to conclude that
\begin{align}\label{7g62}
	|\bar{J}_{2}|\leq \frac{1}{50}\|g\|^2_{\mathcal{H}_{\eta}^2}+C_\beta\alpha^{\frac{1}{2}}(1-\eta)^{-\frac52}\|g\|^2_{\mathcal{H}^2}+C_\beta\alpha^{-1}(1-\eta)^{-2}\|g\|^3_{\mathcal{H}^2}.
\end{align}

\noindent
\textbf{Step 2.3: Estimate for $\dot{\mathcal{H}}^{2}_{\eta}$-norm.}
\medskip

\noindent
We recall \eqref{est-bJ-1} and divide the $\dot{\mathcal{H}}^{2}_{\eta}$-norm into the following three terms:
\begin{align}\label{est-bJ3-1}
	\bar{J}_3
	=&\langle D^2_{\theta}T_{g}, D^2_{\theta}g(w^\lambda)^2\rangle
	+(1-\eta)^{4}\langle D^2_{z}T_{g}, D^2_{z}g(w^\eta)^2\rangle
	+(1-\eta)^2\langle D_zD_{\theta}T_{g}, D_z D_{\theta}g(w^\lambda)^2\rangle
	\\
	\tri& \bar{J}_{31}+\bar{J}_{32}+\bar{J}_{33}.
	\notag
\end{align}
Here $\bar{J}_{31}$, $\bar{J}_{32}$ and $\bar{J}_{33}$ correspond to $D^2_\theta T_g$, $D^2_z T_g$ and $D_zD_{\theta}T_{g}$, respectively.

\medskip

\noindent
\textbf{Step 2.3.1: Estimate for the term related to $D^2_\theta T_g$.} 
\medskip

\noindent   
Firstly, we divide $\bar{J}_{31}$ into two terms as follows:
\begin{align}\label{est-J31-1}
	\bar{J}_{31}
	=\langle T_{D^2_{\theta}g}, D^2_{\theta}g(w^\lambda)^2\rangle
	+\langle \mathcal{R}_3, D^2_{\theta}g(w^\lambda)^2\rangle
	\tri \bar{J}_{311}+\bar{J}_{312},
\end{align}
where the commutator $\mathcal{R}_3$ is defined by
\begin{align}\label{def-R3}
	\mathcal{R}_3\tri\bigg[D^2_{\theta},\frac{U(\Phi)}{\sin(2\theta)}\bigg]D_{\theta}g+\bigg[D^2_{\theta},V(\Phi)\bigg]\alpha D_{z} g.
\end{align}
By the similar derivation as \eqref{7g31}, we obtain that
\begin{align}\label{7g67}
	|\bar{J}_{311}|
	&\leq \frac{1}{200}\|g\|^2_{\mathcal{H}_{\eta}^2}+C_\beta\alpha^{\frac{1}{2}}(1-\eta)^{-\frac{1}{2}}\|g\|^2_{\mathcal{H}^2}+C_\beta\alpha^{-1}\|g\|^3_{\mathcal{H}^2}.
\end{align}
We apply the same decomposition as \eqref{est-bJ213-1}, to discover that
\begin{align}\label{est-J312-1}
	\bar{J}_{312}
	=\langle \mathcal{R}_3^l,D^2_{\theta}g(w^\lambda)^2\rangle
	+\langle \mathcal{R}_3^n,D^2_{\theta}g(w^\lambda)^2\rangle
	\tri \bar{J}_{3121}+\bar{J}_{3122},
\end{align}
where $\mathcal{R}_3^l$ and $\mathcal{R}_3^n$ are defined by
\begin{align}
	\mathcal{R}_3^l
	&\tri -6L^{-1}_{z}(\Gamma^{\ast}_\gamma)\sin^2(2\theta)\alpha D_\theta D_{z} g-12L^{-1}_{z}(\Gamma^{\ast}_\gamma)\sin^2(2\theta)\cos(2\theta)\alpha D_{z} g,
	\label{def-R3l}
	\\
	\mathcal{R}_3^n
	&\tri \bigg[D^2_{\theta},\frac{U(\Phi-\frac{3}{4\alpha}L^{-1}_{z,K}(F^{\ast}_\gamma)\sin(2\theta))}{\sin(2\theta)}\bigg]D_{\theta}g+\bigg[D^2_{\theta},V\bigg(\Phi-\frac{3}{4\alpha}L^{-1}_{z,K}(F^{\ast}_\gamma)\sin(2\theta)\bigg)\bigg]\alpha D_{z} g.
	\label{def-R3n}
\end{align}
By using a similar way as \eqref{7g32}, we can find that $\bar{J}_{3121}$ can be controlled by
\begin{align}\label{7g68}
	|\bar{J}_{3121}|
	&\lesssim 
	\alpha\|D_\theta D_z g w^\lambda\|_{L^2}\|D^2_{\theta}g w^\lambda\|_{L^2} 
	+\alpha\|D_z g w^\eta\|_{L^2}\|D^2_{\theta}g w^\lambda\|_{L^2}
	\leq \frac{1}{200}\| g \|^2_{\mathcal{H}^2_{\eta}}.
\end{align}
Now, we begin to deal with another term $\bar{J}_{3122}$.
We employ \eqref{7g33} and \eqref{7g39}, to find that
\begin{align}
	&~~~~\bigg|\bigg\langle D_\theta\bigg[\frac{U(\Phi-\frac{3}{4\alpha}L^{-1}_{z,K}(F^{\ast}_\gamma)\sin(2\theta))}{\sin(2\theta)}\bigg]D^2_{\theta}g
	\label{7g69}
	\\
	&~~~~+D_{\theta}\bigg[V\bigg(\Phi-\frac{3}{4\alpha}L^{-1}_{z,K}(F^{\ast}_\gamma)\sin(2\theta)\bigg)\bigg]\alpha D_{\theta} D_{z} g
	,D^2_{\theta}g(w^\lambda)^2\bigg\rangle \bigg|
	\notag
	\\
	&\lesssim_\beta \left(\alpha^{\frac{1}{2}}(1-\eta)^{-\frac{1}{2}}+\alpha^{-\frac12}\|g\|_{\mathcal{H}^{2}}\right)\| g \|^2_{\mathcal{H}^2}.
	\notag
\end{align}
According to Lemma \ref{7fm1}, Proposition \ref{6prop0}, Corollary \ref{6cor1}, Corollary \ref{6cor2} and Corollary \ref{7em2}, we infer that
\begin{align*}
	&~~~~\bigg\|D^2_\theta\bigg(\frac{U(\Phi-\frac{3}{4\alpha}L^{-1}_{z,K}(F^{\ast}_\gamma)\sin(2\theta))}{\sin(2\theta)}\bigg)w_\theta^\lambda\bigg\|_{L_z^{\infty}L_\theta^2}
	\\ \notag
	&\lesssim \bigg\|D^2_\theta\frac{U(\tilde{\Phi}_{F^\ast_{\gamma}})}{\sin(2\theta)}w_\theta^\lambda+D^2_\theta\frac{U(\tilde{\Phi}_{g})}{\sin(2\theta)}w_\theta^\lambda\bigg\|_{L_z^{\infty}L_\theta^2} 
	\\ \notag
	&\lesssim \|\partial_\theta D_\theta U(\tilde{\Phi}_{F^\ast_{\gamma}})w_\theta^\lambda\|_{L_z^{\infty}L_\theta^2}+\|\partial_\theta D_\theta U(\tilde{\Phi}_{g})w_\theta^\lambda\|_{L_z^{\infty}L_\theta^2} +\bigg\|\frac{D_\theta U(\tilde{\Phi}_{F^\ast_{\gamma}})}{\sin(2\theta)}w_\theta^\lambda\bigg\|_{L_z^{\infty}L_\theta^2}
	\\ \notag
	&~~~+\bigg\|\frac{D_\theta U(\tilde{\Phi}_{g})}{\sin(2\theta)}w_\theta^\lambda\bigg\|_{L_z^{\infty}L_\theta^2} +\bigg\|\frac{U(\tilde{\Phi}_{F^\ast_{\gamma}})}{\sin(2\theta)}w_\theta^\lambda\bigg\|_{L_z^{\infty}L_\theta^2}+\bigg\|\frac{U(\tilde{\Phi}_{g})}{\sin(2\theta)}w_\theta^\lambda\bigg\|_{L_z^{\infty}L_\theta^2}  
	\\ \notag
	&\lesssim \|\partial_\theta U(\tilde{\Phi}_{F^\ast_{\gamma}})\|_{\mathcal{H}^{2,\ast}}+\|\partial^2_\theta \tilde{\Phi}_{F^\ast_{\gamma}}\|_{\mathcal{H}^{2,\ast}} +\|\partial_\theta U(\tilde{\Phi}_{g})\|_{\mathcal{H}^{2}}+ \|\partial^2_\theta \tilde{\Phi}_{g}\|_{\mathcal{H}^{2}}
	\\  \notag
	&\lesssim_\beta\alpha(1-\eta)^{-\frac{1}{2}}+\|g\|_{\mathcal{H}^{2}},
\end{align*}
which implies that
\begin{align}\label{7g72}
	&~~~~\bigg|\bigg\langle D^2_\theta\bigg(\frac{U(\Phi-\frac{3}{4\alpha}L^{-1}_{z,K}(F^{\ast}_\gamma)\sin(2\theta))}{\sin(2\theta)}\bigg)D_{\theta}g,D^2_{\theta}g(w^\lambda)^2\bigg\rangle \bigg|
	\\ \notag
	&\lesssim_\beta (\alpha(1-\eta)^{-\frac{1}{2}}+\|g\|_{\mathcal{H}^{2}})\|D_{\theta}g w_z\|_{L^2_z L^\infty_\theta} \|D^2_{\theta}g w^\lambda\|_{L^2} 
	\\ \notag
	&\lesssim_\beta
	\left(\alpha^{\frac{1}{2}}(1-\eta)^{-\frac{1}{2}}+\alpha^{-\frac{1}{2}}\|g\|_{\mathcal{H}^{2}}\right)\| g \|^2_{\mathcal{H}^2}.
\end{align}
Thanks to Lemma \ref{7fm1}, Proposition \ref{6prop0}, Corollary \ref{6cor1}, Corollary \ref{6cor2} and Corollary \ref{7em2}, it is enough to prove that
\begin{align}\label{7g73}
	\alpha\|D^2_\theta V(\tilde{\Phi}_{F^\ast_{\gamma}}+\tilde{\Phi}_{g})w_\theta^\lambda\|_{L_z^{\infty}L_\theta^2}
	&\lesssim \alpha\|D^2_\theta D_z V(\tilde{\Phi}_{F^\ast_{\gamma}}+\tilde{\Phi}_{g})w^{\lambda}\|_{L^2}
	\\
	&\lesssim \alpha\|\partial_\theta D_z \tilde{\Phi}_{F^\ast_{\gamma}}\|_{\mathcal{H}^{2,\ast}}
	+\alpha\|\partial_\theta D_z \tilde{\Phi}_{g}\|_{\mathcal{H}^{2}}
	\notag\\
	&\lesssim_\beta\| F^\ast_{\gamma} \|_{\mathcal{H}^{2,\ast}}
	+\| g \|_{\mathcal{H}^{2}}
	\notag\\
	&\lesssim_\beta \alpha(1-\eta)^{-\frac{1}{2}}
	+\| g \|_{\mathcal{H}^{2}}.
	\notag
\end{align}
It then follows from Lemma \ref{7fm1}, Lemma \ref{7em1} and Lemma \ref{7ha1} that
\begin{align}\label{7g74}   
	\alpha\|D^2_\theta V((\tilde{G}_{F^\ast_{\gamma}}+\tilde{G}_{g}+G^\ast_{g})\sin(2\theta))w_\theta^\lambda\|_{L_z^{\infty}L_\theta^2}
	&\lesssim \alpha\|D_z \tilde{G}_{F^\ast_{\gamma}}w^{\ast}_z\|_{L^{2}_z}  
	+\alpha\|D_z (\tilde{G}_{g}+G^\ast_{g})w_z\|_{L^{2}_z}
	\\
	&\lesssim\alpha\| F^\ast_{\gamma} \|_{\mathcal{H}^{1,\ast}}
	+\alpha\| g \|_{\mathcal{H}^{1}}
	\notag\\
	&\lesssim_\beta \alpha^2(1-\eta)^{-\frac{1}{2}}+\alpha\| g \|_{\mathcal{H}^{1}}.
	\notag
\end{align}
We collect \eqref{7g73} and \eqref{7g74}, to deduce directly that
\begin{align}\label{7g78}
	\alpha\bigg\|D^2_\theta V\bigg(\Phi-\frac{3}{4\alpha}L^{-1}_{z,K}(F^{\ast})\sin(2\theta)\bigg)w_\theta^\lambda\bigg\|_{L_z^{\infty}L_\theta^2}\lesssim_\beta \alpha(1-\eta)^{-\frac{1}{2}}+\|g\|_{\mathcal{H}^{2}}.
\end{align}
By using \eqref{7g78} and Corollary \ref{7em2}, we have
\begin{align}\label{7g79}
	&~~~~~\bigg|\bigg\langle D^2_\theta V\bigg(\Phi-\frac{3}{4\alpha}L^{-1}_{z,K}(F^{\ast}_\gamma)\sin(2\theta)\bigg)\alpha D_z g,D^2_{\theta}g(w^\lambda)^2\bigg\rangle \bigg|
	\\ \notag
	&\lesssim_\beta \left(\alpha(1-\eta)^{-\frac{1}{2}}+\|g\|_{\mathcal{H}^{2}}\right)\|D_{z}g w_z\|_{L^2_z L^\infty_\theta} \|D^2_{\theta}g w^\lambda\|_{L^2} \\ \notag
	&\lesssim_\beta \left(\alpha^{\frac{1}{2}}(1-\eta)^{-\frac{1}{2}}+\alpha^{-\frac12}\|g\|_{\mathcal{H}^{2}}\right)\| g \|^2_{\mathcal{H}^2}.
\end{align}
The combination of \eqref{7g69}, \eqref{7g72} and \eqref{7g79}, yields that
\begin{align}\label{7g80}
	|\bar{J}_{3122}|
	\lesssim_\beta (\alpha^{\frac{1}{2}}(1-\eta)^{-\frac{1}{2}}+\alpha^{-\frac12}\|g\|_{\mathcal{H}^{2}})\| g \|^2_{\mathcal{H}^2}.
\end{align}
Substituting \eqref{7g68} and \eqref{7g80} into \eqref{est-J312-1}, one obtains that
\begin{align}\label{est-J312-2}
	|\bar{J}_{312}|
	\leq
	\frac{1}{200}\|g\|^2_{\mathcal{H}_{\eta}^2}+C_\beta\alpha^{\frac{1}{2}}(1-\eta)^{-\frac{1}{2}}\|g\|^2_{\mathcal{H}^2}+C_\beta\alpha^{-\frac12}\|g\|^3_{\mathcal{H}^2}.
\end{align}
Plugging \eqref{7g67} and \eqref{est-J312-2} into \eqref{est-J31-1}, we see that
\begin{align}\label{7g81}
	|\bar{J}_{31}|\leq \frac{1}{100}\|g\|^2_{\mathcal{H}_{\eta}^2}+C_\beta\alpha^{\frac{1}{2}}(1-\eta)^{-\frac{1}{2}}\|g\|^2_{\mathcal{H}^2}+C_\beta\alpha^{-1}\|g\|^3_{\mathcal{H}^2}.
\end{align}

\noindent
\textbf{Step 2.3.2: Estimate for the term related to $D^2_z T_g$.} 
\medskip

\noindent
Similar to \eqref{est-J31-1}, $\bar{J}_{32}$ can be divided into the following two parts:
\begin{align}\label{est-J32-1}
	\bar{J}_{32}
	=(1-\eta)^{4}\langle T_{D^2_{z}g}, D^2_{z}g(w^\eta)^2\rangle
	+(1-\eta)^{4}\langle \mathcal{R}_4, D^2_{z}g(w^\eta)^2\rangle
	\tri \bar{J}_{321}+\bar{J}_{322},
\end{align}
where the commutator $\mathcal{R}_4$ is defined by
\begin{align}\label{def-R4}
	\mathcal{R}_4
	=\bigg[D^2_{z},\frac{U(\Phi)}{\sin(2\theta)}\bigg]D_{\theta}g+\bigg[D^2_{z},V(\Phi)\bigg]\alpha D_{z} g.
\end{align}
Then by exactly the same procedure as that in the proof of \eqref{7g24}, we can deduce from Lemma \ref{7div1}, \eqref{7g21} and \eqref{7g22} that
\begin{align}\label{7g85}
	|\bar{J}_{321}|
	\leq \frac{1}{200}\|g\|^2_{\mathcal{H}_{\eta}^2}+C_\beta\alpha^{\frac{1}{2}}\|g\|^2_{\mathcal{H}^2}+C_\beta\alpha^{-1}\|g\|^3_{\mathcal{H}^2}.
\end{align}
By the similar way as \eqref{est-bJ222-1}, we can decompose $\bar{J}_{322}$ as follows:
\begin{align}\label{est-bJ322-1}
	\bar{J}_{322}
	=(1-\eta)^{4}\langle \mathcal{R}_4^{l}, D^2_{z}g(w^\eta)^2\rangle
	+(1-\eta)^{4}\langle \mathcal{R}_4^{n}, D^2_{z}g(w^\eta)^2\rangle
	\tri \bar{J}_{3221}+\bar{J}_{3222}.
\end{align}
Here $\mathcal{R}_4^{l}$ and $\mathcal{R}_4^{n}$ are defined by
\begin{align}
	\mathcal{R}_4^{l}
	&\tri\left(3\Gamma^{\ast}_{\gamma}+\alpha D_z\Gamma^{\ast}_{\gamma}\right)D_z D_{\theta}g
	+\left(\frac{3}{2}D_z\Gamma^{\ast}_{\gamma}+\frac{\alpha}{2}D^2_z\Gamma^{\ast}_{\gamma}\right)D_{\theta}g 
	\label{def_R4l}\\
	&~~~~-2\Gamma^{\ast}_{\gamma}\left(\cos(2\theta)-\sin^2(\theta)\right)\alpha D^2_{z}g-D_z\Gamma^{\ast}_{\gamma}\left(\cos(2\theta)-\sin^2(\theta)\right)\alpha D_{z}g,
	\notag\\
	\mathcal{R}_4^{n}
	&\tri\bigg[D^2_{z},\frac{U(\Phi-\frac{3}{4\alpha}L^{-1}_{z,K}(F^{\ast}_\gamma)\sin(2\theta))}{\sin(2\theta)}\bigg]D_{\theta}g
	+\bigg[D^2_{z},V\bigg(\Phi-\frac{3}{4\alpha}L^{-1}_{z,K}(F^{\ast}_\gamma)\sin(2\theta)\bigg)\bigg]\alpha D_{z} g.
	\label{def_R4n}
\end{align}
Along a similar way as \eqref{7g47}, we find that
\begin{align}\label{7g86}
	|\bar{J}_{3221}|
	&\lesssim \frac{1}{\beta^2}(1-\eta)^{4}\|D^2_z g w^\eta\|_{L^2}(\|D_{\theta}D_z g w^\eta\|_{L^2}+\|D_{\theta} g w^\eta\|_{L^2})\\ \notag
	&~~~+\frac{\alpha}{\beta^2}(1-\eta)^{4}(\|D^2_z g w^\eta\|^2_{L^2}+\|D_z g w^\eta\|^2_{L^2}) \\ \notag
	&\leq \frac{1}{300}\left((1-\eta)^{4}\|D^2_z g w^\eta\|^2_{L^2}+(1-\eta)^{2}\|D_\theta D_z g w^\lambda\|^2_{L^2}+(1-\eta)^{-2}\|D_{\theta}g w^\lambda\|^2_{L^2}\right) \\ \notag
	&~~~+C\frac{\alpha}{\beta^2}(1-\eta)^{4}(\|D^2_z g w^\eta\|^2_{L^2}+\|D_z g w^\eta\|^2_{L^2})\\ \notag
	&\leq \frac{1}{200}\| g \|^2_{\mathcal{H}^2_{\eta}}.
\end{align}
One can deduce from a similar way as \eqref{7g9} and \eqref{7g10} that
\begin{align*}
	\|D_z U(\tilde{G}_{g}+G^\ast_g)\|_{L^{\infty}}
	\lesssim \|D^2_z (\tilde{G}_{g}+G^\ast_g) w_z\|_{L^{2}_z}+\alpha\|D^3_z (\tilde{G}_{g}+G^\ast_g) w_z\|_{L^{2}_z}\lesssim \alpha^{-1} \|g\|_{\mathcal{H}^{2}}.
\end{align*}
This, together with \eqref{7g49} and \eqref{7g50}, implies that
\begin{align}\label{7g89}
	\|D_z U(\tilde{G}_{g}+G^\ast_g)\|_{L^{\infty}}
	+\bigg\|D_z \bigg(\frac{U(\tilde{\Phi}_{F^\ast_{\gamma}})}{\sin(2\theta)}\bigg)\bigg\|_{L^{\infty}}
	+\|D_z U(\tilde{G}_{F^\ast_{\gamma}})\|_{L^{\infty}}
	\lesssim_\beta \alpha^{-1}\|g\|_{\mathcal{H}^{2}}+\alpha^{\frac{1}{2}}(1-\eta)^{-\frac{1}{2}}.
\end{align}
With the help of Proposition \ref{6prop0}, Corollary \ref{6cor1} and Corollary \ref{7em2}, one obtains that
\begin{align}\label{7g90}
	\bigg\|D_z \bigg(\frac{U(\tilde{\Phi}_{g})}{\sin(2\theta)}\bigg)\frac{w_\theta^\eta}{w_\theta^\lambda}\bigg\|_{L^{\infty}}
	&\lesssim \|D^2_z U(\tilde{\Phi}_{g})w_z(\sin(2\theta))^{-\frac{1}{2}+\frac{\alpha}{20\beta}-\frac{\eta}{2}}\|_{L_z^{2}L_\theta^{\infty}} 
	\\  \notag
	&\lesssim \bigg(\|\partial_\theta D^2_z U(\tilde{\Phi}_{g})w^\eta\|_{L^{2}}+\bigg\|D^2_z \left(\frac {U(\tilde{\Phi}_{g})}{\sin(2\theta)}\right)w^\eta\bigg\|_{L^{2}}\bigg)
	\|(\sin(2\theta))^{-\frac{1}{2}+\frac{\alpha}{20\beta}-\frac{\eta}{2}}(w_\theta^\eta)^{-1}\|_{L^{2}_\theta} 
	\notag \\ \notag
	&\lesssim_\beta \alpha^{-\frac12}\|g\|_{\mathcal{H}^{2}}.
\end{align}
Applying Lemma \ref{7fm1}, it follows from the similar way as \eqref{7g49}, \eqref{7g50} and \eqref{7g52} that
\begin{align}\label{7g91}
	&~~~~~~\|D^2_z U(\tilde{G}_{g})w_z\|_{L_z^2 }+\|D^2_z U(G^\ast_g)w_z\|_{L_z^2}+\bigg\|D^2_z \left(\frac{U(\tilde{\Phi}_{F^\ast_{\gamma}})}{\sin(2\theta)}\right)\bigg\|_{L^{\infty}}+\|D^2_z U(\tilde{G}_{F^\ast_{\gamma}})\|_{L^{\infty}} \\ \notag
	&\lesssim_\beta \alpha^{-1}\|g\|_{\mathcal{H}^{2}}+\alpha^{-\frac12}\| F^\ast_{\gamma} \|_{\mathcal{H}^{4,\ast}}
	\lesssim_\beta \alpha^{-1}\|g\|_{\mathcal{H}^{2}}+\alpha^{\frac{1}{2}}(1-\eta)^{-\frac{1}{2}}.
\end{align}
While by exactly the same procedure as that in the proof of \eqref{7g90}, it is enough to show that
\begin{align}\label{7g92}
	\bigg\|D^2_z \left(\frac{U(\tilde{\Phi}_{g})}{\sin(2\theta)}\right)w_z\frac{w_\theta^\eta}{w_\theta^\lambda}\bigg\|_{L_z^2 L_\theta^{\infty}}
	\lesssim_\beta \alpha^{-\frac12}  \|g\|_{\mathcal{H}^{2}}.
\end{align}
Combining \eqref{7g89}-\eqref{7g92} together, we obtain that
\begin{align}\label{7g93}
	&~~~~(1-\eta)^4 \bigg|\bigg\langle \bigg[D^2_{z},\frac{U(\Phi-\frac{3}{4\alpha}L^{-1}_{z,K}(F^{\ast}_\gamma)\sin(2\theta))}{\sin(2\theta)}\bigg]D_{\theta}g, D_z^2g (w^{\eta})^2 \bigg\rangle\bigg| 
	\\ 
	\notag
	&\lesssim_\beta (1-\eta)^{4}\left(\alpha^{-1}\|g\|_{\mathcal{H}^{2}}+\alpha^{\frac{1}{2}}(1-\eta)^{-\frac{1}{2}}\right)\|D^2_{z}gw^\eta\|_{L^2}\left(\|D_z D_{\theta}g w^\lambda\|_{L^2}
	+
	\|D_{\theta}gw_\theta^\lambda\|_{L_z^{\infty}L^2_\theta}+\|D_{\theta}gw^\eta\|_{L^2}\right)
	\\ \notag
	&\lesssim _\beta \alpha^{\frac{1}{2}}\|g\|^2_{\mathcal{H}^2}+\alpha^{-1}\|g\|^3_{\mathcal{H}^{2}}.
\end{align}
According to \eqref{7g55}, Corollary \ref{6cor1} and Lemma \ref{7ha1}, we infer that
\begin{align}\label{7g94}
	&~~~~~\bigg\|D_z \bigg[V\bigg(\Phi-\frac{3}{4\alpha}L^{-1}_{z,K}(F^{\ast}_\gamma)\sin(2\theta)\bigg)\bigg]\bigg\|_{L^{\infty}}
	\\ \notag
	&\lesssim_\beta \alpha^{\frac{1}{2}}(1-\eta)^{-\frac{1}{2}}+\alpha^{-\frac12}\|\partial_\theta D_z\tilde{\Phi}_{g}\|_{\mathcal{H}^{2}}  
	+\|D^2_z \tilde{G}_{g}w_z\|_{L^{2}_z}+\|D^2_z G^\ast_g w_z\|_{L^{2}_z} \\ \notag
	&\lesssim_\beta\alpha^{\frac{1}{2}}(1-\eta)^{-\frac{1}{2}}+\alpha^{-\frac{3}{2}}\|g\|_{\mathcal{H}^{2}}.
\end{align}
Along a similar way as \eqref{7g55}-\eqref{7g58}, one gets that
\begin{align}\label{7g95}
	&~~~~\|D^2_z V\left(\tilde{\Phi}_{g}+(\tilde{G}_{g}+G^\ast_g) \sin(2\theta)\right)w_z\|_{L_z^2L_\theta^{\infty}}+\|D^2_z V(\tilde{\Phi}_{F^\ast_{\gamma}}+\tilde{G}_{F^\ast_{\gamma}}\sin(2\theta))\|_{L^{\infty}} 
	\\ \notag
	&\lesssim_\beta \alpha^{\frac{1}{2}}(1-\eta)^{-\frac{1}{2}}+\alpha^{-\frac{3}{2}}\|g\|_{\mathcal{H}^{2}}.
\end{align}
We then collect \eqref{7g94}-\eqref{7g95}, to discover that
\begin{align}\label{7g96}
	&~~~~~(1-\eta)^{4}\left|\bigg\langle \bigg[D^2_{z},V\bigg(\Phi-\frac{3}{4\alpha}L^{-1}_{z,K}(F^{\ast}_\gamma)\sin(2\theta)\bigg)\bigg]\alpha D_{z} g,D^2_{z}g(w^\eta)^2\bigg\rangle \right|
	\\ \notag
	&\lesssim_\beta (1-\eta)^{4}\left(\alpha^{\frac{3}{2}}(1-\eta)^{-\frac{1}{2}}+\alpha^{-\frac{1}{2}}\|g\|_{\mathcal{H}^{2}}\right)\|D^2_{z}gw^\eta\|_{L^2} 
	\\ \notag
	&~~~~~\times\left(\|D^2_{z}gw^\eta\|_{L^2}+\|D_{z}gw_\theta^\eta\|_{L_z^{\infty}L^2_\theta}+\|D_{z}gw^\eta\|_{L^2}\right) 
	\\ \notag
	&\lesssim_\beta \alpha^{\frac{3}{2}}\|g\|^2_{\mathcal{H}^2}+\alpha^{-\frac{1}{2}}\|g\|^3_{\mathcal{H}^{2}}.
\end{align}
According to \eqref{7g93} and \eqref{7g96}, we deduce that
\begin{align}\label{7g97}
	|\bar{J}_{3222}|
	\lesssim_\beta \alpha^{\frac{1}{2}}\|g\|^2_{\mathcal{H}^2}+\alpha^{-1}\|g\|^3_{\mathcal{H}^{2}}.
\end{align}
We insert \eqref{7g86} and \eqref{7g97} into \eqref{est-bJ322-1}, to obtain that
\begin{align}\label{est-bJ322-2}
	|\bar{J}_{322}| \leq \frac{1}{200}\|g\|^2_{\mathcal{H}_{\eta}^2}+C_\beta\alpha^{\frac{1}{2}}\|g\|^2_{\mathcal{H}^2}+C_\beta \alpha^{-1}\|g\|^3_{\mathcal{H}^2}.
\end{align}
Substituting \eqref{7g85} and \eqref{est-bJ322-2} into \eqref{est-J32-1}, we finally get that
\begin{align}\label{7g98}
	|\bar{J}_{32}|\leq 
	\frac{1}{100}\|g\|^2_{\mathcal{H}_{\eta}^2}
	+C_\beta\alpha^{\frac{1}{2}}\|g\|^2_{\mathcal{H}^2}
	+C_\beta \alpha^{-1}\|g\|^3_{\mathcal{H}^2}.
\end{align}

\noindent
\textbf{Step 2.3.3: Estimate for the term related to $D_zD_\theta  T_g$.} 
\medskip

\noindent 
Firstly, we rewrite $D_zD_\theta  T_g$ as the following two terms:
\begin{align}\label{est-bJ33-1}
	\bar{J}_{33}
	=(1-\eta)^2\langle T_{D_zD_{\theta}g}, D_zD_{\theta}g(w^\lambda)^2\rangle
	+(1-\eta)^2\langle \mathcal{R}_5, D_zD_{\theta}g(w^\lambda)^2\rangle
	\tri \bar{J}_{331}+\bar{J}_{332},
\end{align}
where the commutator $\mathcal{R}_5$ is defined by
\begin{align}\label{def-R5}
	\mathcal{R}_5=\left[D_zD_{\theta},\frac{U(\Phi)}{\sin(2\theta)}\right]D_{\theta}g+\left[D_zD_{\theta},V(\Phi)\right]\alpha D_{z} g.
\end{align}
Along a similar way as \eqref{7g24}, we deduce from Lemma \ref{7div1}, \eqref{7g21} and \eqref{7g22} that
\begin{align}\label{7g102}
	|\bar{J}_{331}|
	\leq \frac{1}{200}\|g\|^2_{\mathcal{H}_{\eta}^2}+C_\beta\alpha^{\frac{1}{2}}(1-\eta)^{-\frac{1}{2}}\|g\|^2_{\mathcal{H}^2}+C_\beta\alpha^{-1}\|g\|^3_{\mathcal{H}^2}.
\end{align}
We control $\bar{J}_{332}$ through a decomposition similar to \eqref{est-bJ213-1}, specifically:
\begin{align}\label{est-bJ332-1}
	\bar{J}_{332}
	=(1-\eta)^2\langle \mathcal{R}_5^l, D_zD_{\theta}g(w^\lambda)^2\rangle
	+(1-\eta)^2\langle \mathcal{R}_5^n, D_zD_{\theta}g(w^\lambda)^2\rangle
	\tri \bar{J}_{3321}+\bar{J}_{3322},
\end{align}
where $\mathcal{R}_5^l$ and $\mathcal{R}_5^n$ are defined by
\begin{align}
	\mathcal{R}_5^l
	&\tri\frac12\left(3\Gamma^{\ast}_{\gamma}+\alpha D_z\Gamma^{\ast}_{\gamma}\right)
	D^2_{\theta}g
	-3\alpha\left(L^{-1}_{z}(\Gamma^{\ast}_\gamma)  D^2_{z} g-\Gamma^{\ast}_\gamma D_{z} g
	\right)\sin^2(2\theta)
	\label{def-R5l}
	\\
	\notag
	&~~~~-\Gamma^{\ast}_\gamma
	\left(\cos(2\theta)-\sin^2(\theta)\right)\alpha D_{z}D_\theta g,
\end{align}
and
\begin{align}
	\mathcal{R}_5^n
	&\tri\bigg[D_zD_{\theta},\frac{U(\Phi-\frac{3}{4\alpha}L^{-1}_{z,K}(F^{\ast}_\gamma)\sin(2\theta))}{\sin(2\theta)}\bigg]D_{\theta}g+\bigg[D_zD_{\theta},V\bigg(\Phi-\frac{3}{4\alpha}L^{-1}_{z,K}(F^{\ast}_\gamma)\sin(2\theta)\bigg)\bigg]\alpha D_{z} g.
	\label{def-R5n}
\end{align}
Adapting the approach developed for \eqref{7g32} and \eqref{7g47}, we derive
\begin{align}\label{7g103}
	|\bar{J}_{3321}|
	&\leq \frac{1}{200}\| g \|^2_{\mathcal{H}^2_{\eta}}.
\end{align}
By combining \eqref{7g33} and \eqref{7g39}, and following a derivation analogous to \eqref{7g40}, we establish that
\begin{align}\label{7g104}
	&~~~~(1-\eta)^2\bigg|\bigg\langle D_\theta\bigg(\frac{U(\Phi-\frac{3}{4\alpha}L^{-1}_{z,K}(F^{\ast}_\gamma)\sin(2\theta))}{\sin(2\theta)}\bigg)D_zD_{\theta}g,D_zD_{\theta}g(w^\lambda)^2\bigg\rangle\bigg| \\ \notag
	&~~~~+(1-\eta)^2\bigg|\bigg\langle D_{\theta}\left(V(\Phi-\frac{3}{4\alpha}L^{-1}_{z,K}(F^{\ast}_\gamma)\sin(2\theta))\right)\alpha D^2_{z} g,D_zD_{\theta}g(w^\lambda)^2\bigg\rangle\bigg|
	\\ \notag
	&\lesssim_\beta (\alpha^{\frac{1}{2}}(1-\eta)^{-\frac{1}{2}}+\alpha^{-\frac12}\|g\|_{\mathcal{H}^{2}})\| g \|^2_{\mathcal{H}^2}.
\end{align}
Observing that $\tilde{G}_{F^\ast_{\gamma}}$, $\tilde{G}_{g}$ and $G^\ast_g$ are independent of $\theta$, we derive through direct computations that
\begin{align}\label{est-DzDtheta-1}
	D_zD_\theta\bigg(\frac{U(\Phi-\frac{3}{4\alpha}L^{-1}_{z,K}(F^{\ast}_\gamma)\sin(2\theta))}{\sin(2\theta)}\bigg)
	=D_zD_\theta\bigg(\frac{U(\tilde{\Phi}_{F^\ast_{\gamma}})}{\sin(2\theta)}\bigg)
	+D_zD_\theta \bigg(\frac{U(\tilde{\Phi}_{g})}{\sin(2\theta)}\bigg).
\end{align}
We deduce from Proposition \ref{6prop0}, Corollaries \ref{6cor1}, \ref{6cor2} and \ref{7em2} that
\begin{align}
	\label{7g106}
	&\bigg\|D_zD_\theta\left(\frac{U(\tilde{\Phi}_{F^\ast_{\gamma}})}{\sin(2\theta)}\right)w_\theta^\lambda\bigg\|_{L_z^{\infty}L_\theta^2}
	\lesssim
	\|\partial_\theta U(\tilde{\Phi}_{F^\ast_{\gamma}})\|_{\mathcal{H}^{3,\ast}} \lesssim_\beta\alpha(1-\eta)^{-\frac{1}{2}},
	\\
	&\bigg\|D_zD_\theta\left(\frac{U(\tilde{\Phi}_{g})}{\sin(2\theta)}\right)w_z\bigg\|_{L_z^2L_\theta^{\infty}}
	\lesssim \sqrt{\frac{\beta}{\alpha}}\left(\|\partial_\theta U(\tilde{\Phi}_{g})\|_{\mathcal{H}^{2}}+\|\partial^2_\theta \tilde{\Phi}_{g}\|_{\mathcal{H}^{2}}\right) \lesssim_\beta\alpha^{-\frac12}\|g\|_{\mathcal{H}^{2}}.
	\label{7g107}
\end{align}
Combining \eqref{est-DzDtheta-1}-\eqref{7g107} together, it comes out
\begin{align}
	\label{7g108}
	&~~~~(1-\eta)^2\bigg|\bigg\langle D_zD_\theta\bigg(\frac{U(\Phi-\frac{3}{4\alpha}L^{-1}_{z,K}(F^{\ast}_\gamma)\sin(2\theta))}{\sin(2\theta)}\bigg)D_{\theta}g,D_zD_{\theta}g(w^\lambda)^2\bigg\rangle\bigg| 
	\\ \notag
	&\lesssim_\beta \left(\alpha(1-\eta)^{-\frac{1}{2}}\|D_{\theta}g w_z\|_{L^2_z L^\infty_\theta}+\alpha^{-\frac12}\|g\|_{\mathcal{H}^{2}}\|D_{\theta}g w_\theta^\lambda\|_{L^\infty_z L^2_\theta}\right) \|D_zD_{\theta}g w^\lambda\|_{L^2} \\ \notag
	&\lesssim_\beta (\alpha^{\frac{1}{2}}(1-\eta)^{-\frac{1}{2}}+\alpha^{-\frac12}\|g\|_{\mathcal{H}^{2}})\| g \|^2_{\mathcal{H}^2}.
\end{align}
We then deduce from the similar way as \eqref{7g73}-\eqref{7g78} that
\begin{align*}
	\alpha\bigg\|D_zD_\theta V\bigg(\Phi-\frac{3}{4\alpha}L^{-1}_{z,K}(F^{\ast})\sin(2\theta)\bigg)w_\theta^\lambda\bigg\|_{L_z^{\infty}L_\theta^2}\lesssim_\beta \alpha(1-\eta)^{-\frac{1}{2}}+\|g\|_{\mathcal{H}^{2}}.
\end{align*}
By combining this with Corollary \ref{7em2}, we obtain that
\begin{align}\label{7g115}
	&~~~~~(1-\eta)^2\bigg|\bigg\langle D_zD_\theta V\bigg(\Phi-\frac{3}{4\alpha}L^{-1}_{z,K}(F^{\ast}_\gamma)\sin(2\theta)\bigg)\alpha D_z g,D_zD_{\theta}g(w^\lambda)^2\bigg\rangle \bigg|
	\\ \notag
	&\lesssim_\beta \left(\alpha(1-\eta)^{-\frac{1}{2}}+\|g\|_{\mathcal{H}^{2}}\right)\|D_{z}g w_z\|_{L^2_z L^\infty_\theta} \|D_zD_{\theta}g w^\lambda\|_{L^2} 
	\\ \notag
	&\lesssim_\beta \left(\alpha^{\frac{1}{2}}(1-\eta)^{-\frac{1}{2}}+\alpha^{-\frac12}\|g\|_{\mathcal{H}^{2}}\right)\| g \|^2_{\mathcal{H}^2}.
\end{align}
By virtue of \eqref{7g94}, it follows that
\begin{align}\label{7g116}
	&(1-\eta)^2\bigg|\bigg\langle D_z V\bigg(\Phi-\frac{3}{4\alpha}L^{-1}_{z,K}(F^{\ast}_\gamma)\sin(2\theta)\bigg)\alpha D_zD_\theta g,D_zD_{\theta}g(w^\lambda)^2\bigg\rangle\bigg| \\ \notag
	\lesssim_\beta& \left(\alpha^{\frac{3}{2}}(1-\eta)^{-\frac{1}{2}}+\alpha^{-\frac{1}{2}}\|g\|_{\mathcal{H}^{2}}\right)\| g \|^2_{\mathcal{H}^2}.
\end{align}
Controlling $\|D_z \Big(\frac{U(\tilde{\Phi}_{g})}{\sin(2\theta)}\Big)\|_{L^\infty}$ within $\mathcal{H}^2$ presents a fundamental difficulty. Our approach introduces the new norm of $D^2_z g$ with an additional weight that avoids increasing regularity. 
More precisely, we introduce the $\mathcal{E}^{2}$-norm. 
According to Corollaries \ref{7em2}, \ref{6cor3} and Lemma \ref{6lem4}, we infer that
\begin{align}\label{7g117}
	\bigg\|D_z \bigg(\frac{U(\tilde{\Phi}_{g})}{\sin(2\theta)}\bigg)\bigg\|_{L^{\infty}}
	&\lesssim \sqrt{\frac{\beta}{\alpha}}\bigg\|D^2_zD_\theta \bigg(\frac{U(\tilde{\Phi}_{g})}{\sin(2\theta)}\bigg)w^\lambda\bigg\|_{L^{2}} 
	\lesssim \sqrt{\frac{\beta}{\alpha}}\|\partial_\theta D^2_z U(\tilde{\Phi}_{g})w^\lambda\|_{L^{2}} 
	\lesssim_\beta \alpha^{-1}\|g\|_{\mathcal{E}^{2}}.
\end{align}
The application of \eqref{7g89} and \eqref{7g117}, yields that
\begin{align}
	\label{7g118}
	\bigg\|D_z \bigg(\frac{U(\Phi-\frac{3}{4\alpha}L^{-1}_{z,K}(F^{\ast}_\gamma)\sin(2\theta))}{\sin(2\theta)}\bigg)\bigg\|_{L^{\infty}}
	\lesssim_\beta \alpha^{-1}\|g\|_{\mathcal{E}^{2}}
	+\alpha^{-1}\|g\|_{\mathcal{H}^{2}}
	+\alpha^{\frac{1}{2}}(1-\eta)^{-\frac{1}{2}},
\end{align}
which implies that
\begin{align}\label{7g119}
	&~~~~(1-\eta)^2\bigg|\bigg\langle D_z \bigg(\frac{U(\Phi-\frac{3}{4\alpha}L^{-1}_{z,K}(F^{\ast}_\gamma)\sin(2\theta))}{\sin(2\theta)}\bigg)D^2_\theta g,D_zD_{\theta}g(w^\lambda)^2\bigg\rangle\bigg| \\ \notag
	&\lesssim_\beta \left(\alpha^{-1}\|g\|_{\mathcal{E}^{2}}+\alpha^{-1}\|g\|_{\mathcal{H}^{2}}+\alpha^{\frac{1}{2}}(1-\eta)^{-\frac{1}{2}}\right)\| g \|^2_{\mathcal{H}^2}.
\end{align}
Taking advantage of \eqref{7g104}, \eqref{7g108}-\eqref{7g116} and \eqref{7g119}, it is easy to deduce that
\begin{align}\label{7g120}
	|\bar{J}_{3322}|
	\lesssim_\beta \left(\alpha^{-1}\|g\|_{\mathcal{E}^{2}}+\alpha^{-1}\|g\|_{\mathcal{H}^{2}}+\alpha^{\frac{1}{2}}(1-\eta)^{-\frac{1}{2}}\right)\| g \|^2_{\mathcal{H}^2}.
\end{align}
Substituting \eqref{7g103} and \eqref{7g120} into \eqref{est-bJ332-1}, we find that
\begin{align}\label{est-bJ332-2}
	|\bar{J}_{332}|
	\leq \frac{1}{200}\|g\|^2_{\mathcal{H}_{\eta}^2}+C_\beta\left(\alpha^{-1}\|g\|_{\mathcal{E}^{2}}+\alpha^{-1}\|g\|_{\mathcal{H}^{2}}+\alpha^{\frac{1}{2}}(1-\eta)^{-\frac{1}{2}}\right)\| g \|^2_{\mathcal{H}^2}.
\end{align}
We insert \eqref{7g102} and \eqref{est-bJ332-2} into \eqref{est-bJ33-1}, to discover that
\begin{align}\label{7g121}
	|\bar{J}_{33}| 
	\leq&\frac{1}{100}\|g\|^2_{\mathcal{H}_{\eta}^2}+C_\beta\left(\alpha^{-1}\|g\|_{\mathcal{E}^{2}}+\alpha^{-1}\|g\|_{\mathcal{H}^{2}}+\alpha^{\frac{1}{2}}(1-\eta)^{-\frac{1}{2}}\right)\| g \|^2_{\mathcal{H}^2}.
\end{align}
We plug \eqref{7g81} in \textbf{Step 2.3.1}, \eqref{7g98} in \textbf{Step 2.3.2} and \eqref{7g121} in \textbf{Step 2.3.3} into \eqref{est-bJ3-1}, to find that
\begin{align}\label{est-bJ3-2}
	|\bar{J}_3| \leq 
	\frac{3}{100}\|g\|^2_{\mathcal{H}_{\eta}^2}+C_\beta\left(\alpha^{-1}\|g\|_{\mathcal{E}^{2}}+\alpha^{-1}\|g\|_{\mathcal{H}^{2}}+\alpha^{\frac{1}{2}}(1-\eta)^{-\frac{1}{2}}\right)\| g \|^2_{\mathcal{H}^2}.
\end{align}

Consequently, collecting \eqref{7g24} in \textbf{Step 2.1}, \eqref{7g62} in \textbf{Step 2.2} and \eqref{est-bJ3-2} in \textbf{Step 2.3}, we get \eqref{est-norm-H2eta} immediately.

\medskip

\noindent
\textbf{Step 3: Estimate for $\mathcal{E}^{2}_{\eta}$-norm.} 
\medskip

\noindent
Recalling the definition of $\mathcal{E}^{2}_{\eta}$ in \eqref{def_norm_E2eta}, we get that
\begin{align}\label{est-bK-1}
	\langle T_g, g\rangle_{\mathcal{E}^{2}_{\eta}}
	=\alpha(1-\eta)^2\langle T_g, g(w^\lambda)^2\rangle
	+\alpha(1-\eta)^4\langle D_z^2T_g, D_z^2g(w^\lambda)^2\rangle
	\tri \bar{K}_1+\bar{K}_2.
\end{align}
By virtue of Lemma \ref{7div1}, \eqref{7g21} and \eqref{7g22}, we obtain that
\begin{align}\label{7e4}
	|\bar{K}_1|\leq \frac{1}{400}\|g\|^2_{\mathcal{E}_{\eta}^2}+C_\beta
	\left(\alpha^{\frac{1}{2}}(1-\eta)^{-\frac{1}{2}}+\alpha^{-1}\|g\|_{\mathcal{H}^{2}}\right)\|g\|^2_{\mathcal{E}^2}.
\end{align}
We recall the definitions of $\mathcal{R}_4^l$ and $\mathcal{R}_4^n$ in \ref{def_R4l} and \eqref{def_R4n}, respectively, to find that
\begin{align}\label{est-bK2-1}
	\bar{K}_2
	=&\alpha(1-\eta)^4\langle T_{D^2_z g}, D_z^2g(w^\lambda)^2\rangle
	+\alpha(1-\eta)^4\langle \mathcal{R}_4^l, D_z^2g(w^\lambda)^2\rangle
	+\alpha(1-\eta)^4\langle \mathcal{R}_4^n, D_z^2g(w^\lambda)^2\rangle
	\\
	\tri& \bar{K}_{21}+\bar{K}_{22}+\bar{K}_{23}.
	\notag
\end{align}
With the help of Lemma \ref{7div1}, it follows from \eqref{7g21} and \eqref{7g22} that
\begin{align}\label{7e8}
	|\bar{K}_{21}|\leq \frac{1}{200}\|g\|^2_{\mathcal{E}_{\eta}^2}+C_\beta(\alpha^{\frac{1}{2}}(1-\eta)^{-\frac{1}{2}}+\alpha^{-1}\|g\|_{\mathcal{H}^{2}})\|g\|^2_{\mathcal{E}^2}.
\end{align}
Along the similar way as \eqref{7g86}, one can deduce from Lemma \ref{6lem4} that
\begin{align}\label{7e9}
	|\bar{K}_{22}|&\lesssim \frac{1}{\beta^2}(1-\eta)^{4}\alpha\|D^2_z g w^\lambda\|_{L^2}(\|D_{\theta}D_z g w^\lambda\|_{L^2}+\|D_{\theta} g w^\lambda\|_{L^2})\\ \notag
	&~~~+\frac{\alpha}{\beta^2}(1-\eta)^{4}\alpha(\|D^2_z g w^\lambda\|^2_{L^2}+\|D_z g w^\lambda\|^2_{L^2}) \\ \notag
	&\leq \frac{1}{200}(\| g \|^2_{\mathcal{E}^2_{\eta}}+\| g \|^2_{\mathcal{H}^2_{\eta}}).
\end{align}
By using the similar derivation as \eqref{7g117}, we have
\begin{align}\label{7e10}
	\bigg\|D^2_z \bigg(\frac{U(\tilde{\Phi}_{g})}{\sin(2\theta)}\bigg)w_z\bigg\|_{L_z^2 L_\theta^{\infty}}
	\lesssim_\beta \alpha^{-1}\|g\|_{\mathcal{E}^{2}}.
\end{align}
From this, \eqref{7g91} and \eqref{7g118}, it follows that
\begin{align}\label{7e11}
	&~~~~~\alpha(1-\eta)^{4}\bigg|\bigg\langle \bigg[D^2_{z},\frac{U(\Phi-\frac{3}{4\alpha}L^{-1}_{z,K}(F^{\ast}_\gamma)\sin(2\theta))}{\sin(2\theta)}\bigg]D_{\theta}g,D^2_{z}g(w^\lambda)^2\bigg\rangle \bigg|
	\\ \notag
	&\lesssim_\beta \alpha \left(\alpha^{-1}\|g\|_{\mathcal{E}^{2}}+\alpha^{-1} \|g\|_{\mathcal{H}^{2}}+\alpha^{\frac{1}{2}}\right)\|D^2_{z}g w^\lambda\|_{L^2}
	\left(\|D_z D_{\theta}g w^\lambda\|_{L^2}+
	\|D_{\theta}gw_\theta^\lambda\|_{L_z^{\infty}L^2_\theta}+\|D_{\theta}gw^\lambda\|_{L^2}\right)\\ \notag
	&\lesssim_\beta \left(\alpha^{-1}\|g\|_{\mathcal{E}^{2}}+\alpha^{-1}\|g\|_{\mathcal{H}^{2}}+\alpha^{\frac{1}{2}}\right)\|g\|_{\mathcal{E}^{2}}\|g\|_{\mathcal{H}^{2}}.
\end{align}
Combining \eqref{7g94} and \eqref{7g95} together, we achieve that
\begin{align}\label{7e12}
	&~~~~~\alpha(1-\eta)^{4}\bigg|\bigg\langle [D^2_{z},V\bigg(\Phi-\frac{3}{4\alpha}L^{-1}_{z,K}(F^{\ast}_\gamma)\sin(2\theta)\bigg)]\alpha D_{z} g,D^2_{z}g(w^\lambda)^2\bigg\rangle \bigg|
	\\ \notag
	&\lesssim_\beta \alpha \left(\alpha^{\frac{3}{2}}+\alpha^{-\frac{1}{2}}\|g\|_{\mathcal{H}^{2}}\right)\|D^2_{z}gw^\lambda\|_{L^2}\left(\|D^2_{z}gw^\lambda\|_{L^2}+\|D_{z}gw_\theta^\lambda\|_{L_z^{\infty}L^2_\theta}+\|D_{z}gw^\lambda\|_{L^2}\right)
	\\ \notag
	&\lesssim_\beta \left(\alpha^{\frac{3}{2}}+\alpha^{-\frac{1}{2}}\|g\|_{\mathcal{H}^{2}}\right)\|g\|^2_{\mathcal{E}^2}.
\end{align}
According to \eqref{7e11} and \eqref{7e12}, we deduce that
\begin{align}\label{7e13}
	|\bar{K}_{23}|
	&\lesssim_\beta \left(\alpha^{-1}\|g\|_{\mathcal{E}^{2}}
	+\alpha^{-1}\|g\|_{\mathcal{H}^{2}}+\alpha^{\frac{1}{2}}\right)\|g\|_{\mathcal{E}^{2}}\|g\|_{\mathcal{H}^{2}}+\alpha^{\frac{3}{2}}\|g\|^2_{\mathcal{E}^2}.
\end{align}
Substituting \eqref{7e8}, \eqref{7e9} and \eqref{7e13} into \eqref{est-bK2-1}, we finally get that
\begin{align}\label{7e14}
	|\bar{K}_2|
	&\leq \frac{1}{100}(\|g\|^2_{\mathcal{H}_{\eta}^2}+\|g\|^2_{\mathcal{E}_{\eta}^2})+C_\beta\alpha^{\frac{1}{2}}(1-\eta)^{-\frac{1}{2}}\|g\|^2_{\mathcal{E}^2} \\ 
	\notag
	&~~~+C_\beta\left(\alpha^{-1}\|g\|_{\mathcal{E}^{2}}+\alpha^{-1}\|g\|_{\mathcal{H}^{2}}+\alpha^{\frac{1}{2}}\right)\|g\|_{\mathcal{E}^{2}}\|g\|_{\mathcal{H}^{2}}.
\end{align}
We then plug \eqref{7e4} and \eqref{7e14} into \eqref{est-bK-1}, to deduce \eqref{est-norm-E2eta} directly.

\end{proof}

\subsection{Product rules and transport estimates for fundamental solutions}
In this subsection, we aim to establish the \textit{a priori} energy estimates for $\mathcal{H}^{-1}$-norm, $\mathcal{H}^2_{\eta}$-norm and $\mathcal{E}^2_{\eta}$-norm of $T_{F^\ast_{\gamma}}$.
Before we proceed, let us first present some fundamental lemmas concerning estimates related to $F^\ast_{\gamma}$.

\begin{lemm}\label{7fm2}
Assume that $f(z,\theta)|_{\partial D}=0$. 
Let $\beta \in (0,1]$.
There exists a sufficiently small constant $\alpha>0$, such that if $\alpha\ll \beta$ and $|\mu|\leq \alpha^\frac{1}{2}$, then there holds
\begin{align}\label{est-lem7.7-1}
	\|fD_\theta F^\ast_{\gamma} \|_{\mathcal{H}^{2}}\leq C\frac{\alpha^2}{\beta^4}\|f\|_{\mathcal{H}^{2,\ast}},~~~~\|fD_z F^\ast_{\gamma} \|_{\mathcal{H}^{2}}\leq C\frac{\alpha}{\beta^4}\|f\|_{\mathcal{H}^{2,\ast}},
\end{align}
and
\begin{align}\label{est-lem7.7-2}
	\|fD_\theta F^\ast_{\gamma} \|_{\mathcal{H}^{2}}\leq C\frac{\alpha^2}{\beta^4}\|f\|_{\mathcal{H}^{2}},~~~~\|fD_z F^\ast_{\gamma} \|_{\mathcal{H}^{2}}\leq C\frac{\alpha}{\beta^4}\|f\|_{\mathcal{H}^{2}}.
\end{align}
\end{lemm}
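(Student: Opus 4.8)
The plan is to reduce everything to the separated-variable structure $F^\ast_\gamma(z,\theta)=\frac{2\alpha}{3c_{\ast,\gamma}}\Gamma_{\theta,\gamma}(\theta)\Gamma^\ast_\gamma(z)$ from Proposition \ref{3FM2}, together with the pointwise derivative bounds $|D_z^k\Gamma^\ast_\gamma|\lesssim\gamma^{-k}\Gamma^\ast_\gamma$ ($0\le k\le4$) and $|D_\theta^m\Gamma_{\theta,\gamma}|\lesssim\tfrac{\alpha}{\gamma}\Gamma_{\theta,\gamma}$ ($1\le m\le4$), which are precisely the estimates \eqref{est-Dz-1}--\eqref{est-Dz-4} and \eqref{est-Dtheta-1}--\eqref{est-Dtheta-4} established on the way to Lemma \ref{7fm1}. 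I also use $\|\Gamma^\ast_\gamma\|_{L^\infty_z}\le\tfrac1{2\gamma}$, $\|\Gamma_{\theta,\gamma}\|_{L^\infty_\theta}\le1$ and $\Gamma_{\theta,\gamma}=K^{\alpha/(3\gamma)}\lesssim(\sin2\theta)^{\alpha/(3\gamma)}$ (from $K\le\sin2\theta$), together with $c_{\ast,\gamma}\approx1$ and $\gamma\approx\beta$ from \eqref{est-gamma-beta}. First I would expand $D_z^iD_\theta^j(f\,D_\theta F^\ast_\gamma)$ and $D_z^iD_\theta^j(f\,D_zF^\ast_\gamma)$ by the Leibniz rule for $i+j\le2$ into a finite sum of terms $(D_z^{i_1}D_\theta^{j_1}f)(D_z^{i_2}D_\theta^{j_2}F^\ast_\gamma)$, and estimate each one by Hölder's inequality, matching the weights in the definition of $\|\cdot\|_{\mathcal H^2}$.

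For most of these terms the weight is easy to match: when the $f$-factor carries at least one angular derivative the required weight $w^\lambda$ is supplied directly by $\|f\|_{\mathcal H^2}$ (or by $\|f\|_{\mathcal H^{2,\ast}}$), and one then places $D_z^{i_2}D_\theta^{j_2}F^\ast_\gamma$ in $L^\infty$, whose size is controlled, via the pointwise bounds and $\|\Gamma^\ast_\gamma\|_{L^\infty_z}\lesssim\gamma^{-1}$, by a fixed power of $\alpha$ times a fixed negative power of $\gamma\approx\beta$. The genuinely delicate terms are those in which all the outer angular derivatives land on $F^\ast_\gamma$ while the $f$-factor carries only radial derivatives, yet the ambient weight is $w^\lambda$ with $\lambda>1$: there $\|D_z^{i_1}f\,w^\eta\|_{L^2}\le\|f\|_{\mathcal H^2}$ cannot absorb the extra angular singularity $(\sin2\theta)^{-(\lambda-\eta)/2}$. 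I would treat those by placing $D_z^{i_1}f$ in $L^\infty_\theta$ through the angular embedding $\|h\|_{L^\infty_\theta}\lesssim\sqrt{\beta/\alpha}\,\|D_\theta h\,w^\lambda_\theta\|_{L^2_\theta}$ of Lemma \ref{7em1} and $D_\theta^{m}\Gamma_{\theta,\gamma}$ in the weighted $L^2_\theta$ norm, where $\|D_\theta^{m}\Gamma_{\theta,\gamma}w^\lambda_\theta\|^2_{L^2_\theta}\lesssim(\tfrac\alpha\gamma)^2\!\int_0^{\pi/2}\!(\sin2\theta)^{2\alpha/(3\gamma)-\lambda}\,d\theta\lesssim(\tfrac\alpha\gamma)^2\tfrac\beta\alpha$, the angular integral converging by \eqref{est-sin} precisely because $\lambda-\tfrac{2\alpha}{3\gamma}=1-\tfrac\alpha\beta\tfrac{17-23\mu}{30(1+\mu)}\in(0,1)$ — the same mechanism flagged in the proof of Lemma \ref{7fm1}. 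The radial-weight discrepancy $w_z$ versus $w^\ast_z$ in \eqref{est-lem7.7-1} is harmless for the same reason: $\Gamma^\ast_\gamma\,w_z/w^\ast_z\in L^\infty_z$ with norm $\lesssim\gamma^{-1}$, and each extra radial derivative on $\Gamma^\ast_\gamma$ costs only one more $\gamma^{-1}$.

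It remains to collect powers of $\alpha$ and of $\gamma\approx\beta$: $D_\theta F^\ast_\gamma$ carries two factors of $\alpha$ (the prefactor $\tfrac{2\alpha}{3c_{\ast,\gamma}}$ and the $\tfrac\alpha\gamma$ from $D_\theta K^{\alpha/(3\gamma)}$) while $D_zF^\ast_\gamma$ carries only one; every radial derivative hitting $\Gamma^\ast_\gamma$ contributes $\gamma^{-1}\approx\beta^{-1}$ and $\|\Gamma^\ast_\gamma\|_{L^\infty_z}$ contributes one more, while angular derivatives beyond the first cost no further negative power of $\beta$; counting the at most two, respectively three, such radial factors gives the stated bounds $\lesssim\frac{\alpha^2}{\beta^4}\|f\|$ and $\lesssim\frac{\alpha}{\beta^4}\|f\|$, with slack since $\beta\le1$. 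The $\mathcal H^{2,\ast}\to\mathcal H^2$ and $\mathcal H^2\to\mathcal H^2$ statements run in parallel, differing only by the presence of the $w_z/w^\ast_z$ radial mismatch. I expect the main obstacle to be precisely the bookkeeping around the $w^\eta_\theta$/$w^\lambda_\theta$ mismatch on the ``all angular derivatives on $F^\ast_\gamma$'' terms; once that is organized, the radial estimates and the remaining terms reduce to routine Leibniz-and-Hölder.
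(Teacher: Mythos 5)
Your proposal matches the paper's proof in all essentials: both rely on the separated-variable form $F^\ast_\gamma=\tfrac{2\alpha}{3c_{\ast,\gamma}}\Gamma_{\theta,\gamma}\Gamma^\ast_\gamma$, the pointwise bounds $|D^k_z\Gamma^\ast_\gamma|\lesssim\gamma^{-k}\Gamma^\ast_\gamma$ and $|D^m_\theta\Gamma_{\theta,\gamma}|\lesssim\tfrac{\alpha}{\gamma}\Gamma_{\theta,\gamma}(\tfrac{\alpha}{\gamma}+\sin2\theta)$, the conversion $\Gamma^\ast_\gamma w_z\lesssim\gamma^{-1}w^\ast_z$, a Leibniz expansion estimated by H\"older, and — for the terms where all angular derivatives fall on $\Gamma_{\theta,\gamma}$ under the weight $w^\lambda$ — the mixed-norm angular embedding (Corollary \ref{7em2}) paired with $\|\Gamma_{\theta,\gamma}w^\lambda_\theta\|_{L^2_\theta}\lesssim\sqrt{\gamma/\alpha}$, which is exactly estimate \eqref{7f5} and its companions. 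The power-counting in $\alpha$ and $\gamma\approx\beta$ is as you describe, so this is the paper's argument.
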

\begin{proof}
Recalling \eqref{def_Gama+c*}, it is easy to check that for any $1\leq k\leq3$,
\begin{align}
	&|D^k_z \Gamma^\ast_{\gamma}|\lesssim \frac{1}{\gamma^k}\Gamma^\ast_{\gamma},
	\label{est-Dz-Gamma-1}
	\\
	|D_\theta \Gamma_{\theta,\gamma}|\lesssim \frac{\alpha}{\gamma}\Gamma_{\theta,\gamma},~~~~
	&|D^2_\theta \Gamma_{\theta,\gamma}|+|D^3_\theta \Gamma_{\theta,\gamma}|\lesssim\frac{\alpha^2}{\gamma^2}\Gamma_{\theta,\gamma}+\frac{\alpha}{\gamma}\Gamma_{\theta,\gamma}\sin(2\theta).
	\label{est-Dtheta-Gamma-1}
\end{align}
In view of \eqref{def_Gama+c*} and \eqref{4eq13}, we have
\begin{align}\label{est-bd-Gamma}
	\Gamma_{\theta,\gamma}\leq 1,~~~~\Gamma^\ast_{\gamma} w_z\lesssim  \frac{1}{\gamma}w^\ast_z.
\end{align}
This, combined with \eqref{form_F1} and \eqref{est-Dtheta-Gamma-1}, establishes that
\begin{align}\label{7f1}
	\|fD_\theta F^\ast_{\gamma} w^\eta\|_{L^{2}}
	&\lesssim \frac{\alpha^2}{\gamma}\|f(\Gamma_{\theta,\gamma} w_\theta^\eta)(\Gamma^\ast_{\gamma} w_z)\|_{L^{2}}
	\lesssim \frac{\alpha^2}{\gamma^2}\|f w_\theta^\eta w^\ast_z\|_{L^{2}} 
	\lesssim \frac{\alpha^2}{\beta^2}\|f\|_{H^{2,\ast}}.
\end{align}
Using \eqref{form_F1}, \eqref{est-Dz-Gamma-1}-\eqref{est-bd-Gamma}, with arguments similar to \eqref{7f1}, we obtain that
\begin{align}\label{7f2}
	\|D_z(fD_\theta F^\ast_{\gamma}) w^\eta\|_{L^{2}}+\|D^2_z(fD_\theta F^\ast_{\gamma}) w^\eta\|_{L^{2}}\lesssim \frac{\alpha^2}{\beta^4}\|f\|_{H^{2,\ast}}.
\end{align}
While for terms related to $D_\theta f$, we can simply handle them by employing \eqref{form_F1}, \eqref{est-Dtheta-Gamma-1} and \eqref{est-bd-Gamma}, for example:
\begin{align}\label{7f3-1}
	\|D_\theta fD_\theta F^\ast_{\gamma} w^\lambda\|_{L^{2}}
	&\lesssim \frac{\alpha^2}{\gamma}\|D_\theta f(\Gamma_{\theta,\gamma} w_\theta^\lambda)(\Gamma^\ast_{\gamma} w_z)\|_{L^{2}}
	\lesssim \frac{\alpha^2}{\gamma^2}\|D_\theta f w_\theta^\lambda w^\ast_z\|_{L^{2}} 
	\lesssim \frac{\alpha^2}{\beta^2}\|f\|_{H^{2,\ast}}.
\end{align}
Similar argument leads us to get that
\begin{align}\label{7f3}
	\|D_\theta fD^2_\theta F^\ast_{\gamma} w^\lambda\|_{L^{2}}+\|D^2_\theta fD_\theta F^\ast_{\gamma} w^\lambda\|_{L^{2}}\lesssim \frac{\alpha^2}{\beta^2}\|f\|_{H^{2,\ast}}.
\end{align}
Again thanks to \eqref{form_F1}, \eqref{est-Dtheta-Gamma-1}-\eqref{est-bd-Gamma}, we can deduce from the same way that
\begin{align}\label{7f4}
	\|D_\theta fD_zD_\theta F^\ast_{\gamma} w^\lambda\|_{L^{2}}+\|D_zD_\theta fD_\theta F^\ast_{\gamma} w^\lambda\|_{L^{2}}\lesssim \frac{\alpha^2}{\beta^3}\|f\|_{H^{2,\ast}}.
\end{align}
In view of \eqref{form_F1}, \eqref{def_Gama+c*}, \eqref{est-Dtheta-Gamma-1} and Corollary \ref{7em2}, one obtains that
\begin{align}\label{7f5}
	\|fD^2_\theta F^\ast_{\gamma} w^\lambda\|_{L^{2}}
	&\lesssim \frac{\alpha^3}{\gamma^2}\|f(\Gamma_{\theta,\gamma} w_\theta^\lambda)(\Gamma^\ast_{\gamma} w_z)\|_{L^{2}}+\frac{\alpha^2}{\gamma}\|f(\Gamma_{\theta,\gamma} w_\theta^\eta)(\Gamma^\ast_{\gamma} w_z)\|_{L^{2}}\\ \notag
	&\lesssim \frac{\alpha^3}{\gamma^3}\|f w^\ast_z\|_{L_z^{2}L_\theta^{\infty}}\|\Gamma_{\theta,\gamma} w_\theta^\lambda\|_{L_\theta^{2}}
	+\frac{\alpha^2}{\gamma^2}\|f w_\theta^\eta w^\ast_z\|_{L^{2}} \\ \notag
	&\lesssim \frac{\alpha^2}{\beta^2}\|f\|_{H^{2,\ast}},
\end{align}
where we have used the fact that
\begin{align}\label{est-Gamma-lambda-L2}
	\|\Gamma_{\theta,\gamma} w_\theta^\lambda\|_{L_\theta^{2}}
	\lesssim \sqrt{\frac{\gamma}{\alpha}}.
\end{align}
By applying \eqref{form_F1}, \eqref{est-Dz-Gamma-1}-\eqref{est-bd-Gamma}, and following a derivation analogous to \eqref{7f5}, we establish that
\begin{align}
	&\|fD^3_\theta F^\ast_{\gamma} w^\lambda\|_{L^{2}}\lesssim \frac{\alpha^2}{\beta^2}\|f\|_{H^{2,\ast}},
	\label{7f6}
	\\
	&\|D_z fD^2_\theta F^\ast_{\gamma} w^\lambda\|_{L^{2}}+\|f D_z D^2_\theta F^\ast_{\gamma} w^\lambda\|_{L^{2}}\lesssim \frac{\alpha^2}{\beta^3}\|f\|_{H^{2,\ast}}.
	\label{7f7}
\end{align}
Combining \eqref{7f1}-\eqref{7f7} together, we finally obtain that the first inequality in \eqref{est-lem7.7-1} holds.
Similarly, the other three inequalities can be derived directly.
Thus, we complete the proof of Lemma \ref{7fm2}.
\end{proof}

In comparison with Lemma \ref{7fm2}, we consider the case where the function $f(z)$ is independent of $\theta$, yielding the following similar lemma. We omit the proof here.
\begin{lemm}\label{7fm3}
Assume that $f(z)$ is independent of $\theta$ and satisfies $f(0)=0$. 
Let $\beta\in(0,1]$.
There exist constants $\alpha>0$ sufficiently small and $\eta(\beta)$, such that if $\alpha\ll 1-\eta\ll \beta$ and $|\mu|\leq \alpha^\frac{1}{2}$, then
\begin{align*}
	&\|fD_\theta F^\ast_{\gamma} \|_{\mathcal{H}^{2}}\leq C\frac{\alpha^2}{\beta^4}(1-\eta)^{-\frac{1}{2}}\sum_{i=0}^{2}\|D^i_z fw_z^{\ast}\|_{L^2_z},~~~~
	\|fD_z F^\ast_{\gamma} \|_{\mathcal{H}^{2}}\leq C\frac{\alpha}{\beta^4}(1-\eta)^{-\frac{1}{2}}\sum_{i=0}^{2}\|D^i_z fw_z^{\ast}\|_{L^2_z},
	\\
	&\|fD_\theta F^\ast_{\gamma} \|_{\mathcal{H}^{2}}\leq C\frac{\alpha^2}{\beta^4}(1-\eta)^{-\frac{1}{2}}\sum_{i=0}^{2}\|D^i_z fw_z\|_{L^2_z},~~~~
	\|fD_z F^\ast_{\gamma} \|_{\mathcal{H}^{2}}\leq C\frac{\alpha}{\beta^4}(1-\eta)^{-\frac{1}{2}}\sum_{i=0}^{2}\|D^i_z fw_z\|_{L^2_z}.
\end{align*}
\end{lemm}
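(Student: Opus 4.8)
The plan is to mimic the proof of Lemma \ref{7fm2}, using that $f=f(z)$ carries no angular dependence, so that every angular derivative falls entirely on the factor $F^\ast_\gamma$. Concretely, by the separation of variables \eqref{form_F1} we write $F^\ast_\gamma(z,\theta)=\frac{2\alpha}{3}\frac{\Gamma_{\theta,\gamma}(\theta)}{c_{\ast,\gamma}}\Gamma^\ast_\gamma(z)$, and then $D_\theta^j D_z^i (fD_\theta F^\ast_\gamma)$ splits, via the Leibniz rule, into a finite sum of terms of the form $(D_z^{i'}f)(D_\theta^{j+1}\Gamma_{\theta,\gamma})(D_z^{i''}\Gamma^\ast_\gamma)$ with $i'+i''=i$ and $i',j\le 2$. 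The key estimates are the pointwise bounds \eqref{est-Dz-Gamma-1}--\eqref{est-Dtheta-Gamma-1}: $|D_z^k\Gamma^\ast_\gamma|\lesssim \gamma^{-k}\Gamma^\ast_\gamma$ for $0\le k\le 3$, $|D_\theta\Gamma_{\theta,\gamma}|\lesssim \tfrac{\alpha}{\gamma}\Gamma_{\theta,\gamma}$, and $|D_\theta^2\Gamma_{\theta,\gamma}|+|D_\theta^3\Gamma_{\theta,\gamma}|\lesssim \tfrac{\alpha^2}{\gamma^2}\Gamma_{\theta,\gamma}+\tfrac{\alpha}{\gamma}\Gamma_{\theta,\gamma}\sin(2\theta)$, together with $\Gamma_{\theta,\gamma}\le 1$ and $\Gamma^\ast_\gamma w_z\lesssim \gamma^{-1}w^\ast_z$ from \eqref{est-bd-Gamma}. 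Using $|\gamma-\beta|\ll 1$ (which follows from \eqref{4eq13} and $|\mu|\le\alpha^{1/2}$, as in \eqref{est-gamma-beta}), one replaces $\gamma$ by $\beta$ up to a constant throughout.

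The main calculation then runs as follows. First I would treat the $\mathcal{L}^2$-part $\|fD_\theta F^\ast_\gamma w^\eta\|_{L^2}$: pulling out $\tfrac{\alpha^2}{\gamma}$ and bounding $\Gamma_{\theta,\gamma}w^\eta_\theta\le w^\eta_\theta$, $\Gamma^\ast_\gamma w_z\lesssim\gamma^{-1}w^\ast_z$, one gets $\lesssim \tfrac{\alpha^2}{\gamma^2}\|f\,w^\eta_\theta w^\ast_z\|_{L^2}\lesssim\tfrac{\alpha^2}{\beta^2}\,\|w^\eta_\theta\|_{L^2_\theta}\sum_{i\le 0}\|D^i_z f w^\ast_z\|_{L^2_z}$, and $\|w^\eta_\theta\|_{L^2_\theta}^2\lesssim (1-\eta)^{-1}$ by \eqref{est-sin}. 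The extra factor $(1-\eta)^{-1/2}$ — which is the only structural difference from Lemma \ref{7fm2} — arises precisely because $f$ is $\theta$-independent, so that the angular weight $w^\eta_\theta$ (or $w^\lambda_\theta$) is integrated on its own rather than being paired against a weighted norm of $f$; for the $w^\lambda_\theta$ case I would use instead $\|\Gamma_{\theta,\gamma}w^\lambda_\theta\|_{L^2_\theta}^2\lesssim \beta/\alpha$ as in \eqref{est-sin-2}, which reconverts the $\alpha$-powers correctly. The terms with $D_z$ landing on $f$ contribute $\sum_{i\le 2}\|D^i_z f w^\ast_z\|_{L^2_z}$; the terms with $D_z$ landing on $\Gamma^\ast_\gamma$ cost an extra $\gamma^{-1}\lesssim\beta^{-1}$ per derivative, which accounts for the $\beta^{-4}$ (two radial derivatives on $\Gamma^\ast_\gamma$ beyond the base $\beta^{-2}$). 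The mixed and pure-angular terms $D_\theta^j$, $j=2,3$, and $D_zD_\theta$ are handled exactly as in \eqref{7f5}--\eqref{7f7} of Lemma \ref{7fm2}, using Corollary \ref{7em2} to trade an $L^2_zL^\infty_\theta$ bound for $\|f w^\ast_z\|_{L^2_z}$ when needed and \eqref{est-Gamma-lambda-L2}; here again the $\theta$-independence of $f$ replaces the $\mathcal{H}^{2,\ast}$-norm of $f$ by $\sum_{i\le 2}\|D^i_z f w^\ast_z\|_{L^2_z}$ at the cost of one $\|w^\eta_\theta\|_{L^2_\theta}$ or $\|\Gamma_{\theta,\gamma}w^\lambda_\theta\|_{L^2_\theta}$ factor, i.e. the $(1-\eta)^{-1/2}$. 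The second inequality of the lemma (with $D_z F^\ast_\gamma$) uses $|D_zF^\ast_\gamma|\lesssim\tfrac{\alpha}{\gamma}\Gamma^\ast_\gamma \Gamma_{\theta,\gamma}$ from \eqref{est-Dz-1}, producing one fewer power of $\alpha$ and hence $\tfrac{\alpha}{\beta^4}$. The third and fourth inequalities are identical except that $w^\ast_z$ is replaced by $w_z$ throughout; nothing changes because $\Gamma^\ast_\gamma w_z\lesssim\gamma^{-1}w^\ast_z$ is not actually needed when one simply keeps $w_z$, and the radial integrals $\int (zw_z)^{-2}\,dz$, $\int(zw^\ast_z)^{-2}\,dz$ both converge.

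The only genuine obstacle is bookkeeping: one must check that across all $\sim 10$ Leibniz terms the worst power of $\beta^{-1}$ is exactly $4$ and the $\alpha$-power is exactly $2$ (resp. $1$), and that the angular weights always combine so that a single $\|w^\eta_\theta\|_{L^2_\theta}$ or $\|\Gamma_{\theta,\gamma}w^\lambda_\theta\|_{L^2_\theta}$ — never a higher power — is peeled off, giving $(1-\eta)^{-1/2}$ and not a worse power. This is routine given the pointwise bounds above, so I would present it in the same condensed style as Lemma \ref{7fm2}, displaying one representative term per type (e.g. the analogue of \eqref{7f1}, \eqref{7f3-1}, \eqref{7f5}) and asserting the rest by "the same manner", then concluding.
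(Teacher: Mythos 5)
Your proposal matches the paper's proof in strategy and in every key ingredient: separation of variables, the pointwise bounds \eqref{est-Dz-Gamma-1}--\eqref{est-bd-Gamma}, \eqref{est-sin} and \eqref{est-Gamma-lambda-L2} for the angular weight factors, $|\gamma-\beta|\ll 1$ to replace $\gamma$ by $\beta$, and the observation that $\theta$-independence of $f$ is exactly what produces the extra $(1-\eta)^{-1/2}$ by forcing $w^\eta_\theta$ (resp.\ $\Gamma_{\theta,\gamma}w^\lambda_\theta$) to be integrated on its own. Your explanation of the $w_z$-weighted variants is also the correct reading of the paper's terse ``by the same method''.
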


With Lemmas \ref{7fm1}, \ref{7fm2} and \ref{7fm3} at hand, we are ready to establish the \textit{a priori} energy estimates for $\mathcal{H}^{-1}$-norm, $\mathcal{H}^2_{\eta}$-norm and $\mathcal{E}^2_{\eta}$-norm of $T_{F^\ast_{\gamma}}$.
\begin{prop}\label{7Tf}
Let $L^{-1}_{z,K}(g)(0)=0$, and $\beta\in(0,1]$, there exist constants $\alpha>0$ sufficiently small and $\eta(\beta)$, such that if $\alpha\ll 1-\eta\ll \beta$ and $|\mu|\leq \alpha^\frac{1}{2}$, then
\begin{align}
	|\langle T_{F^\ast_{\gamma}}, g(w^K)^2\rangle|
	&\leq C_\beta \alpha(1-\eta)^{-\frac{1}{2}}\left(\alpha\|g\|_{\mathcal{H}^2}+\|g\|^2_{\mathcal{H}^2}\right),
	\label{est-TF-H-1}\\
	|\langle T_{F^\ast_{\gamma}}, g\rangle_{\mathcal{H}_{\eta}^2}|
	&\leq C_\beta \alpha(1-\eta)^{-\frac{5}{2}}\left(\alpha\|g\|_{\mathcal{H}^2}+\|g\|^2_{\mathcal{H}^2}\right),
	\label{est-TF-H2eta}\\
	|\langle T_{F^\ast_{\gamma}}, g\rangle_{\mathcal{E}_{\eta}^2}|
	&\leq C_\beta \left(\alpha^2\|g\|_{\mathcal{E}^2}+\alpha\|g\|_{\mathcal{H}^2}\|g\|_{\mathcal{E}^2}+\alpha^2\|g\|^2_{\mathcal{E}^2}\right).
	\label{est-TF-E2eta}
\end{align}
\end{prop}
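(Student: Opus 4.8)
The plan is to treat $T_{F^{\ast}_{\gamma}}$ in complete parallel with the estimates for $T_g$ in Proposition \ref{7Tg}, exploiting the fact that the role of the perturbation $g$ is now played by the fundamental solution $F^{\ast}_{\gamma}$, which enjoys the stronger bounds of Lemma \ref{7fm1}, Lemma \ref{7fm2}, and Lemma \ref{7fm3}. Recalling the decomposition $T_{F^{\ast}_{\gamma}} = T^{1}_{F^{\ast}_{\gamma}} + T^{2}_{F^{\ast}_{\gamma}}$ from Section \ref{sec:analysis} and the null-structure identities \eqref{4eqnull1}-\eqref{4eqnull2}, I would first write, for each inner product in the three target norms, $T_{F^{\ast}_{\gamma}} = \frac{1}{\mathcal{W}}T_{F^{\ast}_{\gamma}\mathcal{W}} - \frac{1}{\mathcal{W}}T_{\mathcal{W}}\cdot F^{\ast}_{\gamma}$ with the appropriate weight $\overline{w}^{\eta}$ or $\overline{w}^{\lambda}$ from \eqref{def-wxi}. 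However, here there is a crucial simplification relative to Proposition \ref{7Tg}: we are pairing $T_{F^{\ast}_{\gamma}}$ (and its derivatives) against $g$ (and its derivatives), \emph{not} against $F^{\ast}_{\gamma}$ itself, so the exact cancellation \eqref{stru-Tf-1} of Lemma \ref{7div1} is not directly available. Instead I would use the bound $\|\frac{U(\Phi)}{\sin(2\theta)}\|_{L^{\infty}} + \|V(\Phi)\|_{L^{\infty}} \lesssim_{\beta} \alpha^{1/2}(1-\eta)^{-1/2} + \alpha^{-1}\|g\|_{\mathcal{H}^2}$ established in \eqref{7g21}-\eqref{7g22}, combined with the $\mathcal{H}^2$-type smallness of the derivatives of $F^{\ast}_{\gamma}$ coming from Lemma \ref{7fm1}, namely $\|F^{\ast}_{\gamma}\|_{\mathcal{H}^{k,\ast}} \leq C_{\beta}\alpha(1-\eta)^{-1/2}$ for $0\leq k\leq 4$.

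The second step is the $\mathcal{H}^{-1}$ estimate \eqref{est-TF-H-1}. I split $T_{F^{\ast}_{\gamma}}$ according to whether $\Phi$ is replaced by its main part $\frac{3}{4\alpha}L^{-1}_{z,K}(F^{\ast}_{\gamma})\sin(2\theta)$ or by the remainder, exactly as in \eqref{est-bI-1}-\eqref{est-bI2-1}. For the main (linear-in-$F^{\ast}_{\gamma}$) part, the coefficients $U,V$ are explicit via \eqref{cal_U}-\eqref{cal_V}, so after pairing against $g(w^K)^2$ and using Cauchy--Schwarz together with $\|D_{\theta}F^{\ast}_{\gamma}w^{\lambda}\|_{L^2}, \|D_zF^{\ast}_{\gamma}w^{\eta}\|_{L^2}\lesssim_{\beta}\alpha(1-\eta)^{-1/2}$ (Lemma \ref{7fm1}) one gets a contribution $\lesssim_{\beta}\alpha(1-\eta)^{-1/2}\|g\|_{\mathcal{H}^{-1}}\cdot(\text{something})$; here I would bound $\|g\|_{\mathcal{H}^{-1}}\leq \|g\|_{\mathcal{H}^2}$ and absorb via Young. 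For the remainder part, I would use \eqref{7g21}-\eqref{7g22} to bound the $L^{\infty}$-norm of the coefficients by $\alpha^{1/2}(1-\eta)^{-1/2}+\alpha^{-1}\|g\|_{\mathcal{H}^2}$, pair against $\|D_{\theta}F^{\ast}_{\gamma}w^{\lambda}\|_{L^2}+\alpha\|D_zF^{\ast}_{\gamma}w^{\eta}\|_{L^2}\lesssim_{\beta}\alpha(1-\eta)^{-1/2}$ and $\|gw^K\|_{L^2}\leq\|g\|_{\mathcal{H}^2}$, which yields precisely the claimed bound $C_{\beta}\alpha(1-\eta)^{-1/2}(\alpha\|g\|_{\mathcal{H}^2}+\|g\|_{\mathcal{H}^2}^2)$. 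The $\mathcal{H}^2_{\eta}$ estimate \eqref{est-TF-H2eta} follows the same template but applied to $D_{\theta}T_{F^{\ast}_{\gamma}}$, $D_zT_{F^{\ast}_{\gamma}}$, $D_{\theta}^2T_{F^{\ast}_{\gamma}}$, $D_zD_{\theta}T_{F^{\ast}_{\gamma}}$, $D_z^2T_{F^{\ast}_{\gamma}}$; each commutator term $[\,D^i,\text{coefficient}\,]$ is controlled exactly as the $\mathcal{R}_j$ terms in Proposition \ref{7Tg}, except that the second factor is now a derivative of $F^{\ast}_{\gamma}$, for which Lemma \ref{7fm2} and Lemma \ref{7fm3} supply the bounds $\|fD^k_{\theta,z}F^{\ast}_{\gamma}\|_{\mathcal{H}^2}\lesssim_{\beta}\alpha\beta^{-4}\|f\|_{\mathcal{H}^{2,\ast}}$ (and the $(1-\eta)^{-1/2}$-variant). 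The extra powers of $(1-\eta)^{-1}$ accumulate from the $w^{\lambda}$ versus $w^{\eta}$ weight transitions exactly as in \eqref{7g62}, giving $(1-\eta)^{-5/2}$.

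The third step is the $\mathcal{E}^2_{\eta}$ estimate \eqref{est-TF-E2eta}, which concerns $\langle T_{F^{\ast}_{\gamma}}, g\rangle_{\mathcal{E}^2_{\eta}} = \alpha(1-\eta)^2\langle T_{F^{\ast}_{\gamma}},g(w^{\lambda})^2\rangle + \alpha(1-\eta)^4\langle D_z^2T_{F^{\ast}_{\gamma}},D_z^2g(w^{\lambda})^2\rangle$. The first term is handled as in the $\mathcal{H}^{-1}$ case with the weight $w^{\lambda}$, producing $\alpha^2\|g\|_{\mathcal{E}^2}+\alpha\|g\|_{\mathcal{H}^2}\|g\|_{\mathcal{E}^2}$. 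For the second term I expand $D_z^2T_{F^{\ast}_{\gamma}}$ using the Leibniz rule into a transport-type piece $T_{D_z^2F^{\ast}_{\gamma}}$ plus commutators; the transport piece pairs against $D_z^2g$ and is bounded using $\|D_z^2F^{\ast}_{\gamma}w^{\lambda}\|_{L^2}\lesssim_{\beta}\alpha(1-\eta)^{-1/2}$ together with the coefficient $L^{\infty}$-bound; the commutators involve $D_z(\text{coefficient})$ for which the critical quantity $\|D_z(\frac{U(\tilde{\Phi}_g)}{\sin 2\theta})\|_{L^{\infty}}\lesssim_{\beta}\alpha^{-1}\|g\|_{\mathcal{E}^2}$ from \eqref{7g117} is exactly what forces the $\mathcal{E}^2$-norm into the final estimate. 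The main obstacle I anticipate is purely bookkeeping: tracking the precise powers of $\alpha$, $\beta$, and $(1-\eta)$ through the many commutator and weight-transition terms so that the final bounds match the stated exponents, while making sure that the ``nonlinear'' pieces (those involving $\|g\|_{\mathcal{H}^2}$ in the coefficient $L^{\infty}$-bounds) never produce a term worse than quadratic in $g$ — this is guaranteed because $F^{\ast}_{\gamma}$ contributes the smallness factor $\alpha(1-\eta)^{-1/2}$ rather than a copy of $g$, so one never gets a cubic term here, only $\alpha\|g\|_{\mathcal{H}^2}\|g\|_{\mathcal{H}^2}$-type products which are of the required form. No genuinely new analytic idea is needed beyond what is already in Propositions \ref{7Tg}, Lemmas \ref{7fm1}--\ref{7fm3}, and Corollaries \ref{6cor1}--\ref{7em2}.
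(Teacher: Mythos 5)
Your proposal follows the paper's overall decomposition (main part with $\Phi$ replaced by $\frac{3}{4\alpha}L^{-1}_{z,K}(F^\ast_\gamma)\sin 2\theta$, plus a remainder handled with the coefficient $L^\infty$-bounds \eqref{7g21}--\eqref{7g22} and the product Lemmas \ref{7fm2}--\ref{7fm3}), and your treatment of the remainder and of the $\mathcal{E}^2_\eta$ commutators is essentially correct. However, there is a genuine gap in your treatment of the main term $T^{l}_{F^\ast_\gamma}$.

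You write that Lemma \ref{7fm1} gives $\|D_\theta F^\ast_\gamma w^\lambda\|_{L^2},\|D_z F^\ast_\gamma w^\eta\|_{L^2}\lesssim_\beta\alpha(1-\eta)^{-1/2}$ and propose to use these via Cauchy--Schwarz against $g(w^K)^2$. But Lemma \ref{7fm1} controls $\|F^\ast_\gamma\|_{\mathcal{H}^{k,\ast}}$, i.e.\ the weights $w^{\ast,\eta}$ and $w^{\ast,\lambda}$, not the stronger weights $w^\eta,w^\lambda$. In fact $F^\ast_\gamma\notin\mathcal{H}^k$: near $z=0$ one has $\Gamma^\ast_\gamma w_z\sim\gamma^{-1}z^{1/\gamma-2/\beta}\sim\gamma^{-1}z^{-1/\beta}$, so $\|D_\theta F^\ast_\gamma w^K\|_{L^2}$ (and hence $\|D_\theta F^\ast_\gamma w^\lambda\|_{L^2}$, $\|D_z F^\ast_\gamma w^\eta\|_{L^2}$) is \emph{infinite}. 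Consequently your Cauchy--Schwarz step for $\langle T^{l}_{F^\ast_\gamma}, g(w^K)^2\rangle$ produces $\infty\cdot\text{finite}$ and does not close. The paper's proof avoids this by exploiting the algebraic cancellation recorded in \eqref{cal-Fgamma}: combining \eqref{cal_U}, \eqref{cal_V}, \eqref{est-Dz-1} and the ODE identity \eqref{3f4} for $\Gamma^\ast_\gamma$ yields
\begin{align*}
T^{l}_{F^\ast_\gamma}=\frac{\alpha}{\gamma}\Bigl(\frac{\alpha}{3}-2\gamma\Bigr)F^\ast_\gamma\,\Gamma^\ast_\gamma\bigl(\cos(2\theta)-\sin^2\theta\bigr),
\end{align*}
and the crucial extra factor $\Gamma^\ast_\gamma$ converts the singular weight via $\Gamma^\ast_\gamma w_z\lesssim\gamma^{-1}w^\ast_z$ (cf.\ \eqref{est-bd-Gamma}), after which the bound $\|T^{l}_{F^\ast_\gamma}\|_{\mathcal{H}^2}\lesssim_\beta\alpha\|F^\ast_\gamma\|_{\mathcal{H}^{2,\ast}}\lesssim_\beta\alpha^2(1-\eta)^{-1/2}$ follows legitimately. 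Without this cancellation --- which is the same structural fact that made $T^1_{F^\ast_\gamma}\big|_{z=0}=0$ in \eqref{4eq16} --- the term-by-term estimate of $T^{l}_{F^\ast_\gamma}$ is not merely lossy, it is ill-defined. You should add this cancellation step before the Cauchy--Schwarz; the rest of your outline then goes through.
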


\begin{proof}
We will divide this proof into three steps.

\medskip

\noindent
\textbf{Step 1: Estimate for $\mathcal{H}^{-1}$-norm.} 
\medskip

\noindent
Recalling the definition of $\mathcal{H}^{-1}$-norm in \eqref{def_norm_H-1}, we decompose it as follows:
\begin{align}\label{est-bL-1}
	\langle T_{F^\ast_{\gamma}}, g(w^K)^2\rangle
	=\langle T_{F^\ast_{\gamma}}^l, g(w^K)^2\rangle
	+\langle T_{F^\ast_{\gamma}}^n, g(w^K)^2\rangle
	\tri \bar{L}_1+\bar{L}_2.
\end{align}
Here $T^l_{F^\ast_{\gamma}}$ and $T^n_{F^\ast_{\gamma}}$ are defined by
\begin{align}
	T^l_{F^\ast_{\gamma}}&\tri\frac{U(\frac{3}{4\alpha}L^{-1}_{z,K}(F^{\ast}_\gamma)\sin(2\theta))}{\sin(2\theta)}D_{\theta}F^\ast_{\gamma}+V\bigg(\frac{3}{4\alpha}L^{-1}_{z,K}(F^{\ast}_\gamma)\sin(2\theta)\bigg)\alpha D_{z}F^\ast_{\gamma},
	\label{def-Tl}
	\\
	T^n_{F^\ast_{\gamma}}&\tri\frac{U(\Phi-\frac{3}{4\alpha}L^{-1}_{z,K}(F^{\ast}_\gamma)\sin(2\theta))}{\sin(2\theta)}D_{\theta}F^\ast_{\gamma}+V\bigg(\Phi-\frac{3}{4\alpha}L^{-1}_{z,K}(F^{\ast}_\gamma)\sin(2\theta)\bigg)\alpha D_{z}F^\ast_{\gamma}.
	\label{def-Tn}
\end{align}
With the help of \eqref{3f7}, \eqref{form_F1}, \eqref{cal_U} and \eqref{cal_V}, one can simplify $T^l_{F^\ast_{\gamma}}$ to:
\begin{align}
	T^l_{F^\ast_{\gamma}}
	=&\frac12\left(-3 L^{-1}_{z}(\Gamma^{\ast}_{\gamma})+\alpha\Gamma^{\ast}_{\gamma}\right)\frac{2\alpha}{3\gamma}F^\ast_{\gamma}\left(\cos(2\theta)-\sin^2(\theta)\right) 
	\label{cal-Fgamma}\\
	&+L^{-1}_{z}(\Gamma^{\ast}_{\gamma})\left(\cos(2\theta)-\sin^2(\theta)\right)
	\frac{\alpha}{\gamma} F^\ast_{\gamma}(L^{-1}_{z}(\Gamma^\ast_{\gamma})-1)
	\notag \\
	=&\frac{\alpha}{\gamma}\left(\frac{\alpha}{3}-\gamma\right)F^\ast_{\gamma}\Gamma^{\ast}_{\gamma}\left(\cos(2\theta)-\sin^2(\theta)\right).
	\notag
\end{align}
This, along with the fact that $|w^K|\leq|w^\eta|$, Lemma \ref{7fm1} and \eqref{est-bd-Gamma}, yields directly that
\begin{align}\label{7f14-1}
	|\bar{L}_1|
	\lesssim&\|T^l_{F^\ast_{\gamma}}\|_{\mathcal{H}^2}\|g\|_{\mathcal{H}^2}
	\lesssim_\beta \alpha
	\|F^\ast_{\gamma}\|_{\mathcal{H}^{2,\ast}}\|g\|_{\mathcal{H}^2}
	\lesssim_\beta \alpha^2(1-\eta)^{-\frac{1}{2}}\|g\|_{\mathcal{H}^2}.
\end{align}
As for $\bar{L}_2$, by virtue of Lemmas \ref{7fm1}, \ref{7fm2} and \ref{7fm3}, we deduce that
\begin{align}\label{7f15-1}
	&~~~~\bigg|\bigg\langle \frac{U(\Phi-\frac{3}{4\alpha}L^{-1}_{z,K}(F^{\ast}_\gamma)\sin(2\theta))}{\sin(2\theta)}D_{\theta}F^\ast_{\gamma}, g(w^K)^2\bigg\rangle\bigg| 
	\\ \notag
	&\lesssim \bigg\|\frac{U(\Phi-\frac{3}{4\alpha}L^{-1}_{z,K}(F^{\ast}_\gamma)\sin(2\theta))}{\sin(2\theta)}D_{\theta}F^\ast_{\gamma}\bigg\|_{\mathcal{H}^2}\|g\|_{\mathcal{H}^2}\\ \notag
	&\lesssim_\beta\alpha^2(1-\eta)^{-\frac{1}{2}}\|g\|_{\mathcal{H}^2}
	\sum_{i=0}^{2}\left(\|D^i_z U(\tilde{G}_g+G^\ast_g)w_z\|_{L^2_z}+\|D^i_z U(\tilde{G}_{F^{\ast}_\gamma})w_z^\ast\|_{L^2_z}\right)
	\\ \notag
	&~~~~+\alpha^2\|g\|_{\mathcal{H}^2}\left(\bigg\|\frac{U(\tilde{\Phi}_g)}{\sin(2\theta)}\bigg\|_{\mathcal{H}^2}
	+\bigg\|\frac{U(\tilde{\Phi}_{F^{\ast}_\gamma})}{\sin(2\theta)}\bigg\|_{\mathcal{H}^{2,\ast}}\right)
	\\ \notag
	&\lesssim_\beta\alpha^2(1-\eta)^{-\frac{1}{2}}\|g\|_{\mathcal{H}^2}
	\left(\alpha^{-1}\|g\|_{\mathcal{H}^2}+\|F^{\ast}_\gamma\|_{\mathcal{H}^{2,\ast}}\right)
	\\ \notag
	&\lesssim_\beta
	\alpha(1-\eta)^{-\frac{1}{2}}\|g\|^2_{\mathcal{H}^2}+\alpha^3(1-\eta)^{-1}\|g\|_{\mathcal{H}^2}.
\end{align}
Note that 
\begin{align*}
	&V(f(z)\sin(2\theta))=2f(z)(\cos(2\theta)-\sin^2(\theta)),
	\quad
	D_\theta (\cos(2\theta)-\sin^2(\theta))=-3\sin^2(2\theta).
\end{align*}
These two estimates, combined with Lemma \ref{7fm1} and Lemma \ref{7fm2}, establish that
\begin{align}\label{7f17-1}
	&~~~~\bigg|\bigg\langle V\bigg(\Phi-\frac{3}{4\alpha}L^{-1}_{z,K}(F^{\ast}_\gamma)\sin(2\theta)\bigg)\alpha D_{z}F^\ast_{\gamma}, g(w^K)^2\bigg\rangle\bigg| \\ \notag
	&\lesssim_\beta\alpha^2(1-\eta)^{-\frac{1}{2}}\|g\|_{\mathcal{H}^2}\sum_{i=0}^{2}
	\left(\|D^i_z (\tilde{G}_g+G^\ast_g)w_z\|_{L^2_z}+\|D^i_z \tilde{G}_{F^{\ast}_\gamma}w^\ast_z\|_{L^2_z}\right) \\ \notag
	&~~~~+\alpha^2\|g\|_{\mathcal{H}^2}(\|V(\tilde{\Phi}_g)\|_{\mathcal{H}^2}+\|V(\tilde{\Phi}_{F^{\ast}_\gamma})\|_{\mathcal{H}^{2,\ast}})\\ \notag
	&\lesssim_\beta \alpha(1-\eta)^{-\frac{1}{2}}\|g\|^2_{\mathcal{H}^2}+\alpha^3(1-\eta)^{-1}\|g\|_{\mathcal{H}^2}.
\end{align}
Combining \eqref{7f15-1} and \eqref{7f17-1} together, we find that 
\begin{align}\label{7f19-1}
	&|\bar{L}_2|
	\lesssim_\beta \alpha(1-\eta)^{-\frac{1}{2}}\|g\|^2_{\mathcal{H}^2}+\alpha^3(1-\eta)^{-1}\|g\|_{\mathcal{H}^2}.
\end{align}
Substituting \eqref{7f14-1} and \eqref{7f19-1} into \eqref{est-bL-1}, then using the fact that $\alpha\ll(1-\eta)$, one gets \eqref{est-TF-H-1} immediately.

\medskip

\noindent
\textbf{Step 2: Estimate for $\mathcal{H}^{2}_{\eta}$-norm.} 
\medskip

\noindent
We recall the definition of $\mathcal{H}^{2}_{\eta}$-norm in \eqref{def_norm_H2eta}, to obtain that
\begin{align}\label{est-bM-1}
	\langle T_{F^\ast_{\gamma}}, g\rangle_{\mathcal{H}_{\eta}^2}
	=\langle T^l_{F^\ast_{\gamma}}, g\rangle_{\mathcal{H}_{\eta}^2}
	+\langle T^n_{F^\ast_{\gamma}}, g\rangle_{\mathcal{H}_{\eta}^2}
	\tri \bar{M}_1+\bar{M}_2.
\end{align}
It follows from the similar way as \eqref{7f14-1} that
\begin{align}\label{7f14}
	|\bar{M}_1|
	\lesssim&\|T^l_{F^\ast_{\gamma}}\|_{\mathcal{H}_{\eta}^2}\|g\|_{\mathcal{H}_{\eta}^2}
	\lesssim (1-\eta)^{-2}\|T^l_{F^\ast_{\gamma}}\|_{\mathcal{H}^2}\|g\|_{\mathcal{H}^2}
	\lesssim_\beta \alpha^2(1-\eta)^{-\frac{5}{2}}\|g\|_{\mathcal{H}^2}.
\end{align}
By employing a similar derivation procedure as in \eqref{7f15-1}, we directly deduce that
\begin{align}\label{7f16}
	&~~~~\bigg|\bigg\langle \frac{U(\Phi-\frac{3}{4\alpha}L^{-1}_{z,K}(F^{\ast}_\gamma)\sin(2\theta))}{\sin(2\theta)}D_{\theta}F^\ast_{\gamma}, g\bigg\rangle_{\mathcal{H}^2_\eta}\bigg| 
	\\ \notag
	&\lesssim (1-\eta)^{-2}\bigg\|\frac{U(\Phi-\frac{3}{4\alpha}L^{-1}_{z,K}(F^{\ast}_\gamma)\sin(2\theta))}{\sin(2\theta)}D_{\theta}F^\ast_{\gamma}\bigg\|_{\mathcal{H}^2}\|g\|_{\mathcal{H}^2}\\ \notag
	&\lesssim_\beta \alpha(1-\eta)^{-\frac{5}{2}}\|g\|^2_{\mathcal{H}^2}+\alpha^3(1-\eta)^{-3}\|g\|_{\mathcal{H}^2}.
\end{align}
The similar derivation to \eqref{7f17-1} directly yields that
\begin{align}\label{7f18}
	&\bigg|\bigg\langle V\bigg(\Phi-\frac{3}{4\alpha}L^{-1}_{z,K}(F^{\ast}_\gamma)\sin(2\theta)\bigg)\alpha D_{z}F^\ast_{\gamma}, g\bigg\rangle_{\mathcal{H}^2_\eta}\bigg| 
	\lesssim_\beta \alpha(1-\eta)^{-\frac{5}{2}}\|g\|^2_{\mathcal{H}^2}+\alpha^3(1-\eta)^{-3}\|g\|_{\mathcal{H}^2}.
\end{align}
We collect \eqref{7f16} and \eqref{7f18} together, to achieve that
\begin{align}\label{7f19}
	&|\bar{M}_2|\lesssim_\beta\alpha(1-\eta)^{-\frac{5}{2}}\|g\|^2_{\mathcal{H}^2}+\alpha^3(1-\eta)^{-3}\|g\|_{\mathcal{H}^2}.
\end{align}
Since $\alpha\ll(1-\eta)$, we insert \eqref{7f14} and \eqref{7f19} into \eqref{est-bM-1}, to get that \eqref{est-TF-H2eta} holds.

\medskip

\noindent
\textbf{Step 3: Estimate for $\mathcal{E}^{2}_{\eta}$-norm.} 
\medskip

\noindent
We remember the definition of $\mathcal{E}^{2}_{\eta}$-norm in \eqref{def_norm_E2eta}, to find that
\begin{align}\label{est-bN-1}
	\langle T_{F^\ast_{\gamma}}, g\rangle_{\mathcal{E}_{\eta}^2}
	=\langle T_{F^\ast_{\gamma}}^l, g\rangle_{\mathcal{E}_{\eta}^2}
	+\langle T_{F^\ast_{\gamma}}^n, g\rangle_{\mathcal{E}_{\eta}^2}
	\tri \bar{N}_1+\bar{N}_2.
\end{align}
In view of \eqref{est-Dz-Gamma-1}, \eqref{est-Gamma-lambda-L2} and \eqref{cal-Fgamma}, we have
\begin{align}\label{7f14'}
	|\bar{N}_1| 
	&\lesssim\|T^l_{F^\ast_{\gamma}}\|_{\mathcal{E}_{\eta}^2}
	\|g\|_{\mathcal{E}_{\eta}^2}\lesssim\alpha\|F^\ast_{\gamma}\Gamma^\ast_{\gamma}\|_{\mathcal{E}^2}\|g\|_{\mathcal{E}^2}
	\lesssim_\beta \alpha^{\frac52}\|\Gamma_{\theta,\gamma} w^{\lambda}_{\theta}\|_{L^2_\theta}
	\|g\|_{\mathcal{E}^2}
	\lesssim_\beta \alpha^2 \|g\|_{\mathcal{E}^2}.
\end{align}
By using \eqref{est-Gamma-lambda-L2}, Lemma \ref{6lem4}, Corollary \ref{6prop6-19} and Lemma \ref{7ha1}, we see that
\begin{align}\label{7f20}
	\bigg|\bigg\langle \frac{U(\Phi_g)}{\sin(2\theta)}D_{\theta}F^\ast_{\gamma}, g\bigg\rangle_{\mathcal{E}^2_\eta}\bigg|
	&\lesssim\left(\frac{\alpha^2}{\beta^4}\bigg\|\frac{U(\tilde{\Phi}_g)}{\sin(2\theta)}\bigg\|_{\mathcal{E}^2}
	+\|U(\tilde{G}_g+G^\ast_g)D_{\theta}F^\ast_{\gamma}\|_{\mathcal{E}^2}
	\right)\|g\|_{\mathcal{E}^2} 
	\\ \notag
	&\lesssim_\beta\alpha^2\left(\|g\|_{\mathcal{E}^2}
	+\alpha^\frac{1}{2}\sum_{i=0}^2\|D^i_z U(\tilde{G}_g+G^\ast_g)w_z\|_{L_z^2}\|\Gamma_{\theta,\gamma}w_\theta^\lambda\|_{L_\theta^2}
	\right)\|g\|_{\mathcal{E}^2} 
	\\ \notag
	&\lesssim_\beta \alpha^2 \left(\|g\|_{\mathcal{E}^2}
	+\alpha^{-1} \|g\|_{\mathcal{H}^2}\right)
	\|g\|_{\mathcal{E}^2} 
	\\ \notag
	&\lesssim_\beta \alpha\|g\|_{\mathcal{H}^2}\|g\|_{\mathcal{E}^2}+\alpha^2\|g\|^2_{\mathcal{E}^2},
\end{align}
where $\Phi_g\tri \tilde{\Phi}_g+(\tilde{G}_g+G^\ast_g)\sin(2\theta)$.
Thanks to \eqref{est-Gamma-lambda-L2}, Lemmas \ref{7fm1}, \ref{6lem4} and \ref{7ha1}, one can deduce that
\begin{align}\label{7f21}
	&~~~~\bigg|\bigg\langle \frac{U(\tilde{\Phi}_{F^\ast_{\gamma}})}{\sin(2\theta)}D_{\theta}F^\ast_{\gamma}, g\bigg\rangle_{\mathcal{E}^2_\eta}\bigg|
	+\left|\langle U(\tilde{G}_{F^\ast_{\gamma}})D_{\theta}F^\ast_{\gamma}, g\rangle_{\mathcal{E}^2_\eta}\right| \\ \notag
	&\lesssim_{\beta}\alpha^{\frac{5}{2}}
	\|g\|_{\mathcal{E}^2}\|\Gamma_{\theta,\gamma}w_\theta^\lambda\|_{L^2_\theta}\sum_{i=0}^2\left(\bigg\|D^i_z\bigg(\frac{U(\tilde{\Phi}_{F^\ast_{\gamma}})}{\sin(2\theta)}\bigg)w_z^\ast\bigg\|_{L_z^2L^{\infty}_\theta}+\|D_z^iU(\tilde{G}_{F^\ast_{\gamma}})w^\ast_z
	\|_{L_z^2}\right)\\ \notag
	&\lesssim_\beta \alpha^{2}\|g\|_{\mathcal{E}^2}\left(\alpha^{-\frac12}\|F^\ast_{\gamma}\|_{H^{3,\ast}}+\|F^\ast_{\gamma}\|_{H^{2,\ast}}\right)\\ \notag
	&\lesssim_\beta
	\alpha^{\frac{5}{2}}(1-\eta)^{-\frac{1}{2}}\|g\|_{\mathcal{E}^2}.
\end{align}
The application of \eqref{est-Gamma-lambda-L2}, Lemma \ref{6lem4}, Corollary \ref{6prop6-19} and Lemma \ref{7ha1}, yields directly that
\begin{align}\label{7f22}
	|\langle V(\Phi_g)\alpha D_{z}F^\ast_{\gamma}, g\rangle_{\mathcal{E}^2_\eta}| 
	&\lesssim\alpha\|g\|_{\mathcal{E}^2}
	\left(\frac{\alpha}{\beta^4}\|V(\tilde{\Phi}_g)\|_{\mathcal{E}^2}
	+\|V((\tilde{G}_g+G^\ast_g)\sin(2\theta))D_{z}F^\ast_{\gamma}\|_{\mathcal{E}^2}\right) 
	\\ \notag
	&\lesssim_\beta\alpha^2\|g\|_{\mathcal{E}^2}
	\left(\|g\|_{\mathcal{E}^2}
	+\alpha^\frac{1}{2}\sum_{i=0}^2\|D^i_z (\tilde{G}_g +G^\ast_g )w_z\|_{L_z^2}\|\Gamma_{\theta,\gamma}w_\theta^\lambda\|_{L_\theta^2}
	\right) 
	\\ \notag
	&\lesssim_\beta\alpha^2\|g\|_{\mathcal{E}^2} (\|g\|_{\mathcal{E}^2}
	+\alpha^{-1}\|g\|_{\mathcal{H}^2})
	\\ \notag
	&\lesssim_\beta\alpha\|g\|_{\mathcal{H}^2}\|g\|_{\mathcal{E}^2}+\alpha^2\|g\|^2_{\mathcal{E}^2},
\end{align}
and
\begin{align}\label{7f23}
	&~~~~\left|\langle V(\tilde{\Phi}_{F^\ast_{\gamma}})\alpha D_{z}F^\ast_{\gamma}, g\rangle_{\mathcal{E}^2_\eta}\right|
	+\left|\langle V(\tilde{G}_{F^\ast_{\gamma}}\sin(2\theta))\alpha D_{z}F^\ast_{\gamma}, g\rangle_{\mathcal{E}^2_\eta}\right| 
	\\ \notag
	&\lesssim_{\beta}\alpha^{\frac{5}{2}}\|g\|_{\mathcal{E}^2}\|\Gamma_{\theta,\gamma}w_\theta^\lambda\|_{L^2_\theta}
	\sum_{i=0}^2\left(\|D^i_z V(\tilde{\Phi}_{F^\ast_{\gamma}})w_z^\ast\|_{L_z^2L^{\infty}_\theta}+\|D_z^i \tilde{G}_{F^\ast_{\gamma}}w^\ast_z
	\|_{L_z^2}\right)
	\\ \notag
	&\lesssim_\beta \alpha^{\frac{3}{2}}\|g\|_{\mathcal{E}^2}\|F^\ast_{\gamma}\|_{H^{3,\ast}}\\ \notag
	&\lesssim_\beta \alpha^{\frac{5}{2}}(1-\eta)^{-\frac{1}{2}}\|g\|_{\mathcal{E}^2}.
\end{align}
The combination of \eqref{7f20}-\eqref{7f23}, yields that
\begin{align}\label{7f24}
	&|\bar{N}_2|\lesssim_\beta\alpha^{\frac{5}{2}}(1-\eta)^{-\frac{1}{2}}\|g\|_{\mathcal{E}^2}+\alpha\|g\|_{\mathcal{H}^2}\|g\|_{\mathcal{E}^2}+\alpha^2\|g\|^2_{\mathcal{E}^2}.
\end{align}
Plugging \eqref{7f14'}and \eqref{7f24} into \eqref{est-bN-1}, we finally get \eqref{est-TF-E2eta} immediately.
\end{proof}

\section{Blow-up phenomenon of the Euler equations}\label{sec:blow-up}
Firstly, we rewrite $F$ as a perturbation of $F^{\ast}_{\gamma}$, namely, $F=F^{\ast}_{\gamma}+g$. 
Then we recall that $g$ satisfies
\begin{align}\label{8eq1}
\left\{\begin{array}{l} \mathcal{L}_{\Gamma}(g)=-T+R_0+R_1+R_2,\\[1ex]
	L^{\alpha}_{z}(\Phi)+L_{\theta}(\Phi)=F, \\[1ex]
	\mu= \frac{3}{4\alpha}L^{-1}_{z,K}(R_2-T)(0),~~\gamma = \frac{1+\mu}{1-\mu}\beta, \\[1ex]
	L^{-1}_{z,K}(g)(0)=0,~~g|_{\partial D}=0,~~\Phi|_{\partial D}=0, 
\end{array}\right.
\end{align}
where the operator $\mathcal{L}_{\Gamma}$ and the remaining terms $R_i$ $(i=0,1,2)$ are defined in \eqref{def-LGammag}-\eqref{def-R2-1}, respectively.
We have obtained the coercivity estimate of the operator $\mathcal{L}_{\Gamma}$ in Section \ref{sec:Coer}, the elliptic estimates of the solution $\Phi$ in Section \ref{sec:Elli}, and the estimates of the transport term $T$ in Section \ref{sec:Est-T}. 
With all these estimates at hand, we intend to prove our main theorems in this section.

\subsection{Final \textit{a priori} estimate}
As observed in the first equation of \eqref{8eq1}, all terms except $R_0$, $R_1$ and $R_2$ have been rigorously addressed in Sections \ref{sec:Coer}-\ref{sec:Est-T}.
We begin by estimating $R_2$. 
Before proceeding, we establish some fundamental lemmas.

\begin{lemm}\label{8pr1}
Let $f(z,\theta)|_{\partial D}=g(z,\theta)|_{\partial D}=0$. Then we have
\begin{align}\label{est-fg-H2}
	\|fg\|_{\mathcal{H}^{2}}\leq C\sqrt{\frac{\beta}{\alpha}}\|f\|_{\mathcal{H}^{2}}\|g\|_{\mathcal{H}^{2}}
	,~~~
	\|fg\|_{\mathcal{H}^{2}}\leq C\sqrt{\frac{\beta}{\alpha}}\|f\|_{\mathcal{H}^{3,\ast}}\|g\|_{\mathcal{H}^{2}}.
\end{align}
\end{lemm}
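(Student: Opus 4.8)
The plan is to reduce the product estimate to a finite collection of bilinear estimates via the Leibniz rule and then dispatch each bilinear term by a suitable $L^\infty\times L^2$ or mixed-norm pairing, drawing the $L^\infty$-type bounds from Lemma~\ref{7em1} and Corollary~\ref{7em2}. First I would note that $D_z=z\partial_z$ and $D_\theta=\sin(2\theta)\partial_\theta$ are first-order derivations, so for every $i+j\le 2$ the quantity $D_z^iD_\theta^j(fg)$ expands into a finite sum of terms $(D_z^aD_\theta^b f)(D_z^cD_\theta^d g)$ with $a+c=i$, $b+d=j$, hence total order $a+b+c+d=i+j\le 2$. Recalling the definition of $\|\cdot\|_{\mathcal H^2}$, it therefore suffices to bound $\|(D_z^aD_\theta^b f)(D_z^cD_\theta^d g)\,w^{\#}\|_{L^2}$ for each such term, with $w^{\#}=w^\eta$ if $j=0$ and $w^{\#}=w^\lambda$ if $j\ge 1$.

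In each bilinear term at least one factor carries total derivative order $\le 1$. When it carries order $0$, I would put that factor in $L^\infty$ via the embedding $\|h\|_{L^\infty}\le C\sqrt{\beta/\alpha}\,\|D_\theta D_z h\,w^\lambda\|_{L^2}$ of Corollary~\ref{7em2} — this is precisely where the factor $\sqrt{\beta/\alpha}$ is born — and the other factor in the weighted $L^2$ norm, which is literally a summand of $\|f\|_{\mathcal H^2}$ or $\|g\|_{\mathcal H^2}$; here one uses $w^\eta\le w^\lambda$ and $w_z\ge 1$ to reconcile weights. For the first estimate this closes all terms except the configuration $i+j=2$ in which both factors carry exactly one derivative.

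For those balanced terms $(D_\star f)(D_\star g)\,w^{\#}$ with $D_\star\in\{D_z,D_\theta\}$, no single factor can be placed in $L^\infty$ without a third derivative, so I would use the mixed-norm Hölder inequality
\begin{align*}
\|ab\|_{L^2_{z,\theta}}\le \|a\|_{L^2_zL^\infty_\theta}\,\|b\|_{L^\infty_zL^2_\theta},
\end{align*}
splitting the weight as $w^{\#}=w_z\,w^{\#}_\theta$ so that $w_z$ is absorbed by the factor bounded in $L^2_zL^\infty_\theta$ — controlled by $C\sqrt{\beta/\alpha}\,\|D_\theta(\cdot)\,w^\lambda\|_{L^2}$ (third inequality of Corollary~\ref{7em2}) — and the angular weight $w^{\#}_\theta$ by the factor bounded in $L^\infty_zL^2_\theta$ — controlled by $C\,\|D_z(\cdot)\,w^\lambda\|_{L^2}$ (the $w^\lambda$-analogue of the first inequality of Corollary~\ref{7em2}, proved by the identical argument). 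The main obstacle is exactly this weight bookkeeping: one must verify that for each of $(D_zf)(D_zg)$, $(D_\theta f)(D_zg)$, $(D_zf)(D_\theta g)$ and with the appropriate assignment of the two weight pieces, every resulting one-dimensional embedding is among those furnished by Lemma~\ref{7em1}/Corollary~\ref{7em2} and contributes exactly the factor $\sqrt{\beta/\alpha}$ and no worse power of $\beta^{-1}$; this rests on $\lambda-1=\alpha/(10\beta)$ and on $\int_0^{\pi/2}(\sin(2\theta))^{-(2-\lambda)}\,d\theta\approx\beta/\alpha$.

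Finally, the second estimate is obtained by running the same scheme with the starred radial weight $w_z^\ast$ and the starred embeddings (the $L^\infty$, $L^2_zL^\infty_\theta$, $L^\infty_zL^2_\theta$ inequalities of Lemma~\ref{7em1} and Corollary~\ref{7em2} with $w^{\ast,\eta}$, $w^{\ast,\lambda}$), always arranging the pairing so that $g$ carries the radial factor $w_z$ of the target weight. The passage from $\mathcal H^{2,\ast}$ to $\mathcal H^{3,\ast}$ is forced by the terms in which both derivatives fall on $f$, e.g. $(D_z^2f)g$ and its $D_zD_\theta$ and $D_\theta^2$ analogues: there $g$ is placed in $L^\infty$ while $f$'s second derivatives must be measured against the target's non-starred weight $w^{\#}$, which dominates $w^{\ast,\#}$ near $z=\infty$; one therefore controls $\|(D^2 f)\,w^{\#}\|_{L^2}\lesssim \|(D^2 f)\,w^{\ast,\#}\|_{L^2}+\|(D^3 f)\,w^{\ast,\#}\|_{L^2}$ by a Hardy-type inequality, and it is this step that consumes the third derivative of $f$. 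Summing the finitely many bilinear contributions then yields \eqref{est-fg-H2}.
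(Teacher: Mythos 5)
Your scheme for the first inequality — Leibniz expansion of $D_z^iD_\theta^j(fg)$, pairing the zeroth-order factor in $L^\infty$ via Corollary~\ref{7em2} (which is where the single factor $\sqrt{\beta/\alpha}$ enters), and handling the balanced first-order terms by the mixed-norm Hölder $L^2_zL^\infty_\theta\times L^\infty_zL^2_\theta$ with the split $w^{\#}=w_z\,w^{\#}_\theta$ — is essentially the paper's argument (the paper carries this out explicitly only for the second inequality but alludes to the same mechanism for the first). The bookkeeping you flag, choosing which factor receives $w_z$ and which receives $w^{\#}_\theta$, is indeed the whole content, and with the correct assignment every embedding you invoke is available.

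There is, however, a genuine gap in your treatment of the second inequality. For the terms $(D^2 f)\,g$ you propose $\|g\|_{L^\infty}\|D^2 f\,w^{\#}\|_{L^2}$ followed by the claimed Hardy-type inequality $\|(D^2 f)\,w^{\#}\|_{L^2}\lesssim\|(D^2 f)\,w^{\ast,\#}\|_{L^2}+\|(D^3 f)\,w^{\ast,\#}\|_{L^2}$. This inequality is false. The ratio of the radial weights is $w_z/w^\ast_z=(1+z^{1/\beta})\,z^{1/4-1/\beta}$, which blows up both as $z\to0$ (since $\beta\le1$ gives $1/\beta>1/4$) and as $z\to\infty$; one additional $D_z$ cannot recover a weight that is strictly larger at both ends. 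A concrete failure: with $\beta=1$, take $h(z)=z\,\chi(z)$ for a cutoff $\chi$ supported near the origin; then $\|h\,w^\ast_z\|_{L^2}+\|D_zh\,w^\ast_z\|_{L^2}<\infty$ while $\|h\,w_z\|_{L^2}=\infty$. The correct route, which is what the paper actually does, is not to force $g$ into $L^\infty$ here but to keep the mixed-norm pairing: bound $\|(D^2 f)\,g\,w^{\#}\|_{L^2}$ by $\|D^2 f\,w^{\#}_\theta\|_{L^\infty_z L^2_\theta}\,\|g\,w_z\|_{L^2_z L^\infty_\theta}$, so that the radial factor $w_z$ stays attached to $g$. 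The third derivative of $f$ is then consumed by the starred $L^\infty_z$ embedding from Corollary~\ref{7em2}, namely $\|D^2 f\,w^{\#}_\theta\|_{L^\infty_z L^2_\theta}\lesssim\|D_z D^2 f\,w^{\ast,\#}\|_{L^2}\lesssim\|f\|_{\mathcal H^{3,\ast}}$, not by any weight-comparison Hardy inequality.
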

\begin{proof}
By using Corollary \ref{7em2}, it is easy to verify that the first inequality of \eqref{est-fg-H2} holds true.
It remains to prove the second one.
By virtue of Corollary \ref{7em2}, we infer that
\begin{align*}  \|fgw^\eta\|_{L^2}\lesssim\|f\|_{L^\infty}\|gw^\eta\|_{L^2}\lesssim \sqrt{\frac{\beta}{\alpha}}\|f\|_{\mathcal{H}^{2,\ast}}\|gw^\eta\|_{L^2},
\end{align*}
and
\begin{align*}  &\|D_z(fg)w^\eta\|_{L^2}\lesssim\|f\|_{L^\infty}\|D_zgw^\eta\|_{L^2}+\|D_zfw_\theta^\eta\|_{L_z^\infty L_\theta^2}\|gw_z\|_{L^2_z L_\theta^\infty}\lesssim \sqrt{\frac{\beta}{\alpha}}\|f\|_{\mathcal{H}^{2,\ast}}\|g\|_{\mathcal{H}^{1}},
	\\
	&\|D_\theta(fg)w^\lambda\|_{L^2}\lesssim\|f\|_{L^\infty}\|D_\theta gw^\lambda\|_{L^2}+\|D_\theta fw_\theta^\lambda\|_{L_z^\infty L_\theta^2}\|gw_z\|_{L^2_z L_\theta^\infty}\lesssim \sqrt{\frac{\beta}{\alpha}}\|f\|_{\mathcal{H}^{2,\ast}}\|g\|_{\mathcal{H}^{1}}.
\end{align*}
Along the similar way, one obtains that
\begin{align*}  
	&\|D^2_\theta(fg)w^\lambda\|_{L^2}
	+\|D^2_z(fg)w^\eta\|_{L^2}
	\\
	\lesssim&\|f\|_{L^\infty}\|D^2_\theta gw^\lambda\|_{L^2}+\|D^2_\theta fw_\theta^\lambda\|_{L_z^\infty L_\theta^2}\|gw_z\|_{L^2_z L_\theta^\infty}+\|D_\theta fw_\theta^\lambda\|_{L_z^\infty L_\theta^2}\|D_\theta 
	gw_z\|_{L^2_z L_\theta^\infty} \\
	&+\|f\|_{L^\infty}\|D^2_z gw^\eta\|_{L^2}+\|D^2_z fw_\theta^\eta\|_{L_z^\infty L_\theta^2}\|gw_z\|_{L^2_z L_\theta^\infty}+\|D_z fw_\theta^\eta\|_{L_z^\infty L_\theta^2}\|D_z 
	gw_z\|_{L^2_z L_\theta^\infty} \\
	\lesssim &\sqrt{\frac{\beta}{\alpha}}\|f\|_{\mathcal{H}^{3,\ast}}\|g\|_{\mathcal{H}^{2}},
\end{align*}
and
\begin{align*}  
	\|D_\theta D_z(fg)w^\lambda\|_{L^2}
	&\lesssim\|f\|_{L^\infty}\|D_\theta D_z gw^\lambda\|_{L^2}+\|D_\theta D_z fw_\theta^\lambda\|_{L_z^\infty L_\theta^2}\|gw_z\|_{L^2_z L_\theta^\infty} \\
	&~~~+\|D_\theta fw_\theta^\lambda\|_{L_z^\infty L_\theta^2}\|D_z 
	gw_z\|_{L^2_z L_\theta^\infty}+\|D_z f\|_{L^\infty}\|D_\theta 
	gw^\lambda\|_{L^2}\\
	&\lesssim\sqrt{\frac{\beta}{\alpha}}\|f\|_{\mathcal{H}^{3,\ast}}\|g\|_{\mathcal{H}^{2}}.
\end{align*}
We thus complete the proof of Lemma \ref{8pr1}.
\end{proof}

Applying the similar way as the proof of Lemma \ref{7fm2}, we can get the following lemma directly. And its proof is omitted here.
\begin{lemm}\label{8pr2}
Assume that $f(z,\theta)|_{\partial D}=0$. 
Let $\beta \in (0,1]$.
There exists a sufficiently small constant $\alpha>0$, such that if $\alpha\ll \beta$ and $|\mu|\leq \alpha^\frac{1}{2}$, then there holds
\begin{align*}
	\|fF^\ast_{\gamma} \|_{\mathcal{H}^{2}}\leq C\frac{\alpha}{\beta^3}\|f\|_{\mathcal{H}^{2}},~~~~\|fF^\ast_{\gamma} \|_{\mathcal{H}^{2}}\leq C\frac{\alpha}{\beta^3}\|f\|_{\mathcal{H}^{2,\ast}}.
\end{align*}
\end{lemm}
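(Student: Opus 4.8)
The plan is to prove Lemma~\ref{8pr2} by the same mechanism as Lemma~\ref{7fm2}: a Leibniz expansion followed by a careful count of the powers of $\gamma$ (equivalently $\beta$). First I would unfold the definition of $\|\cdot\|_{\mathcal{H}^{2}}$ from Section~\ref{sec:notation}, so that it suffices to bound $\|D_z^{i}(fF^\ast_{\gamma})w^{\eta}\|_{L^2}$ for $0\le i\le 2$ together with $\|D_z^{i}D_\theta^{j}(fF^\ast_{\gamma})w^{\lambda}\|_{L^2}$ for $1\le i+j\le 2$, $j\ge 1$. For each of these I would apply the Leibniz rule to write $D_z^{i}D_\theta^{j}(fF^\ast_{\gamma})$ as a finite sum of terms $(D_z^{a}D_\theta^{b}f)\,(D_z^{i-a}D_\theta^{j-b}F^\ast_{\gamma})$, and then, recalling $F^\ast_{\gamma}=\tfrac{2\alpha}{3c_{\ast,\gamma}}\Gamma_{\theta,\gamma}\Gamma^\ast_{\gamma}$ from \eqref{form_F1}, rewrite every derivative of $F^\ast_{\gamma}$ using the pointwise bounds \eqref{est-Dz-Gamma-1}, \eqref{est-Dtheta-Gamma-1} and the elementary estimates \eqref{est-bd-Gamma} (namely $\Gamma_{\theta,\gamma}\le 1$ and $\Gamma^\ast_{\gamma}w_z\lesssim\gamma^{-1}w^\ast_z$), together with $\|\Gamma^\ast_{\gamma}\|_{L^\infty_z}\lesssim\gamma^{-1}$ and $\|\Gamma_{\theta,\gamma}w_\theta^{\lambda}\|_{L^2_\theta}\lesssim\sqrt{\gamma/\alpha}$ from \eqref{est-Gamma-lambda-L2}.

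The $L^2$ norms would then be split by Hölder's inequality exactly as in the proof of Lemma~\ref{7fm2}. When the derivatives fall on the $F^\ast_{\gamma}$-factor, its $z$-profile is absorbed into the radial weight $w_z$ (turned into $w^\ast_z$ via \eqref{est-bd-Gamma}) or estimated in $L^\infty_z$, while its $\theta$-profile is kept with the angular weight or placed in $L^2_\theta$ or $L^\infty_\theta$; when the derivatives fall on $f$, the $f$-factor is placed in $L^2$, in $L^\infty_zL^2_\theta$, in $L^2_zL^\infty_\theta$, or in $L^\infty$, and controlled by $\|f\|_{\mathcal{H}^2}$ — for the first inequality — or by $\|f\|_{\mathcal{H}^{2,\ast}}$ — for the second — through Corollary~\ref{7em2}. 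The one delicate bookkeeping point, inherited from Lemma~\ref{7fm2}, is that invoking a mixed $L^\infty_\theta$ norm for $f$ costs a factor $\sqrt{\beta/\alpha}$ that must always be offset; here this never actually occurs on the undifferentiated factor $F^\ast_{\gamma}$, since one may just bound $\Gamma_{\theta,\gamma}\le 1$ pointwise and retain the angular weight on $f$. Counting the powers of $\gamma^{-1}$ shows the largest is $\gamma^{-3}$ and is produced only by the term $f\,D_z^{2}F^\ast_{\gamma}$: there $D_z^{2}F^\ast_{\gamma}\sim \alpha\gamma^{-2}\Gamma_{\theta,\gamma}\Gamma^\ast_{\gamma}$, and the extra $\gamma^{-1}$ comes from $\Gamma^\ast_{\gamma}w_z\lesssim\gamma^{-1}w^\ast_z$ in the $\mathcal{H}^{2,\ast}$ estimate and from $\|\Gamma^\ast_{\gamma}\|_{L^\infty_z}\lesssim\gamma^{-1}$ in the $\mathcal{H}^{2}$ estimate, giving $\alpha\gamma^{-3}\|f\|$. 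Every other Leibniz term carries at most $\gamma^{-2}$ and at least one additional power of $\alpha$, hence is of strictly lower order.

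Finally, since $\gamma=\tfrac{1+\mu}{1-\mu}\beta$ with $|\mu|\le\alpha^{1/2}$ and $\alpha\ll\beta$, one has $|\gamma-\beta|\lesssim\alpha^{1/2}\ll\beta$ as in \eqref{est-gamma-beta}, so $\gamma\approx\beta$ and $\gamma^{-3}\approx\beta^{-3}$, which yields $\|fF^\ast_{\gamma}\|_{\mathcal{H}^2}\le C\alpha\beta^{-3}\|f\|_{\mathcal{H}^2}$ and $\|fF^\ast_{\gamma}\|_{\mathcal{H}^2}\le C\alpha\beta^{-3}\|f\|_{\mathcal{H}^{2,\ast}}$. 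I expect the main obstacle to be nothing deep but rather the bookkeeping just described: verifying, across the finitely many Leibniz terms, that no uncompensated $\sqrt{\beta/\alpha}$ appears and that the total power of $\gamma^{-1}$ never exceeds three — which is exactly why the statement is recorded here with the proof omitted.
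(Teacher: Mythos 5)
Your proposal is correct and reproduces exactly the paper's (omitted) argument: the paper dispatches Lemma~\ref{8pr2} with the single sentence ``Applying the similar way as the proof of Lemma~\ref{7fm2},'' and your Leibniz expansion combined with \eqref{est-Dz-Gamma-1}, \eqref{est-Dtheta-Gamma-1}, \eqref{est-bd-Gamma}, \eqref{est-Gamma-lambda-L2}, and Corollary~\ref{7em2} is precisely that ``similar way.'' Your power counting is right: the extremal term is $f\,D_z^2 F^\ast_\gamma$ (one $\gamma^{-2}$ from $D_z^2\Gamma^\ast_\gamma$ and one more $\gamma^{-1}$ either from $\|\Gamma^\ast_\gamma\|_{L^\infty_z}$ for $\mathcal{H}^2$ or from $\Gamma^\ast_\gamma w_z\lesssim\gamma^{-1}w^\ast_z$ for $\mathcal{H}^{2,\ast}$), yielding $\alpha\gamma^{-3}\approx\alpha\beta^{-3}$.

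One small imprecision: your remark that the $\sqrt{\beta/\alpha}$ mixed-norm cost ``never actually occurs on the undifferentiated factor $F^\ast_\gamma$'' is not quite accurate. In, e.g., the Leibniz term $f\cdot D_\theta^2 F^\ast_\gamma$ with the $w^\lambda$ weight, the $\tfrac{\alpha^2}{\gamma^2}\Gamma_{\theta,\gamma}$ part of $|D_\theta^2\Gamma_{\theta,\gamma}|$ still forces you to split the angular $L^2$ norm (putting $\Gamma_{\theta,\gamma}w_\theta^\lambda$ in $L^2_\theta$ and $f$ in $L^2_zL^\infty_\theta$, exactly as in \eqref{7f5}), since $\|f w^{\ast,\lambda}\|_{L^2}$ is not a member of $\|f\|_{\mathcal{H}^{2,\ast}}$; the $\sqrt{\beta/\alpha}\cdot\sqrt{\gamma/\alpha}\approx\beta/\alpha$ cost is offset by the extra $\alpha/\gamma$ that every $D_\theta$ hit on $\Gamma_{\theta,\gamma}$ provides, so the mechanism is neutral for the $\gamma$-count rather than absent. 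What \emph{is} true, and what you may have been aiming at, is that whenever $F^\ast_\gamma$ itself is left undifferentiated in a Leibniz term — so that all $\theta$-derivatives sit on $f$ — no mixed-norm splitting is needed, because the maximally differentiated $f$-factor already carries the right weight. Either way, the estimate closes and the conclusion $\|fF^\ast_\gamma\|_{\mathcal{H}^2}\le C\alpha\beta^{-3}\|f\|$ follows.
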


Based on Lemmas \ref{8pr1}-\ref{8pr2}, we now proceed to deal with $R_2$ by using the product rules.
\begin{prop}\label{8R1}
Let $L^{-1}_{z,K}(g)(0)=0$. There exist constants $\alpha>0$ sufficiently small and $\eta(\beta)$, such that if $\alpha\ll 1-\eta\ll\beta$ and $|\mu|\leq \alpha^\frac{1}{2}$, then 
\begin{align}
	|\langle R_2, g(w^K)^2\rangle|
	&\leq C_\beta(1-\eta)^{-1}(\alpha^{-1}\|g\|^3_{\mathcal{H}^2}
	+\alpha^{\frac{1}{2}}\|g\|^2_{\mathcal{H}^2}
	+\alpha^2\|g\|_{\mathcal{H}^2}),
	\label{est-R2-H-1}\\
	|\langle R_2, g\rangle_{\mathcal{H}_{\eta}^2}|
	&\leq C_\beta(1-\eta)^{-3}(\alpha^{-1}\|g\|^3_{\mathcal{H}^2}
	+\alpha^\frac{1}{2}\|g\|^2_{\mathcal{H}^2}
	+\alpha^2\|g\|_{\mathcal{H}^2}),
	\label{est-R2-H2eta}\\
	|\langle R_2, g\rangle_{\mathcal{E}_{\eta}^2}|
	&\leq C_\beta(\alpha^{-1}\|g\|_{\mathcal{H}^2}\|g\|^2_{\mathcal{E}^2} +\alpha^{\frac{1}{2}}\|g\|^2_{\mathcal{E}^2}+\alpha^{\frac{3}{2}}\|g\|_{\mathcal{E}^2}).
	\label{est-R2-E2eta}
\end{align}
\end{prop}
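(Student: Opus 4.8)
\textbf{Proof proposal for Proposition \ref{8R1}.}

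The plan is to expand $R_2$ according to its definition \eqref{def-R2-1},
\[
R_2 = \frac{3}{2\alpha}L^{-1}_{z,K}(g)g-\frac{3}{2}(\sin\theta)^2\langle F,K\rangle_{\theta}F + \mathcal{R}(\Phi-\Phi_{\text{main}})F,
\]
and treat the three groups of terms separately, using the product rules from Lemma \ref{8pr1} and Lemma \ref{8pr2} together with the elliptic estimates of Section \ref{sec:Elli} and the embedding inequalities of Corollary \ref{7em2}. For the first term $\frac{3}{2\alpha}L^{-1}_{z,K}(g)g$, since $L^{-1}_{z,K}(g)$ is a function of $z$ alone with $L^{-1}_{z,K}(g)(0)=0$, I would use Lemma \ref{7ha1}-type bounds (or directly Lemma \ref{5ha1}) to control $\|D^k_z L^{-1}_{z,K}(g)w_z\|_{L^2_z}$ by $\alpha^{-1}\|g\|_{\mathcal{H}^2}$, and then multiply against $g$ using the second inequality in Lemma \ref{8pr1} (with $f$ replaced by the $\theta$-independent factor, handled more easily). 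This yields the cubic term $\alpha^{-1}\|g\|^3_{\mathcal{H}^2}$. For the second term $\frac{3}{2}(\sin\theta)^2\langle F,K\rangle_{\theta}F$, I would split $F=F^\ast_\gamma+g$ and use the bilinearity: the $g\otimes g$ piece gives $\sqrt{\beta/\alpha}$-type cubic contributions via Lemma \ref{8pr1}, the $F^\ast_\gamma\otimes g$ pieces give quadratic contributions bounded using Lemma \ref{7fm1} (so $\|F^\ast_\gamma\|_{\mathcal{H}^{k,\ast}}\lesssim_\beta \alpha(1-\eta)^{-1/2}$) combined with Lemma \ref{8pr2}, and the $F^\ast_\gamma\otimes F^\ast_\gamma$ piece gives the linear-in-$g$ contribution of size $\alpha^2(1-\eta)^{-1}\|g\|_{\mathcal{H}^2}$; the factor $(\sin\theta)^2$ only helps.

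The third term $\mathcal{R}(\Phi-\Phi_{\text{main}})F$ is the delicate one. Recalling the decomposition $\Phi-\Phi_{\text{main}} = \widetilde{G}\sin(2\theta)+\widetilde{\Phi}$ from \eqref{decom-Phi} and further splitting each of $\widetilde{G}$, $\widetilde{\Phi}$ according to $F=F^\ast_\gamma+g$ as in \eqref{decom-Phi-1}, the stretching operator $\mathcal{R}(\cdot)=V(\cdot)-\tan\theta\, U(\cdot)$ acting on these corrections must be estimated in $\mathcal{H}^2$ (and $\mathcal{H}^{2,\ast}$ for the $F^\ast_\gamma$-parts). For the $\widetilde{\Phi}$-pieces I would invoke Corollary \ref{6cor1} and Corollary \ref{6cor2} (elliptic estimates in $\mathcal{H}^n$ and $\mathcal{H}^{n,\ast}$) together with Proposition \ref{6prop0} to control $\|\frac{U(\widetilde{\Phi})}{\sin(2\theta)}\|_{\mathcal{H}^2}$ and $\|V(\widetilde{\Phi})\|_{\mathcal{H}^2}$ — crucially here the null-structure-informed weights make $\mathcal{R}(\widetilde{\Phi})$ of the same size as $\widetilde{F}$, gaining no $\alpha^{-1}$ loss beyond what the elliptic estimates give. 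For the $\widetilde{G}$-pieces, which depend only on $z$, I would use Lemma \ref{7ha1}. Then multiplying against $F=F^\ast_\gamma+g$ via Lemma \ref{8pr1} (for the $g$-factor) or Lemma \ref{8pr2}/Lemma \ref{7fm1} (for the $F^\ast_\gamma$-factor) produces the claimed cubic, quadratic, and linear contributions. The $\mathcal{H}^2_\eta$ and $\mathcal{E}^2_\eta$ versions follow the same scheme, picking up the extra powers $(1-\eta)^{-2}$ (resp. $(1-\eta)^{-3}$) from the definition \eqref{def_norm_H2eta} of the $\mathcal{H}^2_\eta$-norm, and for $\mathcal{E}^2_\eta$ using Corollary \ref{6cor3}/Proposition \ref{6prop6-19} for the radial second-derivative pieces with the heavier weight $w^\lambda$.

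I expect the main obstacle to be the bookkeeping of where the $\alpha^{-1}$ losses occur and ensuring they land only on the cubic term $\alpha^{-1}\|g\|^3_{\mathcal{H}^2}$ and on the $\mathcal{E}^2$-valued estimate $\alpha^{-1}\|g\|_{\mathcal{H}^2}\|g\|^2_{\mathcal{E}^2}$, never on the quadratic or linear terms (which carry $\alpha^{1/2}$ and $\alpha^{3/2}$ respectively). Concretely, the dangerous factor is $\|U(G^\ast_g)\|_{L^\infty}\lesssim\alpha^{-1}\|g\|_{\mathcal{H}^2}$ (cf. \eqref{7g10}) and its derivatives; since $G^\ast_g = \frac{3}{4\alpha}L^{-1}_{z,K}(g)$ only appears in $\mathcal{R}(\Phi)F$ through $\Phi-\Phi_{\text{main}}$, one must carefully confirm that the $\Phi_{\text{main}}$ subtraction removes precisely the most singular contribution, leaving the corrections $\widetilde{G}_g$, $\widetilde{\Phi}_g$ whose $L^\infty$-bounds cost at worst $\alpha^{-1}$ (via Lemma \ref{7ha1} and Corollary \ref{6cor1}) — and that this $\alpha^{-1}$ is always paired with at least two more factors of $g$-type norms, so the term is genuinely cubic. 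A secondary technical point is that the $\mathcal{E}^2$-norm only controls $\|fw^\lambda\|_{L^2}$ and $\|D^2_z f w^\lambda\|_{L^2}$ (not the full $\mathcal{H}^2$), so in the $\mathcal{E}^2_\eta$-estimate one must route mixed derivatives through the $\mathcal{H}^2$-norm of the other factor, exactly as in the proof of Proposition \ref{7Tg}, Step 3; I would model the $\mathcal{E}^2_\eta$ computation closely on that argument.
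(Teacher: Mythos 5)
Your proposal follows essentially the same approach as the paper's proof: the same three-way decomposition of $R_2$, the same splitting of $F=F^\ast_\gamma+g$ and of $\Phi-\Phi_{\text{main}}$ into $\tilde G$-pieces and $\tilde\Phi$-pieces, the same reliance on Lemmas \ref{7ha1}, \ref{8pr1}, \ref{8pr2}, \ref{7fm1}, and Corollaries \ref{6cor1}--\ref{6cor3}, and the same modeling of the $\mathcal{E}^2_\eta$ case on Proposition \ref{7Tg}. Your bookkeeping of where the $\alpha^{-1}$ losses may fall (only on the cubic-in-$g$ terms) is exactly the point the paper's term-by-term estimates $\bar{\mathcal{I}}_1,\dots,\bar{\mathcal{I}}_6$ and $\bar{\mathcal{J}}_1,\bar{\mathcal{J}}_2,\bar{\mathcal{J}}_3$ are designed to verify.
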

\begin{proof}
This proof will be divided into three steps.

\medskip

\noindent
\textbf{Step 1: Estimate for $\mathcal{H}^{-1}$-norm.} 

\medskip

\noindent
Recalling the definition of $R_2$ in \eqref{def-R2-1} and $F=F^\ast_\gamma+g$, we can decompose the $\mathcal{H}^{-1}$-norm as:
\begin{align}\label{est-bmI-1}
	\langle R_2, g(w^K)^2\rangle
	&=
	\frac{3}{2\alpha}\langle L^{-1}_{z,K}(g)g, g(w^K)^2\rangle
	-\frac{3}{2}\langle \sin^2(\theta)\langle F,K\rangle_{\theta}F, g(w^K)^2\rangle
	\\
	&~~~
	+\langle \mathcal{R}(\Phi_g-G^\ast_{g}\sin(2\theta))g, g(w^K)^2\rangle
	+\langle \mathcal{R}(\Phi_{F^\ast_\gamma}-G^\ast_{F^\ast_\gamma}\sin(2\theta))g, g(w^K)^2\rangle
	\notag \\
	&~~~
	+\langle \mathcal{R}(\Phi_g-G^\ast_{g}\sin(2\theta))F^\ast_\gamma, g(w^K)^2\rangle
	+\langle \mathcal{R}(\Phi_{F^\ast_\gamma}-G^\ast_{F^\ast_\gamma}\sin(2\theta))F^\ast_\gamma, g(w^K)^2\rangle
	\notag \\
	&\tri \sum_{i=1}^{6}\bar{\mathcal{I}}_i,
	\notag
\end{align}
where for $f$ being $g$ or $F^\ast_\gamma$, the notation $\Phi_f$ is defined as $\Phi_f\tri\tilde{\Phi}_f+\tilde{G}_{f}\sin(2\theta)+{G}_{f}^\ast\sin(2\theta)$.
We now estimate $\bar{\mathcal{I}}_i$ $(i=1,\cdots,6)$ term by term.
Firstly, since $L^{-1}_{z,K}(g)$ is independent of $\theta$, we then deduce from Lemma \ref{7em1} that
\begin{align}\label{8in6-1}
	|\bar{\mathcal{I}}_1|
	\lesssim \alpha^{-1}\|L^{-1}_{z,K}(g)g\|_{\mathcal{H}^2}\|g\|_{\mathcal{H}^2}
	\lesssim \alpha^{-1}\sum_{i=1}^2\|D^i_z L^{-1}_{z,K}(g)w_z\|_{L^2_z}\|g\|^2_{\mathcal{H}^2}
	\lesssim \alpha^{-1}\|g\|^3_{\mathcal{H}^2}.
\end{align}
By applying Lemmas \ref{7fm1}, \ref{8pr1} and \ref{8pr2}, we find that $\bar{\mathcal{I}}_2$ can be controlled by 
\begin{align}\label{8in5-1}
	|\bar{\mathcal{I}}_2|
	&\lesssim_{\beta} \alpha^{-\frac12}\|g\|^2_{\mathcal{H}^2}\left(\|\sin^2(\theta)\langle g,K\rangle_{\theta}\|_{\mathcal{H}^2}+\|\sin^2(\theta)\langle F^\ast_\gamma,K\rangle_{\theta}\|_{\mathcal{H}^{3,\ast}}\right)
	\\ \notag
	&~~~~+\alpha\|g\|_{\mathcal{H}^2}\left(\|\sin^2(\theta)\langle g,K\rangle_{\theta}\|_{\mathcal{H}^2}+\|\sin^2(\theta)\langle F^\ast_\gamma,K\rangle_{\theta}\|_{\mathcal{H}^{2,\ast}}\right)
	\\ \notag
	&\lesssim_{\beta} (1-\eta)^{-1}
	\left(\alpha^{-\frac12}\|g\|^3_{\mathcal{H}^2}
	+\alpha^\frac{1}{2}\|g\|^2_{\mathcal{H}^2}
	+\alpha^2\|g\|_{\mathcal{H}^2}\right),
\end{align}
where we have used the fact that
\begin{align}\label{8est-sinF}
	\|\sin^2(\theta)\langle F^\ast_\gamma,K\rangle_{\theta}\|_{\mathcal{H}^{k,\ast}}
	\lesssim_{\beta} \alpha (1-\eta)^{-1}.
\end{align}
We employ Corollary \ref{6cor1} and Lemma \ref{7ha1}, to obtain that
\begin{align}\label{est-mRg-1}
	\|\mathcal{R}(\tilde{\Phi}_g)\|_{\mathcal{H}^2}
	+\|\mathcal{R}(\tilde{G}_{g}\sin(2\theta))\|_{\mathcal{H}^2}
	\lesssim_{\beta} \|g\|_{\mathcal{H}^2}.
\end{align}
This estimate, together with Lemma \ref{8pr1}, yields directly that
\begin{align}\label{8in1-1}
	|\bar{\mathcal{I}}_3|\lesssim  \alpha^{-\frac12} (\|\mathcal{R}(\tilde{\Phi}_g)\|_{\mathcal{H}^2}+\|\mathcal{R}(\tilde{G}_{g}\sin(2\theta))\|_{\mathcal{H}^2})\|g\|^2_{\mathcal{H}^2}
	\lesssim_\beta  \alpha^{-\frac12} \|g\|^3_{\mathcal{H}^2}.
\end{align}
With the help of Lemma \ref{7fm1}, Corollary \ref{6cor2} and Lemma \ref{7ha1}, we achieve that
\begin{align}\label{est-mRF-1}
	\|\mathcal{R}(\tilde{\Phi}_{F^\ast_\gamma})\|_{\mathcal{H}^{3,\ast}}
	+\|\mathcal{R}(\tilde{G}_{F^\ast_\gamma}\sin(2\theta))\|_{\mathcal{H}^{3,\ast}}
	\lesssim_{\beta} \|F^\ast_\gamma\|_{\mathcal{H}^{3,\ast}}
	\lesssim_{\beta} \alpha (1-\eta)^{-\frac12}.
\end{align}
Combining the estimate above and Lemma \ref{8pr1} together, it is enough to prove that
\begin{align}\label{8in2-1}
	|\bar{\mathcal{I}}_4|
	&\lesssim \alpha^{-\frac12}(\|\mathcal{R}(\tilde{\Phi}_{F^\ast_\gamma})\|_{\mathcal{H}^{3,\ast}}+\|\mathcal{R}(\tilde{G}_{F^\ast_\gamma}\sin(2\theta))\|_{\mathcal{H}^{3,\ast}})\|g\|^2_{\mathcal{H}^2}
	\lesssim_\beta \alpha^{\frac{1}{2}} (1-\eta)^{-\frac{1}{2}}\|g\|^2_{\mathcal{H}^2}.
\end{align}
The application of Lemma \ref{8pr2} and \eqref{est-mRg-1}, gives that
\begin{align}\label{8in3-1}
	|\bar{\mathcal{I}}_5|
	&\lesssim \alpha (\|\mathcal{R}(\tilde{\Phi}_g)\|_{\mathcal{H}^2}+\|\mathcal{R}(\tilde{G}_{g}\sin(2\theta))\|_{\mathcal{H}^2})\|g\|_{\mathcal{H}^2}
	\lesssim_\beta \alpha\|g\|^2_{\mathcal{H}^2}.
\end{align}
By virtue of Lemma \ref{8pr2} and \eqref{est-mRF-1}, one gets that
\begin{align}\label{8in4-1}
	|\bar{\mathcal{I}}_6|
	&\lesssim \alpha (\|\mathcal{R}(\tilde{\Phi}_{F^\ast_\gamma})\|_{\mathcal{H}^{2,\ast}}+\|\mathcal{R}(\tilde{G}_{F^\ast_\gamma}\sin(2\theta))\|_{\mathcal{H}^{2,\ast}})\|g\|_{\mathcal{H}^2}
	\lesssim_\beta (1-\eta)^{-\frac{1}{2}}\alpha^{2}\|g\|_{\mathcal{H}^2}.
\end{align}
Substituting \eqref{8in6-1}-\eqref{8in4-1} into \eqref{est-bmI-1}, we finally obtain \eqref{est-R2-H-1}.

\medskip

\noindent
\textbf{Step 2: Estimate for $\mathcal{H}^{2}_{\eta}$-norm.} 
\medskip

\noindent
It is easy to check that
\begin{align*}
	\|f\|_{\mathcal{H}^2_{\eta}}\lesssim (1-\eta)^{-1} \|f\|_{\mathcal{H}^2}.
\end{align*}
We employ the estimate above and the definition of $R_2$ in \eqref{def-R2-1}, then use a similar derivation as $\bar{\mathcal{I}_i}$ $(i=1,\cdots,6)$ in \eqref{8in6-1}-\eqref{8in4-1}, finally deduce \eqref{est-R2-H2eta} immediately.

\medskip

\noindent
\textbf{Step 3: Estimate for $\mathcal{E}^{2}_{\eta}$-norm.} 
\medskip

\noindent
We remember the definition of $R_2$ in \eqref{def-R2-1} and $F=F^\ast_\gamma+g$, to decompose the $\mathcal{E}^{2}_{\eta}$-norm as follows:
\begin{align}\label{est-bmJ-1}
	\langle R_2, g(w^K)^2\rangle
	&=
	\frac{3}{2\alpha}\langle L^{-1}_{z,K}(g)g, g\rangle_{\mathcal{E}_{\eta}^2}
	-\frac{3}{2}\langle \sin^2(\theta)\langle F,K\rangle_{\theta}F, g\rangle_{\mathcal{E}_{\eta}^2}
	+\langle \mathcal{R}(\Phi-G^\ast_{F}\sin(2\theta))F, g\rangle_{\mathcal{E}_{\eta}^2}
	\\
	&\tri \sum_{i=1}^{3}\bar{\mathcal{J}}_i,
	\notag
\end{align}
We now estimate $\bar{\mathcal{J}}_i$ $(i=1,2,3)$ in turn.
At first, we apply Lemma \ref{7em1}, to discover that
\begin{align}\label{8in10}
	|\bar{\mathcal{J}}_1| 
	\lesssim  \alpha^{-1}\|L^{-1}_{z,K}(g)g\|_{\mathcal{E}^2}\|g\|_{\mathcal{E}^2}
	\lesssim \alpha^{-1}\sum_{i=1}^2\|D^i_z L^{-1}_{z,K}(g)w_z\|_{L^2} \|g\|^2_{\mathcal{E}^2}
	\lesssim \alpha^{-1}\|g\|_{\mathcal{H}^2}\|g\|^2_{\mathcal{E}^2}.
\end{align}
Again thanks to Lemma \ref{7em1}, it then follows from Lemma \ref{8pr1} and Lemma \ref{8pr2} that
\begin{align}\label{8in9}
	|\bar{\mathcal{J}}_2| 
	&\lesssim_{\beta} \alpha^{-\frac12}\|g\|^2_{\mathcal{E}^2}
	\bigg(\sum_{i=0}^2\|\langle D^i_z g,K\rangle_{\theta}w_z\|_{L^2}+\sum_{j=0}^3\|\langle D^j_z F^\ast_\gamma,K\rangle_{\theta}w^\ast_z\|_{L^2}\bigg)\\ \notag
	&~~~~+\alpha \|g\|_{\mathcal{E}^2}\sum_{i=0}^2\left(\|\langle D^i_z g,K\rangle_{\theta}w_z\|_{L^2}+\|\langle D^i_z F^\ast_\gamma,K\rangle_{\theta}w^\ast_z\|_{L^2}\right)\\ \notag
	&\lesssim_{\beta} \alpha^{-\frac12}\|g\|^2_{\mathcal{E}^2}\|g\|_{\mathcal{H}^2}+\alpha^\frac{1}{2}\|g\|^2_{\mathcal{E}^2}+\alpha^2\|g\|_{\mathcal{E}^2}.
\end{align}
With the help of \eqref{est-mRg-1}, \eqref{est-mRF-1}, Corollaries \ref{6cor2}, \ref{6cor3} and Lemmas \ref{7em1}, \ref{7ha1}, \ref{8pr1}, \ref{8pr2}, one obtains that
\begin{align}\label{8in8}
	|\bar{\mathcal{J}}_3| 
	&\lesssim_{\beta} \alpha^{-\frac12} \|\mathcal{R}(\Phi_g-G^\ast_{g}\sin(2\theta))\|_{\mathcal{H}^2}\|g\|^2_{\mathcal{E}^2}
	+\alpha^{-\frac12}\|\mathcal{R}(\Phi_g-G^\ast_{g}\sin(2\theta))\|_{\mathcal{E}^2}\|g\|_{\mathcal{H}^2}\|g\|_{\mathcal{E}^2} 
	\\ \notag
	&~~~+\alpha^{-\frac12}(1-\eta)^2\|\mathcal{R}(\Phi_{F^\ast_\gamma}-G^\ast_{F^\ast_\gamma}\sin(2\theta))\|_{\mathcal{H}^{4,\ast}}\|g\|^2_{\mathcal{E}^2}
	+\alpha \|g\|_{\mathcal{E}^2}\Big(\|\mathcal{R}(\Phi_g-G^\ast_{g}\sin(2\theta))\|_{\mathcal{E}^2}
	\\ \notag
	&~~~
	+\alpha^{\frac12}(1-\eta)^2 \sum_{i=0}^2\|D^i_z\mathcal{R}(\Phi_{F^\ast_\gamma}-G^\ast_{F^\ast_\gamma}\sin(2\theta))w^\ast_z\|_{L^{2}_z L^\infty_\theta}\|\Gamma_{\theta,\gamma}w_\theta^\lambda\|_{L^2_\theta}\Big)
	\\ \notag
	&\lesssim_\beta \alpha^{-\frac12}\|g\|_{\mathcal{H}^2}\|g\|^2_{\mathcal{E}^2}
	+\alpha^\frac{1}{2}\|g\|^2_{\mathcal{E}^2}
	+\alpha^{\frac{3}{2}}\|g\|_{\mathcal{E}^2},
\end{align}
where we have used the fact that 
\begin{align*}
	\sum_{i=0}^2\|D^i_z\mathcal{R}(\Phi_{F^\ast_\gamma}-G^\ast_{F^\ast_\gamma}\sin(2\theta))w^\ast_z\|_{L^{2}_z L^\infty_\theta}
	&\lesssim \alpha^{-\frac12}\|D^i_z\mathcal{R}(\Phi_{F^\ast_\gamma}-G^\ast_{F^\ast_\gamma}\sin(2\theta))w^\ast_z\|_{\mathcal{H}^{3,\ast}}
	\\
	&\lesssim \alpha^{-\frac12}\|F^\ast_\gamma\|_{\mathcal{H}^{3,\ast}}
	\lesssim \alpha^{\frac12}(1-\eta)^{-\frac12}.
\end{align*}
Plugging \eqref{8in10}-\eqref{8in8} into \eqref{est-bmJ-1}, we conclude that \eqref{est-R2-E2eta} holds.

\end{proof}

It remains to address $R_0$ and $R_1$ in the first equation of \eqref{8eq1}.
Noting that $\mu$ is present in both terms, we first derive the following lemma concerning the estimation of $\mu$.

\begin{prop}\label{8R2}
Let $L^{-1}_{z,K}(g)(0)=0$. There exist constants $\alpha>0$ sufficiently small and $\eta(\beta)$, such that if $\alpha\ll 1-\eta\ll \beta$ and $|\mu|\leq \alpha^\frac{1}{2}$, then
\begin{align}
	|\mu|&\leq C_\beta\alpha^{-1}(1-\eta)\|g\|_{\mathcal{H}^2_\eta}+C_\beta(1-\eta)^{-\frac{1}{2}}\left(\alpha^{-2}\|g\|^2_{\mathcal{H}^2}+\alpha \right). \label{est-mu-1}
\end{align}
\end{prop}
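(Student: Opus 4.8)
The plan is to estimate $\mu$ directly from its defining relation $\mu=\frac{3}{4\alpha}L^{-1}_{z,K}(R_2-T)(0)$ in $\eqref{8eq1}_3$, splitting the transport term as $T=T_g+T_{F^\ast_{\gamma}}$ and expanding $R_2$ according to \eqref{def-R2-1} with $F=F^\ast_{\gamma}+g$. The first ingredient is a general bound for the linear functional $f\mapsto L^{-1}_{z,K}(f)(0)=\int_0^\infty\langle f(\rho,\cdot),K\rangle_\theta\,\frac{d\rho}{\rho}$: using $\langle f,K\rangle_\theta=-D_zL^{-1}_{z,K}(f)$ where convenient, the Hardy inequality Lemma \ref{5ha1}, Cauchy--Schwarz against a suitably chosen radial weight, and the mixed embeddings of Lemma \ref{7em1} and Corollary \ref{7em2}, one reduces $|L^{-1}_{z,K}(f)(0)|$ to weighted $L^2$ norms of the components of $f$. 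Convergence of the $\rho$-integral near $0$ and $\infty$, and vanishing of all boundary terms in the integrations by parts below, are guaranteed by the $\widetilde{o}(\Gamma^\ast_\beta)$ behaviour of $T$ and $R_2$ established in Section \ref{sec:analysis}.

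For the transport contribution I would re-run the null-structure computation underlying Lemma \ref{7div1}. Starting from $T_f=(\cos\theta)^{-1}\partial_\theta(\cos\theta\,U(\Phi)f)+\alpha z^{-3/\alpha}D_z(z^{3/\alpha}V(\Phi)f)$, one angular integration by parts (using $K(\cos\theta)^{-1}=\tfrac12\sin(2\theta)$) and one radial integration by parts against $d\rho/\rho$ give
\begin{align*}
L^{-1}_{z,K}(T_f)(0)=-\int_0^\infty\big\langle\cos(2\theta)(\cos\theta)\,U(\Phi)f,\,1\big\rangle_\theta\frac{d\rho}{\rho}+3\int_0^\infty\big\langle V(\Phi)f,\,K\big\rangle_\theta\frac{d\rho}{\rho}.
\end{align*}
Writing $\Phi=\Phi_{\mathrm{main}}+(\Phi-\Phi_{\mathrm{main}})$ and taking $f\in\{F^\ast_{\gamma},g\}$, the dangerous pieces are those carrying the $\alpha^{-1}$-singular factors $U(\Phi_{\mathrm{main}})$ and $V(\Phi_{\mathrm{main}})$; these are exactly the combinations shown to cancel at leading order in Section \ref{sec:analysis} (via the choice $\Gamma_{\theta,\gamma}=(K(\theta))^{\alpha/3\gamma}$ and the fundamental equation \eqref{3eq2}), so after that cancellation the remaining pieces are controlled by Lemmas \ref{7fm1}--\ref{7fm3}, the elliptic estimates of Section \ref{sec:Elli}, and the product rules Lemmas \ref{8pr1}--\ref{8pr2}. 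This yields the linear term $C_\beta\alpha^{-1}(1-\eta)\|g\|_{\mathcal{H}^2_\eta}$ from $T_g$, the quadratic remainder $C_\beta(1-\eta)^{-1/2}\alpha^{-2}\|g\|^2_{\mathcal{H}^2}$, and the constant $C_\beta(1-\eta)^{-1/2}\alpha$ from $T_{F^\ast_{\gamma}}$ (the power $(1-\eta)^{-1/2}$ coming from Lemma \ref{7fm1}).

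For $R_2$ the three summands are treated separately. The leading one gives $\frac{3}{4\alpha}\big|L^{-1}_{z,K}\big(\tfrac{3}{2\alpha}L^{-1}_{z,K}(g)g\big)(0)\big|\lesssim\alpha^{-2}\|L^{-1}_{z,K}(g)\|_{L^\infty_z}\,\|\langle g,K\rangle_\theta w_z\|_{L^2_z}\lesssim\alpha^{-2}\|g\|^2_{\mathcal{H}^2}$ by Lemmas \ref{5ha1} and \ref{7em1}; the term $-\tfrac32(\sin\theta)^2\langle F,K\rangle_\theta F$ and the term $\mathcal{R}(\Phi-\Phi_{\mathrm{main}})F$ are expanded in $F=F^\ast_{\gamma}+g$ and bounded using Lemmas \ref{8pr1}--\ref{8pr2} and the elliptic estimates, the pure-$g$ and mixed parts landing in $O(\alpha^{-2}\|g\|^2_{\mathcal{H}^2})$ (in fact smaller) and the pure-$F^\ast_{\gamma}$ parts in $O((1-\eta)^{-1/2}\alpha)$ via Lemma \ref{7fm1}. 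Collecting all contributions and absorbing constants under $\alpha\ll1-\eta\ll\beta$ yields \eqref{est-mu-1}.

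The main obstacle is the bookkeeping of the negative powers of $\alpha$ in the transport part: a naive estimate of $L^{-1}_{z,K}(T)(0)$ acquires one factor $\alpha^{-1}$ from $V(\Phi_{\mathrm{main}})\sim\alpha^{-1}L^{-1}_{z,K}(F)$ on top of the $\tfrac{3}{4\alpha}$ in the definition of $\mu$, which would give an inadmissible $\alpha^{-3}\|g\|$; one must genuinely exploit the exact cancellation between the $U$- and $V$-contributions in the displayed identity above (equivalently, quantify the $\widetilde{o}(\Gamma^\ast_\beta)$ mechanism of Section \ref{sec:analysis}) so that no surplus $\alpha^{-1}$ remains. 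A secondary, routine but indispensable, point is the justification of convergence of every $\rho$-integral and the vanishing of every boundary term, which rests on $g=\widetilde{o}(\Gamma^\ast_\beta)$ and $F^\ast_{\gamma}\in\mathcal{H}^{k,\ast}$.
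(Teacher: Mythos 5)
Your proposal follows essentially the same route as the paper. The paper likewise estimates $\mu=\frac{3}{4\alpha}L^{-1}_{z,K}(R_2-T)(0)$ by splitting $T$ into the piece $T^l$ driven by $\frac{3}{4\alpha}L^{-1}_{z,K}(F^{\ast}_\gamma)\sin(2\theta)$ and a perturbation $T^n$, simplifies $T^l$ exactly via \eqref{cal_U}--\eqref{cal-Fgamma} to obtain \eqref{sim-Tl} (where the $F^\ast_\gamma$-contribution is manifestly $O(\alpha F^\ast_\gamma)$), pairs against $K(\theta)z^{-1}$ using \eqref{est-Ktheta-1}, controls $T^n$ via the $L^\infty$ bounds \eqref{7g21}--\eqref{7g22}, and handles $R_2$ term by term using Lemma \ref{7em1}, Corollary \ref{7em2}, and Lemma \ref{5ha1}. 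Your derived integration-by-parts identity for $L^{-1}_{z,K}(T_f)(0)$ is a compact closed form of the pairing $\alpha^{-1}\langle T^l,Kz^{-1}\rangle$ the paper estimates directly; the arithmetic and the cancellation being invoked are the same.

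Two remarks. First, the paper obtains the leading quadratic piece of $R_2$ exactly as zero, $\bar{\mathcal{K}}_{21}=\tfrac{9}{16\alpha^2}\bigl(L^{-1}_{z,K}(g)(0)\bigr)^2=0$, by a second use of the hypothesis $L^{-1}_{z,K}(g)(0)=0$, whereas you bound it by $\alpha^{-2}\|g\|^2_{\mathcal{H}^2}$; this is also admissible, merely less sharp. Second, your ``main obstacle'' paragraph somewhat misdiagnoses the danger: no $\alpha^{-3}\|g\|$ arises in a naive count. Since $L^{-1}_{z,K}(F^\ast_\gamma)=\frac{2\alpha}{3}L^{-1}_z(\Gamma^\ast_\gamma)$ already carries a factor $\alpha$, the coefficients $U\bigl(\tfrac{3}{4\alpha}L^{-1}_{z,K}(F^\ast_\gamma)\sin(2\theta)\bigr)/\sin(2\theta)$ and $V\bigl(\tfrac{3}{4\alpha}L^{-1}_{z,K}(F^\ast_\gamma)\sin(2\theta)\bigr)$ are $O(1)$, not $O(\alpha^{-1})$, exactly as \eqref{cal_U}--\eqref{cal_V} make explicit; the only genuinely $\alpha^{-1}$-sized factor is $\tfrac{3}{2\alpha}L^{-1}_{z,K}(g)$, and its pairing with derivatives of $g$ yields the admissible $\alpha^{-2}\|g\|^2_{\mathcal{H}^2}$. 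The exact cancellation \eqref{cal-Fgamma} serves in the paper chiefly to exhibit $T^l_{F^\ast_\gamma}$ in the tidy form of \eqref{sim-Tl} with improved $z\to 0$ vanishing, not to avert a surplus power of $\alpha^{-1}$.
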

\begin{proof}
Recalling the definition in $\eqref{8eq1}_3$, we obtain that
\begin{align}\label{est-bk-1}
	\mu=- \frac{3}{4\alpha}L^{-1}_{z,K}(T)(0)
	+ \frac{3}{4\alpha}L^{-1}_{z,K}(R_2)(0)
	\tri \bar{\mathcal{K}}_1 +\bar{\mathcal{K}}_2.
\end{align}
In order to estimate $ \bar{\mathcal{K}}_1$, we decompose it as the following two terms:
\begin{align}\label{est-bK1-1}
	\bar{\mathcal{K}}_1 
	=- \frac{3}{4\alpha}L^{-1}_{z,K}(T^l)(0)
	- \frac{3}{4\alpha}L^{-1}_{z,K}(T^n)(0)
	\tri \bar{\mathcal{K}}_{11}+\bar{\mathcal{K}}_{12}, 
\end{align}
where $T^l$ and $T^n$ are defined by
\begin{align*}
	T^l&\tri\frac{U(\frac{3}{4\alpha}L^{-1}_{z,K}(F^{\ast}_\gamma)\sin(2\theta))}{\sin(2\theta)}D_{\theta}F+V\bigg(\frac{3}{4\alpha}L^{-1}_{z,K}(F^{\ast}_\gamma)\sin(2\theta)\bigg)\alpha D_{z}F,
	\\
	T^n&\tri\frac{U(\Phi-\frac{3}{4\alpha}L^{-1}_{z,K}(F^{\ast}_\gamma)\sin(2\theta))}{\sin(2\theta)}D_{\theta}F+V\bigg(\Phi-\frac{3}{4\alpha}L^{-1}_{z,K}(F^{\ast}_\gamma)\sin(2\theta)\bigg)\alpha D_{z}F.
\end{align*}
Applying \eqref{cal_U}, \eqref{cal_V}, \eqref{cal-Fgamma} and the fact that $F=F^\ast_\gamma+g$, one can simplify $T^l$ as follows:
\begin{align}\label{sim-Tl}
	T^l
	&=\frac12(-3L^{-1}_{z}(\Gamma^{\ast}_{\gamma})+\alpha\Gamma^{\ast}_{\gamma})D_{\theta}g+L^{-1}_{z}(\Gamma^{\ast}_{\gamma})\left(\cos(2\theta)-\sin^2(\theta)\right)\alpha D_{z}g \\
	&~~~+\frac{\alpha}{\gamma} \left(\frac{\alpha}{3}-\gamma\right) F^\ast_{\gamma} \Gamma^{\ast}_{\gamma} \left(\cos(2\theta)-\sin^2(\theta)\right).
	\notag
\end{align}
It is easy to verify that for $\xi=\eta$ or $\lambda$,
\begin{align}\label{est-Ktheta-1}
	\|K(\theta) z^{-1}(w^{\xi})^{-1}\|_{L^2}
	\lesssim 
	\|K(\theta) (w^{\xi}_{\theta})^{-1}\|_{L^2_{\theta}} \| z^{-1}w_z^{-1}\|_{L^2_z} \lesssim 1.
\end{align}
We employ \eqref{sim-Tl} and \eqref{est-Ktheta-1}, to discover that
\begin{align}\label{8in12}
	|\bar{\mathcal{K}}_{11}|
	& \lesssim \alpha^{-1}|\langle T^l,K(\theta)z^{-1}\rangle| 
	\\ \notag
	&\lesssim_{\beta} \alpha^{-1} \langle |D_\theta g|+\alpha|D_zg|, K(\theta)z^{-1}\rangle
	+
	\langle F^\ast_\gamma, K(\theta)z^{-1}\rangle 
	\\ \notag
	&\lesssim_{\beta} \alpha^{-1}  \|D_\theta g w^\lambda\|_{L^2}\|K(\theta) z^{-1}(w^{\lambda})^{-1} \|_{L^2}
	+\|D_z g w^\eta\|_{L^2}\|K(\theta) z^{-1}(w^{\eta})^{-1} \|_{L^2}
	\\ \notag
	&~~~~+\alpha \|K(\theta)\Gamma_{\theta,\gamma}\|_{L^2_\theta} \|\Gamma_{\gamma}^{\ast}z^{-1}\|_{L^2_z}
	\\ \notag
	&\lesssim_{\beta} \alpha^{-1} (1-\eta) \|g\|_{\mathcal{H}^2_\eta}+\|g\|_{\mathcal{H}^2}+\alpha.
\end{align}
Recalling \eqref{7g21} and \eqref{7g22} and \eqref{est-Ktheta-1}, it is enough to prove that
\begin{align}\label{8in13}
	|\bar{\mathcal{K}}_{12}|
	&\lesssim  \alpha^{-1}|\langle T^n,K(\theta)z^{-1}\rangle| 
	\\ \notag
	&\lesssim_\beta \alpha^{-1} (\alpha^{\frac{1}{2}}(1-\eta)^{-\frac{1}{2}}+\alpha^{-1} \|g\|_{\mathcal{H}^{2}}) \langle |D_\theta F|+\alpha|D_zF|, K(\theta)z^{-1} \rangle 
	\\ \notag
	&\lesssim_\beta \alpha^{-1} (\alpha^{\frac{1}{2}}(1-\eta)^{-\frac{1}{2}}+\alpha^{-1}\|g\|_{\mathcal{H}^{2}})(\|g\|_{\mathcal{H}^2}+\alpha^2) 
	\\ \notag
	&\lesssim_\beta \alpha^{-2} \|g\|^2_{\mathcal{H}^2}+\alpha^{-\frac{1}{2}}(1-\eta)^{-\frac{1}{2}}\|g\|_{\mathcal{H}^2}+\alpha,
\end{align}
where we have used the fact that $\alpha\ll 1-\eta\ll \beta$ and 
\begin{align*}
	\langle |D_\theta F^\ast_\gamma|+\alpha  |D_z F^\ast_\gamma|, K(\theta) z^{-1} \rangle \lesssim_{\beta} \alpha^2.
\end{align*}
Substituting \eqref{8in12} and \eqref{8in13} into \eqref{est-bK1-1}, we find that
\begin{align}\label{est-bk1-2}
	|\bar{\mathcal{K}}_{1}|
	\lesssim_{\beta} \alpha^{-1} (1-\eta) \|g\|_{\mathcal{H}^2_\eta}
	+ (1-\eta)^{-\frac12}\left(\alpha^{-2} \|g\|_{\mathcal{H}^2}^2+\alpha\right).
\end{align}
As for $\bar{\mathcal{K}}_{2}$, we deduce from the definition of $R_2$ in \eqref{def-R2-1} that
\begin{align}\label{est-bk2-1}
	\bar{\mathcal{K}}_{2} 
	&= \frac{9}{8\alpha^2} L^{-1}_{z,K}\big(L^{-1}_{z,K}(g)g\big)(0)
	-\frac{9}{8\alpha} L^{-1}_{z,K}\big((\sin\theta)^2\langle F,K\rangle_{\theta}F\big)(0)
	+\frac{3}{4\alpha} L^{-1}_{z,K}(\mathcal{R}(\Phi-G^\ast_{F}\sin(2\theta))F)(0)
	\\ \notag
	&
	\tri \bar{\mathcal{K}}_{21}+\bar{\mathcal{K}}_{22}+\bar{\mathcal{K}}_{23}.   
\end{align}
After some direct calculations, we have
\begin{align}\label{est-bk21-1}
	\bar{\mathcal{K}}_{21}
	=  -\frac{9}{8\alpha^2}\int_0^{\infty}L^{-1}_{z,K}(g)\partial_zL^{-1}_{z,K}(g)dz
	=  \frac{9}{16\alpha^2}\big(L^{-1}_{z,K}(g)(0)\big)^2
	=  0.
\end{align}
Along a similar way as \eqref{est-Ktheta-1}, we see that for $\xi=\eta$ or $\lambda$,
\begin{align*}
	\| K(\theta) (w^\xi_\theta)^{-1}\|_{L^2_\theta} \lesssim 1.
\end{align*}
This, along with Lemma \ref{7em1} and \eqref{est-Ktheta-1}, yields that
\begin{align}\label{8in15}
	|\bar{\mathcal{K}}_{22}|
	&\lesssim \alpha^{-1} \| \langle |F|, K(\theta) \rangle_{\theta} \|_{L^\infty_\theta} 
	\langle |F|,K(\theta)z^{-1} \rangle 
	\\ \notag
	&\lesssim   \alpha^{-1} \left(\|gw^\eta_{\theta}\|_{L^\infty_zL^2_\theta}\|K(\theta)(w^\eta_\theta)^{-1}\|_{L^2_\theta} +\alpha \|K(\theta) \Gamma_{\theta,\gamma}\|_{L^2_\theta} \|\Gamma^\ast_\gamma\|_{L^\infty_z} \right)
	\\ \notag
	&~~~\times \left( 
	\|gw^\eta\|_{L^2}\|K(\theta)(w^\eta)^{-1}\|_{L^2} +\alpha \|K(\theta) \Gamma_{\theta,\gamma}\|_{L^2_\theta} \|\Gamma^\ast_\gamma z^{-1}\|_{L^2_z} 
	\right)
	\\ \notag
	&\lesssim_\beta \alpha^{-1} (\frac{\alpha}{\beta}+\|g\|_{\mathcal{H}^{2}})^2
	\lesssim \alpha^{-1}  \|g\|^2_{\mathcal{H}^2}+ \|g\|_{\mathcal{H}^2}+\alpha.
\end{align}
We use the similar derivation as \eqref{7g21}, \eqref{7g22} and \eqref{8in12}, to get that
\begin{align*}
	&\|\mathcal{R}(\Phi-G^\ast_{F}\sin(2\theta))\|_{L^\infty} \lesssim_\beta
	\alpha^{-1} \|g\|_{\mathcal{H}^2}+ \alpha^{\frac12} (1-\eta)^{-\frac12} ,
	\\
	&\langle |\mathcal{R}(\Phi-G^\ast_{F}\sin(2\theta))|, K(\theta) z^{-1} \rangle \lesssim_{\beta}  \|g\|_{\mathcal{H}^2} +\alpha (1-\eta)^{-\frac12}.
\end{align*}
Applying these two estimates above and using the similar way as \eqref{8in12}, one obtains that
\begin{align}\label{8in14}
	|\bar{\mathcal{K}}_{23}|
	&\lesssim \alpha^{-1}  \|\mathcal{R}(\Phi-G^\ast_{F}\sin(2\theta))\|_{L^\infty} \langle |g|, K(\theta) z^{-1}\rangle 
	+\alpha^{-1} \|F^\ast_\gamma \|_{L^\infty} 
	\langle |\mathcal{R}(\Phi-G^\ast_{F}\sin(2\theta))|, K(\theta) z^{-1} \rangle
	\\ \notag
	&\lesssim_\beta \alpha^{-1} (\alpha^{\frac{1}{2}}(1-\eta)^{-\frac{1}{2}}+\alpha^{-1} \|g\|_{\mathcal{H}^{2}}) \|g\|_{\mathcal{H}^2}+\|g\|_{\mathcal{H}^2} +\alpha (1-\eta)^{-\frac12} 
	\\ \notag
	&\lesssim_\beta 
	\alpha^{-2}\|g\|^2_{\mathcal{H}^2}
	+\alpha^{-\frac{1}{2}}  (1-\eta)^{-\frac{1}{2}}\|g\|_{\mathcal{H}^2}
	+\alpha(1-\eta)^{-\frac{1}{2}}.
\end{align}
Plugging \eqref{est-bk21-1}, \eqref{8in15} and \eqref{8in14} into \eqref{est-bk2-1}, we achieve that
\begin{align}\label{est-bk2-2}
	|\bar{\mathcal{K}}_2| \lesssim_{\beta}   \alpha^{-2}\|g\|^2_{\mathcal{H}^2} +\alpha^{-\frac{1}{2}}  (1-\eta)^{-\frac{1}{2}}\|g\|_{\mathcal{H}^2}
	+\alpha(1-\eta)^{-\frac{1}{2}}.
\end{align}
We then insert \eqref{est-bk1-2} and \eqref{est-bk2-2} into \eqref{est-bk-1}, to conclude that \eqref{est-mu-1} holds.
\end{proof}

With the help of Proposition \ref{8R2}, we can give the following proposition concerning the estimates of $R_0$ and $R_1$.

\begin{prop}\label{8R0R1-1}
Let $L^{-1}_{z,K}(g)(0)=0$. There exist constants $\alpha>0$ sufficiently small and $\eta(\beta)$, such that if $\alpha\ll 1-\eta\ll \beta$ and $|\mu|\leq \alpha^\frac{1}{2}$, then
\begin{align}
	|\langle R_0+R_1, g(w^K)^2\rangle|
	&\leq 
	C_\beta (1-\eta) \|g\|_{\mathcal{H}^{-1}}\|g\|_{\mathcal{H}^2_{\eta}}
	\label{est-R0R1-H-1}\\
	&~~~+C_\beta(1-\eta)^{-\frac12}\left(
	\alpha^{-2}\|g\|^4_{\mathcal{H}^2}
	+\alpha^{-1}\|g\|^3_{\mathcal{H}^2}
	+\alpha\|g\|^2_{\mathcal{H}^2}
	+\alpha^2\|g\|_{\mathcal{H}^2}\right),
	\notag\\
	|\langle R_0+R_1, g\rangle_{\mathcal{H}_{\eta}^2}|
	&\leq 
	C_\beta (1-\eta)\|g\|_{\mathcal{H}^2_{\eta}}^2
	\label{est-R0R1-H2eta}\\
	&~~~+C_\beta(1-\eta)^{-\frac52}\left(
	\alpha^{-2}\|g\|^4_{\mathcal{H}^2}
	+\alpha^{-1}\|g\|^3_{\mathcal{H}^2}
	+\alpha\|g\|^2_{\mathcal{H}^2}
	+\alpha^2\|g\|_{\mathcal{H}^2}\right),
	\notag\\
	|\langle R_0+R_1, g\rangle_{\mathcal{E}_{\eta}^2}|
	&\leq C_\beta (1-\eta) \|g\|_{\mathcal{H}^2_{\eta}}\|g\|_{\mathcal{E}^2_{\eta}}
	+C_\beta (1-\eta)^{-\frac12}\left(
	\alpha^{-1}\|g\|^2_{\mathcal{H}^2}
	+\alpha^2\right)\|g\|_{\mathcal{E}^2}
	\label{est-R0R1-E2eta}\\
	&~~~+C_\beta\left(\alpha^{-1}\|g\|_{\mathcal{H}^2}+(1-\eta)^{-\frac12}(\alpha^{-2}\|g\|^2_{\mathcal{H}^2}+\alpha)\right)\|g\|^2_{\mathcal{E}^2}.
	\notag
\end{align}
\end{prop}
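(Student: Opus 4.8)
The plan is to treat $R_0$ and $R_1$ as genuine small perturbations by exploiting that each carries a factor of $\mu$, and to feed in the bound on $|\mu|$ from Proposition~\ref{8R2} only at the very end. I would organize the argument in three moves: (i) crude weighted inner-product estimates expressing $\langle R_0+R_1,\cdot\rangle$ as $|\mu|$ times squared norms of $g$; (ii) substitution of $|\mu|\lesssim_\beta \alpha^{-1}(1-\eta)\|g\|_{\mathcal{H}^2_\eta}+(1-\eta)^{-1/2}(\alpha^{-2}\|g\|^2_{\mathcal{H}^2}+\alpha)$; (iii) clean-up using the norm equivalences $\|g\|_{\mathcal{H}^{-1}}\lesssim\|g\|_{\mathcal{H}^2}\lesssim(1-\eta)^{-2}\|g\|_{\mathcal{H}^2_\eta}$, $\|g\|_{\mathcal{H}^2_\eta}\lesssim(1-\eta)^{-1}\|g\|_{\mathcal{H}^2}$, $\|g\|_{\mathcal{E}^2_\eta}\le\|g\|_{\mathcal{E}^2}$, and $(1-\eta)^{-1/2}\ge 1$.

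For step (i) I would use the explicit identity $R_0=-\tfrac{2\mu z^{1/\gamma}}{1+z^{1/\gamma}}F^\ast_\gamma$ from \eqref{4eq14}: the prefactor $c(z)=-2z^{1/\gamma}(1+z^{1/\gamma})^{-1}$ is uniformly bounded together with its $D_z$-derivatives and vanishes like $z^{1/\gamma}$ at the origin, so $c\,F^\ast_\gamma$ lies in $\mathcal{H}^2$ (resp.\ $\mathcal{H}^2_\eta$, $\mathcal{E}^2$), the extra $z^{1/\gamma}$-decay converting the $\ast$-weight of $F^\ast_\gamma$ into the ordinary weight; Lemma~\ref{7fm1} then gives $\|c\,F^\ast_\gamma\|_{\mathcal{H}^2}\lesssim_\beta\alpha(1-\eta)^{-1/2}$ and, combined with Corollary~\ref{6cor3} and Lemma~\ref{7ha1} exactly as in Proposition~\ref{7Tf}, $\|c\,F^\ast_\gamma\|_{\mathcal{E}^2}\lesssim_\beta\alpha$. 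For the $\mathcal{H}^{-1}$-pairing the essential gain is that one pairs against $w^K$, whose angular part $w^K_\theta=K^{(1-\alpha/(3\beta))/2}$ carries essentially unit power of $K$, so no $(1-\eta)^{-1}$-blow-up occurs in $\theta$ and $\|R_0 w^K\|_{L^2}\lesssim_\beta\alpha|\mu|$. For $R_1=-\mu(g+\beta D_z g)$ I would pair against the appropriate weighted copies of $g$ in each norm and integrate by parts once in $z$ to remove $D_z$ from $g$ (the weight derivatives producing only factors $C_\beta$, since $|D^\ast_z(w_z^2)/w_z^2|\le C_\beta$), landing on $|\langle R_1,g(w^K)^2\rangle|\lesssim_\beta|\mu|\,\|gw^K\|_{L^2}^2$, $|\langle R_1,g\rangle_{\mathcal{H}^2_\eta}|\lesssim_\beta|\mu|\,\|g\|_{\mathcal{H}^2_\eta}^2$, and $|\langle R_1,g\rangle_{\mathcal{E}^2_\eta}|\lesssim_\beta|\mu|\,\|g\|_{\mathcal{E}^2_\eta}^2$, where in the last case the top-order contribution $\langle D^2_z R_1,D^2_z g(w^\lambda)^2\rangle$ involves $D^3_z g$, brought back down to $\|D^2_z g\,w^\lambda\|^2$ by one further integration by parts.

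Combining, the $R_0$-part produces the advertised leading terms (the ones later absorbed by the coercivity of Proposition~\ref{5co2}): e.g.\ $\alpha|\mu|\|g\|_{\mathcal{H}^{-1}}$ with the above bound on $|\mu|$ yields exactly $C_\beta(1-\eta)\|g\|_{\mathcal{H}^{-1}}\|g\|_{\mathcal{H}^2_\eta}$ plus the cubic/quartic/quadratic/linear remainders of \eqref{est-R0R1-H-1}; likewise one reads off the first terms $C_\beta(1-\eta)\|g\|_{\mathcal{H}^2_\eta}^2$ and $C_\beta(1-\eta)\|g\|_{\mathcal{H}^2_\eta}\|g\|_{\mathcal{E}^2_\eta}$ in the other two estimates, the worsening to $(1-\eta)^{-5/2}$ in the $\mathcal{H}^2_\eta$ case coming from the $(1-\eta)^{-2}$ in the definition \eqref{def_norm_H2eta}, and the residual $C_\beta(\dots)\|g\|^2_{\mathcal{E}^2}$ group of \eqref{est-R0R1-E2eta} from the $R_1$-part after $\|g\|_{\mathcal{H}^2_\eta}\|g\|^2_{\mathcal{E}^2_\eta}\lesssim(1-\eta)^{-1}\|g\|_{\mathcal{H}^2}\|g\|^2_{\mathcal{E}^2}$. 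The $R_1$-part also produces the quartic $\alpha^{-2}\|g\|^4_{\mathcal{H}^2}$, which is precisely $|\mu|\sim\alpha^{-2}\|g\|^2_{\mathcal{H}^2}$ times $\|g\|^2_{\mathcal{H}^2}$.

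The only real difficulty here is the bookkeeping: there is essentially no slack in the quartic term $\alpha^{-2}\|g\|^4_{\mathcal{H}^2}$ (it is forced by the worst piece of Proposition~\ref{8R2}), and one must keep the leading terms $(1-\eta)\|g\|_{\mathcal{H}^{-1}}\|g\|_{\mathcal{H}^2_\eta}$, $(1-\eta)\|g\|^2_{\mathcal{H}^2_\eta}$ and $(1-\eta)\|g\|_{\mathcal{H}^2_\eta}\|g\|_{\mathcal{E}^2_\eta}$ in exactly the stated shape rather than estimating $\|g\|_{\mathcal{H}^2_\eta}$ or $\|g\|_{\mathcal{E}^2_\eta}$ away prematurely. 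All of this goes through provided the smallness hierarchy $\alpha\ll 1-\eta\ll\beta$ is used generously (in particular so that $\alpha^{1/2}\ll 1-\eta$); no analytic ingredient beyond Lemma~\ref{7fm1}, Lemma~\ref{5ha1}, the elliptic corollaries and product rules of Section~\ref{sec:Est-T}, and Proposition~\ref{8R2} is required.
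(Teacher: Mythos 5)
Your proposal is correct and follows the same route as the paper: split into $R_0$ and $R_1$, use the explicit form $R_0=-\tfrac{2\mu z^{1/\gamma}}{1+z^{1/\gamma}}F^\ast_\gamma$ together with weighted-norm bounds on $F^\ast_\gamma$ (Lemma~\ref{7fm1}), integrate by parts once in $z$ for $R_1$ (using that $|D_z w_z/w_z|\lesssim 1/\beta$), and only then substitute the $\mu$-estimate of Proposition~\ref{8R2}. The bookkeeping you describe — keeping the $(1-\eta)\|g\|_{\mathcal{H}^{-1}}\|g\|_{\mathcal{H}^2_\eta}$, $(1-\eta)\|g\|^2_{\mathcal{H}^2_\eta}$, $(1-\eta)\|g\|_{\mathcal{H}^2_\eta}\|g\|_{\mathcal{E}^2_\eta}$ terms in exactly that shape so they can be absorbed by the coercivity, and identifying the quartic as coming from $R_1$ with $|\mu|\sim\alpha^{-2}\|g\|^2_{\mathcal{H}^2}$ — reproduces the paper's argument.

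One point of imprecision worth flagging: you describe the factor $z^{1/\gamma}(1+z^{1/\gamma})^{-1}$ as ``converting the $\ast$-weight of $F^\ast_\gamma$ into the ordinary weight.'' This is not a pointwise dominance. Near $z=0$ the extra $z^{1/\gamma}\approx z^{1/\beta}$ indeed more than compensates the ratio $w_z/w^\ast_z\sim z^{-1/\beta+1/4}$; but near $z=\infty$ the factor tends to $1$ while $w_z/w^\ast_z\sim z^{1/4}\to\infty$, so $|c|w_z\not\lesssim w^\ast_z$ there. What makes $\|c\,F^\ast_\gamma w_z\|_{L^2_z}$ finite at infinity is the decay $\Gamma^\ast_\gamma\sim z^{-1/\gamma}$ together with $\gamma<2$ (so $2/\gamma>1$); one should verify the weighted $L^2$ integral directly, exactly as the paper implicitly does with the quantity $\|F^\ast_\gamma z^{1/\gamma}(1+z^{1/\gamma})^{-1}w^K\|_{L^2}$. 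The conclusion you draw ($\|c\,F^\ast_\gamma\|_{\mathcal{H}^2}\lesssim_\beta\alpha(1-\eta)^{-1/2}$, $\|c\,F^\ast_\gamma\|_{\mathcal{E}^2}\lesssim_\beta\alpha$) is still correct, so this is a flaw of exposition rather than of substance.
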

\begin{proof}
We apply Proposition \ref{8R2} and \eqref{4eq14}, to deduce that 
\begin{align}\label{8in16-1}
	|\langle R_0, g(w^K)^2\rangle|
	&\lesssim_{\beta} \big(
	\alpha^{-1} (1-\eta) \|g\|_{\mathcal{H}^2_\eta}
	+(1-\eta)^{-\frac12} \big( \alpha^{-2}\|g\|_{\mathcal{H}^2} +\alpha \big)
	\big) 
    \big\|F^{\ast}_{\gamma} \frac{z^{\frac{1}{\gamma}}}{(1+z^{\frac{1}{\gamma}})}w^{K}\big\|_{\mathcal{L}^2} \|g w^{K}\|_{\mathcal{L}^2}
	\\  \notag
	&\lesssim_\beta(1-\eta)\|g\|_{\mathcal{H}^2_\eta}\|g\|_{\mathcal{H}^{-1}}
	+(1-\eta)^{-\frac{1}{2}}(\alpha^{-1}\|g\|^3_{\mathcal{H}^2}+\alpha^2\|g\|_{\mathcal{H}^2}).
\end{align}
Integrating by parts and employing Proposition \ref{8R2}, we get that
\begin{align}\label{8in16-2}
	|\langle R_1, g(w^K)^2\rangle|
	&\lesssim_{\beta} \left(
	\alpha^{-1}\|g\|_{\mathcal{H}^2}
	+(1-\eta)^{-\frac12} \left( \alpha^{-2}\|g\|_{\mathcal{H}^2} +\alpha \right)
	\right) \|g w^{K}\|_{\mathcal{L}^2}^2
	\\  \notag
	&\lesssim_\beta \alpha^{-1}\|g\|_{\mathcal{H}^2}^3
	+(1-\eta)^{-\frac{1}{2}}(\alpha^{-2}\|g\|^4_{\mathcal{H}^2}+\alpha \|g\|_{\mathcal{H}^2}^2).
\end{align}
Collecting \eqref{8in16-1} and \eqref{8in16-2} together, we find that \eqref{est-R0R1-H-1} holds true.

Again thanks to Proposition \ref{8R2} and \eqref{4eq14}, we obtain that
\begin{align*}
	|\langle R_0, g\rangle_{\mathcal{H}_{\eta}^2}| 
	&\lesssim_{\beta} \left(
	\alpha^{-1} (1-\eta) \|g\|_{\mathcal{H}^2_\eta}
	+(1-\eta)^{-\frac12} \left( \alpha^{-2}\|g\|_{\mathcal{H}^2} +\alpha \right)
	\right) \|F^{\ast}_{\gamma}z^{\frac{1}{\gamma}}(1+z^{\frac{1}{\gamma}})^{-1}\|_{\mathcal{H}^2_\eta} \|g\|_{\mathcal{H}^2_\eta}
	\\  \notag
	&\lesssim_{\beta} (1-\eta)\|g\|^2_{\mathcal{H}^2_\eta}
	+(1-\eta)^{-\frac{5}{2}}(\alpha^{-1}\|g\|^3_{\mathcal{H}^2}+\alpha^2\|g\|_{\mathcal{H}^2}).
\end{align*}
Along similar way as \eqref{8in16-2}, we see that
\begin{align*}
	|\langle R_1, g\rangle_{\mathcal{H}_{\eta}^2}| 
	\lesssim_{\beta} (1-\eta)^{-2}\alpha^{-1}\|g\|^3_{\mathcal{H}^2}
	+(1-\eta)^{-\frac{5}{2}}(\alpha^{-2}\|g\|^4_{\mathcal{H}^2}+\alpha\|g\|_{\mathcal{H}^2}^2).
\end{align*}
The combination of two estimates above gives that \eqref{est-R0R1-H2eta} holds.

We then use the similar derivation, to get that
\begin{align}
	|\langle R_0, g\rangle_{\mathcal{E}_{\eta}^2}|
	\label{8in18-1}
	&\lesssim_{\beta} \left(
	\alpha^{-1} (1-\eta) \|g\|_{\mathcal{H}^2_\eta}
	+(1-\eta)^{-\frac12} \left( \alpha^{-2}\|g\|_{\mathcal{H}^2} +\alpha \right)
	\right) \|F^{\ast}_{\gamma}z^{\frac{1}{\gamma}}(1+z^{\frac{1}{\gamma}})^{-1}\|_{\mathcal{E}^2_\eta} \|g\|_{\mathcal{E}^2_\eta}
	\\  \notag
	&\lesssim_\beta ((1-\eta)\|g\|_{\mathcal{H}^2_\eta}\|g\|_{\mathcal{E}^2_\eta}+(1-\eta)^{-\frac{1}{2}}(\alpha^{-1}\|g\|^2_{\mathcal{H}^2}+\alpha^2)\|g\|_{\mathcal{E}^2}),
	\\
	|\langle R_1, g\rangle_{\mathcal{E}_{\eta}^2}|
	&
	\lesssim_\beta (\alpha^{-1}\|g\|_{\mathcal{H}^2}+(1-\eta)^{-\frac{1}{2}}(\alpha^{-2}\|g\|^2_{\mathcal{H}^2}+\alpha))\|g\|^2_{\mathcal{E}^2}.
	\label{8in18-2}
\end{align}
According to \eqref{8in18-1} and \eqref{8in18-2}, we obtain \eqref{est-R0R1-E2eta} immediately.
\end{proof}

Employing Propositions \ref{5co2}, \ref{7Tg}, \ref{7Tf}, \ref{8R1} and \ref{8R0R1-1}, we get the following proposition directly.
For simplicity, we omit its proof here.
\begin{prop}\label{8R3}
Let $L^{-1}_{z,K}(g)(0)=0$. There exist constants $\alpha>0$ sufficiently small and $\eta(\beta)$, such that if $\alpha\ll 1-\eta \ll \beta$ and $|\mu|\leq \alpha^\frac{1}{2}$, then 
\begin{align*}
	&\|g\|^2_{\mathcal{H}^{-1}}+\|g\|^2_{\mathcal{H}_{\eta}^2}
	\leq C_\beta(1-\eta)^{-3}(\alpha^{-2}\|g\|^4_{\mathcal{H}^2}+\alpha^{-1}\|g\|^3_{\mathcal{H}^2}+\alpha^\frac{1}{2}\|g\|^2_{\mathcal{H}^2}+\alpha^2\|g\|_{\mathcal{H}^2})
	+C_\beta\alpha^{-1}\|g\|^2_{\mathcal{H}^{2}}\|g\|_{\mathcal{E}^{2}},
\end{align*}
and
\begin{align*}
\|g\|^2_{\mathcal{H}^{-1}}+\|g\|^2_{\mathcal{H}_{\eta}^2}+\|g\|^2_{\mathcal{E}_{\eta}^2}
	&\leq C_\beta(1-\eta)^{-3}(\alpha^{-2}\|g\|^4_{\mathcal{H}^2}+\alpha^{-1}\|g\|^3_{\mathcal{H}^2}+\alpha^\frac{1}{2}\|g\|^2_{\mathcal{H}^2}+\alpha^2\|g\|_{\mathcal{H}^2})
	\\
	&~~~+C_\beta\alpha^{\frac{1}{2}}(1-\eta)^{-\frac{5}{2}}\|g\|^2_{\mathcal{E}^2}
	+C_\beta(1-\eta)^{-\frac{1}{2}}\alpha^{-1}(\|g\|_{\mathcal{E}^{2}}+\|g\|_{\mathcal{H}^{2}})\|g\|_{\mathcal{E}^{2}}\|g\|_{\mathcal{H}^{2}}
	\\
	&~~~+C_\beta\alpha^{\frac{3}{2}}\|g\|_{\mathcal{E}^2}
	+C_\beta(1-\eta)^{-\frac{1}{2}}\alpha^{-2}\|g\|^2_{\mathcal{H}^2}\|g\|^2_{\mathcal{E}^2}.
\end{align*}
\end{prop}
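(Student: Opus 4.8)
The plan is to combine the coercivity estimates from Proposition~\ref{5co2}, the transport estimates for $T_g$ and $T_{F^\ast_\gamma}$ from Propositions~\ref{7Tg} and \ref{7Tf}, the estimate for $R_2$ from Proposition~\ref{8R1}, and the estimate for $R_0+R_1$ from Proposition~\ref{8R0R1-1}. First I would test the equation $\mathcal{L}_{\Gamma}(g)=-T+R_0+R_1+R_2$ against $g(w^K)^2+g$ in the $\mathcal{H}_{\eta}^2$ inner product, i.e. compute $\langle\mathcal{L}_{\Gamma}(g),g(w^K)^2\rangle+\langle\mathcal{L}_{\Gamma}(g),g\rangle_{\mathcal{H}_{\eta}^2}$. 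By \eqref{oper-L2-H2} the left side is bounded below by $\frac14(\|g\|^2_{\mathcal{H}^{-1}}+\|g\|^2_{\mathcal{H}_{\eta}^2})$, while the right side is the sum of the corresponding $\mathcal{H}^{-1}$ and $\mathcal{H}_{\eta}^2$ pairings of $-T_g-T_{F^\ast_\gamma}+R_0+R_1+R_2$, all of which are controlled by the cited propositions. Absorbing the small terms $\frac{1}{10}(\|g\|^2_{\mathcal{H}^{-1}}+\|g\|^2_{\mathcal{H}_{\eta}^2})$ from \eqref{est-norm-H-1}, \eqref{est-norm-H2eta}, the term $C_\beta(1-\eta)\|g\|_{\mathcal{H}^{-1}}\|g\|_{\mathcal{H}^2_\eta}$ and $C_\beta(1-\eta)\|g\|^2_{\mathcal{H}^2_\eta}$ from \eqref{est-R0R1-H-1}, \eqref{est-R0R1-H2eta} into the left side (using $1-\eta\ll\beta$ and $\|g\|_{\mathcal{H}^2}\lesssim(1-\eta)^{-1}\|g\|_{\mathcal{H}^2_\eta}$, $\|g\|_{\mathcal{H}^2}\lesssim\|g\|_{\mathcal{H}^2_\eta}+\|g\|_{\mathcal{H}^{-1}}$), and collecting the remaining contributions — $\alpha^{-1}\|g\|^3_{\mathcal{H}^2}$, $\alpha^{1/2}(1-\eta)^{-1/2}\|g\|^2_{\mathcal{H}^2}$, $\alpha^{-2}\|g\|^4_{\mathcal{H}^2}$, $\alpha^2\|g\|_{\mathcal{H}^2}$, and the cross term $\alpha^{-1}\|g\|^2_{\mathcal{H}^2}\|g\|_{\mathcal{E}^2}$ — yields the first asserted inequality after multiplying through by the appropriate power of $(1-\eta)^{-1}$.

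Second I would repeat this with the $\mathcal{E}_{\eta}^2$ inner product: test $\mathcal{L}_{\Gamma}(g)=-T+R_0+R_1+R_2$ against $g$ in $\langle\cdot,\cdot\rangle_{\mathcal{E}_{\eta}^2}$. Now \eqref{oper-E2} gives the lower bound $\frac14\|g\|^2_{\mathcal{E}_{\eta}^2}-\frac{1}{50}(\|g\|^2_{\mathcal{H}^{-1}}+\|g\|^2_{\mathcal{H}_{\eta}^2})$; adding a large multiple of the first inequality (already established) to control the $\|g\|^2_{\mathcal{H}^{-1}}+\|g\|^2_{\mathcal{H}_{\eta}^2}$ deficit, I get a lower bound of the form $c(\|g\|^2_{\mathcal{H}^{-1}}+\|g\|^2_{\mathcal{H}_{\eta}^2}+\|g\|^2_{\mathcal{E}_{\eta}^2})$ minus the right-hand-side terms. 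The right side is now the sum of $\mathcal{H}^{-1}$, $\mathcal{H}_{\eta}^2$ and $\mathcal{E}_{\eta}^2$ pairings, so I invoke all three estimates \eqref{est-norm-H-1}--\eqref{est-norm-E2eta} from Proposition~\ref{7Tg}, all three from Proposition~\ref{7Tf}, all three from Proposition~\ref{8R1}, and all three from Proposition~\ref{8R0R1-1}. The new contributions involving $\|g\|_{\mathcal{E}^2}$ — namely $\alpha^{1/2}(1-\eta)^{-1/2}\|g\|^2_{\mathcal{E}^2}$, $\alpha^{-1}(\|g\|_{\mathcal{E}^2}+\|g\|_{\mathcal{H}^2})\|g\|_{\mathcal{E}^2}\|g\|_{\mathcal{H}^2}$, $\alpha^{3/2}\|g\|_{\mathcal{E}^2}$, and $\alpha^{-2}\|g\|^2_{\mathcal{H}^2}\|g\|^2_{\mathcal{E}^2}$ — must be grouped exactly into the right-hand side of the second asserted inequality. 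The small linear-in-$\|g\|^2_{\mathcal{E}_{\eta}^2}$ and quadratic-in-mixed-norm pieces (the $\frac{1}{10}(\|g\|^2_{\mathcal{E}_{\eta}^2}+\|g\|^2_{\mathcal{H}_{\eta}^2})$ terms from \eqref{est-norm-E2eta}, \eqref{est-TF-E2eta}, and the $C_\beta(1-\eta)\|g\|_{\mathcal{H}^2_\eta}\|g\|_{\mathcal{E}^2_\eta}$ from \eqref{est-R0R1-E2eta}) get absorbed on the left, using $1-\eta\ll\beta$ and Young's inequality to split mixed products.

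The only genuinely delicate bookkeeping is the $\mathcal{E}^2$-norm budget: in Proposition~\ref{7Tg} the term $C_\beta\alpha^{-1}\|g\|^2_{\mathcal{H}^2}\|g\|_{\mathcal{E}^2}$ appears in the $\mathcal{H}_{\eta}^2$ estimate, not the $\mathcal{E}_{\eta}^2$ one, so it must be carried into the first inequality's right side exactly as stated — this is why the first inequality's conclusion already contains the cross term $\alpha^{-1}\|g\|^2_{\mathcal{H}^{2}}\|g\|_{\mathcal{E}^{2}}$. I would track that $\|g\|_{\mathcal{E}^2_\eta}$ and $\|g\|_{\mathcal{E}^2}$ differ only by bounded powers of $(1-\eta)$ and $\alpha$ (from \eqref{def_norm_E2eta} and the definition of $\mathcal{E}^2$ in Section~\ref{sec:notation}), so that absorption on the left is legitimate; similarly $\|g\|_{\mathcal{H}^2}$ versus $\|g\|_{\mathcal{H}^2_\eta}$. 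The main obstacle will simply be ensuring that every term produced by the many cited inequalities lands in the correct slot of the two target inequalities after the $(1-\eta)$-rescaling, and that the absorption into the coercive left side never requires $\alpha$ or $1-\eta$ larger than the already-imposed smallness $\alpha\ll 1-\eta\ll\beta$; no new analytic input is needed beyond careful algebraic collection.
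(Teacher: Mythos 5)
Your proof is essentially the paper's intended argument: the paper writes \emph{``Employing Propositions~\ref{5co2}, \ref{7Tg}, \ref{7Tf}, \ref{8R1} and \ref{8R0R1-1}, we get the following proposition directly. For simplicity, we omit its proof here,''} and the combination you describe — testing the equation against $g(w^K)^2$, $g$ in $\langle\cdot,\cdot\rangle_{\mathcal{H}^2_\eta}$, and $g$ in $\langle\cdot,\cdot\rangle_{\mathcal{E}^2_\eta}$, invoking the coercivity of $\mathcal{L}_\Gamma$ from Proposition~\ref{5co2}, and absorbing the small terms with coefficients $\tfrac{1}{10}$, $\tfrac{1}{50}$, and $C_\beta(1-\eta)$ into the coercive left side — is exactly that omitted bookkeeping. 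One minor slip: the auxiliary comparisons you quote, $\|g\|_{\mathcal{H}^2}\lesssim(1-\eta)^{-1}\|g\|_{\mathcal{H}^2_\eta}$ and $\|g\|_{\mathcal{H}^2}\lesssim\|g\|_{\mathcal{H}^2_\eta}+\|g\|_{\mathcal{H}^{-1}}$, are not quite right as stated (the definition \eqref{def_norm_H2eta} carries a factor $(1-\eta)^4$ on $\|D_z^2 g\,w^\eta\|^2_{L^2}$, so the correct power is $(1-\eta)^{-2}$, and the second comparison is not established in the paper), but neither relation is actually used anywhere in your argument: every right-hand contribution from the cited propositions is already phrased in terms of $\|g\|_{\mathcal{H}^2}$ and $\|g\|_{\mathcal{E}^2}$, so no conversion between $\mathcal{H}^2$ and $\mathcal{H}^2_\eta$ is required to assemble the stated conclusion. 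The absorption steps and the tally of remaining terms are otherwise correct.
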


\subsection{Constructing the solutions}
With Proposition \ref{8R2} and Proposition \ref{8R3} at hand, we begin to prove Theorem  \ref{Theo1} in this subsection.
\begin{proof}[\textbf{Proof of Theorem \ref{Theo1}}]
We first  introduce a ``fake" time variable $\tau$, and then consider the evolution equation for the function $g(\tau,z,\theta)$ as follows:
\begin{align}\label{82in1}
	\partial_\tau g+\mathcal{L}_{\Gamma}(g)=-T+R_0+R_1+R_2,~~~g(0,z,\theta)=0,
\end{align}
where $T$ and $R_i$ with $i=0,1,2$ as in \eqref{8eq1}. 
We observe that $\mu$ is now a function of $\tau$.
Referring back to the definition of $\gamma$ in \eqref{4eq13}, it follows that $\gamma$ consequently depends on $\tau$ as well.
This dependency implies that $F^\ast_{\gamma}$ is ultimately parameterized by $\tau$.

By using the compactness method of evolution equation, we obtain a local solution $g(\tau,z,\theta)$ for equation \eqref{82in1}. 
Recalling \eqref{82in1} and the definition of $\mu$ in $\eqref{8eq1}_3$, we deduce that 
\begin{align*}
	\partial_\tau L^{-1}_{z,K}(g)(\tau,0)+ L^{-1}_{z,K}(\mathcal{L}_{\Gamma}(g))(\tau,0)= L^{-1}_{z,K}(-T+R_0+R_1+R_2)(0,\tau)= L^{-1}_{z,K}(R_1)(\tau,0).
\end{align*}
This implies that 
\begin{align*}
	\partial_\tau L^{-1}_{z,K}(g)(\tau,0)=-(1+\mu)L^{-1}_{z,K}(g)(\tau,0).
\end{align*}
Since $L^{-1}_{z,K}(g)(0,0)=0$, we infer 
$L^{-1}_{z,K}(g)(\tau,0)=0$ for all $\tau\geq0$. 
We recall the equation \eqref{82in1} and apply Proposition \ref{8R3}, to discover that
\begin{align}\label{82in2}
	&~~~\frac{d}{d\tau}(\|g w^K\|_{L^{2}}+\|g\|_{\mathcal{H}_{\eta}^2})+\|g\|_{\mathcal{H}^{-1}}+\|g\|_{\mathcal{H}_{\eta}^2}
	\\ \notag
	&\lesssim_{\beta} (1-\eta)^{-5}(\alpha^{-2}\|g\|^3_{\mathcal{H}^2}+\alpha^{-1}\|g\|^2_{\mathcal{H}^2}+\alpha^\frac{1}{2}\|g\|_{\mathcal{H}^2}+\alpha^2) +\alpha^{-1}\|g\|_{\mathcal{H}^{2}}\|g\|_{\mathcal{E}^{2}},
\end{align}
and
\begin{align}\label{82in3}
	&~~~\frac{d}{d\tau}(\|g w^K\|_{L^{2}}+\|g\|_{\mathcal{H}_{\eta}^2}+\|g\|_{\mathcal{E}_{\eta}^2})+
	\|g\|_{\mathcal{H}^{-1}}+\|g\|_{\mathcal{H}_{\eta}^2}+\|g\|_{\mathcal{E}_{\eta}^2}
	\\ \notag
	&\lesssim_{\beta} (1-\eta)^{-5}(\alpha^{-2}\|g\|^3_{\mathcal{H}^2}+\alpha^{-1}\|g\|^2_{\mathcal{H}^2}+\alpha^\frac{1}{2}\|g\|_{\mathcal{H}^2}+\alpha^2)+\alpha^{\frac{1}{2}}(1-\eta)^{-\frac{9}{2}}\|g\|_{\mathcal{E}^2}\\ \notag
	&+(1-\eta)^{-\frac{5}{2}}\alpha^{-1}(\|g\|_{\mathcal{E}^{2}}+\|g\|_{\mathcal{H}^{2}})\|g\|_{\mathcal{H}^{2}}+\alpha^{\frac{3}{2}}(1-\eta)^{-2}+(1-\eta)^{-\frac{5}{2}}\alpha^{-2}\|g\|^2_{\mathcal{H}^2}\|g\|_{\mathcal{E}^2}.
\end{align}
It then follows from Proposition \ref{8R2} that
\begin{align}\label{82in4}
	|\mu|&\lesssim_{\beta}\alpha^{-1}(1-\eta)\|g\|_{\mathcal{H}^2_\eta}+(1-\eta)^{-\frac{1}{2}}(\alpha^{-2}\|g\|^2_{\mathcal{H}^2}+\alpha).
\end{align}
Since $g|_{\tau=0}=0$, we deduce from \eqref{82in2}-\eqref{82in4} and the continuity argument that for all $\tau\geq 0$,
\begin{align}\label{82in5}
	\|g\|_{\mathcal{H}^{-1}}+\|g\|_{\mathcal{H}_{\eta}^2}\lesssim_{\beta,1-\eta} \alpha^2,~~~\|g\|_{\mathcal{E}_{\eta}^2}\lesssim_{\beta,1-\eta} \alpha^\frac{3}{2},~~~|\mu|\lesssim_{\beta,1-\eta} \alpha.
\end{align}
Thus, we can obtain that there exists a global solution $g(\tau,z,\theta)$ for equation \eqref{82in1}, which satisfies \eqref{82in5}. 

We apply the operator $\partial_{\tau}$ to equation \eqref{82in1} and consider the following evolution equation for the function $\partial_\tau g(\tau,z,\theta)$:
\begin{align}\label{82in6}
	\partial^2_\tau g+\mathcal{L}_{\Gamma}(\partial_\tau g)=-\partial_\tau T+\partial_\tau R_0+\partial_\tau R_1+\partial_\tau R_2+\bar{R},~~~\partial_\tau g(0,z,\theta)=\alpha^2,
\end{align}
where $\bar{R}\tri \mathcal{L}_{\Gamma}(\partial_\tau g)-\partial_\tau\mathcal{L}_{\Gamma}(g)$. 
Note that $(g,F^\ast_\gamma,\gamma,\mu)$ are all dependent of $\tau$. 
Firstly, it follows from some direct calculations that
\begin{align}
	|\partial_\tau F^\ast_\gamma| &\lesssim_\beta F^\ast_\gamma(1+|\ln z|+|\ln K(\theta)|)|\partial_\tau \gamma|,\label{82in7}
	\\
	|\partial_\tau \gamma| &\lesssim_\beta |\partial_\tau \mu|.\label{82in8}
\end{align}
We recall the definition of $\mu$ in $\eqref{8eq1}_3$, and employ the absorption method and Corollary \ref{7em2}, to obtain that
\begin{align}\label{82in9}
	|\partial_\tau \mu|&\lesssim_{\beta,1-\eta}\alpha^{-1}\|\partial_\tau g\|_{\mathcal{H}^1}.
\end{align}
Then, we define the  $\mathcal{H}_{\eta}^1([0,\infty)\times[0,\frac{\pi}{2}])$ norm
and
$\mathcal{E}^1_{\eta}([0,\infty)\times[0,\frac{\pi}{2}])$ norm as follows:
\begin{align*}
	\|f\|^2_{\mathcal{H}_{\eta}^1}&\tri (1-\eta)^2(\| fw^{\eta}\|^2_{L^2}+\|D_z fw^{\eta}\|^2_{L^2})+(1-\eta)^{-2}\|D_{\theta} fw^{\lambda}\|^2_{L^2},
	\\
	\|f\|^2_{\mathcal{E}^1_{\eta}}&\tri \alpha(1-\eta)^{2}(\| fw^{\lambda}\|^2_{L^2}+\|D_z fw^{\lambda}\|^2_{L^2}).
\end{align*}
Similar argument as the proof of \eqref{82in3}, we infer from \eqref{82in5}-\eqref{82in9} and $\alpha\ll 1-\eta \ll \beta$ that
\begin{align}\label{82in10}
	\frac{d}{d\tau}(\|\partial_\tau g  w^K\|_{L^{2}}+\|\partial_\tau g\|_{\mathcal{H}_{\eta}^1}+\|\partial_\tau g\|_{\mathcal{E}_{\eta}^1})
	+\|\partial_\tau g\|_{\mathcal{H}^{-1}}+\|\partial_\tau g\|_{\mathcal{H}_{\eta}^1}+\|\partial_\tau g\|_{\mathcal{E}_{\eta}^1}
	\leq 0.
\end{align}
Owing to $\partial_\tau g|_{\tau=0}=\alpha^2$, we deduce from \eqref{82in7}-\eqref{82in10} and the continuity argument that  for all $\tau\geq 0$,
\begin{align*}
	\|\partial_\tau g\|_{\mathcal{H}^{-1}}+\|\partial_\tau g\|_{\mathcal{H}_{\eta}^1}+\|\partial_\tau g\|_{\mathcal{E}_{\eta}^1}
	&\lesssim_{\beta,1-\eta} \alpha^2e^{-\frac{\tau}{100}},
	\\
	|\partial_\tau F^\ast_\gamma|+|\partial_\tau \gamma|+|\partial_\tau \mu|
	&\lesssim_{\beta,1-\eta} \alpha e^{-\frac{\tau}{100}}.
\end{align*}
Taking $\tau\rightarrow\infty$ in \eqref{82in1} and applying the above two inequalities, we obtain that there exists a solution $g(z,\theta)$ for equation \eqref{8eq1}. 
This completes the proof of Theorem \ref{Theo1}.
\end{proof}

\subsection{Blow-up phenomenon}
Based on Theorem \ref{Theo1}, our goal in this subsection is to prove blow-up phenomenon of the Euler equations.
\begin{prop}\label{8prop1}
	For any $\beta\in(0,1]$, there exists a positive constant $\alpha(\beta)$, the initial vorticity $\omega_0 \in C^{0,\frac{\alpha}{20\beta}}(D_0)$ with $|\omega_0|\leq c|\bm{x}|^{\frac{\alpha}{2\beta}}(1+|\bm{x}|)^{-\left({\frac{\alpha}{2\beta}+\frac{\alpha}{2}}\right)}$ for some constant $c>0$, and the positive time $T^\ast$, such that the axi-symmetric 3D Euler equations \eqref{1eq2} admit a solution 
	$\omega\in C^{0,\frac{\alpha}{2\beta}}_{t}([0,T^\ast); C^{0,\frac{\alpha}{20\beta}}_{x}( D_0))$.
	The solution develops a finite-time singularity at $t=T^\ast>0$ with scaling index $\frac{\beta}{\alpha}$.
	Moreover, there exists a parameter $\gamma\in\mathbb{R}^+$ that satisfies $|\frac{\beta}{\gamma}-1|\ll 1$ and $T^\ast= \frac{1}{2}+\frac{\beta}{2\gamma}$, such that the following blow-up results hold:
	\begin{align}\label{8est-theo-1}
		\lim\limits_{t\rightarrow T^\ast} \int_0^t \|\omega(s)\|_{L^{\infty}(D_0)} ds = +\infty, \quad 
		\lim\limits_{t\rightarrow T^\ast} \int_0^t \Big\|\frac{u_r}{r} (s)\Big\|_{L^{\infty}(D_0)} ds = +\infty.
	\end{align}
	Furthermore, for any $1\leq p<\frac{2}{2-\beta}$, the velocity components exhibit the following integrability property:
	\begin{align}\label{8est-theo-2}
		\int_0^{T^\ast} \|(u_r,u_3) (s)\|^p_{L^{\infty}_{loc}(D_0)}ds  < +\infty.
	\end{align}
\end{prop}

\begin{proof}
We divide this proof into three steps.

\medskip

\noindent \textbf{Step 1: Proof of estimates 
	\eqref{8est-theo-1} and \eqref{8est-theo-2}. }

\medskip

\noindent 
By virtue of \eqref{form_F1} and \eqref{82in5}, we find that
\begin{align*}
	\|\omega(t)\|_{L^{\infty}} &\geq \frac{1}{t_{\gamma}} \left| F^{\ast}_{\gamma}\left(1,\frac{\pi}{4}\right)\right| - \frac{1}{t_{\gamma}}\big\| g\big\|_{L^{\infty}}  \geq\frac{1}{t_{\gamma}}\frac{\alpha}{3\gamma}-\frac{1}{t_{\gamma}}\big\| g\big\|_{\mathcal{H}^{2}} \geq\frac{1}{t_{\gamma}}\frac{\alpha}{3\gamma}-\frac{1}{t_{\gamma}}~C_{\beta,1-\eta}\alpha^2 
	\geq\frac{1}{t_{\gamma}}\frac{\alpha}{6\gamma},
\end{align*}
where we have used the fact that $\alpha\ll 1-\eta\ll \beta$.
This implies that 
\begin{align*}
	\int_0^{t} \|\omega(s)\|_{L^{\infty}} ds 
	\geq \frac{\alpha}{6\gamma}\int_0^{t} \frac{1}{s_{\gamma}} ds
	=\frac{\alpha}{6\gamma}\left(\frac{1}{2}+\frac{\beta}{2\gamma}\right)\left|\ln\left(1-\frac{2\gamma}{\gamma+\beta}{t}\right)\right|.
\end{align*}
Taking $t\rightarrow T^\ast\tri \frac{1}{2}+\frac{\beta}{2\gamma}$, it is easy to check that
\begin{align*}
	\lim\limits_{t\rightarrow T^\ast} \int_0^{t} \|\omega(s)\|_{L^{\infty}} ds =+\infty.
\end{align*}
After some direct calculations, one obtains that
\begin{align}\label{est-ur-1}
	\frac{u_r}{r}(t)
	&=\mathcal{R}(\Psi)=\mathcal{R}\left(\frac{1}{t_{\gamma}}\Phi(z,\theta)\right)
	\\
	&=\frac{1}{t_{\gamma}} \left(\mathcal{R}(\Phi-G^\ast_F\sin (2\theta))
	+ \frac{3}{2\alpha} L^{-1}_{z,K}(F^\ast_\gamma)
	+ \frac{3}{2\alpha} L^{-1}_{z,K}(g)
	-\frac{3}{2}(\sin\theta)^2\langle F,K\rangle_{\theta}
	\right).
	\notag
\end{align}
The main term in the right-hand side of the equality above is
\begin{align*}
	\frac{3}{2\alpha}  \Big\|L^{-1}_{z,K}(F^\ast_\gamma)\Big\|_{L^\infty}= \|L^{-1}_z(\Gamma^\ast_\gamma)\|_{L^\infty}=\|\frac{2}{1+z^{\frac1\gamma}}\|_{L^\infty}=2.
\end{align*}
Substituting the equality above into \eqref{est-ur-1} and applying the fact that $\alpha\ll 1-\eta\ll \beta$, we get that
\begin{align*}
	\Big\|\frac{u_r}{r}(t)\Big\|_{L^\infty} 
	&\geq  \frac{2}{t_{\gamma}}
	-\frac{1}{t_{\gamma}} \left(\|\mathcal{R}(\Phi-G^\ast_F\sin (2\theta))\|_{L^\infty}
	+ \frac{3}{2\alpha} \|L^{-1}_{z,K}(g)\|_{L^\infty}
	+\frac{3}{2}\|(\sin\theta)^2\langle F,K\rangle_{\theta}\|_{L^\infty}
	\right)
	\\
	&\geq \frac{2}{t_{\gamma}}-\frac{1}{t_{\gamma}}C_{\beta,1-\eta} \alpha^{\frac{1}{2}} 
	\geq  \frac{1}{t_{\gamma}},
\end{align*}
which gives that
\begin{align*}
	\int_0^{t}\Big\|\frac{u_r}{r}(s)\Big\|_{L^\infty} ds 
	\geq \int_0^{t} \frac{1}{s_{\gamma}} ds
	=\left(\frac{1}{2}+\frac{\beta}{2\gamma}\right)\left|\ln\left(1-\frac{2\gamma}{\gamma+\beta}{t}\right)\right|.
\end{align*}
Taking $t\rightarrow T^\ast$, it follows that
\begin{align*}
	\lim\limits_{t\rightarrow T^\ast} \int_0^{t} \Big\|\frac{u_r}{r}(s)\Big\|_{L^\infty}  ds =+\infty.
\end{align*}

It remains to verify \eqref{8est-theo-2}. By using \eqref{est-ur-1}, we deduce that
\begin{align}
	\||\bm{x}|^{\frac{\alpha}{2}-1} u_r\|_{L^\infty} 
	\leq& \|R^{\frac{1}{2}} r^{-1} u_r\|_{L^\infty} 
	\leq t_{\gamma}^{-1+\frac{\beta}{2}} \|z^{\frac12}\mathcal{R}(\Phi)\|_{L^\infty}
	\label{est-ur-Lloc}\\
	\lesssim & t_{\gamma}^{-1+\frac{\beta}{2}}
	\Big(\|z^{\frac12}\mathcal{R}(\Phi_{F^\ast_\gamma}-G^\ast_{F^\ast_\gamma}\sin (2\theta))\|_{L^\infty}
	+ \|z^{\frac12}\mathcal{R}(\Phi_g-G^\ast_g\sin (2\theta))\|_{L^\infty}
	\nonumber\\
	&
	+\|z^{\frac12}\alpha^{-1} L^{-1}_{z,K}(F^\ast_\gamma)\|_{L^\infty}
	+\|z^{\frac12}(\sin\theta)^2\langle F^\ast_\gamma,K\rangle_{\theta}\|_{L^\infty}
	\nonumber\\
	&
	+\|z^{\frac12}\big( \alpha^{-1} L^{-1}_{z,K}(g)
	-(\sin\theta)^2\langle g,K\rangle_{\theta}\big)\|_{L^\infty}
	\Big).
	\nonumber
\end{align}
We now analyze each term on the right-hand side of \eqref{est-ur-Lloc} sequentially.
It is easy to observe that
\begin{align*}
	&\sup_{\theta\in(0,\frac{\pi}{2})}\|z^{\frac12}\mathcal{R}(\Phi_g-G^\ast_g\sin (2\theta))\|_{L^\infty[1,\infty)} \lesssim \alpha^{-\frac12} \|g\|_{\mathcal{H}^2} \lesssim_{1-\eta} \alpha^{\frac32};
	\\
	&\sup_{\theta\in(0,\frac{\pi}{2})}\|z^{\frac12}\alpha^{-1} L^{-1}_{z,K}(F^\ast_\gamma)\|_{L^\infty} =2 \|z^{\frac12}(1+z^{\frac1\gamma})^{-1}\|_{L^\infty} \leq 2;
	\\
	&\sup_{\theta\in(0,\frac{\pi}{2})}\|z^{\frac12}(\sin\theta)^2\langle F^\ast_\gamma,K\rangle_{\theta}\|_{L^\infty} \lesssim \alpha \|z^{\frac12+\frac1\gamma}(1+z^{\frac1\gamma})^{-2}\|_{L^\infty} \lesssim \alpha;
	\\
	&\sup_{\theta\in(0,\frac{\pi}{2})}\|z^{\frac12}\big( \alpha^{-1} L^{-1}_{z,K}(g)
	-(\sin\theta)^2\langle g,K\rangle_{\theta}\big)\|_{L^\infty[1,\infty)} \lesssim \alpha^{-1} \|g\|_{\mathcal{H}^2} \lesssim_{1-\eta} \alpha.
\end{align*}
The first term on the right-hand side of \eqref{est-ur-Lloc} still requires analysis.
To control this term, we need to choose another weight as $z$ goes to infinity.
Let us define
\begin{align*}
	w_z^{\ast\ast}(z)\tri \frac{1+z^{\frac1\beta+\frac14}}{z^{\frac1\beta+\frac14}},
\end{align*}
which tends to $1$ as $z$ goes to infinity.
By some direct calculations, one can check that
\begin{align*}
	\|\Gamma_{\gamma}^\ast w_z^{\ast\ast}\|_{L^2}^2
	=\int_0^\infty \frac{z^{\frac2\gamma}}{(1+z^{\frac1\gamma})^4} \frac{(1+z^{\frac1\beta+\frac14})^2}{z^{\frac2\beta+\frac12}}dz
	\lesssim \int_0^\infty z^{-\frac34}(1+z)^{-1}dz\lesssim 1.
\end{align*}
Then, we define the space $H^{k,\ast\ast}$ by replacing the radial weight $w_z^{\ast}$ in the definition of $H^{k,\ast}$ with $w_z^{\ast\ast}$.
Along with a similar way, this yields that
\begin{align*}
	\|z^{\frac12}\mathcal{R}(\Phi_{F^\ast_\gamma}-G^\ast_{F^\ast_\gamma}\sin (2\theta))\|_{L^\infty}
	\lesssim 
	\alpha^{-\frac12} \|F_{\gamma}^\ast \|_{H^{k,\ast\ast}}
	\lesssim_{1-\eta} \alpha^{\frac12}.
\end{align*}
Collecting all estimates above, it follows from \eqref{est-ur-Lloc} that for any $p\in[1,\frac{2}{2-\beta})$,
\begin{align}\label{est-ur}
	\int_0^{T^\ast} \||\bm{x}|^{\frac{\alpha}{2}-1} u_r(s)\|_{L^\infty}^p  ds
	\lesssim \int_0^{T^\ast} s_{\gamma}^{(-1+\frac\beta2)p}ds
	\lesssim1.
\end{align}
This implies that $u_r\in L^p([0,T^\ast];L^\infty_{loc})$.
As for $u_3$, we have
\begin{align*}
	\frac{u_3}{|\bm{x}|} 
	=-\frac{1}{t_\gamma} \big(
	(\cos\theta)^{-1} \Phi +2\cos\theta \Phi+\alpha\cos\theta D_z\Phi-\sin\theta \partial_{\theta}\Phi
	\big).
\end{align*}
By adapting the derivation method employed in \eqref{est-ur}, we derive that $u_3\in L^p([0,T^\ast];L^\infty_{loc})$.

\noindent \textbf{Step 2: The H\"older regularity. }

\medskip

\noindent 
First, we intend to show the H\"older regularity of the initial data.
According to \eqref{82in5}, we have
\begin{align*}
	\|g w_z\|_{L^\infty_{\theta}L^2_z}+\| D_z g w_z\|_{L^\infty_{\theta}L^2_z} \lesssim 
	\alpha^{-\frac{1}{2}}\|g\|_{\mathcal{H}^2} 
	\lesssim_{\beta,1-\eta} \alpha^{\frac{3}{2}}.
\end{align*}
This, together with the definition of $w_z$, yields directly that
\begin{align}\label{est-g-1}
	\sup_{\theta\in (0,\frac{\pi}{2})} \left(
	\|g\|_{L^2_z[1,\infty)}+\|D_z g\|_{L^2_z[1,\infty)}
	+\| g z^{-\frac{2}{\beta}}\|_{L^2_z[0,1]}+\|D_z gz^{-\frac{2}{\beta}}\|_{L^2_z[0,1]}
	\right)
	\lesssim_{\beta,1-\eta} \alpha^{\frac{3}{2}}.
\end{align}
We claim that
\begin{align}\label{est-g-2}
	g=O(z^{-\frac12}),\quad \text{as}~~z\rightarrow\infty;
	\quad
	g=O(z^{\frac{2}{\beta}-\frac12}),\quad
	\text{as}~~z\rightarrow 0.
\end{align}
Indeed, it follows from \eqref{est-g-1} that for any $z\in[1,\infty)$ and $\theta\in  (0,\frac{\pi}{2})$,
\begin{align*}
	|g(z,\theta)|^2
	=\left| \int_z^\infty g \partial_{z'} g dz'  \right|
	\lesssim z^{-1} \sup_{\theta\in (0,\frac{\pi}{2})} 
	\left(\|g\|_{L^2_z[1,\infty)}\|D_z g\|_{L^2_z[1,\infty)}\right)
	\lesssim_{\beta,1-\eta} \alpha^3 z^{-1}.
\end{align*}
Another estimate in \eqref{est-g-2} can be deduced in a similar way, and we thus omit its proof here.
Recalling \eqref{form_F1}, one gets, by some direct calculations, that
\begin{align}\label{est-F*-1}
	F^\ast_{\gamma}= O(z^{-\frac{1}{\gamma}}), \quad \text{as}~~z\rightarrow\infty;
	\quad
	F^\ast_{\gamma}=O(z^{\frac{1}{\gamma}}),\quad 
	\text{as}~~z\rightarrow 0.
\end{align}
Since $F=F^\ast_{\gamma}+g$, we can find that
\begin{align}\label{est-F-1}
	F= O(z^{-\frac12}),\quad \text{as}~~z\rightarrow\infty;
	\quad
	F=O(z^{\frac{1}{\gamma}}),\quad 
	\text{as}~~z\rightarrow 0.
\end{align}
It is easy to check that
\begin{align*}
	\omega_0(r,x_3)= \Omega(0,R,\theta) =F(R,\theta).
\end{align*}
This, together with $z|_{t=0}=R=|x|^{\alpha}$ and \eqref{est-F-1}, gives that
\begin{align*}
	|\omega_0(r,x_3)| 
	\lesssim \frac{|x|^{\frac{\alpha}{\gamma}}}{(1+|x|)^{\frac{\alpha}{\gamma}+\frac{\alpha}{2}}}
	\lesssim  
	\frac{|x|^{\frac{\alpha}{2\beta}}}{(1+|x|)^{\frac{\alpha}{2\beta}+\frac{\alpha}{2}}},
\end{align*}
where we have used the fact that $\gamma=\frac{1+\mu}{1-\mu}\beta$ and $\mu$ is small enough.

Next, we want to prove the H\"older regularity for the solution.
From \eqref{form_F1}, it follows that for $t=0$,
\begin{align}\label{est-F*-2}
	F^\ast_{\gamma}= O((\sin\theta)^{\frac{\alpha}{3\gamma}}(\cos\theta)^{\frac{2\alpha}{3\gamma}}), \quad \text{as}~~\sin(2\theta)\rightarrow 0;
	\quad
	F^\ast_{\gamma}=O(z^{\frac{1}{\gamma}})=O(|x|^{\frac{\alpha}{\gamma}}),\quad 
	\text{as}~~|x|\rightarrow 0.
\end{align}
In view of \eqref{82in5}, we achieve that
\begin{align*}
	\sup_{z\in[0,\infty)} \left(\|(\sin(2\theta))^{-\frac{\lambda}{2}} g\|_{L^2_{\theta}}
	+\|(\sin(2\theta))^{-\frac{\lambda}{2}} D_{\theta}g\|_{L^2_{\theta}}\right)
	\lesssim_{\beta,1-\eta} \alpha^{\frac{3}{2}}.
\end{align*}
Along a similar way as \eqref{est-g-2}, we find that for $t=0$,
\begin{align}\label{est-g-3}
	g=O((\sin(2\theta))^{\frac{\lambda-1}{2}}) =O((\sin(2\theta))^{\frac{\alpha}{20\beta}}),\quad \text{as}~~ \sin(2\theta)\rightarrow 0;
	\quad
	g=O(|x|^{\frac{2\alpha}{\beta}-\frac\alpha2}),\quad
	\text{as}~~|x|\rightarrow 0.
\end{align}
Owing to $r=|x| \cos(\theta)$ and $x_3=|x| \sin(\theta)$, we get, by applying \eqref{est-F*-2} and \eqref{est-g-3}, that
\begin{align*}
	F^\ast_{\gamma}= O( r^{\frac{2\alpha}{3\gamma}} |x_3|^{\frac{\alpha}{3\gamma}} ),
	\quad
	g=O( (r |x_3|)^{\frac{\alpha}{20\beta}} |x|^{\frac{19\alpha}{10\gamma}-\frac{\alpha}{2}} ),
	\quad\text{as}~~ r\rightarrow0~~\text{or} ~~x_3\rightarrow 0.
\end{align*}
This yields directly that for $t=0$,
$F^\ast_{\gamma} \in  C_x^{\frac{\alpha}{3\gamma}}(D_0)$ and $g\in C_x^{\frac{\alpha}{20\beta}}(D_0)$. 
We thus deduce that $\omega_0\in C_x^{\frac{\alpha}{20\beta}}(D_0)$.

For any $t\in[0,T^\ast)$, we have
\begin{align*}
	|\omega(t,r,x_3)-\omega_0(r,x_3)|
	=&|\Omega(t,R,\theta)-\Omega(0,R,\theta)|
	\leq \left|\left(\frac{1}{t_{\gamma}}-1\right) F(z,\theta)\right| + |F(z,\theta)-F(R,\theta)|
	\\
	\leq& t\|F\|_{L^\infty}+|z-R|^{\frac{\alpha}{\gamma}}
	\leq t^{\frac{\alpha}{\gamma}}.
\end{align*}
This yields that $\omega\in C_{t}^{\frac{\alpha}{\gamma}}[0,T^\ast)$. Therefore, one gets that $\omega\in C_{t}^{\frac{\alpha}{2\beta}}[0,T^\ast)$.

\medskip

\noindent \textbf{Step 3: The scaling index.}

\medskip

\noindent 
Finally, we aim to show that the scaling index is $\frac{\beta}{\alpha}$. We can deduce from some computations that
\begin{align}\label{est-w-scaling}
	\omega(t,r,x_3)
	&=\Omega (t,R,\theta)
	=\frac{1}{t_\gamma} F\Big(\frac{R}{t_\gamma^\beta},\theta\Big)
	= \frac{1}{t_\gamma} F\Big(\Big(\frac{|x|}{t_\gamma^{\frac{\beta}{\alpha}}}\Big)^{\alpha},\arctan \Big(\frac{x_3}{r}\Big)\Big)
	\\ \notag
	&=\frac{1}{t_\gamma}
	F\Big(\Big(\Big(\frac{r}{t_\gamma^{\frac{\beta}{\alpha}}}\Big)^{2}+\Big(\frac{x_3}{t_\gamma^{\frac{\beta}{\alpha}}}\Big)^{2}\Big)^{\frac{\alpha}{2}},\arctan \Big(\frac{x_3}{t_\gamma^{\frac{\beta}{\alpha}}} \Big(\frac{r}{t_\gamma^{\frac{\beta}{\alpha}}}\Big)^{-1} \Big)\Big)
	\\ \notag
	&=\frac{1}{t_\gamma} H\Big(\frac{r}{t_\gamma^{\frac{\beta}{\alpha}}},\frac{x_3}{t_\gamma^{\frac{\beta}{\alpha}}}\Big),
\end{align}
where $H$ is defined by
\begin{align*}
	H(x,y)\tri F\big((x^{2}+y^{2})^{\frac{\alpha}{2}},\arctan (y x^{-1}) \big).
\end{align*}
The equality \eqref{est-w-scaling} implies that  the scaling index is $\frac{\beta}{\alpha}$.
We thereby finish the proof of Proposition \ref{8prop1}.
\end{proof}

\section{Non-implosion mechanism of the Euler equations}
\label{sec:non-implosion}

Building on Theorem \ref{Theo1} and Proposition \ref{8prop1}, we can obtain the blow up of the vorticity and the integrability of the velocity up to the blow-up time, which leads to the proofs of \eqref{est-theo-1} and \eqref{est-theo-2} with $p\in[1,\frac{2}{2-\beta})$ in Theorem \ref{Theo2}. 
In order to complete the proof of Theorem \ref{Theo2}, it remains to establish \eqref{est-theo-2} for the range $\frac{2}{2-\beta}\leq p<+\infty$, as well as to prove \eqref{est-theo-2'}.
In the analysis carried out in the preceding sections, it is observed that the time integrability of the velocity is intimately connected to the decay behavior at infinity of both the fundamental solution $F^\ast_\gamma$ and the perturbation $g$.
This observation motivates the introduction of weighted energy estimates with enhanced weights. The admissible range of the weight may be roughly delineated in terms of the parameters $\beta$ and $\gamma$, and this allows us to establish \eqref{est-theo-2'} under the prescribed conditions.

We first assume that
\begin{align}\label{key-est-1}
\mu<0 ~~(i.e., \gamma<\beta),\quad c_1\alpha\leq|\mu|\leq c_2 \alpha.
\end{align}
where $|\mu|< c_2 \alpha$ can be obtained from \eqref{82in5}.

We then find a new weight to replace $w_z$ as:
\begin{align*}
\tilde{w}_z=\frac{(1+z^{\frac{1}{\beta}})^{2+A}}{z^{\frac2\beta}},
\end{align*}
where the exponent $A=1-\frac\beta2+c_1\alpha$ satisfies $1-\frac{\beta}{2}<A<\frac\beta\gamma-\frac\beta2$.
The key lies in choosing $A$ such that $A - (1-\frac{\beta}{2}) = c_1\alpha$, which is also the lower bound for $|\mu|$. This particular choice of $A$ enables us to handle the remaining term related to $R_0$.

The corresponding weights $\tilde{w}^\eta$ and $\tilde{w}^\lambda$ are defined by
\begin{align*}
\tilde{w}^\eta= w_{\theta}^{\eta}\cdot \tilde{w}_z,
\quad
\tilde{w}^\lambda= w_{\theta}^{\lambda}\cdot \tilde{w}_z.
\end{align*}
We also find a new weight to replace $w_z^\ast$ as:
\begin{align*}
	\tilde{w}_z^\ast=\frac{(1+z^{\frac{1}{\beta}})^{1+\frac{\beta}{4}}}{z^{\frac1\beta+\frac14}}.
\end{align*}
The corresponding weights $\tilde{w}^{\ast,\eta}$ and $\tilde{w}^{\ast,\lambda}$ are defined by
\begin{align*}
	\tilde{w}^{\ast,\eta}= w_{\theta}^{\eta}\cdot \tilde{w}_z^\ast,
	\quad
	\tilde{w}^{\ast,\lambda}= w_{\theta}^{\lambda}\cdot \tilde{w}_z^\ast.
\end{align*}

Next, we give some definitions of Sobolev spaces with new wights.
For any $k\in\mathbb{N}^+$, the $\tilde{\mathcal{H}}^{k}([0,\infty)\times[0,\frac{\pi}{2}])$ and $\tilde{\mathcal{H}}^{k,\ast}([0,\infty)\times[0,\frac{\pi}{2}])$ norms are defined as follows:
\begin{align*}
\|f\|^2_{\tilde{\mathcal{H}}^{k}}
\tri&~\sum_{i=0}^{k}\|D^i_z f\tilde{w}^{\eta}\|^2_{L^2}+\sum_{0\leq i+j\leq k,j\geq 1}\|D^i_zD^j_{\theta} f\tilde{w}^{\lambda}\|^2_{L^2},
\\
\|f\|^2_{\tilde{\mathcal{H}}^{k,\ast}}
\tri&~\sum_{i=0}^{k}\|D^i_z f\tilde{w}^{\ast,\eta}\|^2_{L^2}+\sum_{0\leq i+j\leq k,j\geq 1}\|D^i_zD^j_{\theta} f\tilde{w}^{\ast,\lambda}\|^2_{L^2}.
\notag
\end{align*}
Furthermore, the notation $\langle \cdot, \cdot \rangle_{\tilde{\mathcal{H}}^k}$ and $\langle \cdot, \cdot \rangle_{\tilde{\mathcal{H}}^{k,\ast}}$
denote the corresponding inner products.

\subsection{Coercivity}
We aim to derive the coercivity of operator $\mathcal{L}_{\Gamma}$ within the $\tilde{\mathcal{H}}^{2}$ framework in this section.

\subsubsection{Coercivity of main operator}
Analogous to Lemma \ref{5co0}, we begin by studying the core operator $\mathcal{L}^{\beta}$ defined in \eqref{def-core-oper}.
\begin{lemm}\label{9co0}
For any $\beta\in(0,1]$, there holds
\begin{align}\label{2Co-est-0}
	-\langle \mathcal{L}^{\beta}(g),g\tilde{w}_z^2 \rangle_z
	\geq
	\frac34 c_1\alpha\|g\tilde{w}_z\|_{L^2_z}
	-C\alpha^{-2} \|g w_z\|_{L^2_z},
\end{align}
where $C>0$ is a constant independent of $\alpha$.
\end{lemm}
\begin{proof}
Integration by parts yields directly that
\begin{align}\label{2Co-est-1}
	-\langle\mathcal{L}^{\beta}(g),g\tilde{w}_z^2\rangle_z
	=-\langle g + \beta D_z g - L_z^{-1}(\Gamma^{\ast}_{\beta})g, g\tilde{w}_z^2\rangle_z
	=-\big\langle 1 - L_z^{-1}(\Gamma^{\ast}_{\beta})-\frac{\beta}{2}\frac{D_z^\ast \tilde{w}_z^2}{\tilde{w}_z^2}, g^2\tilde{w}_z^2\big\rangle_z.
\end{align}
After some direct calculations, we have
\begin{align}\label{2Co-cal-Dzww}
	\frac{D_z^\ast \tilde{w}_z^2}{\tilde{w}_z^2}
	=1+2\frac{D_z\tilde{w}_z}{\tilde{w}_z}
	=1+\frac{2A}{\beta}-\frac{2(2+A)}{\beta}\frac{1}{1+z^{\frac{1}{\beta}}}.
\end{align}
Substituting \eqref{3f7} and \eqref{2Co-cal-Dzww} into \eqref{2Co-est-1}, one gets that
\begin{align}\label{2Co-est-2}
	-\langle\mathcal{L}^{\beta}(g),g\tilde{w}_z^2\rangle_z
	=c_1\alpha \|g \tilde{w}_z\|_{L^2_z}^2
	-A\big\langle \frac{1}{1+z^{\frac1\beta}}, g^2 \tilde{w}_z^2\big\rangle_z.
\end{align}
Taking $B_0=\alpha^{-2\beta}$, we have
\begin{align}\label{sec9:est-cor-1}
	\Big|A\big\langle \frac{1}{1+z^{\frac1\beta}}, g^2 \tilde{w}_z^2\big\rangle_z\Big|
	=&~A\int_0^{B_0} \frac{1}{1+z^{\frac1\beta}} g^2 \tilde{w}_z^2 \,dz
	+A\int_{B_0}^{+\infty} \frac{1}{1+z^{\frac1\beta}} g^2 \tilde{w}_z^2 \,dz \notag
	\\
	\lesssim&~ \alpha^{-2} \|g w_z\|_{L^2_z(z\leq B_0)}^2+\alpha^2 \|g \tilde{w}_z\|_{L^2_z(z\geq B_0)}^2.
\end{align}
Combined with \eqref{2Co-est-2}, this immediately yields that
\begin{align*}
	-\langle \mathcal{L}^{\beta}(g),g\tilde{w}_z^2 \rangle_z
	\geq 
	c_1\alpha\|g\tilde{w}_z\|_{L^2_z}
	-C\alpha^{-2} \|g w_z\|_{L^2_z}-C\alpha^2 \|g \tilde{w}_z\|_{L^2_z}^2.
\end{align*}
By taking $\alpha$ suitably small, we deduce \eqref{2Co-est-0} directly.
\end{proof}

With the estimate for the core operator established in Lemma \ref{9co0}, we now proceed to investigate the coercivity of the main operator $\mathcal{L}^{\beta}_{\Gamma}$, defined in \eqref{def-main-oper}. In what follows in this section, we will split the domain into $[0, B_0]$ and $[B_0, +\infty)$ and omit some of the computational details. Unless a different decomposition is used, in which case full details will be given.
	
	Before proceeding, and in analogy with Lemma \ref{7fm2}, we present some fundamental lemmas concerning estimates for $F^\ast_{\gamma}$.
	
	\begin{lemm}\label{9lemma1}
		Assume that $f(z)$ is independent of $\theta$ and satisfies $f(0)=0$. 
		Let $\beta\in(0,1]$.
		There exist constants $\alpha>0$ sufficiently small and $\eta(\beta)$, such that if $\alpha\ll 1-\eta\ll \beta$ and $|\mu|\leq c_2\alpha$, then
		\begin{align*}
			\|f  F^\ast_{\beta} \|_{\tilde{\mathcal{H}}^{2}}\leq C\frac{\alpha}{\beta^4}(1-\eta)^{-\frac{1}{2}}\sum_{i=0}^{2}\|D^i_z fw_z^{\ast}\|_{L^2_z},
			~~~~
			\|f F^\ast_{\beta} \|_{\tilde{\mathcal{H}}^{2}}\leq C\frac{\alpha}{\beta^4}(1-\eta)^{-\frac{1}{2}}\sum_{i=0}^{2}\|D^i_z fw_z\|_{L^2_z}.
		\end{align*}
	\end{lemm}
\begin{proof}
		The proof follows that of Lemma \ref{7fm2}, modifying only the estimate \eqref{est-bd-Gamma}, which now reads:
		\begin{align}\label{sec9:est-bd-Gamma}
			\Gamma^\ast_{\beta} \tilde{w}_z\lesssim  \frac{1}{\beta}w^\ast_z,
			\quad
			\Gamma^\ast_{\beta} \tilde{w}_z\lesssim  \frac{1}{\beta}w_z.
		\end{align}
		The proof is complete, with the details omitted.
	\end{proof}
Considering the product estimates of $D_\theta{ F^\ast_{\beta}}$ and $D_z{ F^\ast_{\beta}}$ in the new space $\tilde{\mathcal{H}}^{2}$, we can obtain results similar to Lemma \ref{7fm2} and Lemma \ref{7fm3}. The results are highly similar and therefore omitted. We omit these similar product estimates here and use them directly in the proof later in the paper.  
\begin{prop}\label{9co1}
There exist constants $\alpha>0$ sufficiently small and $\eta(\beta)$, such that if $\alpha\ll 1-\eta\ll \beta$ and $|\mu|\leq c_2\alpha$, then
\begin{align}
	&-\langle\mathcal{L}^{\beta}_{\Gamma}(g), g\rangle_{\tilde{\mathcal{H}}^2}
	\geq 
	\frac12c_1\alpha \|g\|_{\tilde{\mathcal{H}}^2}^2
	-C\alpha^{-2}
	\|g\|_{\mathcal{H}^2}^2.
	\label{sec9:est-H2eta}
\end{align}
\end{prop}
\begin{proof}
It follows from the definition of $\mathcal{L}^{\beta}_{\Gamma}(g)$ that
\begin{align*}
	-\langle\mathcal{L}^{\beta}_{\Gamma}(g), g\rangle_{\tilde{\mathcal{H}}^2}
	=-\langle\mathcal{L}^{\beta}(g), g\rangle_{\tilde{\mathcal{H}}^2}+\langle\frac{3}{2\alpha}L^{-1}_{z,K}(g)F^{\ast}_{\beta}, g\rangle_{\tilde{\mathcal{H}}^2}.
\end{align*}
Using inequality \eqref{2Co-est-0}, we get
\begin{align*}
	-\langle\mathcal{L}^{\beta}(g), g\rangle_{\tilde{\mathcal{H}}^2}
	\geq \frac34 c_1\alpha \|g\|_{\tilde{\mathcal{H}}^2}^2
	-C\alpha^{-2} \|g \|_{\mathcal{H}^2}^2.
\end{align*}
By Lemma \ref{9lemma1}, we obtain that
\begin{align*}
	|\langle\frac{3}{2\alpha}L^{-1}_{z,K}(g)F^{\ast}_{\beta}, g\rangle_{\tilde{\mathcal{H}}^2}|
	&\lesssim 
	\|g\|_{\tilde{\mathcal{H}}^2}\|\frac{3}{2\alpha}L^{-1}_{z,K}(g)F^{\ast}_{\beta}\|_{\tilde{\mathcal{H}}^2} \\ 
	&\lesssim 
	\frac{1}{\beta^4}(1-\eta)^{-\frac{1}{2}}\|g\|_{\tilde{\mathcal{H}}^2}\sum_{i=0}^{2}\|D^i_z L^{-1}_{z,K}(g)w_z\|_{L^2_z}\\ 
	&\lesssim 
	\frac{c_1\alpha}{4}\|g\|^2_{\tilde{\mathcal{H}}^2}+C_{\beta}\alpha^{-1}(1-\eta)^{-1}\|g\|^2_{\mathcal{H}^2}.
\end{align*}
Combining the above inequality, we complete the proof of this proposition.
\end{proof}

\subsubsection{Parameter stability of main operator}
The goal of this subsection is to establish the coercivity of the operator $\mathcal{L}_{\Gamma}$ under the new weight, relying on the proven coercivity of $\mathcal{L}^{\beta}_{\Gamma}$ from Proposition \ref{9co1}.
In Section \ref{sec:Coer}, the continuity estimate \eqref{continu-F*} was applied to obtain the smallness parameter $\delta_0$, which was crucial for establishing the parameter stability of the main operator. 
In contrast, the key simplification here is that we can omit \eqref{continu-F*} entirely. This is possible for two reasons: (i) the finite interval estimates are already provided in Section \ref{sec:Coer}, and (ii) for $z \geq B_0=\alpha^{-2\beta}$, the condition $\gamma < \beta$ implies that both $\Gamma^{\ast}_{\beta}$ and $\Gamma^{\ast}_{\gamma}$ behave like $O(\alpha^2)$ at infinity. 
As a result, the proof of coercivity for $\mathcal{L}_{\Gamma}$ under the new weight (Proposition \ref{9co2} below) becomes more straightforward than that of Proposition \ref{5co2}.

\begin{prop}\label{9co2}
There exist constants $\alpha>0$ sufficiently small and $\eta(\beta)$, such that if $\alpha\ll 1-\eta\ll \beta$ and $|\mu|\leq c_2\alpha$, then
\begin{align}\label{sec9:oper-L2-H2}
	&
	\langle\mathcal{L}_{\Gamma}(g), g\rangle_{\mathcal{H}^2}\geq \frac{1}{4}c_1 \alpha\|g\|^2_{\tilde{\mathcal{H}}^2}
	-C_{\beta}\alpha^{-4}(1-\eta)^{-\frac12}\|g\|_{\mathcal{H}^2}^2
	,
\end{align}
where $C>0$ is a constant independent of $\alpha$.
\end{prop}
\begin{proof}
According to \eqref{sec9:est-H2eta} in Proposition \ref{9co1}, we compute that
\begin{align}\label{9p1}
	\langle\mathcal{L}_{\Gamma}^{\beta}(g), g\rangle_{\tilde{\mathcal{H}}^2}\geq \frac{1}{2}c_1\alpha\|g\|^2_{\tilde{\mathcal{H}}^2}
	-C\alpha^{-2}\|g\|_{\mathcal{H}^2}^2
	.
\end{align}
Using $\gamma<\beta$, we get that
\begin{align}\label{est-bg-1}
	&\|L^{-1}_z(\Gamma^\ast_{\beta})-L^{-1}_z(\Gamma^\ast_{\gamma})\|_{L^\infty_z(z\geq B_0)}
	+\frac{3}{2\alpha}\|F^{\ast}_{\beta}-F^{\ast}_{\gamma}\|_{L^\infty_{\theta}L^\infty_z(z\geq B_0)}
	\lesssim \alpha^2 \beta^{-1}.
\end{align}
And it is easy to check that
\begin{align}\label{est-bg-2}
	&\|L^{-1}_z(\Gamma^\ast_{\beta})-L^{-1}_z(\Gamma^\ast_{\gamma})\|_{L^\infty_z(z\leq B_0)}
	+\frac{3}{2\alpha}\|F^{\ast}_{\beta}-F^{\ast}_{\gamma}\|_{L^\infty_{\theta}L^\infty_z(z\leq B_0)}
	\lesssim \beta^{-1}.
\end{align}
Making use of \eqref{def-wP}, \eqref{est-bg-1} and \eqref{est-bg-2}, we can deduce from a similar way as \eqref{5p3} that
\begin{align}\label{9p3}
	|\langle\tilde{P}(g), g(\tilde{w}^\eta)^2\rangle|
	&\lesssim 
	\alpha^{2}\beta^{-1}\|g\tilde{w}^\eta\|_{L^2}\big(\|g\tilde{w}^\eta\|_{L^2}+(1-\eta)^{-\frac12}\|g\tilde{w}^\eta\|_{L^2}\big)
	\\
	\notag
	&\quad~
	+\alpha^{-4}\beta^{-1}\|gw^\eta\|_{L^2}\big(\|gw^\eta\|_{L^2}+(1-\eta)^{-\frac12}\|gw^K\|_{L^2}\big)
	\\ \notag
	&\lesssim\alpha^{2}\beta^{-1}(1-\eta)^{-\frac12}\|g\tilde{w}^\eta\|_{L^2}^2
	+\alpha^{-4}\beta^{-1}(1-\eta)^{-\frac12}\|gw^\eta\|_{L^2}^2
	.
\end{align}

The remaining terms can be handled similarly to the proof of \eqref{9p3} and are therefore omitted. It follows that \eqref{sec9:oper-L2-H2} holds.

\end{proof}

\subsection{Estimates of transport term}
The objective of this subsection is to establish estimates for the transport term in the $\tilde{H}^2$ space endowed with the new weights $\tilde{w}^\eta$ and $\tilde{w}^\lambda$.
Following the approach of Section \ref{sec:Est-T}, we begin by presenting several lemmas that will be essential for our subsequent treatment of the transport term.
In parallel with Lemma \ref{7em1}, we state the following lemma under the new weight $\tilde{w}_z$.
\begin{lemm}\label{9em1}
	Let $f(z,\theta)|_{\partial D}=0$. Then we have
	\begin{align}\label{9est1}
		\|f(1+z)^{\frac12}\|_{L^{\infty}_{z}}\leq C\|D_{z}f w_{z}\|_{L^{2}_{z}},
		\quad 
		\|f(1+z)^{\frac12}\|_{L^{\infty}_{z}}\leq C\|D_{z}f \tilde{w}^\ast_{z}\|_{L^{2}_{z}}.
	\end{align}
\end{lemm}
\begin{proof}
	Recalling the definitions of $w_z$ and $\beta\in (0,1]$, then applying integration by parts and $f(z,\theta)|_{\partial D}=0$, one obtains that
	\begin{align*}
		f^2&
		\lesssim \left(\int_z^{\infty}|\partial_z f|dz\right)^2 
		\lesssim \int_z^{\infty}|D_z f|^2(w_z)^2 dz\int_z^{\infty}(zw_z)^{-2} dz
		\\
		&\lesssim \|D_{z}fw_{z}\|^2_{L^{2}_{z}}\int_z^{\infty}(1+z)^{-2} dz \lesssim \|D_{z}fw_{z}\|^2_{L^{2}_{z}}(1+z)^{-1},
	\end{align*}
	which yields directly that the first estimate of \eqref{9est1} holds.
	We immediately obtain from the definition of $\tilde{w}^{\ast}_{z}$ that
	\begin{align*}
		\|f(1+z)^{\frac12}\|^2_{L^{\infty}_{z}}
		&\lesssim \|D_{z}f\tilde{w}^{\ast}_{z}\|^2_{L^{2}_{z}}\int_0^{\infty}(1+z)^{-\frac{3}{2}} dz 
		\lesssim \|D_{z}f\tilde{w}^{\ast}_{z}\|^2_{L^{2}_{z}}.
	\end{align*}
	This implies that the second estimate of \eqref{9est1} holds.
\end{proof}
Remembering the definitions of some weight functions $w_{z}$ and $w^{\lambda}$ in Section \ref{sec:notation}, we can derive the following corollary. 
Following Lemmas \ref{7em1} and \ref{9em1}, we skip the proof.
\begin{coro}\label{9em2}
	Let $f(z,\theta)|_{\partial D}=0$. Then we have
	$$\|f(1+z)^{\frac12}\|_{L^{\infty}}\leq C\sqrt{\frac{\beta}{\alpha}}\|D_{\theta}D_z fw^{\lambda}\|_{L^{2}},
	~~~
	\|f(1+z)^{\frac12}\|_{L^{\infty}}\leq C\sqrt{\frac{\beta}{\alpha}}\|D_{\theta}D_z f\tilde{w}^{\ast,\lambda}\|_{L^{2}}.
	$$
\end{coro}
Similar to Lemma \ref{7ha1}, we can derive the following lemma with new weight.
\begin{lemm}\label{9ha1}
	Let $f(z,\theta)|_{\partial D}=0$ and $\tilde{G}_f(z)=\frac{3}{4\alpha}z^{-\frac{5}{\alpha}}\int_{0}^{z}\rho^{\frac{5}{\alpha}-1}\langle f, K\rangle_{\theta}d\rho$. 
	Then we have
	$$\|D^k_z\tilde{G}_f\tilde{w}^{\ast}_z\|_{L^{2}_{z}}\leq C\|f\|_{\tilde{\mathcal{H}}^{k,\ast}},
	~~~~
	\alpha\|D^{k+1}_z\tilde{G}_f\tilde{w}^{\ast}_z\|_{L^{2}_{z}}\leq C\|f\|_{\tilde{\mathcal{H}}^{k,\ast}}.$$   
\end{lemm}
We derive a more refined estimate for the fundamental solution $F^\ast_\gamma$. Then estimate can be handled similarly to the proof of Lemma \ref{7fm1} and are therefore omitted.
\begin{lemm}\label{9fm1}
	Let $\beta \in (0,1]$ and $\gamma = \frac{1+\mu}{1-\mu}\beta$.
	There exist constants $\alpha>0$ sufficiently small and $\eta(\beta)$, such that if $\alpha\ll 1-\eta\ll \beta$ and $|\mu|\leq c_2\alpha$, then there holds 
	\begin{align*}
		\| F^\ast_{\gamma} \|_{\tilde{\mathcal{H}}^{k,\ast}}\leq C_{\beta}\alpha (1-\eta)^{-\frac{1}{2}},
	\end{align*}
	where integer $k$ satisfies $0\leq k\leq 4$.
\end{lemm}

In a manner similar to Lemma \ref{7div1}, a similar null structure can be derived, which reads:
\begin{lemm}\label{9div1}
	Let $f(z,\theta)|_{\partial D}=0$. 
	For any $\xi\in[0,+\infty)$, we denote that 
	\begin{align}\label{def-twxi}
		\bar{\tilde{w}}^\xi\tri \tilde{w}^\xi z^{\frac{1}{2}-\frac{3}{2\alpha}}(\cos\theta)^{-\frac{1}{2}},
	\end{align}
	then we have
	\begin{align}
		\label{stru-Tf-2}
		\Big\langle \frac{1}{\bar{\tilde{w}}^\xi}T_{\bar{\tilde{w}}^\xi f},(\tilde{w}^\xi)^2f\Big\rangle=0.
	\end{align}
\end{lemm}
The proof of Lemma \ref{9div1} closely follows that of Lemma \ref{7div1}, so we omit it.
\begin{rema}
	In what follows, we mainly employ the cases $\xi=\eta$ or $\lambda$ from Lemma \ref{9div1}.
\end{rema}

Next, we will establish the \textit{a priori} energy estimate for the $\tilde{\mathcal{H}}^{2}$-norm of $T_g$ by combining (i) the elliptic estimates for equation \eqref{6model} from Section \ref{sec:Elli}, (ii) the auxiliary $L^\infty$ bounds in Lemma \ref{9em1} and Corollary \ref{9em2}, and (iii) the transport null structure from Lemma \ref{9div1}.
\begin{prop}\label{9Tg}
	Let $L^{-1}_{z,K}(g)(0)=0$, there exist constants $\alpha>0$ sufficiently small and $\eta(\beta)$, such that if $\alpha\ll 1-\eta\ll \beta$ and $|\mu|\leq c_2\alpha$, then 
	\begin{align}\label{sec9:est-norm-H2eta}
		\big|\langle T_g, g\rangle_{\tilde{\mathcal{H}}^2}\big|
		\leq& ~
		\frac{1}{10}c_1\alpha\|g\|^2_{\tilde{\mathcal{H}}^2}
+ C_{\alpha}.
	\end{align}
\end{prop}

\begin{proof}
	Mirroring the approach used for $\bar{J}_1$ in \eqref{est-bJ1-1}, we split the $\tilde{\mathcal{L}}^{2}$-norm as follows:
	\begin{align}\label{sec9:est-bJ1-1}
		\tilde{\bar{J}}_1
		\tri\langle \tilde{T}_g^d, g(\tilde{w}^\eta)^2\rangle
		-\langle \tilde{T}_g^e, g(\tilde{w}^\eta)^2\rangle
		\tri \tilde{\bar{J}}_{11}+\tilde{\bar{J}}_{12}.
	\end{align}
	Here $\tilde{T}^d_g$ and $\tilde{T}^e_g$ are defined by
	\begin{align*}
		\tilde{T}^d_g\tri&\frac{1}{\bar{\tilde{w}}^\eta}\bigg[\frac{U(\Phi)}{\sin(2\theta)}D_{\theta}(g\bar{\tilde{w}}^\eta)+V(\Phi)\alpha D_{z}(g\bar{\tilde{w}}^\eta)\bigg]=\frac{1}{\bar{\tilde{w}}^\eta}T_{g\bar{\tilde{w}}^\eta},
		\\
		\tilde{T}^e_g\tri&\frac{1}{\bar{\tilde{w}}^\eta}g\bigg[\frac{U(\Phi)}{\sin(2\theta)}D_{\theta}(\bar{\tilde{w}}^\eta)+V(\Phi)\alpha D_{z}(\bar{\tilde{w}}^\eta)\bigg]=\frac{1}{\bar{\tilde{w}}^\eta}gT_{\bar{\tilde{w}}^\eta}.
	\end{align*}
	Here $\bar{w}^\eta$ is defined by \eqref{def-twxi} with $\xi=\eta$.
	Applying Lemma \ref{9div1} with $\xi=\eta$ yields
	\begin{align}\label{9g16}
		\tilde{\bar{J}}_{11}=0.
	\end{align}
	Then by exactly the same procedure as that in the decomposition of \eqref{est-bI-1}, we can deduce from \eqref{cal_U} and \eqref{cal_V} that
	\begin{align}\label{sec9:est-bJ12-1}
		\tilde{\bar{J}}_{12}
		=-\langle \tilde{T}_g^{e,l}, g(\tilde{w}^\eta)^2\rangle
		-\langle \tilde{T}_g^{e,n}, g(\tilde{w}^\eta)^2\rangle
		\tri \tilde{\bar{J}}_{121}+\tilde{\bar{J}}_{122},
	\end{align}
	where $\tilde{T}^{e,l}_g$ and $\tilde{T}^{e,n}_g$ are defined by
	\begin{align*}
		\tilde{T}^{e,l}_g
		\tri&\frac{1}{\bar{\tilde{w}}^\eta}g\bigg[\frac12\left(-3L^{-1}_{z}(\Gamma^{\ast}_{\gamma})+\alpha\Gamma^{\ast}_{\gamma}\right)D_{\theta}\bar{\tilde{w}}^\eta+L^{-1}_{z}(\Gamma^{\ast}_{\gamma})(\cos(2\theta)-\sin^2(\theta))\alpha D_{z}\bar{\tilde{w}}^\eta\bigg],
		\\
		\tilde{T}^{e,n}_g
		\tri&\frac{1}{\bar{\tilde{w}}^\eta}g\bigg[\frac{U(\Phi-\frac{3}{4\alpha}L^{-1}_{z,K}(F^{\ast}_\gamma)\sin(2\theta))}{\sin(2\theta)}D_{\theta}\bar{\tilde{w}}^\eta+V\Big(\Phi-\frac{3}{4\alpha}L^{-1}_{z,K}(F^{\ast}_\gamma)\sin(2\theta)\Big)\alpha D_{z}\bar{\tilde{w}}^\eta\bigg].
	\end{align*}
	Recalling the definition of $\bar{\tilde{w}}^\eta$ in \eqref{def-twxi}, one can infer that
	\begin{align}\label{9g17}
		\bigg\|\frac{D_{\theta}\bar{\tilde{w}}^\eta}{\bar{\tilde{w}}^\eta}\bigg\|_{L^\infty}+\alpha\bigg\|\frac{D_{z}\bar{\tilde{w}}^\eta}{\bar{\tilde{w}}^\eta}\bigg\|_{L^\infty}\leq C.
	\end{align}
	We then compute that
	\begin{align}
		|L^{-1}_{z}(\Gamma^{\ast}_{\gamma})|\lesssim \alpha^2, \quad \text{for}~~z\geq B_0,
		\label{est-Ga-1}
	\end{align}
	We also decompose the estimate of $\tilde{\bar{J}}_{121}$ into two intervals: $z\in [0,B_0]$ and $z\in [B_0,+\infty]$.
	For $z \in [B_0, +\infty)$, we employ \eqref{9g17}-\eqref{est-Ga-1}; whereas for $z \in [0, B_0)$, we adapt the technique from \eqref{7g20}. This leads to
	\begin{align}\label{9g20}
		|\tilde{\bar{J}}_{121}|
		\lesssim \alpha^2\left(\frac{\alpha}{\beta}+1\right)\|g\tilde{w}^\eta\|^2_{L^2}
		+\alpha^{-4}\left(\frac{\alpha}{\beta}+1\right)\|gw^\eta\|^2_{L^2}
		\leq \frac{1}{100}c_1\alpha\| g \|^2_{\tilde{\mathcal{H}}^2}+\alpha^{-4}\| g \|^2_{\mathcal{H}^2}.
	\end{align}
	An application of Lemmas \ref{9em1}, \ref{9ha1} and \ref{9fm1} yields
	\begin{align}\label{9g21}
		\bigg\|\frac{U(\Phi-\frac{3}{4\alpha}L^{-1}_{z,K}(F^{\ast}_\gamma)\sin(2\theta))}{\sin(2\theta)}(1+z)^{\frac12}\bigg\|_{L^{\infty}}
		\lesssim 
		\alpha^{\frac{1}{2}}(1-\eta)^{-\frac{1}{2}}
		+
		\alpha^{-1}\|g\|_{\mathcal{H}^{2}},
	\end{align}
and
	\begin{align}\label{9g22}
		\bigg\|V\Big(\Phi-\frac{3}{4\alpha}L^{-1}_{z,K}(F^{\ast}_\gamma)\sin(2\theta)\Big)(1+z)^{\frac12}\bigg\|_{L^{\infty}}\lesssim_\beta \alpha^{\frac{1}{2}}(1-\eta)^{-\frac{1}{2}}+\alpha^{-1}\|g\|_{\mathcal{H}^{2}}.
	\end{align}
	With \eqref{82in5}, \eqref{9g17}, \eqref{9g21} and \eqref{9g22} at hand, we can deduce that
	\begin{align}\label{9g23}
		|\tilde{\bar{J}}_{122}|&\lesssim_\beta \big(\alpha^{\frac{1}{2}}(1-\eta)^{-\frac{1}{2}}+\alpha^{-1}\|g\|_{\mathcal{H}^{2}}\big)\|g\tilde{w}^\eta(1+z)^{-\frac14}\|^2_{L^2}
		\\
		&\lesssim_\beta\big(\alpha^{\frac{1}{2}}(1-\eta)^{-\frac{1}{2}}+\alpha^{-1}\|g\|_{\mathcal{H}^{2}}\big)
		\big(
		\alpha^{-\frac{4}{\beta}} \|gw^\eta\|^2_{L^2_\theta L^2_z(0\leq z\leq B_1)}
		+
		\alpha \|g\tilde{w}^\eta\|^2_{L^2_\theta L^2_z(z\geq B_1)}
		\big)
		\nonumber
		\\
		&\lesssim_\beta \alpha^{\frac{5}{4}}
		\|g\tilde{w}^\eta\|^2_{L^2}
		+\|g\|_{\mathcal{H}^{2}}\|g\tilde{w}^\eta\|^2_{L^2}
		+\alpha^{\frac12-\frac{4}{\beta}}(1-\eta)^{-\frac{1}{2}}
		\|g\|^2_{\mathcal{H}^2}
		+\alpha^{-\frac4\beta-1} \|g\|_{\mathcal{H}^2}^3,
		\nonumber
	\end{align}
	with $B_1=\alpha^{-2}$.
	Substituting \eqref{9g20} and \eqref{9g23} into \eqref{sec9:est-bJ12-1}, one obtains that
	\begin{align}\label{9g24}
		|\tilde{\bar{J}}_{12}|
		\leq &~\frac{1}{100}c_1\alpha\| g \|^2_{\tilde{\mathcal{H}}^2}
		+C_\beta\big(\alpha^{-4}+\alpha^{\frac12-\frac{4}{\beta}}(1-\eta)^{-\frac{1}{2}}\big)\| g \|^2_{\mathcal{H}^2}
		+C_{\beta}\alpha^{-\frac4\beta-1} \|g\|_{\mathcal{H}^2}^3.
	\end{align}
	We then insert \eqref{9g16} and \eqref{9g24} into \eqref{sec9:est-bJ1-1}, we find that
	\begin{align}\label{est-tJ-1}
		|\tilde{\bar{J}}_{1}|
		\leq &~\frac{1}{100}c_1\alpha\| g \|^2_{\tilde{\mathcal{H}}^2}
		+C_\beta\big(\alpha^{-4}+\alpha^{\frac12-\frac{4}{\beta}}(1-\eta)^{-\frac{1}{2}}\big)\| g \|^2_{\mathcal{H}^2}
		+C_{\beta}\alpha^{-\frac4\beta-1} \|g\|_{\mathcal{H}^2}^3.		
	\end{align}

The estimate of the remaining terms except for $D_zD_{\theta}g$ is similar to that of $\tilde{\bar{J}}_{1}$, so we will omit the proof here.

Then we split the inner product of $D_zD_{\theta}g$ as follows:
	\begin{align}\label{sec9:est-bJ33-1}
		\tilde{\bar{J}}_{33}
		\tri\langle T_{D_zD_{\theta}g}, D_zD_{\theta}g(\tilde{w}^\lambda)^2\rangle
		+\langle \mathcal{R}_5, D_zD_{\theta}g(\tilde{w}^\lambda)^2\rangle
		\tri \tilde{\bar{J}}_{331}+\tilde{\bar{J}}_{332},
	\end{align}
	where the commutator $\mathcal{R}_5$ is defined in \eqref{def-R5}.
	Repeating the same line as \eqref{9g24}, we deduce from Lemma \ref{9div1}, \eqref{9g21} and \eqref{9g22} that
	\begin{align}\label{9g102}
		|\tilde{\bar{J}}_{331}|
		\leq &~\frac{1}{200}c_1\alpha\| g \|^2_{\tilde{\mathcal{H}}^2}
		+C_\beta\big(\alpha^{-4}+\alpha^{-\frac{4}{\beta}}\big)\| g \|^2_{\mathcal{H}^2}
		+C_{\beta}\alpha^{-\frac{4}{\beta}-1}\| g \|^3_{\mathcal{H}^2}.
	\end{align}
	We control $\bar{J}_{332}$ through a decomposition similar to \eqref{est-bJ213-1}, specifically:
	\begin{align}\label{sec9:est-bJ332-1}
		\tilde{\bar{J}}_{332}
		=\langle \mathcal{R}_5^l, D_zD_{\theta}g(w^\lambda)^2\rangle
		+\langle \mathcal{R}_5^n, D_zD_{\theta}g(w^\lambda)^2\rangle
		\tri \tilde{\bar{J}}_{3321}+\tilde{\bar{J}}_{3322},
	\end{align}
	where $\mathcal{R}_5^l$ and $\mathcal{R}_5^n$ are defined in \eqref{def-R5l} and \eqref{def-R5n}, respectively.
	Adapting the approach developed for \eqref{9g20}, we derive
	\begin{align}\label{9g103}
		|\tilde{\bar{J}}_{3321}|
	    \leq \frac{1}{200}c_1\alpha\| g \|^2_{\tilde{\mathcal{H}}^2}+ C\alpha^{-4}\| g \|^2_{\mathcal{H}^2}.
	\end{align}
	By \eqref{7g39}, and following a derivation analogous to \eqref{9g21}, we establish that
	\begin{align}\label{9g104}
		&~~~~\bigg|\bigg\langle D_\theta\bigg(\frac{U(\Phi-\frac{3}{4\alpha}L^{-1}_{z,K}(F^{\ast}_\gamma)\sin(2\theta))}{\sin(2\theta)}\bigg)D_zD_{\theta}g,D_zD_{\theta}g(w^\lambda)^2\bigg\rangle\bigg| \\ \notag
		&~~~~+\bigg|\bigg\langle D_{\theta}\left(V(\Phi-\frac{3}{4\alpha}L^{-1}_{z,K}(F^{\ast}_\gamma)\sin(2\theta))\right)\alpha D^2_{z} g,D_zD_{\theta}g(w^\lambda)^2\bigg\rangle\bigg|
		\\ \notag
		&\lesssim_\beta  \big(\alpha^{\frac{1}{2}}(1-\eta)^{-\frac{1}{2}}+\alpha^{-1}\|g\|_{\mathcal{H}^{2}}\big)\big(\alpha^{-\frac{4}{\beta}}\| g \|^2_{\mathcal{H}^2}+ \alpha\| g \|^2_{\tilde{\mathcal{H}}^2}\big).
	\end{align}
	We deduce from Proposition \ref{6prop0}, Corollaries \ref{6cor1}, \ref{6cor2} and \ref{9em2} that
	\begin{align}
		\label{9g106}
		&\bigg\|D_zD_\theta\left(\frac{U(\tilde{\Phi}_{F^\ast_{\gamma}})}{\sin(2\theta)}\right)w_\theta^\lambda(1+z)^{\frac12}\bigg\|_{L_z^{\infty}L_\theta^2}
		\lesssim
		\|\partial_\theta U(\tilde{\Phi}_{F^\ast_{\gamma}})\|_{\mathcal{H}^{3,\ast}} \lesssim_\beta\alpha(1-\eta)^{-\frac{1}{2}},
		\\
		&\bigg\|D_zD_\theta\left(\frac{U(\tilde{\Phi}_{g})}{\sin(2\theta)}\right)\tilde{w}_z\bigg\|_{L_z^2L_\theta^{\infty}}
		\lesssim \sqrt{\frac{\beta}{\alpha}}\left(\|\partial_\theta U(\tilde{\Phi}_{g})\|_{\tilde{\mathcal{H}}^{2}}+\|\partial^2_\theta \tilde{\Phi}_{g}\|_{\tilde{\mathcal{H}}^{2}}\right) \lesssim_\beta\alpha^{-\frac12}\|g\|_{\tilde{\mathcal{H}}^{2}}.
		\label{9g107}
	\end{align}
	The combination of \eqref{est-DzDtheta-1}, \eqref{9g106}-\eqref{9g107}, Corollary \ref{7em2} and Lemma \ref{9em1} yields directly that
	\begin{align}
		\label{9g108}
		&~~~~\bigg|\bigg\langle D_zD_\theta\bigg(\frac{U(\Phi-\frac{3}{4\alpha}L^{-1}_{z,K}(F^{\ast}_\gamma)\sin(2\theta))}{\sin(2\theta)}\bigg)D_{\theta}g,D_zD_{\theta}g(\tilde{w}^\lambda)^2\bigg\rangle\bigg| 
		\\ \notag
		&\lesssim_\beta \alpha(1-\eta)^{-\frac{1}{2}}\|D_{\theta}g \tilde{w}_z\|_{L^2_z L^\infty_\theta}\|D_zD_{\theta}g \tilde{w}^\lambda(1+z)^{-\frac12}\|_{L^2}
		\\ \notag
		&~~~~
		+\alpha^{-\frac12}\|g\|_{\tilde{\mathcal{H}}^{2}}\|D_{\theta}g w_\theta^\lambda(1+z)^{\frac12}\|_{L^\infty_z L^2_\theta}\|D_zD_{\theta}g \tilde{w}^\lambda(1+z)^{-\frac12}\|_{L^2} \\ \notag
		&\lesssim_\beta \alpha^{\frac32}(1-\eta)^{-\frac{1}{2}}
		\| g \|^2_{\tilde{\mathcal{H}}^2}
		+\|g\|_{\mathcal{H}^{2}}\| g \|^2_{\tilde{\mathcal{H}}^2}
		+ \alpha^{-\frac{4}{\beta}}\| g \|^2_{\mathcal{H}^2}
		+ \alpha^{-1-\frac{4}{\beta}}\| g \|^3_{\mathcal{H}^2}.
	\end{align}
	It follows from a similar way as \eqref{7g73}-\eqref{7g78} that
	\begin{align*}
		\alpha\bigg\|D_zD_\theta V\bigg(\Phi-\frac{3}{4\alpha}L^{-1}_{z,K}(F^{\ast})\sin(2\theta)\bigg)w_\theta^\lambda(1+z)^{\frac12}\bigg\|_{L_z^{\infty}L_\theta^2}\lesssim_\beta \alpha(1-\eta)^{-\frac{1}{2}}+\|g\|_{\mathcal{H}^{2}}.
	\end{align*}
	This, combined with Corollary \ref{7em2}, gives that
	\begin{align}\label{9g115}
		&~~~~~\bigg|\bigg\langle D_zD_\theta V\bigg(\Phi-\frac{3}{4\alpha}L^{-1}_{z,K}(F^{\ast}_\gamma)\sin(2\theta)\bigg)\alpha D_z g,D_zD_{\theta}g(\tilde{w}^\lambda)^2\bigg\rangle \bigg|
		\\ \notag
		&\lesssim_\beta \left(\alpha(1-\eta)^{-\frac{1}{2}}+\|g\|_{\mathcal{H}^{2}}\right)\|D_{z}g \tilde{w}_z\|_{L^2_z L^\infty_\theta} \|D_zD_{\theta}g \tilde{w}^\lambda(1+z)^{-\frac12}\|_{L^2} 
		\\ \notag
		&\lesssim_\beta
		\alpha^{\frac{3}{2}}(1-\eta)^{-\frac{1}{2}}\| g \|^2_{\tilde{\mathcal{H}}^2}
		+\alpha^{\frac12}\|g\|_{\mathcal{H}^{2}}\| g \|^2_{\tilde{\mathcal{H}}^2}
		+\big(\alpha^{\frac{1}{2}}(1-\eta)^{-\frac{1}{2}}+\alpha^{-\frac12}\|g\|_{\mathcal{H}^{2}}\big)\alpha^{-\frac{4}{\beta}}\| g \|^2_{\mathcal{H}^2}.
	\end{align}
	With the help of \eqref{7g94}, one gets that
	\begin{align}\label{9g116}
		&\bigg|\bigg\langle D_z V\bigg(\Phi-\frac{3}{4\alpha}L^{-1}_{z,K}(F^{\ast}_\gamma)\sin(2\theta)\bigg)\alpha D_zD_\theta g,D_zD_{\theta}g(\tilde{w}^\lambda)^2\bigg\rangle\bigg| \\ \notag
		\lesssim_\beta& \big(\alpha^{\frac{3}{2}}(1-\eta)^{-\frac{1}{2}}+\alpha^{-\frac{1}{2}}\|g\|_{\mathcal{H}^{2}}\big)\| g \|^2_{\tilde{\mathcal{H}}^2}.
	\end{align}
	In Section \ref{sec:Est-T}, $\|D_z \Big(\frac{U(\tilde{\Phi}_{g})}{\sin(2\theta)}\Big)\|_{L^\infty}$ is a difficult term to deal with, we thus introduce the $\mathcal{E}^{2}$-norm.
	However, we can now simply control this term by choosing $B_2 = \alpha^{-4}$ and splitting the domain into two intervals: $[0, B_2]$ and $[B_2, +\infty)$.
	Taking advantage of Corollaries \ref{9em2}, \ref{6cor3} and Lemma \ref{6lem4}, we get that
	\begin{align}\label{9g117}
		\bigg\|D_z \bigg(\frac{U(\tilde{\Phi}_{g})}{\sin(2\theta)}\bigg)(1+z)^{\frac12}\bigg\|_{L^{\infty}}
		&\lesssim \sqrt{\frac{\beta}{\alpha}}\bigg\|D^2_zD_\theta \bigg(\frac{U(\tilde{\Phi}_{g})}{\sin(2\theta)}\bigg)w^\lambda\bigg\|_{L^{2}} 
		\\
		&\lesssim \sqrt{\frac{\beta}{\alpha}}\|\partial_\theta D^2_z U(\tilde{\Phi}_{g})w^\lambda\|_{L^{2}} 
		\lesssim_\beta \alpha^{-1}\|g\|_{\mathcal{E}^{2}}.
		\notag
	\end{align}
	In view of \eqref{9g117}, we similarly deduce that
	\begin{align}
		\label{9g118}
		\bigg\|D_z \bigg(\frac{U(\Phi-\frac{3}{4\alpha}L^{-1}_{z,K}(F^{\ast}_\gamma)\sin(2\theta))}{\sin(2\theta)}\bigg)(1+z)^{\frac12}\bigg\|_{L^{\infty}}
		\lesssim_\beta \alpha^{-1}\|g\|_{\mathcal{E}^{2}}
		+\alpha^{-1}\|g\|_{\mathcal{H}^{2}}
		+\alpha^{\frac{1}{2}}(1-\eta)^{-\frac{1}{2}},
	\end{align}
	which yields that
	\begin{align}\label{9g119}
		&~~~~\bigg|\bigg\langle D_z \bigg(\frac{U(\Phi-\frac{3}{4\alpha}L^{-1}_{z,K}(F^{\ast}_\gamma)\sin(2\theta))}{\sin(2\theta)}\bigg)D^2_\theta g,D_zD_{\theta}g(\tilde{w}^\lambda)^2\bigg\rangle\bigg| \\ \notag
		&\lesssim \bigg\|D_z \bigg(\frac{U(\Phi-\frac{3}{4\alpha}L^{-1}_{z,K}(F^{\ast}_\gamma)\sin(2\theta))}{\sin(2\theta)}\bigg)(1+z)^{\frac12}\bigg\|_{L^{\infty}} 
		\|D^2_\theta g \tilde{w}^\lambda\|_{L^2}
		\\
		\notag
		&~~~~\times \big(\|D_zD_\theta g \tilde{w}^\lambda(1+z)^{-\frac12}\|_{L^2_\theta L^2_z(0\leq z\leq B_2)}
		+\|D_zD_\theta g \tilde{w}^\lambda(1+z)^{-\frac12}\|_{L^2_\theta L^2_z(z\geq B_2)}\big)
		\\
		\notag
		&\lesssim_\beta \big(\alpha^{-1}\|g\|_{\mathcal{E}^{2}}+\alpha^{-1}\|g\|_{\mathcal{H}^{2}}+\alpha^{\frac{1}{2}}(1-\eta)^{-\frac{1}{2}}\big) \big(\alpha^2\| g \|^2_{\tilde{\mathcal{H}}^2}+\alpha^{-\frac{6}{\beta}}\| g \|^2_{\mathcal{H}^2}\big),
	\end{align}
	with $B_2=\alpha^{-4}$.
	Taking advantage of \eqref{9g104}, \eqref{9g108}-\eqref{9g116} and \eqref{9g119}, it is easy to deduce that
	\begin{align}\label{9g120}
		|\tilde{\bar{J}}_{3322}|
		\lesssim_\beta &~\alpha^{\frac{3}{2}}(1-\eta)^{-\frac{1}{2}}
		\| g \|^2_{\tilde{\mathcal{H}}^2}
		+\|g\|_{\mathcal{H}^{2}}\| g \|^2_{\tilde{\mathcal{H}}^2}
		+\alpha\|g\|_{\mathcal{E}^{2}}\| g \|^2_{\tilde{\mathcal{H}}^2}
		\\
		&~+ \alpha^{-\frac{6}{\beta}}\| g \|^2_{\mathcal{H}^2}
		+\alpha^{-1-\frac{6}{\beta}}\| g \|^3_{\mathcal{H}^2}
		+\alpha^{-1-\frac{6}{\beta}}\|g\|_{\mathcal{E}^2}\| g \|^2_{\mathcal{H}^2}
		.
		\notag
	\end{align}
	We then substitute \eqref{9g103} and \eqref{9g120} into \eqref{sec9:est-bJ332-1}, to discover that
	\begin{align}\label{sec9:est-bJ332-2}
		|\tilde{\bar{J}}_{332}|
		\leq&~ \frac{1}{200}c_1\alpha\|g\|^2_{\tilde{\mathcal{H}}^2}+ C_{\beta}\alpha^{-\frac{6}{\beta}}\| g \|^2_{\mathcal{H}^2}
		+C_{\beta}\alpha^{-1-\frac{6}{\beta}}\| g \|^3_{\mathcal{H}^2}
		+C_{\beta}\alpha^{-1-\frac{6}{\beta}}\|g\|_{\mathcal{E}^2}\| g \|^2_{\mathcal{H}^2}.
	\end{align}
	Plugging \eqref{9g102} and \eqref{sec9:est-bJ332-2} into \eqref{sec9:est-bJ33-1}, we easily deduce that
	\begin{align}\label{9g121}
		|\tilde{\bar{J}}_{33}| 
		\leq&~\frac{1}{100}c_1\alpha\|g\|^2_{\tilde{\mathcal{H}}^2}+ C_{\beta}\alpha^{-\frac{6}{\beta}}\| g \|^2_{\mathcal{H}^2}
		+C_{\beta}\alpha^{-1-\frac{6}{\beta}}\| g \|^3_{\mathcal{H}^2}
		+C_{\beta}\alpha^{-1-\frac{6}{\beta}}\|g\|_{\mathcal{E}^2}\| g \|^2_{\mathcal{H}^2}
		.
	\end{align}

	Consequently, collecting the estimates, we get \eqref{sec9:est-norm-H2eta} immediately.
\end{proof}

	Having established Lemma \ref{7fm1}, we proceed to obtain the a priori energy estimates for the $\tilde{\mathcal{H}}^2$-norm of $T_{F^\ast_{\gamma}}$.

	\begin{prop}\label{9Tf}
		Let $\beta\in(0,1]$, there exist constants $\alpha>0$ sufficiently small and $\eta(\beta)$, such that if $\alpha\ll 1-\eta\ll \beta$ and $|\mu|\leq c_2\alpha$, then
		\begin{align}
			|\langle T_{F^\ast_{\gamma}}, g\rangle_{\tilde{\mathcal{H}}^2}|
			\leq C_\beta \big(\alpha(1-\eta)^{-1}+(1-\eta)^{-\frac12}\|g\|_{\mathcal{H}^2}\big)\alpha \|g\|_{\tilde{\mathcal{H}}^2}.
			\label{sec9:est-TF-H2eta}
		\end{align}
	\end{prop}
	\begin{proof}
		We perform the decomposition as follows:
		\begin{align}\label{sec9:est-bM-1}
			\langle T_{F^\ast_{\gamma}}, g\rangle_{\tilde{\mathcal{H}}^2}
			=\langle T^l_{F^\ast_{\gamma}}, g\rangle_{\tilde{\mathcal{H}}^2}
			+\langle T^n_{F^\ast_{\gamma}}, g\rangle_{\tilde{\mathcal{H}}^2}
			\tri \tilde{\bar{M}}_1+\tilde{\bar{M}}_2,
		\end{align}
		where $T^l_{F^\ast_{\gamma}}$ and $T^n_{F^\ast_{\gamma}}$ are defined in \eqref{def-Tl} and \eqref{def-Tn}, respectively.
	   In view of Lemma \ref{9fm1}, \eqref{est-bd-Gamma} and \eqref{cal-Fgamma}, one can obtain that
	   \begin{align}\label{9f14-1}
		  |\tilde{\bar{M}}_1|
		  \lesssim \|T^l_{F^\ast_{\gamma}}\|_{\tilde{\mathcal{H}}^2}\|g\|_{\tilde{\mathcal{H}}^2}
		  \lesssim_\beta \alpha
		  \|F^\ast_{\gamma}\|_{\tilde{\mathcal{H}}^{2,\ast}}\|g\|_{\tilde{\mathcal{H}}^2}
		\lesssim_\beta \alpha^2(1-\eta)^{-\frac{1}{2}}\|g\|_{\tilde{\mathcal{H}}^2}.
	   \end{align}
	   Applying Lemma \ref{7fm1}, it is enough to prove that
	   \begin{align}\label{9f16}
	   	&~~~~\bigg|\bigg\langle \frac{U(\Phi-\frac{3}{4\alpha}L^{-1}_{z,K}(F^{\ast}_\gamma)\sin(2\theta))}{\sin(2\theta)}D_{\theta}F^\ast_{\gamma}, g\bigg\rangle_{\tilde{\mathcal{H}}^2}\bigg| 
	   	\\ \notag
	   	&\lesssim (1-\eta)^{-2}\bigg\|\frac{U(\Phi-\frac{3}{4\alpha}L^{-1}_{z,K}(F^{\ast}_\gamma)\sin(2\theta))}{\sin(2\theta)}D_{\theta}F^\ast_{\gamma}\bigg\|_{\tilde{\mathcal{H}}^2}\|g\|_{\tilde{\mathcal{H}}^2}\\ \notag
	   	&\lesssim_\beta\alpha^2(1-\eta)^{-\frac{1}{2}}\|g\|_{\tilde{\mathcal{H}}^2}
	   	\sum_{i=0}^{2}\left(\|D^i_z U(\tilde{G}_g+G^\ast_g)w_z\|_{L^2_z}+\|D^i_z U(\tilde{G}_{F^{\ast}_\gamma})w_z^\ast\|_{L^2_z}\right)
	   	\\ \notag
	   	&~~~~+\alpha^2\|g\|_{\tilde{\mathcal{H}}^2}\left(\bigg\|\frac{U(\tilde{\Phi}_g)}{\sin(2\theta)}\bigg\|_{\mathcal{H}^2}
	   	+\bigg\|\frac{U(\tilde{\Phi}_{F^{\ast}_\gamma})}{\sin(2\theta)}\bigg\|_{\mathcal{H}^{2,\ast}}\right)
	   	\\ \notag
	   	&\lesssim_\beta\alpha^2(1-\eta)^{-\frac{1}{2}}\|g\|_{\tilde{\mathcal{H}}^2}
	   	\left(\alpha^{-1}\|g\|_{\mathcal{H}^2}+\|F^{\ast}_\gamma\|_{\mathcal{H}^{2,\ast}}\right)
	   	\\ \notag
	   	&\lesssim_\beta \big(\alpha^2(1-\eta)^{-1}+(1-\eta)^{-\frac12}\|g\|_{\mathcal{H}^2}\big)\alpha \|g\|_{\tilde{\mathcal{H}}^2}.
	   \end{align}
	   Again thanks to Lemma \ref{7fm1}, we can deduce from a similar derivation as \eqref{7f18} that
	   \begin{align}\label{9f18}
	   	\bigg|\bigg\langle V\bigg(\Phi-\frac{3}{4\alpha}L^{-1}_{z,K}(F^{\ast}_\gamma)\sin(2\theta)\bigg)\alpha D_{z}F^\ast_{\gamma}, g\bigg\rangle_{\tilde{\mathcal{H}}^2}\bigg| 
	   \lesssim_\beta \big(\alpha^2(1-\eta)^{-1}+(1-\eta)^{-\frac12}\|g\|_{\mathcal{H}^2}\big)\alpha \|g\|_{\tilde{\mathcal{H}}^2}.
	   \end{align}
	   The combination of \eqref{9f16} and \eqref{9f18} yields directly that
	   \begin{align}\label{9f19}
	   	&|\tilde{\bar{M}}_2|\lesssim_\beta\big(\alpha^2(1-\eta)^{-1}+(1-\eta)^{-\frac12}\|g\|_{\mathcal{H}^2}\big)\alpha \|g\|_{\tilde{\mathcal{H}}^2}.
	   \end{align}
	   Substituting \eqref{9f14-1} and \eqref{9f19} into \eqref{sec9:est-bM-1}, we get \eqref{sec9:est-TF-H2eta} immediately.
    \end{proof}

    \subsection{Estimates of the remaining terms}
    Our goal in this section is to handle the remaining terms. 
    To this end, we first present several useful lemmas.
    \begin{lemm}\label{9em1-1}
    	Let $f(z,\theta)|_{\partial D}=0$. Then we have
    	\begin{align}\label{9est1-1}
    		\|f(1+z)^{\frac12}\tilde{w}_zw_z^{-1}\|_{L^{\infty}_{z}}\leq C\|D_{z}f \tilde{w}_{z}\|_{L^{2}_{z}}.
    	\end{align}
    \end{lemm}
    \begin{proof}
    	It is easy to verify that
    	\begin{align*}
    		\tilde{w}_zw_z^{-1}=(1+z^{\frac1\beta})^{A}.
    	\end{align*}
    	Since $\beta\in (0,1]$,  we employ integration by parts and $f(z,\theta)|_{\partial D}=0$, to get that
    	\begin{align*}
    		f^2&
    		\lesssim \left(\int_z^{\infty}|\partial_z f|dz\right)^2 
    		\lesssim \int_z^{\infty}|D_z f|^2(\tilde{w}_z)^2 dz\int_z^{\infty}(z\tilde{w}_z)^{-2} dz
    		\\
    		&\lesssim \|D_{z}fw_{z}\|^2_{L^{2}_{z}}\int_z^{\infty}(1+z)^{-2-\frac{2A}{\beta}} dz \lesssim \|D_{z}fw_{z}\|^2_{L^{2}_{z}}(1+z)^{-1}\tilde{w}_z^2w_z^{-2}.
    	\end{align*}
    	This yields \eqref{9est1-1}, thereby proving the lemma.
    \end{proof}
    The following corollary, stated without proof, follows immediately from Lemma \ref{9em1-1}.
    \begin{coro}\label{9em2-1}
    	Let $f(z,\theta)|_{\partial D}=0$. Then we have
    	$$\|f(1+z)^{\frac12}\tilde{w}_zw_z^{-1}\|_{L^{\infty}}\leq C\sqrt{\frac{\beta}{\alpha}}\|D_{\theta}D_z f\tilde{w}^{\lambda}\|_{L^{2}}.
    	$$
    \end{coro}

    We introduce another weighted Sobolev spaces, denoted by $\tilde{\mathcal{H}}^k_{w+}$ and $\tilde{\mathcal{H}}^k_{w-}$ $(k\in\mathbb{N}^+)$, defined as follows:
    \begin{align*}
    	\|f\|_{\tilde{\mathcal{H}}^k_{w+}}
    	\tri\sum_{i=0}^{k}\|D^i_z f\tilde{w}^{\eta}(1+z)^{\frac12}\|^2_{L^2}+\sum_{0\leq i+j\leq k,j\geq 1}\|D^i_zD^j_{\theta} f\tilde{w}^{\lambda}(1+z)^{\frac12}\|^2_{L^2},
    	\\
    	\|f\|_{\tilde{\mathcal{H}}^k_{w-}}
    	\tri\sum_{i=0}^{k}\|D^i_z f\tilde{w}^{\eta}(1+z)^{-\frac12}\|^2_{L^2}+\sum_{0\leq i+j\leq k,j\geq 1}\|D^i_zD^j_{\theta} f\tilde{w}^{\lambda}(1+z)^{-\frac12}\|^2_{L^2}.
    \end{align*}
    It is easy to verify that
    \begin{align}\label{sec9:est-Hw}
    	\|fg\|_{\tilde{\mathcal{H}}^k}\lesssim \|f\|_{\tilde{\mathcal{H}}^k_{w+}}	\|g\|_{\tilde{\mathcal{H}}^k_{w-}}.
    \end{align}
    Our goal is to establish the following lemmas concerning product rules, relying on Corollary \ref{7em2}, Corollary \ref{9em2} and Corollary \ref{9em2-1}.

    \begin{lemm}\label{9pr1}
    	Let $f(z,\theta)|_{\partial D}=g(z,\theta)|_{\partial D}=0$. Then we have
    	\begin{align}\label{sec9:est-fg-H2}
    		\|fg\|_{\tilde{\mathcal{H}}^2_{w+}}\leq C\sqrt{\frac{\beta}{\alpha}}\|f\|_{\mathcal{H}^{2}}\|g\|_{\tilde{\mathcal{H}}^2}
    		,~~~
    		\|fg\|_{\tilde{\mathcal{H}}^2_{w+}}\leq C\sqrt{\frac{\beta}{\alpha}}\|f\|_{\tilde{\mathcal{H}}^{3,\ast}}\|g\|_{\tilde{\mathcal{H}}^2}.
    	\end{align}
    \end{lemm}
    
    \begin{proof}
    	Applying Corollary \ref{9em2}, we see that
    	\begin{align*}  
    		\|fg\tilde{w}^\eta (1+z)^{\frac12}\|_{L^2}\lesssim\|f(1+z)^{\frac12}\|_{L^\infty}\|g\tilde{w}^\eta\|_{L^2}\lesssim \sqrt{\frac{\beta}{\alpha}}\|f\|_{\mathcal{H}^{2}}\|g\tilde{w}^\eta\|_{L^2},
    	\end{align*}
    	and
    	\begin{align*}  
    		\|D_z(fg)\tilde{w}^\eta (1+z)^{\frac12}\|_{L^2}
    		&\lesssim\|f(1+z)^{\frac12}\|_{L^\infty}\|D_zg\tilde{w}^\eta\|_{L^2}+\|D_zfw_\theta^\eta(1+z)^{\frac12}\|_{L_z^\infty L_\theta^2}\|g\tilde{w}_z\|_{L^2_z L_\theta^\infty}
    		\\
    		&\lesssim \sqrt{\frac{\beta}{\alpha}}\|f\|_{\mathcal{H}^{2}}\|g\|_{\tilde{\mathcal{H}}^{1}},
    		\\
    		\|D_\theta(fg)\tilde{w}^\lambda (1+z)^{\frac12}\|_{L^2}
    		&\lesssim\|f(1+z)^{\frac12}\|_{L^\infty}\|D_\theta g\tilde{w}^\lambda\|_{L^2}
    		+\|D_\theta fw_\theta^\lambda(1+z)^{\frac12}\|_{L_z^\infty L_\theta^2}\|g\tilde{w}_z\|_{L^2_z L_\theta^\infty}
    		\\
    		&\lesssim \sqrt{\frac{\beta}{\alpha}}\|f\|_{\mathcal{H}^{2}}\|g\|_{\tilde{\mathcal{H}}^{1}}.
    	\end{align*}
    	By using Corollary \ref{7em2}, Corollary \ref{9em2}, Lemma \ref{9em1-1} and Corollary \ref{9em2-1}, we obtain that
    	\begin{align*}  
    		&\|D^2_\theta(fg)\tilde{w}^\lambda (1+z)^{\frac12}\|_{L^2}
    		+\|D^2_z(fg)\tilde{w}^\eta (1+z)^{\frac12}\|_{L^2}
    		\\
    		\lesssim&\|f(1+z)^{\frac12}\|_{L^\infty}\|D^2_\theta g\tilde{w}^\lambda\|_{L^2}
    		+\|D^2_\theta fw^\lambda\|_{L^2}\|g(1+z)^{\frac12}\tilde{w}_zw_z^{-1}\|_{L^\infty}
    		\\
    		&
    		+\|D_\theta fw_\theta^\lambda(1+z)^{\frac12}\|_{L_z^\infty L_\theta^2}\|D_\theta 
    		g\tilde{w}_z\|_{L^2_z L_\theta^\infty} 
    		+\|f(1+z)^{\frac12}\|_{L^\infty}\|D^2_z g\tilde{w}^\eta\|_{L^2}
    		\\
    		&
    		+\|D^2_z fw^\eta\|_{L^2}\|g(1+z)^{\frac12}\tilde{w}_zw_z^{-1}\|_{L^\infty}
    		+\|D_z fw_\theta^\eta(1+z)^{\frac12}\|_{L_z^\infty L_\theta^2}\|D_z 
    		g\tilde{w}_z\|_{L^2_z L_\theta^\infty} \\
    		\lesssim &\sqrt{\frac{\beta}{\alpha}}\|f\|_{\mathcal{H}^{2}}\|g\|_{\tilde{\mathcal{H}}^{2}},
    	\end{align*}
    	and
    	\begin{align*}  
    		&\|D_\theta D_z(fg)\tilde{w}^\lambda (1+z)^{\frac12}\|_{L^2}
            \\
    		\lesssim
    		&\|f(1+z)^{\frac12}\|_{L^\infty}\|D_\theta D_z g\tilde{w}^\lambda\|_{L^2}
    		+\|D_\theta D_z fw^\lambda\|_{L^2}\|g(1+z)^{\frac12}\tilde{w}_zw_z^{-1}\|_{L^\infty} \\
    		&+\|D_\theta fw_\theta^\lambda(1+z)^{\frac12}\|_{L_z^\infty L_\theta^2}\|D_z 
    		g\tilde{w}_z\|_{L^2_z L_\theta^\infty}
    		+\|D_zf \tilde{w}_z\|_{L^2_z L_\theta^\infty}\|D_\theta 
    		gw_\theta^\lambda(1+z)^{\frac12}\|_{L_z^\infty L_\theta^2}\\
    		\lesssim
    		&\sqrt{\frac{\beta}{\alpha}}\|f\|_{\mathcal{H}^{2}}\|g\|_{\tilde{\mathcal{H}}^{2}}.
    	\end{align*}
    	Collecting these estimates above, we can obtain the first inequality of \eqref{sec9:est-fg-H2}.
    	
    	For the second one, the proof is similar, so we omit it here.
    \end{proof}

    Our next step is to deal with $R_2$ by applying the product rules established in Lemmas \ref{9pr1}.
    \begin{prop}\label{9R1}
    	Let $L^{-1}_{z,K}(g)(0)=0$. There exist constants $\alpha>0$ sufficiently small and $\eta(\beta)$, such that if $\alpha\ll 1-\eta\ll\beta$ and $|\mu|\leq c_2\alpha$, then 
    	\begin{align}\label{sec9:est-R2-H2eta}
    		|\langle R_2, g\rangle_{\tilde{\mathcal{H}}^{2}}|
    		\leq 
    		\frac{1}{10}c_1\alpha\|g\|^2_{\tilde{\mathcal{H}}^2}
+ C_{\alpha}.
    	\end{align}
    \end{prop}
    
    \begin{proof}
    	Recalling the definition of $R_2$ in \eqref{def-R2-1} and $F=F^\ast_\gamma+g$, we can decompose the $\tilde{\mathcal{H}}^{2}$-norm as:
    	\begin{align}\label{sec9:est-bmI-1}
    		\langle R_2, g\rangle_{\tilde{\mathcal{H}}^2}
    		&=
    		\frac{3}{2\alpha}\langle L^{-1}_{z,K}(g)g, g\rangle_{\tilde{\mathcal{H}}^2}
    		-\frac{3}{2}\langle \sin^2(\theta)\langle F,K\rangle_{\theta}F, g\rangle_{\tilde{\mathcal{H}}^2}
    		\\
    		&~~~
    		+\langle \mathcal{R}(\Phi_g-G^\ast_{g}\sin(2\theta))g, g\rangle_{\tilde{\mathcal{H}}^2}
    		+\langle \mathcal{R}(\Phi_{F^\ast_\gamma}-G^\ast_{F^\ast_\gamma}\sin(2\theta))g, g\rangle_{\tilde{\mathcal{H}}^2}
    		\notag \\
    		&~~~
    		+\langle \mathcal{R}(\Phi_g-G^\ast_{g}\sin(2\theta))F^\ast_\gamma, g\rangle_{\tilde{\mathcal{H}}^2}
    		+\langle \mathcal{R}(\Phi_{F^\ast_\gamma}-G^\ast_{F^\ast_\gamma}\sin(2\theta))F^\ast_\gamma, g\rangle_{\tilde{\mathcal{H}}^2}
    		\notag \\
    		&\tri \tilde{\bar{\mathcal{I}}}_1 + \tilde{\bar{\mathcal{I}}}_2 +\tilde{\bar{\mathcal{I}}}_3+\tilde{\bar{\mathcal{I}}}_4+\tilde{\bar{\mathcal{I}}}_5+\tilde{\bar{\mathcal{I}}}_6,
    		\notag
    	\end{align}
    	where for $f$ being $g$ or $F^\ast_\gamma$, the notation $\Phi_f$ is defined as $\Phi_f\tri\tilde{\Phi}_f+\tilde{G}_{f}\sin(2\theta)+{G}_{f}^\ast\sin(2\theta)$.
    	We now estimate $\tilde{\bar{\mathcal{I}}}_i$ $(i=1,\cdots,6)$ term by term.
    	We first give a useful estimate as:
    	\begin{align}\label{sec9:est-Hw-}
    		\|g\|_{\tilde{\mathcal{H}}^{2}_{w-}}
    		\lesssim
    		\alpha \|g\|_{\tilde{\mathcal{H}}^{2}}+\alpha^{-\frac{2}{\beta}}\|g\|_{{\mathcal{H}}^{2}}.
    	\end{align}
    	Owing to the $\theta$-independence of $L^{-1}_{z,K}(g)$, an application of \eqref{sec9:est-Hw}, \eqref{sec9:est-Hw-}, and Lemma \ref{9em1} yields that
    	\begin{align}\label{9in6-1}
    		|\tilde{\bar{\mathcal{I}}}_1|
    		&\lesssim \alpha^{-1}\|L^{-1}_{z,K}(g)g\|_{\tilde{\mathcal{H}}^{2}_{w+}}\|g\|_{\tilde{\mathcal{H}}^{2}_{w-}}
    		\lesssim 
    		\alpha^{-1}\sum_{i=1}^2\|D^i_z L^{-1}_{z,K}(g)w_z\|_{L^2_z}\|g\|_{\tilde{\mathcal{H}}^{2}}
    		\big(\alpha \|g\|_{\tilde{\mathcal{H}}^{2}}+\alpha^{-\frac{2}{\beta}}\|g\|_{{\mathcal{H}}^{2}}\big)
    		\\
    		&\lesssim 
    		\|g\|_{\mathcal{H}^2}\|g\|_{\tilde{\mathcal{H}}^{2}}^2+\alpha^{-\frac4\beta}\|g\|_{\mathcal{H}^2}^3.
    		\notag
    	\end{align}
    	We apply \eqref{8est-sinF}, \eqref{sec9:est-Hw}, \eqref{sec9:est-Hw-}, Lemma \ref{7fm1} and Lemma \ref{9pr1} to control $\bar{\mathcal{I}}_2$ by
    	\begin{align}\label{9in5-1}
    		|\tilde{\bar{\mathcal{I}}}_2|
    		&\lesssim \|g\|_{\tilde{\mathcal{H}}^{2}_{w-}}\|\sin^2(\theta)\langle F,K\rangle_{\theta}g\|_{\tilde{\mathcal{H}}^{2}_{w+}}
    		+ \|g\|_{\tilde{\mathcal{H}}^{2}}\|\sin^2(\theta)\langle F,K\rangle_{\theta}F^\ast_\gamma\|_{\tilde{\mathcal{H}}^{2}}
    		\\ \notag
    		&\lesssim_{\beta} \alpha^{-\frac12}\big(\alpha \|g\|_{\tilde{\mathcal{H}}^{2}}+\alpha^{-\frac{2}{\beta}}\|g\|_{{\mathcal{H}}^{2}}\big)\|g\|_{\tilde{\mathcal{H}}^{2}}\left(\|\sin^2(\theta)\langle g,K\rangle_{\theta}\|_{\mathcal{H}^2}+\|\sin^2(\theta)\langle F^\ast_\gamma,K\rangle_{\theta}\|_{\tilde{\mathcal{H}}^{3,\ast}}\right)
    		\\ \notag
    		&~~~~+\alpha\|g\|_{\tilde{\mathcal{H}}^{2}}
    		\left(\|\sin^2(\theta)\langle g,K\rangle_{\theta}\|_{\mathcal{H}^2}+\|\sin^2(\theta)\langle F^\ast_\gamma,K\rangle_{\theta}\|_{\tilde{\mathcal{H}}^{2,\ast}}\right)
    		\\ \notag
    		&\lesssim_{\beta} (1-\eta)^{-1}
    		\left(\alpha^2\|g\|_{\tilde{\mathcal{H}}^{2}}
    		+\alpha^{\frac32}\|g\|_{\tilde{\mathcal{H}}^{2}}^2
    		+\alpha^{\frac12}\|g\|_{\mathcal{H}^2}\|g\|_{\tilde{\mathcal{H}}^{2}}^2
    		+\alpha^{-1-\frac4\beta}\|g\|^2_{\mathcal{H}^2}
    		+\alpha^{-\frac32-\frac4\beta}\|g\|^3_{\mathcal{H}^2}
    		\right).
    	\end{align}
    	In view of \eqref{est-mRg-1}, \eqref{sec9:est-Hw}, \eqref{sec9:est-Hw-}, Lemma \ref{9pr1}, we get that
    	\begin{align}\label{9in1-1}
    		|\tilde{\bar{\mathcal{I}}}_3|
    		&\lesssim \|\mathcal{R}(\Phi_g-G^\ast_{g}\sin(2\theta))g\|_{\tilde{\mathcal{H}}^{2}_{w+}}\|g\|_{\tilde{\mathcal{H}}^{2}_{w-}}
    		\\ \notag
    		&\lesssim  \alpha^{-\frac12} (\|\mathcal{R}(\tilde{\Phi}_g)\|_{\mathcal{H}^2}+\|\mathcal{R}(\tilde{G}_{g}\sin(2\theta))\|_{\mathcal{H}^2})\|g\|_{\tilde{\mathcal{H}}^{2}}
    		\big(\alpha \|g\|_{\tilde{\mathcal{H}}^{2}}+\alpha^{-\frac{2}{\beta}}\|g\|_{{\mathcal{H}}^{2}}\big)
    		\\
    		\notag
    		&\lesssim_{\beta}
    		\alpha^{-\frac12} \|g\|_{\mathcal{H}^2}\|g\|_{\tilde{\mathcal{H}}^{2}}
    		\big(\alpha \|g\|_{\tilde{\mathcal{H}}^{2}}+\alpha^{-\frac{2}{\beta}}\|g\|_{{\mathcal{H}}^{2}}\big)
    		\\
    		\notag
    		&\lesssim_\beta 
    		\alpha^{\frac12}\|g\|_{\mathcal{H}^2}\|g\|_{\tilde{\mathcal{H}}^{2}}^2
    		+\alpha^{\frac12-\frac4\beta}\|g\|^3_{\mathcal{H}^2}.
    	\end{align}
    	Combining \eqref{est-mRF-1}, \eqref{sec9:est-Hw}, \eqref{sec9:est-Hw-} and Lemma \ref{9pr1}, it suffices to prove that
    	\begin{align}\label{9in2-1}
    		|\tilde{\bar{\mathcal{I}}}_4|
    		\lesssim \|\mathcal{R}(\Phi_{F^\ast_\gamma}-G^\ast_{F^\ast_\gamma}\sin(2\theta))g\|_{\tilde{\mathcal{H}}^{2}_{w+}}\|g\|_{\tilde{\mathcal{H}}^{2}_{w-}}
    		\lesssim_\beta (1-\eta)^{-\frac12}
    		\big(
    		\alpha^{\frac32}\|g\|_{\tilde{\mathcal{H}}^{2}}^2
    		+\alpha^{-\frac12-\frac4\beta}\|g\|^2_{\mathcal{H}^2}\big).
    	\end{align}
    	Then \eqref{est-mRg-1} implies that
    	\begin{align}\label{9in3-1}
    		|\tilde{\bar{\mathcal{I}}}_5|
    		\lesssim_\beta \alpha^{\frac32}\|g\|^2_{\tilde{\mathcal{H}}^{2}}
    		+\alpha^{\frac12}\|g\|^2_{\mathcal{H}^2}.
    	\end{align}
    	With the help of \eqref{est-mRF-1}, we find that
    	\begin{align}\label{9in4-1}
    		|\tilde{\bar{\mathcal{I}}}_6|
    		\lesssim_\beta (1-\eta)^{-\frac{1}{2}}\alpha^{2}\|g\|_{\tilde{\mathcal{H}}^{2}}.
    	\end{align}
    	Substituting \eqref{9in6-1}-\eqref{9in4-1} into \eqref{sec9:est-bmI-1}, we finally obtain \eqref{sec9:est-R2-H2eta}.

    \end{proof}
    
    We now state the following proposition regarding the estimates of $R_0$ and $R_1$.
    
    \begin{prop}\label{9R0R1-1}
    	Let $L^{-1}_{z,K}(g)(0)=0$. There exist constants $\alpha>0$ sufficiently small and $\eta(\beta)$, such that if $\alpha\ll 1-\eta\ll \beta$, $\mu<0$ and $c_1\alpha<|\mu|< c_2 \alpha$, then
    	\begin{align}\label{sec9:est-R0R1-H2eta}
    		|\langle R_0+R_1, g\rangle_{\tilde{\mathcal{H}}^{2}}|
    		\leq C_{\beta,1-\eta}  \alpha^{\frac32}
    		\|g\|_{\tilde{\mathcal{H}}^{2}}
    		+C_{\beta,1-\eta} \big(c_1\alpha +\alpha^2\big)\alpha\|g \|_{\tilde{\mathcal{H}}^{2}}^2
    		+C_{\beta,1-\eta} \alpha^{-1} \|g \|_{\mathcal{H}^{2}}^2.
    	\end{align}
    \end{prop}
    \begin{proof}
    	In order to control $R_0$, we first claim that
    	\begin{align}\label{est-Fgamma}
    		\|F^\ast_\gamma z^{\frac1\gamma}(1+z^{\frac1\gamma})^{-1}\|_{\tilde{\mathcal{H}}^2}
    		\lesssim_{\beta,1-\eta}  \alpha^{\frac12} .
    	\end{align}
    	We then apply \eqref{4eq14} and \eqref{est-Fgamma}, to deduce that 
    	\begin{align}\label{9in16-1}
    		|\langle R_0, g\rangle_{\tilde{\mathcal{H}}^{2}}|
    		\lesssim \|R_0\|_{\tilde{\mathcal{H}}^{2}}\|g \|_{\tilde{\mathcal{H}}^{2}}
    		\lesssim_{\beta} \alpha \|F^{\ast}_{\gamma} z^{\frac{1}{\gamma}}(1+z^{\frac{1}{\gamma}})^{-1}\|_{\tilde{\mathcal{H}}^{2}} \|g\|_{\tilde{\mathcal{H}}^{2}}
    		\lesssim_{\beta,1-\eta}   \alpha^{\frac32}
    		\|g\|_{\tilde{\mathcal{H}}^{2}}.
    	\end{align}
    	The treatment of $R_1$, which cannot be controlled directly because $|\mu| \lesssim_{\beta,1-\eta}  \alpha$, necessitates a more precise estimate. 
    	Before proceeding, we deduce a useful estimate.
    	It follows from integration by parts and \eqref{2Co-cal-Dzww} that
    	\begin{align*}
    		\langle g + \beta D_z g, g\tilde{w}_z^2\rangle_z
    		=\big\langle 1 -\frac{\beta}{2}\frac{D_z^\ast \tilde{w}_z^2}{\tilde{w}_z^2}, g^2\tilde{w}_z^2\big\rangle_z
    		=c_1\alpha \|g \tilde{w}_z\|_{L^2_z}^2
    		-(2+A)\big\langle \frac{1}{1+z^{\frac1\beta}}, g^2\tilde{w}_z^2\big\rangle_z.
    	\end{align*}
    	This, along with \eqref{sec9:est-cor-1}, yields directly that
    	\begin{align}\label{sec9:est-g}
    		\big|\langle g + \beta D_z g, g\tilde{w}_z^2\rangle_z\big|
    		\lesssim 
    		\big(c_1\alpha +\alpha^2\big)\|g \tilde{w}_z\|_{L^2_z}^2
    		+\alpha^{-2} \|g w_z\|_{L^2_z}^2.
    	\end{align}
    	We then employ \eqref{sec9:est-g} and $|\mu|\leq \alpha$, to obtain that
    	\begin{align*}
    		\big|\langle R_1, g\rangle_{\tilde{\mathcal{H}}^{2}}\big|
    		\lesssim |\mu| \big|\langle g+\beta D_z g, g(\tilde{w}^\eta)^2\rangle_{\tilde{\mathcal{H}}^{2}}\big|
    		\lesssim_{\beta,1-\eta} 
    		\big(c_1\alpha +\alpha^2\big)\alpha\|g \|_{\tilde{\mathcal{H}}^{2}}^2
    		+\alpha^{-1} \|g \|_{\mathcal{H}^{2}}^2.
    	\end{align*}
    	This, combined with \eqref{9in16-1}, gives \eqref{sec9:est-R0R1-H2eta}.
    	
    	To complete the proof, it suffices to prove \eqref{est-Fgamma}.
    	We note that
    	\begin{align*}
    		c_1\alpha<|\mu|<c_2\alpha,\quad \mu<0.
    	\end{align*}
    	It then follows from a similar way as \eqref{est-L2} that
    	\begin{align}\label{sec9:est-L2}
    		\|F^\ast_{\gamma}z^{\frac1\gamma}(1+z^{\frac1\gamma})^{-1}\tilde{w}^\eta\|^2_{L^{2}}
    		&\lesssim \left(\frac{\alpha}{\gamma}\right)^2(1-\eta)^{-1}\int_0^{\infty} \frac{z^{\frac{4}{\gamma}}}{(1+z^{\frac{1}{\gamma}})^6}\frac{(1+z^{\frac{1}{\beta}})^{4+2A}}{z^{\frac{4}{\beta}}}dz \\ 
    		&\lesssim \left(\frac{\alpha}{\beta}\right)^2(1-\eta)^{-1}
    		\Big(\int_0^{1} z^{\frac4\gamma-\frac4\beta}dz 
    		+\int_{1}^{+\infty} z^{\frac2\beta-\frac2\gamma-1+\frac{2c_1\alpha}{\beta}}dz 
    		\Big)
    		\notag
    		\\
    		&\lesssim 
    		\alpha^{2}\beta^{-2} (1-\eta)^{-1}
    		 \big(1+2\alpha^{-1}\beta^{-1}\big)
    		\lesssim_{\beta,1-\eta} \alpha,
    		\notag
    	\end{align}
        where we have used the fact that
        \begin{align*}
        	\beta\gamma^{-1}-1-c_1\alpha =-\frac{2\mu}{1+\mu}-c_1\alpha>0,\quad
        	\beta\gamma^{-1}-1-c_1\alpha  \lesssim_{\beta,1-\eta}  \alpha.
        \end{align*}
        By an argument analogous to that used for Lemma \ref{7fm1} and combined with \eqref{sec9:est-L2}, we deduce \eqref{est-Fgamma}.
        The proof of this lemma is complete.
    \end{proof}

    A direct application of Propositions \ref{9co2}, \ref{9Tg}, \ref{9Tf}, \ref{9R1}, and \ref{9R0R1-1} yields the following proposition.
    \begin{prop}\label{9R3}
    	Let $L^{-1}_{z,K}(g)(0)=0$, $\mu<0$ and $c_1\alpha<|\mu|< c_2 \alpha$. There exist constants $\alpha>0$ sufficiently small and $\eta(\beta)$, such that if $\alpha\ll 1-\eta\ll \beta$, then
    	\begin{align*}
    		\|g\|_{\tilde{\mathcal{H}}^2}\lesssim C_\alpha.
    	\end{align*}
    \end{prop}
    \begin{proof}
    	Relying on Propositions \ref{9co2}, \ref{9Tg}, \ref{9Tf}, \ref{9R1}, and \ref{9R0R1-1}, together with the fact that $\alpha \ll 1-\eta \ll \beta$, we complete the proof of this proposition.
    \end{proof}
\begin{rema}
   We point out that in proving the boundedness of $\|g\|_{\tilde{\mathcal{H}}^2}$, condition $c_1 \alpha\leq|\mu|$ is not necessary. It suffices to use $|\mu|>0$, and by choosing the new weight and the constants $B_0-B_2$ that depends on $|\mu|$, we can still prove the boundedness of $\|g\|_{\tilde{\mathcal{H}}^2}$.
\end{rema}

\subsection{Non-implosion mechanism}

With Propositions \ref{8prop1}, \ref{9R3} at hand, we are ready to prove Theorem \ref{Theo2}.

\begin{proof}[\textbf{Proof of Theorem \ref{Theo2}}]
The vorticity blow-up of Euler  equations has been proven in Proposition \ref{8prop1}. In order to clarify the main idea, we next focus on the the proof of the space-time integrability result $(u_r,u_3)\in L^\infty([0,T^\ast];L^{\infty}_{loc}(D_0))$. Other result $(u_r,u_3)\in L^p([0,T^\ast];L^{\infty}_{loc}(D_0))$ for any $p\in [1,\infty)$  is simpler by changing $A\in(1-\frac 1 p-\frac \beta 2,1-\frac \beta 2)$ in the definition of the new space $\tilde{\mathcal{H}^2}$, so the proof is omitted here. Then, it remains to verify \eqref{est-theo-3} with $\mu<0$. 

By using \eqref{est-ur-1}, we deduce that
	\begin{align}
		\||\bm{x}|^{(1+c_1\alpha)\alpha\beta^{-1}-1} u_r\|_{L^\infty} 
		\leq&~ \|R^{(1+c_1\alpha)\beta^{-1}} r^{-1} u_r\|_{L^\infty} 
		\leq t_{\gamma}^{c_1\alpha} \|z^{\frac12+\frac{A}{\beta}}\mathcal{R}(\Phi)\|_{L^\infty}
		\label{sec9:est-ur-Lloc}\\
		\lesssim &~ t_{\gamma}^{c_1\alpha}
		\Big(\|z^{\frac12+\frac{A}{\beta}}\mathcal{R}(\Phi_{F^\ast_\gamma}-G^\ast_{F^\ast_\gamma}\sin (2\theta))\|_{L^\infty}
		+ \|z^{\frac12+\frac{A}{\beta}}\mathcal{R}(\Phi_g-G^\ast_g\sin (2\theta))\|_{L^\infty}
		\nonumber\\
		&
		+\|z^{\frac12+\frac{A}{\beta}}\alpha^{-1} L^{-1}_{z,K}(F^\ast_\gamma)\|_{L^\infty}
		+\|z^{\frac12+\frac{A}{\beta}}(\sin\theta)^2\langle F^\ast_\gamma,K\rangle_{\theta}\|_{L^\infty}
		\nonumber\\
		&
		+\|z^{\frac12+\frac{A}{\beta}}\big( \alpha^{-1} L^{-1}_{z,K}(g)
		-(\sin\theta)^2\langle g,K\rangle_{\theta}\big)\|_{L^\infty}
		\Big).
		\nonumber
	\end{align}
	We proceed to analyze the terms on the right-hand side of \eqref{est-ur-Lloc} in sequence. We first note that
	\begin{align*}
		&\sup_{\theta\in(0,\frac{\pi}{2})}\|z^{\frac12+\frac{A}{\beta}}\mathcal{R}(\Phi_g-G^\ast_g\sin (2\theta))\|_{L^\infty[1,\infty)} \lesssim \alpha^{-\frac12} \|g\|_{\tilde{\mathcal{H}^2}} \leq C_\alpha;
		\\
		&\sup_{\theta\in(0,\frac{\pi}{2})}\|z^{\frac12+\frac{A}{\beta}}\alpha^{-1} L^{-1}_{z,K}(F^\ast_\gamma)\|_{L^\infty} =2 \|z^{\frac12+\frac{A}{\beta}}(1+z^{\frac1\gamma})^{-1}\|_{L^\infty} \leq 2;
		\\
		&\sup_{\theta\in(0,\frac{\pi}{2})}\|z^{\frac12+\frac{A}{\beta}}(\sin\theta)^2\langle F^\ast_\gamma,K\rangle_{\theta}\|_{L^\infty} \lesssim \alpha \|z^{\frac12+\frac{A}{\beta}+\frac1\gamma}(1+z^{\frac1\gamma})^{-2}\|_{L^\infty} \lesssim \alpha;
		\\
		&\sup_{\theta\in(0,\frac{\pi}{2})}\|z^{\frac12+\frac{A}{\beta}}\big( \alpha^{-1} L^{-1}_{z,K}(g)
		-(\sin\theta)^2\langle g,K\rangle_{\theta}\big)\|_{L^\infty[1,\infty)} \lesssim \alpha^{-1} \|g\|_{\tilde{\mathcal{H}}^2} \leq C_\alpha.
	\end{align*}
	Turning to the first term on the right-hand side of \eqref{sec9:est-ur-Lloc}, we control it using Corollary \ref{9em2}, which yield
	\begin{align*}
		\|z^{\frac12+\frac{A}{\beta}}\mathcal{R}(\Phi_{F^\ast_\gamma}-G^\ast_{F^\ast_\gamma}\sin (2\theta))\|_{L^\infty}
		\lesssim &~
		\alpha^{-\frac12} \|z^{\frac{A}{\beta}}F_{\gamma}^\ast \|_{\tilde{\mathcal{H}}^{2,\ast}}
		\lesssim_{\beta,1-\eta} 1.
		\nonumber
	\end{align*}
	The above estimates, together with \eqref{est-ur-Lloc}, imply that
	\begin{align}\label{sec9:est-ur}
		\sup_{s\in[0,T^\ast]} \||\bm{x}|^{(1+c_1\alpha)\alpha\beta^{-1}-1} u_r(s)\|_{L^\infty}  
		\leq C_\alpha \sup_{s\in[0,T^\ast]} s_{\gamma}^{c_1\alpha}
		\leq C_\alpha,
	\end{align}
	which yields that $u_r\in L^{\infty}([0,T^\ast];L^\infty_{loc})$.
	As for $u_3$, following the same technical line as in the derivation of \eqref{sec9:est-ur}, we establish that $u_3\in L^{\infty}([0,T^\ast];L^\infty_{loc})$.
	
\end{proof}

\section*{Acknowledgement}
W. Deng is partially supported by Postdoctoral Fellowship Program of CPSF(No. GZB20230940), and China Postdoctoral Science Foundation (No. 2024M764279). 
M. Li is partially supported by Postdoctoral Fellowship Program of CPSF (No. GZB20240024), and China Postdoctoral Science Foundation (No. 2024M760057 and No. 2025T180840).
\bibliographystyle{abbrv}

\end{document}